\documentclass[11pt]{article}
\usepackage[a4paper,margin=1in]{geometry}
\usepackage[T1]{fontenc}
\usepackage[utf8]{inputenc}
\usepackage{lmodern,microtype,amsmath,amssymb,amsthm,amscd,mathtools,mathrsfs}
\usepackage{booktabs,tabularx,array,enumitem,etoolbox,xcolor,needspace}
\usepackage{fancyhdr}
\usepackage[colorlinks=true,linkcolor=blue!45!black,citecolor=blue!45!black,urlcolor=blue!45!black]{hyperref}
\usepackage[capitalise,noabbrev]{cleveref}
\hypersetup{pdftitle={Higher Schwarzians},pdfauthor={Amir Jafari}}
\renewcommand{\sectionmark}[1]{}
\renewcommand{\subsectionmark}[1]{}
\newtheorem{maintheorem}{Theorem}

\newtheorem{theorem}{Theorem}[section]
\newtheorem{proposition}[theorem]{Proposition}
\newtheorem{lemma}[theorem]{Lemma}
\newtheorem{corollary}[theorem]{Corollary}
\theoremstyle{definition}
\newtheorem{definition}[theorem]{Definition}
\newtheorem{example}[theorem]{Example}
\newtheorem{convention}[theorem]{Convention}
\theoremstyle{remark}
\newtheorem{remark}[theorem]{Remark}
\numberwithin{equation}{section}
\setlist[enumerate]{label=\textup{(\roman*)},leftmargin=2em}
\makeatletter
\patchcmd{\l@section}{1.0em}{0.65em}{}{}
\patchcmd{\l@part}{2.25em}{1.75em}{}{}
\makeatother
\apptocmd{\thebibliography}{\setlength{\itemsep}{3pt}\setlength{\parsep}{0pt}}{}{}
\newcommand{\Q}{\mathbb Q}
\newcommand{\Z}{\mathbb Z}
\newcommand{\C}{\mathbb C}
\newcommand{\Sch}{\operatorname{Sch}}
\newcommand{\OO}{\mathcal O}
\newcommand{\Id}{\operatorname{Id}}
\newcommand{\End}{\operatorname{End}}
\newcommand{\Hom}{\operatorname{Hom}}
\newcommand{\tr}{\operatorname{tr}}
\newcommand{\res}{\operatorname{res}}
\newcommand{\cub}{\mathcal S}
\newcommand{\SZ}{Szeg\H{o}}
\newcommand{\Hg}{\mathfrak H_g}
\newcommand{\EE}{\mathcal E}
\newcommand{\HH}{\mathcal H}
\newcommand{\nD}{\nabla_t}
\newcommand{\cov}{\mathcal D}
\newcommand{\VV}{\mathcal V}
\newcommand{\MM}{\mathscr M}
\newcommand{\Sp}{\operatorname{Sp}}
\renewcommand{\AA}{\mathscr A}
\newcommand{\RR}{\mathscr T}
\newcommand{\SSS}{\mathscr S}
\newcommand{\Sym}{\operatorname{Sym}}
\newcommand{\GL}{\operatorname{GL}}
\newcommand{\dZ}{\,dZ}
\newcommand{\Dt}{\mathsf D}
\newcommand{\mer}{\mathrm{mer}}
\title{Higher Schwarzians}
\author{Amir Jafari}
\date{}
\begin{document}
\markboth{Higher Schwarzians}{}
\maketitle
\begin{abstract}
The classical Schwarzian can be computed directly from the coefficients
of a second-order linear equation. We extend this coefficient viewpoint
to monic operators of arbitrary order over unital associative
differential algebras over the rationals. The normalized coefficients
freely generate the universal gauge covariants under covariant
differentiation. Projective normalization in the scalar case, followed
by symmetrization, gives higher Schwarzians with compatible finite laws
under changes of frame and parameter.

On a complex curve, these invariants, together with the coefficient
connection and quadratic datum, reconstruct the operator. On a compact
curve of genus $g\ge2$, canonical Szeg\H{o} kernels turn the construction
into a tool for recovering bundle geometry. A bracket of the cubic
invariant recovers marked extensions of acyclic line bundles with
distinct cubics. Higher invariants through degree $4g-1$ separate acyclic
line bundles and detect their self-extensions. For acyclic bundles of arbitrary
rank, the full invariant jets at a point generate the associative span
of the canonical monodromy.

The same universal coefficient calculus gives automorphic tensors on
Siegel modular varieties.
For genus-two filtered symplectic connections with invertible
Kodaira--Spencer map, matrix Schwarzian
invariants determine where scalar equations selected by the Hodge
filtration acquire apparent singularities. Exact Picard--Fuchs
calculations exhibit these divisors and express their equations in
Siegel modular forms. The exact calculations are supplied in a
reproducible ancillary package.
\end{abstract}

\begingroup\small
\noindent\textit{2020 Mathematics Subject Classification.}
Primary 16S32, 34M45, 11F46; Secondary 13N15, 14H60, 17B68, 53A55.\par
\noindent\textit{Key words and phrases.}
Higher Schwarzian, Ore algebra, gauge covariant, projective connection,
modular form, Szeg\H{o} kernel, extension class, Picard--Fuchs equation.
\par\endgroup

\section{Introduction}\label{intro:main}

Consider a second-order linear equation
$y''+2a_1y'+a_2y=0$. A quotient $y_1/y_2$ of two independent solutions defines
a local projective coordinate: changing the solution basis changes
the quotient by a M\"obius transformation. For a function $f$ with
nonvanishing derivative, the Schwarzian
\begin{equation}\label{I:eq:schwarzian}
 \Sch(f)=\frac{f'''}{f'}-\frac32\left(\frac{f''}{f'}\right)^2
\end{equation}
is unchanged by that transformation. It can also be read directly
from the equation. Where $y_2\ne0$, one has
\begin{equation}\label{I:eq:klein}
 \Sch(y_1/y_2)=2I_2,\qquad I_2=a_2-a_1'-a_1^2.
\end{equation}
Thus the projective differential invariant is available before the
solutions are known. Klein's use of this identity in the icosahedron
problem is a classical example
\cite[Part I, Chap.~III, \S\S6--8]{Klein1888}; see also
\cite{OT2009,OT}.

The coefficient formula suggests extending the construction to
higher-order equations and to coefficients which need not commute.
The latter include the matrix equations arising from vector bundles
and periods. We ask which differential expressions in the coefficients
have intrinsic transformation laws, and what geometric information
those expressions determine. Part~\ref{part:I} constructs the universal
expressions. Part~\ref{part:II} uses them to recover operators and bundle
extensions on curves. Part~\ref{part:III} applies them to modular tensors
and to scalar equations selected by the Hodge filtration.

\subsection{From the coefficient formula to universal invariants}

A change of frame acts on an operator by conjugation. An expression
in its coefficients is a \emph{gauge covariant} if it changes by the
same conjugation. We seek universal covariants: differential
polynomials, with the order of multiplication retained, whose law
holds in every unital associative differential $\Q$-algebra.

The first normalization has a simple form. Write a monic operator of
order $N\ge2$ in the binomial convention and expand it in powers of
the adapted first-order operator:
\begin{equation}\label{intro:normalization}
 L=\sum_{k=0}^N\binom Nk a_kD^{N-k}
  =\sum_{k=0}^N\binom Nk I_k(D+a_1)^{N-k},
 \qquad a_0=I_0=1,\quad I_1=0.
\end{equation}
Here $Da=aD+\delta(a)$, where $\delta$ is coefficient differentiation.
The second expansion defines the normalized coefficients $I_k$;
its quadratic member is \eqref{I:eq:klein}. Their covariant derivative
is $\Delta C=\delta(C)+[a_1,C]$, with $[a,b]=ab-ba$.

\begin{maintheorem}[Universal coefficient invariants]
\label{intro:universal}
For fixed order $N$, the algebra of universal gauge covariants is the
free differential associative algebra on $I_2,\ldots,I_N$, with
derivation $\Delta$.

For each $3\le m\le N$, adding to $I_m$ a universal correction in the
lower normalized coefficients and their covariant derivatives gives
a higher Schwarzian $W_m$. It transforms by conjugation under a
change of frame and as an $m$-differential under a change of parameter.
The representative is specified by scalar projective normalization
and full symmetrization.
\end{maintheorem}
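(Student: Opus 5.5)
The plan has two parts: a normal-form argument for the universal covariants, and a bootstrap for the higher Schwarzians.

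\emph{Setup.} Work in the universal differential algebra $R_N=\Q\langle a_1,\dots,a_N\rangle_\delta$, the free associative differential algebra on the coefficients. A frame change $g$ acts by $L\mapsto g^{-1}Lg$. Since $D g=gD+\delta(g)$, this acts on the coefficients by
\[
a_1\mapsto g^{-1}a_1g+g^{-1}\delta(g),
\]
together with the corresponding affine laws for higher $a_k$. Put $A:=D+a_1$. Then $g^{-1}Ag=D+a_1^g$, so the adapted operator itself transforms by conjugation. Uniqueness of the expansion $L=\sum\binom Nk I_k A^{N-k}$, which follows by triangular induction on $k$, then gives $I_k\mapsto g^{-1}I_kg$. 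The same computation, $[g^{-1}Ag,\,g^{-1}Cg]=g^{-1}[A,C]g$ and $[A,C]=\Delta C$ as operators, shows that $\Delta$ commutes with the action. Hence the differential algebra generated by the $I_k$ under $\Delta$ consists of covariants.

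\emph{Freeness and completeness.} Change variables in $R_N$ from $(a_1,\delta^ja_k)$ to $(\delta^ja_1,\ \Delta^jI_k)$ for $k\ge2$. The relation $I_k=a_k+(\text{terms in }a_1,\dots,a_{k-1}\text{ and derivatives})$ is triangular, so $R_N$ is the free associative algebra on the $\delta^ja_1$ and the $\Delta^jI_k$. This proves that the differential algebra generated by $I_2,\dots,I_N$ under $\Delta$ is free. For completeness, let $P$ be a universal covariant, written in these variables. Apply the gauge $g=\exp(-\int a_1)$, realised universally by adjoining a formal solution $g$ of $\delta g=-a_1g$. This transforms $a_1$ to $0$ and the $I_k$ to $g^{-1}I_kg$. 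Covariance gives $P(\delta^j a_1,\Delta^jI)=g\,P(0,\,g^{-1}\Delta^jI\,g)\,g^{-1}$. One has to justify that the adjunction of $g$ is faithful, i.e.\ a free extension. Then $P$ equals its specialization at $a_1=0$, which is a noncommutative polynomial in the $\Delta^jI_k$. \textbf{The main obstacle is making this universal gauge-fixing rigorous without commutativity.} I would first try the infinitesimal version instead. Under $g=1+\epsilon h$, invariance forces the derivative of $P$ in the $\delta^ja_1$-directions to vanish, because $\delta a_1\mapsto \delta a_1+\delta^2 h+\dots$ is a free shift. A degree filtration argument on $\delta^ja_1$ then kills all dependence on $a_1$.

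\emph{Higher Schwarzians.} A parameter change $t=\phi(s)$ rescales $D$ by $\phi'$. The $I_k$ then transform as $k$-differentials plus lower-order terms involving $\Sch(\phi)$ and the $\Delta^jI_l$ with $l<k$, computed by the same triangular expansion. Proceed by induction on $m$. Posit
\[
W_m=I_m+\sum c_\alpha\,\Delta^{j}I_{l_1}\cdots
\]
summed over all weight-$m$ words in lower data. Restricting to the scalar commutative case, use the classical projective normalization (Wilczynski/Bol-type invariants) to fix the coefficients that make $W_m$ a primary $m$-differential. Then symmetrize over orderings of the factors to obtain a canonical noncommutative lift. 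One must check that symmetrization preserves the $m$-differential law. I would argue this as follows: the anomaly terms of the finite parameter law are themselves universal covariants, linear in the anomaly cocycle $\Sch(\phi)$, and are produced by a derivation-like rule that commutes with symmetrization. By the freeness above, an identity that holds after full symmetrization is detected by its commutative image. Conjugation covariance is automatic from the first part.
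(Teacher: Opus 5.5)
Your argument for the first assertion is sound. The triangular substitution makes $\mathcal U_N$ free on the $\delta^ja_1$ together with the $\Delta^rI_k$. Gauge-fixing with $\delta g=-a_1g$ is the paper's completeness argument; faithfulness is obtained there from $\mathcal U_N*_{\Q}\Q[f,f^{-1}]$ and the algebra retraction $f\mapsto1$, which need not respect $\delta$. Your infinitesimal alternative also works and is a genuinely different route. With $g=1+\epsilon h$, covariance says that replacing, one occurrence at a time, each $\delta^ja_1$ by $\delta^{j+1}h+\sum_{i<j}\binom{j}{i}[\delta^ia_1,\delta^{j-i}h]$ gives zero. The coefficient of the highest jet $\delta^{J+1}h$ then rules out the highest $a_1$-jet.

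The gap is in the associative lift. What makes full symmetrization compatible with reparametrization is that a change of parameter acts on the normalized jets by an affine substitution with central coefficients and no products of jets:
\[
 (\Delta^rI_k)^\lambda=F_{k,r,0}+\sum_{j,s}F_{k,r,j,s}\,\overline{\Delta^sI_j},
 \qquad F_{k,r,0},F_{k,r,j,s}\in\Q[\lambda_1^{\pm1},\lambda_2,\ldots].
\]
A substitution of this kind commutes with the linear map $\operatorname{Sym}$, because the central factors can be pulled out of each averaged word. Symmetrizing the scalar identity then gives $W_m^\lambda=\lambda_1^m\bar W_m$ at once.

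You assert a commutation with symmetrization but never identify this property, and what you offer in its place is incorrect. The anomaly is not linear in $\Sch(\phi)$: $I_3^\lambda$ already contains $3\lambda_1\lambda_2\bar I_2$, and $I_4^\lambda$ contains $S_\lambda^2$. Nor does freeness of the $\Delta^rI_k$ let the commutative image detect identities. The element $[I_2,\Delta I_2]$ is nonzero with zero commutative image, and $W_5+c[I_2,\Delta I_2]$ has both transformation laws for every $c\in\Q$. Detection by the commutative image holds only on the image of $\operatorname{Sym}$, and knowing that $W_m^\lambda$ lies there is exactly the missing affine statement.

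For noncentral $a_1$ this statement needs proof. On the slice $a_1=0$, the transformed first coefficient $-(N-1)\rho/2$ is central, so the Bell expansion is affine. The paper transports this to general $a_1$ through the normalizing gauge, using $(L^g)^\lambda=(L^\lambda)^{\bar g}$.

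Separately, weighted covariance does not determine the scalar polynomial: in the scalar case $W_6+cW_3^2$ has the same laws for every $c$ when $N\ge6$. So ``classical projective normalization'' must be made precise. The paper requires the restriction to $x_{2,r}=0$ to equal the linear M\"obius covariant $E_m$. It constructs $\omega_m$ by evaluating $E_m$ in a formal parameter with vanishing projective connection. Covariance then follows because two such parameters differ by a M\"obius map.
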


The first assertion says that every universal gauge covariant is a
unique polynomial in ordered products of the covariant derivatives
of the normalized coefficients. Its proof removes $a_1$ by a formal
change of frame, where the remaining coefficient derivatives are
free, and then restores covariance
(Theorem~\ref{I:thm:local-generation}).

Parameter changes present a further obstruction. The quadratic
coefficient acquires an additive Schwarzian term; in the scalar case
it defines a projective connection. A projective parameter sets the
scalar quadratic datum to zero and determines the corrections in
higher degrees. Full symmetrization gives an associative representative
with the same finite parameter law
(Theorem~\ref{I:thm:rp-higher}). The first higher member is
$W_3=I_3-\tfrac32\Delta I_2$. The symmetrization convention matters:
the covariance laws alone allow additional commutator terms.

The scalar starting point is Wilczy\'nski's theory of differential
equations and relative invariants
\cite{Wilczynski1901,Wilczynski1906,Bol}. Noncommutative Schwarzians
were studied in \cite{RetakhShander1993,RetakhRubtsovSharygin2019},
and generalized Wilczy\'nski invariants for nonlinear equations and
systems in \cite{Doubrov2008,DoubrovMedvedev2014}. Our construction
concerns ordered coefficient polynomials in every associative
differential algebra, with universal generation and compatible finite
transformation laws in all orders. Higher Schwarzians of locally
univalent functions, as in \cite{Tamanoi,KimSugawa}, concern a
different input. For scalar and matrix coefficients we also identify
our normalized representatives with the classical primary currents
of \cite{DiFrancescoItzyksonZuber1991,Bilal1995}.

\subsection{From differential invariants to bundle geometry}

The transformation laws allow the local expressions to glue on a
complex curve. With the half-form twists specified in
Section~\ref{II:sec:curve-schwarzians}, the first operator coefficient
defines a connection on the coefficient bundle, the quadratic member
defines an affine quadratic datum, and the higher Schwarzians are
endomorphism-valued differentials. These data determine the operator:
one recovers each coefficient from the preceding ones, and uniqueness
glues the local inverses. This gives the holomorphic and meromorphic
reconstruction correspondence of
Theorem~\ref{II:thm:curve-reconstruction}.

To obtain invariants of a bundle itself, one needs a distinguished
source of coefficients. Let $X$ be a compact complex curve of genus
$g\ge2$. A holomorphic bundle $E$ is \emph{acyclic} if
$H^0(X,E)=H^1(X,E)=0$. Its canonical Szeg\H{o} kernel has identity
residue along the diagonal; expanding near the diagonal gives the
required differential data. The kernel and its connection come from
Ben-Zvi--Biswas \cite{BB}. We identify the resulting cubic differential
with the first higher Schwarzian and denote it by $\cub_E$.

For matrix coefficients, the cubic and its covariant derivative need
not commute. Six times their commutator is the first self-bracket of the
cubic. On an extension of line bundles this bracket has only an
off-diagonal entry. Although its trace and determinant vanish, that
entry can determine the entire extension class. A \emph{marked}
extension keeps fixed its inclusion of the first line bundle and its
identification of the quotient with the second.

\begin{maintheorem}[Recovery of marked extensions]
\label{intro:extensions}
Let $L,M$ be acyclic line bundles on $X$ with
$\cub_L\ne\cub_M$, and let $K_X$ be the canonical bundle.
The cubic self-bracket of a marked extension gives a linear injection
\begin{equation}\label{intro:extension-injection}
 H^1(X,L\otimes M^{-1})\lhook\joinrel\longrightarrow
 H^0(X,L\otimes M^{-1}\otimes K_X^7).
\end{equation}
Its value determines the extension class, and it vanishes precisely
when the extension splits.
\end{maintheorem}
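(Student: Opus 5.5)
Let $E$ be a marked extension $0\to L\to E\to M\to 0$ with class $e\in H^1(X,L\otimes M^{-1})$. The plan is to compute the Szeg\H{o} kernel of $E$ to first order in $e$ and then read off the off-diagonal entry of the cubic self-bracket.

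First, $E$ is acyclic, by the long exact sequence, since $L$ and $M$ are. Choose a smooth splitting $E\cong L\oplus M$ in which $\bar\partial_E=\bar\partial_L\oplus\bar\partial_M+\begin{pmatrix}0&\beta\\0&0\end{pmatrix}$, with $\beta$ a Dolbeault representative of $e$. The canonical kernel $S_E$ is then block upper triangular. Its diagonal blocks are $S_L$ and $S_M$. Its off-diagonal block is the convolution
\[
 S_{LM}(x,y)=-\int_X S_L(x,z)\,\beta(z)\,S_M(z,y),
\]
which is well defined because acyclicity makes these Green kernels unique; one can check it against the identity-residue normalization.

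Next, expand near the diagonal. In a local frame the coefficients $a_1,I_2,I_3$ are upper triangular, and $\cub_E$ is upper triangular with diagonal $(\cub_L,\cub_M)$ and off-diagonal entry $\sigma$ depending linearly on $\beta$. Write $\Delta$ for the covariant derivative. A direct computation of $6[\cub_E,\Delta\cub_E]$ for triangular matrices gives off-diagonal entry
\[
 6\bigl((\cub_L-\cub_M)\Delta\sigma-\sigma\,\Delta(\cub_L-\cub_M)\bigr),
\]
with diagonal entries zero. This is a Wronskian-type bracket of weight $3+4=7$, taking values in $L\otimes M^{-1}\otimes K_X^7$. That it is a genuine holomorphic section, independent of the chosen splitting and frame, follows from the covariance laws of the earlier sections. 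Changing the splitting changes $\sigma$ by the covariant image of a global smooth section. Under the Wronskian this contributes the term $[\cub,\text{gauge}]$, and I expect it to cancel, because the gauge transformation acts by conjugation on the whole bracket and conjugation by a unipotent matrix fixes the off-diagonal entry of a matrix with zero diagonal. Linearity in $e$ then gives a linear map. If $e=0$, the extension splits, $\sigma=0$, and the map vanishes.

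The main obstacle is injectivity. Suppose the bracket vanishes. Where $u:=\cub_L-\cub_M\ne0$, we get $\Delta(\sigma/u)=0$, so $\sigma=c\,u$ locally for a covariantly constant $c$. Since $u\not\equiv 0$ is a nonzero cubic differential, its zeros are isolated, so this holds on a dense open set. I then need to show that the covariantly flat section $c$ forces $e=0$. My first attempt is to conjugate by the unipotent gauge $\begin{pmatrix}1&-c\\0&1\end{pmatrix}$, which kills $\sigma$. I would then use the reconstruction theorem (Theorem~\ref{II:thm:curve-reconstruction}) together with the analogous vanishing of the lower off-diagonal data, namely the connection and $I_2$ entries of the Szeg\H{o} expansion, to conclude that the operator, and hence $S_E$, is gauge equivalent to a diagonal one. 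Acyclicity and the uniqueness of the canonical kernel would then split $E$ holomorphically.

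The delicate points are these. First, $c$ is only locally defined, so it may have monodromy, which must be ruled out using the compactness of $X$ and the global flatness of the Ben-Zvi--Biswas connection. Second, the lower off-diagonal entries need not vanish automatically; I would try to show that they too are determined by $\sigma$ through the Szeg\H{o} expansion identities. Once injectivity holds, the value determines $e$, and vanishing characterizes the split extensions.
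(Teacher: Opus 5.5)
\textbf{The gap: injectivity.} Your construction of the map is fine in outline, and you rightly call injectivity the main obstacle, but you do not close that step, and the route you propose for it does not work. Theorem~\ref{II:thm:curve-reconstruction} recovers only the order-three operator $\mathcal L_{E,3}$ from $(\nabla^{V_E},P_E,\cub_E)$, that is, only the kernel coefficients $\mathsf A,B_2,B_3$. A diagonal $\mathcal L_{E,3}$ says nothing about the higher $B_j$, so ``the operator, and hence $S_E$, is gauge equivalent to a diagonal one'' is a non sequitur. Moreover, the hypothesis $\mathcal C_E=0$ gives you no direct handle on the off-diagonal part of $B_2$; showing that it vanishes in your frame is essentially the splitting you are trying to prove. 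Your bracket formula also holds as written only in a horizontal adapted frame, which is the frame your next step (``$\sigma=cu$ with $c$ covariantly constant'') implicitly uses. In a merely holomorphic adapted frame there is an extra term $-6(\cub_L-\cub_M)^2q$ coming from the off-diagonal connection entry $q\,dz$, as in \eqref{II:eq:extension-local-bracket}.

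\textbf{The missing observation.} A marked splitting is just a holomorphic complement to $L$, and your identity already produces a flat one; the kernel is not needed. Put $u=\cub_L-\cub_M$ and $Z=(u)_0$. By naturality, $\cub_E-\cub_M\Id_E$ maps $E$ into $L\otimes K^3$ and restricts to $u$ on $L$. So on $X\setminus Z$ its kernel is a canonical line subbundle $N$ complementary to $L$. In a horizontal adapted frame $N$ is spanned by $e_2-(\sigma/u)e_1$. Thus $(\sigma/u)'=0$ says precisely that $N\otimes\vartheta^{-1}$ is flat, and since $N$ is intrinsic there is no monodromy question. Near each $p\in Z$, $V_E$ has a horizontal frame on a whole disc, so the flat line on the punctured disc is a constant line in that frame. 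It therefore extends across $p$ and stays transverse to the flat line $V_L$. This gives a global flat, hence holomorphic, complement, and so a marked splitting.

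\textbf{Comparison with the paper.} The paper instead works with holomorphic local splittings. It proves $c_\xi/(6d_{LM}^2)=\nabla^Tt_i-\nu_i$ with $t_i=b_i/d_{LM}$, $t_j-t_i=\varphi_{ij}$, and $\nu_i$ holomorphic. Hence $c_\xi=0$ forces every $t_i$ to be holomorphic and $\xi=[t_j-t_i]=0$; this version also gives the explicit polar-part inverse.

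\textbf{Linearity.} In either form, ``vanishes iff split'' gives injectivity only together with linearity. Your one-line appeal to linearity in $\beta$ must also track the off-diagonal connection entry, which varies with the extension. The paper proves linearity separately, via fibre products, pushouts and naturality of $\mathcal C_E$ (Lemma~\ref{II:lem:extension-linearity}).
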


The inverse has a local geometric meaning. Dividing by six times the square of
the difference of the endpoint cubics gives a meromorphic one-form.
Its covariant primitives have principal parts whose differences
recover the extension cocycle
(Theorem~\ref{II:thm:extension-recovery}).

Higher invariants treat pairs that the cubic does not separate.
After fixing a holomorphic projective connection, degrees
$2,\ldots,4g-1$ separate all acyclic line bundles. The same finite
hierarchy detects their self-extensions through its nilpotent
coefficients; the self-brackets vanish for these self-extensions.
For an acyclic bundle of arbitrary rank, the algebra generated by
all invariant jets at one point equals the associative span of the
canonical connection's monodromy
(Section~\ref{II:sec:higher-detection}). The finite degree bound is
specific to line bundles.

On a modular curve the differentials become algebra-valued modular
forms. Their ordered Rankin--Cohen operations extend the
Connes--Moscovici and higher Serre calculus \cite{CM,NSZ}, within the
bundle framework for vector-valued forms of \cite{CF}.
The example on $X_0(23)$ makes the recovery concrete: one Fourier
coefficient of a weight-fourteen matrix-valued cusp form determines
the marked extension coordinate
(Section~\ref{II:sec:x023}).

\subsection{From matrix equations to scalar period equations}

A flat connection can have a regular matrix presentation while a
scalar equation for one of its periods has extra poles. These
\emph{apparent singularities} arise from the scalar presentation;
the underlying local system is nonsingular there. To locate them,
one must keep track of the geometric choice that produces the scalar
equation.

Consider a rank-four flat symplectic bundle on a curve, with a
Lagrangian subbundle. The model is the first de Rham cohomology of
a family of genus-two curves, with its subbundle of holomorphic
differentials. Differentiation modulo this subbundle gives the
directional Kodaira--Spencer map. Where that map is invertible, a
Hodge frame and its first derivatives form a frame of the whole
bundle. The next derivatives determine a second-order matrix
equation, whose normalized coefficient $I$ is the classical
Lagrangian matrix Schwarzian
\cite{Ovsienko,APD,MBS}.

A local section is \emph{cyclic} where its successive covariant
derivatives span the flat bundle. For a line subbundle, the determinant
formed from a nonvanishing local generator has an intrinsic zero
divisor. This cyclicity divisor locates the apparent singularities.
The Hodge filtration supplies two natural choices. The wedge of the two
Hodge frame elements, the Hodge determinant, lies in the rank-five primitive
exterior square, the kernel of contraction by the polarization.
Alternatively, use either null line of the symmetric form induced
on the Hodge bundle by the Kodaira--Spencer map. The two null lines
are separated by an \'etale double cover of the transverse locus.

Two scalar differentials distinguish these choices. Write
$S=I-\tfrac12\tr(I)\Id$ for the trace-free part and $\cov$ for the
adapted covariant derivative. In a parameter $t$, put
\begin{equation}\label{intro:period-invariants}
 q\,dt^4=\tr(S^2)\,dt^4,\qquad
 c\,dt^{10}=\tr([S,\cov S]^2)\,dt^{10}.
\end{equation}
These differentials are independent of the frame and parameter.

\begin{maintheorem}[Cyclicity and Schwarzian divisors]
\label{intro:cyclicity}
On the transverse locus, the Hodge determinant is cyclic in the
primitive exterior square exactly where $c\ne0$. If it is
generically cyclic, twice its cyclicity divisor is the zero divisor
of $c\,dt^{10}$.

If $q\not\equiv0$, the cyclicity divisor of the tautological null
line on the double cover pushes forward to twice the zero divisor
of $q\,dt^4$.
\end{maintheorem}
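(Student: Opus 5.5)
The plan is to turn both cyclicity conditions into Gram determinants for the flat pairings, and then evaluate these determinants in a normal form where only the trace-free Schwarzian $S$ and its covariant derivatives appear. We work locally on the transverse locus. Take a Hodge frame $e=(e_1,e_2)$ of the Lagrangian subbundle $F$. Because the Kodaira--Spencer map is invertible, $(e,e')$ is a frame of the flat bundle $V$, and the next derivatives give the second-order matrix equation $e''+2Ae'+Be=0$. As in Part~\ref{part:I}, a formal change of frame removes $A$. Passing to a projective parameter then leaves $e''=-Ie$, with normalized coefficient $I$. We take $S=I-\tfrac12\tr(I)\Id$. The $\cov$-derivatives of $S$ are its ordinary derivatives in the adapted frame.

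Isotropy of $F$ gives $\omega(e_i,e_j)=0$. The Kodaira--Spencer form is $K_{ij}=\omega(e_i,e'_j)$, and differentiating $\omega(e_i,e_j)=0$ shows that $K$ is symmetric. Differentiating once more shows that $K$ is constant in the normal frame and that $I$ is $K$-self-adjoint. By a constant change of frame we may therefore take $K$ to be a fixed nondegenerate form. Cyclicity is independent of the choice of generator and frame, so it suffices to compute in this normal form. The resulting local divisors then glue, because $q\,dt^4$ and $c\,dt^{10}$ are frame and parameter independent by \eqref{intro:period-invariants}.

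\emph{Hodge determinant.} Wedge product followed by division by the volume form gives a nondegenerate symmetric pairing on the primitive exterior square $P\subset\Lambda^2V$, which has rank five. The pairing is flat. For $w=e_1\wedge e_2$, the Wronskian $W=w\wedge w'\wedge\cdots\wedge w^{(4)}$ in $\det P$ therefore satisfies
\[
 W^2=\det\bigl(\langle w^{(i)},w^{(j)}\rangle\bigr)_{0\le i,j\le4}\cdot(\text{unit}).
\]
This is the source of the factor two in the statement. We expand the derivatives explicitly: $w'=e'_1\wedge e_2+e_1\wedge e'_2$ and $w''=-\tr(I)\,w+2e'_1\wedge e'_2$. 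Each Gram entry is then a universal polynomial in $\tr I$, $S$ and their derivatives. Using the flatness relations $\langle w^{(i)},w^{(j)}\rangle'=\langle w^{(i+1)},w^{(j)}\rangle+\langle w^{(i)},w^{(j+1)}\rangle$, the matrix reduces to a few independent entries. The determinant should then collapse to a nonzero constant times $\tr([S,S']^2)=c$. For $2\times2$ trace-free matrices this equals $-2\det[S,S']$, so the trace terms must drop out. The check on weights is that $W$ has weight $0+1+2+3+4=10$ in $dt$, up to the half-form twists of Section~\ref{II:sec:curve-schwarzians}. Its square is compared with $c\,dt^{10}$ after those twists are accounted for. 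The equivalence ``cyclic $\iff c\ne0$'' follows immediately, and so does $2\,\mathrm{div}(W)=\mathrm{div}(c\,dt^{10})$ when $W\not\equiv0$.

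\emph{Null lines.} Let $v$ generate a $K$-null line, so $K(v,v)=\omega(v,v')=0$. The $4\times4$ matrix $\omega(v^{(i)},v^{(j)})$ is skew, and its Pfaffian is the Wronskian $v\wedge v'\wedge v''\wedge v'''$ up to a unit. Differentiating $\omega(v,v')=0$ gives $\omega(v,v'')=0$, so the Pfaffian reduces to $\omega(v,v''')\,\omega(v',v'')$. Using $v''=-Iv$ modulo lower terms, both factors become multiples of the single scalar $\sigma_v=K(v,Sv)$ (the trace parts cancel against $K(v,v)=0$). This gives $\mathrm{Pf}_v=\text{unit}\cdot\sigma_v^2$. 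In a null basis $v_1,v_2$ with $K$ off-diagonal, a $K$-self-adjoint trace-free $S$ has $\tr(S^2)$ proportional to $\sigma_{v_1}\sigma_{v_2}$. Since $K$ is nondegenerate, the double cover is étale on the transverse locus. Pushing forward therefore sums the divisors over the two sheets, giving $\mathrm{div}(\sigma_{v_1}^2\sigma_{v_2}^2)=2\,\mathrm{div}(q\,dt^4)$. The hypothesis $q\not\equiv0$ guarantees that each $\sigma_v$ is not identically zero on either sheet.

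The main obstacle is the $5\times5$ Gram determinant for $w$. One must show that the trace part $\tr(I)$ and the entries not involving commutators cancel, leaving exactly a multiple of $\tr([S,S']^2)$. I would first try to organize $P$ under $\mathfrak{sl}_2\cong\mathfrak{so}$ of the $K$-orthogonal structure, where $S$ acts on $P$. In that picture, cyclicity of $w$ should become the linear independence of $S$, $S'$ and $[S,S']$ acting on the three-dimensional complement of $\langle w,e'_1\wedge e'_2\rangle$. The determinant would then factor as $\det[S,S']$ times a unit, and this would be confirmed by a direct symbolic check.
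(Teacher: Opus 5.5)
Your null-line half is correct and is essentially the paper's argument. For a constant-coefficient null generator $v$ in a normal frame, $\omega(v,v')=\omega(v,v'')=0$ collapses the Pfaffian to $-\omega(v',v'')^2$. Here $\omega(v',v'')$ is the entry $b$ (resp.\ $c_0$) of $S$ in a null basis, so the two cyclic determinants are $-b^2,-c_0^2$ with $q=2bc_0$. The paper simply expands the $4\times4$ derivative determinant directly.

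The gap is in the first half, which carries the real content of the statement. Everything there rests on ``Gram determinant $=$ nonzero constant times $c$''. You only say it \emph{should} hold and defer it to an unspecified $\mathfrak{sl}_2$ reorganization or a symbolic check. Without it you have neither ``cyclic $\iff c\ne0$'' nor the divisor identity. Your weight bookkeeping is also off: $W^2$ has parameter weight $20$ and cannot be compared with $c\,dt^{10}$. The correct identity is $\mathcal W(\omega_1\wedge\omega_2)^2=-32(\det\mathsf K)^5\tr([I,\cov I]^2)$. The factor $(\det\mathsf K)^5$, a unit on the transverse locus, supplies the missing weight ten; half-form twists play no role.

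The missing step is short if you take the unsquared determinant instead of a $5\times5$ Gram matrix, as the paper does.
\begin{enumerate}
\item Work in a normal frame with $a_1=0$ and $\mathsf K=\Id$. No projective parameter is needed; choosing one would only force $\tr I=0$, which your later bookkeeping with $\tr(I)$ does not assume.
\item Put $f=e'$, $f'=-Ie$, $I=\begin{pmatrix}p&u\\u&r\end{pmatrix}$, $\tau=\tr I$. Use the volume-one primitive basis $(e_1\wedge e_2,\ e_1\wedge f_1-e_2\wedge f_2,\ e_1\wedge f_2,\ e_2\wedge f_1,\ f_1\wedge f_2)$.
\item Since $v=e_1\wedge e_2$ and $v''=2f_1\wedge f_2-\tau v$ fill the two outer directions, $\mathcal W$ is twice the $3\times3$ determinant of the $e\wedge f$ components of $v',v''',v^{(4)}$.
\item Identify $\sum x_{ij}\,e_i\wedge f_j$ with the trace-free matrix $(x_{ij})$, and put $J=\begin{pmatrix}0&1\\-1&0\end{pmatrix}$. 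The three components are then $J$, $[J,S]-2\tau J$ and $[J,S']-3\tau'J$, so the trace terms drop out.
\item Since $\operatorname{ad}_J$ is invertible on symmetric trace-free matrices, the determinant is a nonzero multiple of the single entry $\kappa=(p-r)u'-u(p'-r')$ of $C=[S,S']=\begin{pmatrix}0&\kappa\\-\kappa&0\end{pmatrix}$. Explicitly, $\mathcal W=-8\kappa$ while $c=-2\kappa^2$.
\end{enumerate}
This is the precise version of your heuristic: cyclicity $\iff$ $J,[J,S],[J,S']$ span $\mathfrak{sl}_2$ $\iff$ $S,S'$ are independent $\iff$ $[S,S']\ne0$. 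The laws $\mathcal W\mapsto(\det M)^5\mathcal W$, $\det\mathsf K\mapsto(\det M)^2\det\mathsf K$ and $c\mapsto c$ transfer the identity to every Hodge frame. That gives both the pointwise criterion and $2\operatorname{div}_0\mathcal W=\operatorname{div}_0(c\,dt^{10})$.
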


These are Theorems~\ref{III:thm:primitive-commutator-four} and
\ref{III:thm:null-lines-four}. They determine both the locations and
multiplicities of the failures of the selected scalar presentations.
When the Kodaira--Spencer map has constant rank one, its kernel is a
line. A local generator is cyclic wherever differentiation gives a
second independent Hodge direction. If the line is generically cyclic,
zeros of this derivative determine its apparent points. This identifies the ramification in the
Yang--Zudilin construction \cite{YZ} and relates it to rank-one
Lagrangian geometry \cite{Zelenko,ZelenkoLi}.

The modular setting gives a second use of the universal calculus.
On a Siegel modular variety, covariant differentiation takes values
in symmetric cotangent tensors. Keeping all these tensors in one
differential algebra, as in \cite{YY}, allows the universal
polynomials to be evaluated there. Suitable bracket combinations
are independent of the modular connection; in genus two the scalar
second-order brackets recover those of Eholzer--Ibukiyama \cite{EI}.
Equivariant contractions then produce scalar modular forms
\cite{CFG,CvG}. This ambient construction and differentiation along
a specified period curve have different inputs; their relation is
made explicit by the geodesic and acceleration formulas in
Sections~\ref{III:subsec:geodesic-Schwarzian} and
\ref{III:subsec:ambient-acceleration}.

For the family $y^2=x^5+x^3+x+t$, exact Gauss--Manin reduction
computes the matrix equation, its selected scalar equations, and
their apparent divisors. The Hodge matrix equation is regular at
every smooth finite parameter, while the scalar presentations have
different apparent loci. Igusa--Clebsch invariants express these
loci as pullbacks of Siegel modular forms
(Sections~\ref{III:sec:explicit-gm}--\ref{III:sec:igusa-applications}).
Thus coefficient invariants describe the singularities introduced
by a geometric choice of scalar equation.

\subsection*{Reading guide and conventions}

The algebraic construction occupies Sections~\ref{I:sec:local-ore}--\ref{I:sec:higher-construction};
Section~\ref{I:sec:local-W-algebras} gives its Hamiltonian comparison.
For the curve applications, reconstruction and brackets in
Sections~\ref{II:sec:modular-calculus}--\ref{II:sec:rankin-cohen}
lead to the kernel, extension recovery, and the modular example in
Sections~\ref{II:sec:kernel}--\ref{II:sec:x023}.
Part~\ref{part:III} has two routes: Sections~\ref{III:sec:siegel-ore}--\ref{III:sec:schwarzians-modular-constructions}
develop ambient modular tensors, while
Sections~\ref{III:sec:filtered-connections}--\ref{III:sec:igusa-applications}
pass from filtered connections to explicit scalar equations. The
latter route uses the one-variable calculus of Part~\ref{part:I};
the ambient comparison is given at its points of use. The kernel
applications of Part~\ref{part:II} and the period applications of
Part~\ref{part:III} are independent: their common coefficient calculus
acts on different geometric inputs.

Coefficients in Ore operators are on the left. The correction
defining $W_m$ depends on the operator order $N$, even when this
dependence is suppressed in its notation. Curve derivatives are
ordinary unless a modular normalization is stated.
Appendix~\ref{I:app:formulas} gives formulas through weight six, and
Appendix~\ref{app:computations} describes the combined exact calculations.

\clearpage
\begingroup\fontsize{10}{11}\selectfont
\tableofcontents
\endgroup

\clearpage
\part{Noncommutative invariant theory}\label{part:I}
\markboth{Part I: Universal invariants}{}
The coefficient formula for the classical Schwarzian extends to an
operator with coefficients in any associative differential algebra.
Theorem~\ref{intro:universal} separates two tasks: finding all expressions
that transform by conjugation, and choosing higher expressions with
weighted parameter laws. Sections~\ref{I:sec:local-ore}--\ref{I:sec:local-generation}
solve the first task by normalization and differential generation;
Sections~\ref{I:sec:reparam}--\ref{I:sec:higher-construction} solve the
second by projective normalization and symmetrization.
The geometric applications use these identities. The Hamiltonian
comparison in Section~\ref{I:sec:local-W-algebras} may be read separately:
it identifies their scalar and matrix specializations with primary
currents. The two low-order examples in Section~\ref{I:sec:reparam}
show what changes when the coefficients cease to commute.

\section{Ore operators and universal coefficients}\label{I:sec:local-ore}

The Ore relation separates differentiation from coefficient multiplication.
The free coefficient algebra then permits one proof of each identity to
apply in every associative differential algebra.

Let $A$ be a unital associative $\Q$-algebra with a derivation $\delta$.
The Ore algebra $A[D;\delta]$ is the free left $A$-module with basis
$1,D,D^2,\ldots$, whose multiplication is determined by
\begin{equation}\label{I:eq:local-ore}
 Da=aD+\delta(a),\qquad a\in A.
\end{equation}
Here $D$ is a formal operator and $\delta(a)$ is a coefficient.
Induction gives
\begin{equation}\label{I:eq:local-leibniz}
 D^j a=\sum_{r=0}^j\binom jr\delta^r(a)D^{j-r}.
\end{equation}
To construct this algebra, adjoin a central indeterminate $t$ and
extend $\delta$ to $A[t]$ by $\delta(t)=0$. Let $a\in A$ act by left
multiplication and let $D$ act as $\delta+t$. These additive operators
satisfy \eqref{I:eq:local-ore}. By \eqref{I:eq:local-leibniz}, finite sums
$\sum_j a_jD^j$ are closed under composition. Since
\[
 \left(\sum_j a_jD^j\right)(1)=\sum_j a_jt^j,
\]
their left normal forms are unique. Composition therefore gives the
required associative multiplication. This is the differential case of
Ore's construction \cite{Ore1933}; the same argument works over any
unital differential ring.

Fix $N\ge2$. We use the binomial normalization
\begin{equation}\label{I:eq:local-L}
 L=\sum_{k=0}^N\binom Nk a_kD^{N-k},\qquad a_0=1.
\end{equation}
Over $\Q$, every monic operator of order $N$ has a unique presentation
of this form. We seek polynomial expressions in its coefficients and
their derivatives which transform by conjugation when $L$ does.

To construct these expressions once for all coefficient algebras, set
\begin{equation}\label{I:eq:local-universal}
 \mathcal U_N=\Q\langle a_{k,r}\mid 1\le k\le N,\ r\ge0\rangle,
 \qquad \delta(a_{k,r})=a_{k,r+1}.
\end{equation}
The brackets denote the free associative algebra: its elements are
finite linear combinations of ordered words in the displayed symbols.
The rule for $\delta$ extends uniquely by the Leibniz rule. Write
$a_k=a_{k,0}$, and let $L$ now denote \eqref{I:eq:local-L} over
$\mathcal U_N$. Given coefficients in any differential $\Q$-algebra,
the substitution
\[
 a_{k,r}\longmapsto\delta^r(a_k)
\]
is the unique differential homomorphism with the prescribed values of
$a_{k,0}$. Thus an identity in $\mathcal U_N$ holds after every such
specialization. We first work in this universal algebra; the end of
Section~\ref{I:sec:local-generation} treats general coefficient rings.

\section{The normalized coefficients}\label{I:sec:local-normalization}

The coefficient $a_1$ can be absorbed into the first-order operator
$\nabla=D+a_1$. To expand $L$ in powers of $\nabla$, we first record
the change of basis from $D$ to $D+u$. For a coefficient $u$, define
\begin{equation}\label{I:eq:local-bell}
 \begin{aligned}
 B_0^{\mathrm L}(u)&=B_0^{\mathrm R}(u)=1,\\
 B_{r+1}^{\mathrm L}(u)&=\delta(B_r^{\mathrm L}(u))+uB_r^{\mathrm L}(u),\\
 B_{r+1}^{\mathrm R}(u)&=\delta(B_r^{\mathrm R}(u))-B_r^{\mathrm R}(u)u.
 \end{aligned}
\end{equation}
These are the two Bell polynomial families needed here. The side on
which $u$ is multiplied is part of the definition.

The first terms show the distinction between the two recurrences:
\[
\begin{aligned}
 B_1^{\mathrm L}(u)&=u,&B_1^{\mathrm R}(u)&=-u,\\
 B_2^{\mathrm L}(u)&=\delta(u)+u^2,&
 B_2^{\mathrm R}(u)&=-\delta(u)+u^2.
\end{aligned}
\]
At the next order,
\[
\begin{aligned}
 B_3^{\mathrm L}(u)&=\delta^2(u)+\delta(u)u+2u\delta(u)+u^3,\\
 B_3^{\mathrm R}(u)&=-\delta^2(u)+2\delta(u)u+u\delta(u)-u^3.
\end{aligned}
\]
In particular, replacing \(u\) by \(-u\) in the left recurrence does
not give the right recurrence when the coefficients do not commute.

\begin{lemma}\label{I:lem:local-bell}
For every $j\ge0$,
\begin{equation}\label{I:eq:local-bell-expansions}
 \begin{aligned}
 (D+u)^j&=\sum_{r=0}^j\binom jr B_r^{\mathrm L}(u)D^{j-r},\\
 D^j&=\sum_{r=0}^j\binom jr B_r^{\mathrm R}(u)(D+u)^{j-r}.
 \end{aligned}
\end{equation}
\end{lemma}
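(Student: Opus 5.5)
Both identities follow by induction on $j$, using only the Ore relation \eqref{I:eq:local-ore} and the recurrences \eqref{I:eq:local-bell}. The case $j=0$ is trivial since $B_0^{\mathrm L}=B_0^{\mathrm R}=1$.

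For the first identity, assume it for $j$ and multiply on the left by $D+u$. For each term we have
\[
 (D+u)B_r^{\mathrm L}(u)D^{j-r}
 =B_r^{\mathrm L}(u)D^{j-r+1}
 +\bigl(\delta(B_r^{\mathrm L}(u))+uB_r^{\mathrm L}(u)\bigr)D^{j-r}.
\]
The bracketed coefficient is exactly $B_{r+1}^{\mathrm L}(u)$. Summing over $r$, the coefficient of $D^{j+1-s}$ is $\binom js B_s^{\mathrm L}+\binom j{s-1}B_s^{\mathrm L}=\binom{j+1}sB_s^{\mathrm L}$ by Pascal's rule, which is the claim for $j+1$.

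For the second identity we cannot multiply on the left. Left multiplication by $D$ applied to $B_r^{\mathrm R}(u)(D+u)^{j-r}$ gives
\[
 DB_r^{\mathrm R}(u)=B_r^{\mathrm R}(u)(D+u)+\delta(B_r^{\mathrm R}(u))-B_r^{\mathrm R}(u)u,
\]
using $Db=bD+\delta(b)$ and writing $D=(D+u)-u$. The extra coefficient is $\delta(B_r^{\mathrm R})-B_r^{\mathrm R}u=B_{r+1}^{\mathrm R}(u)$. Here it matters that $u$ lands on the right of $B_r^{\mathrm R}$, which is why the right recurrence appears. So
\[
 D\cdot B_r^{\mathrm R}(D+u)^{j-r}=B_r^{\mathrm R}(D+u)^{j-r+1}+B_{r+1}^{\mathrm R}(D+u)^{j-r},
\]
and the same Pascal-rule bookkeeping finishes the induction.

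The only point needing care is this side-of-multiplication issue in the second identity. The expansion in powers of $D+u$ must keep its coefficients on the left, and $D$ has to be moved past them before being rewritten as $(D+u)-u$. Once that ordering is respected, both inductions are routine.
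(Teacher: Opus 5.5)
Your proof is correct and is the paper's argument: induction on $j$, multiplying on the left by $D+u$ (respectively by $D$), using $(D+u)B_r^{\mathrm L}=B_r^{\mathrm L}D+B_{r+1}^{\mathrm L}$ and $DB_r^{\mathrm R}=B_r^{\mathrm R}(D+u)+B_{r+1}^{\mathrm R}$, and then combining the two sums with Pascal's rule. One wording fix: the sentence ``For the second identity we cannot multiply on the left'' contradicts what you then do, since you do multiply on the left, by $D$ rather than $D+u$, so reword it.
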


\begin{proof}
For the first identity multiply on the left by $D+u$, using
\[
 (D+u)B_r^{\mathrm L}=B_r^{\mathrm L}D+B_{r+1}^{\mathrm L}.
\]
For the second multiply on the left by $D$, using
\[
 DB_r^{\mathrm R}=B_r^{\mathrm R}(D+u)+B_{r+1}^{\mathrm R}.
\]
In both cases Pascal's identity combines the two sums and proves the
induction step. The initial case is $j=0$.
\end{proof}

\begin{proposition}[Normalized expansion]\label{I:prop:local-normalization}
There are unique coefficients $I_0,\ldots,I_N\in\mathcal U_N$ such that
\begin{equation}\label{I:eq:local-normalized}
 L=\sum_{k=0}^N\binom Nk I_k\nabla^{N-k}.
\end{equation}
They satisfy $I_0=1$, $I_1=0$, and
\begin{equation}\label{I:eq:local-I-formulas}
 I_k=\sum_{j=0}^k\binom kj a_jB_{k-j}^{\mathrm R}(a_1),
 \qquad
 a_k=\sum_{j=0}^k\binom kj I_jB_{k-j}^{\mathrm L}(a_1).
\end{equation}
\end{proposition}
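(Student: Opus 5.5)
The plan is to substitute the second expansion of Lemma~\ref{I:lem:local-bell}, with $u=a_1$, into \eqref{I:eq:local-L} and collect powers of $\nabla=D+a_1$. Uniqueness will come from the fact that $\nabla^0,\ldots,\nabla^N$ are a left $A$-basis of operators of order at most $N$, for $A=\mathcal U_N$. Indeed, by the first identity of the lemma, $\nabla^j=D^j+(\text{lower order})$ with left coefficients. The transition matrix from $\{D^j\}$ to $\{\nabla^j\}$ is therefore unitriangular with unit diagonal. Since left normal forms in $A[D;\delta]$ are unique, left normal forms in powers of $\nabla$ are unique as well. The binomial factors $\binom Nk$ are invertible over $\Q$, so this fixes the $I_k$ once existence is shown.

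For existence and the first formula I will write $D^{N-j}=\sum_r\binom{N-j}{r}B_r^{\mathrm R}(a_1)\nabla^{N-j-r}$. Substituting,
\[
L=\sum_{j}\sum_{r}\binom Nj\binom{N-j}{r}a_jB_r^{\mathrm R}(a_1)\nabla^{N-j-r}.
\]
Set $k=j+r$ and use $\binom Nj\binom{N-j}{k-j}=\binom Nk\binom kj$. The coefficient of $\binom Nk\nabla^{N-k}$ is then $\sum_{j\le k}\binom kj a_jB_{k-j}^{\mathrm R}(a_1)$, and this gives the first formula in \eqref{I:eq:local-I-formulas}. The point to watch is that the coefficients $a_j$ stand on the left, so the product $a_jB_r^{\mathrm R}$ is already in left normal form in $\nabla$. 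Multiplying a coefficient on the left never moves past an operator, so no reordering is needed.

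The second formula follows the same way in reverse. I start from \eqref{I:eq:local-normalized}, expand $\nabla^{N-j}$ by the first identity of the lemma, and apply the same binomial reindexing to get $a_k=\sum_j\binom kj I_jB^{\mathrm L}_{k-j}(a_1)$. By uniqueness of left normal forms in $D$, this holds exactly when the displayed expansion of $L$ holds.

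The special values come from the formulas directly. For $k=0$ we get $I_0=a_0B_0^{\mathrm R}=1$. For $k=1$ we get $I_1=a_0B_1^{\mathrm R}(a_1)+a_1B_0^{\mathrm R}=-a_1+a_1=0$, using $B_1^{\mathrm R}(u)=-u$.

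I expect no serious obstacle here. The only care needed is keeping track of sidedness: the right Bell family must be used for the $D\to\nabla$ direction and the left family for the inverse, as in the lemma, and the binomial identity must be applied with the coefficient indices in the correct order.
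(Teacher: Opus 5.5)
Your proposal is correct and follows the paper's proof essentially step by step. Both substitute the right Bell expansion, reindex with $\binom Nj\binom{N-j}{k-j}=\binom Nk\binom kj$, get uniqueness from the monicity of each $\nabla^j$ together with the invertibility of $\binom Nk$ over $\Q$, obtain the inverse formula from the left Bell expansion, and read off $I_0=1$ and $I_1=0$ from $B_1^{\mathrm R}(u)=-u$.
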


\begin{proof}
Substitute the second expansion in Lemma~\ref{I:lem:local-bell} into
\eqref{I:eq:local-L}. The coefficient of $\nabla^{N-k}$ is
\[
 \sum_{j=0}^k\binom Nj\binom{N-j}{k-j}
                   a_jB_{k-j}^{\mathrm R}(a_1)
 =\binom Nk\sum_{j=0}^k\binom kj a_jB_{k-j}^{\mathrm R}(a_1).
\]
This proves the first formula. Each $\nabla^j$ is monic of order $j$,
so expansion in its powers is unique. Since $\binom Nk$ is invertible
over $\Q$, the $I_k$ themselves are unique. Applying the first Bell
expansion to \eqref{I:eq:local-normalized} gives the inverse formula.
Finally, $B_1^{\mathrm R}(a_1)=-a_1$, so $I_1=0$.
\end{proof}

Formula \eqref{I:eq:local-I-formulas} shows that $I_k$ has integer
coefficients and depends only on $a_1,\ldots,a_k$ and their derivatives.
It is independent of $N$ once $N\ge k$. In particular,
\begin{equation}\label{I:eq:local-I23}
 \begin{aligned}
 I_2&=a_2-\delta(a_1)-a_1^2,\\
 I_3&=a_3-3a_2a_1+2a_1^3-\delta^2(a_1)
                         +2[\delta(a_1),a_1].
 \end{aligned}
\end{equation}
The second formula applies when $N\ge3$.
This is an algebraic expansion of the given operator. A gauge removing
\(a_1\) need not belong to the original coefficient algebra; an
injective extension supplying it is constructed in
Section~\ref{I:sec:local-generation}.

Setting every $a_{1,r}$
to zero sends $I_k$ to $a_k$; this restriction will give the inverse
map in the generation theorem.

\subsection{The binary-form comparison}

The algebraic part of the normalization is already visible in a monic
binary form
\[
 F(X,Y)=\sum_{k=0}^N\binom Nk b_kX^{N-k}Y^k,\qquad b_0=1.
\]
The substitution \(X\mapsto X+uY\), with \(u=-b_1\), removes the
next coefficient. The remaining coefficients, for $N\ge3$, begin with
\[
 J_2=b_2-b_1^2,\qquad J_3=b_3-3b_2b_1+2b_1^3.
\]
In the polynomial coordinates \((b_1,J_2,\ldots,J_N)\), this unipotent
group translates \(b_1\) and fixes every \(J_k\). Its invariant ring
on the monic chart is therefore \(\Q[J_2,\ldots,J_N]\).
For scalar differential operators, the derivative terms in
\eqref{I:eq:local-I23} come from the Ore relation. The result is
Wilczy\'nski's normalization \cite[pp.~16--17]{Wilczynski1906}.
In the associative case the same calculation also fixes the order of
each product. The next section proves its covariance without choosing
a solution of a differential equation.

The full-group comparison uses two different derivations. On the full
coefficient space, where \(b_0\) is again a variable, the two unipotent
subgroups of \(\mathrm{SL}_2\) have infinitesimal operators
\cite[p.~523]{Hilbert1890}
\begin{equation}\label{I:eq:Hilbert-operators}
 E=\sum_{k=1}^N kb_{k-1}\frac{\partial}{\partial b_k},\qquad
 \Delta_{\mathrm H}
   =\sum_{k=0}^{N-1}(N-k)b_{k+1}\frac{\partial}{\partial b_k}.
\end{equation}
A homogeneous unipotent invariant of fixed diagonal weight is a
seminvariant. The operator \(E\) kills it, and successive applications
of \(\Delta_{\mathrm H}\) recover the coefficients of a binary
covariant. Full \(\mathrm{SL}_2\)-invariants are killed by both
operators; Hilbert's theorem gives finite generation of their ring
\cite[\S V]{Hilbert1890} (see also \cite{Hilbert1993}).
In the differential problem, the role of \(\Delta\) is different:
it preserves gauge covariance rather than recovering a binary covariant.

\subsection{Changing the adapted connection}\label{I:subsec:connection-translation}
The expansion also separates a change of the adapted connection from
gauge conjugation. Let $u$ be a coefficient, adjoining it and its derivatives freely if
necessary. Define
\[
 L^{\star u}=\sum_{k=0}^N\binom Nk I_k(L)(D+a_1-u)^{N-k}.
\]
Its first coefficient is $a_1-u$, so uniqueness of the normalized
expansion gives
\[
 I_k(L^{\star u})=I_k(L),\qquad
 (L^{\star v})^{\star u}=L^{\star(u+v)}.
\]
Here $u,v$ are held fixed under the translations. Thus the additive
group of coefficients acts by changing the adapted connection. This
operation preserves the normalized coefficients, but it need not preserve
their covariant derivatives; see Section~\ref{I:sec:local-generation}.

\section{Formal gauge transformations}\label{I:sec:local-gauge}

A gauge transformation is conjugation by an invertible coefficient.
To express its law universally, adjoin symbols $g_r$ for $r\ge0$
and an inverse of $g_0$:
\[
 \mathcal G_N=
 \Q\langle a_{k,r},g_r,g_0^{-1}\mid
       g_0g_0^{-1}=g_0^{-1}g_0=1\rangle.
\]
Put $g=g_0$. Extend $\delta$ by
\[
 \delta(g_r)=g_{r+1},\qquad
 \delta(g^{-1})=-g^{-1}g_1g^{-1}.
\]
These rules preserve the two inverse relations and hence define a
derivation. No commutation relation is imposed between $g$ and the
coefficient generators.

By \eqref{I:eq:local-leibniz}, the conjugated operator has coefficients
\begin{equation}\label{I:eq:local-gauge-coefficients}
 L^g=g^{-1}Lg=\sum_{k=0}^N\binom Nk a_k^gD^{N-k},
 \qquad
 a_k^g=\sum_{j=0}^k\binom kj g^{-1}a_j\delta^{k-j}(g).
\end{equation}
The same binomial identity used in
Proposition~\ref{I:prop:local-normalization} gives the factor $\binom Nk$.
Define the differential homomorphism $T_g:\mathcal U_N\to\mathcal G_N$
by $T_g(a_{k,r})=\delta^r(a_k^g)$. For a differential polynomial
$C\in\mathcal U_N$, write $C^g=T_g(C)$. We call $C$ a
\emph{universal gauge covariant} if
\begin{equation}\label{I:eq:local-covariance}
 C^g=g^{-1}Cg.
\end{equation}
Every choice of coefficients and an invertible gauge in a differential
$\Q$-algebra gives a specialization of $\mathcal G_N$. Thus this one
formal identity is equivalent to covariance in every such algebra.
Conjugation is a right action: $(L^g)^h=L^{gh}$.

\begin{proposition}\label{I:prop:local-covariance}
The normalized coefficients are universal gauge covariants:
\[
 I_k^g=g^{-1}I_kg,\qquad 0\le k\le N.
\]
\end{proposition}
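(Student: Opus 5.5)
The plan is to use the uniqueness of the normalized expansion in Proposition~\ref{I:prop:local-normalization}, applied to the conjugated operator. By \eqref{I:eq:local-gauge-coefficients} and the definition of $T_g$, the elements $I_k^g=T_g(I_k)$ are exactly the normalized coefficients of $L^g$. Indeed, the formula \eqref{I:eq:local-I-formulas} for $I_k$ is a differential polynomial in the $a_j$. Applying the differential homomorphism $T_g$ replaces each $a_j$ by $a_j^g$ and $a_1$ by $a_1^g$. The result is the same formula evaluated on the coefficients of $L^g$. Hence, by the uniqueness argument in Proposition~\ref{I:prop:local-normalization}, which works over $\mathcal G_N$ verbatim since only invertibility of binomial coefficients and monicity were used, we get
\[
 L^g=\sum_{k=0}^N\binom Nk I_k^g\,(D+a_1^g)^{N-k}.
\]

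Next I compute $g^{-1}Lg$ directly from the normalized expansion of $L$. Inserting $gg^{-1}$ between factors gives
\[
 g^{-1}Lg=\sum_{k=0}^N\binom Nk (g^{-1}I_kg)\,\bigl(g^{-1}\nabla g\bigr)^{N-k}.
\]
By the Ore relation, $g^{-1}(D+a_1)g=D+g^{-1}\delta(g)+g^{-1}a_1g$. This is $D+a_1^g$, since \eqref{I:eq:local-gauge-coefficients} with $k=1$ gives $a_1^g=g^{-1}a_1g+g^{-1}\delta(g)$. Thus $g^{-1}\nabla g=\nabla^g$, the adapted operator of $L^g$. So $g^{-1}Lg$ has a second expansion in powers of $\nabla^g$, now with coefficients $g^{-1}I_kg$.

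Finally, comparing the two expansions of $L^g$ in powers of the monic operator $\nabla^g$ and using uniqueness gives $I_k^g=g^{-1}I_kg$ for every $k$. The cases $k=0,1$ are trivial checks.

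The main point requiring care is the first step: identifying $T_g(I_k)$ with the normalized coefficients of $L^g$. This needs the Bell polynomials to commute with the differential homomorphism, $T_g(B_r^{\mathrm R}(a_1))=B_r^{\mathrm R}(a_1^g)$. That holds because the recurrence \eqref{I:eq:local-bell} uses only $\delta$ and multiplication, both preserved by $T_g$.

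Another point requiring care is that uniqueness of expansions in powers of $D+u$ holds in $\mathcal G_N[D;\delta]$. This is the same left-normal-form argument as in Section~\ref{I:sec:local-ore}, which the paper notes works over any unital differential ring.
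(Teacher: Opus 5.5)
Your proposal is correct and follows the paper's proof: conjugate the normalized expansion, use the Ore relation to get $g^{-1}\nabla g=D+a_1^g$, and compare coefficients by uniqueness of the expansion in powers of $D+a_1^g$. The one addition is that you spell out why $T_g(I_k)$ equals the normalized coefficients of $L^g$ ($T_g$ commutes with the Bell recurrence for $B_r^{\mathrm R}$); the paper leaves this step implicit in its final appeal to uniqueness.
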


\begin{proof}
The first transformed coefficient is
\[
 a_1^g=g^{-1}a_1g+g^{-1}\delta(g).
\]
The Ore relation therefore gives
$D+a_1^g=g^{-1}(D+a_1)g=g^{-1}\nabla g$.
Conjugating \eqref{I:eq:local-normalized} now yields
\[
 L^g=\sum_{k=0}^N\binom Nk
          (g^{-1}I_kg)(D+a_1^g)^{N-k}.
\]
Uniqueness of the normalized expansion identifies these coefficients
with $I_k^g$.
\end{proof}

\section{Covariant differentiation and generation}\label{I:sec:local-generation}

Differentiating $g^{-1}Cg$ introduces derivatives of the gauge.
Commutation with $\nabla$ gives a derivation that respects conjugation:
\begin{equation}\label{I:eq:local-Delta}
 \Delta C=[\nabla,C]=\delta(C)+[a_1,C].
\end{equation}
For the translations in Section~\ref{I:subsec:connection-translation},
replacing \(a_1\) by \(a_1-u\) gives
\(\Delta^{\star u}=\Delta-[u,\cdot]\). This explains why those
translations preserve \(I_k\) but need not preserve \(\Delta I_k\).
Both summands in \eqref{I:eq:local-Delta} are derivations. In particular,
\begin{equation}\label{I:eq:local-covariant-leibniz}
 \nabla C=C\nabla+\Delta C,
 \qquad
 \nabla^j C=\sum_{r=0}^j\binom jr\Delta^r(C)\nabla^{j-r}.
\end{equation}
The second identity in \eqref{I:eq:local-covariant-leibniz} follows by the induction used for
\eqref{I:eq:local-leibniz}. Moreover, for every universal covariant $C$,
\begin{equation}\label{I:eq:local-Delta-law}
 (\Delta C)^g
   =[D+a_1^g,C^g]
   =[g^{-1}\nabla g,g^{-1}Cg]
   =g^{-1}(\Delta C)g.
\end{equation}
Thus $\Delta$ preserves the algebra $\mathcal C_N$ of universal gauge
covariants. All the expressions $\Delta^rI_k$ belong to this algebra.
We show that they give it free generators.

\begin{example}[Ordinary and covariant derivatives]\label{I:ex:matrix-derivative}
Let \(A=M_2(\Q[t])\), with entrywise differentiation, and let \(E_{ij}\)
be its matrix units. For
\[
 L=D^2+2E_{12}D+tE_{11},
\]
one has \(I_2=tE_{11}\) and
\(\Delta I_2=E_{11}-tE_{12}\), since
\([E_{12},E_{11}]=-E_{12}\).
Take \(g=1+tE_{12}\), whose inverse is \(1-tE_{12}\). The gauge
formulas give
\[
 a_1^g=2E_{12},\qquad I_2^g=tE_{11}+t^2E_{12}.
\]
Ordinary differentiation does not respect conjugation:
\[
 \delta(I_2^g)=E_{11}+2tE_{12},\qquad
 g^{-1}\delta(I_2)g=E_{11}+tE_{12}.
\]
The connection term removes the discrepancy. In fact,
\[
 \Delta^g(I_2^g)=E_{11}
       =g^{-1}(\Delta I_2)g.
\]
\end{example}

\subsection{Restriction and reconstruction}

The example concerns a particular coefficient algebra. The generation
theorem concerns polynomials whose covariance holds in every such algebra.
Restriction to a normalized operator will determine these polynomials.

To remove $a_1$, we need a gauge satisfying $\delta(f)=-a_1f$.
Such a gauge need not belong to $\mathcal U_N$, so we first construct
an extension in which it does.

\begin{lemma}\label{I:lem:local-slice}
There is an injective differential extension
$\mathcal U_N\hookrightarrow\mathcal E_N$ with an invertible element
$f$ such that $\delta(f)=-a_1f$. In this extension,
\begin{equation}\label{I:eq:local-slice}
 a_1^f=0,\qquad a_k^f=f^{-1}I_kf\quad(2\le k\le N),
 \qquad
 \delta^r(f^{-1}Cf)=f^{-1}\Delta^r(C)f
\end{equation}
for every $C\in\mathcal U_N$ and $r\ge0$.
\end{lemma}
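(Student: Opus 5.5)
I will construct $\mathcal E_N$ explicitly as a ``free'' extension by a formal fundamental solution. Let $\mathcal E_N$ be the algebra generated over $\mathcal U_N$ by two symbols $f,f^{-1}$, subject only to $ff^{-1}=f^{-1}f=1$; this is the free product $\mathcal U_N*\Q[f,f^{-1}]$. Extend $\delta$ by
\[
 \delta(f)=-a_1f,\qquad \delta(f^{-1})=f^{-1}a_1 .
\]
These values are forced by differentiating $ff^{-1}=1$, and they respect the relations: $\delta(f)f^{-1}+f\delta(f^{-1})=-a_1+a_1=0$, and similarly on the other side. A derivation on the free product is determined by its restriction to each factor, provided each restriction is compatible with that factor's relations. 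The extension is therefore well defined.

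Injectivity of $\mathcal U_N\to\mathcal E_N$ is the main obstacle. I would prove it by a normal-form argument for free products of algebras with augmentation-type bases. The algebra $\Q[f,f^{-1}]$ has basis $\{f^n\}$ with $1$ among them. Alternating words $u_0 f^{n_1}u_1\cdots f^{n_s}u_s$, with $u_i$ drawn from a basis of $\mathcal U_N$ containing $1$, with $n_i\ne0$, and with interior $u_i\ne1$, give a basis. Since the ordered monomials in the $a_{k,r}$ form a basis of $\mathcal U_N$, the words with $s=0$ show that $\mathcal U_N$ embeds.

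Alternatively, and more concretely, I would take $\mathcal E_N$ to be spanned by words in $a_{k,r}$ and $f^{\pm1}$ in which $f$ and $f^{-1}$ are never adjacent. Multiplication cancels adjacent pairs, and one checks associativity via the standard reduction (the diamond lemma, with no overlaps beyond $ff^{-1}f$, which resolves trivially).

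With $f$ in hand, the identities follow from earlier formulas. By the gauge law of Proposition~\ref{I:prop:local-covariance}'s proof,
\[
 a_1^f=f^{-1}a_1f+f^{-1}\delta(f)=f^{-1}a_1f-f^{-1}a_1f=0 .
\]
Next, Proposition~\ref{I:prop:local-covariance} gives $I_k^f=f^{-1}I_kf$. Because $a_1^f=0$, all $a_{1,r}^f=0$. By the remark after \eqref{I:eq:local-I23}, setting every $a_{1,r}$ to zero sends $I_k$ to $a_k$, so $I_k^f=a_k^f$. Hence $a_k^f=f^{-1}I_kf$. These universal identities specialize to $\mathcal E_N$ because $\mathcal E_N$ is a differential algebra receiving the appropriate map from $\mathcal G_N$ ($g_r\mapsto\delta^r f$).

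Finally, for any $C$ a direct computation gives
\[
 \delta(f^{-1}Cf)=f^{-1}a_1Cf+f^{-1}\delta(C)f-f^{-1}Ca_1f=f^{-1}(\Delta C)f ,
\]
and induction on $r$ gives $\delta^r(f^{-1}Cf)=f^{-1}\Delta^r(C)f$.
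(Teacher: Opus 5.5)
Your proof is correct. Its skeleton matches the paper's: the free product $\mathcal U_N*\Q[f,f^{-1}]$, the prescribed derivation and its compatibility with $ff^{-1}=f^{-1}f=1$, and the final commutator computation for $\delta^r(f^{-1}Cf)$. The differences lie in two steps.

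\textbf{Injectivity.} The paper avoids normal forms altogether. The algebra map $\mathcal E_N\to\mathcal U_N$ that fixes $\mathcal U_N$ and sends $f,f^{-1}\mapsto 1$ is a retraction of the inclusion; it need not respect $\delta$, which does not matter for injectivity. This one-line argument is exactly what the paper reuses over an arbitrary commutative ring in Theorem~\ref{I:thm:local-integral-generation}, and over the central parameter ring in Lemma~\ref{I:lem:rp-affine}. Your free-product basis or diamond-lemma argument is also valid: the overlaps $ff^{-1}f$ and $f^{-1}ff^{-1}$ resolve, and the argument extends to any base ring because the free algebra has a monomial basis containing $1$. It proves considerably more than is needed.

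\textbf{The identity $a_k^f=f^{-1}I_kf$.} The paper computes directly. Induction with the right Bell recurrence gives $\delta^r(f)=B_r^{\mathrm R}(a_1)f$. Substituting this into \eqref{I:eq:local-gauge-coefficients} reproduces the Bell formula \eqref{I:eq:local-I-formulas} for $I_k$, with $a_1^f=0$ as the case $k=1$.

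You instead transport Proposition~\ref{I:prop:local-covariance} along $\mathcal G_N\to\mathcal E_N$. You then note that $T_f$ kills every $a_{1,r}$, so it factors through the substitution $a_{1,r}\mapsto0$, which sends $I_k$ to $a_k$. This is sound and not circular, since the covariance proposition is proved independently from uniqueness of the normalized expansion. It also mirrors the restriction mechanism of Theorem~\ref{I:thm:local-generation}.

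What the paper's route buys is self-containment. It also shows explicitly that $f$ is a formal solution whose derivatives are governed by the right Bell polynomials, which is precisely why $I_k$ has its Bell form.
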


\begin{proof}
Take the free product
$\mathcal E_N=\mathcal U_N*_{\Q}\Q[f,f^{-1}]$ and prescribe
\[
 \delta(f)=-a_1f,\qquad \delta(f^{-1})=f^{-1}a_1.
\]
Differentiating $ff^{-1}=f^{-1}f=1$ gives zero, so the derivation
is well defined. The algebra homomorphism fixing $\mathcal U_N$ and
sending $f$ to $1$ is a retraction. Hence the inclusion of
$\mathcal U_N$ is injective. This retraction is used only as an algebra
map; it need not preserve the derivation.
The assignment $g_r\mapsto\delta^r(f)$ and $g_0^{-1}\mapsto f^{-1}$
defines a differential homomorphism $\mathcal G_N\to\mathcal E_N$
fixing $\mathcal U_N$, so every universal gauge identity applies to $f$.

The right Bell recurrence in \eqref{I:eq:local-bell} gives
$\delta^r(f)=B_r^{\mathrm R}(a_1)f$ by induction. Substitution in
\eqref{I:eq:local-gauge-coefficients} proves $a_k^f=f^{-1}I_kf$,
including $a_1^f=0$. Finally,
\[
 \delta(f^{-1}Cf)
 =f^{-1}\bigl(a_1C+\delta(C)-Ca_1\bigr)f
 =f^{-1}\Delta(C)f.
\]
Iteration proves the last assertion.
\end{proof}

Let
\[
 \mathcal V_N=\Q\langle x_{k,r}\mid 2\le k\le N,\ r\ge0\rangle,
 \qquad \partial(x_{k,r})=x_{k,r+1}.
\]
Define algebra homomorphisms
\begin{equation}\label{I:eq:local-restriction}
 \begin{aligned}
 \pi_0:\mathcal U_N\longrightarrow\mathcal V_N,&\qquad
    \pi_0(a_{1,r})=0,\quad \pi_0(a_{k,r})=x_{k,r}\quad(k\ge2),\\
 \sigma:\mathcal V_N\longrightarrow\mathcal U_N,&\qquad
    \sigma(x_{k,r})=\Delta^rI_k.
 \end{aligned}
\end{equation}
The map $\pi_0$ sets $a_1$ and all its derivatives to zero; $\sigma$
replaces the remaining coefficient derivatives by covariant derivatives. Since $\pi_0\Delta=\partial\pi_0$ and
$\pi_0(I_k)=x_{k,0}$, we have $\pi_0\sigma=\mathrm{id}$.
This identity proves that the proposed generators satisfy no relations.
The normalizing gauge will prove the complementary assertion: every
universal covariant is reconstructed from its restriction.

\begin{theorem}[Universal generation]\label{I:thm:local-generation}
The map $\sigma$ is an isomorphism of differential algebras
\[
 (\mathcal V_N,\partial)\ \xrightarrow{\ \sim\ }\
 (\mathcal C_N,\Delta).
\]
Equivalently,
\begin{equation}\label{I:eq:local-generation}
 \mathcal C_N=
 \Q\langle\Delta^rI_k\mid 2\le k\le N,\ r\ge0\rangle,
\end{equation}
and every universal covariant has a unique expression as a finite
linear combination of ordered words in these generators.
\end{theorem}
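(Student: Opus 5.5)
We will prove that $\sigma$ is a differential algebra map into $\mathcal C_N$, that it is injective, and that it is surjective onto $\mathcal C_N$. By Proposition~\ref{I:prop:local-covariance} each $I_k$ is a universal covariant. By \eqref{I:eq:local-Delta-law} the operator $\Delta$ preserves $\mathcal C_N$, and $\mathcal C_N$ is visibly a subalgebra, since $(CC')^g=C^gC'^g=g^{-1}CC'g$. Hence every $\Delta^rI_k$ lies in $\mathcal C_N$, and $\sigma$ lands in $\mathcal C_N$. Because $\Delta$ is a derivation and $\sigma(\partial x_{k,r})=\Delta\sigma(x_{k,r})$ on generators, the two derivations $\sigma\partial$ and $\Delta\sigma$ along $\sigma$ agree everywhere. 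So $\sigma$ intertwines $\partial$ and $\Delta$. Injectivity is immediate from the identity $\pi_0\sigma=\mathrm{id}$ recorded before the statement, and this gives the uniqueness of expressions as ordered words.

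The substance is surjectivity: every $C\in\mathcal C_N$ equals $\sigma(\pi_0(C))$. We use the normalizing gauge $f$ of Lemma~\ref{I:lem:local-slice}. Since $f$ is a specialization of the formal gauge through the homomorphism $\mathcal G_N\to\mathcal E_N$, covariance gives $T_f(C)=f^{-1}Cf$ in $\mathcal E_N$. We then compute $T_f(C)$ a second way. $T_f$ is the differential homomorphism sending $a_{k,r}$ to $\delta^r(a_k^f)$. By \eqref{I:eq:local-slice}, $a_1^f=0$ and $a_k^f=f^{-1}I_kf$, and the last clause of the lemma gives $\delta^r(f^{-1}I_kf)=f^{-1}\Delta^r(I_k)f$. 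So $T_f(a_{1,r})=0$ and $T_f(a_{k,r})=f^{-1}\sigma(x_{k,r})f$ for $k\ge2$. Both $T_f$ and the map $c_f\circ\sigma\circ\pi_0$ are algebra homomorphisms $\mathcal U_N\to\mathcal E_N$, where $c_f$ denotes conjugation by $f$. They agree on generators, hence $T_f=c_f\circ\sigma\circ\pi_0$. Therefore $f^{-1}Cf=f^{-1}\sigma(\pi_0 C)f$. Conjugating back by $f$ inside $\mathcal E_N$ and using the injectivity of $\mathcal U_N\hookrightarrow\mathcal E_N$ yields $C=\sigma(\pi_0C)$. Thus $\sigma$ is onto $\mathcal C_N$, and $\pi_0|_{\mathcal C_N}$ is its inverse.

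The main obstacle is the step identifying $T_f$ with $c_f\sigma\pi_0$. It requires that the universal covariance identity \eqref{I:eq:local-covariance} may legitimately be specialized along $\mathcal G_N\to\mathcal E_N$. That map is a differential homomorphism fixing $\mathcal U_N$, so $T_g$ specializes to the homomorphism $a_{k,r}\mapsto\delta^r(a_k^f)$. One must also check that this homomorphism uses $\delta$ on $\mathcal E_N$ and not the retraction, which does not respect $\delta$. With this checked, the displayed relations of Lemma~\ref{I:lem:local-slice} close the argument. The equivalent formulation \eqref{I:eq:local-generation} is then a restatement, since $\mathcal V_N$ is free on the $x_{k,r}$.
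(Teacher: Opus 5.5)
Your proposal is correct and follows the paper's proof. Both arguments place the image of $\sigma$ in $\mathcal C_N$ via Proposition~\ref{I:prop:local-covariance} and \eqref{I:eq:local-Delta-law}, get injectivity from $\pi_0\sigma=\mathrm{id}$, and prove $C=\sigma\pi_0(C)$ by evaluating covariance at the normalizing gauge $f$ of Lemma~\ref{I:lem:local-slice} and then using injectivity of $\mathcal U_N\hookrightarrow\mathcal E_N$. Your identification $T_f=c_f\circ\sigma\circ\pi_0$, by agreement of two algebra homomorphisms on free generators, is only a cleaner packaging of the paper's remark that just adjacent factors $ff^{-1}$ cancel.
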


\begin{proof}
The image of $\sigma$ consists of covariants by
Proposition~\ref{I:prop:local-covariance} and \eqref{I:eq:local-Delta-law}.
Since $\pi_0\sigma=\mathrm{id}$, it remains to prove
$\sigma\pi_0(\Phi)=\Phi$ for every $\Phi\in\mathcal C_N$.
Put $R=\pi_0(\Phi)$.
Use the gauge $f$ from Lemma~\ref{I:lem:local-slice}. Since
$a_1^f=0$, all its derivatives vanish, and hence
\[
 \Phi^f
 =R\bigl(\delta^r(a_k^f)\bigr)
 =R\bigl(f^{-1}\Delta^rI_kf\bigr)
 =f^{-1}\sigma(R)f.
\]
The last equality cancels only adjacent factors $ff^{-1}$.
Covariance also gives $\Phi^f=f^{-1}\Phi f$, so cancellation of the
outer gauge factors yields $\Phi=\sigma\pi_0(\Phi)$ in $\mathcal E_N$.
Injectivity of the extension gives the identity in $\mathcal U_N$.
Finally, $\sigma$ intertwines $\partial$ with $\Delta$ by definition.
\end{proof}

The proof also gives a direct procedure: set every $a_{1,r}$ to zero
in a universal covariant, and replace each remaining $a_{k,r}$ by
$\Delta^rI_k$. The resulting polynomial is the original covariant.
Universality is essential here. The theorem describes identities
valid for all coefficient algebras; specialization to a particular
algebra can introduce additional relations among the generators.

\subsection{Integral coefficients}
The normalization and covariance formulas have integer coefficients.
Let $A$ now be any unital associative ring with a derivation. Start with a tuple $(a_1,\ldots,a_N)$ and its operator
\eqref{I:eq:local-L}. Define its normalized coefficients by
\eqref{I:eq:local-I-formulas}, and its gauge-transformed tuple by
\eqref{I:eq:local-gauge-coefficients}. These definitions require no
cancellation hypothesis.

\begin{proposition}\label{I:prop:local-integral}
The Bell identities, the normalized expansion and its inverse,
gauge covariance of the $I_k$, and covariance under $\Delta$ hold
for these tuples over every differential ring.
\end{proposition}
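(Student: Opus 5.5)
The plan is to prove each identity over the universal integral algebras and then specialize. Let \(\mathcal U_N^{\Z}\) and \(\mathcal G_N^{\Z}\) be the analogues of \(\mathcal U_N\) and \(\mathcal G_N\) defined over \(\Z\). They are the free associative rings on the same generators, with \(g_0^{-1}\) adjoined, and the same derivations. Every tuple \((a_1,\ldots,a_N)\) and invertible gauge \(g\) in a differential ring \(A\) determine a unique differential ring homomorphism \(\mathcal G_N^{\Z}\to A\) with the prescribed values. Every identity among integer-coefficient differential polynomials that holds in \(\mathcal G_N^{\Z}\) therefore holds in \(A\). It suffices to prove the listed identities in the integral universal rings.

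The Bell identities of Lemma~\ref{I:lem:local-bell} need no denominators. The recurrences \eqref{I:eq:local-bell} and the Ore relation have integer coefficients, and the induction uses only Pascal's identity. The lemma therefore holds verbatim over \(\mathcal U_N^{\Z}\) or any differential ring.

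For the normalized expansion I will not divide by \(\binom Nk\). Substituting the second Bell expansion into \(L\) gives the coefficient computation of Proposition~\ref{I:prop:local-normalization}, and that computation is a binomial identity over \(\Z\). Hence \(L=\sum\binom Nk I_k\nabla^{N-k}\) holds with the \(I_k\) defined by \eqref{I:eq:local-I-formulas}. The inverse formula follows the same way, by expanding \(\nabla^{N-k}\) with the first Bell identity and collecting terms.

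Gauge covariance is the main obstacle. Uniqueness of the normalized expansion only determines \(\binom Nk I_k\), and it determines \(I_k\) itself only when \(A\) has no \(\binom Nk\)-torsion. I will handle this by a torsion-freeness argument in the universal ring. Since \(\mathcal G_N^{\Z}\) is a free product of free \(\Z\)-modules, its words give a \(\Z\)-basis, and it embeds in \(\mathcal G_N^{\Z}\otimes\Q=\mathcal G_N\). The identity \(I_k^g=g^{-1}I_kg\) holds in \(\mathcal G_N\) by Proposition~\ref{I:prop:local-covariance}. Both sides lie in the integral subring, so the identity holds there and then specializes to every differential ring. The same reasoning proves \eqref{I:eq:local-gauge-coefficients} and \eqref{I:eq:local-Delta-law} for integral covariants, in particular for \(\Delta^r I_k\). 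Alternatively, \eqref{I:eq:local-Delta-law} follows directly, since its proof consists only of commutator manipulations with \(g^{-1}\nabla g\) and needs no torsion argument.

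The key point to verify is that the integral free product with \(g_0^{-1}\) adjoined is indeed a free \(\Z\)-module embedding in its rationalization. I would check this through normal forms of reduced words, which form a \(\Z\)-basis in both cases.
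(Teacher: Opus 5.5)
Your argument is correct, but it reaches integrality by a different mechanism from the paper's. The paper never leaves the given ring $A$. Because $I_k$ and the transformed coefficients $a_j^g$ with $j\le k$ do not depend on the ambient order, it applies the two expansions to the order-$k$ operator $\sum_{j\le k}\binom kj a_jD^{k-j}$. For the inverse formula these are the two Bell expansions; for covariance they are the two normalized expansions of its conjugate. It then compares the coefficient of $D^0$, respectively of $(D+a_1^g)^0$. That coefficient carries the factor $\binom kk=1$, and powers of a monic first-order operator form a left $A$-basis even with additive torsion, so no integer is ever cancelled.

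You instead use a transfer principle. The $\Z$-forms $\mathcal U_N^{\Z}$ and $\mathcal G_N^{\Z}$ are monoid rings: of a free monoid, respectively of its free product with the infinite cyclic group generated by $g_0$. They are therefore $\Z$-free and sit inside $\mathcal U_N$ and $\mathcal G_N$. Any integer-coefficient identity established over $\Q$ thus descends to them and then specializes to every differential ring. Your normal-form claim is right: the rewriting rules $g_0g_0^{-1}\to1$ and $g_0^{-1}g_0\to1$ have resolvable overlaps over any base ring.

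The paper's route is self-contained and does not depend on the rational proof. Yours is a uniform device that would handle any identity, including ones with no convenient truncation trick, at the cost of relying on Proposition~\ref{I:prop:local-covariance} and the basis lemma.

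One step needs tightening. For the inverse formula, expanding $\nabla^{N-k}$ by the first Bell identity and collecting terms only yields
$\binom Nm a_m=\binom Nm\sum_k\binom mk I_kB^{\mathrm L}_{m-k}(a_1)$.
This is the same torsion issue you identified for gauge covariance. The cancellation is legitimate only because you work in the $\Z$-free ring $\mathcal U_N^{\Z}$, or alternatively after truncating to order $m$ as the paper does. State that explicitly instead of saying the inverse follows ``the same way'' as the forward expansion, which needs no cancellation at all.
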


\begin{proof}
The Bell identities and the formula for $I_k$ follow from their
recurrences without division. To obtain the inverse formula for
$a_k$, apply the two Bell expansions to the operator of order $k$
with coefficients $a_1,\ldots,a_k$ and compare constant terms.
The relevant binomial factor is $\binom kk=1$.

The same observation proves gauge covariance without cancelling an
integer. The formula for $I_k$ and the transformed coefficients
$a_j^g$, $j\le k$, are independent of the ambient order. Apply the
two normalized expansions of the conjugated order-$k$ operator and
compare their coefficients of $(D+a_1^g)^0$. Powers of $D+a_1^g$
form a left $A$-basis even in the presence of additive torsion, and
this coefficient has binomial factor $\binom kk=1$. Thus
$I_k^g=g^{-1}I_kg$. The identity for $\Delta$ is the commutator
calculation \eqref{I:eq:local-Delta-law}.

\end{proof}

\begin{theorem}[Integral generation]\label{I:thm:local-integral-generation}
Let $R$ be a commutative ring. The universal covariants of coefficient
tuples over associative differential $R$-algebras with $R$-linear
derivations form the free differential $R$-algebra on
$I_2,\ldots,I_N$ under $\Delta$. The structural map from $R$ is
required to have central image.
\end{theorem}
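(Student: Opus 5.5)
The plan is to repeat the proof of Theorem~\ref{I:thm:local-generation} over $R$, checking that no step divides by an integer. First set up the universal objects over $R$. Let $\mathcal U_N^R=R\langle a_{k,r}\rangle$ be the free associative $R$-algebra, with $R$ central, and give it the $R$-linear derivation $\delta(a_{k,r})=a_{k,r+1}$. Let $\mathcal G_N^R$ be the analogous gauge algebra, and $\mathcal V_N^R=R\langle x_{k,r}\rangle$. Over a general $R$ the binomial presentation \eqref{I:eq:local-L} need not be unique, so the statement concerns coefficient tuples, as the paragraph before Proposition~\ref{I:prop:local-integral} prescribes. A universal covariant is then an element $C\in\mathcal U_N^R$ with $T_g(C)=g^{-1}Cg$ in $\mathcal G_N^R$. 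Here $T_g$ is defined through the integral formulas \eqref{I:eq:local-gauge-coefficients}.

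Every tuple over an associative differential $R$-algebra with central structure map comes from a unique differential $R$-homomorphism out of $\mathcal U_N^R$. The same holds for $\mathcal G_N^R$ together with a gauge. Hence this formal identity is equivalent to covariance in all such algebras. Centrality of $R$ is exactly what makes these free objects universal, so $R$-coefficients commute past $g$.

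Next, I show the generators are covariants. Proposition~\ref{I:prop:local-integral} gives $I_k^g=g^{-1}I_kg$ and the law \eqref{I:eq:local-Delta-law} over any differential ring, in particular over $\mathcal G_N^R$. Hence $\sigma\colon\mathcal V_N^R\to\mathcal U_N^R$, defined by $x_{k,r}\mapsto\Delta^rI_k$, lands in the covariants. The identities $\pi_0\Delta=\partial\pi_0$ and $\pi_0(I_k)=x_{k,0}$ come from the integral formula \eqref{I:eq:local-I-formulas}, since $B^{\mathrm R}_r(a_1)$ has no constant term for $r\ge1$. They give $\pi_0\sigma=\mathrm{id}$, hence injectivity and freeness.

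For surjectivity, build $\mathcal E_N^R=\mathcal U_N^R*_R R[f,f^{-1}]$ with $\delta(f)=-a_1f$ and $\delta(f^{-1})=f^{-1}a_1$. The verification that the derivation respects $ff^{-1}=1$ involves no division. The retraction $f\mapsto1$ still shows that $\mathcal U_N^R$ embeds. This is the step needing most care: I would argue that the free product of $R$-algebras over $R$ with a retraction is split, so the inclusion is injective with no flatness hypothesis.

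The relation $\delta^r(f)=B^{\mathrm R}_r(a_1)f$ follows by induction. Substituting it into \eqref{I:eq:local-gauge-coefficients} gives $a_k^f=f^{-1}I_kf$ through the integral formula for $I_k$ itself. This route avoids the uniqueness of the normalized expansion, which needed $\binom Nk$ invertible. Likewise $\delta^r(f^{-1}Cf)=f^{-1}\Delta^r(C)f$ holds.

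The final computation $\Phi^f=f^{-1}\sigma\pi_0(\Phi)f=f^{-1}\Phi f$ is then verbatim, giving $\Phi=\sigma\pi_0(\Phi)$. The main obstacle I expect is confirming that the gauge law \eqref{I:eq:local-gauge-coefficients} and Lemma~\ref{I:lem:local-slice} are stated purely through integral recurrences. The remaining steps repeat the proof of Theorem~\ref{I:thm:local-generation} unchanged.
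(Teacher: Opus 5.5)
Your proposal is correct and follows the paper's own proof: replace $\Q$ by $R$ in the formal algebras, use the algebra retraction $f\mapsto1$ to get injectivity of the extension, take the division-free covariance identities from Proposition~\ref{I:prop:local-integral}, and rerun the restriction-and-reconstruction argument with $\pi_0$ and $\sigma$. The points you check explicitly are the ones the paper's brief proof leaves implicit: that $a_k^f=f^{-1}I_kf$ follows from $\delta^r(f)=B_r^{\mathrm R}(a_1)f$ without using uniqueness of the normalized expansion, and that centrality of $R$ lets conjugation by $f$ pass through the $R$-coefficients.
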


\begin{proof}
Replace $\Q$ by $R$ in the formal algebras of
Theorem~\ref{I:thm:local-generation}. The extension adjoining $f$ is
injective by the same algebra retraction. Proposition~\ref{I:prop:local-integral}
supplies the covariance identities without division. The maps
$\pi_0,\sigma$ and the restriction-and-reconstruction proof therefore
remain valid over $R$.
\end{proof}

To regard these covariants as expressions in the operator alone,
rather than in its chosen tuple, assume
\begin{equation}\label{I:eq:local-cancellation}
 \binom Nk a=0\ \Longrightarrow\ a=0
 \qquad(a\in A,\ 1\le k\le N).
\end{equation}
Then the operator determines its binomially normalized tuple and
each $I_k$ uniquely. The hypothesis concerns the additive group of
$A$, not merely its characteristic.

Existence of a binomial presentation is a separate requirement:
for $L=D^N+\sum_{k=1}^N c_kD^{N-k}$ it means
$c_k\in\binom Nk A$ for every $k$. Cancellation gives uniqueness
when such a presentation exists; it does not supply division.
Without \eqref{I:eq:local-cancellation}, the tuple remains part of the
data, and all the universal identities and the generation theorem
still apply in that form.

\section{Changes of parameter}\label{I:sec:reparam}

Parameter changes act on the coefficient derivation as well as on the
coefficients. We first define this action and compute its effect on the
normalized coefficients. Their affine transformation law will permit
the symmetrization used in the next section.

\subsection{Algebraic changes of parameter}

Let \((A,\delta)\) and \((B,\widetilde\delta)\) be differential
\(\Q\)-algebras. A change of parameter consists of a unital
\(\Q\)-algebra homomorphism \(\phi:A\to B\) and a central unit
\(\ell\in B\) such that
\begin{equation}\label{I:eq:formal-chain-rule}
 \widetilde\delta(\phi(a))=\ell\,\phi(\delta(a))\qquad(a\in A).
\end{equation}
For functions, these are \(\phi(a)=a\circ\lambda\) and
\(\ell=\lambda'\). If \(\widetilde D\) is the Ore symbol over \(B\),
the assignments \(a\mapsto\phi(a)\) and
\(D\mapsto\ell^{-1}\widetilde D\) respect the Ore relation.
The corresponding monic pullback is
\begin{equation}\label{I:eq:formal-pullback}
 L^\lambda=\ell^N\sum_{k=0}^N\binom Nk
       \phi(a_k)(\ell^{-1}\widetilde D)^{N-k}.
\end{equation}
The superscript denotes these data; an element \(\lambda\) need not
belong to either coefficient algebra. Weight-\(m\) covariance means
\(C(L^\lambda)=\ell^m\phi(C(L))\).

\medskip
\noindent\textbf{Universal parameter jets.}
To prove the transformation identities universally, introduce central symbols
$\lambda_1,\lambda_1^{-1},\lambda_2,\ldots$ and a pulled-back copy of
the coefficient generators. The resulting algebra is
\[
 \mathcal U_N^\lambda
 =\Q[\lambda_1,\lambda_1^{-1},\lambda_2,\ldots]
       \langle\bar a_{k,r}\mid 1\le k\le N,\ r\ge0\rangle,
\]
with derivation
\[
 \partial(\lambda_j)=\lambda_{j+1},\qquad
 \partial(\bar a_{k,r})=\lambda_1\bar a_{k,r+1}.
\]
A bar denotes the homomorphism $a_{k,r}\mapsto\bar a_{k,r}$; thus
$\partial\bar C=\lambda_1\overline{\delta C}$.
For functions, $\lambda_j$ is the $j$-th derivative of the parameter
change and $\bar C=C\circ\lambda$. More generally, the data
$\phi,\ell$ in \eqref{I:eq:formal-chain-rule} specialize this algebra by
$\lambda_j\mapsto\widetilde\delta^{j-1}\ell$ and
$\bar a_{k,r}\mapsto\phi(\delta^r a_k)$.
Derivations preserve the center, so the substituted parameter jets
are central as required.

Let $D_\lambda$ be the Ore symbol for $\partial$. Define the monic
pullback by
\begin{equation}\label{I:eq:rp-operator}
 L^\lambda=\lambda_1^N\sum_{k=0}^N\binom Nk
             \bar a_k(\lambda_1^{-1}D_\lambda)^{N-k}
           =\sum_{k=0}^N\binom Nk a_k^\lambda D_\lambda^{N-k}.
\end{equation}
For any differential polynomial $C$, its transform $C^\lambda$ is
obtained by substituting $a_k^\lambda$ for $a_k$ and $\partial$ for
$\delta$. It is a \emph{parameter covariant of weight $m$} if
\begin{equation}\label{I:eq:rp-weight}
 C^\lambda=\lambda_1^m\bar C.
\end{equation}
The bar pulls back the original coefficients; the superscript also
accounts for the transformed operator.

Put $\rho=\lambda_2/\lambda_1$ and
\[
 S_\lambda=\frac{\lambda_3}{6\lambda_1}
             -\frac{\lambda_2^2}{4\lambda_1^2}.
\]
Thus $S_\lambda$ is one sixth of the classical Schwarzian. The first
transformation formulas are
\begin{equation}\label{I:eq:rp-low-laws}
 \begin{aligned}
 a_1^\lambda&=\lambda_1\bar a_1-\frac{N-1}{2}\rho,\\
 I_2^\lambda&=\lambda_1^2\bar I_2+(N+1)S_\lambda,\\
 I_3^\lambda&=\lambda_1^3\bar I_3+3\lambda_1\lambda_2\bar I_2
                      +\frac{3(N+1)}2\partial S_\lambda.
 \end{aligned}
\end{equation}
The last line applies for $N\ge3$.

\subsection{The quadratic and cubic laws}
Let $\Delta^\lambda=\partial+[a_1^\lambda,\,\cdot\,]$.
Since $\rho$ is central, the first line of \eqref{I:eq:rp-low-laws} gives
\begin{equation}\label{I:eq:rp-Delta-bar}
 \Delta^\lambda\bar C=\lambda_1\overline{\Delta C}.
\end{equation}
Applying this to the second line yields
\[
 (\Delta I_2)^\lambda
 =\lambda_1^3\overline{\Delta I_2}
     +2\lambda_1\lambda_2\bar I_2+(N+1)\partial S_\lambda.
\]
Comparison with the formula for $I_3^\lambda$ shows that
\begin{equation}\label{I:eq:rp-W3}
 W_3=I_3-\frac32\Delta I_2,
 \qquad W_3^\lambda=\lambda_1^3\bar W_3.
\end{equation}
The same subtraction removes both extra terms. Gauge covariance is
preserved because $I_3$ and $\Delta I_2$ are gauge covariants.
The quadratic coefficient behaves differently:
\begin{equation}\label{I:eq:P}
 P=I_2/(N+1),\qquad P^\lambda=\lambda_1^2\bar P+S_\lambda.
\end{equation}
For associative coefficients this is an algebra-valued quadratic datum
with a central Schwarzian term; in the scalar case it is a projective
connection.

\begin{example}[The classical order-two case]\label{I:ex:classical-order-two}
For a scalar second-order equation, remove the first derivative locally
and choose independent solutions $z_1,z_2$ of $z''+I_2z=0$.
Their Wronskian is constant, so, for $f=z_1/z_2$,
\[
 \frac{f''}{f'}=-2\frac{z_2'}{z_2},\qquad
 \Sch(f)=-2\frac{z_2''}{z_2}=2I_2,\qquad P=\frac16\Sch(f).
\]
The law $P^\lambda=\lambda_1^2\bar P+S_\lambda$ is therefore exactly
the Schwarzian chain rule:
\begin{equation}\label{I:eq:schwarzian-chain}
 \Sch(f\circ\lambda)
 = (\lambda')^2(\Sch(f)\circ\lambda)+\Sch(\lambda).
\end{equation}
There are no members $W_m$ with $m\ge3$ when $N=2$.
\end{example}

\begin{example}[A cubic with noncommuting coefficients]\label{I:ex:matrix-W3}
Work in $M_2(\Q[t])$ with entrywise differentiation. Put
\[
 a=tE_{12}+E_{21},\qquad L=D^3+3aD^2.
\]
Thus $a_1=a$, $a_2=a_3=0$, $a^2=t\mathbf1_2$, and
$[a',a]=E_{11}-E_{22}\ne0$. Formula~\eqref{I:eq:local-I23} gives
\[
 I_2=-E_{12}-t\mathbf1_2,\qquad
 I_3=2ta+2(E_{11}-E_{22}),\qquad
 \Delta I_2=-2E_{22}.
\]
Consequently,
\begin{equation}\label{I:eq:matrix-W3}
 W_3=\begin{pmatrix}2&2t^2\\2t&1\end{pmatrix}.
\end{equation}
The commutator in $I_3$ survives in $W_3$. Indeed, for this linear
matrix $a$, the same calculation reads
\[
 W_3=2a^3+\frac32(a'a+aa')+\frac12[a',a].
\]
Its last term is $(E_{11}-E_{22})/2$. Replacing the two constant
matrix units in $a$ by diagonal matrices makes $[a',a]$ vanish;
the remaining terms give the scalar calculation on each diagonal entry.
\end{example}

\subsection{Affine transport of normalized jets}
For the higher coefficients we use a single normal-ordering recurrence,
which also proves \eqref{I:eq:rp-low-laws}. Define central polynomials
$\beta_{j,r}$ by
\begin{equation}\label{I:eq:rp-beta}
 \beta_{0,0}=1,\qquad
 \beta_{j+1,r}=\beta_{j,r}+\partial \beta_{j,r-1}-j\rho \beta_{j,r-1},
\end{equation}
with $\beta_{j,r}=0$ for $r<0$ or $r>j$. The Ore relation gives
\[
 \lambda_1^j(\lambda_1^{-1}D_\lambda)^j
       =\sum_{r=0}^j \beta_{j,r}D_\lambda^{j-r},
 \qquad
 a_k^\lambda=\frac1{\binom Nk}
       \sum_{i=0}^k\binom Ni\lambda_1^i \beta_{N-i,k-i}\bar a_i.
\]
Indeed, if $T_j=\lambda_1^j(\lambda_1^{-1}D_\lambda)^j$, then
$T_{j+1}=(D_\lambda-j\rho)T_j$, which gives \eqref{I:eq:rp-beta}. Substitution into \eqref{I:eq:local-I-formulas} gives
\eqref{I:eq:rp-low-laws} and the higher transformation laws, with the
coefficient factors in their prescribed order.

Symmetrization will preserve the parameter law because that law is
affine in the normalized jets. The next lemma establishes this before
we choose the higher currents. Write $I_{k,r}=\Delta^rI_k$.

\begin{lemma}[Affine transport]\label{I:lem:rp-affine}
For $2\le k\le N$ and $r\ge0$, the normalized jets transform as
\[
 I_{k,r}^\lambda
 =F_{k,r,0}+\sum_{j=2}^k\sum_{s=0}^r
       F_{k,r,j,s}\,\overline{I_{j,s}},
 \qquad
 F_{k,r,j,s}\in\Q[\lambda_1^{\pm1},\lambda_2,\ldots].
\]
The coefficients are central and the sum is finite.
\end{lemma}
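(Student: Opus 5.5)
The plan is to derive the law for $r=0$ from the explicit transformation of the coefficients $a_k$, and then to obtain all higher $r$ by induction using \eqref{I:eq:rp-Delta-bar}.

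\emph{Step 1: the case $r=0$.} Write $L^\lambda$ in terms of the adapted operator $D_\lambda+a_1^\lambda$. The first line of \eqref{I:eq:rp-low-laws} gives
\[
 D_\lambda+a_1^\lambda=\lambda_1\bigl(\lambda_1^{-1}D_\lambda+\bar a_1\bigr)-\tfrac{N-1}2\rho .
\]
Put $\bar\nabla=\lambda_1^{-1}D_\lambda+\bar a_1$. Applying the bar to \eqref{I:eq:local-normalized} gives
\[
 L^\lambda=\lambda_1^N\sum_i\binom Ni\bar I_i\,\bar\nabla^{N-i}.
\]
The next task is to rewrite $\lambda_1^{j}\bar\nabla^{j}$ as a sum $\sum_r\gamma_{j,r}(D_\lambda+a_1^\lambda)^{j-r}$ with central coefficients $\gamma_{j,r}\in\Q[\lambda_1^{\pm1},\lambda_2,\ldots]$. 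I would use the recurrence of \eqref{I:eq:rp-beta}, with $D_\lambda$ replaced by $\nabla^\lambda=D_\lambda+a_1^\lambda$. The reason this works is that $\lambda_1\bar\nabla=\nabla^\lambda+\tfrac{N-1}2\rho$ differs from $\nabla^\lambda$ only by a central term, and $[\nabla^\lambda,c]=\partial c$ for central $c$. So the same normal-ordering argument as for $T_j$ runs verbatim and yields central $\gamma_{j,r}$ built from $\rho$ and its $\partial$-derivatives, hence polynomials in $\lambda_1^{\pm1},\lambda_2,\ldots$. Moving the central factors $\lambda_1^{i}$ past $\bar I_i$ and reading off the coefficient of $(\nabla^\lambda)^{N-k}$ then gives
\[
 I_k^\lambda=\binom Nk^{-1}\sum_{i\le k}\binom Ni\lambda_1^i\gamma_{N-i,k-i}\bar I_i,
\]
by uniqueness of the normalized expansion (Proposition~\ref{I:prop:local-normalization}, applied to the pulled-back operator). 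This expression is affine in the $\bar I_j$ for $j\ge2$, because $\bar I_1=0$ and the $i=0$ term is central; that term is $F_{k,0,0}$.

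\emph{Step 2: induction on $r$.} First I would check that $I_{k,r}^\lambda=(\Delta^\lambda)^rI_k^\lambda$. This holds because the transform $C\mapsto C^\lambda$ is a differential homomorphism intertwining $\Delta$ with $\Delta^\lambda$: it sends $a_1\mapsto a_1^\lambda$ and $\delta\mapsto\partial$. Then apply $\Delta^\lambda$ to the affine expression for $I_{k,r}^\lambda$. On a central coefficient $F$, $\Delta^\lambda F=\partial F$, which is again central and lies in $\Q[\lambda_1^{\pm1},\lambda_2,\ldots]$ because $\partial\lambda_1^{-1}=-\lambda_1^{-2}\lambda_2$. On a barred jet, \eqref{I:eq:rp-Delta-bar} gives $\Delta^\lambda\overline{I_{j,s}}=\lambda_1\overline{I_{j,s+1}}$. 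By the Leibniz rule,
\[
 \Delta^\lambda(F\overline{I_{j,s}})=\partial F\,\overline{I_{j,s}}+F\lambda_1\overline{I_{j,s+1}}.
\]
So $I_{k,r+1}^\lambda$ is again affine, with central coefficients and with $s\le r+1$, $j\le k$. Finiteness is clear at each stage.

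The main obstacle is Step~1. There one must verify carefully that the normal-ordering coefficients are central even though $\nabla^\lambda$ contains the noncommuting $a_1^\lambda$. This rests on the shift between $\lambda_1\bar\nabla$ and $\nabla^\lambda$ being exactly the central multiple $\tfrac{N-1}2\rho$, which is where the first line of \eqref{I:eq:rp-low-laws} is essential. Alternatively, one could substitute into \eqref{I:eq:local-I-formulas} directly, but then the $\bar a_1$-dependence would have to be shown to cancel, which the adapted-operator argument avoids.
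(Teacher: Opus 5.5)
Your proof is correct, but it reaches the case $r=0$ by a different route from the paper.

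\textbf{The paper's route.} It first restricts to the slice $a_1=0$. There $a_j=I_j$, the coefficient $a_1^\lambda=-\tfrac{N-1}2\rho$ is central, and the Bell formula with a central argument makes $I_k^\lambda$ affine in the $\bar I_j$. It then applies $\Delta^\lambda$ exactly as in your Step~2. To return from the slice, it uses the Ore homomorphism $D\mapsto\lambda_1^{-1}D_\lambda$, $a\mapsto\bar a$ to get $(L^g)^\lambda=(L^\lambda)^{\bar g}$. It adjoins the normalizing gauge $\bar f$ freely over the parameter ring, which is injective by the retraction $f\mapsto1$, and cancels the outer conjugation.

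\textbf{Your route.} You work in the adapted frame from the start, using the same Ore homomorphism only to pull back the normalized expansion. The identity $\lambda_1\bar\nabla=\nabla^\lambda+\tfrac{N-1}2\rho$ together with $[\bar\nabla,\lambda_1^j]=j\lambda_1^{j-1}\rho$ gives, for $U_j=\lambda_1^j\bar\nabla^j$,
\[
 U_{j+1}=\bigl(\nabla^\lambda+\tfrac{N-1}2\rho-j\rho\bigr)U_j .
\]
The normal-ordering coefficients are therefore central, since $[\nabla^\lambda,c]=\partial c$ for central $c$. Note that the factor is $\nabla^\lambda$ shifted by $\tfrac{N-1}2\rho$, not $\nabla^\lambda$ itself. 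So your $\gamma_{j,r}$ are not literally the $\beta_{j,r}$ of \eqref{I:eq:rp-beta}, which is harmless. Uniqueness of the normalized expansion of $L^\lambda$ over $\Q$ then gives $F_{k,0,j,0}=\binom Nk^{-1}\binom Nj\lambda_1^j\gamma_{N-j,k-j}$.

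\textbf{What each buys.} Your approach avoids the second formal extension and the cancellation argument entirely, and it produces explicit central coefficients. The paper's approach reuses the slice-and-reconstruct pattern of Theorem~\ref{I:thm:local-generation}, and it records the compatibility of pullback with gauge conjugation along the way. The step you flag as the main obstacle is handled correctly, and your induction in $r$ is the same as the paper's.
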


Thus parameter change introduces no products of normalized jets. This is
the exact property used to lift scalar identities by symmetrization.

\begin{proof}
First set every $a_1$-jet to zero. Then $a_j=I_j$
for $j\ge2$, pullback is affine in these coefficients, and
$a_1^\lambda=-(N-1)\rho/2$ is central. The Bell formula therefore
makes $I_k^\lambda$ affine in the $\bar I_j$. Applying
$\Delta^\lambda$ preserves this form, since
\[
 \Delta^\lambda(F\overline{I_{j,s}})
  =(\partial F)\overline{I_{j,s}}
       +\lambda_1F\overline{I_{j,s+1}}
\]
for central $F$.

To return from the slice, observe that
$D\mapsto\lambda_1^{-1}D_\lambda$, $a\mapsto\bar a$
defines an algebra homomorphism of Ore algebras. Thus pullback
commutes with gauge conjugation:
\[
 (L^g)^\lambda=(L^\lambda)^{\bar g}.
\]
Adjoin the gauge $f$ from Lemma~\ref{I:lem:local-slice} and its inverse
freely over the central parameter ring, with
$\partial\bar f=-\lambda_1\bar a_1\bar f$.
This extension is injective by the same algebra retraction.
Both sides of the affine formula for $(L^f)^\lambda$ are the
conjugates by $\bar f$ of its two sides for $L^\lambda$.
The central coefficients commute with $\bar f$; cancelling the outer
gauge factors and using injectivity proves the formula before restriction.
\end{proof}

\section{Projective normalization and higher Schwarzians}
\label{I:sec:higher-construction}

The quadratic anomaly is removed by choosing a projective parameter.
We first identify the linear expressions that are unchanged, up to
weight, by the remaining M\"obius freedom. This gives the normalization
condition for the higher Schwarzians. The scalar construction and its
associative lift are then treated separately. The four kinds of
expressions have different parameter laws:
\begin{center}
\begin{tabularx}{\linewidth}{@{}l>{\raggedright\arraybackslash}X>{\raggedright\arraybackslash}X@{}}
\toprule
Expression & Gauge law & Parameter law \\
\midrule
$I_k$ & Conjugation & Affine; generally not pure weight \\
$P=I_2/(N+1)$ & Conjugation & Quadratic datum with a central Schwarzian term \\
$E_m$ & Conjugation & Weight $m$ under M\"obius maps \\
$W_m$ & Conjugation & Weight $m$ under every parameter change \\
\bottomrule
\end{tabularx}
\end{center}
For $m=3$ the last two rows coincide. Weight four is the first case
in which the linear expression needs a nonlinear correction.

\subsection{Linear projective covariants}
Give $I_{k,r}$ weight $k+r$. For $3\le m\le N$ set
\begin{equation}\label{I:eq:rp-E}
 E_m=\sum_{r=0}^{m-2}c_{m,r}I_{m-r,r},
 \qquad
 c_{m,r}=(-1)^r
       \frac{\binom mr\binom{m-1}r}{\binom{2m-2}r}.
\end{equation}
Thus $E_3=W_3$. The following property explains this choice of coefficients.

\begin{proposition}\label{I:prop:rp-projective}
The expression $E_m$ has weight-$m$ covariance under M\"obius
changes of parameter. It is the unique $\Q$-linear combination of the
$I_{m-r,r}$ with this property and coefficient of $I_m$ equal to one.
\end{proposition}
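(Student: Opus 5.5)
The plan is to reduce everything to infinitesimal Möbius transformations. Möbius changes of parameter form a group generated by translations, dilations and inversion, or infinitesimally by the vector fields $\partial_t$, $t\partial_t$ and $t^2\partial_t$. For a Möbius map $S_\lambda=0$, and $\rho=\lambda_2/\lambda_1$ satisfies $\partial\rho=\rho^2/2$.

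By Lemma~\ref{I:lem:rp-affine}, $I_{k,r}^\lambda$ is affine in the jets $\overline{I_{j,s}}$ with central coefficients in the parameter jets. Every inhomogeneous term $F_{k,r,0}$ is built from $S_\lambda$ and its derivatives, since it arises from the order-$N$ operator $D^N$ with zero coefficients. Hence these terms vanish for Möbius maps, and the transformation law is \emph{linear} in the jets.

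\textbf{Step 1.} Reduce to the scalar slice. As in the proof of Lemma~\ref{I:lem:rp-affine}, set the $a_1$-jets to zero and transport the result back by the gauge $f$. Because the coefficients are central and the law is linear, the problem becomes computing how $I_{j,s}$ transforms under Möbius maps. That law is the same as in the commutative case with the $I_j$ treated as independent symbols.

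\textbf{Step 2.} Compute the infinitesimal Möbius law. Take $\lambda_1=1+\epsilon v'$ with $v=t^2$, so that $\lambda_2=2\epsilon$, $\lambda_3=0$, and work to first order in $\epsilon$. From the normal-ordering recurrence \eqref{I:eq:rp-beta} and the inverse formula, the first-order variation of $I_k$ beyond the weight term is a multiple of $I_{k-1}$. The pattern $I_3^\lambda=\lambda_1^3\bar I_3+3\lambda_1\lambda_2\bar I_2$ suggests the coefficient is $\binom k2\lambda_2$, and I will check this by the recurrence, noting that $I_1=0$. Then $\Delta^r$ acting on $\lambda_2\bar I_{k-1}$ shows that the first-order variation of $I_{k,r}$ is
\[
 \lambda_2\Bigl(\bigl[\tbinom k2+kr+\tbinom r2\bigr]I_{k-1,r}
 +\alpha_{k,r}\,I_{k,r-1}\Bigr),
\]
where the term in $I_{k,r-1}$ comes from differentiating $\lambda_1^{k+r}$ and has $\alpha_{k,r}=r(2k+r-1)/2$. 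The exact constants will follow by induction on $r$. Invariance under the vector field $t^2\partial_t$ therefore becomes a lowering operator $\mathsf L$ on the span of $\{I_{m-r,r}\}$, which sends $I_{m-r,r}$ to a combination of $I_{m-r-1,r}$ and $I_{m-r,r-1}$. Translations and dilations impose nothing beyond homogeneity of weight $m$.

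\textbf{Step 3.} Solve $\mathsf L(E)=0$. The target space of $\mathsf L$ has basis $I_{m-1-s,s}$ with $0\le s\le m-3$. Writing $E=\sum c_rI_{m-r,r}$ and collecting the coefficient of $I_{m-1-s,s}$ yields a two-term recurrence
\[
 c_s\cdot(\text{first coefficient at }k=m-s,\,r=s)+c_{s+1}\,\alpha_{m-s-1,s+1}=0 .
\]
This determines $c_{s+1}$ from $c_s$, so the solution is unique once $c_0=1$, which gives the uniqueness claim. I will then verify that the stated $c_{m,r}$ satisfy the recurrence, which amounts to checking a ratio of binomials. As a consistency check, $m=3$ must give $c_{3,1}=-3/2$, recovering \eqref{I:eq:rp-W3}.

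Invariance at first order implies invariance under the group generated by the flows. For finite Möbius maps, the law is linear with coefficients depending on the group element, so first-order invariance integrates. Alternatively, a finite Möbius map factors as an affine map composed with an inversion, and the computation can be carried out directly with $\rho$ satisfying $\partial\rho=\rho^2/2$.

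The main obstacle is getting the exact first-order coefficients in Step 2, and in particular justifying that only $I_{k-1}$ appears in the variation of $I_k$. This requires the Möbius identity $S_\lambda=0$ together with $I_1=0$ to kill the other terms, and it should be proved by induction using the $\beta$ recurrence.
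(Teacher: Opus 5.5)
Your route is the paper's route. You work on the slice $a_1=0$ and linearize along the special conformal direction $t^2\partial_t$; the paper uses the subgroup $t\mapsto t/(1-ct/2)$, whose first-order term is your $t+\epsilon t^2$. You then obtain a lowering operator on the jets $I_{k,r}$, solve the two-term recurrence from $c_{m,0}=1$, and integrate using that subgroup together with scalings and recentring. Two of your ingredients are correct: the first-order law $\binom k2\lambda_2\bar I_{k-1}$ for $I_k$ itself, and $\alpha_{k,r}=kr+\binom r2$. For the former, the pullback contributes $-\tfrac12k(N-k)\rho\bar I_{k-1}$, and removing $a_1^\lambda=-\tfrac12(N-1)\rho$ contributes $\tfrac12k(N-1)\rho\bar I_{k-1}$.

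The gap is in your displayed first-order variation of $I_{k,r}$. The coefficient of $I_{k-1,r}$ is $\binom k2$, not $\binom k2+kr+\binom r2=\binom{k+r}2$. To see this, compute $\Delta^\lambda(\lambda_2\overline{I_{k-1,s}})=\lambda_3\overline{I_{k-1,s}}+\lambda_1\lambda_2\overline{I_{k-1,s+1}}$. With $\lambda_3=0$ and $\lambda_1=1+O(\epsilon)$ this is $\lambda_2\overline{I_{k-1,s+1}}+O(\epsilon^2)$. The factors $k+j$ come only from differentiating the powers $\lambda_1^{k+j}$ on the $\bar I_k$ branch, so they feed the $I_{k,r-1}$ term alone. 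The correct operator is therefore
\[
\mathsf L(I_{k,r})=\bigl(kr+\tbinom r2\bigr)I_{k,r-1}+\tbinom k2 I_{k-1,r},
\]
which is the paper's $\mathfrak L_1$.

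With your coefficient, every collected equation in Step 3 becomes $\binom m2c_s+\tfrac12(s+1)(2m-2-s)c_{s+1}=0$. For $m=4$ this gives $c_2=\tfrac{12}5$ instead of the correct $\tfrac65$. The planned verification of the stated $c_{m,r}$ would therefore fail for every $m\ge4$. Your $m=3$ check cannot catch this, because there the faulty coefficient multiplies $I_{1,1}=0$.

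Uniqueness survives, since it only uses $\alpha_{m-s-1,s+1}=\tfrac12(s+1)(2m-2-s)\neq0$. Existence does not until the coefficient is fixed. With the correction the recurrence is $c_{r+1}=-\frac{(m-r)(m-r-1)}{(r+1)(2m-2-r)}c_r$, and the stated $c_{m,r}$ satisfy it.

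For the integration step, note that $t+\epsilon t^2$ is not itself a one-parameter group. Linearize along $t\mapsto t/(1-\epsilon t)$ instead, and use, as the paper does, that the induced action $T_c$ is polynomial in $c$ on jets of bounded weight with $T_{c+d}=T_cT_d$.
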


\begin{proof}
\textit{1. The infinitesimal action on the slice.}
Consider the one-parameter group $t\mapsto t/(1-ct/2)$.
On the slice $a_1=0$, write the coefficients as Taylor series.
To first order in $c$, the pullback and Bell formulas give
\[
 I_k^\lambda(t)=I_k(t)+c\left(
     \frac{t^2}{2}I_k'(t)+ktI_k(t)+\binom k2 I_{k-1}(t)\right)
       +O(c^2).
\]
Here $\rho=c+O(c^2)$ and
$a_1^\lambda=-(N-1)c/2+O(c^2)$. The pullback contributes
$-k(N-k)cI_{k-1}/2$, and removal of the first coefficient contributes
$k(N-1)cI_{k-1}/2$. Their sum is
\[
 \frac{k((N-1)-(N-k))}{2}cI_{k-1}
       =\binom k2 cI_{k-1}.
\]

\textit{2. The recurrence for the coefficients.}
Taking $r$ derivatives at zero and transporting from the slice by
Lemma~\ref{I:lem:rp-affine} gives
\begin{equation}\label{I:eq:rp-L1}
 \mathfrak L_1(I_{k,r})
  =\left(rk+\binom r2\right)I_{k,r-1}
                    +\binom k2 I_{k-1,r},
\end{equation}
where $I_{1,r}=0$ and terms with negative indices are zero.
For a linear combination in \eqref{I:eq:rp-E}, cancellation of adjacent
terms is equivalent to
\[
 c_{m,r+1}
   =-\frac{(m-r)(m-r-1)}{(r+1)(2m-2-r)}c_{m,r}.
\]
Starting with $c_{m,0}=1$ determines exactly the coefficients in
\eqref{I:eq:rp-E}. Thus the infinitesimal condition proves uniqueness
as well as determining the candidate.

\textit{3. From the infinitesimal action to the subgroup.}
Let $T_c$ denote the induced action on jets at zero. On jets of
bounded weight, $T_c$ is polynomial in $c$ and
$T_{c+d}=T_cT_d$. Differentiating in $d$ at zero yields
\[
 \frac{d}{dc}T_c(F)=T_c(\mathfrak L_1 F).
\]
For $F=E_m$, the right side is zero. Since the coefficient field is
$\Q$ and $T_0(F)=F$, the polynomial $T_c(F)$ equals $F$ for every $c$.

\textit{4. The full M\"obius law.}
Every M\"obius map fixing zero is a scaling composed with a member
of this subgroup. Homogeneity supplies the factor of weight $m$.
Centering the source and target parameters at arbitrary base points
proves the covariance law there, and hence the asserted identity.
\end{proof}

In scalar Laguerre--Forsyth form, where $a_1=a_2=0$, these are the
linear Wilczy\'nski expressions, with the coefficient of $a_m$
normalized to one \cite[p.~32, equations (47)--(48)]{Wilczynski1906}.
For a general parameter the $E_m$ can still have Schwarzian terms.
To remove them, we choose a formal parameter in which the projective
connection vanishes and evaluate $E_m$ there.

\subsection{The quartic correction}\label{I:subsec:quartic}

The cubic correction is linear. Weight four is the first place where
a product is required. For \(N\ge4\), formula~\eqref{I:eq:rp-E} gives
\[
 E_4=I_4-2\Delta I_3+\frac65\Delta^2I_2.
\]
For this calculation write \(\ell=\lambda_1\), \(S=S_\lambda\),
and use primes for \(\partial\) on central parameter expressions.
Thus \(\rho=\ell'/\ell\) and \(12S=2\rho'-\rho^2\).
Normal ordering by \eqref{I:eq:rp-beta}, followed by
\eqref{I:eq:local-I-formulas}, gives
\begin{equation}\label{I:eq:quartic-I4-law}
\begin{aligned}
 I_4^\lambda={}&\ell^4\bar I_4+6\ell^3\rho\bar I_3
    +\ell^2\bigl(9\rho^2+6(N+5)S\bigr)\bar I_2\\
 &+\frac{9(N+1)}5 S''
    +\frac{3(N+1)(5N+7)}5 S^2.
\end{aligned}
\end{equation}
The remaining two terms of \(E_4\) transform by differentiating
\eqref{I:eq:rp-low-laws}. Using \eqref{I:eq:rp-Delta-bar}, one obtains
\[
\begin{aligned}
 (\Delta I_3)^\lambda={}&\ell^4\overline{\Delta I_3}
 +3\ell^3\rho\bar I_3+3\ell^3\rho\overline{\Delta I_2}\\
 &+3\ell^2(2\rho^2+\rho')\bar I_2+\frac{3(N+1)}2S'',\\
 (\Delta^2I_2)^\lambda={}&\ell^4\overline{\Delta^2I_2}
 +5\ell^3\rho\overline{\Delta I_2}
 +\ell^2(4\rho^2+2\rho')\bar I_2+(N+1)S''.
\end{aligned}
\]
The coefficients of $E_4$ were chosen to cancel every M\"obius
anomaly. This explains the cancellation of the terms involving
$\rho$ without a Schwarzian factor, but does not by itself exclude
derivatives such as $S''$. In the displayed combination, the
coefficient of $S''$ is
\[
 (N+1)\left(\frac95-3+\frac65\right)=0.
\]
The $\bar I_3$ and $\overline{\Delta I_2}$ terms also cancel, while
the remaining coefficient of $\bar I_2$ reduces by
$2\rho'-\rho^2=12S$. Thus precisely the two terms $S\bar I_2$
and $S^2$ survive:
\begin{equation}\label{I:eq:quartic-E4-law}
 E_4^\lambda=\ell^4\bar E_4
 +\frac{6(5N+7)}5\ell^2S\bar I_2
 +\frac{3(N+1)(5N+7)}5S^2.
\end{equation}
On the other hand, centrality of \(S\) gives
\[
 (I_2^2)^\lambda=\ell^4\bar I_2^2
       +2(N+1)\ell^2S\bar I_2+(N+1)^2S^2.
\]
This is why a product correction appears: the square of the
quadratic coefficient has exactly the two remaining anomalies.
The same multiple of this identity cancels both extra terms in
\eqref{I:eq:quartic-E4-law}. Hence
\begin{equation}\label{I:eq:quartic-W4}
 W_4=E_4-\frac{3(5N+7)}{5(N+1)}I_2^2
\end{equation}
has weight-four covariance. Every step is valid with associative
coefficients: only the parameter jets have been commuted. The scalar
restriction at \(I_2=\Delta I_2=\cdots=0\) is \(E_4\), so the
general normalization below gives this same representative.

\subsection{The normalization condition}

For scalar coefficients, we specify a higher Schwarzian by its value
when the projective connection and all its derivatives vanish.
To express this condition, put
\[
 \mathcal J_N=\Q[x_{k,r}\mid 2\le k\le N,\ r\ge0],
 \qquad \operatorname{wt}(x_{k,r})=k+r.
\]
The variable \(x_{k,r}\) represents the \(r\)-th derivative of the
scalar normalized coefficient \(I_k\). The parameter action is the
commutative specialization of Lemma~\ref{I:lem:rp-affine}.
Write \(E_m(x)\) for \eqref{I:eq:rp-E} with \(I_{k,r}\) replaced by
\(x_{k,r}\).

\begin{proposition}[Scalar projective normalization]\label{I:prop:rp-scalar}
For every \(3\le m\le N\), there is a unique scalar polynomial
\(\omega_m\in\mathcal J_N\) with parameter covariance of weight
\(m\) whose restriction to \(x_{2,r}=0\) for all \(r\) equals
\(E_m(x)\) on that slice. It has the form
\[
 \omega_m=x_{m,0}+q_m,
\]
where \(q_m\) is homogeneous of weight \(m\) and uses only variables
\(x_{k,r}\) with \(k<m\) and \(k+r\le m\).
The polynomial is computed by the finite prescription
\eqref{I:eq:rp-normalizer}--\eqref{I:eq:rp-q-definition}.
\end{proposition}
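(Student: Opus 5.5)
I construct \(\omega_m\) by passing to a projective parameter, in which the scalar projective connection vanishes, and evaluating \(E_m\) there. Work with formal jets at a point. Given scalar jets \(x_{k,r}\), solve for a formal parameter change \(\lambda\) with \(P^\lambda=0\). By \eqref{I:eq:P} this is the condition
\[
 S_\lambda=-\lambda_1^2\bar P,
\]
a third-order equation for \(\lambda\), equivalent to the Schwarzian equation \(\Sch(\lambda)=-6\lambda_1^2\bar P\). Fix the initial jet \(\lambda_1=1,\lambda_2=0\) at the base point; then \(\lambda_3,\lambda_4,\ldots\) are determined recursively as polynomials in the jets \(x_{2,r}\). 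In this parameter, define \(\omega_m:=\lambda_1^{-m}\,E_m^\lambda\), transported back via the affine law of Lemma~\ref{I:lem:rp-affine}. The result is a polynomial in the \(x_{k,r}\), and this is the finite prescription the statement refers to. To see that only finitely many jets enter, note that weight \(m\) allows only \(\lambda_j\) with \(j\le m-1\), hence only \(x_{2,r}\) with \(r\le m-4\).

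Well-definedness and covariance come from Proposition~\ref{I:prop:rp-projective}. Projective parameters for the same data differ by M\"obius maps, since two solutions of \(\Sch(\lambda)=\) fixed differ by postcomposition with a M\"obius map. By Proposition~\ref{I:prop:rp-projective}, \(E_m\) is weight-\(m\) covariant under these, so \(\omega_m\) does not depend on the choice of initial jet. Now change the parameter by an arbitrary \(\mu\). A projective parameter for the pulled-back data is the composite of \(\mu\) with the old projective parameter, up to M\"obius. Composing the scale factors then gives \(\omega_m^\mu=\mu_1^m\bar\omega_m\). I will carry this out in the universal parameter jet algebra \(\mathcal U_N^\lambda\) specialised to commutative coefficients, using the chain rule for the Schwarzian.

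The form of \(\omega_m\) follows from the construction. On the slice \(x_{2,r}=0\) one may take \(\lambda=\mathrm{id}\), so there \(\omega_m=E_m(x)\). The leading term is \(x_{m,0}\) because \(c_{m,0}=1\) and corrections only multiply lower jets by \(\lambda\)-jets. Homogeneity follows by using scalings to track weight, with \(\lambda_j\) of weight \(j-1\). The restriction to variables \(k<m\), \(k+r\le m\) is read off from Lemma~\ref{I:lem:rp-affine} together with weight counting.

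Uniqueness is the step I expect to be the main obstacle. Suppose \(\omega,\omega'\) are both covariant with the same slice restriction. Their difference \(d\) is weight-\(m\) covariant and vanishes when all \(x_{2,r}=0\). Given any jets, pull back to a projective parameter, which is possible since formal solutions exist over \(\Q\). There \(P=0\), so \(x_{2,r}=0\) and \(d=0\); covariance then forces \(d=0\) at the original jets. The delicate point is turning this into a polynomial identity in \(\mathcal J_N\). Because the parameter change is a formal power series with polynomial jet coefficients, I would argue universally: treat \(\lambda\) as a formal solution over \(\mathcal J_N\) itself, with jets lying in \(\mathcal J_N\) since \(\lambda_1=1\) is invertible, so that evaluation becomes a ring identity rather than a pointwise one.
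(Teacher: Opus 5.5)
Your argument is essentially the paper's. You normalize to the projective parameter with $\nu'(0)=1$, $\nu''(0)=0$, use Proposition~\ref{I:prop:rp-projective} both to remove the residual M\"obius freedom and to obtain $\omega_m(\mathcal L^\mu)=\mu'(0)^m\omega_m(\mathcal L)$, and prove uniqueness by specializing the universal covariance identity to the normalizing parameter jets, which kill every $x_{2,r}$. Two corrections are needed, neither affecting the main line. First, two projective parameters differ by \emph{pre}composition $\nu\mapsto\nu\circ M$, because $P^{\nu\circ M}=M'^2(P^\nu\circ M)+S_M$; the equation $\Sch(\nu)=-6\nu'^2p(\nu)$ has no fixed right side, and the classical postcomposition fact applies instead to $\nu^{-1}$, which solves $\Sch(\nu^{-1})=6p$. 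Second, your jet bound is wrong: weight counting cannot give $j\le m-1$, since a pure Schwarzian term such as $\partial^{m-2}S_\lambda$ has weight $m$ and contains $\lambda_{m+1}$, and in the prescription~\eqref{I:eq:rp-q-definition} the jets $\nu^{(j)}(0)$ are genuinely needed through $j=m+1$ (for $m=4$, $x_{2,2}$ enters $\omega_4=b_4(0)-2b_3'(0)$ only through $\nu^{(5)}(0)$), although finiteness itself is unaffected.
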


The lift to associative coefficients is a linear symmetrization map.
For free symbols \(X_{k,r}\), define
\begin{equation}\label{I:eq:rp-Sym}
 \operatorname{Sym}(x_{i_1}\cdots x_{i_d})
   =\frac1{d!}\sum_{\pi\in S_d}
        X_{i_{\pi(1)}}\cdots X_{i_{\pi(d)}}.
\end{equation}
Here each \(i_j\) denotes a pair \((k,r)\), repeated factors are
allowed, and \(\operatorname{Sym}(1)=1\). Extend the map linearly.
For two coefficients we write $A\odot B=(AB+BA)/2$; thus
$\operatorname{Sym}(xy)=X\odot Y$. In contrast,
$\operatorname{Sym}(x)\operatorname{Sym}(y)=XY$, so this linear map
is not an algebra homomorphism. Full symmetrization in more than two
factors always means \eqref{I:eq:rp-Sym}, not an iterated binary product.

\begin{theorem}[Associative higher Schwarzians]\label{I:thm:rp-higher}
Let $q_m$ be the scalar correction of Proposition~\ref{I:prop:rp-scalar}.
For $3\le m\le N$, define
\begin{equation}\label{I:eq:rp-Q-definition}
 Q_m=\operatorname{Sym}(q_m),\qquad
 W_m=I_m+Q_m(X_{k,r}=\Delta^rI_k).
\end{equation}
Then $W_m$ is a universal gauge covariant and a parameter covariant
of weight $m$:
\[
 W_m^g=g^{-1}W_mg,\qquad
 W_m^\lambda=\lambda_1^m\bar W_m.
\]
\end{theorem}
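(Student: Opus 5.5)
Gauge covariance is the easy half. By Theorem~\ref{I:thm:local-generation}, every $\Delta^rI_k$ is a universal gauge covariant, and $\mathcal C_N$ is a subalgebra. Hence any noncommutative polynomial in these jets, in particular $I_m+Q_m(X_{k,r}=\Delta^rI_k)$, satisfies $W_m^g=g^{-1}W_mg$: conjugation by $g$ is multiplicative, so each ordered word in covariants transforms by conjugation. No property of $\operatorname{Sym}$ is needed here.

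For the parameter law I will use Lemma~\ref{I:lem:rp-affine}. Each jet transforms as $I_{k,r}^\lambda=F_{k,r,0}+\sum_{j,s}F_{k,r,j,s}\overline{I_{j,s}}$, with central coefficients $F$ that are universal: they do not depend on the coefficient algebra. The commutative specialization of this lemma is the parameter action on $\mathcal J_N$ used in Proposition~\ref{I:prop:rp-scalar}, with the same $F$'s. I will then prove the key compatibility: for any monomial $x_{i_1}\cdots x_{i_d}$, the transform of $\operatorname{Sym}(x_{i_1}\cdots x_{i_d})$, evaluated at $X_i=I_i$, equals $\operatorname{Sym}$ applied to the scalar transform of the monomial, evaluated at $X_i=\overline{I_i}$. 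Here $\bar I_{0}:=1$ stands for the constant term. To prove it, substitute the affine expressions into each ordered word $X_{i_{\pi(1)}}\cdots X_{i_{\pi(d)}}$ and expand multilinearly. Since the $F$'s are central, they can be pulled to the front. The sum over all permutations of a product of affine forms then becomes a symmetrized sum of products of the $\bar I$'s. In the commutative case the same expansion gives the ordinary product, and $\operatorname{Sym}$ of a product of affine forms in commuting variables is exactly the full symmetrization of the expanded noncommutative product. Concretely, $\operatorname{Sym}$ restricted to degree-$d$ multilinear expressions is the $S_d$-average, and averaging commutes with substituting central affine forms. In this identity, lower-degree pieces arise when some factors are replaced by constants $F_{\cdot,0}$. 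I must check that the symmetrization of the remaining factors is again the full $\operatorname{Sym}$ in fewer variables, with the correct normalizing counts. This holds because the average over $S_d$ of the words in which a fixed subset of positions is replaced by scalars is the average over arrangements of the remaining factors.

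With this compatibility in hand, $\operatorname{Sym}$ intertwines the scalar parameter action on $\mathcal J_N$ with the associative one. Proposition~\ref{I:prop:rp-scalar} gives $\omega_m^\lambda=\lambda_1^m\bar\omega_m$ as an identity in $\mathcal J_N$ tensored with the parameter jets, where the bar is the analogous substitution. Apply $\operatorname{Sym}$ to this identity. It is linear over the central ring $\Q[\lambda_1^{\pm1},\lambda_2,\dots]$. The left side becomes $W_m^\lambda$ by the compatibility, using $\operatorname{Sym}(x_{m,0})=I_m$. The right side becomes $\lambda_1^m\bar W_m$. Universality over $\mathcal U_N^\lambda$, the free algebra on the barred jets, then follows.

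The main obstacle is making the compatibility rigorous. The difficulty is that $\operatorname{Sym}$ is not a ring map. The argument must therefore go through the $S_d$-averaging of multilinear expansions, and must use that the transformation is affine, with no products of jets (Lemma~\ref{I:lem:rp-affine}). I also have to confirm that distinct scalar monomials producing the same expanded monomial add up consistently. This follows because $\operatorname{Sym}$ is well defined on the symmetric algebra, so linear relations among scalar monomials are respected.
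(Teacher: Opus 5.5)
Your proposal is correct and follows essentially the same route as the paper. Gauge covariance comes from the covariance of each jet $\Delta^rI_k$; the parameter law comes from showing that the central affine substitutions of Lemma~\ref{I:lem:rp-affine} commute with the linear map $\operatorname{Sym}$, and then symmetrizing the scalar identity $\omega_m^\lambda=\lambda_1^m\bar\omega_m$ of Proposition~\ref{I:prop:rp-scalar}. Your commutation argument (pull out the central factors, average the remaining factors over all orders, and handle the positions replaced by constants) is exactly the paper's remark about repeated factors and terms made entirely of constants.
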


\begin{remark}[The canonical choice]\label{I:rem:canonical-choice}
The representative $W_m$ is canonical once the scalar flat-jet
restriction and the full symmetrization convention have been fixed.
Proposition~\ref{I:prop:rp-scalar} makes the scalar polynomial unique;
equation~\eqref{I:eq:rp-Sym} then selects its associative lift.
There is no uniqueness assertion among all associative polynomials
with the two covariance laws, even if their scalar specializations
agree. Example~\ref{I:ex:matrix-ambiguity} below exhibits the ambiguity.
\end{remark}

For a general associative $I_2$, a central parameter change need not
make it vanish: the inhomogeneous term in \eqref{I:eq:rp-low-laws} is
central. We therefore construct the scalar polynomial first and use
affine transport to prove the parameter law after symmetrization.

\subsection{The formal projective parameter}

The normalizer can be constructed coefficient by coefficient. Form
the Taylor series

\[
 J_k(t)=\sum_{r\ge0}x_{k,r}\frac{t^r}{r!},\qquad
 p(t)=\frac{J_2(t)}{N+1},\qquad
 \mathcal L=D_t^N+\sum_{k=2}^N\binom Nk J_k(t)D_t^{N-k}.
\]
Here $D_t$ is the Ore symbol for $d/dt$ on $\mathcal J_N[[t]]$.
The three initial values below fix the residual M\"obius freedom in a
parameter with vanishing projective connection. Choose $\nu(t)$ by
\begin{equation}\label{I:eq:rp-normalizer}
 \begin{gathered}
 \nu(0)=0,\qquad \nu'(0)=1,\qquad \nu''(0)=0,\\
 \nu'''=\frac{3(\nu'')^2}{2\nu'}-6(\nu')^3p(\nu).
 \end{gathered}
\end{equation}
The equation is precisely $p^\nu=0$. In ordinary Taylor coefficients,
the coefficient of $t^r$ on the right depends only on those of $\nu$
through degree $r+2$. On the left it is $(r+3)(r+2)(r+1)$ times
the next coefficient. This recursively gives a unique solution;
for example, $\nu'''(0)=-6p(0)$. Since $\nu'(0)=1$, its reciprocal
exists as a formal series, and all coefficients obtained are rational
polynomials in the $x_{2,r}$. The construction takes place entirely in the formal series ring.

Let $b_k(t)=I_k(\mathcal L^\nu)$. These coefficients are obtained by
\eqref{I:eq:rp-operator} and \eqref{I:eq:local-I-formulas}; in particular,
$b_2=0$. Define commuting polynomials
\begin{equation}\label{I:eq:rp-q-definition}
 \omega_m=\sum_{r=0}^{m-3}c_{m,r}\,b_{m-r}^{(r)}(0),
 \qquad q_m=\omega_m-x_{m,0}.
\end{equation}
For a general scalar operator, evaluate $\omega_m$ by
$x_{k,r}=\left.(d/dt)^r I_k(L)\right|_{t=0}$. These normalized jets
are unchanged by a scalar gauge. Formula \eqref{I:eq:rp-q-definition}
is $E_m(\mathcal L^\nu)$ evaluated at zero: the missing last
summand vanishes because $b_2=0$, and the transformed $a_1$ is central,
so covariant differentiation is ordinary differentiation.

The transformation of $I_k$ uses parameter derivatives only through
order $k+1$; taking $r$ further derivatives raises that bound by $r$.
Thus only the coefficients of $\nu$ through order $m+1$ and the variables
$x_{k,r}$ with $k+r\le m$ enter \eqref{I:eq:rp-q-definition}.
The recurrences are homogeneous if $\nu^{(j)}(0)$ is given weight
$j-1$. Thus $\omega_m$ has weight $m$. The coefficient of $x_{m,0}$ is one, and every other term uses
only $x_{k,r}$ with $k<m$. Equations
\eqref{I:eq:rp-normalizer}--\eqref{I:eq:rp-q-definition} are therefore a
finite prescription for $q_m$. The whole formal series is used to
prove covariance; computing $q_m$ requires only the truncation through
\(\nu^{(m+1)}(0)\). The output is a polynomial in coefficient jets,
not an infinite series in those variables.

\subsection{Covariance and the associative lift}

Two parameters that make the scalar projective connection vanish differ
by a M\"obius transformation. This determines the finite parameter law;
affine transport then carries it through symmetrization.

\begin{proof}[Proof of Proposition~\ref{I:prop:rp-scalar}]
The finite construction above gives the polynomial and its stated shape.

\textit{Scalar covariance.}
Let $\mu$ be any formal change of
parameter fixing zero, and let $\eta$ be the normalized projective
parameter for $\mathcal L^\mu$. The latter operator need not have zero
first coefficient. Its normalized jets are unchanged by a scalar
gauge, so they define $\eta$ by the same prescription.
Pullback composes in the order
$(\mathcal L^\mu)^\eta=\mathcal L^{\mu\circ\eta}$: the two leading
factors multiply to $(\eta'\,\mu'\!\circ\eta)^N$.
Both $\nu$ and $\mu\circ\eta$ make
the projective connection vanish. Their transition
\[
 M=\nu^{-1}\circ\mu\circ\eta
\]
satisfies \(\mu\circ\eta=\nu\circ M\). Applying the projective
connection law to \((\mathcal L^\nu)^M\) gives \(0=S_M\), so
\(M\) has zero Schwarzian. A formal series fixing zero
with zero Schwarzian is M\"obius: for initial derivatives $a$ and
$b$, with $a$ invertible, the unique formal solution is
$at/(1-bt/(2a))$. Here $M'(0)=\mu'(0)$. Proposition~\ref{I:prop:rp-projective} now gives
\[
 \omega_m(\mathcal L^\mu)
 =E_m((\mathcal L^\nu)^M)(0)
 =\mu'(0)^m E_m(\mathcal L^\nu)(0)
 =\mu'(0)^m \omega_m(\mathcal L).
\]
Take the coefficient jets and the jets $\mu^{(j)}(0)$ to be independent
indeterminates, with $\mu'(0)$ inverted. The displayed equality is then
an identity in the polynomial ring with $\mu'(0)$ inverted, so it proves the universal
scalar transformation law and specializes to the abstract chain-rule data.

\textit{Uniqueness.}
When all $x_{2,r}$ vanish, \eqref{I:eq:rp-normalizer} has the solution
$\nu(t)=t$, so $\omega_m$ restricts to $E_m$. Conversely, any scalar
covariant with this restriction has the same value after projective
normalization. Its covariance and $\nu'(0)=1$ recover that value in
the original parameter. This proves uniqueness.

\end{proof}

\begin{proof}[Proof of Theorem~\ref{I:thm:rp-higher}]
\textit{The associative lift.}
By Lemma~\ref{I:lem:rp-affine},
parameter changes act affinely on the normalized jets, with central
coefficients.
Such substitutions commute with \eqref{I:eq:rp-Sym}. Indeed, label the
factors of a monomial before making the substitution. In any term of
the expansion, move its central factors outside the average. The
remaining factors occur in every order with equal multiplicity,
giving their full symmetrization. This argument includes repeated
factors and terms consisting entirely of constants.

We may therefore symmetrize both sides of the scalar transformation
identity, obtaining $W_m^\lambda=\lambda_1^m\bar W_m$.
This uses equivariance of a linear map, not multiplicativity of
$\operatorname{Sym}$. Gauge covariance follows from that of every
$\Delta^rI_k$.
\end{proof}

For $m=3$, the construction gives \eqref{I:eq:rp-W3};
Example~\ref{I:ex:matrix-W3} evaluates it on a noncommutative operator.
At weight five, the failure of uniqueness has a concrete form.
The laws of $I_2$ and $\Delta I_2$ give
\[
 [I_2,\Delta I_2]^\lambda
      =\lambda_1^5\overline{[I_2,\Delta I_2]}.
\]
All extra terms are central or proportional to $\bar I_2$, so their
commutators vanish. The expression is also gauge covariant and has
zero scalar specialization. Thus $W_5+c[I_2,\Delta I_2]$ has the
same two laws for every $c\in\Q$ when $N\ge5$.

\begin{example}[A nonzero weight-five ambiguity]\label{I:ex:matrix-ambiguity}
Over $M_2(\Q[t])$, take
\[
 N=5,\qquad L=D^5+10(tE_{12}+E_{21})D^3.
\]
The operator is already normalized: $I_2=tE_{12}+E_{21}$,
$I_3=I_4=I_5=0$, and $\Delta=\delta$. Hence
\[
 [I_2,\Delta I_2]=E_{22}-E_{11}\ne0.
\]
Here $E_3=-3E_{12}/2$ and $E_5=0$. The formula for $W_5$ in
Appendix~\ref{I:app:formulas} gives
\[
 W_5=\frac{60}{7}\mathbf1_2,\qquad
 W_5+c[I_2,\Delta I_2]
     =\begin{pmatrix}60/7-c&0\\0&60/7+c\end{pmatrix}.
\]
The representatives are distinct for distinct rational $c$.
\end{example}

The next section compares the representative chosen by full
symmetrization with the classical scalar and matrix currents.

\section{Adler brackets and primary fields}
\label{I:sec:local-W-algebras}

Weighted parameter covariance becomes primarity when the Virasoro
Hamiltonian action is reparametrization. We prove this identity of
actions for the scalar and matrix Adler brackets. It identifies our
currents with primary generators of the classical $W_N$-algebra
and with Bilal's matrix primary fields \cite{Bilal1995}.
For scalar coefficients the underlying structures are the classical
Adler--Gelfand--Dickey and Drinfeld--Sokolov brackets
\cite{Adler1979,GelfandDickey1976,DrinfeldSokolov1985};
our primarity convention is that of \cite{BouwknegtSchoutens1993}.
For $3\le m\le N$, the notation is
\[
 \begin{aligned}
 W_m^{\mathrm{sc}}&=\text{scalar specialization of }W_m,
 &w_m&=\binom Nm W_m^{\mathrm{sc}},\\
 V_m&=\binom Nm W_m(L)\quad\text{for matrix }L,
 &\mathcal W_m^{\mathrm B}(-U)&=-V_m.
 \end{aligned}
\]
Here $U$ denotes the tuple of matrix coefficients introduced in
Section~\ref{I:subsec:wa-matrix}. The last equality uses Bilal's sign
convention, established there; the quadratic fields are defined separately.

We begin with commuting coefficients. In this section, write \(\partial\)
for coefficient differentiation and \(D\) for the Ore symbol, so that
\(Da=aD+\partial a\). Primes denote \(\partial\). Put
\begin{equation}\label{I:eq:wa-algebra}
 \mathcal R_N=\Q[u_k^{(r)}\mid 2\le k\le N,\ r\ge0],
 \qquad L=D^N+\sum_{k=2}^N u_kD^{N-k}.
\end{equation}
The derivation is determined by \(\partial u_k^{(r)}=u_k^{(r+1)}\).
The coefficient of $D^{N-1}$ is zero. Our normalization gives
\begin{equation}\label{I:eq:AGD-dictionary}
 u_k=\binom Nk I_k,\qquad
 T=u_2=6\kappa_NP,\qquad
 \kappa_N=\frac{N(N^2-1)}{12}.
\end{equation}

\subsection{The reduced scalar Adler bracket}
A Hamiltonian bracket specifies an infinitesimal change in the coefficients
of $L$ for each functional. The reduced Adler map computes this change
from the variational gradient, while preserving the zero coefficient of
$D^{N-1}$. We give its formula to fix the sign and normalization.
The scalar pseudodifferential algebra $\mathcal R_N((D^{-1}))$
allows series bounded above in powers of $D$. Its multiplication is
given by
\[
 D^j a=\sum_{r\ge0}\binom jr \partial^r(a)D^{j-r},\qquad j\in\Z.
\]
For an operator $A$, let $A_+$ be its part with nonnegative powers of
$D$, and let $\operatorname{res}A$ be its coefficient of $D^{-1}$.
For a negative-order scalar pseudodifferential operator $X$, the
residue of $[L,X]$ is a total derivative. Choose a coefficient $b_X$ with
\[
 \partial b_X=\operatorname{res}[L,X],
\]
and define
\begin{equation}\label{I:eq:wa-reduced-Adler}
 \mathcal H_L(X)=(LX)_+L-L(XL)_++\frac1N[L,b_X].
\end{equation}
Here $b_X$ is a coefficient primitive, not the operator product
$D^{-1}\operatorname{res}[L,X]$. Its existence follows from the
Leibniz identity
\[
 a\partial^r(b)-(-1)^r b\partial^r(a)
 =\partial\!\left(\sum_{s=0}^{r-1}(-1)^s\partial^s(a)\partial^{r-1-s}(b)\right),
 \qquad r\ge1.
\]
Indeed, the residue of each monomial commutator is a scalar multiple
of its left side; terms with $r=0$ cancel. Only finitely many terms
contribute to the residue. The additive constant in $b_X$ has no effect.
The last term preserves the constraint: the coefficient of $D^{N-1}$ in the first two terms
is $-\operatorname{res}[L,X]$, and the correction contributes $\partial b_X$.

Let $\zeta$ be a formal bracket parameter and $h$ an auxiliary
differential indeterminate. Define the coefficient operators by
\begin{equation}\label{I:eq:wa-Hij}
 \mathcal H_L(D^{j-N-1}h)
   =\sum_{i=2}^N \bigl(H_{ij}(\partial)h\bigr)D^{N-i},\qquad
 \{u_j{}_{\zeta}u_i\}=H_{ij}(\zeta).
\end{equation}
The $H_{ij}$ are differential operators with coefficients in
$\mathcal R_N$. Thus the bracket is local, although its definition
uses a primitive. Extend it to differential polynomials by the
Leibniz rules and sesquilinearity:
\[
 \{\partial f{}_{\zeta}g\}=-\zeta\{f{}_{\zeta}g\},\qquad
 \{f{}_{\zeta}\partial g\}=(\partial+\zeta)\{f{}_{\zeta}g\}.
\]
The Adler--Gelfand--Dickey theorem says that this bracket satisfies
skewsymmetry and the Jacobi identity. It is the second Adler bracket
after Dirac reduction by the coefficient of $D^{N-1}$; the resulting
Poisson vertex algebra is the classical principal $W$-algebra of
$\mathfrak{sl}_N$
\cite[Proposition~2.18 and Remark~2.20]{DeSoleKacValeri2015}.

A local functional $\int f$ means the class of
$f$ modulo total derivatives, and its Hamiltonian action is
$\{\int f,g\}=\{f{}_{\zeta}g\}|_{\zeta=0}$.
Equivalently, with the variational derivatives placed on the right,
\begin{equation}\label{I:eq:wa-gradient}
 X_f=\sum_{j=2}^N D^{j-N-1}\frac{\delta f}{\delta u_j},\qquad
 \frac{\delta f}{\delta u_j}
    =\sum_{r\ge0}(-\partial)^r\frac{\partial f}{\partial u_j^{(r)}},\qquad
 \left\{\int f,L\right\}=\mathcal H_L(X_f).
\end{equation}

\subsection{Virasoro and change of parameter}
A field $w$ is primary of weight $m$ with respect to $T$ if
$\{T{}_{\zeta}w\}=(\partial+m\zeta)w$. The next calculation explains why
this definition applies to our higher Schwarzians.

\begin{proposition}\label{I:prop:wa-Virasoro-action}
Let $v$ be an auxiliary differential indeterminate with zero Poisson
brackets. Then
\begin{equation}\label{I:eq:wa-Virasoro-operator}
 \left\{\int vT,L\right\}
 =\left(vD+\frac{N+1}{2}v'\right)L
       -L\left(vD-\frac{N-1}{2}v'\right).
\end{equation}
This is the infinitesimal monic pullback, followed by the scalar gauge
which restores the zero coefficient of $D^{N-1}$. In particular,
\begin{equation}\label{I:eq:wa-TT}
 \{T{}_{\zeta}T\}=(\partial+2\zeta)T+\kappa_N\zeta^3.
\end{equation}
\end{proposition}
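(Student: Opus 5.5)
The proof is a direct computation with the reduced Adler map \eqref{I:eq:wa-reduced-Adler} at the gradient of the functional $\int vT$; the $\zeta$-bracket \eqref{I:eq:wa-TT} is then read off from the result.

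\textbf{Step 1: the gradient.} Since $T=u_2$, the only variational derivative is $\delta(vT)/\delta u_2=v$. Formula \eqref{I:eq:wa-gradient} therefore gives
\[
 X=D^{1-N}v .
\]

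\textbf{Step 2: the truncated products.} We need $(LX)_+$ and $(XL)_+$. Write $L=D^N+u_2D^{N-2}+\cdots$.
\begin{itemize}
\item For $LX$: we have $D^NX=D\,v$. The term $u_2D^{N-2}D^{1-N}v=u_2D^{-1}v$ has negative order, and so do all later terms. Hence $(LX)_+=vD+v'$.
\item For $XL$: we have $D^{1-N}vD^N=(D^{1-N}v)D^N$. Expanding $D^{1-N}v=vD^{1-N}+(1-N)v'D^{-N}+\cdots$ gives the nonnegative part $vD+(1-N)v'$. The remaining products $D^{1-N}vu_kD^{N-k}$ have order $1-k<0$ and contribute nothing. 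Hence $(XL)_+=vD-(N-1)v'$.
\end{itemize}

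\textbf{Step 3: the correction term.} Substituting into \eqref{I:eq:wa-reduced-Adler} gives
\[
 (vD+v')L-L\bigl(vD-(N-1)v'\bigr)+\tfrac1N[L,b_X].
\]
Its $D^{N}$-coefficient is $v'-(N-1)\cdot 0\ldots$; more carefully, the coefficient of $D^{N-1}$ must be cancelled. We compute the $D^N$ part of $(vD+v')L-LvD+(N-1)Lv'$: it is $v'+(N-1)v'-Nv'=0$ from $-Nv'D^N$ in $LvD$. So the order is at most $N-1$. Next take $b_X=\lambda v'$ with $\lambda$ constant (the residue is local). Since $[L,v']=Nv''D^{N-1}+\cdots$, we only need to choose $\lambda$ so that the $D^{N-1}$-coefficient vanishes. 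Rather than track $\operatorname{res}[L,X]$, use uniqueness: the final operator must have zero $D^{N-1}$-coefficient, and $[L,\cdot]$ applied to a central function $c$ equals $Nc'D^{N-1}+\cdots$. Matching the $v'$-multiples then forces the split $(N+1)/2$, $(N-1)/2$. Explicitly, $\frac1N[L,b_X]=\frac{\lambda}{N}(Lv'-v'L)$, and adding it shifts the coefficients by $-\lambda/N$ on the left and $+\lambda/N$ on the right. Only $\lambda=-\tfrac{N(N-1)}2\cdot$ (sign fixed by the constraint) yields \eqref{I:eq:wa-Virasoro-operator}.

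To verify the constraint directly, note that the $D^{N-1}$-coefficient of $(vD+\alpha v')L-L(vD-\beta v')$ is $\alpha v'+\beta Nv'-\binom N1 v''\cdot 0\ldots$. The reliable route is to compute the $D^{N-1}$-coefficient of $(vD+\alpha v')L-L(vD-\beta v')$ with $\alpha+\beta=N$ and require it to vanish. This gives
\[
 \alpha\,\partial^0\!\cdot\! 0+(\text{from }D^N v D)\ \binom N2 v''-\beta N v''=0,
\]
so $\beta=(N-1)/2$ and $\alpha=(N+1)/2$.

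\textbf{Step 4: identification with pullback.} The right side of \eqref{I:eq:wa-Virasoro-operator} is the first-order term of $\ell^{-(N+1)/2}\,L^\lambda\,\ell^{(N-1)/2}$ for $\lambda=t+\epsilon v$. Indeed, the infinitesimal monic pullback \eqref{I:eq:rp-operator} is $[vD,L]+Nv'L$ to first order, and the scalar gauge contributes $\tfrac{N-1}2[L,v']$. These combine to the same expression.

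\textbf{Step 5: the bracket \eqref{I:eq:wa-TT}.} Read off the $D^{N-2}$-coefficient of \eqref{I:eq:wa-Virasoro-operator}. It is
\[
 vT'+2v'T+\kappa_Nv''' ,
\]
where the constant arises from $D^N$ acting past $v$ and $v'$. Concretely, $-\binom N3v'''+\tfrac{N+1}{2}\cdot 0+\tfrac{N-1}2\binom N2v'''$ should equal $\kappa_N$, and $\tfrac{N-1}{2}\binom N2-\binom N3=\tfrac{N(N-1)(N+1)}{12}$, as required. Passing to $\zeta$-brackets by \eqref{I:eq:wa-Hij} and sesquilinearity (integrating by parts, $v^{(j)}\mapsto\zeta^j$) gives \eqref{I:eq:wa-TT}.

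The main obstacle is the normal-ordering bookkeeping of the $v'$-coefficients in Step 3 and the cubic term in Step 5. The cubic-term arithmetic has been checked above; the sign conventions for $b_X$ are the remaining point to fix carefully.
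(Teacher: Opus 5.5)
Your proposal is correct and is essentially the paper's proof: the same gradient $X=D^{1-N}v$, the same truncations $(LX)_+=vD+v'$ and $(XL)_+=vD-(N-1)v'$, the same pullback-plus-gauge identification, and the same $D^{N-2}$ readout with $\tfrac{N-1}2\binom N2-\binom N3=\kappa_N$; fixing $b_X$ by the vanishing $D^{N-1}$-coefficient is legitimate and agrees with the paper's direct value $\operatorname{res}[L,X]=-\tfrac{N(N-1)}2v''$, so $b_X=-\tfrac{N(N-1)}2v'$ with no remaining sign issue. One slip to fix: the finite gauge in Step~4 should be $\ell^{-(N-1)/2}L^\lambda\ell^{(N-1)/2}$, since your $\ell^{-(N+1)/2}L^\lambda\ell^{(N-1)/2}$ is not monic and its first-order term has $\tfrac{N-1}2v'L$ instead of $\tfrac{N+1}2v'L$, although the infinitesimal check in your next sentence is correct.
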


\begin{proof}
For $f=vT$, the gradient is $X=D^{1-N}v$. The Ore rule gives
\[
 (LX)_+=vD+v',\qquad (XL)_+=vD-(N-1)v',\qquad
 \operatorname{res}[L,X]=-\frac{N(N-1)}2v''.
\]
Substitution in \eqref{I:eq:wa-reduced-Adler} proves
\eqref{I:eq:wa-Virasoro-operator}.
On the other hand, differentiate the monic pullback
\eqref{I:eq:rp-operator} at $\lambda(t)=t+\varepsilon v(t)$, with
$\varepsilon^2=0$. Its variation is $[vD,L]+Nv'L$, with normalized first coefficient
$a_1=-\varepsilon(N-1)v''/2$.
The gauge $g=1+\varepsilon(N-1)v'/2$ restores that coefficient to
zero and adds $(N-1)[L,v']/2$ to the variation. The result is
\eqref{I:eq:wa-Virasoro-operator}.

Its coefficient of $D^{N-2}$ is
\[
 vT'+2v'T+
 \left(\frac{N-1}{2}\binom N2-\binom N3\right)v'''
 =vT'+2v'T+\kappa_Nv'''.
\]
Comparison of the independent derivatives of $v$ proves
\eqref{I:eq:wa-TT}.
\end{proof}

\begin{theorem}[Primary generators]\label{I:thm:wa-primary}
For $3\le m\le N$, set
\[
 w_m=\binom Nm W_m^{\mathrm{sc}},
\]
where $W_m^{\mathrm{sc}}$ is the scalar specialization from
Section~\ref{I:sec:higher-construction}, defined there by $\omega_m$. This rescaling
gives leading term $u_m$. Then
\begin{equation}\label{I:eq:wa-primary}
 \{T{}_{\zeta}w_m\}=(\partial+m\zeta)w_m.
\end{equation}
Moreover, $T,w_3,\ldots,w_N$ freely generate $\mathcal R_N$ as a
commutative differential algebra. Hence they give primary generators
of the classical $W_N$-algebra, with $T$ as its Virasoro element.
\end{theorem}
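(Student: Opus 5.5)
The proof has two parts: primarity, which comes from the infinitesimal form of weight-$m$ parameter covariance, and free generation, which comes from a triangularity argument.

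\emph{Primarity.} By Proposition~\ref{I:prop:wa-Virasoro-action}, the Hamiltonian action of $\int vT$ on $L$ is the infinitesimal monic pullback along $\lambda(t)=t+\varepsilon v(t)$, followed by the scalar gauge $g=1+\varepsilon(N-1)v'/2$.

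\begin{enumerate}
\item The scalar gauge does not affect $w_m$. Indeed, $W_m^{\mathrm{sc}}$ is a polynomial in the normalized jets $\Delta^rI_k$, and in the scalar case these are unchanged by a scalar gauge (Proposition~\ref{I:prop:local-covariance}).
\item By Proposition~\ref{I:prop:rp-scalar}, $W_m^{\mathrm{sc}}$ satisfies $W_m^\lambda=\lambda_1^m\bar W_m$. To first order in $\varepsilon$ we have $\lambda_1=1+\varepsilon v'$ and $\bar W=W+\varepsilon vW'$. Hence the variation is $\varepsilon(vw_m'+mv'w_m)$.
\item Therefore $\{\int vT,w_m\}=vw_m'+mv'w_m$ for an arbitrary auxiliary $v$.
\end{enumerate}

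To pass to the $\zeta$-bracket, I would use the standard correspondence for local brackets: $\{\int vT,w\}$ equals $\{T{}_{\zeta}w\}$ with $\zeta^j$ replaced by $v^{(j)}$, up to the sesquilinearity convention. Skewsymmetry converts the left action of $\int vT$ into $\{T{}_\zeta w\}$. Comparing coefficients of $v,v',v'',\ldots$ then gives $\{T{}_\zeta w_m\}=(\partial+m\zeta)w_m$, with no higher powers of $\zeta$.

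I expect the main obstacle to be this sign bookkeeping between $\{\int vT,\cdot\}$ and the $\zeta$-bracket. I would check it against the case $w=T$ in \eqref{I:eq:wa-TT}, where the same procedure must reproduce $(\partial+2\zeta)T+\kappa_N\zeta^3$ from Proposition~\ref{I:prop:wa-Virasoro-action}. That calibration fixes the convention once and for all.

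\emph{Free generation.} By \eqref{I:eq:AGD-dictionary}, $u_k=\binom Nk I_k$ with $a_1=0$, so $\Delta=\partial$ in this setting. Proposition~\ref{I:prop:rp-scalar} gives $w_m=u_m+\binom Nm q_m$, where $q_m$ involves only jets $u_k^{(r)}$ with $k<m$. Also $T=u_2$.

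Define the differential algebra map $\Q[T^{(r)},w_m^{(r)}]\to\mathcal R_N$. It is unitriangular with respect to the order by index $k$: the variable $u_m^{(r)}$ equals $w_m^{(r)}$ minus a polynomial in $u_k^{(s)}$ with $k<m$. Induction on $m$ then expresses every $u_m^{(r)}$ as a polynomial in the jets of $T,w_3,\ldots,w_m$, which gives surjectivity. The inverse substitution is also triangular, so the map is an isomorphism of polynomial rings, which gives freeness.

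Together, primarity and free generation identify $T,w_3,\ldots,w_N$ as primary generators of the classical $W_N$-algebra, with $T$ its Virasoro element by \eqref{I:eq:wa-TT}.
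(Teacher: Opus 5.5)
Your proposal is correct and follows the paper's proof step for step. Weighted parameter covariance together with scalar-gauge invariance gives $\{\int vT,w_m\}=vw_m'+mv'w_m$ via Proposition~\ref{I:prop:wa-Virasoro-action}, comparing the $v$-jets gives primarity, and the unitriangular substitution $w_m=u_m+(\text{terms in }u_2,\ldots,u_{m-1})$ gives free generation. One small correction: no skewsymmetry is needed, because with the convention $\{\int f,g\}=\{f{}_{\zeta}g\}|_{\zeta=0}$ and $v$ bracket-free, the left Leibniz rule gives $\{\int vT,w\}=\sum_r c_r\partial^r v$ directly, as your calibration against \eqref{I:eq:wa-TT} would confirm.
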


\begin{proof}
Theorem~\ref{I:thm:rp-higher} gives weighted covariance under
reparametrization and invariance under scalar gauge. By
Proposition~\ref{I:prop:wa-Virasoro-action}, its infinitesimal form is
\[
 \left\{\int vT,w_m\right\}=vw_m'+mv'w_m.
\]
If $\{T{}_{\zeta}w_m\}=\sum_r c_r\zeta^r$, the left Leibniz rule
identifies the left side with $\sum_r c_r\partial^r(v)$.
Independence of the $v$-jets proves \eqref{I:eq:wa-primary}.
Finally, $w_m=u_m+$ a differential polynomial in
$u_2,\ldots,u_{m-1}$. This triangular substitution has a differential
polynomial inverse, obtained successively for $m=3,\ldots,N$.
It therefore carries the free differential generators
$u_2,\ldots,u_N$ to $T,w_3,\ldots,w_N$.
\end{proof}

\subsection{Comparison of primary generators}
The primary-generator problem is classical
\cite{DiFrancescoItzyksonZuber1991,Dickey1997}. Our representatives are
specified by their values when $u_2$ and all its derivatives vanish:
\begin{equation}\label{I:eq:wa-flat}
 \left.w_m\right|_{u_2^{(r)}=0\ (r\ge0)}
 =\sum_{r=0}^{m-3}(-1)^r
   \frac{\binom{N-m+r}{r}\binom{m-1}{r}}
        {\binom{2m-2}{r}}\,u_{m-r}^{(r)}.
\end{equation}
Indeed, substitute $I_k=u_k/\binom Nk$ in the restriction of $E_m$
from \eqref{I:eq:rp-E} and multiply by $\binom Nm$.
The scalar uniqueness assertion of Proposition~\ref{I:prop:rp-scalar} shows
that weighted covariance and \eqref{I:eq:wa-flat} determine these
representatives at every order.

\begin{proposition}\label{I:prop:wa-classical-comparison}
With the coefficient convention \eqref{I:eq:AGD-dictionary}, our $w_m$
are the normalized primary currents of Di Francesco, Itzykson and
Zuber \cite{DiFrancescoItzyksonZuber1991}.
\end{proposition}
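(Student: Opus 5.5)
Since both sides are primary generators with the same flat restriction, I would compare the characterization of the Di Francesco--Itzykson--Zuber currents with the uniqueness part of Proposition~\ref{I:prop:rp-scalar}. Their currents for $L=D^N+\sum_{k\ge2}u_kD^{N-k}$ are constructed as differential polynomials $W_m^{\mathrm{DIZ}}=u_m+(\text{terms in }u_2,\ldots,u_{m-1})$, homogeneous of weight $m$, primary of weight $m$ for the Virasoro element $T=u_2$ of the second Gelfand--Dickey bracket. Their explicit formula is the covariantization of the linear Wilczy\'nski combination. In the first step I would take their explicit expression and set $u_2^{(r)}=0$ for all $r$. The goal is to show that what remains is exactly the linear combination \eqref{I:eq:wa-flat}. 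Their formula is built by dressing with $T$-dependent terms that vanish on this slice, so this should reduce to matching the binomial coefficients $(-1)^r\binom{N-m+r}{r}\binom{m-1}{r}/\binom{2m-2}{r}$. After the substitution $u_k=\binom Nk I_k$, this is the identity $\binom Nm c_{m,r}/\binom N{m-r}=(-1)^r\binom{N-m+r}r\binom{m-1}r/\binom{2m-2}r$, which I would check directly.

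In the second step I would translate primarity into weighted parameter covariance. By Proposition~\ref{I:prop:wa-Virasoro-action}, the Hamiltonian flow of $\int vT$ is the infinitesimal monic pullback followed by the scalar gauge. Hence $\{T{}_\zeta W^{\mathrm{DIZ}}_m\}=(\partial+m\zeta)W^{\mathrm{DIZ}}_m$ says that $W^{\mathrm{DIZ}}_m/\binom Nm$ is infinitesimally a weight-$m$ parameter covariant, in the form $vw'+mv'w$. I would then integrate this to the finite law. The action on jets of bounded weight is polynomial in the parameter jets, and the argument of step~3 of Proposition~\ref{I:prop:rp-projective} applies. I would use the one-parameter flows $t\mapsto t+\varepsilon v(t)$ for polynomial $v$, together with the fact that formal diffeomorphisms fixing $0$ are generated by such flows on each finite jet truncation. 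This is exactly the point where the formal-group argument must be written carefully, since only the infinitesimal statement is given.

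In the third step I would apply the uniqueness assertion of Proposition~\ref{I:prop:rp-scalar}. The polynomial $W^{\mathrm{DIZ}}_m/\binom Nm$ has weight-$m$ covariance and restricts to $E_m$ on the slice $x_{2,r}=0$. By uniqueness it therefore equals $\omega_m$, and hence $W^{\mathrm{DIZ}}_m=w_m$. As a consistency check I would verify this at $m=3$ and $m=4$ against \eqref{I:eq:rp-W3} and \eqref{I:eq:quartic-W4}.

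I expect the main obstacle to be the first step, which requires confirming that the published normalization of the DIZ currents really has the stated flat restriction. Their normalization is fixed by leading term plus primarity, which by itself leaves ambiguity through products of lower primaries (for example $w_3^2$ terms in weight six). If their convention fixes this ambiguity differently, for instance by a vanishing condition on composite terms rather than on the $u_2$-slice, I would instead compare on the slice $u_2^{(r)}=0$ directly, using their closed formula, and reconcile the conventions there.
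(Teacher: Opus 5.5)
Your proposal is correct and is essentially the paper's proof: restrict DFIZ's explicit currents to the flat slice $u_2^{(r)}=0$, match the surviving linear coefficients with \eqref{I:eq:wa-flat}, and conclude by the uniqueness in Proposition~\ref{I:prop:rp-scalar}. The paper does not need your Step~2, since DFIZ's currents are constructed to transform as $m$-differentials. It settles your worry about composite terms by noting that every nonlinear term in their formula (2.11a) contains a jet of $u_2$, so the flat restriction consists exactly of their linear coefficients (2.24).
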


\begin{proof}
Their operator order $n$ and coefficients $a_k$ are our $N$ and $u_k$.
Their currents transform as $m$-differentials. In their formula
\cite[equation (2.11a)]{DiFrancescoItzyksonZuber1991}, every nonlinear
term contains a jet of $u_2$. Setting all $u_2^{(r)}$ to zero therefore
leaves the linear coefficients
\[
 B_{m,m-r}=(-1)^r
 \frac{\binom{m-1}{r}\binom{N-m+r}{r}}{\binom{2m-2}{r}},
\]
given in their equation (2.24). These are exactly
\eqref{I:eq:wa-flat}. Thus $w_m^{\mathrm{DFIZ}}/\binom Nm$ and
$W_m^{\mathrm{sc}}$ have the same covariance and the same full
flat-jet restriction. Proposition~\ref{I:prop:rp-scalar} gives equality.
\end{proof}

The first two fields, in the ordinary coefficient convention, are
\begin{equation}\label{I:eq:wa-low-fields}
 \begin{aligned}
 w_3={}&u_3-\frac{N-2}{2}u_2',\\
 w_4={}&u_4-\frac{N-3}{2}u_3'
       +\frac{(N-2)(N-3)}{10}u_2''
       -\frac{(N-2)(N-3)(5N+7)}{10N(N^2-1)}u_2^2.
 \end{aligned}
\end{equation}
The second formula applies when $N\ge4$; both agree with
\cite[Table~I]{DiFrancescoItzyksonZuber1991}.
At higher weights, a leading term alone is insufficient: for example,
$w_6+c w_3^2$ is also primary when $N\ge6$. Condition
\eqref{I:eq:wa-flat} excludes this freedom.

\subsection{The cubic scalar operator}
For $N=2$, equation \eqref{I:eq:wa-TT} is the Virasoro bracket with
$\kappa_2=1/2$. The first nonlinear example is
$L=D^3+TD+b$. Put $w=b-T'/2$; thus $w=W_3^{\mathrm{sc}}$.
The brackets are
\begin{equation}\label{I:eq:wa-W3-brackets}
 \begin{aligned}
 \{T{}_{\zeta}T\}&=(\partial+2\zeta)T+2\zeta^3,\\
 \{T{}_{\zeta}w\}&=(\partial+3\zeta)w,\\
 \{w{}_{\zeta}w\}&=-\frac16\zeta^5-\frac56T\zeta^3
             -\frac54T'\zeta^2
             -\left(\frac34T''+\frac23T^2\right)\zeta
             -\frac16T'''-\frac23TT'.
 \end{aligned}
\end{equation}
For a direct check of the last line, \eqref{I:eq:wa-Hij} gives
\[
 \begin{aligned}
 \{T{}_{\zeta}b\}&=(\partial+3\zeta)b+T\zeta^2+\zeta^4,\\
 \{b{}_{\zeta}b\}&=(\partial+2\zeta)
       \left(b'-\frac12T''-\frac13T^2\right)
       -\frac16(\partial+2\zeta)^3T-\frac23\zeta^5.
 \end{aligned}
\]
Substitute $w=b-T'/2$ and use sesquilinearity and skewsymmetry;
all terms containing $b$ cancel. These are the classical
$\mathfrak{sl}_3$ brackets in the conventions of
\cite[Example~2.22]{DeSoleKacValeri2015}. The quadratic term $T^2$
exhibits the nonlinear extension of the Virasoro algebra.

\subsection{Matrix coefficients}\label{I:subsec:wa-matrix}
The associative construction also applies to the classical matrix
$W$-algebras of \cite[\S4]{DeSoleKacValeri2015}. Fix $d\ge1$ and put
\[
 \mathcal R_{N,d}=\Q[(U_k)_{ab}^{(r)}\mid
  2\le k\le N,\ 1\le a,b\le d,\ r\ge0],\qquad
 L=\mathbf1_dD^N+\sum_{k=2}^N U_kD^{N-k}.
\]
The entries commute, but the coefficient matrices multiply in their
given order. Equip $\mathcal R_{N,d}$ with the second matrix Adler
bracket after Dirac reduction by the entire coefficient of $D^{N-1}$,
as in \cite[equation (4.11)]{DeSoleKacValeri2015}.

\begin{proposition}\label{I:prop:wa-matrix-primary}
Put $T=\operatorname{tr}U_2$,
$V_2=U_2-(T/d)\mathbf1_d$, and
$V_m=\binom Nm W_m(L)$ for $3\le m\le N$. Then
\[
 \{T{}_{\zeta}T\}=(\partial+2\zeta)T+d\kappa_N\zeta^3,
 \qquad
 \{T{}_{\zeta}(V_m)_{ab}\}=(\partial+m\zeta)(V_m)_{ab}
 \quad(2\le m\le N).
\]
The element $T$, any $d^2-1$ independent linear coordinates of $V_2$,
and all entries of $V_3,\ldots,V_N$ freely generate
$\mathcal R_{N,d}$ as a commutative differential algebra.
\end{proposition}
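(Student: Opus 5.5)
We follow the scalar argument of Proposition~\ref{I:prop:wa-Virasoro-action} and Theorem~\ref{I:thm:wa-primary}, now with matrix coefficients. The plan has three steps: identify the Hamiltonian flow of $\int vT$, read off the Virasoro bracket, and then read off primarity and free generation.

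\textbf{Step 1: the flow of $\int vT$.} For $f=vT=v\operatorname{tr}U_2$, the variational gradient is $X=D^{1-N}v\mathbf1_d$. This is central in the matrix sense, since it is a scalar multiple of the identity. We substitute it into the matrix Adler map of \cite[equation (4.11)]{DeSoleKacValeri2015}. Its Dirac correction is matrix-valued, chosen to kill the entire coefficient of $D^{N-1}$. Because $X$ is scalar, the residue $\operatorname{res}[L,X]$ equals $-\binom N2 v''\mathbf1_d$, as in the scalar case, and the correction is again a scalar gauge term. We expect to obtain the same formula \eqref{I:eq:wa-Virasoro-operator}, namely
\[
 \left\{\int vT,L\right\}=\Bigl(vD+\tfrac{N+1}2v'\Bigr)L-L\Bigl(vD-\tfrac{N-1}2v'\Bigr),
\]
with $v$ read as $v\mathbf1_d$. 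The main obstacle is this step: matching the sign and normalization conventions of the matrix reduced bracket, and checking that the Dirac reduction by the whole matrix coefficient gives only the scalar correction $\frac1N[L,b_X]$ when $X$ is scalar. We would first verify this directly from their formula (4.11), using that the trace and scalar parts decouple.

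\textbf{Step 2: the Virasoro bracket.} Once the flow is known, the identification with the infinitesimal pullback composed with the scalar gauge $1+\varepsilon(N-1)v'/2$ goes through verbatim, since only central parameter jets occur. The coefficient of $D^{N-2}$ is $vU_2'+2v'U_2+\kappa_Nv'''\mathbf1_d$. Taking the trace gives $\{T{}_\zeta T\}=(\partial+2\zeta)T+d\kappa_N\zeta^3$. Subtracting $T/d$ times the identity kills the anomaly, so $\{T{}_\zeta (V_2)_{ab}\}=(\partial+2\zeta)(V_2)_{ab}$.

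\textbf{Step 3: primarity of $V_m$ for $m\ge3$.} The matrix $L$ has $a_1=0$, so $\Delta=\partial$ and $I_k=U_k/\binom Nk$. By Theorem~\ref{I:thm:rp-higher}, $W_m$ is gauge covariant and a parameter covariant of weight $m$. The flow in Step 1 is exactly the infinitesimal pullback followed by a scalar, hence central, gauge; under that gauge $W_m$ is unchanged. Therefore $\{\int vT,V_m\}=vV_m'+mv'V_m$ entrywise. Independence of the $v$-jets then gives $(\partial+m\zeta)(V_m)_{ab}$, exactly as in the proof of Theorem~\ref{I:thm:wa-primary}.

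\textbf{Free generation.} $V_m=U_m+{}$(a differential polynomial in the entries of $U_2,\dots,U_{m-1}$), and $U_2=V_2+(T/d)\mathbf1_d$, where $V_2$ ranges over trace-free matrices. Any $d^2-1$ independent linear coordinates of $V_2$ together with $T$ give an invertible linear change of the entries of $U_2$. Triangular inversion, done successively for $m=3,\dots,N$, then shows that these elements freely generate $\mathcal R_{N,d}$ as a commutative differential algebra.
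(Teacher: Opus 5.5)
Your proposal is correct and follows essentially the same route as the paper. The ingredients match: the scalar gradient $X=D^{1-N}v\mathbf 1_d$, a scalar Dirac correction identifying the flow with pullback composed with a central gauge, the trace and trace-free parts of the $D^{N-2}$ coefficient, Theorem~\ref{I:thm:rp-higher} for $m\ge3$, and triangular inversion for free generation.

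The step you flag as the main obstacle is settled in the paper as you anticipate. One adds the normal component $D^{-N}F$, whose Adler image $FL-LF$ shifts the $D^{N-1}$ coefficient by $-NF'$. This forces $F=\tfrac{N-1}{2}v'\mathbf 1_d$, and $FL-LF$ is exactly your $\tfrac1N[L,b_X]$.
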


\begin{proof}
Under the trace-residue pairing, the gradient of $\int vT$ is
$X=D^{1-N}v\mathbf1_d$, where $v$ is a scalar test indeterminate.
Write $\mathcal A_L(X)=(LX)_+L-L(XL)_+$ for the matrix Adler map.
Its coefficient of $D^{N-1}$ is $-\operatorname{res}[L,X]$.
To preserve this constraint, add the normal gradient component
$D^{-N}F$. It changes the coefficient by $-NF'$, since
$\mathcal A_L(D^{-N}F)=FL-LF$.
For this gradient,
\[
 \operatorname{res}[L,X]=-\frac{N(N-1)}2v''\mathbf1_d.
\]
Thus the Dirac correction is given by $F=(N-1)v'\mathbf1_d/2$;
this is the local value of the formal inverse of the constraint block
in \cite[Lemma~4.4(c) and \S4.3]{DeSoleKacValeri2015}. Taking the Adler image gives
\[
 \left\{\int vT,L\right\}
 =\left(vD+\frac{N+1}{2}v'\right)L
       -L\left(vD-\frac{N-1}{2}v'\right).
\]
This is the pullback and restoring gauge of
Proposition~\ref{I:prop:wa-Virasoro-action}. Theorem~\ref{I:thm:rp-higher} therefore
gives the claimed primary law for $m\ge3$. Its quadratic coefficient
is $vU_2'+2v'U_2+\kappa_Nv'''\mathbf1_d$; taking its trace and traceless
part proves the remaining bracket identities. Finally,
$V_m=U_m+$ an ordered differential polynomial in lower coefficients.
This triangular substitution, together with
$U_2=(T/d)\mathbf1_d+V_2$, has a differential polynomial inverse.
\end{proof}

The brackets with $T$ are local although the full reduced matrix
bracket is generally nonlocal. Bilal's theory already supplies a
primary differential basis; we now compare the current polynomials
with his construction.

To compare the expressions exactly, write Bilal's operator as
$L_{\mathrm B}=-\mathbf1_dD^N+\sum_{k=2}^N B_kD^{N-k}$.
His order $m$ and matrix size $n$ are our $N$ and $d$.
For $L_{\mathrm B}=-L$, the coefficient dictionary is $B_k=-U_k$.
For $3\le m\le N$, let $\mathcal W_m^{\mathrm B}$ denote his current polynomial. Then
\begin{equation}\label{I:eq:wa-Bilal-dictionary}
 \mathcal W_m^{\mathrm B}(-U_2,\ldots,-U_N)=-V_m.
\end{equation}
Indeed, Bilal's definition \cite[Theorem~4.6 and equations (4.19)--(4.23)]{Bilal1995}
is the full symmetrization of the DFIZ polynomial, with these sign
changes. Proposition~\ref{I:prop:wa-classical-comparison} identifies the
scalar polynomials, and symmetrization commutes with the diagonal
binomial rescaling. This proves \eqref{I:eq:wa-Bilal-dictionary} in
every order. The identity compares current polynomials; our bracket
normalization is fixed by \cite{DeSoleKacValeri2015}.

For general matrix gradients, the constraint correction requires
primitives of commutators, which need not be differential polynomials. For example, if $L=D^2+U$ and $F$ is a matrix
test coefficient, the Ore rule gives
\[
 \operatorname{res}[L,D^{-1}F]=[U,F]-F''.
\]
For $d>1$, the term $[U,F]$ has no universal entrywise
differential-polynomial primitive: setting all positive-order jets to
zero kills every total derivative but need not kill $[U,F]$.
Dirac reduction therefore uses a formal inverse of the derivation. For $F=v\mathbf1_d$ this commutator vanishes, explaining
why the Virasoro action above remains local.

\begin{remark}[Scope of the Hamiltonian comparison]
This section treats scalar differential polynomials and matrices whose
entries lie in a commutative differential algebra. The ordered
coefficient calculus of the preceding sections applies to arbitrary
associative differential algebras. A Hamiltonian interpretation at
that level requires a noncommutative bracket formalism, such as the
double Poisson vertex algebras of De Sole, Kac and Valeri
\cite{DeSoleKacValeriDouble2015}. Their theory treats noncommutative
Hamiltonian equations, including Gelfand--Dickey hierarchies.
Comparing our universal covariants with that formalism is left to
future work.
\end{remark}

\subsection*{Notation for the universal calculus}
\label{I:sec:notation-index}
\begingroup\small
\renewcommand{\arraystretch}{1.08}
\begin{tabularx}{\linewidth}{@{}l>{\raggedright\arraybackslash}Xl@{}}
\toprule
Symbols & Meaning & Defined in \\
\midrule
$D,\delta,\partial$ & Ore symbol; coefficient derivation; derivation on jets or on the target algebra & \S\ref{I:sec:local-ore}, \S\ref{I:sec:reparam} \\
$\nabla,\Delta,\Delta^\lambda$ & Adapted operator $D+a_1$; covariant derivation; its transformed version & \S\ref{I:sec:local-normalization}, \S\ref{I:sec:local-generation}, \S\ref{I:sec:reparam} \\
$B_r^{\mathrm L},B_r^{\mathrm R}$ & Left and right Bell families & \eqref{I:eq:local-bell} \\
$I_k,\ I_{k,r}$ & Normalized coefficients and their jets $\Delta^rI_k$ & \S\ref{I:sec:local-normalization}, \S\ref{I:sec:reparam} \\
$\pi_0,\sigma$ & Restriction to $a_1=0$ and reconstruction by covariant jets & \eqref{I:eq:local-restriction} \\
$\lambda_j,\ell,\rho,S_\lambda$ & Central parameter jets; $\ell=\lambda_1$; $\rho=\lambda_2/\lambda_1$; one sixth of the Schwarzian & \S\ref{I:sec:reparam} \\
$\beta_{j,r}$ & Central normal-ordering polynomials for parameter pullback & \eqref{I:eq:rp-beta} \\
$P,E_m$ & Affine quadratic datum and linear M\"obius covariants & \eqref{I:eq:P}, \eqref{I:eq:rp-E} \\
$\omega_m,q_m$ & Scalar normalized polynomial and its lower-order correction & \eqref{I:eq:rp-q-definition} \\
$\operatorname{Sym},\odot$ & Full linear symmetrization and its binary product & \eqref{I:eq:rp-Sym} \\
$Q_m,W_m$ & Symmetrized correction and associative higher Schwarzian & \eqref{I:eq:rp-Q-definition} \\
$W_m^{\mathrm{sc}},w_m,V_m,\mathcal W_m^{\mathrm B}$ & Scalar specialization, scalar current, matrix current, and Bilal's current & \S\ref{I:sec:local-W-algebras} \\
\bottomrule
\end{tabularx}
\par\endgroup

\clearpage
\part{Curves, modular forms, and extension classes}\label{part:II}
\markboth{Part II: Curves and bundles}{}
The transformation laws of Part~\ref{part:I} allow coefficient invariants
to be defined on a curve. Sections~\ref{II:sec:modular-calculus}--\ref{II:sec:rankin-cohen}
give the gluing and differentiation rules, keeping the coefficient
connection among the reconstruction data. The kernel construction in
Section~\ref{II:sec:kernel} then supplies these data from an acyclic
bundle on a compact curve. Its cubic self-bracket recovers marked
extensions when the endpoint cubics differ; the higher hierarchy deals
with the remaining pairs and with self-extensions
(Sections~\ref{II:sec:extension-recovery}--\ref{II:sec:higher-detection}).
Section~\ref{II:sec:x023} carries the recovery through rational formulas
and Fourier coefficients on $X_0(23)$.

The kernel applications use a fixed curve and a bundle on that curve.
The period applications of Part~\ref{part:III} use a varying Hodge
subbundle over the parameter curve. They can be read independently
after Part~\ref{part:I}; the shared curve-operator convention is stated
in Section~\ref{II:sec:curve-schwarzians}.

\section{Algebra-valued modular forms and covariant differentiation}
\label{II:sec:modular-calculus}

A higher Schwarzian of degree $m$ will give a modular form of weight
$2m$ with values in a coefficient algebra. To differentiate such a
form, we must account both for its weight and for changes of
coefficient frame. We define the forms first, then construct the two
corrections to differentiation.

Let $\mathbb H=\{\tau\in\C:\operatorname{Im}\tau>0\}$, and let
$\Gamma\subset\operatorname{SL}_2(\mathbb R)$ be a discrete torsion-free
group of finite covolume. Thus its action on $\mathbb H$ is free and
$Y=\Gamma\backslash\mathbb H$ is a smooth complex curve. Write $\bar Y$
for its compactification by cusps; when $Y$ is compact, $\bar Y=Y$.
For $\gamma=\left(\begin{smallmatrix}a&b\\c&d\end{smallmatrix}\right)$,
put
\[
 \gamma\tau=\frac{a\tau+b}{c\tau+d},\qquad
 j(\gamma,\tau)=c\tau+d,\qquad
 \kappa=\frac1{2\pi i},\qquad D=\kappa\frac{d}{d\tau}.
\]
The lift to $\operatorname{SL}_2(\mathbb R)$ fixes the meaning of integer
input weights for differential operators. Our coefficient forms have
even weights and therefore depend only on the effective projective
action. These conventions also apply to a torsion-free projective
congruence group after choosing a lift.

Fix a finite-dimensional unital associative $\C$-algebra $A$, with
identity $1$, and a homomorphism
\[
 \alpha:\Gamma\longrightarrow\operatorname{Aut}_{\C\text{-alg}}(A).
\]
We assume that $\alpha$ is the identity on every parabolic stabilizer.
This assumption gives ordinary Laurent expansions at the cusps; it
will hold for the bundles constructed on $\bar Y$ in
Section~\ref{II:sec:kernel}. Triviality at the cusps does not require
finite image: the character in Section~\ref{II:subsec:x023-multiplier}
has infinite image and is trivial on every parabolic stabilizer.
The associated bundle
\[
 \mathcal A=(\mathbb H\times A)/\Gamma\longrightarrow Y,
 \qquad \gamma(\tau,a)=(\gamma\tau,\alpha(\gamma)a),
\]
is a holomorphic bundle of associative algebras. Its flat connection
is the one induced by entrywise differentiation on $\mathbb H\times A$.

\subsection{Forms with algebra coefficients}
\label{II:subsec:algebra-valued-forms}

For an even integer $k$, an $A$-valued meromorphic modular form of
weight $k$ is a meromorphic function $F:\mathbb H\to A$ satisfying
\begin{equation}\label{II:eq:modular-form-law}
 F(\gamma\tau)=j(\gamma,\tau)^k\alpha(\gamma)F(\tau)
 \qquad(\gamma\in\Gamma),
\end{equation}
with finite Laurent principal parts at the cusps. Meromorphicity is
understood in any vector-space basis of $A$. To specify the cusp
condition, choose $\sigma\in\operatorname{SL}_2(\mathbb R)$ taking
$\infty$ to a cusp. Write $h>0$ for the width, so its stabilizer
is generated, up to the sign of the lift, by
$\sigma\left(\begin{smallmatrix}1&h\\0&1\end{smallmatrix}\right)\sigma^{-1}$.
Set
\begin{equation}\label{II:eq:cusp-form}
 F_\sigma(\tau)=j(\sigma,\tau)^{-k}F(\sigma\tau),\qquad
 q=\exp(2\pi i\tau/h).
\end{equation}
Then $F_\sigma$ is $h$-periodic and must have an expansion
$\sum_{n\ge n_0}F_nq^n$, with $F_n\in A$ and $n_0\in\mathbb Z$.
Write $M_k^{\mathrm{mer}}(\Gamma,A,\alpha)$ for this space.
Its subspace $M_k(\Gamma,A,\alpha)$ consists of forms holomorphic on
$\mathbb H$ for which $n_0\ge0$ at every cusp. Cusp forms have
$F_0=0$ at every cusp. Scalar forms mean $A=\C$ and trivial $\alpha$;
we then abbreviate the spaces to $M_k^{\mathrm{mer}}(\Gamma)$ and
$M_k(\Gamma)$.

Multiplication is ordered multiplication in $A$. Since $\alpha(\gamma)$
is an algebra automorphism,
\[
 M_k^{\mathrm{mer}}(\Gamma,A,\alpha)\,
 M_\ell^{\mathrm{mer}}(\Gamma,A,\alpha)
 \subset M_{k+\ell}^{\mathrm{mer}}(\Gamma,A,\alpha).
\]
Thus the direct sum over even weights is a graded associative algebra.
We will not interchange factors in this algebra.

The coefficient algebra for a vector bundle is its endomorphism
algebra. Accordingly, the principal example is
\begin{equation}\label{II:eq:matrix-multiplier}
 A=\End(\C^r),\qquad
 \alpha(\gamma)P=\rho(\gamma)P\rho(\gamma)^{-1},
 \qquad \rho:\Gamma\to\operatorname{GL}_r(\C).
\end{equation}
Here we assume $\rho$ itself is trivial on parabolic stabilizers.
The bundle
$V_\rho=(\mathbb H\times\C^r)/\Gamma\to Y$, with
$\gamma(\tau,v)=(\gamma\tau,\rho(\gamma)v)$, has its flat connection
$\nabla^0$ induced by $d$, and $\mathcal A=\End(V_\rho)$.
If $F$ has weight $2m$, then
\begin{equation}\label{II:eq:form-tensor}
 F(\tau)(d\tau)^m
 \quad\text{descends to a meromorphic section of}\quad
 \End(V_\rho)\otimes K_Y^m.
\end{equation}
Here $K_Y$ is the holomorphic cotangent line bundle of $Y$, and its
powers are tensor powers. Indeed, $d(\gamma\tau)=j^{-2}d\tau$ cancels the weight factor
in \eqref{II:eq:modular-form-law}.

The parabolic assumption extends $(V_\rho,\nabla^0)$ over $\bar Y$:
a single-valued horizontal frame on a punctured cusp disc extends
as a frame over its center. We use the same notation for this
extension. In such a frame, \eqref{II:eq:form-tensor} becomes
\begin{equation}\label{II:eq:cusp-tensor}
 F_\sigma(q)\left(\frac{h}{2\pi i}\right)^m
             q^{-m}(dq)^m.
\end{equation}
Consequently, for $m\ge1$ a holomorphic $m$-differential on $\bar Y$
corresponds to a modular coefficient vanishing to order at least $m$
at each cusp. Holomorphic modular coefficients alone can give poles
of the tensor at the cusps.

With trivial $\alpha$, the space of forms is simply
$M_k^{\mathrm{mer}}(\Gamma)\otimes_\C A$.
The multiplier in \eqref{II:eq:matrix-multiplier} allows the coefficient
algebra to vary with a flat bundle. It is this case that will arise
from the canonical kernel.

\subsection{The scalar correction to differentiation}
\label{II:subsec:scalar-connection}

Differentiation almost preserves \eqref{II:eq:modular-form-law}.
The chain rule gives
\begin{equation}\label{II:eq:derivative-anomaly}
 (DF)(\gamma\tau)
 =j^{k+2}\alpha(\gamma)(DF)(\tau)
       +k\kappa c j^{k+1}\alpha(\gamma)F(\tau).
\end{equation}
The second term is removed by a scalar meromorphic function $\eta$
with transformation law
\begin{equation}\label{II:eq:eta-law}
 \eta(\gamma\tau)=j^2\eta(\tau)+\kappa cj.
\end{equation}
We call such an $\eta$ a \emph{scalar modular connection}. At a cusp,
its coefficient is
\begin{equation}\label{II:eq:eta-cusp}
 \eta^\sigma(\tau)=j(\sigma,\tau)^{-2}\eta(\sigma\tau)
                    -\frac{\kappa c_\sigma}{j(\sigma,\tau)},
\end{equation}
and we require a finite Laurent principal part in $q$. A holomorphic
modular connection is holomorphic on $\mathbb H$ and has no negative
powers in these cusp expansions. For the classical scalar Serre derivative, see
\cite[Section~5.1]{Zagier2008}.

Define the weight-raising derivative by
\begin{equation}\label{II:eq:scalar-covariant}
 d_k^\eta F=DF-k\eta F.
\end{equation}
Equations \eqref{II:eq:derivative-anomaly} and \eqref{II:eq:eta-law} show that
$d_k^\eta F$ has weight $k+2$. For scalar $F$ and $k=2m$, this
is the coefficient expression of a meromorphic connection on $K_Y^m$:
\begin{equation}\label{II:eq:scalar-connection-tensor}
 \nabla^\eta\bigl(F(d\tau)^m\bigr)
       =\kappa^{-1}(d_{2m}^\eta F)(d\tau)^{m+1}.
\end{equation}
The connection is understood on meromorphic sections. Thus $-k\eta F$
corrects differentiation of the tensor factor; the coefficient-bundle
correction will be introduced separately below. Since $\eta$ is scalar,
\begin{equation}\label{II:eq:graded-Leibniz}
 d_{k+\ell}^\eta(FG)=(d_k^\eta F)G+F(d_\ell^\eta G).
\end{equation}
Both statements hold in cusp coordinates with $\eta$ replaced by
$\eta^\sigma$. Thus the derivative preserves the stated meromorphic
conditions; it preserves holomorphicity when $\eta$ is holomorphic.

Such meromorphic connections exist. For a nonzero scalar form $f$
of nonzero weight $k$, equation \eqref{II:eq:derivative-anomaly} gives
\begin{equation}\label{II:eq:logarithmic-connection}
 \eta_f=\frac{Df}{kf}.
\end{equation}
For existence without an initial form, take a nonconstant meromorphic
function $u$ on $\bar Y$. Its pullback has weight zero, so $Du$ is a
nonzero meromorphic form of weight two, and
$\eta=D^2u/(2Du)$ has the required law and cusp expansions.
The difference of any two scalar modular connections belongs to
$M_2^{\mathrm{mer}}(\Gamma)$.

One additional expression will enter the higher derivatives:
\begin{equation}\label{II:eq:modular-curvature}
 \Omega_\eta=\eta^2-D\eta\in M_4^{\mathrm{mer}}(\Gamma).
\end{equation}
Its modularity follows by differentiating \eqref{II:eq:eta-law}; the
terms involving $c$ cancel. We use the term \emph{modular curvature}
for \eqref{II:eq:modular-curvature}. It is this weight-four expression,
not the exterior curvature of a connection on a curve. If
$v\in M_2^{\mathrm{mer}}(\Gamma)$, direct expansion gives
\begin{equation}\label{II:eq:change-eta-curvature}
 \Omega_{\eta+v}=\Omega_\eta-d_2^\eta v+v^2.
\end{equation}

For the classical modular group, and hence on its subgroups, the
familiar choice is
\[
 \eta=\frac{E_2}{12},\qquad
 \Omega_\eta=\frac{E_4}{144},
\]
where, with $q=e^{2\pi i\tau}$ and
$\sigma_a(n)=\sum_{d\mid n}d^a$,
\[
 E_2=1-24\sum_{n\ge1}\sigma_1(n)q^n,\qquad
 E_4=1+240\sum_{n\ge1}\sigma_3(n)q^n.
\]
The identity $DE_2=(E_2^2-E_4)/12$ proves the displayed value of
$\Omega_\eta$. In this case $d_k^\eta$ is the Serre derivative
\cite[Section 5]{NSZ}.

\subsection{A connection on the coefficient bundle}
\label{II:subsec:coefficient-connection}

The scalar connection accounts for weight. A second datum accounts
for differentiation in the coefficient bundle. Fix
$B\in M_2^{\mathrm{mer}}(\Gamma,A,\alpha)$, and put
\begin{equation}\label{II:eq:adjoint-derivative}
 \delta_BF=DF+[B,F],\qquad
 \mathcal D_k^{\eta,B}F=\delta_BF-k\eta F,
 \qquad [B,F]=BF-FB.
\end{equation}
The commutator with $B$ is a derivation. Consequently,
\begin{equation}\label{II:eq:coefficient-Leibniz}
 \mathcal D_{k+\ell}^{\eta,B}(FG)
   =(\mathcal D_k^{\eta,B}F)G+F(\mathcal D_\ell^{\eta,B}G),
\end{equation}
and $\mathcal D_k^{\eta,B}$ raises weight by two.
This separates the scalar correction from the action on coefficients.
For comparison, a noncentral expression $DF-kCF$ would have the
product-rule defect $-\ell[C,F]G$; it cannot replace
\eqref{II:eq:adjoint-derivative} in the arguments below.

In the matrix case \eqref{II:eq:matrix-multiplier}, $B$ defines a
meromorphic connection on $V_\rho$. Write $\mathcal M(W)$ for the
sheaf of meromorphic sections of a holomorphic bundle $W$. Then
\begin{equation}\label{II:eq:bundle-connection-B}
 \begin{aligned}
 \nabla^B&:\mathcal M(V_\rho)\longrightarrow
                         \mathcal M(V_\rho\otimes K_Y),\\
 \nabla^B&=\nabla^0+\kappa^{-1}B(\tau)\,d\tau.
 \end{aligned}
\end{equation}
For a local endomorphism $F$, its induced connection is
\[
 \nabla^{B,\mathrm{ad}}F=\kappa^{-1}\delta_BF\,d\tau.
\]
Together with \eqref{II:eq:scalar-connection-tensor}, this explains the
two corrections in $\mathcal D_k^{\eta,B}$: the scalar term acts on
the cotangent power, and the commutator acts on the endomorphism
bundle. On columns of input weight $w$, put
$\nabla_w^{\eta,B}s=Ds-w\eta s+Bs$. For $F$ of weight $k$ the identity
\[
 \nabla_{w+k}^{\eta,B}(Fs)-F\nabla_w^{\eta,B}s
                         =(\mathcal D_k^{\eta,B}F)s
\]
gives the coefficient derivative and its weight correction together.
The connection in
\eqref{II:eq:bundle-connection-B} is holomorphic on $Y$ when $B$ is.
It extends holomorphically across a cusp precisely when
$B_\sigma=O(q)$, since its one-form there is $hB_\sigma(q)dq/q$.

Let $g\in M_0^{\mathrm{mer}}(\Gamma,A,\alpha)$ be invertible, with
meromorphic inverse. A change of coefficient frame acts by
\begin{equation}\label{II:eq:B-gauge}
 F^g=g^{-1}Fg,\qquad B^g=g^{-1}Bg+g^{-1}Dg.
\end{equation}
Differentiating $g^{-1}g=1$ gives $D(g^{-1})=-g^{-1}(Dg)g^{-1}$.
Substitution in \eqref{II:eq:adjoint-derivative} then cancels the
derivatives of $g$ and yields
\begin{equation}\label{II:eq:derivative-gauge}
 \mathcal D_k^{\eta,B^g}(F^g)
       =g^{-1}(\mathcal D_k^{\eta,B}F)g.
\end{equation}
Thus the derivative is compatible with changes of coefficient
frame. In the matrix case the trace gives scalar forms and satisfies
\[
 \tr(\mathcal D_k^{\eta,B}F)=d_k^\eta\tr(F),
\]
because $\tr[B,F]=0$. Matrix multiplication retains information
that such scalar expressions can lose. The first self-bracket in
Section~\ref{II:sec:rankin-cohen} will make this distinction explicit.

\Needspace{30\baselineskip}

\subsection{Notation for the curve constructions}
\label{II:subsec:notation}
The differential degree on a curve is half the modular weight.
The following symbols distinguish the several connections and kernels.
\begin{center}
\small\renewcommand{\arraystretch}{1.16}
\begin{tabularx}{\textwidth}{@{}lXl@{}}
\toprule
Symbol & Meaning & Definition \\
\midrule
$A,\mathcal A$ & Coefficient algebra and its associated flat algebra bundle & \S\ref{II:sec:modular-calculus} \\
$D,\kappa$ & Fourier derivative $D=\kappa\partial_\tau$, $\kappa=(2\pi i)^{-1}$ & \S\ref{II:sec:modular-calculus} \\
$\eta,\eta_{\mathrm D}$ & Scalar modular connection; Dedekind eta function & \S\ref{II:subsec:scalar-connection}, \S\ref{II:subsec:x023-curve} \\
$B,\delta_B$ & Coefficient connection; $DF+[B,F]$ & \eqref{II:eq:adjoint-derivative} \\
$B_L^\eta$ & Coefficient connection obtained from the operator $L$ & \eqref{II:eq:operator-B} \\
$\vartheta,\varepsilon$ & Theta characteristic; a flat two-torsion line used to change it & \S\ref{II:subsec:connection}, \S\ref{II:subsec:cubic} \\
$\mathsf H_E,\mathsf A,B_j$ & Regular kernel, its linear connection coefficient, and horizontal coefficients & \eqref{II:eq:regular-kernel}, \eqref{II:eq:horizontal-kernel} \\
$P,p_0$ & Quadratic reconstruction datum; fixed scalar projective connection for the hierarchy & \eqref{II:eq:quadratic-datum-law}, \eqref{II:eq:background-projective-connection} \\
$\cub_E,\mathcal C_E$ & Canonical cubic and its first self-bracket & \eqref{II:eq:canonical-cubic}, \eqref{II:eq:canonical-bracket} \\
$T,d_{LM},c_\xi$ & $\Hom(M,L)$, the cubic difference $\cub_L-\cub_M$, and the extension bracket coefficient & \S\ref{II:sec:extension-recovery} \\
$P_\nabla,\mathfrak F$ & Polar integration; the fibre product in the linearity proof & \S\ref{II:subsec:extension-polar-recovery}, Lemma~\ref{II:lem:extension-linearity} \\
$\ell,\psi(e),e_L$ & Parameter derivative; $\log\theta(e)$; rational divisor frame & \S\ref{II:sec:curve-schwarzians}, \S\ref{II:subsec:theta-cubic}, \S\ref{II:subsec:x023-line-kernel} \\
$P_\infty,P_0$ & The two cusps of $X_0(23)$ & \S\ref{II:subsec:x023-curve} \\
\bottomrule
\end{tabularx}
\end{center}
The letter $L$ denotes an operator in Sections~\ref{II:sec:curve-schwarzians}--\ref{II:sec:rankin-cohen}
and a line bundle in the kernel applications. The scalar $c$ in a
M\"obius matrix is unrelated to $c_\xi$. The iterates $T_{w,n}^\eta$
are differential operators, whereas $T=\Hom(M,L)$ is a line bundle.

\section{Higher Schwarzians on curves and modular reconstruction}
\label{II:sec:modular-operators}

The triangular normalization of Part~\ref{part:I} can be inverted on a
fixed coefficient bundle. We first give the local inverse, then use its
uniqueness to patch operators on a complex curve and on a modular
quotient.

We use the normalized coefficients and higher polynomials defined in
\eqref{I:eq:local-normalized} and \eqref{I:eq:rp-Q-definition}, writing
$Q_m^{(N)}$ when the ambient order must be explicit. On a coordinate
disc the coefficient algebra is the algebra of meromorphic matrix
functions with $\delta=d/dz$. In modular coordinates it consists of
meromorphic $A$-valued functions with $\delta=D=\kappa\,d/d\tau$,
where $\kappa=(2\pi i)^{-1}$. Products keep the order fixed in
Part~\ref{part:I}.

For $z=\phi(t)$, put $\ell=\phi'(t)$ and
$D_z=\kappa\partial_z$, $D_t=\kappa\partial_t$. In the algebraic parameter
laws of Section~\ref{I:sec:reparam}, the substitutions are
$\lambda_1=\ell$, $\lambda_2=D_t\ell$, $\lambda_3=D_t^2\ell$ and
\begin{equation}\label{II:eq:scaled-schwarzian}
 S_\phi^D=\frac{\kappa^2}{6}\Sch(\phi).
\end{equation}
Thus \eqref{I:eq:rp-low-laws} and Theorem~\ref{I:thm:rp-higher} apply
with this derivative convention. Ordinary curve formulas use
$\kappa=1$. The conversion of the resulting differentials is stated in
\eqref{II:eq:derivative-normalization}.

\subsection{The local inverse}
\label{II:subsec:local-reconstruction}

The higher Schwarzians retain all the normalized coefficients:
$W_m$ differs from $I_m$ only by terms whose indices are smaller
than $m$. We record the inverse before using it to patch operators
on a curve or on a modular quotient.

For order three and $a_1=0$, the calculation has a particularly
simple form. Start with $L=D^3+3pD+r$. Then $I_2=p$ and
$W_3=r-\tfrac32Dp$, so prescribing the cubic coefficient $q$
requires $r=q+\tfrac32Dp$. Hence
\begin{equation}\label{II:eq:cubic-operator-example}
 L_{p,q}=D^3+3pD+\tfrac32Dp+q,
 \qquad I_2=p,\quad W_3=q.
\end{equation}
Here $p$ and $q$ are arbitrary elements of the local differential
algebra, and $Dp$ means the derivative $\delta p$. The general
inverse recovers one coefficient at a time in the same way.

\begin{proposition}[Local reconstruction]\label{II:prop:local-reconstruction}
Fix an order $N$, a differential algebra $(\mathfrak A,\delta)$,
and $a_1\in\mathfrak A$. For any $q_2,\ldots,q_N\in\mathfrak A$,
there is a unique monic operator \eqref{I:eq:local-L} with
first coefficient $a_1$ and
\[
 I_2(L)=q_2,\qquad W_m(L)=q_m\quad(3\le m\le N).
\]
\end{proposition}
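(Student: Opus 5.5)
The plan is to pass from the coefficients $a_2,\ldots,a_N$ to the normalized coefficients $I_2,\ldots,I_N$ with $a_1$ held fixed, and then invert the triangular relation between the $I_m$ and the $W_m$ one degree at a time. Proposition~\ref{I:prop:local-normalization} shows that, once $a_1$ is fixed, the map $(a_2,\ldots,a_N)\mapsto(I_2,\ldots,I_N)$ is a bijection of $\mathfrak A^{N-1}$. Its two directions are given by the explicit formulas \eqref{I:eq:local-I-formulas}. Each $I_k$ equals $a_k$ plus a polynomial in $a_1,\ldots,a_{k-1}$ and their derivatives, and the inverse has the same shape with the left Bell polynomials $B^{\mathrm L}_{k-j}(a_1)$. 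These formulas involve only multiplication and $\delta$, so they make sense in any differential $\Q$-algebra $\mathfrak A$; the uniqueness in that proposition needs only that powers of $\nabla=D+a_1$ form a left basis. It therefore suffices to show that $(I_2,\ldots,I_N)\mapsto(I_2,W_3,\ldots,W_N)$ is a bijection, with $a_1$ fixed, where each $\Delta=\delta+[a_1,\cdot\,]$ is computed from that same fixed $a_1$.

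By \eqref{I:eq:rp-Q-definition}, $W_m=I_m+Q_m(X_{k,r}=\Delta^rI_k)$. Proposition~\ref{I:prop:rp-scalar} says that $q_m$ uses only variables $x_{k,r}$ with $k<m$, and symmetrization does not change which variables occur. Hence $Q_m$ evaluated on covariant jets is a differential polynomial in $I_2,\ldots,I_{m-1}$ alone. Once $a_1$ is fixed, $\Delta$ is a fixed derivation of $\mathfrak A$, so these covariant jets are determined by $I_2,\ldots,I_{m-1}$. The inversion then runs by induction on $m$. Set $I_2=q_2$. Given $I_2,\ldots,I_{m-1}$, set
\[
 I_m=q_m-Q_m\bigl(X_{k,r}=\Delta^rI_k\bigr),
\]
which is forced and uniquely defined. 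Finally, recover $a_k=\sum_j\binom kj I_jB^{\mathrm L}_{k-j}(a_1)$ and form $L$ by \eqref{I:eq:local-L}.

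For existence, I will check that this $L$ has first coefficient $a_1$ and normalized coefficients equal to the constructed $I_k$; this follows from the uniqueness of the normalized expansion. Its $W_m$ is then $q_m$ by construction. For uniqueness, any $L$ with the prescribed data has first coefficient $a_1$, so the same $\Delta$ applies. The triangular equations then force its $I_k$ inductively, and the $I_k$ together with $a_1$ force the $a_k$.

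The only point needing care is that $W_m$, together with its correction $Q_m$, is defined as a universal polynomial in $\mathcal U_N$. Evaluating it in an arbitrary $\mathfrak A$ means applying the differential specialization $a_{k,r}\mapsto\delta^r a_k$. I must confirm that this specialization sends the universal $\Delta^rI_k$ to $\Delta^r$ of the specialized $I_k$, computed with the specialized $a_1$. This holds because specialization is a differential homomorphism and $\Delta$ is defined by $\delta$ and a commutator with $a_1$. I also want to confirm that the dependence of $Q_m=Q_m^{(N)}$ on $N$ is harmless, since $N$ is fixed throughout. I expect no further obstacle: the argument is a triangular inversion, and the degree bound $k<m$ from Proposition~\ref{I:prop:rp-scalar} is the essential input.
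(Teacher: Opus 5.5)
Your proposal is correct and follows the paper's proof: both use the triangular recursion $I_2=q_2$, $I_m=q_m-Q_m^{(N)}(\Delta^jI_k:2\le k<m)$, with $\Delta$ fixed by the prescribed $a_1$. Both then assemble $L$ from the normalized expansion $\sum_k\binom Nk I_k(D+a_1)^{N-k}$, which is exactly what your left-Bell inverse formula computes, and both get uniqueness by forcing the $I_m$ one degree at a time. Your remarks on compatibility with specialization and on the bijection $(a_2,\ldots,a_N)\leftrightarrow(I_2,\ldots,I_N)$ simply spell out steps the paper leaves implicit.
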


\begin{proof}
Put $\nabla=D+a_1$ and use the recursion
\begin{equation}\label{II:eq:reconstruction-recursion}
 \begin{aligned}
 \Delta b&=\delta b+[a_1,b],\qquad I_2=q_2,\\
 I_m&=q_m-Q_m^{(N)}(\Delta^jI_k:2\le k<m),
                       \qquad m=3,\ldots,N.
 \end{aligned}
\end{equation}
Each right-hand side uses only coefficients already recovered.
In degrees three and four, when present, this gives
\[
 I_3=q_3+\tfrac32\Delta q_2,\qquad
 I_4=q_4+2\Delta I_3-\tfrac65\Delta^2q_2
                  +\frac{3(5N+7)}{5(N+1)}q_2^2.
\]
Substitution in \eqref{I:eq:local-normalized} gives the required
operator. Conversely, any operator with the prescribed data must
have these $I_m$, one after another, so it is unique.
\end{proof}

We will use this local uniqueness to compare operators written
in different coordinate charts.

\subsection{Higher Schwarzians on a complex curve}
\label{II:sec:curve-schwarzians}

The local construction now gives geometric data on a curve.
We must specify the operator's source and target so that its first
coefficient becomes a connection on the intended bundle.

Let $X$ be a smooth complex curve, with no group action assumed,
and put $K=K_X$. Let $V\to X$ be a holomorphic vector bundle of
rank $r$, and choose a holomorphic line bundle $\vartheta$ with
$\vartheta^2\simeq K$. We consider holomorphic differential operators
of order $N$ of the form
\begin{equation}\label{II:eq:curve-operator-type}
 \mathcal L:V\otimes\vartheta^{1-N}\longrightarrow
                         V\otimes\vartheta^{1+N}.
\end{equation}
The twists serve two purposes. First, the principal symbol takes
values in
\[
 \Hom(V\otimes\vartheta^{1-N},V\otimes\vartheta^{1+N})
                     \otimes K^{-N}\simeq\End(V),
\]
so it makes sense to require that symbol to be $\Id_V$; this is the
meaning of monic in \eqref{II:eq:curve-operator-type}. Second, the
exponent $1-N$ cancels the scalar coordinate term in the first
coefficient. We will see this cancellation in the overlap formula.
Tensor powers of line bundles are understood, with negative powers
denoting duals.

Locally, choose a coordinate $z$, a frame $e$ of $V$, and a frame
$e_\vartheta$ of $\vartheta$ with $e_\vartheta^2=dz$. The operator is specified by
\begin{equation}\label{II:eq:curve-local-action}
 \mathcal L(e\,e_\vartheta^{1-N}y)=e\,e_\vartheta^{1+N}L_zy,\qquad
 L_z=\sum_{k=0}^N\binom Nk a_k\partial_z^{N-k},\quad a_0=\Id.
\end{equation}
Here $y$ is a column of holomorphic functions and $a_k$ are
holomorphic $r\times r$ matrix functions. We now use the ordinary
derivative $\partial_z=d/dz$. All local formulas above apply with
$\delta=\partial_z$ and with $S_\phi^D$ replaced by $\Sch(\phi)/6$.

If $z=\phi(t)$, $\ell=\phi'$, and $e_t=(e_z\circ\phi)G$, the
local frames of $\vartheta$ determine a square root of $\ell$ by
$e_{\vartheta,z}\circ\phi=\ell^{1/2}e_{\vartheta,t}$. Consequently the input
frames in \eqref{II:eq:curve-local-action} differ by
$h=\ell^{(N-1)/2}G$. The half-integer power here is fixed by those
frames. The local operators therefore satisfy
\begin{equation}\label{II:eq:curve-operator-transition}
 L_t=h^{-1}L_z^\phi h.
\end{equation}
The scalar part of the first parameter law cancels the logarithmic
derivative of $\ell^{(N-1)/2}$, giving
\[
 a_{1,t}=G^{-1}\ell(a_{1,z}\circ\phi)G+G^{-1}\partial_tG.
\]
Consequently the first coefficient defines a holomorphic connection
on the specified bundle $V$:
\begin{equation}\label{II:eq:operator-adjoint-connection}
 \begin{aligned}
 \nabla^{\mathcal L}&:V\longrightarrow V\otimes K,
       &\nabla^{\mathcal L}&=d+a_1\,dz,\\
 \nabla^{\mathcal L,\mathrm{ad}}&:\End(V)\longrightarrow\End(V)\otimes K,
       &b&\longmapsto(b'+[a_1,b])\,dz.
 \end{aligned}
\end{equation}
These connections are constructed from $\mathcal L$; neither was
part of its input data. They are flat because $X$ is a complex curve.

The remaining coordinate laws give
\begin{equation}\label{II:eq:global-higher-differentials}
 W_m(\mathcal L)=W_m(L_z)(dz)^m
        \in H^0(X,\End(V)\otimes K^m),\qquad 3\le m\le N.
\end{equation}
For example $W_3(\mathcal L)$ is a cubic differential, equivalently
a holomorphic map $V\to V\otimes K^3$. The quadratic datum has an
affine law:
\begin{equation}\label{II:eq:quadratic-datum-law}
 P_z=\frac{I_2(L_z)}{N+1},\qquad
 P_t=G^{-1}\left(\ell^2(P_z\circ\phi)
                          +\tfrac16\Sch(\phi)\Id\right)G.
\end{equation}
Thus $P$ denotes a collection of local coefficients with this law.
The difference of two such collections is an
$\End(V)$-valued quadratic differential.

\begin{theorem}[Higher Schwarzians and reconstruction on a curve]\label{II:thm:curve-reconstruction}
For fixed $X,V,\vartheta,N$, monic holomorphic operators
\eqref{II:eq:curve-operator-type} correspond bijectively to data
\[
 (\nabla^V,P,w_3,\ldots,w_N),\qquad
 w_m\in H^0(X,\End(V)\otimes K^m),
\]
where $\nabla^V:V\to V\otimes K$ is a holomorphic connection and
$P$ is a collection of holomorphic local coefficients satisfying
\eqref{II:eq:quadratic-datum-law}.
\end{theorem}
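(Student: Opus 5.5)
The plan is to build the forward map from the gluing laws already recorded, and the inverse map by local reconstruction glued via uniqueness. Given a monic operator $\mathcal L$, take $\nabla^V=\nabla^{\mathcal L}$ from \eqref{II:eq:operator-adjoint-connection}, $P$ from \eqref{II:eq:quadratic-datum-law}, and $w_m=W_m(\mathcal L)$ from \eqref{II:eq:global-higher-differentials}. Well-definedness comes from the transition law \eqref{II:eq:curve-operator-transition}, $L_t=h^{-1}L_z^\phi h$. The parameter part is governed by \eqref{I:eq:rp-low-laws} and Theorem~\ref{I:thm:rp-higher}, specialized with $\kappa=1$. The gauge part by $h=\ell^{(N-1)/2}G$ is governed by Proposition~\ref{I:prop:local-covariance} and Theorem~\ref{I:thm:rp-higher}. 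The scalar factor $\ell^{(N-1)/2}$ is central, so it conjugates trivially on the $I_k$ and $W_m$; it only shifts $a_1$ by $\tfrac{N-1}2\rho$, which cancels the $-\tfrac{N-1}2\rho$ in $a_1^\lambda$. These are exactly the laws already displayed, so the forward map lands in the stated data.

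For the inverse, fix data $(\nabla^V,P,w_3,\dots,w_N)$. In each chart with coordinate $z$, frame $e$ and half-form frame $e_\vartheta$, write $\nabla^V=d+a_{1,z}dz$, $P=P_z$, and $w_m=q_{m,z}(dz)^m$. Proposition~\ref{II:prop:local-reconstruction} then produces a unique $L_z$ with first coefficient $a_{1,z}$, $I_2=(N+1)P_z$, and $W_m=q_{m,z}$; it is holomorphic since the recursion \eqref{II:eq:reconstruction-recursion} is polynomial in holomorphic jets. On an overlap, compare $L_t$ with $h^{-1}L_z^\phi h$. By the forward computation, the latter has first coefficient $G^{-1}\ell(a_{1,z}\circ\phi)G+G^{-1}\partial_tG$, which is $a_{1,t}$ because $\nabla^V$ is a connection. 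Its quadratic datum is given by \eqref{II:eq:quadratic-datum-law}, so it equals $P_t$. Its $W_m$ equal $G^{-1}\ell^m(q_{m,z}\circ\phi)G=q_{m,t}$, since $w_m$ is a tensor. Local uniqueness then forces $L_t=h^{-1}L_z^\phi h$, so the local operators glue to a global $\mathcal L$ of type \eqref{II:eq:curve-operator-type} with identity symbol.

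The two constructions are mutually inverse. One direction is the uniqueness clause of Proposition~\ref{II:prop:local-reconstruction} applied chartwise; the other holds by construction.

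The step I expect to need the most care is the sign and branch bookkeeping in the transition. The square root $\ell^{1/2}$ is fixed by the chosen $\vartheta$-frames, so $h$ is well defined even though $\ell^{(N-1)/2}$ alone is ambiguous for even $N$. Independence of the chosen frame $e_\vartheta$, up to sign, must also be checked: changing $e_\vartheta\mapsto -e_\vartheta$ multiplies $h$ by a central $\pm1$, which fixes $L$. I would verify this first, and then confirm that the central scalar gauge contributes nothing to $I_2$ beyond the Schwarzian term already built into \eqref{I:eq:rp-low-laws}.
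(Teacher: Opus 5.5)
Your proposal is correct and matches the paper's proof. In each chart you reconstruct the operator with Proposition~\ref{II:prop:local-reconstruction}, compare charts on overlaps through the transition law $L_t=h^{-1}L_z^\phi h$, and glue by local uniqueness; the forward map comes from the gluing laws already established. Your extra bookkeeping — the cancellation of the central $\tfrac{N-1}{2}\rho$ shift, the choice of $\ell^{1/2}$ through the $\vartheta$-frames, and the sign independence of $e_\vartheta$ — makes explicit what the paper records in the text just before the theorem.
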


\begin{proof}
Write $\nabla^V=d+a_1dz$ locally and
$w_m=w_{m,z}(dz)^m$. Proposition~\ref{II:prop:local-reconstruction},
with ordinary derivatives, gives a unique local operator whose
quadratic coefficient is $I_2=(N+1)P_z$ and whose higher
coefficients are $W_m=w_{m,z}$. On an overlap, transforming that operator by
\eqref{II:eq:curve-operator-transition} gives the same connection,
quadratic datum, and higher differentials as those prescribed in
the second chart. Local uniqueness identifies the two operators,
so they glue. The construction and recovery are inverse.
\end{proof}

The same reconstruction proof applies to meromorphic data by
allowing poles in the local coefficients.

\begin{remark}[When holomorphic operators exist]\label{II:rem:operator-existence}
Suppose here that $X$ is compact. By the Atiyah--Weil criterion,
$V=\bigoplus_iV_i$, with $V_i$ indecomposable, admits a holomorphic
connection if and only if $\deg V_i=0$ for every $i$ \cite{Atiyah,Weil}.
There is no further existence obstruction in
Theorem~\ref{II:thm:curve-reconstruction}. Indeed, uniformization gives
a holomorphic projective connection $p_0$ in every genus
\cite{Gunning}. Normalize it by the scalar law in
\eqref{II:eq:quadratic-datum-law}; then all affine quadratic data are
\[
 P=p_0\Id_V+q,\qquad q\in H^0(X,\End(V)\otimes K^2).
\]
Taking $w_m=0$ therefore produces an operator whenever $V$ admits a
holomorphic connection. In particular, degree zero of $V$ alone is
not sufficient: the criterion concerns each indecomposable summand.
The bundles $V_E$ constructed in Section~\ref{II:sec:kernel}
come with such a connection.

If poles are unrestricted, every $V$ admits a meromorphic operator
of the same type. A meromorphic frame $e_1,\ldots,e_r$ exists by
Riemann--Roch; declare it horizontal by setting
\[
 \nabla^V\!\left(\sum_i f_i e_i\right)=\sum_i e_i\,df_i.
\]
This gives a meromorphic connection on $V$. Taking $P=p_0\Id_V$
and $w_m=0$ supplies the remaining reconstruction data. Thus the
degree obstruction concerns holomorphic operators; no existence
claim is made here with a prescribed pole divisor or pole order.
\end{remark}

To return to modular forms, we record the derivative convention
explicitly. On $\mathbb H$, the conversion from ordinary derivatives
to $D=\kappa\partial_\tau$ is exact: if $\widehat L=\kappa^NL_\tau$ is written
as a monic polynomial in $D=\kappa\partial_\tau$, then
\begin{equation}\label{II:eq:derivative-normalization}
 I_m(\widehat L)=\kappa^m I_m(L_\tau),\qquad
 W_m(\widehat L)=\kappa^m W_m(L_\tau).
\end{equation}
Each term has the same differential weight, which proves these
identities. Thus its geometric $m$-differential has modular expression
$\kappa^{-m}W_m(\widehat L)(d\tau)^m$.

The intrinsic construction has used arbitrary coordinate changes.
For a modular quotient the coordinate changes are fractional linear,
so their scalar Schwarzian is zero. The next two subsections express
the reconstruction in this setting, including its cusp conditions.

\subsection{Modular operators and their first coefficient}
\label{II:subsec:modular-operator-definition}

Return to $\Gamma,A,\alpha$ from Section~\ref{II:sec:modular-calculus}.
For the relation between modular differential equations and their
representations in the scalar case, see~\cite[Sections~2--3]{FM}.
For an integer $w$ and an arbitrary local $A$-valued function $s$, put
\[
 (s|_{w,\alpha}\gamma)(\tau)
 =j(\gamma,\tau)^{-w}\alpha(\gamma)^{-1}s(\gamma\tau).
\]
\begin{definition}[Modular operator]\label{II:def:modular-operator}

A \emph{modular operator of type $(w,w+2N)$} is a monic operator
\eqref{I:eq:local-L} on $\mathbb H$ such that
\begin{equation}\label{II:eq:operator-intertwining}
 L(s|_{w,\alpha}\gamma)=(Ls)|_{w+2N,\alpha}\gamma
\end{equation}
for every $\gamma\in\Gamma$ and every local meromorphic $s$.
This condition is imposed on local functions, so it determines an
identity of differential operators even if a space of global forms
of weight $w$ happens to vanish. For matrix coefficients it is
equivalent to the same identity on columns with slash action
$s|_{w,\rho}\gamma=j^{-w}\rho(\gamma)^{-1}s\circ\gamma$.

At a cusp represented by $\sigma$, define $L_\sigma$ by
\[
 L_\sigma(s|_w\sigma)=(Ls)|_{w+2N}\sigma,\qquad
 s|_w\sigma=j(\sigma,\tau)^{-w}s(\sigma\tau).
\]
We require its coefficients to have finite Laurent principal parts
in the cusp variable $q$. The operator is holomorphic if its
coefficients on $\mathbb H$ and in every such cusp expression are
holomorphic. No extension across a cusp of the integer-weight input
line is needed for this coefficient convention.
\end{definition}

To describe these operators by modular coefficients, we first
replace powers of $D$ by a basis that already respects weights.
Fix a scalar modular connection $\eta$. Its ordinary iterates are
\begin{equation}\label{II:eq:ordinary-iterates}
 T_{w,n}^\eta=d_{w+2n-2}^\eta\circ\cdots\circ d_w^\eta,
 \qquad T_{w,0}^\eta=\mathrm{id}.
\end{equation}
They are monic of order $n$ and raise weight by $2n$.

\Needspace{11\baselineskip}
\begin{proposition}[Modular coefficient expansion]\label{II:prop:modular-expansion}
Every modular operator of type $(w,w+2N)$ has a unique expansion
\begin{equation}\label{II:eq:modular-expansion}
 L=\sum_{j=0}^N\binom Nj c_j^\eta T_{w,N-j}^\eta,
 \qquad c_0^\eta=1,\qquad
 c_j^\eta\in M_{2j}^{\mathrm{mer}}(\Gamma,A,\alpha).
\end{equation}
Conversely, every such tuple of coefficients defines a modular
operator. If $\eta$ is holomorphic, the operator is holomorphic if
and only if all the $c_j^\eta$ are holomorphic.
\end{proposition}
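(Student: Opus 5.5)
The proof has three steps: existence and uniqueness of the expansion as an identity of operators on $\mathbb H$, modularity of the coefficients, and the cusp and holomorphy conditions.

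\emph{Step 1: expansion on $\mathbb H$.} Each $T_{w,n}^\eta$ is monic of order $n$ in $D$, since it is a composite of $n$ first-order operators $D-k\eta$. The family $\{T_{w,n}^\eta\}_{n\ge0}$ is therefore a unitriangular left basis of the Ore algebra over meromorphic $A$-valued functions. Expanding $L$ in this basis is triangular, exactly as in the proof of Proposition~\ref{I:prop:local-normalization}, with $T_{w,n}^\eta$ in place of $\nabla^n$. This gives unique meromorphic $A$-valued functions $c_j^\eta$ with $c_0^\eta=1$. They are determined recursively: $c_j^\eta$ is the $D^{N-j}$ coefficient of $L$ minus the corresponding contributions of $c_i^\eta T_{w,N-i}^\eta$ for $i<j$. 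Each is a polynomial in the coefficients of $L$, in $\eta$, and in their $D$-derivatives. The binomial factors are invertible over $\C$.

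\emph{Step 2: modularity of the coefficients.} By \eqref{II:eq:derivative-anomaly} and \eqref{II:eq:eta-law}, $d_k^\eta$ intertwines the slash actions of weights $k$ and $k+2$. Since $\eta$ is scalar, this holds with the automorphism $\alpha$ as well. Hence $T_{w,n}^\eta(s|_{w,\alpha}\gamma)=(T_{w,n}^\eta s)|_{w+2n,\alpha}\gamma$.

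Apply \eqref{II:eq:operator-intertwining} to both sides of the expansion and compare. The left side becomes $\sum_j\binom Nj c_j^\eta(\gamma\tau)\,j^{\,\cdot}\alpha(\gamma)^{-1}(\cdots)$. Rewriting $(c_j^\eta T s)|_{w+2N,\alpha}\gamma$ gives the expansion $L=\sum_j\binom Nj\tilde c_j T_{w,N-j}^\eta$, where
\[
 \tilde c_j=(c_j^\eta)|_{2j,\alpha}\gamma .
\]
Here I use that $\alpha(\gamma)^{-1}$ is an algebra automorphism, so $\alpha^{-1}(cT s)=\alpha^{-1}(c)\,\alpha^{-1}(Ts)$. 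Because the identity is imposed on arbitrary local functions, it is an identity of operators with left coefficients. Uniqueness from Step~1 then gives $\tilde c_j=c_j^\eta$, which is the law \eqref{II:eq:modular-form-law} in weight $2j$.

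Conversely, given modular coefficients, the same computation read backwards shows that the operator satisfies \eqref{II:eq:operator-intertwining}.

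\emph{Step 3: cusps and holomorphy.} This is the step I expect to need the most care. At a cusp $\sigma$, conjugate everything by $\sigma$. The operator $L_\sigma$ expands in the iterates built from $\eta^\sigma$ of \eqref{II:eq:eta-cusp}, and its coefficients are $(c_j^\eta)_\sigma$. The same intertwining argument applies, now with the cocycle term $\kappa c_\sigma/j$ built into $\eta^\sigma$.

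The triangular relations between the coefficients of $L_\sigma$ and the $(c_j^\eta)_\sigma$ are polynomial in $\eta^\sigma$ and its $D$-derivatives, and these relations are invertible. Therefore finite Laurent principal parts in $q$ transfer in both directions, since $D=q\,d/dq$ up to the scale $1/h$ preserves them.

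If $\eta$ is holomorphic on $\mathbb H$ and at the cusps, the same polynomial relations preserve holomorphy in both directions. On $\mathbb H$ this is immediate. At a cusp, holomorphy in $q$ is preserved by $q\,d/dq$ and by multiplication by $\eta^\sigma$, whose expansion has no negative powers. This proves the final equivalence.
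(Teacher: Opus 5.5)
Your proof is correct and follows the paper's argument. Both rest on the intertwining property of the iterates $T_{w,n}^\eta$ and the unitriangular expansion, and both treat the cusps by replacing $\eta$ with $\eta^\sigma$ and the coefficients with their slash transforms. The only difference is organizational: the paper peels off one leading term at a time and reads the modularity of each coefficient from the highest-derivative term of the remaining intertwining identity, whereas you expand first, then transport the whole expansion by $\gamma$ and invoke uniqueness.
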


\begin{proof}
Subtract $T_{w,N}^\eta$ from $L$. The remainder satisfies
\eqref{II:eq:operator-intertwining} and has order at most $N-1$.
If its leading term is $bD^r$, comparing the highest derivatives
of $s$ in that identity gives
\[
 b(\gamma\tau)=j(\gamma,\tau)^{2N-2r}\alpha(\gamma)b(\tau).
\]
Subtract $bT_{w,r}^\eta$ and repeat. This gives existence and the
weights in \eqref{II:eq:modular-expansion}; the different leading orders
give uniqueness. Conversely, each term in that expansion satisfies
the intertwining law, since its two factors raise the weight by
$2j$ and $2(N-j)$.

In a cusp expression the same calculation uses $\eta^\sigma$ and
$c_j^\eta|_{2j}\sigma$. Differentiation and multiplication preserve
finite Laurent principal parts. If $\eta$ is holomorphic, the
successive subtractions preserve holomorphicity in both directions.
\end{proof}

For the Serre connection and scalar coefficients this is the familiar
expansion in modular derivatives; compare \cite[Section 6, Theorem
3]{NSZ}. Ordered multiplication gives the same proof for $A$.

The new coefficients $c_j^\eta$ identify the connection supplied
by a modular operator. Put
\begin{equation}\label{II:eq:operator-B}
 s_0=w+N-1,\qquad B_L^\eta=a_1+s_0\eta\,1.
\end{equation}
Since the coefficient of $D^{N-1}$ in $T_{w,N}^\eta$ is
$-\sum_{r=0}^{N-1}(w+2r)\eta=-Ns_0\eta$, comparison in
\eqref{II:eq:modular-expansion} gives
\begin{equation}\label{II:eq:B-first-coefficient}
 B_L^\eta=c_1^\eta\in M_2^{\mathrm{mer}}(\Gamma,A,\alpha),\qquad
 a_1(\gamma\tau)=j^2\alpha(\gamma)a_1(\tau)-s_0\kappa cj\,1.
\end{equation}
For a weight-zero change of frame, \eqref{I:eq:local-gauge-coefficients}
therefore gives
\begin{equation}\label{II:eq:operator-B-gauge}
 B_{L^g}^\eta=g^{-1}B_L^\eta g+g^{-1}Dg.
\end{equation}
Thus $B_L^\eta$ is exactly a coefficient connection as in
\eqref{II:eq:bundle-connection-B} in the matrix case.
Changing $\eta$ to $\eta+v$ adds $s_0v\,1$ to $B_L^\eta$, so the
induced derivative
\begin{equation}\label{II:eq:operator-delta}
 \Delta F=DF+[a_1,F]=DF+[B_L^\eta,F]=\delta_{B_L^\eta}F
\end{equation}
is independent of $\eta$. At the centered input weight $w=1-N$,
$a_1$ itself is modular of weight two.

\subsection{Reconstruction from modular data}
\label{II:subsec:modular-reconstruction}

The parameter laws show why higher Schwarzians become modular
forms. A fractional linear change has zero scalar Schwarzian,
and its remaining Jacobian factor is exactly the modular weight.

\begin{proposition}[Modularity of the invariants]\label{II:prop:modular-invariants}
For a modular operator of type $(w,w+2N)$,
\[
 I_2\in M_4^{\mathrm{mer}}(\Gamma,A,\alpha),\qquad
 W_m\in M_{2m}^{\mathrm{mer}}(\Gamma,A,\alpha)
             \quad(3\le m\le N).
\]
These forms are independent of $\eta$ and transform by conjugation
under weight-zero changes of coefficient frame.
\end{proposition}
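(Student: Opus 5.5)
The plan is to read the intertwining law \eqref{II:eq:operator-intertwining} as a combination of a parameter change by the fractional linear map $\gamma$ and a gauge transformation. Then Proposition~\ref{I:prop:local-covariance}, Theorem~\ref{I:thm:rp-higher} and the parameter law \eqref{I:eq:rp-low-laws} for $I_2$ do the work. Cusp conditions and independence of $\eta$ are handled separately at the end.

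\textbf{Step 1: rewrite the intertwining law.} Fix $\gamma$ and write $\phi(\tau)=\gamma\tau$, $\ell=D$-adapted Jacobian $j(\gamma,\tau)^{-2}$. The law $L(s|_{w,\alpha}\gamma)=(Ls)|_{w+2N,\alpha}\gamma$ says that
\[
 L=j^{w+2N}\alpha(\gamma)^{-1}\,(L\circ\phi)\,\alpha(\gamma)j^{-w},
\]
where $L\circ\phi$ acts on functions pulled back along $\phi$. Expressing $D$ through $\ell^{-1}D_\tau$ exhibits the right side as $h^{-1}L^\phi h$ with $h=j^{-w}$ times the constant automorphism $\alpha(\gamma)$. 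The monic pullback $L^\phi$ is \eqref{I:eq:rp-operator} with $\lambda_1=\ell=j^{-2}$. Since $j^{2N}=\ell^{-N}$, this matches the normalization $\ell^N$ in \eqref{I:eq:rp-operator}.

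In the matrix case $\alpha(\gamma)$ is conjugation by $\rho(\gamma)$, so $h$ is an honest gauge. For a general algebra automorphism, I would instead apply the differential algebra automorphism $\alpha(\gamma)$ to the coefficients. Every universal polynomial commutes with such a map, because it is a differential homomorphism; the constant $\alpha(\gamma)$ commutes with $D$.

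\textbf{Step 2: apply the two covariance laws.} The scalar factor $j^{-w}$ is a central gauge, so it leaves $I_k$ and $W_m$ unchanged by Proposition~\ref{I:prop:local-covariance} and Theorem~\ref{I:thm:rp-higher}. The pullback contributes $\lambda_1^m$ for $W_m$, which gives
\[
 W_m(\tau)=j^{-2m}\alpha(\gamma)^{-1}W_m(\gamma\tau).
\]
This is exactly \eqref{II:eq:modular-form-law} of weight $2m$.

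For $I_2$, the law \eqref{I:eq:rp-low-laws} adds $(N+1)S_\lambda$, and $S_\lambda$ is a multiple of $\Sch(\gamma)$, which is zero for a M\"obius map. So $I_2$ is modular of weight four. I would check the sign and direction conventions of $\lambda$ against \eqref{II:eq:derivative-normalization}; this bookkeeping of which side the pullback sits on is the step most likely to go wrong, though only in notation.

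\textbf{Step 3: cusps, $\eta$-independence, and frame changes.} At a cusp, apply the same argument to $L_\sigma$ with $\sigma$ in place of $\gamma$. This shows that $I_2$ and $W_m$ of $L_\sigma$ are the slash transforms $j(\sigma,\cdot)^{-2m}W_m(\sigma\,\cdot)$. These have finite Laurent principal parts in $q$ because they are differential polynomials in the coefficients of $L_\sigma$, and $D$ preserves such expansions.

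Independence of $\eta$ is immediate, since $I_k$ and $W_m$ are defined from the coefficients $a_k$ of $L$ alone and $\eta$ never enters. Finally, for a weight-zero frame change $g$, Proposition~\ref{I:prop:local-covariance} and Theorem~\ref{I:thm:rp-higher} give $I_2(L^g)=g^{-1}I_2g$ and $W_m(L^g)=g^{-1}W_mg$.
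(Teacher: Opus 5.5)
Your proposal is correct and follows the paper's proof. As there, you rewrite the intertwining law as $L^\gamma=(\alpha(\gamma)L)^{j^{-w}}$, use that the universal formulas commute with the coefficient automorphism and ignore the scalar gauge, apply the parameter laws with $\Sch(\gamma)=0$, and repeat the computation for a cusp matrix; $\eta$-independence and frame covariance come from the local construction. The one slip is in Step~1, where the powers of $j$ are inverted: the correct identity is $L=j^{-w}\bigl(\alpha(\gamma)^{-1}L^\gamma\bigr)j^{w}$, with $j^{-2N}=\ell^{N}$ supplying the monic normalization. This is harmless, because the central scalar gauge drops out of $I_2$ and $W_m$ whatever its exponent.
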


\begin{proof}
For $\phi(\tau)=\gamma\tau$, one has $\ell=j^{-2}$ and
$\Sch(\gamma)=0$. The intertwining identity is equivalent to
\begin{equation}\label{II:eq:modular-pullback-gauge}
 L^\gamma=(\alpha(\gamma)L)^{j^{-w}},
\end{equation}
where $\alpha(\gamma)L$ means applying the fixed algebra
automorphism to every coefficient. The formulas for $I_2$ and $W_m$
commute with such automorphisms, and scalar conjugation leaves them
unchanged. Formula \eqref{I:eq:rp-low-laws} and
Theorem~\ref{I:thm:rp-higher} therefore give
\[
 j^{-4}I_2(\gamma\tau)=\alpha(\gamma)I_2(\tau),\qquad
 j^{-2m}W_m(\gamma\tau)=\alpha(\gamma)W_m(\tau).
\]
The same calculation with a cusp matrix $\sigma$ identifies the
invariants of $L_\sigma$ with their weight-$2m$ slash transforms.
Their differential-polynomial formulas preserve finite Laurent
principal parts. Independence of $\eta$ and gauge covariance follow
from the local construction.
\end{proof}

We can therefore prescribe the connection coefficient and these
modular forms independently. The local inverse makes the resulting
operator, and its uniqueness proves the modular law.

\Needspace{16\baselineskip}
\begin{theorem}[Modular reconstruction]\label{II:thm:modular-reconstruction}
Fix $\Gamma,A,\alpha,N,w$ and a meromorphic scalar modular connection
$\eta$. The assignment
\begin{equation}\label{II:eq:modular-data-map}
 L\longmapsto(B_L^\eta,I_2,W_3,\ldots,W_N)
\end{equation}
is a bijection from monic meromorphic modular operators of type
$(w,w+2N)$ to
\begin{equation}\label{II:eq:modular-data-space}
 M_2^{\mathrm{mer}}(\Gamma,A,\alpha)
      \times\prod_{m=2}^N M_{2m}^{\mathrm{mer}}(\Gamma,A,\alpha).
\end{equation}
It respects weight-zero changes of frame, with the connection law
on the first factor and conjugation on the remaining factors.
If $\eta$ is holomorphic, it restricts to a bijection for holomorphic
operators and holomorphic tuples.
\end{theorem}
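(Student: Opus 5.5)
The plan is to combine the modular-side facts already established with the local inverse of Proposition~\ref{II:prop:local-reconstruction}. First, the map \eqref{II:eq:modular-data-map} is well defined. By \eqref{II:eq:B-first-coefficient}, $B_L^\eta=c_1^\eta$ lies in $M_2^{\mathrm{mer}}(\Gamma,A,\alpha)$. By Proposition~\ref{II:prop:modular-invariants}, $I_2$ and the $W_m$ lie in the stated spaces, and they do not depend on $\eta$. Gauge compatibility follows from \eqref{II:eq:operator-B-gauge} for the first factor and from Proposition~\ref{I:prop:local-covariance} and Theorem~\ref{I:thm:rp-higher} for the rest.

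\emph{Injectivity.} Two modular operators with the same data have the same $a_1=B-s_0\eta$, the same $I_2$, and the same $W_m$ on $\mathbb H$. Local uniqueness in Proposition~\ref{II:prop:local-reconstruction}, with $\delta=D$, then forces the operators to coincide.

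\emph{Surjectivity.} Given a tuple $(B,q_2,\ldots,q_N)$, set $a_1=B-s_0\eta$ and apply Proposition~\ref{II:prop:local-reconstruction} on $\mathbb H$. The recursion \eqref{II:eq:reconstruction-recursion} is a differential polynomial in meromorphic data, so it produces a meromorphic monic $L$ with first coefficient $a_1$, $I_2=q_2$ and $W_m=q_m$.

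It remains to check the intertwining law \eqref{II:eq:operator-intertwining}. Equivalently, by \eqref{II:eq:modular-pullback-gauge}, $L^\gamma=(\alpha(\gamma)L)^{j^{-w}}$. Both sides are monic operators, and I would compare their reconstruction data.
\begin{itemize}
\item The first coefficient of $L^\gamma$ is $j^{-2}a_1(\gamma\tau)-\tfrac{N-1}{2}\rho$ by \eqref{I:eq:rp-low-laws}. On the other side, scalar conjugation by $j^{-w}$ shifts $\alpha(\gamma)a_1$ by $j^{w}D(j^{-w})$.
\item The transformation law of $a_1$ needed for these to match is exactly the one encoded in \eqref{II:eq:B-first-coefficient}. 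It follows from $B$ having weight two together with the law \eqref{II:eq:eta-law} for $\eta$.
\item For $I_2$ and $W_m$, the laws \eqref{I:eq:rp-low-laws} and Theorem~\ref{I:thm:rp-higher} hold with $\Sch(\gamma)=0$. Scalar gauges leave these invariants unchanged, and applying the automorphism $\alpha(\gamma)$ commutes with them. Hence both sides have the invariants $\alpha(\gamma)q_m$, by modularity of the $q_m$.
\end{itemize}
Local uniqueness then gives equality of the operators. The main obstacle I expect is this first-coefficient bookkeeping, namely matching $s_0=w+N-1$ against $\rho=\lambda_2/\lambda_1=-2cj^{-1}\kappa$ in the $D$-convention. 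I would verify it by running the computation behind \eqref{II:eq:B-first-coefficient} in reverse.

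Cusp conditions and holomorphy are handled in the same framework. Repeat the argument for $L_\sigma$, using $\eta^\sigma$ and the slash transforms of the data. Differential polynomials preserve finite Laurent principal parts, and they preserve holomorphy when $\eta$ is holomorphic. For the holomorphic restriction, the recursion \eqref{II:eq:reconstruction-recursion} and the forward formulas are polynomial in each direction. Combined with the holomorphy clause of Proposition~\ref{II:prop:modular-expansion}, this shows that holomorphic data correspond exactly to holomorphic operators.
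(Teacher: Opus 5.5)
Your proposal is correct and follows essentially the same route as the paper. The paper likewise reconstructs on $\mathbb H$ via Proposition~\ref{II:prop:local-reconstruction} with $a_1=B-s_0\eta$. It then proves modularity by matching the first coefficient, $I_2$, and the $W_m$ of the two operators in \eqref{II:eq:modular-pullback-gauge} and invoking local uniqueness, and it handles cusps and holomorphy in the same way you do. Your explicit first-coefficient check, with $\rho=-2\kappa c/j$ and gauge shift $-w\kappa c/j$, yields exactly the law \eqref{II:eq:B-first-coefficient}; this is what the paper abbreviates as agreement ``by the transformation laws of $B$ and $\eta$.''
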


\begin{proof}
Given $(B,q_2,q_3,\ldots,q_N)$ in \eqref{II:eq:modular-data-space}, set
\[
 a_1=B-(w+N-1)\eta\,1,\qquad
 \Delta F=DF+[a_1,F].
\]
Proposition~\ref{II:prop:local-reconstruction} gives a unique local
monic operator with this first coefficient and the prescribed
$I_2,W_3,\ldots,W_N$.

To prove its modularity, compare the two operators in
\eqref{II:eq:modular-pullback-gauge}. Their first coefficients agree by
the transformation laws of $B$ and $\eta$. Their $I_2$ and $W_m$
agree by the weights of the $q_m$ and the parameter laws. Local
uniqueness therefore identifies the operators, giving
\eqref{II:eq:operator-intertwining}.

At a cusp, the same local inverse applied to $B|_2\sigma$,
$q_m|_{2m}\sigma$, and $\eta^\sigma$ gives $L_\sigma$, with finite
Laurent principal parts. If the data and $\eta$ are holomorphic,
every step of \eqref{II:eq:reconstruction-recursion} preserves
holomorphicity. Conversely, the invariants of a holomorphic
operator are holomorphic by their local polynomial formulas, and
$B_L^\eta=c_1^\eta$ is holomorphic by
Proposition~\ref{II:prop:modular-expansion}. Finally, conjugation of the
normalized expansion and local uniqueness give the asserted
equivariance under changes of coefficient frame.
\end{proof}

The order-three example \eqref{II:eq:cubic-operator-example} now
has a modular interpretation. If
$p\in M_4^{\mathrm{mer}}(\Gamma,A,\alpha)$ and
$q\in M_6^{\mathrm{mer}}(\Gamma,A,\alpha)$, the centered choice
$w=-2$, $B=0$ makes $a_1=0$. Thus $L_{p,q}$ has type $(-2,4)$.
This can also be checked directly in the modular derivative basis:
\[
 T_{-2,3}^\eta=D^3-4\Omega_\eta D-2D\Omega_\eta,
\]
and hence
\[
 L_{p,q}=T_{-2,3}^\eta
 +3\left(p+\tfrac43\Omega_\eta\,1\right)T_{-2,1}^\eta
 +q+\tfrac32d_4^\eta p+2d_4^\eta\Omega_\eta\,1.
\]
Every coefficient has the weight required by
\eqref{II:eq:modular-expansion}.

\begin{remark}[The quadratic coefficient]
The derivative coefficients $c_j^\eta$ and the higher Schwarzian
data are now two complete descriptions of $L$. Their quadratic
comparison locates the scalar modular curvature within $I_2$:
\begin{equation}\label{II:eq:I2-curvature-comparison}
 I_2=c_2^\eta-d_2^\eta B_L^\eta-(B_L^\eta)^2
                                  -\frac{N+1}{3}\Omega_\eta\,1.
\end{equation}
To check it, the first two lower terms of the ordinary iterate are
\[
 \begin{split}
 T_{w,N}^\eta={}&D^N-Ns_0\eta D^{N-1}\\
 &+\binom N2\left(s_0^2\eta^2-s_0D\eta
                    -\frac{N+1}{3}\Omega_\eta\right)D^{N-2}
       +\text{terms of lower order}.
 \end{split}
\]
These coefficients follow by multiplying the factors in
\eqref{II:eq:ordinary-iterates}; alternatively, their displayed formulas
are preserved when the next factor $D-(w+2N)\eta$ is multiplied on
the left. Compare the $D^{N-2}$ terms in
\eqref{II:eq:modular-expansion}, substitute $a_1=B_L^\eta-s_0\eta$, and
use \eqref{I:eq:local-I23}. This gives \eqref{II:eq:I2-curvature-comparison}.
In particular, the modular curvature already enters in order two.
\end{remark}

\section{Ordered Rankin--Cohen operations}
\label{II:sec:rankin-cohen}

The higher Schwarzians are forms with values in the coefficient
algebra. We now combine them with their derivatives to obtain further
forms. The first operation already exhibits the role of the algebra:
its self-bracket vanishes for scalar coefficients, but can be nonzero
for matrices. We will apply it to the canonical cubic constructed in
Section~\ref{II:sec:kernel}.

The ordered brackets belong to the noncommutative Rankin--Cohen
framework of Connes and Moscovici \cite{CM}. We give them in the
coefficient-bundle conventions of this paper, including their
behavior under a change of frame.

\subsection{The first ordered bracket}
\label{II:subsec:first-ordered-bracket}

Fix $B\in M_2^{\mathrm{mer}}(\Gamma,A,\alpha)$, and write
$\delta_B=D+[B,\,\cdot\,]$. If $F,G$ have respective even weights
$k,\ell$, put
\begin{align}
 [F,G]_0^B&=FG,\label{II:eq:RC-zero}\\
 [F,G]_1^B&=kF\delta_BG-\ell(\delta_BF)G.
       \label{II:eq:RC-first}
\end{align}
The order of the factors is part of the definition. To see why the
second expression is modular, choose a scalar modular connection
$\eta$ and use $\mathcal D_k^{\eta,B}=\delta_B-k\eta$:
\[
 kF\mathcal D_\ell^{\eta,B}G
       -\ell(\mathcal D_k^{\eta,B}F)G
   =kF\delta_BG-\ell(\delta_BF)G.
\]
The two terms involving $\eta$ cancel because $\eta$ is scalar.
On the left, both products have weight $k+\ell+2$ and transform
by the same conjugation under a change of coefficient frame.
This proves both properties for the bracket, while the right side
shows that it is independent of $\eta$.

In particular, a form $F$ of weight $2m$ has first self-bracket
\begin{equation}\label{II:eq:first-self-bracket}
 [F,F]_1^B=2m[F,\delta_BF],
\end{equation}
of weight $4m+2$. For matrix coefficients its trace is zero,
although the commutator itself need not vanish. The cubic higher
Schwarzian therefore gives a form of weight $14$.

\begin{example}[A nonzero first self-bracket]
\label{II:ex:ordered-self-brackets}
Use ordinary differentiation in a local projective coordinate $t$,
and take the coefficient connection to be zero. Give
\[
 F(t)=\begin{pmatrix}t&t^2\\1&-t\end{pmatrix}
\]
weight six, so that $F(t)(dt)^3$ is a local endomorphism-valued
cubic. Direct multiplication gives
\[
 [F,F]_1^0=6(FF'-F'F)
 =12\begin{pmatrix}-t&t^2\\1&t\end{pmatrix}.
\]
Thus the first self-bracket need not vanish. Its cancellation for
scalar inputs uses commutativity. This is a local
calculation of the bracket law, with no global automorphy condition
imposed on the polynomial matrix $F$.
\end{example}

To extend the bracket to higher orders, we need higher derivatives
whose scalar connection terms cancel in the same way. Ordinary
iterates of $\mathcal D^{\eta,B}$ do not have this property. The
correction below is the higher Serre derivative of Nagatomo,
Sakai, and Zagier \cite[Section 5]{NSZ}, with the coefficient
connection included.

\subsection{Higher covariant derivatives}
\label{II:subsec:higher-derivatives}

Keep $B$ fixed, choose a scalar modular connection $\eta$, and
write $\Omega=\Omega_\eta$. For a form $F$ of weight $k$, define
\begin{equation}\label{II:eq:higher-derivative-recurrence}
 \begin{aligned}
 H_{k,0}^{\eta,B}F&=F,\qquad
 H_{k,1}^{\eta,B}F=\mathcal D_k^{\eta,B}F,\\
 H_{k,n+1}^{\eta,B}F
   &=\mathcal D_{k+2n}^{\eta,B}(H_{k,n}^{\eta,B}F)
            -n(k+n-1)\Omega H_{k,n-1}^{\eta,B}F
                      \qquad(n\ge1).
 \end{aligned}
\end{equation}
Each summand has weight $k+2n+2$, so $H_{k,n}^{\eta,B}$ raises weight
by $2n$. The recurrence also preserves gauge covariance
\eqref{II:eq:derivative-gauge}. Its first departure from ordinary
iteration is
\begin{equation}\label{II:eq:second-higher-derivative}
 H_{k,2}^{\eta,B}F
   =\mathcal D_{k+2}^{\eta,B}\mathcal D_k^{\eta,B}F-k\Omega F
   =\delta_B^2F-2(k+1)\eta\delta_BF+k(k+1)\eta^2F.
\end{equation}
In the last expression the derivative of $\eta$ has cancelled.
The same cancellation holds in every order.

\begin{lemma}\label{II:lem:higher-derivative-closed}
With $(a)_s=a(a+1)\cdots(a+s-1)$ and $(a)_0=1$, one has
\begin{equation}\label{II:eq:higher-derivative-closed}
 H_{k,n}^{\eta,B}F
   =\sum_{i=0}^n\binom ni(k+i)_{n-i}(-\eta)^{n-i}\delta_B^iF.
\end{equation}
\end{lemma}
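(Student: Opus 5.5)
The plan is to prove \eqref{II:eq:higher-derivative-closed} by induction on $n$, directly from the recurrence \eqref{II:eq:higher-derivative-recurrence}. Write $G_n$ for the right side of \eqref{II:eq:higher-derivative-closed}. The cases $n=0,1$ are immediate: $G_1=\delta_BF-k\eta F=\mathcal D_k^{\eta,B}F$. The case $n=2$ is the displayed formula \eqref{II:eq:second-higher-derivative}, which serves as a check. For the induction step, assume $H_{k,n}=G_n$ and $H_{k,n-1}=G_{n-1}$. We must show
\[
 \mathcal D_{k+2n}^{\eta,B}G_n-n(k+n-1)\Omega G_{n-1}=G_{n+1}.
\]

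Expand the left side using $\mathcal D_{k+2n}^{\eta,B}=\delta_B-(k+2n)\eta$ and the Leibniz rule. Since $\eta$ is scalar, $\delta_B(\eta^j\delta_B^iF)=j\eta^{j-1}(D\eta)\delta_B^iF+\eta^j\delta_B^{i+1}F$. Then substitute $D\eta=\eta^2-\Omega$ from \eqref{II:eq:modular-curvature}.

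After this substitution, everything is a combination of the monomials $\eta^{a}\Omega^{b}\delta_B^iF$, and only $b\in\{0,1\}$ occurs. There are two groups to check.

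\emph{The $\Omega$-terms must cancel.} The derivative of $(-\eta)^{n-i}$ produces $-(n-i)(-\eta)^{n-i-1}\cdot(-1)\cdot(-\Omega)$, which up to sign is $(n-i)\Omega(-\eta)^{n-1-i}$. This must cancel against $n(k+n-1)\binom{n-1}{i}(k+i)_{n-1-i}(-\eta)^{n-1-i}\Omega$. The identity needed is
\[
 \binom ni(n-i)(k+i)_{n-i}=n(k+n-1)\binom{n-1}i(k+i)_{n-1-i}.
\]
It holds because $\binom ni(n-i)=n\binom{n-1}i$ and $(k+i)_{n-i}=(k+i)_{n-1-i}(k+n-1)$. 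The first thing to settle is sign bookkeeping. Since $D\eta=\eta^2-\Omega$, the $\Omega$-part of $\delta_B(-\eta)^{j}$ is $-j(-\eta)^{j-1}\cdot(-\Omega)\cdot(-1)$, and I will track this carefully so that it matches the minus sign in the recurrence.

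\emph{The remaining terms must match $G_{n+1}$.} The coefficient of $(-\eta)^{n+1-i}\delta_B^iF$ on the left collects three contributions: $\binom n{i-1}(k+i-1)_{n-i+1}$ from differentiating $\delta_B^{i-1}F$; $(k+2n)\binom ni(k+i)_{n-i}$ from the scalar term; and $-(n-i)\binom ni(k+i)_{n-i}$ from the $\eta^2$ part of $D\eta$, where the sign is to be checked. Factor out $(k+i)_{n-i}$, using $(k+i-1)_{n-i+1}=(k+i-1)(k+i)_{n-i}$. The target is $\binom{n+1}i(k+i)_{n+1-i}=\binom{n+1}i(k+i)_{n-i}(k+n)$, so it reduces to the scalar identity
\[
 \binom n{i-1}(k+i-1)+\binom ni(k+n+i)=\binom{n+1}i(k+n).
\]
This follows from Pascal's rule: the difference equals $\binom n{i-1}(i-1-n)+\binom ni\,i$, which vanishes since $\binom n{i-1}(n-i+1)=\binom ni\,i$.

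The expected obstacle is only this sign and index bookkeeping; no conceptual step is needed beyond the centrality of $\eta$. Centrality is what allows $\delta_B$ to act on $\eta^j$ as $D$ does, and lets the scalar powers commute past $\delta_B^iF$. In particular, the ordering of factors plays no role.
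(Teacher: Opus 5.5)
Your proof is correct and is essentially the paper's own. The structure is the same: an induction step through the recurrence, the same cancellation identity $(n-i)\binom ni(k+i)_{n-i}=n(k+n-1)\binom{n-1}i(k+i)_{n-1-i}$, and the same use of Pascal's rule with $(k+i)_{n+1-i}=(k+n)(k+i)_{n-i}$. The only cosmetic difference is that you eliminate $D\eta$ via $D\eta=\eta^2-\Omega$, whereas the paper eliminates $\Omega=\eta^2-D\eta$.

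The sign you left open comes out as needed. Since $\delta_B(-\eta)^j=-j(-\eta)^{j-1}D\eta$, its $\Omega$-part is $+j(-\eta)^{j-1}\Omega$; your displayed expression carries a spurious extra minus. This positive term is exactly what cancels $-n(k+n-1)\Omega G_{n-1}$. Finally, at the top index $i=n+1$ you cannot factor out $(k+i)_{n-i}$, so check that coefficient directly: both sides equal $1$.
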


\begin{proof}
The cases $n=0,1$ follow from the definitions. In the induction step,
apply $\delta_B-(k+2n)\eta$ to the displayed sum. Since $\eta$ is
scalar, $\delta_B\eta=D\eta$, and differentiation of
$(-\eta)^{n-i}$ contributes a factor $(n-i)D\eta$.
For $0\le i\le n-1$, the coefficient identity
\[
 (n-i)\binom ni(k+i)_{n-i}
  =n(k+n-1)\binom{n-1}i(k+i)_{n-1-i}
\]
shows that these terms cancel the $D\eta$ part of
$-n(k+n-1)(\eta^2-D\eta)H_{k,n-1}^{\eta,B}F$.
For the remaining terms, Pascal's identity and
$(k+i)_{n+1-i}=(k+n)(k+i)_{n-i}$ combine the contributions
to $\delta_B^iF$ into the coefficient in
\eqref{II:eq:higher-derivative-closed} with $n$ replaced by $n+1$.
This proves the formula. All identities are polynomial in $k$, so
they include zero and negative weights.
\end{proof}

\subsection{The bracket and the cancellation of the scalar connection}
\label{II:subsec:ordered-brackets}

For forms $F,G$ of respective even weights $k,\ell$, define
\begin{equation}\label{II:eq:RC-covariant-definition}
 [F,G]_n^B
  =\sum_{r=0}^n(-1)^r
       \binom{k+n-1}{n-r}\binom{\ell+n-1}{r}
       (H_{k,r}^{\eta,B}F)(H_{\ell,n-r}^{\eta,B}G).
\end{equation}
For any integer $a$ and $b\ge0$, the binomial coefficient here means
$\binom ab=a(a-1)\cdots(a-b+1)/b!$. The $F$ factor always precedes
the $G$ factor. The notation omits $\eta$ because the next proposition
shows that it cancels.

\begin{proposition}\label{II:prop:RC-properties}
The expression \eqref{II:eq:RC-covariant-definition} is independent of
$\eta$ and equals
\begin{equation}\label{II:eq:RC-ordinary-definition}
 [F,G]_n^B
  =\sum_{r=0}^n(-1)^r
       \binom{k+n-1}{n-r}\binom{\ell+n-1}{r}
       (\delta_B^rF)(\delta_B^{n-r}G).
\end{equation}
It is a modular form of weight $k+\ell+2n$ with multiplier $\alpha$.
Under \eqref{II:eq:B-gauge},
\begin{equation}\label{II:eq:RC-gauge}
 [F^g,G^g]_n^{B^g}=g^{-1}[F,G]_n^Bg.
\end{equation}
It is holomorphic if $F,G,B$ are holomorphic, including at the cusps.
\end{proposition}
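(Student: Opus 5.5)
The proof starts from the covariant definition \eqref{II:eq:RC-covariant-definition} and derives everything else from it. Modularity, gauge covariance and holomorphy are properties of each summand, so they follow directly. The first step is to show that $H_{k,r}^{\eta,B}$ maps $M_k^{\mathrm{mer}}(\Gamma,A,\alpha)$ to $M_{k+2r}^{\mathrm{mer}}(\Gamma,A,\alpha)$. By induction on the recurrence \eqref{II:eq:higher-derivative-recurrence}, each step applies $\mathcal D^{\eta,B}$, which raises weight by two as shown in \S\ref{II:subsec:coefficient-connection}, and subtracts a multiple of $\Omega_\eta H_{k,n-1}^{\eta,B}F$. Since $\Omega_\eta\in M_4^{\mathrm{mer}}(\Gamma)$ is scalar, that second term has the right weight. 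Gauge covariance of $H_{k,r}$ then follows from \eqref{II:eq:derivative-gauge}, because $\Omega_\eta$ and $\eta$ are scalar and do not change under \eqref{II:eq:B-gauge}. Each product $(H_{k,r}F)(H_{\ell,n-r}G)$ has weight $k+\ell+2n$ and transforms by $g^{-1}(\cdot)g$, since the adjacent factors $gg^{-1}$ cancel. This gives modularity and \eqref{II:eq:RC-gauge}, once the bracket is known to be independent of $\eta$, so that $[\cdot,\cdot]^{B^g}$ may be computed with the same $\eta$.

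The main step is the $\eta$-independence, which I would get by substituting Lemma~\ref{II:lem:higher-derivative-closed}. Because $\eta$ is scalar and central, the bracket becomes $\sum_{i,j}C_{i,j}(-\eta)^{n-i-j}(\delta_B^iF)(\delta_B^jG)$, where
\[
 C_{i,j}=\sum_{r}(-1)^r\binom{k+n-1}{n-r}\binom{\ell+n-1}{r}
 \binom ri(k+i)_{r-i}\binom{n-r}{j}(\ell+j)_{n-r-j}.
\]
The claim is that $C_{i,j}=0$ whenever $i+j<n$, and that $C_{i,n-i}$ equals the coefficient in \eqref{II:eq:RC-ordinary-definition}. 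The second part is immediate, since only $r=i$ contributes. The first part is a binomial identity in $k$ and $\ell$, and it is the same identity as in the scalar, commutative case: nothing was commuted except the central $\eta$. The scalar case is the classical statement that the Rankin--Cohen bracket can be written either in Serre derivatives or in ordinary derivatives, as in \cite[Section 5]{NSZ} and Zagier's work. If I had to prove it directly, I would rewrite
\[
 \binom{k+n-1}{n-r}\binom ri(k+i)_{r-i}
 =\binom{k+n-1}{n-i}\binom{n-i}{r-i}\frac{(k+i)_{r-i}\,(n-i)!\,\cdots}{\cdots},
\]
simplify both factors into the form $\binom{k+n-1}{n-i}\binom{n-i}{r-i}$ times a Pochhammer ratio, and reduce the sum over $r$ to a Chu--Vandermonde sum. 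That sum vanishes because of the alternating sign when $n-i-j>0$.

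I expect this combinatorial identity to be the main obstacle. A cleaner alternative avoids it. Both \eqref{II:eq:RC-covariant-definition} and \eqref{II:eq:RC-ordinary-definition} are polynomial in $\eta$ and its derivatives, and the covariant one has only $\eta$ itself appearing, by Lemma~\ref{II:lem:higher-derivative-closed}. So it suffices to show that the derivative with respect to a shift $\eta\mapsto\eta+\epsilon$ vanishes, with $\epsilon$ a formal central scalar; modularity is not needed for this algebraic identity. Differentiating the closed formula gives $\partial_\epsilon H_{k,r}=-r(k+r-1)H_{k,r-1}$. The resulting telescoping in $r$ comes down to the single two-term identity
\[
 r(k+r-1)\binom{k+n-1}{n-r}\binom{\ell+n-1}{r}
 =(n-r+1)(\ell+n-r)\binom{k+n-1}{n-r+1}\binom{\ell+n-1}{r-1},
\]
which can be checked by expanding factorials. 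At $\eta=0$ each $H_{k,r}$ reduces to $\delta_B^r$, and this yields \eqref{II:eq:RC-ordinary-definition}. I would adopt this route.

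Holomorphy follows from the ordinary formula, which uses only $\delta_B=D+[B,\cdot]$. This operator preserves holomorphic functions on $\mathbb H$. In cusp coordinates, $D=q\,d/dq$ up to a constant preserves power series in $q$, and $B|_2\sigma$ is holomorphic there. The slash-transformed bracket is the same expression built from $F|_k\sigma$, $G|_\ell\sigma$ and $\eta^\sigma$, and it is again $\eta$-independent. It is therefore a polynomial in $\delta_{B_\sigma}$-derivatives of holomorphic $q$-series, so it is holomorphic at each cusp.
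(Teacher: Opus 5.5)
Your proposal is correct, and your handling of modularity, gauge covariance and the cusps matches the paper's. The difference lies in the $\eta$-independence step.

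\textbf{The paper's route.} It does not differentiate in $\eta$. It substitutes Lemma~\ref{II:lem:higher-derivative-closed} and computes the full coefficient of each ordered monomial $(-\eta)^s(\delta_B^iF)(\delta_B^jG)$, where $s=n-i-j$. Two Pochhammer identities do the work: $\binom{k+n-1}{n-r}(k+i)_{r-i}=(k+i)_{n-i}/(n-r)!$ and $\binom{\ell+n-1}{r}(\ell+j)_{n-r-j}=(\ell+j)_{n-j}/r!$. With these, the sum over $r=i+u$ collapses to a fixed factor times $\sum_{u=0}^s(-1)^u/(u!(s-u)!)=(1-1)^s/s!$, as in \eqref{II:eq:RC-cancellation}. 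So the identity you regarded as the main obstacle is a one-line cancellation, not a Chu--Vandermonde evaluation. It gives every coefficient explicitly, with the ordinary formula appearing as the $s=0$ term.

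\textbf{Your route.} You freeze the $\delta_B$-jets, treat $\eta$ as a formal central variable, and combine $\partial_\eta H_{k,r}=-r(k+r-1)H_{k,r-1}$ with the two-term ratio identity for consecutive bracket coefficients. This is equally valid. The derivative vanishes identically in the formal variable, and over a $\mathbb{Q}$-algebra that forces the polynomial to equal its value at $\eta=0$.

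\textbf{Comparison.} Your route needs only adjacent-coefficient ratios rather than a closed form. It is precisely the argument the paper uses for the Siegel analogue, Proposition~\ref{III:prop:projected-RC}. The paper's direct extraction avoids the formal-variable step and shows the vanishing term by term. Both arguments are polynomial identities in $k,\ell$, so both cover all integer weights.
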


\begin{proof}
Modularity follows term by term in
\eqref{II:eq:RC-covariant-definition}: the weights add to
$k+\ell+2n$. The gauge identity follows from that of $H$ and
the cancellation of the adjacent factors $gg^{-1}$.

For independence of $\eta$, insert
\eqref{II:eq:higher-derivative-closed}. Fix $i,j\ge0$ with
$s=n-i-j\ge0$. The coefficient of the ordered product
$(-\eta)^s(\delta_B^iF)(\delta_B^jG)$ is
\begin{equation}\label{II:eq:RC-cancellation}
 \frac{(-1)^i(k+i)_{n-i}(\ell+j)_{n-j}}{i!j!}
                 \sum_{u=0}^s\frac{(-1)^u}{u!(s-u)!}.
\end{equation}
To see the coefficient, set $r=i+u$ in
\eqref{II:eq:RC-covariant-definition} and cancel the factorials in its
two binomial coefficients and the two factors from
\eqref{II:eq:higher-derivative-closed}. The sum in
\eqref{II:eq:RC-cancellation} is $(1-1)^s/s!$. It vanishes for $s>0$.
For $s=0$, so that $j=n-i$, the remaining coefficient is exactly
the one in \eqref{II:eq:RC-ordinary-definition}. This proves that formula
without commuting $F,G$ or their derivatives.

Finally, \eqref{II:eq:RC-ordinary-definition} uses only derivatives and
products of $F,G,B$. Its cusp expression has the same formula with
$F_\sigma,G_\sigma,B_\sigma$. These operations preserve holomorphic
$q$-series. This proves the last assertion even if the auxiliary
$\eta$ chosen for the proof was meromorphic.
\end{proof}

These operations specialize the construction of Connes--Moscovici,
including the coefficient connection. On the graded algebra of forms,
the dictionary is
\[
 Y_{\rm CM}F=\tfrac{k}{2}F,\qquad
 X_{\rm CM}F=\mathcal D_k^{\eta,B}F,\qquad
 \delta_j^{\rm CM}=0,\qquad \Omega_{\rm CM}=2\Omega_\eta.
\]
The graded Leibniz rule makes $X_{\rm CM}$ a derivation, even when
$B$ is noncentral, and $[Y_{\rm CM},X_{\rm CM}]=X_{\rm CM}$.
A nonzero $\Omega_{\rm CM}$ is compatible with
$\delta_j^{\rm CM}=0$. Indeed, the condition in
\cite[Section~2, equations~(2.1)--(2.2)]{CM} becomes
\[
 \delta_2^{\rm CM}-\tfrac12(\delta_1^{\rm CM})^2
     =\operatorname{ad}(\Omega_{\rm CM})=0,
\]
because $\Omega_{\rm CM}=2\Omega_\eta\,1$ is central; the conditions
$\delta_j^{\rm CM}(\Omega_{\rm CM})=0$ are automatic.
Their recurrences and bracket formula~\cite[Section~2]{CM} give
\eqref{II:eq:higher-derivative-recurrence} and
\eqref{II:eq:RC-covariant-definition}. Here the higher Schwarzians
supply canonical inputs to these established operations.

In the scalar case $B$ has zero commutator action, and
\eqref{II:eq:RC-ordinary-definition} is the classical Rankin--Cohen
formula. In the matrix case $B$ records the chosen coefficient
connection. The bracket is independent of the scalar connection
but can depend on this coefficient connection.

\begin{example}[A nonzero third self-bracket]
\label{II:ex:third-self-bracket}
Use the local coordinate and weight-six matrix $F$ of
Example~\ref{II:ex:ordered-self-brackets}, with zero coefficient
connection. Since $F^{(3)}=0$, only the two middle terms of
\eqref{II:eq:RC-ordinary-definition} survive in order three:
\[
 [F,F]_3^0=224(F''F'-F'F'')
          =-896\begin{pmatrix}0&1\\0&0\end{pmatrix}.
\]
The same matrix therefore has nonzero self-brackets in both
orders one and three.
\end{example}

For $n=0,1$, the general formula gives
\eqref{II:eq:RC-zero}--\eqref{II:eq:RC-first}. In order two, the correction
to ordinary covariant iterates can be seen explicitly. Let $B_2^{\mathrm{it}}(F,G)$ denote the expression
in \eqref{II:eq:RC-covariant-definition} with every $H$ replaced by the
ordinary iterates of $\mathcal D^{\eta,B}$. From
\eqref{II:eq:second-higher-derivative},
\begin{equation}\label{II:eq:ordinary-bracket-correction}
 B_2^{\mathrm{it}}(F,G)
   =[F,G]_2^B+\frac{k\ell(k+\ell+2)}2\Omega_\eta FG.
\end{equation}
Only the two terms with a second derivative contribute to the
difference: their coefficients are
$\ell\binom{k+1}{2}$ and $k\binom{\ell+1}{2}$.
For $\eta=E_2/12$ the correction is a multiple of $E_4/144$ and
does not generally vanish. This is why replacing ordinary derivatives
by Serre iterates in the classical formula requires the correction
\eqref{II:eq:higher-derivative-recurrence}.

\subsection{Brackets of higher Schwarzians}
\label{II:subsec:Schwarzian-brackets}

For a modular operator $L$, take its coefficient connection
$B=B_L^\eta$. By \eqref{II:eq:operator-delta}, the derivative $\delta_B$
depends only on $L$. Proposition~\ref{II:prop:RC-properties} therefore
gives, for $3\le r,s\le N$,
\[
 [W_r(L),W_s(L)]_n^{B_L^\eta}
   \in M_{2(r+s+n)}^{\mathrm{mer}}(\Gamma,A,\alpha).
\]
These forms are independent of $\eta$ and transform by conjugation
with the operator. Thus Part~\ref{part:I} supplies the inputs and their gauge
laws, while the bracket supplies further modular forms from them.

For the cubic, the self-bracket
\eqref{II:eq:first-self-bracket} is $6[W_3,\Delta W_3]$, of weight $14$.
It also has an intrinsic meaning on a complex curve, which will let
us apply the construction without choosing a modular presentation. Let $X$ be a complex curve and let
$\nabla^V:V\to V\otimes K_X$ be a holomorphic connection on a
holomorphic vector bundle. Its induced connection is denoted by
$\nabla^{\mathrm{ad}}$. For
\[
 P\in H^0(X,\End(V)\otimes K_X^m),\qquad
 P=p(z)(dz)^m,
\]
define locally
\begin{equation}\label{II:eq:intrinsic-self-bracket}
 \mathcal C(P)=2m[p,\nabla_z^{\mathrm{ad}}p](dz)^{2m+1}.
\end{equation}
Here $\nabla_z^{\mathrm{ad}}p=p'+[{\mathsf A},p]$ if
$\nabla^V=d+{\mathsf A}\,dz$; the derivative is the ordinary $d/dz$.

\begin{lemma}\label{II:lem:intrinsic-self-bracket}
Formula \eqref{II:eq:intrinsic-self-bracket} defines a holomorphic
section of $\End(V)\otimes K_X^{2m+1}$, independently of the local
coordinate and frame. The same assertion holds meromorphically.
\end{lemma}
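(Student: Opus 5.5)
The plan is to verify directly how the local expression $2m[p,\nabla_z^{\mathrm{ad}}p]$ transforms under the two kinds of change allowed: a change of frame of $V$, and a change of coordinate. Holomorphy, and the meromorphic version, then come for free, since the formula uses only products and one derivative of local coefficient functions.

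\emph{Frame change.} Let $e'=eG$ with $G$ holomorphic and invertible. Then $p'=G^{-1}pG$ and ${\mathsf A}'=G^{-1}{\mathsf A}G+G^{-1}\partial_zG$. The identity used in \eqref{II:eq:B-gauge}--\eqref{II:eq:derivative-gauge}, with the scalar term $\eta$ set to zero and $D$ replaced by $\partial_z$, gives
\[
 \nabla_z^{\mathrm{ad}\prime}p'=G^{-1}(\nabla_z^{\mathrm{ad}}p)G .
\]
Hence the commutator is conjugated by $G$. This is exactly the transformation law of a local section of $\End(V)$.

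\emph{Coordinate change.} Let $z=\phi(t)$ and $\ell=\phi'$. The coefficient of $P$ transforms as $p_t=\ell^m(p\circ\phi)$, and the connection coefficient as ${\mathsf A}_t=\ell({\mathsf A}\circ\phi)$, since ${\mathsf A}\,dz$ is a one-form. Differentiating gives
\[
 \nabla_t^{\mathrm{ad}}p_t=\ell^{m+1}\bigl(\nabla_z^{\mathrm{ad}}p\bigr)\circ\phi+m\ell^{m-1}\ell'\,(p\circ\phi).
\]
The second term is a scalar multiple of $p\circ\phi$. Its commutator with $p_t=\ell^m(p\circ\phi)$ therefore vanishes, because $\ell$ is scalar and $p$ commutes with itself. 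It follows that
\[
 [p_t,\nabla_t^{\mathrm{ad}}p_t]=\ell^{2m+1}[p,\nabla_z^{\mathrm{ad}}p]\circ\phi ,
\]
and this is precisely the law of a coefficient of $(dz)^{2m+1}$.

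Combining the two changes, which commute as operations on local data, the local expressions glue to a global section of $\End(V)\otimes K_X^{2m+1}$. The only point that needs care is the inhomogeneous term $m\ell^{m-1}\ell'p$ in the coordinate law; it is exactly the scalar-connection term that cancels in \eqref{II:eq:RC-first}. Alternatively, one can cite Proposition~\ref{II:prop:RC-properties} in a local projective coordinate. The direct computation avoids any modular presentation, and it applies verbatim when $p$ and ${\mathsf A}$ have poles.
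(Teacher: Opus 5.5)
Your proposal is correct and follows the paper's proof: frame covariance comes from the connection law, and under $z=\phi(t)$ the inhomogeneous term $m\ell^{m-1}\ell'(p\circ\phi)$ commutes with $\ell^m(p\circ\phi)$, which leaves exactly the weight-$(2m+1)$ tensor law. The only difference is that you write out the frame-change computation, which the paper states in one line.
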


\begin{proof}
Gauge covariance follows from the connection law. For $z=\phi(t)$
with $\ell=\phi'$, keep the pulled-back frame and write
$\widetilde p=\ell^m(p\circ\phi)$. Then
\[
 \nabla_t^{\mathrm{ad}}\widetilde p
    =\ell^{m+1}((\nabla_z^{\mathrm{ad}}p)\circ\phi)
                       +m\ell^{m-1}\ell'(p\circ\phi).
\]
The last summand commutes with $\widetilde p$. Taking the commutator
therefore gives $\ell^{2m+1}[p,\nabla_z^{\mathrm{ad}}p]\circ\phi$,
which is precisely the tensor law in
\eqref{II:eq:intrinsic-self-bracket}.
\end{proof}

If $P=F(\tau)(d\tau)^m$ and the connection is presented by $B$ as
in \eqref{II:eq:bundle-connection-B}, the two conventions are related by
\begin{equation}\label{II:eq:intrinsic-modular-bracket}
 [F,F]_1^B(d\tau)^{2m+1}=\kappa\,\mathcal C(P).
\end{equation}
The single factor $\kappa$ comes from the one derivative in the
bracket. The next section constructs both $P$ and its coefficient
connection canonically from a holomorphic bundle.

\section{Canonical higher Schwarzians from a kernel}
\label{II:sec:kernel}

Let $X$ be an arbitrary smooth connected compact complex curve of
genus $g\geq2$, and write $K=K_X$. Let $E\to X$ be a holomorphic
vector bundle of rank $r\geq1$, with fibre $E_x$ at $x$. We will
construct a canonical kernel from $E$. Its first three regular
coefficients will supply the connection, quadratic datum, and cubic
needed for the order-three reconstruction of
Section~\ref{II:sec:curve-schwarzians}. Thus the higher Schwarzian
construction will acquire inputs determined by a holomorphic bundle.
No differential operator or connection is prescribed.
All bundle maps below are holomorphic and cover the identity of $X$.

We assume that $E$ is \emph{acyclic}, meaning
\begin{equation}\label{II:eq:acyclic}
 H^0(X,E)=H^1(X,E)=0.
\end{equation}
Here $H^q(X,E)$ denotes the cohomology of the sheaf of holomorphic
sections of $E$. Thus $H^0(X,E)=0$ says that $E$ has no nonzero
global holomorphic section; the additional vanishing of $H^1(X,E)$
will give existence in the construction below. Riemann--Roch gives
\begin{equation}\label{II:eq:acyclic-degree}
 0=\dim H^0(X,E)-\dim H^1(X,E)=\deg E+r(1-g),
 \qquad \deg E=r(g-1).
\end{equation}
These hypotheses have examples in every rank. For line bundles,
\[
 U:=\operatorname{Pic}^{g-1}(X)\setminus\Theta,
 \qquad
 \Theta=\{L\in\operatorname{Pic}^{g-1}(X):H^0(X,L)\ne0\},
\]
is precisely the acyclic locus. The theta divisor is proper, and
direct sums of lines in $U$ give acyclic bundles of any rank.
Acyclicity is also an open condition in algebraic families, by
semicontinuity of cohomology.

The \emph{Serre dual bundle} of $E$ is
\[
 E^\vee=E^*\otimes K.
\]
Serre duality identifies
$H^q(X,E^\vee)\simeq H^{1-q}(X,E)^*$ for $q=0,1$, so $E^\vee$
is acyclic as well.

\subsection{A kernel with prescribed residue}
\label{II:subsec:residue}

Let $\Delta=\{(x,x):x\in X\}\subset X\times X$. Denote the two
projections by $\operatorname{pr}_1,\operatorname{pr}_2$, and put
\[
 E\boxtimes F=\operatorname{pr}_1^*E\otimes\operatorname{pr}_2^*F.
\]
\begin{definition}[Kernel and residue]\label{II:def:kernel-residue}

The line bundle $\OO(\Delta)$ allows meromorphic sections with at
most a simple pole along $\Delta$. A \emph{kernel on $E$ with at
most a simple diagonal pole} means a section
\begin{equation}\label{II:eq:kernel-space}
 s\in H^0\bigl(X\times X,(E\boxtimes E^\vee)(\Delta)\bigr).
\end{equation}
In particular, for $x\ne y$ its value is a linear map
\begin{equation}\label{II:eq:kernel-fibres}
 s(x,y):E_y\longrightarrow E_x\otimes K_y.
\end{equation}
It depends holomorphically on $(x,y)$ away from $\Delta$. Thus
``kernel'' refers here to a section in two variables, with a
one-form factor in the second variable.

To describe the pole, take both points in one coordinate disc,
write $z=z(x)$ and $w=z(y)$, and choose a holomorphic frame of $E$
on that disc. The kernel has the matrix expression
\begin{equation}\label{II:eq:local-residue}
 s(z,w)=H(z,w)\frac{dw}{w-z},
\end{equation}
where $H$ is a holomorphic $r\times r$ matrix near $w=z$.
We take the residue in $w$, so
\[
 (\res_\Delta s)(z)=H(z,z)\in\End(E_z).
\]
Our normalization will be $H(z,z)=\Id_{E_z}$. It compares the two
fibres precisely at the point where they coincide.
\end{definition}

Taking residues gives the exact sequence
\begin{equation}\label{II:eq:residue-sequence}
 0\longrightarrow E\boxtimes E^\vee
 \longrightarrow (E\boxtimes E^\vee)(\Delta)
 \xrightarrow{\res_\Delta}\iota_*\End(E)
 \longrightarrow0.
\end{equation}
Here $\iota:X\to X\times X$ is $x\mapsto(x,x)$, and
$\iota_*\End(E)$ denotes the sheaf supported on the diagonal with
fibre $\End(E_x)$ at $(x,x)$. Exactness can be seen directly in
\eqref{II:eq:local-residue}: zero residue means that $H(z,w)$ is
divisible by $w-z$, so the pole disappears. Conversely, any local
endomorphism $P(z)$ is the residue of $P(z)\,dw/(w-z)$.

\begin{proposition}[Canonical kernel]\label{II:prop:kernel}
For an acyclic bundle $E$, the residue map induces an isomorphism
\[
 H^0\bigl(X\times X,(E\boxtimes E^\vee)(\Delta)\bigr)
 \xrightarrow{\ \sim\ }H^0(X,\End(E)).
\]
In particular, there is a unique section $s_E$ with residue
$\Id_E$. We call $s_E$ the \SZ\ kernel of $E$.
\end{proposition}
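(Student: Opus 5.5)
The plan is to take global sections of the residue sequence \eqref{II:eq:residue-sequence} and show that both flanking cohomology groups vanish by the Künneth formula. The long exact sequence on $X\times X$ reads
\[
 0\to H^0(X\times X,E\boxtimes E^\vee)\to H^0\bigl(X\times X,(E\boxtimes E^\vee)(\Delta)\bigr)
 \to H^0(X\times X,\iota_*\End(E))\to H^1(X\times X,E\boxtimes E^\vee).
\]
Since $\iota$ is a closed embedding, $\iota_*$ is exact and $H^0(X\times X,\iota_*\End(E))=H^0(X,\End(E))$. The middle arrow is therefore the asserted residue map. It will be an isomorphism once $H^0$ and $H^1$ of $E\boxtimes E^\vee$ on $X\times X$ vanish.

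For these groups I will invoke the Künneth formula for coherent sheaves on a product of compact complex (equivalently, projective algebraic) curves over the field $\C$:
\[
 H^n(X\times X,E\boxtimes E^\vee)\simeq\bigoplus_{p+q=n}H^p(X,E)\otimes H^q(X,E^\vee).
\]
Acyclicity \eqref{II:eq:acyclic} gives $H^0(X,E)=H^1(X,E)=0$. Every summand therefore contains a zero factor, and all cohomology of $E\boxtimes E^\vee$ vanishes. We do not even need acyclicity of $E^\vee$, although it was noted above. Injectivity of the residue map then follows from $H^0=0$, and surjectivity from $H^1=0$. Taking the preimage of $\Id_E\in H^0(X,\End(E))$ gives existence and uniqueness of $s_E$.

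The only point needing care is the justification of Künneth. This is standard: use GAGA to pass to algebraic coherent cohomology on the projective surface $X\times X$, then compute with Čech complexes for products of affine covers, where the Čech complex of $E\boxtimes E^\vee$ is the tensor product of the two Čech complexes over $\C$. The algebraic Künneth theorem for a field then applies. I should also check that the residue map in the sequence agrees with the residue of Definition~\ref{II:def:kernel-residue}, namely the coefficient $H(z,z)$ in the variable $w$. This is immediate from the local description given after \eqref{II:eq:residue-sequence}. The identification of $\End(E)$ uses $E\otimes E^*\otimes K\otimes K^{-1}$, where $\OO(\Delta)|_\Delta\simeq K^{-1}$ via the residue in $w$. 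That check is a routine local computation and is the only step where some bookkeeping is required.
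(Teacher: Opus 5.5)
Your proposal is correct and is the paper's proof: take the cohomology sequence of the residue sequence \eqref{II:eq:residue-sequence}, and kill $H^0$ and $H^1$ of $E\boxtimes E^\vee$ by K\"unneth, since every summand contains a vanishing factor coming from $E$. Your additional remarks are accurate refinements of the same argument: acyclicity of $E$ alone suffices, and the identification $\OO(\Delta)|_\Delta\simeq K^{-1}$ turns the restricted bundle into $\End(E)$.
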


\begin{proof}
The two groups that could obstruct this isomorphism are the zeroth
and first cohomology groups of $E\boxtimes E^\vee$. By K\"unneth,
\[
 H^q(X\times X,E\boxtimes E^\vee)
 =\bigoplus_{a+b=q}H^a(X,E)\otimes H^b(X,E^\vee),
 \qquad q=0,1.
\]
Every summand is zero. The cohomology sequence of
\eqref{II:eq:residue-sequence} therefore gives the asserted isomorphism.
\end{proof}

This is the kernel construction of Ben-Zvi and Biswas
\cite[Section 3.1]{BB}. We have displayed the residue and cohomology
arguments to make clear which condition gives existence and which
gives uniqueness. The name follows their algebro-geometric usage:
$s_E$ is characterized by its diagonal pole and residue, rather
than by a Hardy-space projection. In concurrent work, Hu and
Yu~\cite[Sections~4--5]{HuYu} use the same normalized line-bundle
kernel for finite residue duality on curves over finite fields;
here its regular diagonal coefficients give higher differentials
on complex curves.

The uniqueness has a useful consequence even for maps that are not
isomorphisms. If $u:E\to F$, let $u^\vee:F^\vee\to E^\vee$ be its
Serre dual map.

\begin{lemma}[Functoriality of the kernel]\label{II:lem:kernel-natural}
For a map $u:E\to F$ between acyclic bundles,
\begin{equation}\label{II:eq:kernel-natural}
 (u\boxtimes1)s_E=(1\boxtimes u^\vee)s_F.
\end{equation}
Equivalently, $(u_x\otimes1)s_E(x,y)=s_F(x,y)u_y$ as maps from
$E_y$ to $F_x\otimes K_y$.
\end{lemma}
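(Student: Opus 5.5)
Both sides of \eqref{II:eq:kernel-natural} are global sections of one and the same bundle, $(F\boxtimes E^\vee)(\Delta)$ on $X\times X$. The left side applies $u$ in the first factor to $s_E\in H^0((E\boxtimes E^\vee)(\Delta))$. The right side applies $u^\vee:F^\vee\to E^\vee$ in the second factor to $s_F\in H^0((F\boxtimes F^\vee)(\Delta))$. The plan is to show that their difference is a global section of $F\boxtimes E^\vee$ and that this bundle has no nonzero global sections.

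First compare residues along the diagonal. In a coordinate disc, choose frames of $E$ and $F$, and write $u$ as a holomorphic matrix $U(z)$. Then $s_E=H_E(z,w)\,dw/(w-z)$ with $H_E(z,z)=\Id$, and similarly for $s_F$. The Serre dual map $u^\vee$ acts on the second variable as right multiplication by $U(w)$, with the dual frames and the $K$ factor fixed. So the left side is $U(z)H_E(z,w)\,dw/(w-z)$, which has residue $U(z)$. The right side is $H_F(z,w)U(w)\,dw/(w-z)$, which has residue $H_F(z,z)U(z)=U(z)$. The residues therefore agree. By exactness of the residue sequence \eqref{II:eq:residue-sequence}, written for the pair $(F,E^\vee)$ in place of $(E,E^\vee)$, the difference is a holomorphic section of $F\boxtimes E^\vee$ on $X\times X$.

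Next apply K\"unneth as in Proposition~\ref{II:prop:kernel}:
\[
 H^0(X\times X,F\boxtimes E^\vee)=H^0(X,F)\otimes H^0(X,E^\vee)=0.
\]
Here $F$ is acyclic by hypothesis, and $E^\vee$ is acyclic by Serre duality. Hence the difference vanishes, which proves the identity. The pointwise version holds because $u_x\otimes 1$ acts on the first factor and $u^\vee$ acts on the second factor by precomposition with $u_y$.

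The only delicate point is checking the sign and side conventions for $u^\vee$ acting on the $E^*\otimes K$ factor. One must confirm that it corresponds to right multiplication by $U(w)$ in the matrix picture, and that the residue in $w$ then evaluates this factor at $w=z$. Everything else follows from the residue sequence and the K\"unneth vanishing already used for existence and uniqueness. Only $H^0(F)=0$ and $H^0(E^\vee)=0$ are needed, so the full acyclicity hypothesis is used only through these vanishings.
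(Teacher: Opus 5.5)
Your proposal is correct and follows the paper's proof exactly: both sides are sections of $(F\boxtimes E^\vee)(\Delta)$ with residue $u$, so their difference lies in $H^0(X\times X,F\boxtimes E^\vee)=H^0(X,F)\otimes H^0(X,E^\vee)=0$. Your local matrix check of the residues ($U(z)H_E(z,w)$ versus $H_F(z,w)U(w)$, both with residue $U(z)$ at $w=z$) spells out what the paper states in one line; note that either of the vanishings $H^0(X,F)=0$ or $H^0(X,E^\vee)=0$ alone already kills the tensor product.
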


\begin{proof}
Both sides are sections of $(F\boxtimes E^\vee)(\Delta)$, and both
have residue $u$. Their difference has no pole, so it belongs to
\[
 H^0(X\times X,F\boxtimes E^\vee)
 =H^0(X,F)\otimes H^0(X,E^\vee)=0.
\]
Thus the sections agree.
\end{proof}

For a direct sum, this identity applied to the inclusions and
projections gives
\begin{equation}\label{II:eq:kernel-sum}
 s_{E\oplus F}=s_E\oplus s_F.
\end{equation}

\subsection{The bundle carrying the connection}
\label{II:subsec:connection}

The kernel compares nearby fibres of $E$, but its one-form factor
is carried by the second variable. To extract a connection, we
distribute this factor equally between the variables. Choose a
theta characteristic $\vartheta$, that is, a holomorphic line
bundle with an isomorphism $\vartheta^2\simeq K$, and set
\[
 V_E=E\otimes\vartheta^{-1}.
\]
Since $\deg\vartheta=g-1$, equation~\eqref{II:eq:acyclic-degree} gives
\[
 \deg V_E=\deg E-r\deg\vartheta=0.
\]
Every direct summand $E_i$ of $E$ is acyclic as well. Riemann--Roch
then gives
\[
 \deg E_i=\operatorname{rk}(E_i)(g-1),\qquad
 \deg(E_i\otimes\vartheta^{-1})=0.
\]
Thus every indecomposable summand of $V_E$ satisfies the
Atiyah--Weil condition, in agreement with the canonical connection constructed below.
The connection constructed in this subsection will be
\begin{equation}\label{II:eq:connection-target}
 \nabla^{V_E}:V_E\longrightarrow V_E\otimes K.
\end{equation}
The twist matters: $E$ has positive degree, whereas a holomorphic
bundle carrying a flat connection has degree zero. The construction
of $\nabla^{V_E}$ uses the holomorphic structure of $E$ and the chosen
$\vartheta$; no connection on either bundle is supplied as input.

The same twist separates the fibre maps from the one-form factor:
\[
 E\boxtimes E^\vee
 \simeq (V_E\boxtimes V_E^*)\otimes
                 (\vartheta\boxtimes\vartheta).
\]

On a coordinate disc, choose a local frame $e_\vartheta$ of $\vartheta$
with $e_\vartheta^{\otimes2}=dz$, and write it as $\sqrt{dz}$.
After factoring out $\sqrt{dz}\sqrt{dw}/(w-z)$, the remaining kernel
is a holomorphic map $V_{E,w}\to V_{E,z}$ near the diagonal.
Choose a holomorphic frame $e=(e_1,\ldots,e_r)$ of $V_E$ on the
disc, and expand its matrix:
\begin{equation}\label{II:eq:regular-kernel}
 \begin{aligned}
 s_E(z,w)&=\frac{\sqrt{dz}\sqrt{dw}}{w-z}\,\mathsf H_E(z,w),
 \qquad h=w-z,\\
 \mathsf H_E(z,w)&=\Id+{\mathsf A}(z)h+H_2(z)h^2+H_3(z)h^3+O(h^4).
 \end{aligned}
\end{equation}
The symbols ${\mathsf A},H_2,H_3$ denote holomorphic matrix functions of $z$.
The residue condition makes the constant coefficient the identity.
From now on, primes denote derivatives in the displayed coordinate.

We use ${\mathsf A}$ to define the connection in \eqref{II:eq:connection-target}
by the local rule
\begin{equation}\label{II:eq:connection-local-action}
 \nabla^{V_E}(ev)=e(v'+{\mathsf A}v)\,dz,
 \qquad v:U\longrightarrow\C^r\ \text{holomorphic}.
\end{equation}
We must check that these rules agree on overlaps. First replace the
frame $e$ by $eG$, where $G:U\to\operatorname{GL}_r(\C)$ is
holomorphic. Then
\[
 \mathsf H_E^G(z,w)=G(z)^{-1}\mathsf H_E(z,w)G(w).
\]
The coefficient of $h$ is consequently
\begin{equation}\label{II:eq:connection-frame-law}
 {\mathsf A}^G=G^{-1}{\mathsf A}G+G^{-1}G'.
\end{equation}
This is the required frame law, since the component column of a
section changes from $v$ to $G^{-1}v$.

For a locally biholomorphic coordinate change $z=\phi(t)$,
$w=\phi(u)$, so that $\phi'\ne0$, the two scalar
regularizations differ by
\begin{equation}\label{II:eq:regularization-factor}
 R_\phi(t,u)=
 \frac{(u-t)\sqrt{\phi'(t)\phi'(u)}}{\phi(u)-\phi(t)}.
\end{equation}
The branches are chosen locally so that $R_\phi(t,t)=1$.
Expanding numerator and denominator to first order gives
$R_\phi(t,u)=1+O((u-t)^2)$. It follows that
\begin{equation}\label{II:eq:connection-coordinate-law}
 \widetilde {\mathsf A}(t)=\phi'(t){\mathsf A}(\phi(t)).
\end{equation}
Equations \eqref{II:eq:connection-frame-law} and
\eqref{II:eq:connection-coordinate-law} are exactly the gluing laws for
\begin{equation}\label{II:eq:canonical-connection}
 \nabla^{V_E}:V_E\longrightarrow V_E\otimes K,
 \qquad \nabla^{V_E}=d+{\mathsf A}(z)\,dz.
\end{equation}
The connection is holomorphic. Its curvature is an
endomorphism-valued holomorphic exterior two-form, and hence
vanishes on a complex curve. Thus $\nabla^{V_E}$ is flat.

On a sufficiently small disc, choose a horizontal frame
$e=(e_1,\ldots,e_r)$ for this connection, so
$\nabla^{V_E}e_i=0$. It is obtained by solving $G'=-{\mathsf A}G$ with
invertible initial value. In this frame ${\mathsf A}=0$,
and \eqref{II:eq:regular-kernel} becomes
\begin{equation}\label{II:eq:horizontal-kernel}
 \mathsf H_E(z,w)=\Id+B_2(z)h^2+B_3(z)h^3+O(h^4).
\end{equation}
Two horizontal frames differ by a constant matrix. Tensoring by a
line bundle does not change the endomorphism bundle, so there is a
canonical identification $\End(V_E)=\End(E)$. Denote the induced
connection by
\[
 \nabla_E^{\mathrm{ad}}:\End(E)\longrightarrow\End(E)\otimes K.
\]
It is the connection of \eqref{II:eq:operator-adjoint-connection}
with $a_1={\mathsf A}$: for a local matrix $P(z)$ its coefficient is
\begin{equation}\label{II:eq:end-connection}
 \nabla_{E,z}^{\mathrm{ad}}P=P'+[{\mathsf A},P],\qquad [{\mathsf A},P]={\mathsf A}P-P{\mathsf A}.
\end{equation}

\begin{lemma}[Maps are horizontal]\label{II:lem:horizontal-maps}
If $u:E\to F$ is a map between acyclic bundles, then
$\widehat u=u\otimes\Id_{\vartheta^{-1}}:V_E\to V_F$ is horizontal:
\[
 \nabla^{V_F}\widehat u=(\widehat u\otimes1)\nabla^{V_E}.
\]
In horizontal frames its matrix is constant and intertwines every
coefficient of the regularized kernels.
\end{lemma}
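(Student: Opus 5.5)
The plan is to read the functoriality identity of Lemma~\ref{II:lem:kernel-natural} in local coordinates and compare regularized kernels coefficient by coefficient in $h=w-z$. Fix a coordinate disc with $\vartheta$-frame $\sqrt{dz}$. Choose holomorphic frames $e$ of $V_E$ and $f$ of $V_F$, and let $U(z)$ be the matrix of $\widehat u$ in these frames; this is also the matrix of $u$ after twisting by $\vartheta^{-1}$. Both sides of \eqref{II:eq:kernel-natural} carry the common scalar factor $\sqrt{dz}\sqrt{dw}/(w-z)$. After removing it, the identity $(u_x\otimes1)s_E(x,y)=s_F(x,y)u_y$ becomes
\[
 U(z)\,\mathsf H_E(z,w)=\mathsf H_F(z,w)\,U(w)
\]
near the diagonal. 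The $\vartheta$-twist is the same on both sides, so it cancels. The Serre dual $u^\vee$ acting in the second variable becomes right multiplication by $U(w)$.

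Next, expand both sides to first order in $h$. Write $U(w)=U(z)+U'(z)h+O(h^2)$ and use \eqref{II:eq:regular-kernel}. The constant terms give $U=U$. The linear terms give
\[
 U{\mathsf A}_E={\mathsf A}_FU+U'.
\]
Equivalently, $U'+{\mathsf A}_FU-U{\mathsf A}_E=0$. By \eqref{II:eq:connection-local-action}, this is exactly the statement $\nabla^{V_F}(\widehat u\,e v)=\widehat u\,\nabla^{V_E}(ev)$ for every holomorphic column $v$. The frame and coordinate laws \eqref{II:eq:connection-frame-law}--\eqref{II:eq:connection-coordinate-law} show that this local identity is intrinsic, so it proves the displayed horizontality.

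For the second assertion, take horizontal frames, which exist on small discs by \eqref{II:eq:horizontal-kernel}. Then ${\mathsf A}_E={\mathsf A}_F=0$, and the first-order identity becomes $U'=0$, so $U$ is constant. With constant $U$, the relation $U\mathsf H_E(z,w)=\mathsf H_F(z,w)U$ holds identically in $h$. Comparing coefficients of $h^n$ gives $UB_n^E=B_n^FU$ for every $n$, which is the intertwining of all regularized coefficients. The same comparison in arbitrary frames gives the corresponding twisted relations; they are not needed here.

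I expect the main obstacle to be bookkeeping rather than substance. One has to check that $u^\vee$, the Serre dual on $F^\vee\to E^\vee$, really acts on the matrix of $s_F$ by right multiplication by $U(w)$, with no transpose or sign. I would confirm this from the fibrewise form $(u_x\otimes1)s_E(x,y)=s_F(x,y)u_y$ stated in Lemma~\ref{II:lem:kernel-natural}. That form already displays $u_y$ as precomposition $E_y\to F_y$, so no duality convention enters once the equivalent statement is used.
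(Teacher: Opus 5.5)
Your proposal is correct and follows the paper's own proof essentially step for step. The paper likewise writes kernel naturality as $U(z)\mathsf H_E(z,w)=\mathsf H_F(z,w)U(w)$ and reads the connection identity $U\mathsf A_E=U'+\mathsf A_FU$ off the linear coefficient; in horizontal frames it obtains $U'=0$ and compares the remaining coefficients to get $UB_{j,E}=B_{j,F}U$.
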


\begin{proof}
In local frames, write $U(z)$ for the matrix of $\widehat u$.
The kernel identity \eqref{II:eq:kernel-natural} reads
\[
 U(z)\mathsf H_E(z,w)=\mathsf H_F(z,w)U(w).
\]
Its linear coefficient is
$U{\mathsf A}_E=U'+{\mathsf A}_FU$. This is the asserted connection identity.
In horizontal frames it becomes $U'=0$; comparison of the remaining
coefficients then gives, for example,
$UB_{j,E}=B_{j,F}U$ for $j=2,3$.
\end{proof}

\subsection{A cubic independent of the coordinate}
\label{II:subsec:cubic}

The remaining regular coefficients must supply the quadratic
and cubic data of the reconstruction theorem. The cubic is to be
a differential with values in $\End(E)$:
\begin{equation}\label{II:eq:cubic-type}
 \cub_E\in H^0(X,\End(E)\otimes K^3),
 \qquad \cub_E:E\longrightarrow E\otimes K^3.
\end{equation}
Writing $\cub_E=F(z)(dz)^3$, the laws of
Theorem~\ref{I:thm:rp-higher} require $F$ to transform
by conjugation under a frame change and satisfy
\begin{equation}\label{II:eq:cubic-tensor-law}
 \widetilde F(t)=\phi'(t)^3F(\phi(t))
\end{equation}
under $z=\phi(t)$ with the frame fixed.

The coefficients $B_2$ and $B_3$ in
\eqref{II:eq:horizontal-kernel} do not by themselves have these tensor
transformation laws. We will check that the combination
$6B_3-3B_2'$ has exactly the law \eqref{II:eq:cubic-tensor-law}.

\begin{proposition}[The canonical cubic]\label{II:prop:canonical-cubic}
In a horizontal frame, the expression
\begin{equation}\label{II:eq:canonical-cubic}
 \cub_E=(6B_3-3B_2')(dz)^3
\end{equation}
defines a holomorphic section of $\End(E)\otimes K^3$, called the
\emph{canonical cubic} of $E$. It is natural
under maps of acyclic bundles:
\begin{equation}\label{II:eq:cubic-natural}
 (u\otimes1)\cub_E=\cub_Fu\qquad(u:E\to F).
\end{equation}
Both $\cub_E$ and $\nabla_E^{\mathrm{ad}}$ are independent of the
choice of $\vartheta$.
\end{proposition}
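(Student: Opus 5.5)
The plan is to verify three things in turn: the frame law, the coordinate law \eqref{II:eq:cubic-tensor-law}, and the naturality and $\vartheta$-independence statements. Each reduces to comparing Taylor coefficients of the regularized kernel $\mathsf H_E$ in \eqref{II:eq:horizontal-kernel}.

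\emph{Frames.} Two horizontal frames of $V_E$ on a disc differ by a constant matrix $G$. The frame law $\mathsf H_E^G(z,w)=G^{-1}\mathsf H_E(z,w)G(w)$ with $G$ constant gives $B_j^G=G^{-1}B_jG$, and since $G'=0$ also $(B_2^G)'=G^{-1}B_2'G$. Hence $6B_3-3B_2'$ transforms by conjugation, which is the required frame law. Holomorphy is clear from the construction.

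\emph{Coordinates.} Let $z=\phi(t)$, $w=\phi(u)$, and keep the horizontal frame of $V_E$ fixed; it remains horizontal, because by \eqref{II:eq:connection-coordinate-law} $\widetilde{\mathsf A}=\phi'\,\mathsf A\circ\phi=0$. The factor $\sqrt{dz}\sqrt{dw}/(w-z)$ is compared through \eqref{II:eq:regularization-factor}, so
\[
 \widetilde{\mathsf H}(t,u)=R_\phi(t,u)\,\mathsf H_E(\phi(t),\phi(u)).
\]
Put $k=u-t$ and $h=\phi(u)-\phi(t)=\phi'k+\tfrac12\phi''k^2+O(k^3)$. Write $R_\phi=1+r_2(t)k^2+r_3(t)k^3+O(k^4)$; this uses the expansion $R_\phi=1+O(k^2)$ already recorded. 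Substituting, I expect
\[
 \widetilde B_2=\phi'^2B_2\circ\phi+r_2,\qquad
 \widetilde B_3=\phi'^3B_3\circ\phi+\phi'\phi''\,B_2\circ\phi+r_3 .
\]
Differentiating the first relation in $t$ gives $\widetilde B_2'=\phi'^3B_2'\circ\phi+2\phi'\phi''B_2\circ\phi+r_2'$. In $6\widetilde B_3-3\widetilde B_2'$ the $B_2$ terms therefore cancel ($6-3\cdot2=0$), and there remains the central term $6r_3-3r_2'$.

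The one point needing an idea rather than bookkeeping is that this central term vanishes. I will use the symmetry $R_\phi(t,u)=R_\phi(u,t)$, with the branch normalized by $R_\phi(t,t)=1$. Expanding $R_\phi(u,t)$ about $t$ gives $1+r_2(u)k^2-r_3(u)k^3+\cdots=1+r_2k^2+(r_2'-r_3)k^3+\cdots$. Comparing with $R_\phi(t,u)$ yields $r_3=r_2'/2$, hence $6r_3-3r_2'=0$. This proves \eqref{II:eq:cubic-tensor-law}, without computing $r_2$ (a multiple of $\Sch(\phi)$).

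\emph{Naturality and $\vartheta$.} Naturality \eqref{II:eq:cubic-natural} follows from Lemma~\ref{II:lem:horizontal-maps}. In horizontal frames the matrix $U$ of $u$ is constant and satisfies $UB_{j,E}=B_{j,F}U$; differentiating, $UB_{2,E}'=B_{2,F}'U$. For independence of $\vartheta$, note that another theta characteristic is $\vartheta\otimes\varepsilon$ with $\varepsilon^2$ trivial. Locally this only changes the chosen square root $\sqrt{dz}$ by a locally constant factor $c$ with $c^2=1$. That factor multiplies $\sqrt{dz}\sqrt{dw}$ by $c^2=1$ and rescales the frame of $V_E$ by the constant $c^{-1}$. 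Hence $\mathsf H_E$ is unchanged after conjugating by a scalar, and so are $\mathsf A$, $B_2$, $B_3$ under the canonical identification $\End(V_E)=\End(E)$. It follows that both $\nabla_E^{\mathrm{ad}}$ and $\cub_E$ are independent of $\vartheta$.
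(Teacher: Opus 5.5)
Your proof is correct and follows the paper's own three steps: the coordinate change with the horizontal frame held fixed, cancelling the $\phi'\phi''B_2$ terms; naturality from Lemma~\ref{II:lem:horizontal-maps}; and the locally constant $\pm1$ twist for $\vartheta\otimes\varepsilon$. The one difference is that you kill the central term $6r_3-3r_2'$ using the symmetry $R_\phi(t,u)=R_\phi(u,t)$, instead of computing $r_2=\Sch(\phi)/12$ and $r_3=\Sch(\phi)'/24$ as in \eqref{II:eq:R-cubic}. This is the same device the paper uses for the prime-form factor in Proposition~\ref{II:prop:theta-cubic}; it is quicker, although the explicit value of $r_2$ is what the paper later reuses for the affine law of $P_E=2B_2$.
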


\begin{proof}
\noindent\textit{1. Change of coordinate.} Keep a horizontal frame for $\nabla^{V_E}$ and make the coordinate
change used in \eqref{II:eq:regularization-factor}. Write $s=u-t$.
Primes on $\phi$ denote $t$-derivatives, while $B_j'$ denotes the
derivative of $B_j$ in $z$. With $\Sch(\phi)$ as in
\eqref{I:eq:schwarzian}, the scalar factor through third
order is
\begin{equation}\label{II:eq:R-cubic}
 R_\phi(t,t+s)=1+\frac{\Sch(\phi)}{12}s^2
                   +\frac{\Sch(\phi)'}{24}s^3+O(s^4).
\end{equation}
For clarity, this follows by writing
$a=\phi''/\phi'$, $b=\phi'''/\phi'$, and $c=\phi''''/\phi'$ at $t$.
Then the factor is
\[
 \frac{\sqrt{1+as+\tfrac12bs^2+\tfrac16cs^3+O(s^4)}}
      {1+\tfrac12as+\tfrac16bs^2+\tfrac1{24}cs^3+O(s^4)}.
\]
Its quadratic and cubic coefficients are
$b/12-a^2/8$ and $c/24-ab/6+a^3/8$, respectively, which are
the two coefficients in \eqref{II:eq:R-cubic}.

\noindent\textit{2. Cancellation and gluing.} Multiply \eqref{II:eq:horizontal-kernel} by this factor and use
$\phi(t+s)-\phi(t)=\phi's+\tfrac12\phi''s^2+O(s^3)$.
The new coefficients are
\begin{align}
 \widetilde B_2
  &=\phi'^2(B_2\circ\phi)+\tfrac1{12}\Sch(\phi)\Id,
       \label{II:eq:B2-coordinate}\\
 \widetilde B_3
  &=\phi'^3(B_3\circ\phi)+\phi'\phi''(B_2\circ\phi)
                    +\tfrac1{24}\Sch(\phi)'\Id.
       \label{II:eq:B3-coordinate}
\end{align}
Differentiating \eqref{II:eq:B2-coordinate} gives
\[
 \widetilde B_2'
 =\phi'^3(B_2'\circ\phi)+2\phi'\phi''(B_2\circ\phi)
                       +\tfrac1{12}\Sch(\phi)'\Id.
\]
Thus both extra terms cancel in
\begin{equation}\label{II:eq:cubic-coordinate}
 6\widetilde B_3-3\widetilde B_2'
 =\phi'^3\bigl((6B_3-3B_2')\circ\phi\bigr).
\end{equation}
This is precisely \eqref{II:eq:cubic-tensor-law}. A horizontal
frame change is constant and conjugates the coefficient, so the
local expressions glue to a section of $\End(E)K^3$.
Lemma~\ref{II:lem:horizontal-maps} gives
\eqref{II:eq:cubic-natural}, including the derivative term because
the matrix of $u$ is constant in those frames.

\noindent\textit{3. Independence of the theta characteristic.} Finally, write a second theta characteristic as
$\vartheta' =\vartheta\otimes\varepsilon$, where $\varepsilon^2\simeq\OO_X$
and $\OO_X$ is the trivial holomorphic line bundle. Local frames of
$\varepsilon$ can be chosen with
transition functions in $\{1,-1\}$, which define a flat connection.
Replacing $\vartheta$ by $\vartheta\otimes\varepsilon$ replaces $V_E$ by
$V_E\otimes\varepsilon^{-1}$. In these locally constant frames, the matrices
in \eqref{II:eq:regular-kernel} are unchanged. The induced connection
on endomorphisms and the cubic are therefore unchanged as well.
\end{proof}

The scalar correction in \eqref{II:eq:B2-coordinate} explains why
$B_2(dz)^2$ alone is not a quadratic differential. The cubic removes
its derivative together with the corresponding correction in $B_3$.
Thus $\cub_E$ is defined from the holomorphic bundle $E$ without
an additional choice of a connection on $E$ or a coordinate on $X$.

The quadratic datum is read from the same transformation:
in a horizontal frame put $P_{E,z}=2B_2(z)$. Then
\[
 \widetilde P_E=\phi'^2(P_E\circ\phi)
                             +\tfrac16\Sch(\phi)\Id.
\]
Together with conjugation under a change of horizontal frame,
this is exactly \eqref{II:eq:quadratic-datum-law}. We have therefore
obtained all three data required for reconstruction in order three:
\[
 \nabla^{V_E},\qquad P_E,\qquad
 \cub_E\in H^0(X,\End(E)\otimes K^3).
\]

\subsection{Reconstruction and the Taylor operator}
\label{II:subsec:part-I-cubic}

Theorem~\ref{II:thm:curve-reconstruction}, with $N=3$, therefore
supplies a unique global monic operator
\begin{equation}\label{II:eq:global-kernel-operator}
 \mathcal L_{E,3}:V_E\otimes\vartheta^{-2}\longrightarrow
                            V_E\otimes\vartheta^4
\end{equation}
whose connection is $\nabla^{V_E}$, quadratic datum is $P_E$, and
cubic is $\cub_E$. Its horizontal expression is
\begin{equation}\label{II:eq:global-kernel-operator-expression}
 \mathcal L_{E,3}=\partial_z^3+24B_2\partial_z+6B_3+9B_2'.
\end{equation}
Indeed, the normalization requires $I_2=4P_E=8B_2$ and
$I_3=(6B_3-3B_2')+\tfrac32(8B_2)'=6B_3+9B_2'$.
Thus \eqref{I:eq:local-normalized} gives the displayed formula and
\[
 W_3(\mathcal L_{E,3})=\cub_E.
\]

The operator is canonical for the chosen $\vartheta$. If
$\vartheta$ is replaced by $\vartheta\otimes\varepsilon$, with
$\varepsilon^2\simeq\OO_X$, its source and target are both tensored by
$\varepsilon$, and
\[
 \mathcal L^{\vartheta\otimes\varepsilon}_{E,3}
      =\mathcal L^\vartheta_{E,3}\otimes\Id_\varepsilon.
\]
Here tensoring uses the flat connection whose local transition
functions are $\pm1$. The adjoint connection and $\cub_E$ remain
unchanged under the canonical identification
$\End(V_E\otimes\varepsilon)\simeq\End(V_E)$.

\begin{remark}[Taylor normalization]
A second operator records the Taylor coefficients directly.
It will be useful when we form the higher residue operators in
Section~\ref{II:sec:higher-detection}. In the chosen coordinate, set
\begin{equation}\label{II:eq:kernel-operator-action}
 (\mathcal T_{E,3}y)(z)=
 \left.\frac{\partial^3}{\partial w^3}
       \bigl(\mathsf H_E(z,w)y(w)\bigr)\right|_{w=z}.
\end{equation}
The product rule gives
\begin{equation}\label{II:eq:local-kernel-operator}
 \mathcal T_{E,3}=\partial^3+3{\mathsf A}\partial^2+6H_2\partial+6H_3.
\end{equation}
Thus the factors $3,6,6$ come from differentiating the Taylor series.
The law $\mathsf H_E^G(z,w)=G(z)^{-1}\mathsf H_E(z,w)G(w)$ also
shows directly in \eqref{II:eq:kernel-operator-action} that
$\mathcal T_{E,3}^G=G^{-1}\circ\mathcal T_{E,3}\circ G$.
In a horizontal frame for $\nabla^{V_E}$ the expression is
\[
 \mathcal T_{E,3}=\partial^3+6B_2\partial+6B_3,
 \qquad I_2=2B_2,\quad I_3=6B_3.
\]
Consequently,
\begin{equation}\label{II:eq:kernel-W3}
 W_3(\mathcal T_{E,3})(dz)^3=\cub_E.
\end{equation}
Here we have used the Ore normalization of
Section~\ref{I:sec:local-normalization}. The coordinate covariance of
the cubic is the calculation \eqref{II:eq:cubic-coordinate}.
The Taylor operator has $I_2=2B_2$, whereas the reconstructed
operator has $I_2=8B_2$. They have the same cubic; it is
$\mathcal L_{E,3}$ that has the global source and target
\eqref{II:eq:global-kernel-operator}.
\end{remark}

\subsection{Hyperelliptic kernels and a genus-two example}
\label{II:subsec:hyperelliptic-kernels}

On an even-degree hyperelliptic curve, the normalized kernel has a
rational formula. We derive its cubic from the Taylor operator just
defined, then calculate the horizontal expansion on a genus-two curve.

\Needspace{24\baselineskip}
\begin{lemma}[A hyperelliptic kernel and its cubic]
\label{II:lem:hyperelliptic-kernel}
Let $X$ be the smooth completion of $y^2=f(x)$, where $f$ is a
monic squarefree polynomial of degree $2g+2$, with $g\ge2$.
Denote its two points at infinity by
\[
 P:\quad y/x^{g+1}\longrightarrow-1,
 \qquad Q:\quad y/x^{g+1}\longrightarrow1.
\]
Let $S$ be the polynomial part of $+\sqrt{f}$ at infinity and put
\[
 R=f-S^2,\qquad \deg R\le g,\qquad
 \omega=\frac{dx}{y},\qquad L=\OO(gP-Q).
\]
Then $L$ is acyclic. Use the rational section represented by $1$
in the divisor model $L=\OO(gP-Q)$; its divisor as a section
of $L$ is $gP-Q$. In this frame, the canonical kernel is
\begin{equation}\label{II:eq:hyperelliptic-kernel}
 s_L(z,w)=
 \frac{y(z)+y(w)-S(x(z))+S(x(w))}
      {2(x(w)-x(z))}\,\frac{dx(w)}{y(w)}.
\end{equation}
The factors from $L_z\otimes L_w^{-1}$ are understood.
Its cubic, with derivatives taken in $x$, is
\begin{equation}\label{II:eq:hyperelliptic-cubic}
 \cub_L=\frac{S''R'-S'R''}{8}\,\omega^3.
\end{equation}
In particular, $\cub_L\ne0$ if and only if $R$ is nonconstant.
\end{lemma}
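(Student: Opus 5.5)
The plan has three steps: prove acyclicity by a Weierstrass-gap argument, identify \eqref{II:eq:hyperelliptic-kernel} with $s_L$ through the uniqueness in Proposition~\ref{II:prop:kernel}, and then compute the cubic from a local expansion in the coordinate $x$.

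\textbf{Acyclicity.} Since $\deg L=g-1$, Riemann--Roch reduces acyclicity to $H^0(X,L)=0$. A section of $\OO(gP-Q)$ is a rational function with poles of order at most $g$ at $P$, none elsewhere, and a zero at $Q$. The point $P$ is not a Weierstrass point, since the hyperelliptic involution exchanges $P$ and $Q$. Its gap sequence is therefore $1,\dots,g$, and $H^0(\OO(gP))=\C$. An explicit check is also available. At $Q$ one has $y-S=R/(y+S)$, and $\deg R\le g<g+1$, so $y-S$ vanishes at $Q$; it has a pole of order exactly $g+1$ at $P$. This is the first nonconstant function with poles only at $P$. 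Vanishing at $Q$ kills the constants, so $H^0(L)=0$.

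\textbf{The kernel.} By Proposition~\ref{II:prop:kernel}, it suffices to show that the displayed expression is a global section of $(L\boxtimes L^\vee)(\Delta)$ with residue $1$.
\begin{enumerate}
\item In the divisor model, $\operatorname{div}\omega=(g-1)(P+Q)$. Hence $L^\vee=L^{-1}\otimes K$ corresponds to $\OO(gQ-P)$, with frame $\omega$.
\item The coefficient function $\Phi(z,w)$ must therefore have poles of order at most $g$ at $z=P$, vanish at $z=Q$, vanish at $w=P$, and have poles of order at most $g$ at $w=Q$. It must be regular elsewhere, except for a simple pole on $\Delta$.
\item At infinity these orders come from the expansions $y\mp S=O(x^{-1})$ at $Q$ and at $P$ respectively, together with $2(x(w)-x(z))$ in the denominator.
\item Off the diagonal, $x(w)=x(z)$ also occurs at the conjugate point $y(w)=-y(z)$, where the numerator vanishes. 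This leaves only the diagonal pole.
\item Near the diagonal the numerator tends to $2y$, so $s\sim dx(w)/(x(w)-x(z))$ and the residue is $1$. At branch points, where $dx=0$, I would redo this check in the local coordinate $y$.
\end{enumerate}

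\textbf{The cubic.} Rather than passing to a horizontal frame, I will use the Taylor operator \eqref{II:eq:local-kernel-operator} in the coordinate $x$ and the rational frame of $L$. Since $W_3$ is a gauge covariant and \eqref{II:eq:kernel-W3} holds in any frame, this suffices. Away from branch points, the regularized kernel is
\[
 \mathsf H(z,w)=\frac{y(z)+y(w)-S(x(z))+S(x(w))}{2}\cdot\frac{1}{y(w)}\cdot\frac{\sqrt{y(z)y(w)}}{\sqrt{1}}\cdot\ldots
\]
More precisely, we multiply by $(w-z)\,dx(w)/(y(w)\sqrt{dx\,dx})$, which gives the factor $\sqrt{y(z)/y(w)}$ times the numerator over $2y(z)$. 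The steps are then as follows.
\begin{enumerate}
\item Expand $\mathsf H$ in $h=x(w)-x(z)$ to order three to obtain ${\mathsf A},H_2,H_3$.
\item Set $a_1={\mathsf A}$, $a_2=2H_2$, $a_3=6H_3$.
\item In rank one, \eqref{I:eq:local-I23} gives $W_3=a_3-3a_2a_1+2a_1^3-a_1''-\tfrac32(a_2-a_1'-a_1^2)'$.
\end{enumerate}
The main obstacle is simplifying this expression. My plan is to write every coefficient in terms of $y$, $S$ and their derivatives, then use $y^2=S^2+R$ and its differentiated forms $yy'=SS'+R'/2$, and so on, to eliminate $y''$ and $y'''$. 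I expect all terms to collapse to $(S''R'-S'R'')/(8y^3)$, which gives \eqref{II:eq:hyperelliptic-cubic}. Because $\cub_L$ is a global section, the identity extends across branch points and infinity.

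\textbf{Nonvanishing.} The expression $S''R'-S'R''$ vanishes exactly when $R'$ is a constant multiple of $S'$. Since $\deg S'=g>\deg R'$, this forces $R'=0$. Hence $\cub_L\ne0$ if and only if $R$ is nonconstant.
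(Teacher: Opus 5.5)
Your proposal is correct and follows the paper's proof essentially step by step. Acyclicity comes from $h^0(\OO(gP))=1$. The kernel is identified through the divisor bounds at $P,Q$, the cancellation on the conjugate sheet, residue one, and uniqueness. The cubic comes from the Taylor operator in the coordinate $x$ with the rational frame, simplified using $y^2=S^2+R$ to $\bigl(2(S')^3-S'f''+S''f'\bigr)/(8y^3)$. Your Wronskian argument for nonvanishing is a neat substitute for the paper's leading-coefficient count.

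One small slip. The multiplication you describe actually gives the paper's $\mathsf H=\bigl(y(z)+y(w)-S(x(z))+S(x(w))\bigr)/(2y(w))$. Your $\sqrt{y(z)/y(w)}$ version is instead the same kernel written in the frame $e_L\sqrt{y}/\sqrt{dx}$. This is harmless, since $W_3$ is gauge invariant in rank one.
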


\begin{proof}
\noindent\textit{1. Acyclicity.} First, the differentials $x^j\omega$, $0\le j\le g-1$, have
orders $g-1-j$ at $P$. They form a basis of $H^0(X,K)$, so
$H^0(X,K(-gP))=0$. Riemann--Roch gives
$h^0(X,\OO(gP))=1$. Its sections are the constants, and a
nonzero constant does not vanish at $Q$. Thus $H^0(X,L)=0$;
since $\deg L=g-1$, also $H^1(X,L)=0$.

\noindent\textit{2. Divisor bounds.} Write the rational factor in \eqref{II:eq:hyperelliptic-kernel} as
$F(z,w)$. At infinity we have
\[
 y=-S+O(x^{-1})\quad\hbox{at }P,
 \qquad y=S+O(x^{-1})\quad\hbox{at }Q.
\]
For fixed $w$, these expansions show that $F$ has a pole of
order at most $g$ at $z=P$ and vanishes at $z=Q$. For fixed
$z$, it vanishes at $w=P$ and has a pole of order at most $g$
at $w=Q$. Together with
$\operatorname{div}(\omega)=(g-1)(P+Q)$, these are exactly
the bounds required by $L_z\otimes L_w^{-1}\otimes K_w$.

\noindent\textit{3. The diagonal residue.} Away from infinity the only possible additional pole lies over
$x(w)=x(z)$. On the conjugate sheet the numerator vanishes,
so that pole cancels. At a branch point, use local parameters
$x(z)=a+t^2$, $x(w)=a+s^2$, with
$y(z)=tU(t^2)$ and $y(w)=sU(s^2)$, where $U(0)\ne0$.
The numerator is divisible by $s+t$, leaving only the
diagonal factor $s-t$. Its residue in the second variable is
$+1$. The residue characterization in
Proposition~\ref{II:prop:kernel} therefore proves the formula.

\noindent\textit{4. The normalized cubic.} In a coordinate disc with coordinate $x$, regularization by
$\sqrt{dx}\sqrt{dw}/(w-x)$ gives
\[
 H(x,w)=\frac{y(x)+y(w)-S(x)+S(w)}{2y(w)}.
\]
This is a rational frame, so its linear coefficient need not
vanish. Write $\epsilon=w-x$ and set
\begin{equation}\label{II:eq:hyperelliptic-factorial-expansion}
 \begin{aligned}
 H(x,x+\epsilon)&=1+u(x)\epsilon+\tfrac12v(x)\epsilon^2
                      +\tfrac16a_3(x)\epsilon^3+O(\epsilon^4),\\
 a_j(x)&=\left.\partial_w^j H(x,w)\right|_{w=x},
       \qquad a_1=u=\frac{S'-y'}{2y},\qquad a_2=v.
 \end{aligned}
\end{equation}
The identity
$2y(w)H(x,w)=y(x)+y(w)-S(x)+S(w)$ gives the next two coefficients
without expanding a quotient:
\[
 v=\frac{S''-y''}{2y}-2u\frac{y'}y,\qquad
 a_3=\frac{S'''-y'''}{2y}-3v\frac{y'}y-3u\frac{y''}y.
\]
The Taylor operator uses these $a_j$ by
\eqref{II:eq:kernel-operator-action}. We first express it in powers
of $\nabla=\partial_x+u$, using the Ore normalization
\eqref{I:eq:local-I23}. For scalar coefficients this reads
\[
 I_2=v-u'-u^2,\qquad
 I_3=a_3-u''-3uu'-u^3-3I_2u,\qquad
 W_3=I_3-\tfrac32 I_2'.
\]
Substituting these expressions and using $y^2=f$ gives
\[
 I_3-\tfrac32I_2'
   =\frac{2(S')^3-S'f''+S''f'}{8y^3}
   =\frac{S''R'-S'R''}{8y^3},
\]
since $f=S^2+R$. This proves \eqref{II:eq:hyperelliptic-cubic}.
If $R$ has degree
$r\ge1$ and leading coefficient $a$, this numerator has leading
coefficient $(g+1)r(g-r+1)a$, which is nonzero because $r\le g$.
If $R$ is constant, the numerator is zero.
\end{proof}

\begin{example}[A genus-two kernel through cubic order]
\label{II:ex:small-kernel}
Let $X$ be the smooth completion of $y^2=x^6+x+1$. The polynomial
is squarefree. Let $P_-,P_+$ be the points at infinity where
$y/x^3$ tends to $-1,+1$, respectively, and take
$L=\OO(2P_--P_+)$. Lemma~\ref{II:lem:hyperelliptic-kernel} proves
acyclicity and gives its kernel with $S=x^3$ and $R=x+1$.
In the divisor frame of $L$ and the coordinate half-form
$\sqrt{dx}$, its regular part is
\[
 \mathsf H_L(x,w)
 =\frac{y(x)+y(w)-x^3+w^3}{2y(w)},\qquad
 \mathsf A(x)=\frac{3x^2-y'(x)}{2y(x)}.
\]
Choose the branch $y(0)=1$. The first expansion is
\[
 \mathsf H_L(0,w)=1-\frac w4+\frac{3w^2}{16}
                         +\frac{11w^3}{32}+O(w^4).
\]
It is not yet horizontal: $\mathsf A(0)=-1/4$.
Let $G'=-\mathsf A G$ and $G(0)=1$. Since
$\mathsf A'(0)=1/4$ and $\mathsf A''(0)=5/2$,
\[
 G(w)=1+\frac w4-\frac{3w^2}{32}
                         -\frac{57w^3}{128}+O(w^4),\qquad
 \mathsf H_L^{\mathrm{hor}}(0,w)
      =1+\frac{w^2}{32}-\frac{w^3}{32}+O(w^4).
\]
Thus $B_2(0)=1/32$ and $B_3(0)=-1/32$. Keeping the base point
$x$ variable gives
\[
 B_2(x)=\frac{1-36x^4-24x^5}{32(x^6+x+1)^2},\qquad
 6B_3-3B_2'=\frac{3x}{4y^3}.
\]
In particular $B_2'(0)=-1/16$: the cubic vanishes at $x=0$ even
though $B_3(0)\ne0$. Globally,
$\cub_L=(3x/4)(dx/y)^3$ is nonzero. This distinguishes the cubic
tensor from a single Taylor coefficient and shows why the
horizontal normalization enters its definition.
\end{example}

\subsection{Duality}
\label{II:subsec:kernel-extensions}

Duality supplies pairs of line bundles with opposite cubics.
These will give the fixed-determinant extensions in
Section~\ref{II:subsec:extension-consequences}.

The bundle associated to the Serre dual is
\[
 V_{E^\vee}=E^*\otimes K\otimes\vartheta^{-1}
           =E^*\otimes\vartheta=V_E^*.
\]
For an endomorphism $P$ of $E$, its transpose $P^{\mathsf t}$ acts
on a covector $\lambda$ by $P^{\mathsf t}\lambda=\lambda\circ P$;
it also acts on $E^\vee=E^*\otimes K$ through the first factor.

\begin{lemma}[Duality]\label{II:lem:cubic-duality}
Under the identification $V_{E^\vee}=V_E^*$, the connection
$\nabla^{V_{E^\vee}}$ is dual to $\nabla^{V_E}$: if the latter has
matrix ${\mathsf A}\,dz$, the former has matrix $-{\mathsf A}^{\mathsf t}\,dz$ in the
dual frame. Moreover,
\begin{equation}\label{II:eq:cubic-duality}
 \cub_{E^\vee}=-\cub_E^{\mathsf t}.
\end{equation}
In particular, for an acyclic line bundle $L$,
$\cub_{KL^{-1}}=-\cub_L$.
\end{lemma}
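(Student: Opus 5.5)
The key input is a symmetry of the Szeg\H{o} kernel under exchange of its two variables. Let $\tau:X\times X\to X\times X$ be the swap. I will show that the transpose of $\tau^*s_{E^\vee}$ is a section of $(E\boxtimes E^\vee)(\Delta)$ with residue $-\Id_E$, and then identify it with $-s_E$ via the uniqueness in Proposition~\ref{II:prop:kernel}.

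First the sheaf identification. The kernel $s_{E^\vee}$ lies in $(E^\vee\boxtimes (E^\vee)^\vee)(\Delta)$. Since $(E^\vee)^\vee=E$, this bundle is $(E^*\otimes K)\boxtimes E$, twisted by $\Delta$. Pulling back by $\tau$ and transposing the fibre pairing gives a section of $(E\boxtimes E^*\otimes K)(\Delta)=(E\boxtimes E^\vee)(\Delta)$. In the local model \eqref{II:eq:local-residue}, $H(z,w)\,dw/(w-z)$ becomes $H(w,z)^{\mathsf t}\,dz/(z-w)$. To compare the one-form factors I will pass to the symmetric regularization $\sqrt{dz}\sqrt{dw}/(w-z)$ of \eqref{II:eq:regular-kernel}. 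Under this regularization the residue changes sign, because $z-w=-(w-z)$. Uniqueness then gives
\[
 \mathsf H_{E^\vee}(z,w)=\mathsf H_E(w,z)^{\mathsf t}
\]
in dual frames of $V_E^*=V_{E^\vee}$, where the overall minus sign from the pole is cancelled by the swap of $\sqrt{dz}\sqrt{dw}$. I expect the main bookkeeping obstacle to be checking this sign carefully, together with the theta-characteristic identification $V_{E^\vee}=V_E^*$.

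Next I expand the symmetry. With $h=w-z$, write $\mathsf H_E(w,z)=\Id-\mathsf A(w)h+H_2(w)h^2-\cdots$ and re-expand about $z$. The linear coefficient of $\mathsf H_{E^\vee}$ is then $-\mathsf A^{\mathsf t}$, which proves the connection claim. For the cubic I will work in a horizontal frame for $\nabla^{V_E}$; its dual frame is horizontal for the dual connection. There $\mathsf H_E=\Id+B_2h^2+B_3h^3+O(h^4)$, and swapping the variables gives
\[
 \Id+B_2(z+h)h^2-B_3(z+h)h^3
 =\Id+B_2h^2+(B_2'-B_3)h^3+O(h^4).
\]
Hence $B_2^\vee=B_2^{\mathsf t}$ and $B_3^\vee=(B_2'-B_3)^{\mathsf t}$. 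Substituting into the definition of the canonical cubic,
\[
 6B_3^\vee-3B_2^{\vee\prime}=(6B_2'-6B_3-3B_2')^{\mathsf t}=-(6B_3-3B_2')^{\mathsf t},
\]
which is \eqref{II:eq:cubic-duality}.

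For a line bundle $L$, transposition is the identity on $\End(L)=\OO_X$, and $L^\vee=KL^{-1}$. The duality formula therefore reads $\cub_{KL^{-1}}=-\cub_L$. As a check, this agrees with the hyperelliptic formula: replacing $S$ by $-S$ there sends $gP-Q$ to the dual divisor class and flips the sign of $\cub_L$.
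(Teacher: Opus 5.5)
Your proof is correct and follows the paper's argument: swap the variables and transpose to get a section with residue $-\Id$, apply uniqueness of the kernel, and expand the resulting identity $\mathsf H_{E^\vee}(z,w)=\mathsf H_E(w,z)^{\mathsf t}$ in a horizontal frame to obtain $B_2^{\mathsf t}$ and $(B_2'-B_3)^{\mathsf t}$. You apply uniqueness to $E$ rather than to $E^\vee$, which makes no difference, and one remark needs correcting: the minus sign from uniqueness is cancelled by $1/(z-w)=-1/(w-z)$, not by swapping $\sqrt{dz}\sqrt{dw}$, since that product is symmetric and contributes no sign.
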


\begin{proof}
Interchange the two variables and transpose the matrix of $s_E$.
The resulting section takes values in
$(E^\vee\boxtimes E)(\Delta)$ and has residue $-\Id$:
the factor $1/(w-z)$ changes sign. Uniqueness of the kernel gives
\begin{equation}\label{II:eq:kernel-duality}
 s_{E^\vee}(z,w)=-s_E(w,z)^{\mathsf t}.
\end{equation}
The first regular coefficient on the right is $-{\mathsf A}^{\mathsf t}$,
so its connection is the dual connection.

Now choose a horizontal frame and its dual. With $h=w-z$,
the regular part on the right of \eqref{II:eq:kernel-duality} is
\[
 \Id+B_2(w)^{\mathsf t}h^2-B_3(w)^{\mathsf t}h^3+O(h^4).
\]
Expanding at $z$ shows that the dual coefficients are
$B_2^{\mathsf t}$ and $(B_2')^{\mathsf t}-B_3^{\mathsf t}$.
Their cubic combination is
$3(B_2')^{\mathsf t}-6B_3^{\mathsf t}=-(6B_3-3B_2')^{\mathsf t}$,
which proves \eqref{II:eq:cubic-duality}.
\end{proof}

\subsection{The cubic of a line bundle}
\label{II:subsec:theta-cubic}

For a line bundle, the cubic has a direct theta formula. Besides
giving a way to calculate it, the formula will show that distinct
cubics occur for a general pair of acyclic lines.
Choose a symplectic marking of $X$, normalized holomorphic
differentials $\omega_1,\ldots,\omega_g$, and the associated Riemann
theta function $\theta$. Choose the symmetric identification of
$\operatorname{Pic}^{g-1}(X)$ with the Jacobian for which the theta
divisor has equation $\theta(e)=0$. Write $L_e$ for
the line represented by $e$, with the difference convention
$L_e(w-z)=L_{e+u(z,w)}$, where
\[
 u_i(z,w)=\int_z^w\omega_i.
\]
Thus $L_e$ is acyclic exactly when $\theta(e)\ne0$. Derivatives
$\partial_i$ below are taken in the Abelian coordinates of $e$.

\begin{proposition}[Theta formula for the cubic]
\label{II:prop:theta-cubic}
For every $e$ with $\theta(e)\ne0$,
\begin{equation}\label{II:eq:theta-cubic}
 \cub_{L_e}
 =\sum_{i,j,k=1}^g
   \partial_i\partial_j\partial_k\log\theta(e)\,
   \omega_i\omega_j\omega_k.
\end{equation}
\end{proposition}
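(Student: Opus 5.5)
The plan is to write $s_{L_e}$ by Fay's classical Szeg\H{o} kernel formula and read off the horizontal Taylor coefficients. For $\theta(e)\ne0$ I would take
\[
 s(z,w)=\frac{\theta(e+u(z,w))}{\theta(e)\,E(z,w)},
\]
where $E$ is the prime form, so that $E(z,w)=(w-z)(dz)^{-1/2}(dw)^{-1/2}(1+O(h^2))$ in a local coordinate. The first step is to check that this section has the right type: its quasi-periodicity in $w$ and in $z$ matches the difference convention $L_e(w-z)=L_{e+u(z,w)}$, so it is a section of $(L_e\boxtimes L_e^\vee)(\Delta)$ with residue $1$. Proposition~\ref{II:prop:kernel} then identifies it with $s_{L_e}$. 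I would not verify the prime form's multiplier bookkeeping in detail; the framing conventions are exactly those fixed before the proposition.

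Next, in a coordinate $z$, I would write $\omega_i=f_i(z)\,dz$ and expand around $z$ with $h=w-z$. Then $u_i(z,z+h)=f_ih+\tfrac12f_i'h^2+\tfrac16f_i''h^3+O(h^4)$, and the regular part is
\[
 \mathsf H(z,w)=\frac{\theta(e+u(z,w))}{\theta(e)}\cdot\frac{w-z}{E(z,w)\sqrt{dz}\sqrt{dw}}.
\]
The second factor is $1+c(z)h^2+c_3(z)h^3+O(h^4)$. Its coefficients are scalar functions built from the projective connection hidden in the prime form; they are the same for every $e$.

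The key simplification is that we are in rank one. A horizontal frame multiplies $\mathsf H$ by $G(z)^{-1}G(w)$ with $G'=-\mathsf A G$. This amounts to replacing $\log\mathsf H$ by $\log\mathsf H-\mathsf A(z)h-\ldots$. Equivalently, in $\log\mathsf H_{\mathrm{hor}}(z,z+h)$ we subtract the unique function of the form $\gamma(w)-\gamma(z)$ that kills the linear term. Set $\psi=\log\theta$. Then
\[
 \log\mathsf H=\psi(e+u)-\psi(e)+\log(\text{prime form factor}).
\]
A term of the form $\gamma(w)-\gamma(z)$ has cubic invariant $6\cdot\tfrac16\gamma'''-3\cdot(\tfrac12\gamma'')'=0$ when the quadratic and cubic coefficients are computed from it. More precisely, I would show that $6B_3-3B_2'$ depends only on the part of $\log\mathsf H_{\mathrm{hor}}$ of order $\ge2$, since $B_2,B_3$ are its $h^2,h^3$ coefficients when the linear term is zero. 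I would also show that adding $\gamma(w)-\gamma(z)-\gamma'(z)h$ contributes $\tfrac12\gamma''h^2+\tfrac16\gamma'''h^3$, which is annihilated by $6B_3-3B_2'$.

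So the cubic is $6C_3-3C_2'$, where $C_j$ are the $h^j$ coefficients of
\[
 \psi(e+u(z,z+h))-\psi(e)-\sum_i\partial_i\psi(e)\,u_i(z,z+h).
\]
The subtracted linear term is itself of the form $\gamma(w)-\gamma(z)$ with $\gamma=\sum_i\partial_i\psi\int^{\cdot}\omega_i$, so subtracting it is harmless. The remainder has $C_2=\tfrac12\sum_{i,j}\psi_{ij}f_if_j$ and $C_3=\tfrac12\sum_{i,j}\psi_{ij}f_if_j'+\tfrac16\sum_{i,j,k}\psi_{ijk}f_if_jf_k$. Here the $f_if_j'$ term comes from the cross term of $f_ih$ and $\tfrac12f_j'h^2$. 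Then $6C_3-3C_2'=\sum\psi_{ijk}f_if_jf_k$, because $3\sum_{i,j}\psi_{ij}f_if_j'=\tfrac32\bigl(\sum_{i,j}\psi_{ij}f_if_j\bigr)'$, using the symmetry of $\psi_{ij}$. The prime-form factor contributes scalar coefficients independent of $e$. I expect the main obstacle is showing this contribution to the cubic vanishes. I would argue that the contribution is a holomorphic cubic differential determined by the curve alone, and then use symmetry: swapping $z$ and $w$ sends $E\mapsto-E$, so the factor $(w-z)/(E\sqrt{dz}\sqrt{dw})$ is symmetric. For a symmetric function the cubic coefficient equals $\tfrac12$ of the derivative of the quadratic coefficient, so $6c_3-3c'=0$.
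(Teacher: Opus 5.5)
Your proposal follows the paper's proof essentially step by step. The shared steps are: the Ben-Zvi--Biswas/Fay kernel $r(z,w)\theta(e+u)/\theta(e)$; the horizontal frame, which replaces $\log\mathsf H$ by $\psi(e+u)-\psi(e)-\sum_i\psi_i(e)u_i$; the third-order expansion, in which the $\psi_{ij}$ terms cancel; and the symmetry $r(z,w)=r(w,z)$, which gives $r_3=\tfrac12 r_2'$ and removes the prime-form factor from $6B_3-3B_2'$. Your final coefficient computation, $C_2=\tfrac12\sum\psi_{ij}f_if_j$, $C_3=\tfrac12\sum\psi_{ij}f_if_j'+\tfrac16\sum\psi_{ijk}f_if_jf_k$, and the resulting $\sum\psi_{ijk}f_if_jf_k$, are correct.

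One auxiliary claim is false and should be deleted. For $\gamma(z+h)-\gamma(z)$ one gets
\[
6\cdot\tfrac16\gamma'''-3\cdot\bigl(\tfrac12\gamma''\bigr)'=-\tfrac12\gamma''',
\]
not $0$. So the truncated term $\gamma(w)-\gamma(z)-\gamma'(z)h$ is \emph{not} annihilated by $6B_3-3B_2'$. The formula $6B_3-3B_2'$ is valid only after the linear term has been removed by an actual frame change $G(z)^{-1}G(w)$.

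What makes your step work is the following. The horizontal gauge has $\gamma'=\mathsf A=\sum_i\psi_i(e)f_i$. It therefore subtracts exactly $\gamma(w)-\gamma(z)=\sum_i\psi_i(e)u_i(z,w)$, including the $h^2$ and $h^3$ parts. If you removed only the linear part $\sum_i\psi_i(e)f_ih$, you would pick up a spurious extra term $-\tfrac12\sum_i\psi_i(e)f_i''$, which is not even tensorial.

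The only sense in which a full $\gamma(w)-\gamma(z)$ is harmless is gauge invariance of the intrinsic cubic. With the false claim removed, and with ``subtracting it is harmless'' read as ``this subtraction is exactly the horizontal frame,'' your argument is complete.
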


\begin{proof}
We calculate from the classical line-kernel formula
\cite[Section~4.2, equation~(4.2.1)]{BB}, using the residue and
Abel-difference conventions above. In a coordinate $z$, write
$\omega_i=f_i(z)\,dz$, and write the prime form as
$\epsilon(z,w)/(\sqrt{dz}\sqrt{dw})$, normalized by
$\epsilon(z,w)=(w-z)+O((w-z)^3)$. Put
\[
 r(z,w)=\frac{w-z}{\epsilon(z,w)},\qquad
 \psi(e)=\log\theta(e).
\]
The regularized theta kernel is
$r(z,w)\theta(e+u)/\theta(e)$. Its linear coefficient is
${\mathsf A}(z)=\sum_i\psi_i(e)f_i(z)$, so passage to the canonical horizontal
frame multiplies it by $\exp(-\sum_i\psi_i(e)u_i)$. Consequently,
\begin{equation}\label{II:eq:theta-horizontal-kernel}
 \mathsf H^{\mathrm{hor}}_{L_e}(z,w)
 =r(z,w)\exp\!\left(
   \psi(e+u)-\psi(e)-\sum_i\psi_i(e)u_i\right).
\end{equation}
This subtraction is essential: the theta trivialization itself
need not be horizontal for the canonical connection.

The prime form is antisymmetric, hence $r(z,w)=r(w,z)$.
Since $r(z,z)=1$, its expansion for $h=w-z$ has the form
\[
 r(z,z+h)=1+r_2(z)h^2+\tfrac12r_2'(z)h^3+O(h^4).
\]
Substitute $u_i=f_i h+\tfrac12f_i'h^2+O(h^3)$ into
\eqref{II:eq:theta-horizontal-kernel}. The quadratic and cubic
coefficients are
\begin{align*}
 B_2&=r_2+\tfrac12\sum_{i,j}\psi_{ij}f_if_j,\\
 B_3&=\tfrac12r_2'
       +\tfrac12\sum_{i,j}\psi_{ij}f_if_j'
       +\tfrac16\sum_{i,j,k}\psi_{ijk}f_if_jf_k.
\end{align*}
In $6B_3-3B_2'$, the prime-form term and all second-derivative
terms cancel. Equation~\eqref{II:eq:canonical-cubic} gives the claim.
\end{proof}

This is the third-order companion of the second logarithmic
derivative formula in \cite[Section~4.4]{BB}. The kernel formula
is classical; the calculation identifies its cubic coefficient
with the normalization used here. Since $L_{-e}=K\otimes L_e^{-1}$
and $\theta$ is even, \eqref{II:eq:theta-cubic} also recovers
$\cub_{K\otimes L_e^{-1}}=-\cub_{L_e}$.

\begin{example}[The genus-two theta formula]
\label{II:ex:theta-genus-two}
Put $\psi_{ijk}=\partial_i\partial_j\partial_k\log\theta(e)$.
In genus two, Proposition~\ref{II:prop:theta-cubic} reads
\[
 \cub_{L_e}=\psi_{111}\omega_1^3
 +3\psi_{112}\omega_1^2\omega_2
 +3\psi_{122}\omega_1\omega_2^2+\psi_{222}\omega_2^3.
\]
The factors three count the permutations of the mixed indices.
For an acyclic theta characteristic $L$, duality gives
$KL^{-1}=L$ and hence $\cub_L=-\cub_L=0$.
In theta coordinates centered at such a characteristic, the same
vanishing follows because the nonvanishing theta function is even,
so its third logarithmic derivatives at the center vanish.
\end{example}

\begin{corollary}[Distinct cubics occur generically]
\label{II:cor:generic-distinct-cubics}
The assignment
\[
 U\longrightarrow H^0(X,K^3),\qquad L\longmapsto\cub_L,
\]
is a nonconstant algebraic morphism. Hence
$\{(L,M)\in U\times U:\cub_L\ne\cub_M\}$ is a nonempty
Zariski-open subset. For each fixed $L\in U$, a general $M\in U$
has $\cub_M\ne\cub_L$.
\end{corollary}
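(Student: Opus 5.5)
The plan is to use the theta formula of Proposition~\ref{II:prop:theta-cubic} for both algebraicity and nonconstancy. Once the morphism is known to be nonconstant, the Zariski-open assertions are formal.

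\emph{Algebraicity.} On $U$ I will use the Ben-Zvi--Biswas description: the kernel $s_L$ is the unique section with residue $\Id$. As $L$ varies, acyclicity gives vanishing $H^0$ and $H^1$ in the relative residue sequence \eqref{II:eq:residue-sequence} over $U\times X\times X$. Base change then shows that $s_L$ varies algebraically with $L$, and hence so do its regular Taylor coefficients. Passage to a horizontal frame, followed by the combination $6B_3-3B_2'$, is a universal differential-polynomial operation. So the coefficients of $\cub_L$ in a fixed basis of $H^0(X,K^3)$ are regular functions on $U$. Alternatively, holomorphy in $e$ can be read off directly from \eqref{II:eq:theta-cubic}, since $\partial^3\log\theta$ is meromorphic on the Jacobian with poles only along $\Theta$; GAGA then upgrades this to an algebraic morphism.

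\emph{Nonconstancy.} The map is odd under $L\mapsto KL^{-1}$, that is $e\mapsto-e$, by Lemma~\ref{II:lem:cubic-duality}. If it were constant with value $c$, then $c=-c$, so $c=0$. Hence it suffices to exhibit a single acyclic $L$ with $\cub_L\neq0$.

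I would attempt this in two ways. The first is to prove that $\partial_i\partial_j\partial_k\log\theta$ cannot vanish identically on $U$: otherwise $\log\theta$ would be a quadratic polynomial locally, so $\theta$ would be the exponential of a quadratic. That contradicts the existence of zeros of $\theta$ (the divisor $\Theta$ is nonempty), since $\exp$ of an entire function never vanishes. To pass from vanishing of the symmetric tensor $\sum\psi_{ijk}\omega_i\omega_j\omega_k$ in $H^0(K^3)$ to vanishing of all the $\psi_{ijk}$, I would need injectivity of $\Sym^3H^0(K)\to H^0(K^3)$. That holds for nonhyperelliptic curves by Max Noether's theorem, but fails in general. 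This is the main obstacle.

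To avoid it, I would instead use the cubic form $v\mapsto\cub_L$ restricted along a curve: fix a general point $x\in X$ and consider $F(e)=\cub_{L_e}(x)$ evaluated on a tangent vector $\partial_z$. This equals $\sum\psi_{ijk}f_if_jf_k=\partial_v^3\log\theta(e)$ with $v=(f_i(x))$, the image of $x$ under the canonical map. If this vanished identically for $e\in U$, then $\log\theta$ would be quadratic along every line in direction $v$. Hence $\theta(e+tv)=\exp(\text{quadratic in }t)\cdot\theta(e)$ would never vanish in $t$. But the line $e+\C v$ meets $\Theta$, because $\Theta$ is ample and so its restriction to the image of any nonconstant entire curve has zeros (periodicity together with the quasi-periodicity of $\theta$ forces zeros). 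This contradiction gives nonconstancy on every curve, without the Noether issue.

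\emph{Conclusion.} Given nonconstancy, $\{(L,M):\cub_L\neq\cub_M\}$ is the complement of the preimage of the diagonal under a morphism $U\times U\to H^0(K^3)^2$. It is therefore open, and it is nonempty because the map takes at least two values. For fixed $L$, the set $\{M:\cub_M=\cub_L\}$ is closed in the irreducible variety $U$. It is proper: if it were all of $U$, the map would be constant. Hence a general $M$ has $\cub_M\neq\cub_L$.
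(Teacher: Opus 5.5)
Your algebraicity argument (relative residue isomorphism plus base change) and your closing Zariski-openness argument coincide with the paper's. Your reduction of nonconstancy to nonvanishing via $\cub_{KL^{-1}}=-\cub_L$ is also correct.

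The gap is the decisive step of your second route: the assertion that the line $e+\C v$ meets $\Theta$. The reason you give does not prove it. Quasi-periodicity of $\theta$ only relates values at points differing by lattice vectors. It therefore forces zeros on $e+\C v$ only when $\C v$ contains a rank-two sublattice, that is, when the line closes up to an elliptic curve on which $\OO(\Theta)$ has positive degree. For $v=(f_i(x))$ at a general point $x$ this need not happen. A simple Jacobian contains no elliptic curves at all; the image of the line is then not closed, and there is no fundamental domain on which to count zeros. The blanket statement that every nonconstant entire curve in an abelian variety meets an ample divisor is true. It is, however, a deep hyperbolicity theorem of Siu--Yeung, not a formal consequence of ampleness and periodicity. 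As written, the contradiction is not established.

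The step is easily repaired by choosing the line so that it meets $\Theta$ by construction.
\begin{itemize}
\item Take $e_0$ with $\theta(e_0)=0$ and $d\theta(e_0)\ne0$, a smooth point of the theta divisor.
\item Then $\sum_i\partial_i\theta(e_0)\omega_i$ is a nonzero holomorphic differential. For $x$ off its finitely many zeros, $\partial_v\theta(e_0)\ne0$.
\item Hence $t\mapsto\theta(e_0+tv)$ has a simple zero at $t=0$, and $e=e_0+t_1v$ lies in $U$ for small $t_1\ne0$.
\item The line through $e$ in direction $v$ contains $e_0\in\Theta$. This contradicts your conclusion that $\theta$ has no zeros on that line.
\end{itemize}

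Once you have this transverse smooth point, however, the detour through oddness and lines becomes unnecessary, and what remains is the paper's proof. Expanding the third logarithmic derivatives gives
\[
 \lim_{e\to e_0}\theta(e)^3\cub_{L_e}=2\Bigl(\sum_i\partial_i\theta(e_0)\omega_i\Bigr)^3 .
\]
This limit is nonzero because it is the cube of a single nonzero differential. So the cubic map has a pole along $\Theta$ and cannot be constant.

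This also explains why the Noether obstacle of your first route never arises. No injectivity of $\Sym^3H^0(X,K)\to H^0(X,K^3)$ is needed, only the fact that the cube of one nonzero section of $K$ is nonzero on an integral curve.
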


\begin{proof}
First, algebraicity follows from the kernel construction in
families, rather than from the analytic theta expression alone.
Take a Poincar\'e line bundle on $X\times U$. Acyclicity, together
with cohomology and base change, makes the relative residue map an
isomorphism, just as in Proposition~\ref{II:prop:kernel}. Its inverse
therefore gives an algebraic family of normalized kernels.
The cubic is obtained from the first three regular coefficients
by differential-polynomial expressions, so it is algebraic in
the parameter as well. Horizontal frames are needed for the
short formula \eqref{II:eq:canonical-cubic}, not for this conclusion.

To prove nonconstancy, approach a smooth point $e_0$ of the theta
divisor. Such points exist, and $d\theta(e_0)\ne0$; see
\cite[Section~4.3]{BB}. Formula~\eqref{II:eq:theta-cubic} gives
\[
 \lim_{e\to e_0}\theta(e)^3\cub_{L_e}
 =2\left(\sum_i\partial_i\theta(e_0)\omega_i\right)^3\ne0.
\]
The last inequality follows because the $\omega_i$ are a basis.
Thus the cubic map has a pole at $e_0$ and cannot be constant.
The locus $U$ is irreducible, being a nonempty open subset of
$\operatorname{Pic}^{g-1}(X)$, a translate of the Jacobian.
Each fixed-value fibre is proper and closed, so its complement
is dense and open.
Equality of the two
cubics is the inverse image of the diagonal under the product map,
so it is also a proper closed condition.
\end{proof}

\subsection{The canonical modular forms}
\label{II:subsec:kernel-modular-forms}

The preceding construction supplies both inputs to
\eqref{II:eq:intrinsic-self-bracket}: the connection on $V_E$ and its
endomorphism-valued cubic $\cub_E$. Put
\begin{equation}\label{II:eq:canonical-bracket}
 \mathcal C_E=\mathcal C(\cub_E)
     =6[F,\nabla_{E,z}^{\mathrm{ad}}F](dz)^7
       \in H^0(X,\End(E)\otimes K^7),
 \qquad \cub_E=F(z)(dz)^3.
\end{equation}
Lemma~\ref{II:lem:intrinsic-self-bracket} proves that this expression
is intrinsic. Proposition~\ref{II:prop:canonical-cubic} shows that it
is independent of $\vartheta$ as well. It is natural under maps of
acyclic bundles: in horizontal frames such maps have constant
matrices intertwining $F$, and hence intertwining $F'$ and
$[F,F']$.

To obtain automorphic coefficients, uniformize the compact curve
as $X=\Gamma_X\backslash\mathbb H$ and choose a lift of its group to
$\operatorname{SL}_2(\mathbb R)$. The pullback of $\nabla^{V_E}$ has
a global horizontal frame on $\mathbb H$. Let
$\rho_E:\Gamma_X\to\operatorname{GL}_r(\C)$ be its descent
representation. In this frame write
\begin{equation}\label{II:eq:kernel-modular-coefficients}
 \pi^*\cub_E=F_E(\tau)(d\tau)^3,\qquad
 \Phi_E=[F_E,F_E]_1^0=6[F_E,DF_E],
 \qquad \pi:\mathbb H\to X.
\end{equation}

\begin{proposition}[Automorphic coefficients on a compact quotient]
\label{II:prop:canonical-modular-forms}
Let $E$ be an acyclic bundle on a compact curve
$X=\Gamma_X\backslash\mathbb H$ of genus at least two.
The two coefficients in \eqref{II:eq:kernel-modular-coefficients} are
holomorphic forms of weights $6$ and $14$ with multiplier
$\operatorname{Ad}\rho_E$:
\begin{equation}\label{II:eq:canonical-modular-laws}
 \begin{aligned}
 F_E(\gamma\tau)&=j(\gamma,\tau)^6
                      \rho_E(\gamma)F_E(\tau)\rho_E(\gamma)^{-1},\\
 \Phi_E(\gamma\tau)&=j(\gamma,\tau)^{14}
                      \rho_E(\gamma)\Phi_E(\tau)\rho_E(\gamma)^{-1}.
 \end{aligned}
\end{equation}
They satisfy
\begin{equation}\label{II:eq:canonical-modular-normalization}
 \Phi_E(\tau)(d\tau)^7=\kappa\,\pi^*\mathcal C_E.
\end{equation}
Changing the horizontal frame conjugates the coefficients and the
representation by the same constant matrix.

\end{proposition}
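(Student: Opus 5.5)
The plan is to work entirely on $\mathbb H$ with the pulled-back data and deduce each claim from results already established. First, set up the horizontal frame. The pullback $\pi^*V_E$ carries the flat holomorphic connection $\pi^*\nabla^{V_E}$ of Proposition~\ref{II:prop:canonical-cubic}. Since $\mathbb H$ is simply connected, there is a global horizontal frame $e$. For $\gamma\in\Gamma_X$, the translated frame $\gamma^*e$ is again horizontal, so it differs from $e$ by a constant matrix; this defines $\rho_E(\gamma)$, and the cocycle identity makes $\rho_E$ a representation. With $\rho_E$ so defined, $\pi^*V_E$ descends as the bundle $V_{\rho_E}$ of \eqref{II:eq:matrix-multiplier}, and $\End(V_E)=\End(E)$ becomes $\End(V_{\rho_E})$ with multiplier $\operatorname{Ad}\rho_E$.

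Second, the law for $F_E$. Since $\cub_E$ is a holomorphic section of $\End(E)\otimes K^3$, its pullback written in the frame $e$ is $F_E(\tau)(d\tau)^3$ with $F_E$ holomorphic on $\mathbb H$. Invariance of $\pi^*\cub_E$ under $\gamma$ uses two facts: $d(\gamma\tau)=j^{-2}d\tau$, and the change of frame by $\rho_E(\gamma)$ acts on endomorphism matrices by conjugation. Together these give the first line of \eqref{II:eq:canonical-modular-laws}, exactly as in the converse direction of \eqref{II:eq:form-tensor}. I expect the main care point to be the side and inverse convention for $\rho_E$, that is, whether one gets $\rho F\rho^{-1}$ or $\rho^{-1}F\rho$. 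I would fix this by defining $\rho_E$ through the descent convention $\gamma(\tau,v)=(\gamma\tau,\rho(\gamma)v)$, so that the identity comes out as displayed. Because $X$ is compact, there are no cusps and no cusp conditions to check.

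Third, $\Phi_E$. In the horizontal frame the connection matrix vanishes, so $B=0$ presents $\pi^*\nabla^{V_E}$ via \eqref{II:eq:bundle-connection-B}, and $\delta_0=D$. Proposition~\ref{II:prop:RC-properties} with $n=1$, $k=\ell=6$ and multiplier $\alpha=\operatorname{Ad}\rho_E$ then shows that $[F_E,F_E]_1^0$ is holomorphic of weight $14$. Formula \eqref{II:eq:first-self-bracket} identifies this bracket with $6[F_E,DF_E]$. For the normalization \eqref{II:eq:canonical-modular-normalization}, I would apply \eqref{II:eq:intrinsic-modular-bracket} with $P=\pi^*\cub_E$, and note that $\mathcal C$ commutes with pullback by the local biholomorphism $\pi$, since Lemma~\ref{II:lem:intrinsic-self-bracket} makes it coordinate independent. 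The alternative is to compute directly: $D=\kappa\,d/d\tau$ contributes exactly one factor $\kappa$ relative to \eqref{II:eq:canonical-bracket} with $z=\tau$.

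Finally, a second horizontal frame has the form $eC$ with $C$ constant. Its matrices are $C^{-1}F_EC$ and $C^{-1}\Phi_EC$, because $D$ commutes with constant conjugation, and its representation is $C^{-1}\rho_EC$. This proves the last assertion.
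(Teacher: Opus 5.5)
Your proposal is correct and follows the paper's own argument: the first law is the tensor law of the cubic read in a horizontal frame, the second is Proposition~\ref{II:prop:RC-properties} with $k=\ell=6$ and $B=0$, and the normalization is \eqref{II:eq:intrinsic-modular-bracket} for $m=3$. Beyond the paper's short proof, you spell out how $\rho_E$ is built and which side convention it uses, why $\mathcal C$ commutes with pullback along $\pi$, and why a constant frame change conjugates the coefficients and the representation; the paper leaves these steps implicit.
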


\begin{proof}
The first law in \eqref{II:eq:canonical-modular-laws} is the tensor law
for the cubic in a horizontal frame. The second follows from
Proposition~\ref{II:prop:RC-properties} with $k=\ell=6$ and $B=0$;
equivalently, the extra derivative term in the transformation of
$DF_E$ is proportional to $F_E$ and vanishes in the commutator.
Equation~\eqref{II:eq:canonical-modular-normalization} is
\eqref{II:eq:intrinsic-modular-bracket} for $m=3$.

\end{proof}

\begin{corollary}[Cusp orders on a compactification]
\label{II:cor:kernel-cusp-forms}
Let $Y=\Gamma\backslash\mathbb H$ be a noncompact quotient as in
Section~\ref{II:sec:modular-calculus}, and suppose its smooth
compactification $X=\bar Y$ has genus at least two. Let $E$ be an
acyclic bundle on $X$. Construct its kernel on $X$ and define
$F_E,\Phi_E$ by~\eqref{II:eq:kernel-modular-coefficients} after
pullback to $\mathbb H$. These coefficients satisfy
\eqref{II:eq:canonical-modular-laws}--\eqref{II:eq:canonical-modular-normalization},
and their representation is trivial at parabolic stabilizers. Both coefficients are cusp forms; at each cusp,
\begin{equation}\label{II:eq:kernel-cusp-orders}
 F_{E,\sigma}=O(q^3),\qquad \Phi_{E,\sigma}=O(q^7).
\end{equation}
\end{corollary}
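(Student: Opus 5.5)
The plan is to rerun the proof of Proposition~\ref{II:prop:canonical-modular-forms} on the compact curve $X=\bar Y$, and then read off the cusp orders from the tensor relation \eqref{II:eq:cusp-tensor}. Nothing in the construction of $s_E$, $\nabla^{V_E}$, $\cub_E$ or $\mathcal C_E$ used the uniformization. These objects are holomorphic sections on $X$, including over the cusp points, by Proposition~\ref{II:prop:canonical-cubic} and \eqref{II:eq:canonical-bracket}. I will pull back along $\pi:\mathbb H\to Y\subset X$. The connection $\nabla^{V_E}$ restricted to $Y$ is flat and holomorphic, so its pullback has a global horizontal frame on the simply connected $\mathbb H$, and this defines $\rho_E$. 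The laws \eqref{II:eq:canonical-modular-laws} and \eqref{II:eq:canonical-modular-normalization} then follow verbatim from the tensor law of the cubic, from Proposition~\ref{II:prop:RC-properties} with $B=0$, and from \eqref{II:eq:intrinsic-modular-bracket}.

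Next I will show that $\rho_E$ is trivial on parabolic stabilizers. Take a cusp $\sigma$ and a small punctured disc around the corresponding point $c\in X$. Since $\nabla^{V_E}$ is holomorphic on all of $X$, it is holomorphic on the full disc around $c$. The disc is simply connected, so a horizontal frame exists on the whole disc and is single-valued on the punctured disc. The monodromy around the loop generating the parabolic stabilizer is therefore the identity. Hence $\rho_E(\gamma)=\Id$ for the parabolic generator, up to the sign ambiguity of the lift, which acts trivially on $\operatorname{Ad}$. So $\alpha=\operatorname{Ad}\rho_E$ satisfies the standing assumption of Section~\ref{II:sec:modular-calculus}.

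For the cusp orders, I will choose the global horizontal frame on $\mathbb H$ to be the pullback of this disc frame near the cusp. This is possible after conjugating by a constant matrix, which does not change vanishing orders. In this frame, $\cub_E$ near $c$ is $G(q)(dq)^3$ with $G$ holomorphic at $q=0$, since $\cub_E$ is a holomorphic section over $X$ and the frame extends across $c$. Comparing with \eqref{II:eq:cusp-tensor} at $m=3$ gives
\[
 F_{E,\sigma}(q)\left(\frac{h}{2\pi i}\right)^3 q^{-3}=G(q),
\]
so $F_{E,\sigma}=O(q^3)$. The same argument with $m=7$ applied to $\mathcal C_E$, together with \eqref{II:eq:canonical-modular-normalization}, gives $\Phi_{E,\sigma}=O(q^7)$. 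In particular both coefficients vanish at every cusp, so they are cusp forms.

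The main obstacle is the compatibility between the two frames in the cusp step. The global horizontal frame on $\mathbb H$, pushed through $\sigma$, has to agree with the frame that extends over $c$, and the slash action $F_\sigma=j(\sigma,\tau)^{-6}F(\sigma\tau)$ has to match the $(d\tau)^3$ coefficient in the relevant chart. I expect to handle this by noting two facts. First, any two horizontal frames on the punctured disc's universal cover differ by a constant matrix, and triviality of the local monodromy makes the extending frame one of them. Second, the identity $d(\sigma\tau)=j(\sigma,\tau)^{-2}\,d\tau$ converts the tensor coefficient exactly into the slash transform, as in the derivation of \eqref{II:eq:cusp-tensor}.
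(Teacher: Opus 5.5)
Your proposal is correct and follows the paper's own argument. The transformation laws come from the tensor calculation of Proposition~\ref{II:prop:canonical-modular-forms}. Holomorphy of $\nabla^{V_E}$ on a full disc about each cusp point gives a single-valued horizontal frame, hence trivial parabolic monodromy. Applying \eqref{II:eq:cusp-tensor} with $m=3$ and $m=7$ to the holomorphic tensors $\cub_E$ and $\mathcal C_E$ then gives the cusp orders.

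Your matching of the global frame on $\mathbb H$ to the disc frame by a constant matrix is a correct filling-in of a step the paper leaves implicit. The hedge about the sign of the lift is unnecessary: $\rho_E$ is the monodromy of a deck transformation and depends only on its action on $\mathbb H$, so $\rho_E$ itself, not just $\operatorname{Ad}\rho_E$, is trivial there.
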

\begin{proof}
The transformation laws follow from the tensor calculation in the
preceding proposition. The connection is holomorphic on a
disc about each cusp, so a horizontal frame exists on the entire
disc. A loop about its center has trivial monodromy. Both $\cub_E$
and $\mathcal C_E$ are holomorphic tensors on $X$; applying
\eqref{II:eq:cusp-tensor} with $m=3$ and $m=7$ gives
\eqref{II:eq:kernel-cusp-orders}.
\end{proof}

Thus the same canonical construction works on every compact complex
curve of genus at least two and, when the curve is the compactification
of a congruence quotient, gives matrix-valued modular cusp forms on
that quotient. The forms and their multiplier come from the bundle
and its kernel connection. Their ability to retain extension data
will be the application developed next.

\section{Extension recovery from the canonical cubic}
\label{II:sec:extension-recovery}

The preceding section constructs a canonical cubic and a modular
self-bracket from an acyclic bundle. We now ask what the bracket
remembers about an extension of two line bundles. The main point is
that its off-diagonal entry recovers the extension class when the
two endpoint cubics differ.

Keep $X$ a smooth compact complex curve of genus $g\geq2$, put
$K=K_X$, and fix two acyclic holomorphic line bundles $L,M$ on $X$.
Corollary~\ref{II:cor:generic-distinct-cubics} shows that
$\cub_L\ne\cub_M$ holds for a general pair.

\subsection{Marked extensions and triangular forms}
\label{II:subsec:marked-extensions}

\begin{definition}[Marked extension]\label{II:def:marked-extension}

An \emph{extension of $M$ by $L$} is a short exact sequence of
holomorphic bundles
\begin{equation}\label{II:eq:line-extension}
 0\longrightarrow L\xrightarrow{i}E\xrightarrow{\pi}M
 \longrightarrow0.
\end{equation}
The middle bundle $E$ has rank two, $i$ identifies $L$ with a
subbundle, and $\pi$ identifies the quotient with $M$. A splitting is a holomorphic map $s:M\to E$ with
$\pi s=\Id_M$; it identifies $E$ with $L\oplus M$ compatibly
with $i$ and $\pi$.

Local splittings exist. Choose them on an open cover $\{U_i\}$,
writing $s_i:M|_{U_i}\to E|_{U_i}$. Put
\[
 T=\Hom(M,L)=L\otimes M^{-1}.
\]
On an overlap $U_i\cap U_j$, the difference lands in $i(L)$,
so there is a unique section $\varphi_{ij}$ of $T$ with
\begin{equation}\label{II:eq:extension-cocycle}
 s_j-s_i=i\varphi_{ij},\qquad
 \varphi_{ij}+\varphi_{jk}=\varphi_{ik}.
\end{equation}
Replacing $s_i$ by $s_i+i\psi_i$, with
$\psi_i\in H^0(U_i,T)$, changes $\varphi_{ij}$ to
$\varphi_{ij}+\psi_j-\psi_i$. Thus the cocycle defines the
\emph{extension class}
\begin{equation}\label{II:eq:extension-class}
 \xi(E)=[\varphi_{ij}]\in H^1(X,T).
\end{equation}
It vanishes exactly when the local splittings can be modified to
agree, giving a global splitting. We write $E_\xi$ for an extension
representing $\xi$, retaining the inclusion $i$ and quotient $\pi$
in \eqref{II:eq:line-extension}. Isomorphisms of these marked extensions
must commute with $i$ and $\pi$ and induce the identity on $L$ and $M$.
\end{definition}

The cohomology sequence of \eqref{II:eq:line-extension} gives
$H^q(X,E)=0$ for $q=0,1$, because the corresponding groups for $L$
and $M$ vanish. Thus $E$ is acyclic, and all the preceding
constructions apply to it. A frame \emph{adapted to the extension}
consists of a local frame of $L$ followed by the image under a local
splitting of a frame of $M$.

\begin{corollary}[Triangular form]\label{II:cor:triangular-kernel}
For the marked extension~\eqref{II:eq:line-extension}, kernel
naturality has the following consequences.
\begin{enumerate}[label=\textup{(\roman*)},leftmargin=*]
\item The connection on $V_E$ preserves $L\vartheta^{-1}$ and induces the
canonical connection on $M\vartheta^{-1}$; these are the connections
$\nabla^{V_E}$, $\nabla^{V_L}$, and $\nabla^{V_M}$ constructed above.
In local frames adapted
to \eqref{II:eq:line-extension}, the kernel and the cubic are upper
triangular, with diagonal entries $s_L,s_M$ and $\cub_L,\cub_M$,
respectively.
\item The determinant is independent of the extension class:
\begin{equation}\label{II:eq:extension-determinant}
 \det s_E=s_Ls_M=\det s_{L\oplus M}
\end{equation}
under the determinant-line identification induced by the extension.
Here the determinant is taken fibrewise:
$\det s_E(x,y):\det E_y\to\det E_x\otimes K_y^2$ for $x\ne y$.
\end{enumerate}
\end{corollary}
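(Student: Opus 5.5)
The plan is to deduce everything from Lemma~\ref{II:lem:kernel-natural} applied to the two maps of the extension, $i:L\to E$ and $\pi:E\to M$; all three bundles are acyclic, $E$ by the cohomology sequence noted above.

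For the inclusion, the identity $(i\boxtimes1)s_L=(1\boxtimes i^\vee)s_E$ says that $s_E(x,y)\,i_y=(i_x\otimes1)s_L(x,y)$. So $s_E$ carries $L_y$ into $L_x\otimes K_y$, and its restriction there is $s_L$. For the projection, the identity $(\pi\boxtimes1)s_E=(1\boxtimes\pi^\vee)s_M$ says that $(\pi_x\otimes1)s_E(x,y)=s_M(x,y)\pi_y$, so the induced map on quotients is $s_M$. Now take a frame adapted to the extension, namely $e_L$ followed by $s_i(e_M)$. In that frame the matrix of $s_E$ is upper triangular with diagonal entries $s_L$ and $s_M$. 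The off-diagonal entry depends on the chosen local splitting and is not constrained.

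The connection and cubic statements come from Lemma~\ref{II:lem:horizontal-maps}. Since $\widehat i:V_L\to V_E$ is horizontal, $\nabla^{V_E}$ preserves $L\vartheta^{-1}$ and restricts there to $\nabla^{V_L}$. Since $\widehat\pi$ is horizontal, the quotient connection is $\nabla^{V_M}$. Equivalently, the linear regular coefficient ${\mathsf A}_E$ is upper triangular in an adapted frame, with diagonal entries ${\mathsf A}_L,{\mathsf A}_M$; this can be read off from the triangular kernel directly. For the cubic I would invoke naturality \eqref{II:eq:cubic-natural}. From $\cub_E\, i=(i\otimes1)\cub_L$, the cubic preserves $L$ and restricts to $\cub_L$. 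From $(\pi\otimes1)\cub_E=\cub_M\pi$, it induces $\cub_M$ on the quotient. Hence its matrix in an adapted frame is upper triangular with the stated diagonal.

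For (ii), the fibrewise determinant of a triangular map is the product of its diagonal entries. This gives $\det s_E(x,y)=s_L(x,y)s_M(x,y)$ as a map $L_y\otimes M_y\to L_xM_x\otimes K_y^2$, using $\det E\simeq L\otimes M$ via $i,\pi$. This product is independent of the splitting and of $\xi$. By \eqref{II:eq:kernel-sum}, applied to $E=L\oplus M$, it equals $\det s_{L\oplus M}$. The only point needing care is that "adapted frames" may vary with the local splitting. The invariance holds because triangularity and the diagonal entries are stated via $i$ and $\pi$ intrinsically, and changing the splitting is a unipotent gauge that only alters the off-diagonal entry. The proof is otherwise formal; I expect no real obstacle beyond tracking the one-form factor $K_y$ in the determinant.
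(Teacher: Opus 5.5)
Your proposal is correct and follows the paper's proof step by step. You apply kernel naturality \eqref{II:eq:kernel-natural} to $i$ and $\pi$ to get the first column $(s_L,0)^{\mathsf t}$ and second row $(0,s_M)$, Lemma~\ref{II:lem:horizontal-maps} for the connection, \eqref{II:eq:cubic-natural} for the cubic, and then take determinants, with \eqref{II:eq:kernel-sum} supplying the comparison with $L\oplus M$ that the paper leaves implicit.
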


\begin{proof}
Apply Lemma~\ref{II:lem:horizontal-maps} to $i$ and $\pi$ to obtain
the assertion about the connection. Apply
\eqref{II:eq:kernel-natural} to the same maps: the first column is
$(s_L,0)^{\mathsf t}$ and the second row is $(0,s_M)$.
Thus the kernel is upper triangular with the stated diagonal.
Equation~\eqref{II:eq:cubic-natural} gives the identical assertion
for the cubic. Taking determinants proves
\eqref{II:eq:extension-determinant}.
\end{proof}

For example, if $a$ and $d$ are the local coefficients of
$\cub_L$ and $\cub_M$, the cubic coefficient of $E$ has the form
\[
 F=\begin{pmatrix}a&b\\0&d\end{pmatrix}.
\]
Its trace and determinant are $a+d$ and $ad$, independently of $b$.
Write $\varphi(z)$ for the coefficient of a local section of $T$
in the chosen line frames. Changing the splitting by this section
gives the frame change
$G=\left(\begin{smallmatrix}1&\varphi\\0&1\end{smallmatrix}\right)$.
Direct multiplication gives
\begin{equation}\label{II:eq:cubic-splitting-change}
 G^{-1}FG=\begin{pmatrix}a&b+(a-d)\varphi\\0&d\end{pmatrix}.
\end{equation}
Thus $b$ itself depends on the local splitting. The bracket
$\mathcal C_E$, however, is already independent of this choice.
In a horizontal frame adapted to the preserved line and quotient,
\begin{equation}\label{II:eq:triangular-bracket}
 \mathcal C_E
 =6\begin{pmatrix}
     0&(a-d)b'-b(a'-d')\\0&0
   \end{pmatrix}(dz)^7.
\end{equation}
This follows by multiplying $F$ and $F'$ in both orders. In
particular, $\mathcal C_E$ kills $L$ and takes its values in
$L\otimes K^7$. It therefore factors through the specified quotient
and inclusion as
\[
 E\xrightarrow{\pi}M\longrightarrow L\otimes K^7
                      \xrightarrow{i\otimes1}E\otimes K^7,
\]
and can be viewed as a section of $T\otimes K^7$, where
$T=\Hom(M,L)$. The upper-right entry is the part that can depend on the extension.
We now show that, when $\cub_L\ne\cub_M$, it determines the marked
extension class.

For $E_\xi$, write this factorization as
\begin{equation}\label{II:eq:extension-bracket-factor}
 c_\xi\in H^0(X,T\otimes K^7),\qquad
 \mathcal C_{E_\xi}=(i\otimes1)c_\xi\pi.
\end{equation}
An isomorphism of marked extensions intertwines the brackets and
induces the identity on $L,M$. Thus $c_\xi$ depends only on $\xi$.

\subsection{The recovery theorem}
\label{II:subsec:extension-recovery-theorem}

The connection needed to read this differential is fixed before
choosing the extension. Write $V_L=L\otimes\vartheta^{-1}$ and
$V_M=M\otimes\vartheta^{-1}$. Their canonical connections induce
\begin{equation}\label{II:eq:fixed-T-connection}
 T=\Hom(V_M,V_L),\qquad
 \nabla^T u=\nabla^{V_L}\circ u-(u\otimes1)\circ\nabla^{V_M}.
\end{equation}
This is a holomorphic flat connection on $T$. It is the restriction
of the induced connection on $\End(V_{L\oplus M})$, and hence is
independent of $\vartheta$ by Proposition~\ref{II:prop:canonical-cubic}.
It depends on the fixed lines $L,M$, not on $\xi$.

Put
\begin{equation}\label{II:eq:cubic-gap}
 d_{LM}=\cub_L-\cub_M\in H^0(X,K^3).
\end{equation}
When $d_{LM}\not\equiv0$, let $Z=(d_{LM})_0$ be its zero divisor.
The quotient sheaf
$\mathcal P_Z(T)=T(Z)/T$ records the polar parts of meromorphic
sections of $T$ whose poles are bounded by $Z$. The exact sequence
\begin{equation}\label{II:eq:extension-principal-sequence}
 0\longrightarrow T\longrightarrow T(Z)
   \longrightarrow\mathcal P_Z(T)\longrightarrow0
\end{equation}
gives a connecting map
$\partial_Z:H^0(X,\mathcal P_Z(T))\to H^1(X,T)$.
We use the sign convention of \eqref{II:eq:extension-cocycle}:
local meromorphic lifts $u_i$ of a principal part give the cocycle
$u_j-u_i$.

\Needspace{24\baselineskip}
\begin{theorem}[Recovery of extension classes]
\label{II:thm:extension-recovery}
Let $X$ be a smooth compact complex curve of genus $g\geq2$, and
let $L,M$ be acyclic line bundles on $X$ with $\cub_L\ne\cub_M$.
For extensions with their endpoint identifications retained, the canonical bracket gives a
$\C$-linear injection
\begin{equation}\label{II:eq:extension-injection}
 H^1(X,L\otimes M^{-1})\lhook\joinrel\longrightarrow
 H^0(X,L\otimes M^{-1}\otimes K^7),\qquad
 \xi\longmapsto c_\xi.
\end{equation}
The map is independent of the choice of theta characteristic.
In particular, $c_\xi=0$ if and only if the marked extension splits.
\end{theorem}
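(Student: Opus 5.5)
The plan is to compute $c_\xi$ locally in frames that are simultaneously adapted to the extension and horizontal, and then read off the extension cocycle from the resulting formula.

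\emph{Local formula.} By Corollary~\ref{II:cor:triangular-kernel}, $\nabla^{V_E}$ preserves $V_L$ and induces $\nabla^{V_M}$. On a small disc $U_i$, choose horizontal frames of $V_L$ and $V_M$. The connection on $V_E$ is flat, and the induced connection on $T$ is $\nabla^T$. Hence I can choose a local splitting $s_i$ whose image is horizontal: start from any splitting, whose connection matrix is $\left(\begin{smallmatrix}0&\beta_i\\0&0\end{smallmatrix}\right)dz$, and correct it by a local solution $\psi$ of $\nabla^T\psi=\beta_i$. In these frames ${\mathsf A}=0$ and the cubic is $F_i=\left(\begin{smallmatrix}a&b_i\\0&d\end{smallmatrix}\right)$. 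Formula \eqref{II:eq:triangular-bracket} then gives
\[
 c_\xi=6\bigl((a-d)b_i'-b_i(a-d)'\bigr)(dz)^7
      =6(a-d)^2\Bigl(\frac{b_i}{a-d}\Bigr)'(dz)^7 .
\]
Put $u_i=b_i/(a-d)$. This is a meromorphic section of $T$ on $U_i$ with poles bounded by $Z=(d_{LM})_0$, and the display says
\[
 c_\xi=6\,d_{LM}^2\otimes\nabla^Tu_i .
\]
Two horizontal splittings differ by a horizontal section $\varphi_{ij}$ of $T$. By \eqref{II:eq:cubic-splitting-change}, $b_j=b_i+(a-d)\varphi_{ij}$, so $u_j-u_i=\varphi_{ij}$. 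Thus the $u_i$ are meromorphic lifts of a global principal part whose connecting image $\partial_Z$ is exactly $\xi$, with the sign convention fixed before the theorem. Moreover $\nabla^Tu_i$ glues to the global meromorphic form $c_\xi/(6d_{LM}^2)$. This is the local meaning described after Theorem~\ref{intro:extensions}.

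\emph{Injectivity and splitting.} Suppose $c_\xi=0$. Then $\nabla^Tu_i=0$ on each $U_i$. A meromorphic section annihilated by a holomorphic connection is a constant combination of a horizontal holomorphic frame, so it has no poles, and each $u_i$ is holomorphic. Therefore $\varphi_{ij}=u_j-u_i$ is a holomorphic coboundary and $\xi=0$. Conversely, for the split extension $E=L\oplus M$ the cubic is diagonal by \eqref{II:eq:kernel-sum}, so $b=0$ and $c_\xi=0$. Independence of $\vartheta$ follows from Proposition~\ref{II:prop:canonical-cubic}, because $\mathcal C_E$, $\nabla_E^{\mathrm{ad}}$ and $\nabla^T$ are all independent of $\vartheta$. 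The target line $T\otimes K^7$ is the one stated, by \eqref{II:eq:extension-bracket-factor}.

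\emph{Linearity, the main obstacle.} The map $\xi\mapsto c_\xi$ is built from the nonlinear cubic of the middle bundle, so additivity is the delicate step. For additivity I would use Baer sums and the naturality of the cubic, \eqref{II:eq:cubic-natural}. Form the fibre product $\mathfrak F=E_\xi\times_M E_{\xi'}$, an acyclic rank-three extension of $M$ by $L\oplus L$. Then push it out along the sum map $L\oplus L\to L$ to obtain $E_{\xi+\xi'}$. The maps involved are maps of acyclic bundles, so by Lemma~\ref{II:lem:horizontal-maps} they have constant matrices in horizontal frames and intertwine the cubics. This forces the off-diagonal cubic entries to satisfy $b_{\xi+\xi'}=b_\xi+b_{\xi'}$ in compatible horizontal adapted frames. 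Homogeneity follows in the same way from pushout along multiplication by $\lambda$ on $L$, which gives $b_{\lambda\xi}=\lambda b_\xi$. Since $c_\xi$ is linear in $b$ when $a,d$ are fixed, the map is linear. The care needed here is in checking that the horizontal adapted frames of the three extensions can be chosen compatibly through $\mathfrak F$, so that the triangular entries really add. As a fallback, linearity can also be read from the local formula: $\xi$ determines the polar lifts $u_i$ up to a global horizontal holomorphic section. There is no such nonzero section, since $\nabla^T$-horizontal sections of $T$ would be holomorphic sections of a flat degree-zero bundle, and $L\not\cong M$ because $\cub_L\ne\cub_M$. So $c_\xi=6d_{LM}^2\nabla^T u$ depends linearly on $\xi$, provided one first shows that the cubic entry $b_i$ equals $(a-d)$ times the chosen polar lift.
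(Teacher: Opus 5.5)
Your proposal is correct and follows the paper's proof. It uses the same local identity $c_\xi=6d_{LM}^2\,\nabla^T(b_i/d_{LM})$, the same cocycle relation $b_j-b_i=d_{LM}\varphi_{ij}$ with polar recovery, and the same fibre-product/pushout argument for linearity.

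Your variants are sound. Choosing horizontal splittings on discs kills the paper's off-diagonal connection term $\nu_i$. The frame compatibility you flag holds because the twisted maps $p_r$ and $u$ are horizontal, so they carry a horizontal adapted frame of $V_{\mathfrak F}$ to horizontal adapted frames of $V_{E_r}$ and $V_{E_+}$. The paper avoids frames entirely by applying naturality to $\mathcal C_E$ itself.

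Discard your fallback argument for linearity. The cocycle $\varphi_{ij}$ fixes the lifts $u_i$ only up to a global section of $T(Z)$, not up to a horizontal holomorphic section. Since $H^0(X,T(Z))\ne0$ (because $\deg T(Z)=6g-6>2g-2$), that route cannot yield linearity without the actual cubic of the extension.
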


\begin{proposition}[Recovery by polar integration]
\label{II:prop:polar-recovery}
Under the hypotheses of Theorem~\ref{II:thm:extension-recovery}, set
$T=LM^{-1}$, $d_{LM}=\cub_L-\cub_M$, and $Z=(d_{LM})_0$.
Use the fixed connection~\eqref{II:eq:fixed-T-connection} on $T$
and the connecting map~\eqref{II:eq:extension-principal-sequence}.
Form the meromorphic
$T$-valued one-form
\begin{equation}\label{II:eq:recovery-one-form}
 \omega_\xi=\frac{c_\xi}{6d_{LM}^2}.
\end{equation}
At a zero of $d_{LM}$ of order $m$, this form has a pole of order
at most $m+1$ and zero residue in a horizontal frame for $\nabla^T$.
Integrating its polar terms gives a principal part of order at
most $m$. These principal parts define
$P_\nabla(\omega_\xi)\in H^0(X,\mathcal P_Z(T))$, and
\begin{equation}\label{II:eq:extension-recovery-formula}
 \boxed{\ \partial_Z P_\nabla\!
   \left(\frac{c_\xi}{6d_{LM}^2}\right)=\xi.\ }
\end{equation}
\end{proposition}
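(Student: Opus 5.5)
Work locally in a horizontal frame adapted to the extension. Choose a cover $\{U_i\}$ with local splittings $s_i$. In the horizontal frame of $V_L$ and $V_M$ (for the fixed connections $\nabla^{V_L},\nabla^{V_M}$), write the cubic of $E_\xi$ on $U_i$ as $F_i=\left(\begin{smallmatrix}a&b_i\\0&d\end{smallmatrix}\right)$. Here $a,d$ are the coefficients of $\cub_L,\cub_M$, and the connection on $V_E$ has matrix $\left(\begin{smallmatrix}0&\alpha_i\\0&0\end{smallmatrix}\right)$ for some off-diagonal coefficient $\alpha_i$. The first step is to replace the local splitting by a \emph{horizontal} one. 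Solving $\varphi_i'=-\alpha_i$ on a small disc (the $T$-component of the connection in a horizontal frame for $\nabla^T$), the frame change $G=\left(\begin{smallmatrix}1&\varphi_i\\0&1\end{smallmatrix}\right)$ kills $\alpha_i$, so the frame becomes horizontal for $\nabla^{V_E}$. The differences $\varphi_{ij}$ of these horizontal splittings then satisfy $\nabla^T\varphi_{ij}=0$, and they still represent $\xi$ as in \eqref{II:eq:extension-cocycle}.

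In such frames, \eqref{II:eq:triangular-bracket} gives the local coefficient $c_\xi=6\bigl((a-d)b_i'-b_i(a-d)'\bigr)$ (with $\nabla^T$-derivatives). Setting $\delta=a-d$, the coefficient of $d_{LM}$, this reads
\[
 \frac{c_\xi}{6\delta^2}=\Bigl(\frac{b_i}{\delta}\Bigr)'.
\]
So $\omega_\xi$ is locally the covariant derivative of the meromorphic section $u_i=b_i/\delta$ of $T$. This single identity gives all the local assertions. The poles of $\omega_\xi$ lie in $Z$ and have order at most $m+1$. The residue vanishes because $\omega_\xi$ is an exact derivative in a horizontal frame. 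The polar part of the covariant primitive is well defined: it is the principal part of $u_i$, with order at most $m$, and the additive constant of integration is holomorphic. Hence $P_\nabla(\omega_\xi)$ is the class of $\{u_i\}$ in $H^0(X,\mathcal P_Z(T))$; it glues because on overlaps the $u_i$ differ by holomorphic sections.

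It remains to compute this difference. By \eqref{II:eq:cubic-splitting-change}, a change of horizontal splitting by $\varphi_{ij}$ (constant in the horizontal frame) gives $b_j=b_i+\delta\varphi_{ij}$. Therefore $u_j-u_i=\varphi_{ij}$ is holomorphic, and under the sign convention of the connecting map $\partial_Z$ it is exactly the cocycle of $\xi$, which proves \eqref{II:eq:extension-recovery-formula}.

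I expect the main obstacle to be the normalization bookkeeping rather than any conceptual step. One has to check three things carefully. First, the horizontal frame for $\nabla^{V_E}$ adapted to $L$ has diagonal parts horizontal for $\nabla^{V_L},\nabla^{V_M}$ (Corollary~\ref{II:cor:triangular-kernel}). Second, $\nabla^{\mathrm{ad}}$ on the upper-right entry is precisely $\nabla^T$. Third, the constants of integration are genuinely holomorphic after passing to a horizontal frame of $\nabla^T$; this is why the residue must vanish, and why the construction uses the fixed connection~\eqref{II:eq:fixed-T-connection} rather than an arbitrary frame.
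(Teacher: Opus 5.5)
Your argument is correct and is essentially the paper's. Both proofs rest on two facts: $\omega_\xi$ agrees with $\nabla^T(b_i/d_{LM})$ up to a holomorphic form, and the quotients $b_i/d_{LM}$ satisfy the transition law $b_j/d_{LM}-b_i/d_{LM}=\varphi_{ij}$, after which the connecting map returns $\xi$.

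The one difference is the choice of splittings. You first gauge the local splittings to be horizontal on discs, so the off-diagonal connection form $\nu_i$ of \eqref{II:eq:off-diagonal-data} vanishes. Then $\omega_\xi=\nabla^T u_i$ exactly, the pole bound and zero residue are immediate, and the cocycle $\varphi_{ij}$ is horizontal.

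The paper instead keeps arbitrary holomorphic splittings and carries the holomorphic correction $\nu_i$, obtaining $\omega_\xi=\nabla^T t_i-\nu_i$ in \eqref{II:eq:extension-basic-identity}. That form applies directly to non-horizontal splittings, for example rational ones, on non-simply-connected opens. It is used this way for the splitting on $U_0$ in the $X_0(23)$ computation.
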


The proof rests on a local observation. A holomorphic splitting
makes the cubic upper triangular. Divide its upper-right entry $b_i$
by $d_{LM}$. The resulting meromorphic section $t_i=b_i/d_{LM}$
satisfies $t_j-t_i=\varphi_{ij}$, so its principal parts determine
the extension class. We will show that the bracket determines these
principal parts. After constructing the inverse, we verify that the
bracket respects addition and scalar multiplication of classes.

\noindent\textit{Proof of Theorem~\ref{II:thm:extension-recovery} and
Proposition~\ref{II:prop:polar-recovery}.}
The next three subsections establish the local identity, construct
the polar inverse, and prove linearity. The final proof paragraph
assembles these steps.

\subsection{The local identity}
\label{II:subsec:extension-local-identity}

Choose holomorphic splittings $s_i:M|_{U_i}\to E_\xi|_{U_i}$.
Twisting by $\vartheta^{-1}$ gives splittings
$\widehat s_i:V_M|_{U_i}\to V_{E_\xi}|_{U_i}$; write
$\widehat i:V_L\to V_{E_\xi}$ for the twisted inclusion.
The splittings $\widehat s_i$ need not be horizontal for the
canonical connections. We must therefore keep both the off-diagonal
cubic entry and the off-diagonal connection entry; the latter
measures this failure of horizontality. There are unique holomorphic
sections
\[
 b_i\in H^0(U_i,T\otimes K^3),\qquad
 \nu_i\in H^0(U_i,T\otimes K)
\]
defined by
\begin{equation}\label{II:eq:off-diagonal-data}
 \begin{aligned}
 (\widehat i\otimes1)b_i
   &=\cub_{E_\xi}\widehat s_i
                   -(\widehat s_i\otimes1)\cub_M,\\
 (\widehat i\otimes1)\nu_i
   &=\nabla^{V_{E_\xi}}\circ\widehat s_i
                   -(\widehat s_i\otimes1)\circ\nabla^{V_M}.
 \end{aligned}
\end{equation}
In the second line, the derivative terms cancel, so the difference
is $\OO_X$-linear. Both right-hand sides take values in $V_L$
with the indicated tensor factor, by naturality for the quotient.
Thus $b_i$ is the off-diagonal cubic, while $\nu_i$ measures the
failure of the chosen splitting to be horizontal.

To compute the bracket, take a coordinate $z$ on $U_i$ and
holomorphic frames of $V_L,V_M$. Write
\[
 \begin{gathered}
 \cub_L=a(z)(dz)^3,\qquad \cub_M=d(z)(dz)^3,\\
 d_{LM}=d_{LM,z}(dz)^3,\qquad d_{LM,z}=a-d,\\
 b_i=b_{i,z}(dz)^3,\qquad \nu_i=q_i\,dz.
 \end{gathered}
\]
The coefficients in the frame determined by $\widehat s_i$ are
\[
 F_i=\begin{pmatrix}a&b_{i,z}\\0&d\end{pmatrix},\qquad
 \nabla^{V_{E_\xi}}_z=\partial_z+
       \begin{pmatrix}{\mathsf A}_L&q_i\\0&{\mathsf A}_M\end{pmatrix},\qquad
 \nabla^T_z=\partial_z+{\mathsf A}_L-{\mathsf A}_M.
\]
Here ${\mathsf A}_L\,dz$ and ${\mathsf A}_M\,dz$ are the matrices of the fixed line
connections. In this frame
\[
 \nabla^{\mathrm{ad}}_{E_\xi,z}F_i
   =\begin{pmatrix}
       a'&\nabla^T_z b_{i,z}-d_{LM,z}q_i\\
       0&d'
     \end{pmatrix}.
\]
Multiplying this matrix and $F_i$ in the two orders gives
\begin{equation}\label{II:eq:extension-local-bracket}
 c_\xi=6\bigl(
     d_{LM,z}\nabla^T_z b_{i,z}
       -b_{i,z}d_{LM,z}'-d_{LM,z}^2q_i\bigr)(dz)^7.
\end{equation}
Assume $d_{LM}\not\equiv0$ and put
\begin{equation}\label{II:eq:meromorphic-splitting-coefficient}
 t_i=\frac{b_i}{d_{LM}}\in H^0(U_i,T(Z)).
\end{equation}
The quotient is an actual meromorphic section of $T$: its two
factors of $K^3$ cancel. Dividing
\eqref{II:eq:extension-local-bracket} by $6d_{LM}^2$ gives
the identity of $T$-valued one-forms
\begin{equation}\label{II:eq:extension-basic-identity}
 \boxed{\ \omega_\xi=\nabla^T t_i-\nu_i.\ }
\end{equation}

The changes between splittings explain what this identity remembers.
By \eqref{II:eq:extension-cocycle},
$s_j-s_i=i\varphi_{ij}$, where $[\varphi_{ij}]=\xi$.
Using \eqref{II:eq:off-diagonal-data} and the naturality of the
endpoint connections and cubics gives
\begin{equation}\label{II:eq:extension-data-transitions}
 b_j-b_i=d_{LM}\varphi_{ij},\qquad
 t_j-t_i=\varphi_{ij},\qquad
 \nu_j-\nu_i=\nabla^T\varphi_{ij}.
\end{equation}
The first identity is also the splitting change
\eqref{II:eq:cubic-splitting-change}. The last two show directly that
the right-hand side of \eqref{II:eq:extension-basic-identity} agrees
on overlaps.

\subsection{Recovering the cocycle from polar parts}
\label{II:subsec:extension-polar-recovery}

Let $p$ be a zero of $d_{LM}$ of multiplicity $m$. Choose a small
disc about $p$, a coordinate $z$ with $z(p)=0$, and a nonvanishing
horizontal frame $e$ of $T$ on the whole disc:
$\nabla^T e=0$. Such a frame exists because $\nabla^T$ is
holomorphic there. At a simple zero of $d_{LM}$, a possible polar
term of $\omega_\xi$ is $u_{-2}e\,z^{-2}dz$. Its primitive has
principal part $-u_{-2}e/z$; the integration constant is
holomorphic and has no effect. A multiple zero requires the same
calculation for finitely many negative powers.

In general, $t_i$ has a pole of order at most $m$ and $\nu_i$ is
holomorphic. Thus \eqref{II:eq:extension-basic-identity} gives
\begin{equation}\label{II:eq:extension-Laurent-recovery}
 \omega_\xi=e\left(\sum_{n=-m-1}^{\infty}u_nz^n\right)dz,
 \qquad u_{-1}=0.
\end{equation}
The residue is zero because differentiating a meromorphic function
produces no term in $z^{-1}dz$. The polar part of $t_i$ is therefore
\begin{equation}\label{II:eq:extension-polar-primitive}
 P_\nabla(\omega_\xi)|_p
   =\left[e\sum_{n=-m-1}^{-2}\frac{u_n}{n+1}z^{n+1}\right]
       \in\mathcal P_Z(T)_p.
\end{equation}
The brackets here mean the class modulo holomorphic sections.
No integration constant is needed.

Formula~\eqref{II:eq:extension-polar-primitive} is independent of the
coordinate and horizontal frame. Intrinsically it is the unique
principal part with a meromorphic lift $v$ such that
$\nabla^T v-\omega_\xi$ is holomorphic. For uniqueness, the difference
of two such lifts has holomorphic covariant derivative. In a
horizontal frame a pole of order $r\geq1$ would give a derivative
with a nonzero pole of order $r+1$, so the difference has no pole.
This also explains why residues in
\eqref{II:eq:extension-Laurent-recovery} are taken in horizontal frames.

We can now recover the class. The local identity gives the pole
bound and residue vanishing in Theorem~\ref{II:thm:extension-recovery}. The principal parts computed in
\eqref{II:eq:extension-polar-primitive} are precisely those of $t_i$.
By \eqref{II:eq:extension-data-transitions}, $t_j-t_i$ is holomorphic.
The principal parts therefore agree on overlaps and define a section
$P_\nabla(\omega_\xi)$ of $\mathcal P_Z(T)$. Applying
the connecting map in \eqref{II:eq:extension-principal-sequence} gives
\[
 \partial_Z P_\nabla(\omega_\xi)
      =[t_j-t_i]=[\varphi_{ij}]=\xi,
\]
with exactly the sign fixed in \eqref{II:eq:extension-cocycle}.
This proves the recovery formula and injectivity. It uses only
the fixed data $(T,\nabla^T,d_{LM})$ and the given differential
$c_\xi$.

To realize the recovered class as an explicit extension, cover $X$ by $U_0=X\setminus|Z|$ and small discs about
the points of $Z$. Set $v_0=0$ and on each disc choose the
meromorphic representative $v_i$ displayed in that formula.
The differences $\psi_{ij}=v_j-v_i$ are holomorphic on overlaps
and satisfy $\psi_{ij}+\psi_{jk}=\psi_{ik}$. Glue the local copies
of $(L\oplus M)|_{U_i}$ by
\begin{equation}\label{II:eq:recovered-extension-gluing}
 (\ell,m)_j\longmapsto
       \bigl(\ell+\psi_{ij}(m),m\bigr)_i.
\end{equation}
The resulting extension has class $[\psi_{ij}]=\xi$ by
\eqref{II:eq:extension-recovery-formula}. Changing the meromorphic
representatives by holomorphic sections changes this cocycle by
a coboundary and gives an isomorphic extension.

The pole bound in
\eqref{II:eq:extension-Laurent-recovery} implies the useful constraint
\begin{equation}\label{II:eq:extension-vanishing-order}
 \operatorname{ord}_p(c_\xi)\geq m-1
       \qquad\text{if }\operatorname{ord}_p(d_{LM})=m.
\end{equation}
For an input in the image of \eqref{II:eq:extension-injection}, the
pole and residue conditions guarantee that every step of the
recovery formula is defined.

Equivalently, if $\Xi_{L,M}(\xi)=c_\xi$ and
\[
 \Psi(c)=\partial_ZP_\nabla\!\left(\frac{c}{6d_{LM}^2}\right),
 \qquad c\in\operatorname{im}\Xi_{L,M},
\]
then the construction proves $\Psi\circ\Xi_{L,M}=\Id$.
The indicated domain of $\Psi$ retains the pole and residue
conditions required for its definition.

\subsection{Linearity in the extension class}
\label{II:subsec:extension-linearity}

It remains to prove that $\xi\mapsto c_\xi$ is linear. The local
commutator formula by itself does not prove this, because both
the cubic and its connection depend on the extension. Instead we
use their compatibility with bundle maps. By
Section~\ref{II:subsec:kernel-modular-forms}, for a holomorphic map
$u:E\to F$ between acyclic bundles,
\begin{equation}\label{II:eq:bracket-functoriality}
 (u\otimes1)\mathcal C_E=\mathcal C_Fu.
\end{equation}
On a line bundle the self-bracket is zero. On a direct sum of line
bundles it is also zero: the kernel, connection, and cubic are direct
sums, so the cubic and its covariant derivative are diagonal.

\begin{lemma}\label{II:lem:extension-linearity}
The assignment $\xi\mapsto c_\xi$ is linear for any fixed acyclic
$L,M$, including when $\cub_L=\cub_M$.
\end{lemma}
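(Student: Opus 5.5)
The plan is the Baer-sum argument with the fibre product $\mathfrak F$ of the notation table, using only functoriality \eqref{II:eq:bracket-functoriality} and the vanishing of the self-bracket on direct sums of lines. Linearity splits into additivity, $c_{\xi+\eta}=c_\xi+c_\eta$, and homogeneity, $c_{\lambda\xi}=\lambda c_\xi$.

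\emph{Scalar multiplication.} For $\lambda\ne0$, the extension $E_{\lambda\xi}$ can be taken to be $E_\xi$ with the same quotient and inclusion $i\lambda^{-1}$. Equivalently, there is a map of marked-type sequences $E_\xi\to E_{\lambda\xi}$ that is multiplication by $\lambda$ on $L$ and the identity on $M$. This is a map of acyclic bundles. Functoriality gives $(u\otimes1)(i\otimes1)c_\xi\pi=(i'\otimes1)c_{\lambda\xi}\pi'u$, and since $u$ restricts to $\lambda$ on $L$, we get $c_{\lambda\xi}=\lambda c_\xi$. For $\lambda=0$, the map $E_\xi\to L\oplus M$ that is $0$ on $L$ and $\pi$ to $M$ handles the case. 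More directly, $c_0=0$ because the split extension has vanishing bracket, which was just noted.

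\emph{Addition.} I will form the fibre product $\mathfrak F=E_\xi\times_M E_\eta$. This is a rank-three bundle, an extension $0\to L\oplus L\to\mathfrak F\to M\to0$, and it is acyclic by the cohomology sequence. It carries three maps of acyclic bundles: the two projections $\mathfrak F\to E_\xi$ and $\mathfrak F\to E_\eta$, and the map $\mathfrak F\to E_{\xi+\eta}$ induced by addition $L\oplus L\to L$. The last one realises the Baer sum as the pushout of $\mathfrak F$ along addition. By kernel naturality, as in Corollary~\ref{II:cor:triangular-kernel} but using the subbundle $L\oplus L$, the bracket $\mathcal C_{\mathfrak F}$ kills $L\oplus L$ and lands in $(L\oplus L)\otimes K^7$. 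The reason is that its restriction to $L\oplus L$ is the bracket of the direct sum, which vanishes, and its image under the quotient is the bracket of $M$, which also vanishes. So $\mathcal C_{\mathfrak F}$ factors through a section $(c^{(1)},c^{(2)})$ of $(T\oplus T)\otimes K^7$. Applying functoriality to the two projections identifies $c^{(1)}=c_\xi$ and $c^{(2)}=c_\eta$, since each projection restricts to a coordinate projection of $L\oplus L$ and to the identity on $M$. Applying functoriality to the sum map gives $c_{\xi+\eta}=c^{(1)}+c^{(2)}$.

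The main obstacle is justifying that $\mathcal C_{\mathfrak F}$ has this block form. The factorisation for rank two used the triangular structure of the cubic, whereas here I need the analogous statement for the sub-bundle $L\oplus L$. I will deduce it from functoriality applied to the inclusion $L\oplus L\to\mathfrak F$ and the quotient $\mathfrak F\to M$, together with the vanishing of the bracket on $L\oplus L$ and on $M$. I also need to check that these maps respect the markings, so that $c_\xi$ is read with the correct identifications. None of this uses $\cub_L\ne\cub_M$, so the lemma holds in general.
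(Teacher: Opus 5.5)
Your proposal is correct and follows the paper's proof. You form the fibre product over $M$ and obtain the factorization of $\mathcal C_{\mathfrak F}$ from functoriality for $j$ and $\pi_{\mathfrak F}$, then identify its components through the two projections and push out along addition. The one variation is scalar multiplication: you re-mark the inclusion as $i\lambda^{-1}$ and handle $\lambda=0$ through the split extension, whereas the paper's pushout $E_\lambda$ covers every $\lambda$, including $\lambda=0$, at once.
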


\begin{proof}
\noindent\textit{1. Pull back the two quotients.} Let $E_1,E_2$ represent $\xi_1,\xi_2$. To add the extension classes,
first identify their quotients by taking the fibre product
\[
 {\mathfrak F}=E_1\times_M E_2
   =\{(e_1,e_2):\pi_1(e_1)=\pi_2(e_2)\}.
\]
The projections $p_r:{\mathfrak F}\to E_r$ form the pullback square
\[
\begin{CD}
 {\mathfrak F} @>{p_2}>> E_2\\
 @V{p_1}VV @VV{\pi_2}V\\
 E_1 @>>{\pi_1}> M.
\end{CD}
\]
It is a rank-three holomorphic bundle in an exact sequence
\[
 0\longrightarrow L\oplus L\xrightarrow{j}{\mathfrak F}
   \xrightarrow{\pi_{\mathfrak F}}M\longrightarrow0.
\]
The cohomology sequence shows that ${\mathfrak F}$ is acyclic. Applying
\eqref{II:eq:bracket-functoriality} to $j$ and $\pi_{\mathfrak F}$ gives
$\mathcal C_{\mathfrak F}j=0$ and $(\pi_{\mathfrak F}\otimes1)\mathcal C_{\mathfrak F}=0$.
Consequently the bracket has a unique factorization
\[
 \mathcal C_{\mathfrak F}=(j\otimes1)c_{\mathfrak F}\pi_{\mathfrak F},\qquad
 c_{\mathfrak F}:M\longrightarrow(L\oplus L)\otimes K^7.
\]
Let $\operatorname{pr}_r:L\oplus L\to L$ be projection to the
$r$th factor, for $r=1,2$. The relations
$p_rj=i_r\operatorname{pr}_r$ and $\pi_rp_r=\pi_{\mathfrak F}$ turn
naturality for $p_r$ into
\[
 (i_r\otimes1)(\operatorname{pr}_r\otimes1)c_{\mathfrak F}\pi_{\mathfrak F}
       =(i_r\otimes1)c_{\xi_r}\pi_{\mathfrak F}.
\]
Since $i_r$ is injective and $\pi_{\mathfrak F}$ is surjective, this implies
\begin{equation}\label{II:eq:bracket-fibre-product}
 c_{\mathfrak F}=\begin{pmatrix}c_{\xi_1}\\c_{\xi_2}\end{pmatrix}.
\end{equation}

\noindent\textit{2. Push out by addition.} Next add the two copies of $L$. Let
$a:L\oplus L\to L$ be $a(\ell_1,\ell_2)=\ell_1+\ell_2$ and
form the pushout
\[
 E_+=({\mathfrak F}\oplus L)/
     \{(j(\ell_1,\ell_2),-a(\ell_1,\ell_2))\}.
\]
It is an acyclic extension of $M$ by $L$. Indeed, its subbundle
and quotient are $L$ and $M$. Writing $i_+:L\to E_+$ and $\pi_+:E_+\to M$ for its inclusion
and quotient, the map $u(p)=[(p,0)]$ gives the diagram
\[
\begin{CD}
 0 @>>> L\oplus L @>{j}>> {\mathfrak F} @>{\pi_{\mathfrak F}}>> M @>>> 0\\
 @. @V{a}VV @V{u}VV @V{\Id_M}VV @.\\
 0 @>>> L @>{i_+}>> E_+ @>{\pi_+}>> M @>>> 0.
\end{CD}
\]
Both rows are exact. If the two original extensions have cocycles
$\varphi^{(1)}_{ij},\varphi^{(2)}_{ij}$, their local splittings
give the cocycle $\varphi^{(1)}_{ij}+\varphi^{(2)}_{ij}$ for
$E_+$. Thus $E_+$ represents $\xi_1+\xi_2$.
Applying \eqref{II:eq:bracket-functoriality} to $u$ and using
\eqref{II:eq:bracket-fibre-product} now gives
\[
 c_{\xi_1+\xi_2}=(a\otimes1)c_{\mathfrak F}=c_{\xi_1}+c_{\xi_2}.
\]

\noindent\textit{3. Scalar multiplication.} For $\lambda\in\C$, the pushout
\[
 E_\lambda=(E_\xi\oplus L)/
            \{(i(\ell),-\lambda\ell):\ell\in L\}
\]
has cocycle $\lambda\varphi_{ij}$ and hence class $\lambda\xi$.
Its natural map from $E_\xi$ induces $\lambda\Id_L$ on $L$ and
$\Id_M$ on $M$. Naturality gives $c_{\lambda\xi}=\lambda c_\xi$.
This also applies at $\lambda=0$, when the pushout is split.
Every bundle occurring in these maps is acyclic, so each application
of \eqref{II:eq:bracket-functoriality} is valid.
\end{proof}

The fixed endpoint identifications in
\eqref{II:eq:line-extension} are part of the assertion. Changing
the inclusion or quotient identification can rescale the extension
class and its differential together.

\begin{proof}[Completion of the recovery argument]
The local calculation and polar construction prove the recovery
formula \eqref{II:eq:extension-recovery-formula} and injectivity.
Lemma~\ref{II:lem:extension-linearity} proves linearity.
Independence of $\vartheta$ follows from that of
$\mathcal C_{E_\xi}$ in \eqref{II:eq:canonical-bracket} and the fixed
factorization \eqref{II:eq:extension-bracket-factor}.
This proves Theorem~\ref{II:thm:extension-recovery} and
Proposition~\ref{II:prop:polar-recovery}.
\end{proof}

\subsection{Splitting and scalar characteristic invariants}
\label{II:subsec:extension-consequences}

\begin{corollary}\label{II:cor:extension-splitting}
Under the hypothesis $\cub_L\ne\cub_M$, the following conditions
are equivalent:
\[
 \xi=0,\qquad
 E_\xi\text{ splits as the extension \eqref{II:eq:line-extension}},
 \qquad c_\xi=0,\qquad \mathcal C_{E_\xi}=0.
\]
For fixed endpoint identifications, two extensions have the same
differential $c_\xi$ if and only if they have the same extension
class.
\end{corollary}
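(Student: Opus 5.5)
The equivalences follow from Theorem~\ref{II:thm:extension-recovery} together with the definitions. We prove them in a cycle and then deduce the final sentence from linearity and injectivity.

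First, $\xi=0$ is equivalent to splitting of the marked extension by Definition~\ref{II:def:marked-extension}. The local splittings can be modified to agree exactly when the cocycle $[\varphi_{ij}]$ is a coboundary, and a global splitting $s$ with $\pi s=\Id_M$ identifies $E_\xi$ with $L\oplus M$ compatibly with $i$ and $\pi$.

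Second, if $E_\xi$ splits, then $E_\xi\simeq L\oplus M$ as marked extensions. By \eqref{II:eq:kernel-sum} the kernel, connection and cubic are direct sums, so the cubic coefficient and its covariant derivative are diagonal in the $1\times1$ blocks and commute. Hence $\mathcal C_{E_\xi}=0$. Alternatively, linearity from Lemma~\ref{II:lem:extension-linearity} gives $c_0=0$ directly.

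Third, $\mathcal C_{E_\xi}=0$ is equivalent to $c_\xi=0$ by the factorization \eqref{II:eq:extension-bracket-factor}. Since $i\otimes1$ is injective and $\pi$ is surjective, $(i\otimes1)c_\xi\pi=0$ forces $c_\xi=0$, and the converse is immediate.

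Fourth, $c_\xi=0$ implies $\xi=0$ by injectivity in Theorem~\ref{II:thm:extension-recovery}. This is the only step that uses $\cub_L\ne\cub_M$. Concretely, the recovery formula \eqref{II:eq:extension-recovery-formula} applied to $c_\xi=0$ gives $\xi=\partial_ZP_\nabla(0)=0$.

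For the last assertion, suppose two marked extensions have classes $\xi_1,\xi_2$ and the same differential. Lemma~\ref{II:lem:extension-linearity} gives $c_{\xi_1-\xi_2}=c_{\xi_1}-c_{\xi_2}=0$, so $\xi_1=\xi_2$ by the previous step. Conversely, $c_\xi$ depends only on $\xi$: an isomorphism of marked extensions intertwines the brackets by \eqref{II:eq:bracket-functoriality} and induces the identity on $L$ and $M$, as noted after \eqref{II:eq:extension-bracket-factor}.

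No step is a genuine obstacle, since everything reduces to the theorem just proved. The one point needing care is that all four conditions are taken with the endpoint identifications fixed. Splitting has to mean splitting compatibly with $i$ and $\pi$, not an abstract isomorphism $E_\xi\simeq L\oplus M$, because otherwise the equivalence with $\xi=0$ and the uniqueness statement could fail up to rescaling.
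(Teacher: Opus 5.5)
Your proof is correct and follows the paper's argument: the paper also gets $\xi=0\Leftrightarrow$ splitting from the definition of the extension class. It gets $\xi=0\Leftrightarrow c_\xi=0$ from the linear injection of Theorem~\ref{II:thm:extension-recovery}, $c_\xi=0\Leftrightarrow\mathcal C_{E_\xi}=0$ from the factorization \eqref{II:eq:extension-bracket-factor}, and the final sentence from injectivity. Your direct-sum argument that a split extension has $\mathcal C_{E_\xi}=0$ is just an explicit form of the fact $c_0=0$, which the paper takes from linearity.
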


\begin{proof}
The first two conditions are equivalent by
\eqref{II:eq:extension-class}. Theorem~\ref{II:thm:extension-recovery}
identifies the first and third, and the factorization
\eqref{II:eq:extension-bracket-factor} identifies the last two.
The last assertion is injectivity.
\end{proof}

The invariant retains information despite the vanishing of its
scalar characteristic invariants. The composition
$\mathcal C_{E_\xi}^2:E_\xi\to E_\xi\otimes K^{14}$ is zero,
because $\pi i=0$ in \eqref{II:eq:extension-bracket-factor}.
Locally, writing $\mathcal C_{E_\xi}=C_z(dz)^7$ in an adapted
frame gives
\begin{equation}\label{II:eq:extension-nilpotence}
 C_z=\begin{pmatrix}0&c_z\\0&0\end{pmatrix},\qquad
 C_z^2=0,\qquad \tr C_z=\det C_z=0,\qquad
 \det(\lambda\Id-C_z)=\lambda^2.
\end{equation}
For every nonzero extension class, the theorem gives $C_z\not\equiv0$,
although all these scalar expressions agree with those of the split
extension.

The cubic and its bracket also exhibit the role of ordered
multiplication. In the notation of
\eqref{II:eq:extension-local-bracket},
\begin{equation}\label{II:eq:extension-noncommutativity}
 [F_i,C_z]=d_{LM,z} C_z.
\end{equation}
At a point where $d_{LM,z}c_z\ne0$ they do not commute. Indeed,
$\Id,F_i,C_z$ are then linearly independent and span the algebra
of upper triangular $2$-by-$2$ matrices in this frame: the first
two supply arbitrary diagonal entries, and the third supplies
the remaining upper-right entry. The scalar characteristic
expressions in \eqref{II:eq:extension-nilpotence} discard precisely
that entry.

The distinct-cubic hypothesis has a concrete role. If
$\cub_L=\cub_M$, then $d_{LM,z}=d_{LM,z}'=0$, and
\eqref{II:eq:extension-local-bracket} gives $c_\xi=0$ for every
extension class. The first self-bracket of the cubic therefore
requires a nonzero difference of endpoint cubics to detect these
extensions.

One useful specialization follows immediately from duality.
If $L$ is acyclic and $\cub_L\not\equiv0$, put $M=K\otimes L^{-1}$.
Then $M$ is acyclic, $d_{LM}=2\cub_L$, and
\eqref{II:eq:extension-injection} becomes
\begin{equation}\label{II:eq:fixed-determinant-injection}
 H^1(X,L^2\otimes K^{-1})
       \lhook\joinrel\longrightarrow H^0(X,L^2\otimes K^6).
\end{equation}
These are extensions with determinant $K$; their nonzero classes
give nonzero nilpotent endomorphism-valued seventh differentials.

\subsection{The modular forms and their fixed multiplier}
\label{II:subsec:extension-modular-forms}

We now return to the modular formulation of
Section~\ref{II:sec:modular-calculus}. Let $Y=\Gamma\backslash\mathbb H$
and $X=\bar Y$ as in Proposition~\ref{II:prop:canonical-modular-forms};
this includes the compact case $Y=X$. Write
$\pi_X:\mathbb H\to Y\subset X$ for the covering map.
Let $\rho_L,\rho_M:\Gamma\to\C^\times$ be the descent characters
of the canonical connections on $V_L,V_M$. Their ratio
\begin{equation}\label{II:eq:fixed-extension-character}
 \chi=\rho_L\rho_M^{-1}
\end{equation}
is the character of $(T,\nabla^T)$. In particular, it is fixed
as $\xi$ varies and is independent of $\vartheta$.

Fix horizontal frames of $\pi_X^*V_L$ and $\pi_X^*V_M$. They
determine a horizontal frame $e_T$ of $\pi_X^*T$. Define
$f_\xi:\mathbb H\to\C$ by
\begin{equation}\label{II:eq:extension-modular-coefficient}
 \pi_X^*c_\xi=\kappa^{-1}f_\xi(\tau)e_T(d\tau)^7,
 \qquad \kappa=(2\pi i)^{-1}.
\end{equation}
Choose a horizontal lift of the frame of $\pi_X^*V_M$ to
$\pi_X^*V_{E_\xi}$. Such a lift exists on the simply connected
upper half-plane. In the resulting adapted horizontal frame,
the matrix modular form from
\eqref{II:eq:kernel-modular-coefficients} is
\begin{equation}\label{II:eq:extension-matrix-modular-form}
 \Phi_{E_\xi}=[F_{E_\xi},F_{E_\xi}]_1^0
      =6[F_{E_\xi},DF_{E_\xi}]
      =f_\xi(\tau)\begin{pmatrix}0&1\\0&0\end{pmatrix}.
\end{equation}
This normalization follows from
\eqref{II:eq:canonical-modular-normalization}. Changing the horizontal
lift adds a constant multiple of the frame of $\pi_X^*V_L$;
it leaves the upper-right coefficient in
\eqref{II:eq:extension-matrix-modular-form} unchanged.

\begin{corollary}\label{II:cor:extension-modular-injection}
If $\cub_L\ne\cub_M$, then $\xi\mapsto f_\xi$ is a linear
injection into the holomorphic weight-$14$ forms with the fixed
character $\chi$:
\begin{equation}\label{II:eq:extension-weight-fourteen}
 f_\xi(\gamma\tau)=j(\gamma,\tau)^{14}
                         \chi(\gamma)f_\xi(\tau).
\end{equation}
When $Y$ is noncompact, $\chi$ is trivial on parabolic stabilizers
and, at every cusp,
\begin{equation}\label{II:eq:extension-cusp-vanishing}
 f_{\xi,\sigma}=O(q^7).
\end{equation}
Thus the construction gives a $(g-1)$-dimensional space of cusp
forms in that case. Their matrix realizations
\eqref{II:eq:extension-matrix-modular-form} have zero trace and
determinant and vanish identically exactly for the split extension.
\end{corollary}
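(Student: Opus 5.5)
The plan is to obtain the corollary by transporting Theorem~\ref{II:thm:extension-recovery} to $\mathbb H$ through the normalization \eqref{II:eq:extension-modular-coefficient}.

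\emph{Linearity and injectivity.} By Theorem~\ref{II:thm:extension-recovery}, $\xi\mapsto c_\xi$ is a $\C$-linear injection into $H^0(X,T\otimes K^7)$. The pullback $\pi_X^*$ of holomorphic sections to $\mathbb H$ is injective, since $\pi_X$ is a local biholomorphism onto the dense open set $Y$. Taking the coefficient with respect to the fixed frame $e_T(d\tau)^7$ is linear and injective. Hence $\xi\mapsto f_\xi$ is a linear injection.

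\emph{Automorphy \eqref{II:eq:extension-weight-fourteen}.} The section $\pi_X^*c_\xi$ is $\Gamma$-invariant. The horizontal frame $e_T$ transforms under $\gamma$ by the descent character of $(T,\nabla^T)$, and this character is $\chi$ by \eqref{II:eq:fixed-extension-character}. The factor $(d\tau)^7$ contributes $j^{-14}$. Comparing the two expressions for the invariant section gives $f_\xi(\gamma\tau)=j^{14}\chi(\gamma)f_\xi(\tau)$, up to fixing which of $\chi$ or $\chi^{-1}$ appears. I would pin this down using the convention of \eqref{II:eq:canonical-modular-laws}, namely $\rho(\gamma)$ acting on the coefficient. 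As a consistency check, the upper-right entry of $\rho_{E}(\gamma)\Phi\rho_E(\gamma)^{-1}$ in triangular form is multiplied by $\rho_L\rho_M^{-1}=\chi$. Holomorphy on $\mathbb H$ holds because $c_\xi$ is holomorphic.

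\emph{Cusps and dimension.} Since $L,M$ are line bundles on $X=\bar Y$ with holomorphic connections, horizontal frames exist on a full disc about each cusp. So $\chi$ has trivial local monodromy, that is, it is trivial on parabolic stabilizers. Then \eqref{II:eq:cusp-tensor} with $m=7$, applied to the holomorphic tensor $c_\xi$ on $X$, gives $f_{\xi,\sigma}=O(q^7)$, exactly as in Corollary~\ref{II:cor:kernel-cusp-forms}.

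The dimension count follows from injectivity and $\dim H^1(X,T)$. Since $\deg T=0$ and $T\not\simeq\OO_X$ (because $\cub_L\ne\cub_M$ forces $L\not\simeq M$), we have $h^0(T)=0$. Riemann--Roch then gives $h^1(T)=g-1$.

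\emph{Matrix realization.} The final assertions come from \eqref{II:eq:extension-matrix-modular-form} together with \eqref{II:eq:extension-nilpotence} and Corollary~\ref{II:cor:extension-splitting}.

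The only delicate point is the direction of the character, $\chi$ versus $\chi^{-1}$. This is a bookkeeping matter of frame-transformation conventions, not a real obstacle.
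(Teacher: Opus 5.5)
Your proposal is correct and follows the paper's proof essentially step for step. It gets linearity and injectivity from Theorem~\ref{II:thm:extension-recovery} and the fixed frame $e_T(d\tau)^7$, and the automorphy law from the tensor law of $c_\xi$; your conjugation check is the same upper-triangular identity the paper uses, and it fixes $\chi$ rather than $\chi^{-1}$. The cusp vanishing comes from horizontal frames on cusp discs together with \eqref{II:eq:cusp-tensor} at $m=7$, the dimension $g-1$ from $\deg T=0$, $L\not\simeq M$ and Riemann--Roch, and the matrix assertions from \eqref{II:eq:extension-nilpotence} and Corollary~\ref{II:cor:extension-splitting}, so nothing is missing.
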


\begin{proof}
The tensor law for $c_\xi$ gives
\eqref{II:eq:extension-weight-fourteen}. It also follows from the
matrix law in \eqref{II:eq:canonical-modular-laws}, since in an
adapted frame
\[
 \rho_{E_\xi}(\gamma)=
 \begin{pmatrix}\rho_L(\gamma)&r_\xi(\gamma)\\
                 0&\rho_M(\gamma)\end{pmatrix},
 \qquad
 \rho_{E_\xi}(\gamma)
 \begin{pmatrix}0&1\\0&0\end{pmatrix}
 \rho_{E_\xi}(\gamma)^{-1}
   =\chi(\gamma)\begin{pmatrix}0&1\\0&0\end{pmatrix}.
\]
Thus the varying off-diagonal part of $\rho_{E_\xi}$ does not
change the character on $\Hom(V_M,V_L)$.
Linearity and injectivity follow from
Theorem~\ref{II:thm:extension-recovery} and the fixed normalization
\eqref{II:eq:extension-modular-coefficient}.
The cusp assertions follow from holomorphicity of $\nabla^T$ and
$c_\xi$ on $X$, as in the proof of
Proposition~\ref{II:prop:canonical-modular-forms}.

Finally, $\deg L=\deg M=g-1$ by acyclicity, so $\deg T=0$.
The inequality $\cub_L\ne\cub_M$ implies $L\not\simeq M$.
A nonzero section of a degree-zero line bundle has no zeros and
trivializes it; hence $H^0(X,T)=0$. Riemann--Roch gives
$h^1(X,T)=g-1$, which is the dimension of the image.
The remaining assertions are
Corollary~\ref{II:cor:extension-splitting} and
\eqref{II:eq:extension-nilpotence}.
\end{proof}

The modular coefficient $f_\xi$, together with the fixed data
$(T,\nabla^T,d_{LM})$, therefore determines the extension class.
Pullback identifies it with $c_\xi$ by
\eqref{II:eq:extension-modular-coefficient}, and the local formula
\eqref{II:eq:extension-polar-primitive} recovers the class. The
ordered bracket of Section~\ref{II:sec:rankin-cohen}, applied to the
canonical cubic of Section~\ref{II:sec:kernel}, has produced a modular
form with this explicit geometric meaning.

\section{Higher invariants and the limits of cubic detection}
\label{II:sec:higher-detection}

Suppose that $\cub_L=\cub_M$. There are two cases to distinguish.
If $L\not\simeq M$, a differential of another degree may have
different values on the two lines, and the recovery argument of
Section~\ref{II:sec:extension-recovery} can still apply. If $L=M$,
every endpoint value agrees. In that case the information must
come from the nilpotent part of the invariant on the middle bundle.

We construct one higher differential $\mathcal P_j(E)$ in each
degree $j\geq2$, with $\mathcal P_3(E)=\cub_E$. We then treat
the two cases in turn. A pole estimate for the kernel will show
that degrees $2\leq j\leq4g-1$ suffice both to separate distinct
endpoints and to detect self-extensions. The final result explains
what the entire hierarchy retains in any rank: its jets generate
the associative monodromy algebra of the canonical connection.

\subsection{A fixed hierarchy of higher Schwarzians}
\label{II:subsec:fixed-hierarchy}

Let $X$ remain a smooth compact complex curve of genus $g\geq2$.
To compare higher coefficients of different kernels, we must use
the same construction for every bundle. We make two choices.
A holomorphic projective connection $p_0$ supplies compatible
coordinates for the kernel expansion. The higher Schwarzian of
degree $j$ also depends on the order $N$ of the operator to which
Part~\ref{part:I} is applied; we will use $N=j$. These choices specify the
individual differentials. The algebra generated by the whole
hierarchy will be independent of them.

Choose $p_0$ in the normalization
\begin{equation}\label{II:eq:background-projective-connection}
 p_{0,t}=\phi'^2(p_{0,z}\circ\phi)+\frac16\Sch(\phi)
             \qquad(z=\phi(t)).
\end{equation}
Equivalently, choose an atlas of projective coordinates: its
transition functions are M\"obius maps and $p_0=0$ in its charts.
The uniformization of the compact curve supplies one such choice.
Throughout this section, all bundles are compared using the same
$p_0$. When $X=\bar Y$ is a compactified modular curve, this
background is chosen holomorphic on $X$, including at its cusps.

We need a global operator in each order whose coefficients retain
the regular coefficients of the kernel. The Cauchy germ in a
projective coordinate provides it. With $e_\vartheta^2=dz$ a local
frame of $\vartheta$, set
\begin{equation}\label{II:eq:projective-Cauchy-kernel}
 \mathsf k_{p_0}(z,w)=\frac{e_\vartheta(z)e_\vartheta(w)}{w-z}.
\end{equation}
This defines a meromorphic germ along the diagonal with values in
$\vartheta\boxtimes\vartheta$. Indeed, for a M\"obius change of
coordinate,
\[
 \frac{\sqrt{\phi'(t)\phi'(s)}}{\phi(s)-\phi(t)}
       =\frac1{s-t},
\]
where the square roots are chosen compatibly near $s=t$.
Thus the local germs in \eqref{II:eq:projective-Cauchy-kernel} agree.

For an acyclic bundle $E$, multiply its kernel by
$\mathsf k_{p_0}^{\,N}$ and take the residue in the second
variable:
\begin{equation}\label{II:eq:higher-residue-operator}
 (L^{p_0}_{E,N}u)(z)
    =N!\res_{w=z}
       \bigl(s_E(z,w)\mathsf k_{p_0}(z,w)^N u(w)\bigr).
\end{equation}
This is the usual kernel interpretation of differential operators;
see \cite[Sections 2.2 and 2.3.8]{BB}.
The formula specifies its normalization here. Its source and target are
\begin{equation}\label{II:eq:higher-residue-type}
 L^{p_0}_{E,N}:V_E\otimes\vartheta^{1-N}
                   \longrightarrow V_E\otimes\vartheta^{1+N},
 \qquad V_E=E\otimes\vartheta^{-1}.
\end{equation}
In fact, the kernel in \eqref{II:eq:higher-residue-operator} has
the half-differential factors
$e_\vartheta(z)^{N+1}e_\vartheta(w)^{N+1}$.
Multiplication by an input with factor $e_\vartheta(w)^{1-N}$
leaves $dw$ in the second variable and the required factor
$e_\vartheta(z)^{N+1}$ in the first.

In a horizontal frame for $\nabla^{V_E}$, write the regularized
kernel as in Section~\ref{II:sec:kernel}:
\[
 \mathsf H_E(z,w)=\Id+\sum_{j\geq2}B_j(z)(w-z)^j.
\]
The coefficient of $(w-z)^{-1}dw$ in
\eqref{II:eq:higher-residue-operator} gives
\begin{equation}\label{II:eq:higher-residue-coefficients}
 L^{p_0}_{E,N}
   =\partial_z^N+
      \sum_{j=2}^N\frac{N!}{(N-j)!}B_j\partial_z^{N-j},
 \qquad I_j(L^{p_0}_{E,N})=j!B_j.
\end{equation}
Thus the operator is monic and its coefficient connection is
the canonical $\nabla^{V_E}$. The residue construction proves
that these local expressions define the global operator
\eqref{II:eq:higher-residue-type}.

For $3\leq j\leq N$, let
$\mathcal W_{j,N}(E)=W_j(L^{p_0}_{E,N})$, viewed as an
$\End(E)$-valued $j$-differential by
Section~\ref{II:sec:modular-operators}. The polynomial defining
$W_j$ depends on $N$, so $\mathcal W_{j,N}$ for $N>j$ may differ
from $\mathcal W_{j,j}$. We select the latter in each degree:
\begin{equation}\label{II:eq:chosen-higher-hierarchy}
 \mathcal P_j(E)=\mathcal W_{j,j}(E)\quad(j\geq3),\qquad
 \mathcal P_2(E)=2B_2(dz)^2
       \quad\text{in projective coordinates for }p_0.
\end{equation}
In an arbitrary coordinate the last definition reads
$\mathcal P_2(E)=(2B_2-p_{0,z}\Id)(dz)^2$, by
\eqref{II:eq:B2-coordinate}. Each $\mathcal P_j(E)$ is therefore
a holomorphic section of $\End(E)\otimes K^j$.
These assignments are natural under holomorphic maps of acyclic
bundles: in horizontal frames such maps are constant and intertwine
every $B_j$, hence every polynomial defining the invariant.
They are independent of $\vartheta$ by the same locally constant
twist argument as in Proposition~\ref{II:prop:canonical-cubic}.

The first formulas, in the projective coordinates just chosen, are
\begin{align}
 \mathcal P_3(E)&=(6B_3-3B_2')(dz)^3=\cub_E,
       \label{II:eq:selected-cubic}\\
 \mathcal P_4(E)&=
  \left(24B_4-12B_3'+\frac{12}{5}B_2''
                      -\frac{324}{25}B_2^2\right)(dz)^4.
       \label{II:eq:selected-quartic}
\end{align}
The relation with the kernel coefficients is triangular:

\begin{equation}\label{II:eq:hierarchy-triangular}
 \mathcal P_j(E)=
 \bigl(j!B_j+Q_j(B_2,\ldots,B_{j-1};
                         \partial_zB_2,\ldots)\bigr)(dz)^j,
\end{equation}
where $Q_j$ is the fixed noncommutative differential polynomial
from Part~\ref{part:I} at order $N=j$.

The residue operators and the reconstructed operator of
\eqref{II:eq:global-kernel-operator} serve different purposes.
The reconstruction realizes the kernel's affine quadratic datum
and cubic intrinsically. The residue construction gives a family
in every order with the simple coefficient identity $I_j=j!B_j$.
At $N=3$ its quadratic datum is
$p_0\Id+\frac14\mathcal P_2(E)$, while that of
$\mathcal L_{E,3}$ is $p_0\Id+\mathcal P_2(E)$; both have cubic
$\cub_E$. Thus the cubic is independent of $p_0$, although the
other selected $\mathcal P_j$ may depend on it.
Subsection~\ref{II:subsec:hierarchy-monodromy} will show that the
algebra generated by their jets is independent of $p_0$ as well.

\subsection{Recovery in any separating degree}
\label{II:subsec:higher-degree-recovery}

The argument of Section~\ref{II:sec:extension-recovery} applies to
any natural differential whose endpoint values differ.

\begin{theorem}\label{II:thm:higher-degree-recovery}
Let $m\geq2$. Suppose that every acyclic bundle $E$ is assigned
a holomorphic
$P_E\in H^0(X,\End(E)\otimes K^m)$, naturally under holomorphic
maps. Use the canonical kernel connections on $V_E$.
For acyclic lines $L,M$, put
\[
 T=L\otimes M^{-1},\qquad
 \delta_P=P_L-P_M\in H^0(X,K^m).
\]
If $\delta_P\not\equiv0$, the first self-bracket of $P_{E_\xi}$
gives a linear injection
\begin{equation}\label{II:eq:higher-extension-injection}
 H^1(X,T)\lhook\joinrel\longrightarrow
 H^0(X,T\otimes K^{2m+1}),\qquad
 \xi\longmapsto c^P_\xi.
\end{equation}
Writing $Z_P=(\delta_P)_0$, the recovery formula is
\begin{equation}\label{II:eq:higher-recovery-formula}
 \partial_{Z_P}P_\nabla
       \left(\frac{c^P_\xi}{2m\,\delta_P^2}\right)=\xi.
\end{equation}
Here $\nabla^T$ is \eqref{II:eq:fixed-T-connection}, and the
principal parts and connecting map have the conventions of
Section~\ref{II:subsec:extension-polar-recovery}.
\end{theorem}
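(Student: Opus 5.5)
The plan is to repeat the proof of Theorem~\ref{II:thm:extension-recovery} and Proposition~\ref{II:prop:polar-recovery}, replacing the canonical cubic by the assigned natural differential $P$ and the factor $6=2\cdot3$ by $2m$. Only three inputs from the cubic case were used there: naturality under holomorphic maps of acyclic bundles, the tensor law making the self-bracket intrinsic, and the fact that the connection on $V_E$ is the canonical kernel connection, which is natural by Lemma~\ref{II:lem:horizontal-maps}. We take $\mathcal C(P_E)$ as in \eqref{II:eq:intrinsic-self-bracket}. Lemma~\ref{II:lem:intrinsic-self-bracket} shows it is a global section of $\End(E)\otimes K^{2m+1}$.

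Naturality of the bracket follows as in Section~\ref{II:subsec:kernel-modular-forms}. In horizontal frames a map $u$ has constant matrix and intertwines the coefficient of $P$, so it also intertwines its derivative and their commutator. Applying this to $i$ and $\pi$ of a marked extension $E_\xi$ (acyclic by the cohomology sequence) gives the triangular form of Corollary~\ref{II:cor:triangular-kernel}. The coefficient of $P_{E_\xi}$ in an adapted frame is $\begin{pmatrix}a&b\\0&d\end{pmatrix}$ with $a,d$ the coefficients of $P_L,P_M$. The bracket kills $L$ and factors through $\pi$, which defines $c^P_\xi\in H^0(X,T\otimes K^{2m+1})$. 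Isomorphisms of marked extensions preserve it, so $c^P_\xi$ depends only on $\xi$. Linearity is Lemma~\ref{II:lem:extension-linearity} verbatim: its fibre-product and pushout argument used only \eqref{II:eq:bracket-functoriality}, which now holds for $\mathcal C(P)$. The bracket also vanishes on direct sums of lines, since there everything is diagonal.

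For the local identity, choose local splittings $s_i$ and define $b_i\in H^0(U_i,T\otimes K^m)$ and $\nu_i$ as in \eqref{II:eq:off-diagonal-data}. The same matrix computation as in \eqref{II:eq:extension-local-bracket}, with the prefactor $2m$, gives
\[
 c^P_\xi=2m\bigl(\delta_{P,z}\nabla^T_zb_{i,z}-b_{i,z}\delta_{P,z}'-\delta_{P,z}^2q_i\bigr)(dz)^{2m+1}.
\]
Hence, with $t_i=b_i/\delta_P$, a meromorphic section of $T$ because the $K^m$ factors cancel, we get $c^P_\xi/(2m\delta_P^2)=\nabla^Tt_i-\nu_i$. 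Naturality of the endpoint data gives the transition laws \eqref{II:eq:extension-data-transitions}: $t_j-t_i=\varphi_{ij}$ and $\nu_j-\nu_i=\nabla^T\varphi_{ij}$.

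The polar step of Section~\ref{II:subsec:extension-polar-recovery} then applies unchanged. In a horizontal frame of $T$ at a zero of $\delta_P$ of order $k$, the form has a pole of order at most $k+1$ and no residue. Its covariant primitive principal part is the unique one matching the polar part of $t_i$, and the connecting map sends it to $[t_j-t_i]=\xi$. This proves \eqref{II:eq:higher-recovery-formula}, and injectivity follows. The main point needing care is that the intrinsic self-bracket is a genuine tensor for general $m$, which Lemma~\ref{II:lem:intrinsic-self-bracket} already covers, and that no property particular to the cubic, such as its kernel formula, entered the earlier proof. I expect checking this second point to be the only real obstacle.
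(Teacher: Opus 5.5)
Your proposal is correct and follows essentially the paper's own proof. The paper likewise factors the intrinsic bracket of $P_{E_\xi}$ through $T\otimes K^{2m+1}$ by naturality, takes linearity from the fibre-product and pushout argument of Lemma~\ref{II:lem:extension-linearity}, and derives $c^P_\xi/(2m\,\delta_P^2)=\nabla^T(b_i/\delta_P)-\nu_i$ together with $b_j/\delta_P-b_i/\delta_P=\varphi_{ij}$ before applying the polar recovery step; your additional checks (naturality of $\mathcal C(P)$, its vanishing on $L\oplus L$, and $b_j-b_i=\delta_P\varphi_{ij}$ from naturality alone) are exactly the points the paper leaves implicit, confirming that no cubic-specific input is needed.
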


\begin{proof}
Locally write $P_E=p_z(dz)^m$. Lemma~\ref{II:lem:intrinsic-self-bracket}
defines the intrinsic bracket
\[
 2m[p_z,\nabla_z^{\mathrm{ad}}p_z](dz)^{2m+1}.
\]
It vanishes on the two line bundles, so it factors through
$T\otimes K^{2m+1}$ as in \eqref{II:eq:extension-bracket-factor}.
The fibre product and pushouts in
Lemma~\ref{II:lem:extension-linearity} show that this factor is
linear in $\xi$.

To check recovery, use a local splitting and let $b_i$ be the
off-diagonal part of $P_E$; it is now a section of $T\otimes K^m$.
Keep the connection one-form $\nu_i=q_i\,dz$ from
\eqref{II:eq:off-diagonal-data}. Writing
$\delta_P=\delta_z(dz)^m$ and $b_i=b_{i,z}(dz)^m$,
the matrix multiplication in \eqref{II:eq:extension-local-bracket}
gives
\[
 c^P_\xi=2m\bigl(\delta_z\nabla^T_zb_{i,z}
       -b_{i,z}\delta_z'-\delta_z^2q_i\bigr)(dz)^{2m+1}.
\]
Consequently
\begin{equation}\label{II:eq:higher-basic-identity}
 \frac{c^P_\xi}{2m\,\delta_P^2}
       =\nabla^T(b_i/\delta_P)-\nu_i,\qquad
 \frac{b_j}{\delta_P}-\frac{b_i}{\delta_P}=\varphi_{ij}.
\end{equation}
The first identity recovers the polar parts of $b_i/\delta_P$,
and the second identifies their connecting class with $\xi$.
This proves \eqref{II:eq:higher-recovery-formula} and injectivity.
\end{proof}

Apply this theorem with $P_E=\mathcal P_m(E)$.
On a compactified modular curve, the resulting matrix form has
weight $4m+2$ and cusp order at least $2m+1$:
\begin{equation}\label{II:eq:higher-modular-output}
 [F_m,F_m]_1^0=2m[F_m,DF_m],\qquad
 [F_m,F_m]_1^0(d\tau)^{2m+1}
       =\kappa\,\pi_X^*\mathcal C(\mathcal P_m(E)).
\end{equation}
The scalar upper-right coefficient has the fixed character
$\rho_L\rho_M^{-1}$, by the argument of
Corollary~\ref{II:cor:extension-modular-injection}.
These are pullbacks of the intrinsic differentials:
$\tau$ need not be a projective coordinate for the chosen $p_0$.

Degree two is available as well. Replacing $p_0$ changes
$\mathcal P_2(E)$ by a scalar quadratic differential times $\Id$,
which has no effect on its first self-bracket. Thus a nonzero
difference $\mathcal P_2(L)-\mathcal P_2(M)$ gives a canonical
weight-$10$ construction. For the fixed-determinant pair
$M=K\otimes L^{-1}$, the quadratic coefficients agree by the
duality computation of Section~\ref{II:subsec:kernel-extensions}.
The cubic is then the first possible separating degree.

\subsection{An effective degree bound for separating lines}
\label{II:subsec:effective-line-separation}

We now find a degree in which two distinct lines are separated.
The same estimate will also handle self-extensions. In each case,
the kernel gives a meromorphic comparison map with a controlled
pole divisor. Agreement of its first coefficients makes the
covariant derivative vanish to high order at one point. A
zero-and-pole count then forces that derivative to be zero.

Fix $y\in X$ and trivialize the fixed factors at $y$ in the
line kernel $s_M(\,\cdot\,,y)$. It becomes a nonzero meromorphic
section of $M$ with a simple pole at $y$ and no other poles.
Since $\deg M=g-1$, its zero divisor $Z_M(y)$ satisfies
\begin{equation}\label{II:eq:kernel-zero-degree}
 \deg Z_M(y)=g,\qquad y\notin\operatorname{supp}Z_M(y).
\end{equation}
Equivalently, it is a section of $M(y)$ that is nonzero at $y$.

The following estimate isolates the common step in the two
arguments. Here $Z_{\mathrm{red}}$ counts each point of a divisor
once.

\Needspace{14\baselineskip}
\begin{lemma}[The pole estimate]\label{II:lem:kernel-pole-estimate}
Let $T$ be a degree-zero line bundle with a holomorphic connection,
and let $v$ be a meromorphic section with poles bounded by
$Z_M(y)$. Then
\begin{equation}\label{II:eq:pole-count-bound}
 \begin{aligned}
 \nabla^T v&\in
 H^0\!\left(X,T\otimes K
       \bigl(Z_M(y)+Z_M(y)_{\mathrm{red}}\bigr)\right),\\
 \deg\!\left(T\otimes K
       \bigl(Z_M(y)+Z_M(y)_{\mathrm{red}}\bigr)\right)
       &\leq4g-2.
 \end{aligned}
\end{equation}
If $\nabla^T v$ vanishes to order at least $4g-1$ at $y$, it is
identically zero.
\end{lemma}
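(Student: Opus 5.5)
The plan is to work locally near each point of the support of $Z_M(y)$, bound the pole order of $\nabla^T v$, and then count degrees. Write $Z=Z_M(y)=\sum_p n_p p$. Near a point $p$ of the support, choose a coordinate $z$ with $z(p)=0$ and a horizontal frame $e$ of $T$ on a small disc; this exists because $\nabla^T$ is holomorphic. In that frame $v=f e$ with $\operatorname{ord}_p f\geq -n_p$, and $\nabla^T v=f'e\,dz$. Differentiation raises the pole order by at most one, so $\nabla^T v$ has a pole of order at most $n_p+1$ at $p$. Away from $\operatorname{supp}Z$ the section $v$ is holomorphic, and hence so is $\nabla^T v$. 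Adding one for each point of the support gives exactly the twist $Z+Z_{\mathrm{red}}$, so
\[
 \nabla^T v\in H^0\bigl(X,T\otimes K(Z+Z_{\mathrm{red}})\bigr).
\]
The horizontal frame is used only for convenience. In an arbitrary frame the connection term is holomorphic times $f$, which gives the same bound.

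For the degree, note that $\deg T=0$, $\deg K=2g-2$, and $\deg Z=g$ by \eqref{II:eq:kernel-zero-degree}. Since $\deg Z_{\mathrm{red}}\leq\deg Z=g$, the total degree is at most $0+(2g-2)+g+g=4g-2$. This proves the second line of \eqref{II:eq:pole-count-bound}.

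For the vanishing statement, suppose $\nabla^T v\not\equiv0$. It is then a nonzero holomorphic section of a line bundle of degree at most $4g-2$, so its total zero count is at most $4g-2$. The order of vanishing at $y$ must be measured in this twisted bundle, and this is the one point that needs care. It is harmless because $y\notin\operatorname{supp}Z_M(y)$ by \eqref{II:eq:kernel-zero-degree}. Near $y$ the twisted bundle therefore agrees with $T\otimes K$, and the order at $y$ of the section is its order as a $T$-valued one-form. A zero of order at least $4g-1$ at $y$ would exceed the degree. Hence $\nabla^T v\equiv0$.

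No serious obstacle is expected; the pole bound and the location of $y$ are the only steps that use any structure.
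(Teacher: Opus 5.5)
Your proof is correct and follows the paper's argument. Differentiation raises each pole order by at most one, which gives the twist by $Z_M(y)+Z_M(y)_{\mathrm{red}}$; the degree count uses $\deg K=2g-2$ and $\deg Z_M(y)_{\mathrm{red}}\le\deg Z_M(y)=g$; and since $y\notin\operatorname{supp}Z_M(y)$, a zero of order $4g-1$ at $y$ exceeds the degree. Your horizontal-frame computation only makes explicit the paper's one-line remark about pole orders.
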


\begin{proof}
Differentiation increases each pole order by at most one. This
gives the first inclusion in \eqref{II:eq:pole-count-bound}; the
degree bound follows from $\deg K=2g-2$ and
$\deg Z_M(y)_{\mathrm{red}}\leq\deg Z_M(y)=g$.
A nonzero holomorphic section has a zero divisor of degree equal
to that of its line bundle. Since $y$ is outside the pole divisor,
a zero of order $4g-1$ at $y$ exceeds that degree.
\end{proof}

\begin{theorem}[Bounded separation]\label{II:thm:bounded-line-separation}
For any fixed $p_0$, the map
\begin{equation}\label{II:eq:bounded-separating-list}
 L\longmapsto
 \bigl(\mathcal P_2(L),\ldots,\mathcal P_{4g-1}(L)\bigr)
       \in\bigoplus_{j=2}^{4g-1}H^0(X,K^j)
\end{equation}
is injective on isomorphism classes of acyclic holomorphic line
bundles. The entries are global differentials on $X$; equality
means equality as sections. In particular, if $L\not\simeq M$, then
$\mathcal P_m(L)\ne\mathcal P_m(M)$ for some
$2\leq m\leq4g-1$.
\end{theorem}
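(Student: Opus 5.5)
Assume $\mathcal P_j(L)=\mathcal P_j(M)$ for $2\le j\le 4g-1$. The plan is to show first that the two kernels agree to high order near a point, and then to feed their ratio into Lemma~\ref{II:lem:kernel-pole-estimate} to force $L\simeq M$.

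\emph{Step 1: the kernel coefficients agree.} Work on a small disc carrying a projective coordinate $z$ for $p_0$, with horizontal frames of $V_L$ and $V_M$. Since $\mathcal P_2=2B_2(dz)^2$ there, equality in degree two gives $B_2^L=B_2^M$ as functions on the disc. Now induct using the triangular formula \eqref{II:eq:hierarchy-triangular}. The polynomial $Q_j$ at order $N=j$ is fixed and involves only $B_2,\ldots,B_{j-1}$ and their $z$-derivatives. So equality of $\mathcal P_j$, together with equality of the lower coefficients as functions, gives $B_j^L=B_j^M$ for all $j\le 4g-1$ on the disc. For line bundles all coefficients are scalars, so no ordering issues arise.

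\emph{Step 2: build a comparison section.} Fix $y$ in the disc. Trivialize the fixed factors at $y$ in $s_L(\cdot,y)$ and $s_M(\cdot,y)$ compatibly with the chosen horizontal frames at $y$. Then form
\[
 v=s_L(\cdot,y)/s_M(\cdot,y),
\]
a meromorphic section of $T=L\otimes M^{-1}$.

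\begin{enumerate}
\item Both kernels have a simple pole at $y$ with residue one, so $v$ is holomorphic and nonvanishing near $y$.
\item Away from $y$, the only poles of $v$ come from zeros of $s_M(\cdot,y)$. They are therefore bounded by $Z_M(y)$, with $y\notin Z_M(y)$ by \eqref{II:eq:kernel-zero-degree}.
\item Near $y$, in the horizontal frame $e_T$ (the ratio of the horizontal line frames), $v$ is the ratio of the regularized kernels $\mathsf H_L(x,y)/\mathsf H_M(x,y)$, with $\mathsf H(x,y)=1+\sum_{j\ge2}B_j(x)(y-x)^j$.
\end{enumerate}

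By Step 1 the two series agree through the term $(y-x)^{4g-1}$, so $v=e_T\,(1+O((x-y)^{4g}))$. Since $\nabla^Te_T=0$, this gives $\nabla^Tv=O((x-y)^{4g-1})$.

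\emph{Step 3: conclude.} Lemma~\ref{II:lem:kernel-pole-estimate} now applies, with $\nabla^T$ the holomorphic flat connection \eqref{II:eq:fixed-T-connection} on the degree-zero bundle $T$, and gives $\nabla^Tv\equiv0$. A horizontal meromorphic section has no poles, because a pole of order $r\ge1$ would produce a pole of order $r+1$ in its covariant derivative. So $v$ is holomorphic. It is also nonzero, since it equals $e_T$ at $y$. A nonzero holomorphic section of a degree-zero line bundle trivializes it, so $T\simeq\OO_X$ and $L\simeq M$. This proves injectivity of \eqref{II:eq:bounded-separating-list}.

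\emph{Main obstacle.} The delicate point is the bookkeeping in Step 2: matching the index range $j\le 4g-1$ exactly with the vanishing order $4g-1$ required by the lemma. This needs two things.
\begin{itemize}
\item Regarding the expansion in $h=w-z$ as a function of the moving first variable $x=z$ with $w=y$ fixed, so that equality of the $B_j$ as functions (not only at $y$) is what gets used.
\item Checking that the horizontal frames and the $\vartheta$-trivializations at $y$ turn the kernel ratio into $v$ written in a horizontal frame of $T$.
\end{itemize}
I would first verify these in the hyperelliptic model of Lemma~\ref{II:lem:hyperelliptic-kernel} as a sanity check on the normalizations.
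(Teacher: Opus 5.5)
Your proposal is correct and follows essentially the same route as the paper: the triangular formula gives $B_j(L)=B_j(M)$ for $2\le j\le 4g-1$ in a common projective coordinate. The ratio of the two kernels, with the second variable fixed at $y$ and an identification $M_y\to L_y$ chosen, is then a meromorphic section of $T$ with poles bounded by $Z_M(y)$ and covariant derivative vanishing to order $4g-1$ at $y$. Lemma~\ref{II:lem:kernel-pole-estimate} makes it horizontal, hence a nowhere-vanishing holomorphic trivialization of $T$. Your matching of the index range with the required vanishing order, and your use of equality of the $B_j$ as functions of the moving variable rather than only at $y$, agree with the paper's argument.
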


\begin{proof}
We compare the two kernels by taking their ratio. Equality of
the invariants will make this ratio horizontal to high order;
Lemma~\ref{II:lem:kernel-pole-estimate} will then make it horizontal
everywhere.

Put $n=4g-1$ and suppose that the two lines have the same
invariants through degree $n$. In a common projective coordinate,
\eqref{II:eq:hierarchy-triangular} successively gives
\[
 B_j(L)=B_j(M)\qquad(2\leq j\leq n).
\]
Choose horizontal line frames near $y$, with their values
identified at $y$. The regularized kernels consequently satisfy
\begin{equation}\label{II:eq:equal-kernel-jets}
 \mathsf H_L(z,w)-\mathsf H_M(z,w)=O((w-z)^{n+1}).
\end{equation}

Choose a nonzero element of $K_y$ and suppress that common factor
in $s_L(z,y),s_M(z,y)$, writing the resulting maps as
$k_L(z):L_y\to L_z$ and $k_M(z):M_y\to M_z$.
For the chosen isomorphism $\iota_y:M_y\to L_y$, define
\[
 r_y(z)=k_L(z)\,\iota_y\,k_M(z)^{-1}
                   \in T_z,\qquad T=L\otimes M^{-1}.
\]
This is a meromorphic section of $T$. Its possible poles are
bounded by $Z_M(y)$. It is regular at $y$, because the simple
poles of the two kernels cancel, and $r_y(y)=\iota_y\ne0$.
In the horizontal frame of $T$ determined by $\iota_y$,
\eqref{II:eq:equal-kernel-jets}, with $w=y$, gives
\[
 r_y(z)=1+O((z-y)^{n+1}),\qquad
 \nabla^T r_y=O((z-y)^n)\,dz.
\]
Apply Lemma~\ref{II:lem:kernel-pole-estimate}. Since $n=4g-1$, it
forces $\nabla^T r_y=0$.

A meromorphic horizontal section of a holomorphic flat line
bundle has no poles: in a horizontal frame on a disc its
coefficient is constant. Since $r_y(y)\ne0$, that constant is
nonzero everywhere. Thus $r_y$ trivializes $T$ and gives
$L\simeq M$.
\end{proof}

Combining this theorem with
Theorem~\ref{II:thm:higher-degree-recovery} gives a finite recovery
procedure even when the cubics agree. Compute the endpoint values
in degrees $2,\ldots,4g-1$, choose a degree $m$ with nonzero
difference, and apply \eqref{II:eq:higher-recovery-formula}.
Equivalently, for fixed nonisomorphic $L,M$ there is a linear
injection
\begin{equation}\label{II:eq:bounded-bracket-injection}
 H^1(X,T)\lhook\joinrel\longrightarrow
 \bigoplus_{j=2}^{4g-1}H^0(X,T\otimes K^{2j+1}),\qquad
 \xi\longmapsto(c^{\mathcal P_j}_\xi)_{j=2}^{4g-1}.
\end{equation}
At least one component alone recovers $\xi$. Its modular weight
$4m+2$ is at most $16g-2$.
The bound $4g-1$ is a uniform upper bound; no minimality is asserted.
For $g=2$ it uses degrees $2,\ldots,7$. The pair of line bundles
on $X_0(23)$ in Section~\ref{II:sec:x023} is already separated by the
cubic. A bound for every pair and a separating degree for one pair
answer different questions.

\begin{example}[What the genus-two bound provides]
\label{II:ex:genus-two-bound}
For $g=2$, the bounded hierarchy is
$\mathcal P_2,\mathcal P_3,\ldots,\mathcal P_7$. Riemann--Roch
gives $h^0(K^j)=2j-1$ for $j\geq2$, so the combined target has
dimension $3+5+7+9+11+13=48$. Once the common projective
connection $p_0$ is fixed, every pair of nonisomorphic acyclic
lines differs in at least one of these six components.
The theorem gives a terminating comparison procedure: compute
the six differences and use any nonzero one in
\eqref{II:eq:higher-recovery-formula}. It does not prescribe which
component first differs. For self-extensions, the next theorem
embeds the two-dimensional space $H^1(X,\OO_X)$ into the same
48-dimensional target by its nilpotent coefficients.
\end{example}

\subsection{Self-extensions and nilpotent coefficients}
\label{II:subsec:higher-self-extensions}

Now take a self-extension
\begin{equation}\label{II:eq:higher-self-extension}
 0\longrightarrow L\xrightarrow{i}E_\xi\xrightarrow{\pi}L
      \longrightarrow0,\qquad
 \xi\in H^1(X,\OO_X).
\end{equation}
Every endpoint difference is zero. Naturality instead gives the
unique decomposition
\begin{equation}\label{II:eq:self-extension-coefficients}
 \mathcal P_j(E_\xi)
    =\mathcal P_j(L)\Id_{E_\xi}
          +\beta_{j,L}(\xi)\varepsilon_E,\qquad
 \beta_{j,L}(\xi)\in H^0(X,K^j),\qquad
 \varepsilon_E=i\pi,\quad \varepsilon_E^2=0.
\end{equation}
Indeed, after subtracting $\mathcal P_j(L)\Id$, the invariant
kills the subline and has image in that line. It factors through
$\Hom(L,L)\otimes K^j=K^j$.
The endpoint identifications in \eqref{II:eq:higher-self-extension}
fix $\varepsilon_E$ and the coefficient $\beta_{j,L}(\xi)$.

Each map $\beta_{j,L}$ is linear. To verify this, use the fibre
product and pushouts of Lemma~\ref{II:lem:extension-linearity},
replacing $\mathcal C_E$ by
$\mathcal P_j(E)-\mathcal P_j(L)\Id_E$.
This natural map vanishes on every copy of $L$, and the same
factorizations now end in $L\otimes K^j$. They give
\[
 \beta_{j,L}(\xi+\zeta)=\beta_{j,L}(\xi)+\beta_{j,L}(\zeta),
 \qquad
 \beta_{j,L}(\lambda\xi)=\lambda\beta_{j,L}(\xi).
\]

\begin{theorem}[Bounded detection of self-extensions]
\label{II:thm:bounded-self-extension}
Let $X$ be a smooth compact complex curve of genus $g\geq2$,
let $L$ be an acyclic line bundle on $X$, and fix the common
projective connection $p_0$ used to define the tensors
$\mathcal P_j$ in Section~\ref{II:subsec:fixed-hierarchy}.
Then the map
\begin{equation}\label{II:eq:bounded-self-injection}
 H^1(X,\OO_X)\lhook\joinrel\longrightarrow
       \bigoplus_{j=2}^{4g-1}H^0(X,K^j),\qquad
 \xi\longmapsto
       \bigl(\beta_{2,L}(\xi),\ldots,\beta_{4g-1,L}(\xi)\bigr)
\end{equation}
is a linear injection.
\end{theorem}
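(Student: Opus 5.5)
The plan is to prove injectivity, since linearity of each $\beta_{j,L}$ was established just before the statement. Put $n=4g-1$ and suppose $\beta_{j,L}(\xi)=0$ for $2\le j\le n$. I would show that the kernel of $E_\xi$ then agrees with that of $L\oplus L$ to order $n$ at a point $y$, and then imitate the proof of Theorem~\ref{II:thm:bounded-line-separation}, with the extension cocycle replacing the ratio $r_y$.

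\emph{Step 1: equal kernel jets.} Near $y$, work in a projective coordinate for $p_0$ and a horizontal frame of $V_{E_\xi}$ adapted to the extension, obtained by lifting a horizontal frame of $V_L$ horizontally; this is possible by Corollary~\ref{II:cor:triangular-kernel}. By Lemma~\ref{II:lem:horizontal-maps} each $B_j(E_\xi)$ is upper triangular with both diagonal entries $B_j(L)$. I would show by induction on $j$, using the triangular relation \eqref{II:eq:hierarchy-triangular}, that $B_j(E_\xi)=B_j(L)\Id$. The point is that if $B_2,\dots,B_{j-1}$ are scalar multiples of $\Id$, then in this constant frame the ordered polynomial $Q_j$ is the same scalar as for $L$, times $\Id$. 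Hence
\[
 j!\,B_j(E_\xi)=\mathcal P_j(E_\xi)_z-Q_j=\bigl(\mathcal P_j(L)_z-Q_j\bigr)\Id=j!\,B_j(L)\Id ,
\]
since $\beta_{j,L}(\xi)=0$. Consequently $\mathsf H_{E_\xi}(z,w)=\mathsf H_L(z,w)\Id+O((w-z)^{n+1})$ near $y$.

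\emph{Step 2: the kernel section in local splittings.} Choose a lift $\tilde e\in E_y$ of a nonzero element of $L_y$, and suppress a fixed element of $K_y$. Put $\sigma(z)=s_{E_\xi}(z,y)\tilde e$; this is a meromorphic section of $E_\xi$ with $\pi\sigma=k_L(z):=s_L(z,y)\pi(\tilde e)$. By \eqref{II:eq:kernel-zero-degree}, $k_L$ has a simple pole at $y$ and zero divisor $Z_L(y)$ of degree $g$. With local holomorphic splittings $s_i$ as in \eqref{II:eq:extension-cocycle}, write $\sigma=i(f_i)+s_i(k_L)$ and set $v_i=f_i/k_L$. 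These are meromorphic functions with poles bounded by $Z_L(y)$; they are regular at $y$ because the simple poles cancel. They satisfy $v_j-v_i=-\varphi_{ij}$, up to the sign convention. Now let $\nu_i$ be the off-diagonal connection forms of \eqref{II:eq:off-diagonal-data}; here $T=\OO_X$ and $\nabla^T=d$, since the connection on $\Hom(V_L,V_L)$ is trivial. The one-forms $\eta=dv_i+\nu_i$ (sign matched to the transition law $\nu_j-\nu_i=d\varphi_{ij}$ of \eqref{II:eq:extension-data-transitions}) then glue to a global meromorphic form. Its poles are bounded by $Z_L(y)+Z_L(y)_{\mathrm{red}}$, because $\nu_i$ is holomorphic.

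\emph{Step 3: vanishing, which I expect to be the main obstacle.} In the horizontal adapted splitting near $y$ we have $\nu=0$, and Step~1 gives $v=\mathrm{const}+O((z-y)^{n+1})$. The care needed is to check that the off-diagonal entry of $\mathsf H_{E_\xi}(z,y)$ is exactly $f$ divided by the diagonal entry, once the twist by $\vartheta$ and the regularizing factor are accounted for. Granting this, $\eta$ vanishes to order at least $n=4g-1$ at $y$. Lemma~\ref{II:lem:kernel-pole-estimate}, applied with $T=\OO_X$ and $M=L$, then forces $\eta=0$. Thus each $dv_i=\mp\nu_i$ is holomorphic. As in the uniqueness argument of Section~\ref{II:subsec:extension-polar-recovery}, a pole of order $r\ge1$ would produce a nonzero pole of order $r+1$, so every $v_i$ is holomorphic. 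Then $\varphi_{ij}=\mp(v_j-v_i)$ is a holomorphic coboundary, so $\xi=0$. Together with the linearity already proved, this establishes that \eqref{II:eq:bounded-self-injection} is a linear injection.
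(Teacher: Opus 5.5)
Your proof is correct and follows the paper's argument. Your $v_i$ are the coefficients of the paper's meromorphic splitting $s_y=k_E u_y k_L^{-1}$ relative to the local holomorphic splittings, so your glued form $\eta=dv_i+\nu_i$ is exactly the paper's off-diagonal connection form $\nu_y$, killed by the same degree count from Lemma~\ref{II:lem:kernel-pole-estimate}. The difference is only bookkeeping: the paper notes that the horizontal meromorphic splitting is constant in horizontal frames and hence holomorphic, while you show each $v_i$ is holomorphic and read off $\xi$ as a coboundary. The point you flag in Step~3 is immediate, since the factor $\sqrt{dz}\sqrt{dw}/(w-z)$ and the $\vartheta$-twist cancel in $f/k_L$, giving $v=c+\epsilon/\mathsf H_L(z,y)$ with $\epsilon=O((z-y)^{n+1})$.
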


\begin{proof}
Linearity was just proved. We use the kernel to extend a splitting
at one point to a meromorphic splitting on $X$. The pole estimate
will show that this splitting is horizontal, hence holomorphic.

Suppose that all the displayed coefficients vanish, and put
$n=4g-1$.
In an adapted horizontal frame the invariants of $E_\xi$ are
scalar through degree $n$. The triangular identities give
\[
 B_j(E_\xi)=B_j(L)\Id\quad(2\leq j\leq n),\qquad
 \mathsf H_{E_\xi}(z,w)
    =\mathsf H_L(z,w)\Id+O((w-z)^{n+1}).
\]

Fix $y\in X$ and choose a linear splitting
$u_y:L_y\to E_{\xi,y}$ of $\pi_y$. Suppress a fixed nonzero
element of $K_y$ in the kernels, obtaining
$k_E(z):E_{\xi,y}\to E_{\xi,z}$ and $k_L(z):L_y\to L_z$.
The formula
\begin{equation}\label{II:eq:kernel-meromorphic-splitting}
 s_y(z)=k_E(z)u_y k_L(z)^{-1}:L_z\longrightarrow E_{\xi,z}
\end{equation}
gives a meromorphic splitting of $\pi$. To check this, kernel
naturality gives $\pi_z k_E(z)=k_L(z)\pi_y$, hence
$\pi_zs_y(z)=\Id$. Its poles are bounded by $Z_L(y)$,
the degree-$g$ zero divisor of $k_L$.
The simple poles of $k_E$ and $k_L$ cancel at $y$, so $s_y$
is regular there.

Twist this splitting by $\vartheta^{-1}$ and let
$\nu_y$ be its off-diagonal connection one-form, as in
\eqref{II:eq:off-diagonal-data}. Now $\Hom(V_L,V_L)=\OO_X$,
and therefore
\begin{equation}\label{II:eq:self-extension-pole-estimate}
 \nu_y\in H^0\!\left(X,K
             \bigl(Z_L(y)+Z_L(y)_{\mathrm{red}}\bigr)\right).
\end{equation}
Choose the adapted horizontal frame near $y$ with initial
splitting $u_y$. The displayed kernel expansion shows that
the coefficient of $s_y$ in this frame is its constant value
at $y$ plus $O((z-y)^{n+1})$. Differentiation gives
$\nu_y=O((z-y)^n)\,dz$.
The degree estimate of Lemma~\ref{II:lem:kernel-pole-estimate}
applies to \eqref{II:eq:self-extension-pole-estimate}. Its line
bundle has degree at most $4g-2$, so $\nu_y=0$.
Thus the twisted splitting is horizontal. In horizontal frames
near each possible pole it is a constant matrix, so all poles
are removable. It is a holomorphic splitting on $X$, and
$\xi=0$.

\end{proof}

\begin{corollary}[Vanishing self-brackets]
\label{II:cor:self-extension-brackets}
For every marked self-extension of an acyclic line bundle, the first
self-bracket of $\mathcal P_j(E_\xi)$ vanishes for every $j\geq2$.
\end{corollary}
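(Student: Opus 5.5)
The plan is to use the decomposition \eqref{II:eq:self-extension-coefficients} directly. In a local frame adapted to the self-extension \eqref{II:eq:higher-self-extension}, with the first vector a frame of $L$ and the second the image of a frame of $L$ under a local splitting, the coefficient of $\mathcal P_j(E_\xi)$ is
\[
 p_z=a\,\Id+b\,N,\qquad N=\begin{pmatrix}0&1\\0&0\end{pmatrix},
\]
where $\mathcal P_j(L)=a(dz)^j$ and $b$ is the local coefficient of $\beta_{j,L}(\xi)$. The intrinsic bracket of Lemma~\ref{II:lem:intrinsic-self-bracket} is $2j[p_z,\nabla^{\mathrm{ad}}_zp_z](dz)^{2j+1}$, so it suffices to show that this local commutator vanishes.

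For the covariant derivative, Corollary~\ref{II:cor:triangular-kernel}(i) shows that the connection matrix in an adapted frame is upper triangular. Both diagonal entries equal the connection coefficient ${\mathsf A}_L$ of $V_L$, and the upper-right entry is some $q$. We can then write the connection matrix as ${\mathsf A}_L\Id+qN$. Since $[\Id,\cdot]=0$ and $[N,N]=0$, we get
\[
 \nabla^{\mathrm{ad}}_zp_z=a'\Id+b'N+[qN,a\Id+bN]=a'\Id+b'N .
\]
Equivalently, the adjoint derivative preserves the commutative span of $\Id$ and $N$. The commutator $[a\Id+bN,\,a'\Id+b'N]$ is zero because $\Id$ and $N$ commute, so $\mathcal C(\mathcal P_j(E_\xi))=0$. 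The same argument applies to $\mathcal P_2$, which also has the form \eqref{II:eq:self-extension-coefficients}; any shift of $\mathcal P_2$ by $p_0\Id$ is scalar and does not affect the bracket.

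The only point that needs care is that the diagonal entries of the connection are literally equal. This holds because both the subbundle and the quotient are identified with the same $V_L$, and naturality (Lemma~\ref{II:lem:horizontal-maps} applied to $i$ and $\pi$) induces the canonical connection $\nabla^{V_L}$ on each. If $L\ne M$, the off-diagonal term $q$ would contribute $-(a-d)q$ as in \eqref{II:eq:extension-local-bracket}. Here $a=d$, consistent with the formula of Theorem~\ref{II:thm:higher-degree-recovery} with $\delta_P=0$: the formula $c^P_\xi=2m(\delta_z\nabla^T_zb-b\delta_z'-\delta_z^2q)$ vanishes identically. This confirms the vanishing, and it shows why the brackets cannot detect self-extensions even though Theorem~\ref{II:thm:bounded-self-extension} says the nilpotent coefficients $\beta_{j,L}$ do.
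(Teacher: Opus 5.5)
Your proof is correct and is essentially the paper's argument. The paper observes that naturality makes $\varepsilon_E=i\pi$ horizontal, so the coefficient $p_j\Id+b_j\varepsilon_E$ and its covariant derivative $p_j'\Id+b_j'\varepsilon_E$ commute; your adapted-frame check that the connection matrix ${\mathsf A}_L\Id+qN$ commutes with the constant matrix $N$ is exactly that horizontality, verified by hand.
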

\begin{proof}
Naturality makes $\varepsilon_E$ horizontal.
In a coordinate, the coefficient of \eqref{II:eq:self-extension-coefficients}
and its covariant derivative have the forms
$p_j\Id+b_j\varepsilon_E$ and
$p_j'\Id+b_j'\varepsilon_E$. They commute, so their first
self-bracket is zero.
\end{proof}

The coefficients in \eqref{II:eq:bounded-self-injection} also recover
the class by finite-dimensional linear algebra. Choose a basis
$\xi_1,\ldots,\xi_g$ of $H^1(X,\OO_X)$ and bases of the target
spaces. The column vectors
$(\beta_{j,L}(\xi_a))_{j=2}^{4g-1}$ form a matrix of rank $g$.
Any nonzero $g$-by-$g$ minor gives a linear inverse on the image.

There is an equivalent deformation interpretation. Under
$T_L\operatorname{Pic}^{g-1}(X)=H^1(X,\OO_X)$, one has
\begin{equation}\label{II:eq:self-extension-differential}
 \beta_{j,L}(\xi)=(d\mathcal P_j)_L(\xi).
\end{equation}
For verification, represent $\xi$ by a first-order line bundle
over $\C[\epsilon]/(\epsilon^2)$. Its underlying rank-two bundle
is \eqref{II:eq:higher-self-extension}, and multiplication by
$\epsilon$ is $\varepsilon_E$. The residue characterization of
the kernel commutes with this base change: acyclicity and the
exact diagonal sequence still give a unique section with residue
the identity. The finite-jet operations defining $\mathcal P_j$
also commute with it. Expanding
$\mathcal P_j(L_{\epsilon})$ to first order proves
\eqref{II:eq:self-extension-differential}.

On a compactified modular curve, the coefficients
$\beta_{j,L}(\xi)$ give scalar forms of weight $2j$ and cusp order
at least $j$; multiplying by $\varepsilon_E$ realizes them as
nilpotent matrix forms. Their traces and determinants vanish.
The coefficients $\beta_{j,L}(\xi)$ therefore supply the
extension data in this case, even though all the first
self-brackets vanish.

\subsection{The jet algebra and monodromy}
\label{II:subsec:hierarchy-monodromy}

The finite results above concern line bundles and their
extensions. We now return to an acyclic bundle $E$ of arbitrary
rank and ask what its entire hierarchy retains. At a point, the
coefficients and their derivatives are matrices, so they generate
an associative algebra. We will identify this algebra with the
span of the monodromy matrices of the canonical connection.

Fix $x\in X$, a local projective coordinate $z$, and a horizontal
frame for $V_E$. Let $P_{j,z}$ be the coefficient of
$\mathcal P_j(E)$.
Define the unital algebra
\begin{equation}\label{II:eq:hierarchy-jet-algebra}
 \mathfrak A_x(E)=
 \operatorname{alg}_{\C}\{
       \partial_z^aP_{j,z}(x):j\geq2,\ a\geq0\}
       \subset\End(V_{E,x}).
\end{equation}
The identity matrix is included in $\operatorname{alg}_{\C}$.

The triangular formula \eqref{II:eq:hierarchy-triangular} gives
an equivalent description:
\begin{equation}\label{II:eq:kernel-jet-algebra}
 \mathfrak A_x(E)=
 \operatorname{alg}_{\C}\{
       \partial_z^aB_j(x):j\geq2,\ a\geq0\}.
\end{equation}
Indeed, $\mathcal P_2$ gives $B_2$ and all its derivatives.
Once the jets of $B_2,\ldots,B_{j-1}$ are known,
\eqref{II:eq:hierarchy-triangular}, and its successive derivatives,
recover the jets of $B_j$ by division by $j!$.
The reverse inclusion follows from the same polynomial formulas.
Using all $\mathcal W_{j,N}$ instead of the selected $\mathcal P_j$
therefore gives the same algebra.

This algebra does not depend on the coordinate or on $p_0$.
To see this directly in \eqref{II:eq:kernel-jet-algebra}, use the
two-variable Taylor series of $\mathsf H_E(z,w)$ at $(x,x)$.
A coordinate change substitutes an invertible analytic change
of variables and multiplies this series by an invertible scalar
series. Its new coefficients are linear combinations of the old
ones and scalar multiples of $\Id$; the inverse change gives
the reverse inclusion. A change of horizontal frame conjugates
every matrix by the same constant matrix.

Let
\[
 \rho_E:\pi_1(X,x)\longrightarrow\operatorname{GL}(V_{E,x}),
 \qquad
 R_x(E)=\operatorname{Span}_{\C}
            \{\rho_E(\gamma):\gamma\in\pi_1(X,x)\}.
\]
The space $R_x(E)$ is an algebra: products of monodromy matrices
are again monodromy matrices, and the identity is present.

\begin{theorem}[The kernel jet algebra]\label{II:thm:kernel-monodromy-algebra}
Let $X$ be a smooth compact complex curve of genus $g\geq2$,
and let $E$ be an acyclic holomorphic bundle on $X$. Fix $x\in X$,
a local projective coordinate, and a horizontal frame for
$V_E=E\otimes\vartheta^{-1}$. The algebra generated by the invariant
jets in~\eqref{II:eq:hierarchy-jet-algebra} equals the monodromy span:
\begin{equation}\label{II:eq:kernel-monodromy-equality}
 \boxed{\ \mathfrak A_x(E)=R_x(E).\ }
\end{equation}
\end{theorem}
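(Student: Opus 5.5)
The plan is to prove the two inclusions separately. Both go through the pullback of the canonical kernel to the universal cover $\widetilde X$, written in a global horizontal frame of $\nabla^{V_E}$. Transporting fibres horizontally to $x$ turns the regularized kernel into a meromorphic function $\widetilde{\mathsf H}(z,w)$ on $\widetilde X\times\widetilde X$ with values in $\End(V_{E,x})$, after fixing half-form factors. Its equivariance is
\[
 \widetilde{\mathsf H}(\gamma z,w)=\rho_E(\gamma)\widetilde{\mathsf H}(z,w),\qquad
 \widetilde{\mathsf H}(z,\gamma w)=\widetilde{\mathsf H}(z,w)\rho_E(\gamma)^{-1},
\]
up to scalar half-form factors.

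First I reduce $\mathfrak A_x(E)$ to Taylor coefficients. By \eqref{II:eq:kernel-jet-algebra}, $\mathfrak A_x(E)$ is the algebra generated by the jets $\partial_z^aB_j(x)$. These are exactly the Taylor coefficients of $\mathsf H_E(z,w)$ at $(x,x)$ in a horizontal frame, together with $\Id$. Since $R_x(E)$ is a unital algebra, the inclusion $\mathfrak A_x(E)\subset R_x(E)$ reduces to showing that $\widetilde{\mathsf H}(z,w)\in R_x(E)$ for all $z,w$ near $x$.

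To prove that, fix $w$ and regard $z\mapsto s_E(z,w)$ as a meromorphic section of $E\otimes\Hom(E_w,\cdot)$, a trivially twisted copy of $E^{\oplus r}$, with a simple pole only at $w$. Since $R_x(E)$ is stable under left multiplication by monodromy, it defines a flat subbundle $\mathcal R\subset V_E\otimes V_{E,w}^*$ with fibre $R_x(E)$ (transported along paths from $x$). Let $\mathcal Q$ be the flat quotient. The projection of the kernel to $\mathcal Q\otimes\vartheta$ has residue equal to the class of $\Id$, which is zero because $\Id\in R_x(E)$. Hence this projection is a holomorphic section of $\mathcal Q\otimes\vartheta$.

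The key step is to show $H^0(X,\mathcal Q\otimes\vartheta)=0$. The bundle $\mathcal Q\otimes\vartheta$ is a quotient of $E^{\oplus r}$, so $H^1(E^{\oplus r})=0$ surjects onto $H^1(\mathcal Q\otimes\vartheta)$, giving $H^1(\mathcal Q\otimes\vartheta)=0$. Since $\mathcal Q$ is flat, $\deg(\mathcal Q\otimes\vartheta)=\operatorname{rk}(\mathcal Q)(g-1)$, so Riemann--Roch gives Euler characteristic zero. Therefore $H^0(\mathcal Q\otimes\vartheta)=0$, and $s_E(\cdot,w)$ takes values in $\mathcal R\otimes\vartheta$. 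Letting $w$ vary near $x$, horizontal transport of the second factor only multiplies on the right by matrices near the identity in the same horizontal frame. So $\widetilde{\mathsf H}(z,w)\in R_x(E)$ near $(x,x)$, and all Taylor coefficients lie in $R_x(E)$.

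For the reverse inclusion, let $S$ be the linear span of the Taylor coefficients at $(x,x)$; it lies in $\mathfrak A_x(E)$. If a linear functional $\lambda$ kills $S$, then $\lambda(\widetilde{\mathsf H})$ vanishes near $(x,x)$, hence on all of $\widetilde X\times\widetilde X$ by analytic continuation. Now $\widetilde{\mathsf H}(z,w)$, after multiplying by the Cauchy factor, has simple poles along the translates $w=\gamma z$. The residue there is the transported identity, namely $\rho_E(\gamma)^{\pm1}$; this follows from the equivariance laws and $\res_\Delta s_E=\Id$. Therefore $\lambda(\rho_E(\gamma))=0$ for every $\gamma$, so $R_x(E)\subset S\subset\mathfrak A_x(E)$.

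I expect the main obstacle to be bookkeeping rather than ideas. One must check that the half-form factors and the Cauchy regularization on $\widetilde X$ do not alter the residues along the translated diagonals by anything but nonzero scalars. One must also make sure the flat subbundle $\mathcal R$ is well defined, which needs $R_x(E)$ to be a left $\rho_E$-module; this holds because it is an algebra containing the monodromy. The vanishing $H^0(\mathcal Q\otimes\vartheta)=0$ is the crucial input, and the argument above derives it from acyclicity of $E$ alone.
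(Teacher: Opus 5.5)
Your proposal is correct and is essentially the paper's proof. The two inclusions are obtained the same way: continuing the regularized kernel on $\widetilde X\times\widetilde X$ and taking residues along the translated diagonals gives $R_x(E)\subset\mathfrak A_x(E)$. Projecting $s_E(\cdot,w)$ to the flat quotient $\mathcal Q\otimes\vartheta$ by the monodromy-span subbundle, where $H^0(X,\mathcal Q\otimes\vartheta)=0$, gives $\mathfrak A_x(E)\subset R_x(E)$.

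There are two small differences. First, you get that vanishing from the surjection $H^1(X,E^{\oplus r})\to H^1(X,\mathcal Q\otimes\vartheta)$ together with $\chi(\mathcal Q\otimes\vartheta)=0$. The paper instead uses $H^0(X,\mathcal R\otimes\vartheta)=0$ and $\chi(\mathcal R\otimes\vartheta)=0$, then the long exact sequence; the two routes are equivalent. Second, your remark about right multiplication by near-identity matrices is not needed. In a horizontal frame the identification of $V_{E,w}$ with $V_{E,x}$ is literally the identity, and a general right multiplication would not preserve $R_x(E)$ anyway.
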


\begin{proof}
We prove the two inclusions separately. For the first, continuation
of the kernel recovers monodromy as residues. For the second, we
project the kernel to a quotient bundle and use acyclicity to
show that the projection is zero.

\noindent\textit{1. Continue the kernel in the fixed jet algebra.} Pull $V_E$ and $\vartheta$ to the universal cover $\widetilde X$.
Extend the chosen local horizontal frame to a horizontal frame $e$
of the former, and choose a nowhere vanishing
holomorphic frame $\beta$ of the latter. Write the lifted kernel as
\[
 \widetilde s_E(z,w)
   =e(z)\,\mathsf S(z,w)\,e(w)^{-1}\beta(z)\beta(w).
\]
Near a lift $(\widetilde x,\widetilde x)$, the Taylor coefficients
of the regularized matrix kernel belong to $\mathfrak A_x(E)$.
This algebra is a finite-dimensional linear subspace of the matrix
space, so convergence implies that $\mathsf S$ takes values in it
on an open set away from the diagonal. If a linear functional on
the matrix space annihilates $\mathfrak A_x(E)$, its value on
$\mathsf S$ is a meromorphic function vanishing on that open set.
The identity theorem on the connected product
$\widetilde X\times\widetilde X$ makes it identically zero. Thus
$\mathsf S$ takes values in this same fixed algebra everywhere it
is regular; no invariance of the algebra under monodromy has been
assumed.

\noindent\textit{2. Recover monodromy from residues.} To compute a translated residue, use the right deck action,
written $z\mapsto z\gamma$, with the convention
\[
 e(z\gamma)=e(z)\rho_E(\gamma),\qquad
 \beta(z\gamma)=b_\gamma(z)\beta(z),
\]
where the fibres over $z$ and $z\gamma$ are identified by the
covering map and $b_\gamma$ is nowhere zero. The kernel descends,
so changing its first variable gives
\[
 \mathsf S(z\gamma,w)
   =b_\gamma(z)^{-1}\rho_E(\gamma)^{-1}\mathsf S(z,w).
\]
Choose a local coordinate $t$ at $\widetilde x$ with
$t(\widetilde x)=0$ and $dt=\beta^2$. The prescribed residue in
the \emph{second} variable gives
$\mathsf S(z,\widetilde x)=-\Id/t(z)+O(1)$.
Put $t_\gamma(z)=t(z\gamma^{-1})$ near $\widetilde x\gamma$.
Consequently
\[
 \res_{z=\widetilde x\gamma}
       \bigl(\mathsf S(z,\widetilde x)\,dt_\gamma(z)\bigr)
   =-b_\gamma(\widetilde x)^{-1}\rho_E(\gamma)^{-1}.
\]
The residue belongs to $\mathfrak A_x(E)$ because the meromorphic
matrix function takes values in that finite-dimensional subspace.
Its scalar factor is nonzero. As $\gamma$ varies, the inverse
monodromy matrices span $R_x(E)$, proving
$R_x(E)\subset\mathfrak A_x(E)$.

\noindent\textit{3. Form the quotient by the monodromy span.} For the reverse inclusion, put $W=V_{E,x}$ and fix $y$ near $x$.
Use parallel transport on a small disc to identify $V_{E,y}$ with
$W$. Left multiplication by every $\rho_E(\gamma)$ preserves
$R_x(E)\subset\Hom(W,W)$, so it defines a flat subbundle
\[
 \mathcal R\subset V_E\otimes W^*,\qquad
 \mathcal Q=(V_E\otimes W^*)/\mathcal R.
\]
Write $d=\operatorname{rank}\mathcal R$.
The holomorphic flat connection gives $\deg\mathcal R=0$,
and Riemann--Roch gives
\[
 \chi(\mathcal R\otimes\vartheta)
      =d(g-1)+d(1-g)=0.
\]
The inclusion
$\mathcal R\otimes\vartheta\subset E\otimes W^*$
implies $H^0(X,\mathcal R\otimes\vartheta)=0$.
Therefore $H^1(X,\mathcal R\otimes\vartheta)=0$ as well.
The exact sequence
\[
 0\longrightarrow\mathcal R\otimes\vartheta
 \longrightarrow E\otimes W^*
 \longrightarrow\mathcal Q\otimes\vartheta
 \longrightarrow0
\]
now implies $H^0(X,\mathcal Q\otimes\vartheta)=0$.

\noindent\textit{4. Use acyclicity to kill the projected kernel.} With a frame of the fixed one-dimensional factor $\vartheta_y$,
the kernel $s_E(\,\cdot\,,y)$ is a meromorphic section of
$(V_E\otimes W^*)\otimes\vartheta$, with at most a simple pole
at $y$. Its polar coefficient is a scalar multiple of $\Id$,
which belongs to $R_x(E)$. Its projection to
$\mathcal Q\otimes\vartheta$ is consequently holomorphic, and
must be zero. Hence the matrix kernel takes values in $R_x(E)$.
Varying $y$ in the same disc shows that all $B_j$ and their
derivatives at $x$ belong to this algebra. This proves the
remaining inclusion.
\end{proof}

The vanishing of $H^0(X,\mathcal Q\otimes\vartheta)$ is where
acyclicity and compactness enter the identification with the
canonical connection.

For lines, the theorem also explains separation by the whole
hierarchy. If $\mathcal P_j(L)=\mathcal P_j(M)$ for every $j$,
then every invariant of $L\oplus M$ is scalar. Its jet algebra
is $\C\Id$, so its two monodromy characters agree and $L\simeq M$.
The pole estimate gave the stronger, effective conclusion of
Theorem~\ref{II:thm:bounded-line-separation}.

For extensions, the algebra has a particularly simple form.
Let $\mathsf T_2(\C)$ denote the upper triangular matrix algebra.
When $L=M$, write $\varepsilon_E=i\pi$, so that
$\varepsilon_E^2=0$. With the endpoint identifications retained,
the algebras are, up to an adapted change of frame,
\begin{equation}\label{II:eq:extension-jet-algebras}
 \begin{array}{c|c|c}
   \text{endpoints}&\xi=0&\xi\ne0\\ \hline
   L\not\simeq M&\C\oplus\C&\mathsf T_2(\C)\\
   L=M&\C\Id&\C\Id+\C\varepsilon_E .
 \end{array}
\end{equation}
Here the last entry is the dual-number algebra.

To prove the first row, the two canonical line characters are
distinct: equality of characters would give a flat, hence
holomorphic, isomorphism $L\simeq M$.
The diagonal projection of $R_x(E)$ is therefore all of $\C^2$.
If that projection were injective, its two idempotents would give
complementary invariant lines and split the flat extension.
For $\xi\ne0$ this is impossible. Its kernel must therefore be
the one-dimensional strictly upper triangular space, giving
$R_x(E)=\mathsf T_2(\C)$.
For $\xi=0$, naturality gives the direct sum of the line
connections and the diagonal algebra.

For the second row, $\varepsilon_E$ is horizontal by naturality,
and the monodromy matrices have the form
\[
 \rho_E(\gamma)=\rho_L(\gamma)
                    \bigl(\Id+u_\gamma\varepsilon_E\bigr).
\]
If every $u_\gamma$ were zero, a constant complement in a
horizontal frame would descend to a splitting. Thus $\xi\ne0$
gives $\C\Id+\C\varepsilon_E$; the split kernel gives $\C\Id$.
Theorem~\ref{II:thm:kernel-monodromy-algebra} proves
\eqref{II:eq:extension-jet-algebras}.
The algebra detects nonsplitting, but its abstract isomorphism
type is the same for all nonzero classes with the same type of
endpoints. To recover the class we retain the actual invariant
coefficients.

\subsection{What is finite, and what is bounded}
\label{II:subsec:hierarchy-bounds}

The line-bundle results use a bounded list of global sections:
$\mathcal P_2,\ldots,\mathcal P_{4g-1}$. Recovery is given by
\eqref{II:eq:higher-recovery-formula} for distinct endpoints and by
\eqref{II:eq:bounded-self-injection} for identical endpoints.

\begin{remark}[Finite dimension and effective bounds]\label{II:rem:finite-jet-scope}

The algebra \eqref{II:eq:hierarchy-jet-algebra} instead uses jets
at one point. Its finite-dimensionality has a different meaning.
For a bundle of rank $r$, it implies that some finite collection
of words in the jets spans the algebra:
\begin{equation}\label{II:eq:finite-jet-words}
 \mathfrak A_x(E)=\operatorname{Span}_{\C}\{C_1,\ldots,C_d\},
 \qquad d\leq r^2,\qquad
 C_b=\prod_s\partial_z^{a_{b,s}}P_{j_{b,s},z}(x),\qquad 1\le b\le d.
\end{equation}
Indeed, all finite words span the generated algebra, and a basis
can be selected from that spanning set.
The number $r^2$ bounds the size of a basis; it does not bound
the indices $j_{b,s}$ or the derivative orders $a_{b,s}$.
No bound for these indices in arbitrary rank is supplied here.
\end{remark}

Three questions remain within this construction. Can the uniform
upper bound $4g-1$ for separating lines be lowered? The theta
formula proves that a general pair has different cubics, but it
does not prove that
\[
 \operatorname{Pic}^{g-1}(X)\setminus\Theta
       \longrightarrow H^0(X,K^3),\qquad L\longmapsto\cub_L,
\]
is generically injective; does that stronger statement hold?
Finally, for rank $r>1$, can one bound both the degrees $j$ and
the derivative orders $a$ needed among
$\partial_z^aP_{j,z}(x)$ in terms of $g$ and $r$?

Thus the monodromy theorem explains the information retained by
the full hierarchy, while the kernel pole count provides a
specific finite procedure for the line-bundle extensions studied
in this paper.

\section{An explicit construction on \texorpdfstring{$X_0(23)$}{X0(23)}}
\label{II:sec:x023}

We finish by carrying the cubic construction through to a
Fourier coefficient. On $X=X_0(23)$ we will choose acyclic lines
$L,M$ with different cubics and
$\dim H^1(X,LM^{-1})=1$. After fixing a horizontal frame, let
$A_9(\xi)$ denote the coefficient of $q^9$ in the upper-right
entry of the normalized weight-fourteen form of an extension
$\xi$. We will compute a generator $\zeta$ of the Serre-dual
space and obtain
\[
 \langle\xi,\zeta\rangle=\frac{A_9(\xi)}{2484}.
\]
Thus this Fourier coefficient is a coordinate on the marked
extension space. The choices fixing its normalization are part
of the construction below.

We first use rational frames to compute the kernel, its
connection, and the intrinsic differential $c_\xi$. We then
change to a horizontal frame, where its coefficient is a modular
form. This conversion will produce both the displayed recovery
rule and the multiplier of the form.

\subsection{The curve and the endpoint bundles}
\label{II:subsec:x023-curve}

Let $X=X_0(23)$, with cusps ${P_\infty}=\infty$ and ${P_0}=0$. The projective
image of $\Gamma_0(23)$ has index $24$, has no elliptic points,
and has two cusps, of widths $1$ and $23$. The genus formula gives
$g=1+24/12-2/2=2$. We use this effective projective action, or a
lift as in Section~\ref{II:sec:modular-calculus}.

Put $q=e^{2\pi i\tau}$ and $D=q\,d/dq$. To avoid confusion with
the scalar connection $\eta$ of Section~\ref{II:sec:modular-calculus},
write $\eta_{\mathrm D}$ for the Dedekind eta function. Define
\begin{align}\label{II:eq:x023-basis}
 h&=\eta_{\mathrm D}(\tau)\eta_{\mathrm D}(23\tau),&
 \Theta&=\sum_{m,n\in\mathbb Z}q^{m^2+mn+6n^2},\nonumber\\
 f_2&=h^2,& f_1&=h\Theta-h^2 .
\end{align}
The eta and binary theta transformation laws give $h,\Theta$
weight one and the same quadratic character
$\left(\frac{-23}{\,\cdot\,}\right)$. Thus $f_1,f_2$ are weight-two
cusp forms with trivial character. Their expansions begin
\begin{align}\label{II:eq:x023-basis-series}
 f_1&=q-q^3-q^4-2q^6+2q^7-q^8+2q^9+O(q^{10}),\nonumber\\
 f_2&=q^2-2q^3-q^4+2q^5+q^6+2q^7-2q^8+O(q^{10}).
\end{align}
They are independent and $\dim S_2(\Gamma_0(23))=g=2$, so they
form a basis. This is the basis recorded in
\cite[Example~3.29]{Stein}. Let
\[
 \omega_i=f_i\,\frac{dq}{q}\in H^0(X,K),\qquad \omega=\omega_2 .
\]
The eta product has no zeros on $\mathbb H$, and its Fricke
transformation gives order two for $f_2$ at each cusp. Consequently
\begin{equation}\label{II:eq:x023-canonical-divisor}
 \operatorname{div}(\omega)={P_\infty}+{P_0},\qquad K\simeq\OO({P_\infty}+{P_0}).
\end{equation}
At ${P_\infty}$, the orders of $\omega_1,\omega_2$ are $0,1$.
Thus ${P_\infty}$ is not a Weierstrass point.

The modular functions
\begin{equation}\label{II:eq:x023-coordinates}
 x=\frac{f_1}{f_2},\qquad y=\frac{Dx}{f_2}
\end{equation}
identify $X$ with the smooth completion of
\begin{align}\label{II:eq:x023-model}
 y^2=f(x)
  &=(x^3-x+1)(x^3-8x^2+3x-7)\nonumber\\
  &=x^6-8x^5+2x^4+2x^3-11x^2+10x-7.
\end{align}
This model is recorded by Schoof \cite[Section~6]{Schoof}.
In these coordinates $\omega=dx/y$ and $\omega_1=x\,dx/y$.
The two points at infinity are
${P_\infty}:y/x^3\to-1$ and ${P_0}:y/x^3\to1$.

The model can also be verified directly from the modular forms.
The following finite coefficient check verifies the identity
\eqref{II:eq:x023-model} for the coordinates
\eqref{II:eq:x023-coordinates}. The expression
$W=(Df_1)f_2-f_1Df_2$ has weight six, and
$W^2-f_2^6f(f_1/f_2)$ is a holomorphic form of weight twelve.
Its coefficients through $q^{24}$ vanish on substituting
\eqref{II:eq:x023-basis}; the weight-index bound is
$12[\operatorname{SL}_2(\mathbb Z):\Gamma_0(23)]/12=24$
\cite[Theorem~9.18]{Stein}. This proves the identity globally.
Since $x=\omega_1/\omega_2$ is the canonical degree-two map of a
genus-two curve, the identity gives its full function field.

We take
\begin{equation}\label{II:eq:x023-lines}
 L=\OO(2{P_\infty}-{P_0}),\qquad M=KL^{-1}=\OO(2{P_0}-{P_\infty}),\qquad
 T=LM^{-1}=\OO(3{P_\infty}-3{P_0}).
\end{equation}
These bundles are acyclic. Indeed, ${P_\infty}$ is ordinary, so
$h^0(\OO(2{P_\infty}))=1$; the only rational functions with these pole
bounds are constants, and a nonzero constant does not vanish at
${P_0}$. Hence $H^0(L)=0$. Since $\deg L=1=g-1$,
Riemann--Roch gives $H^1(L)=0$, and Serre duality gives the same
vanishing for $M$.

The modular unit
\begin{equation}\label{II:eq:x023-unit}
 \mathfrak u=\left(\frac{\eta_{\mathrm D}(\tau)}
                    {\eta_{\mathrm D}(23\tau)}\right)^{12}
 \quad\hbox{has divisor }11({P_0}-{P_\infty}).
\end{equation}
The class ${P_\infty}-{P_0}$ is nonzero, since a function with divisor ${P_\infty}-{P_0}$
would give a degree-one map $X\to\mathbb P^1$. It therefore has
exact order $11$. In particular $T$ is nontrivial of degree zero,
and
\begin{equation}\label{II:eq:x023-extension-dimension}
 \dim H^1(X,T)=1.
\end{equation}
We fix a theta characteristic $\vartheta$ and construct the
connection on $V_E=E\vartheta^{-1}$ as in
Section~\ref{II:sec:kernel}. One possible choice is
$\vartheta=\OO(6{P_\infty}-5{P_0})$: its square differs from $K$ by
$11({P_\infty}-{P_0})$, which is principal by \eqref{II:eq:x023-unit}.
The adjoint connection and all the invariants below are
independent of this choice.

\subsection{A rational kernel and a nonzero cubic}
\label{II:subsec:x023-line-kernel}

We now apply Lemma~\ref{II:lem:hyperelliptic-kernel} to the
model~\eqref{II:eq:x023-model}. Its nonzero cubic will provide
the input for extension recovery.

Write $e_L$ for the rational section represented by $1$ in the divisor
model $L=\OO(2P_\infty-P_0)$. Its divisor as a section of $L$ is
$\operatorname{div}(e_L)=2P_\infty-P_0$. Thus a rational function
multiplies $e_L$ to represent a section of $L$; the same convention
fixes the induced frames on its tensor powers and on $T$.

For the model \eqref{II:eq:x023-model}, the two points at infinity have
the orientation used in Lemma~\ref{II:lem:hyperelliptic-kernel}, and
\[
 S(x)=x^3-4x^2-7x-27,\qquad
 R(x)=f(x)-S(x)^2=-276x^2-368x-736.
\]
Thus $R$ is nonconstant, and the lemma already gives $\cub_L\ne0$.
Let ${e_L}$ be the rational divisor frame of $L=\OO(2{P_\infty}-{P_0})$.
For later reference the kernel in this frame is
\begin{equation}\label{II:eq:x023-line-kernel}
 s_L(z,w)=
 \frac{y(z)+y(w)-S(x(z))+S(x(w))}
      {2(x(w)-x(z))}\,\frac{dx(w)}{y(w)},
\end{equation}
with the factors ${e_L}(z){e_L}(w)^{-1}$ understood. In
\eqref{II:eq:hyperelliptic-factorial-expansion}, use the functions $u,v,a_3$
obtained from this $f$ and $S$. The arithmetic identity
\[
 S''R'-S'R''=-184(9x^2+12x+5)
\]
then gives
\begin{equation}\label{II:eq:x023-cubic}
 \cub_L=-23(9x^2+12x+5)\omega^3.
\end{equation}
By duality $\cub_M=-\cub_L$, so the hypothesis of
Theorem~\ref{II:thm:extension-recovery} holds for this pair.

\subsection{The extension and its canonical connection}
\label{II:subsec:x023-extension}

Choose $R_0=(0,c)$, where $c^2=-7$, and a small disc $U_1$ about
$R_0$. Put $U_0=X\setminus\{R_0\}$. The coordinate $x$ is regular
at $R_0$. On each $U_i$ take a copy of $L\oplus M$. On the
annulus $U_0\cap U_1$, the endpoint frames ${e_L}$ and $dx/{e_L}$
are regular; express the two splittings in these frames and glue by
\begin{equation}\label{II:eq:x023-gluing}
 e_1=e_0G,\qquad
 G(x)=\begin{pmatrix}1&\lambda/x\\0&1\end{pmatrix}.
\end{equation}
Here $e_i$ denotes these local frame rows on the annulus.
The splittings on $U_i$ are direct sums of bundles, not chosen
global holomorphic trivializations of those bundles.
The extension class $\xi_\lambda\in H^1(X,T)$ is represented,
with the convention of Section~\ref{II:sec:extension-recovery},
by $(\lambda/x){e_L}^2/dx$ on $U_0\cap U_1$.
The endpoint bundles are fixed throughout this construction.

The kernel can be written down in the splitting on $U_0$.
Let $k_L(z,w)$ be the coefficient of $dx(w)$ in
\eqref{II:eq:x023-line-kernel}, and set
\begin{equation}\label{II:eq:x023-g}
 g(x)=\frac{y(x)-S(x)+c-27}{2xc}.
\end{equation}
Duality gives $k_M(z,w)=-k_L(w,z)$, while
$k_L(z,R_0)=-g(z)$ and $k_M(R_0,w)=g(w)$. The matrix kernel is
\begin{equation}\label{II:eq:x023-matrix-kernel}
 K_\lambda(z,w)=
 \begin{pmatrix}
 k_L(z,w)&\lambda g(z)g(w)\\
 0&k_M(z,w)
 \end{pmatrix}dx(w).
\end{equation}
The entries refer to the frames ${e_L},dx/{e_L}$ in each variable.
In particular the differential factor in the upper-right entry
cancels the one in the dual frame of $M$; intrinsically that entry
takes values in $L_z\otimes L_w$.

Here $g=x^{-1}+O(1)$ at $R_0$ and is regular at $(0,-c)$.
Changing the frame in the first variable gives the upper entry
$\lambda g(z)g(w)-\lambda k_M(z,w)/x(z)$; its pole at $R_0$
cancels. Changing it in the second variable gives
$\lambda g(z)g(w)+\lambda k_L(z,w)/x(w)$, with the same
cancellation. These are exactly the transformations
$G(z)^{-1}K_\lambda G(w)$, and they contain no term quadratic in
$\lambda$. The diagonal residue remains the identity.
The line-kernel divisor bounds handle the other points. After
these cancellations there are no divisorial poles beyond the
diagonal; holomorphic extension across the remaining isolated
points follows on the smooth surface $X\times X$.
Since $E_{\xi_\lambda}$ is acyclic, uniqueness identifies
\eqref{II:eq:x023-matrix-kernel} with its canonical kernel.

On a disc in $U_0$ where $x$ is a coordinate, use the
determinant-compatible frames
${e_L}/\sqrt{dx}$ and $\sqrt{dx}/{e_L}$ of
$V_{E_{\xi_\lambda}}$. The first regular coefficient of
\eqref{II:eq:x023-matrix-kernel} gives the complete connection matrix
\begin{equation}\label{II:eq:x023-connection}
 \nabla^{V_{E_{\xi_\lambda}}}=d+\mathsf A(x)\,dx,\qquad
 \mathsf A(x)=\begin{pmatrix}u&\lambda g^2\\0&-u\end{pmatrix},
 \qquad u=\frac{S'-y'}{2y}.
\end{equation}
This formula is in meromorphic frames; the connection itself is
holomorphic on all of $X$, by its kernel construction. In
particular it extends across both cusps and across $R_0$.
We use this canonical connection in all subsequent covariant
derivatives of the matrix invariants.

\subsection{The entire extension map}
\label{II:subsec:x023-map}

The Serre pairing turns the extension class into a scalar once
we choose a nonzero section of $KT^{-1}$. We use
\begin{equation}\label{II:eq:x023-dual-generator}
 \zeta={e_L}^{-2}\bigl((3x-16)(y+S)-414\bigr)\omega^2
       \in H^0(X,KT^{-1}).
\end{equation}
To check holomorphicity of $\zeta$, the factor in parentheses has
a zero of order at least two at ${P_\infty}$ and pole order four at ${P_0}$.
The divisor of ${e_L}^{-2}\omega^2$ is $-2{P_\infty}+4{P_0}$.
Thus the product is holomorphic and nonzero.
We normalize the pairing $\langle\, ,\,\rangle$ by residues, as in
the cocycle convention of Section~\ref{II:sec:extension-recovery}.

On the target side, consider the meromorphic differential
\begin{equation}\label{II:eq:x023-target-direction}
 {e_L}^2r(x,y)\omega^6,\qquad
 r(x,y)=\tfrac12 f'(x)y-S'(x)f(x).
\end{equation}
Its bundle is $TK^7=L^2K^6$. The calculation below will show that
it is holomorphic and that every bracket is a multiple of it.
By \eqref{II:eq:x023-extension-dimension} and Serre duality,
$h^0(X,KT^{-1})=h^1(X,T)=1$. With these choices the map from the one-dimensional extension
space is determined by one scalar. The next theorem computes
that scalar.

\begin{theorem}[The explicit recovery map]\label{II:thm:x023-map}
For the bundles \eqref{II:eq:x023-lines}, the canonical map of
Theorem~\ref{II:thm:extension-recovery} is
\begin{equation}\label{II:eq:x023-map}
 c_\xi=-414\langle\xi,\zeta\rangle\,{e_L}^2r(x,y)\omega^6
       \in H^0(X,TK^7).
\end{equation}
Consequently its inverse on its image is
\[
 c_\xi=A_\xi{e_L}^2r\omega^6
       \quad\longmapsto\quad
 \langle\xi,\zeta\rangle=-A_\xi/414.
\]
For the cocycle \eqref{II:eq:x023-gluing},
$\langle\xi_\lambda,\zeta\rangle=-2\lambda(9-8c)/7$.
\end{theorem}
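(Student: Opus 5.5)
The plan is to compute $c_{\xi_\lambda}$ directly from the explicit kernel \eqref{II:eq:x023-matrix-kernel} and the local bracket identity \eqref{II:eq:extension-local-bracket}, then compute the Serre pairing $\langle\xi_\lambda,\zeta\rangle$ by a single residue, and compare. Both sides of \eqref{II:eq:x023-map} are linear in $\xi$ by Lemma~\ref{II:lem:extension-linearity}, and $H^1(X,T)$ is one-dimensional by \eqref{II:eq:x023-extension-dimension}. It therefore suffices to verify the identity for the single family $\xi_\lambda$, or even for $\lambda=1$. The inverse formula then follows formally: $r$ defines a nonzero section and $\langle\cdot,\zeta\rangle$ is an isomorphism $H^1(X,T)\to\C$.

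\textbf{Step 1: the bracket.} On a disc in $U_0$ with coordinate $x$, I work in the meromorphic frames of \eqref{II:eq:x023-connection}. The cubic of $E_{\xi_\lambda}$ is upper triangular with diagonal $a=-23(9x^2+12x+5)/y^3$ and $d=-a$, by \eqref{II:eq:x023-cubic} and duality. Its off-diagonal entry $b$ is read from the $h^2,h^3$ Taylor data of the upper-right kernel entry $\lambda g(z)g(w)$. I would do this through the Taylor operator \eqref{II:eq:kernel-operator-action}, which is valid in non-horizontal frames since $W_3$ is gauge covariant. Because $\mathsf A$ is upper triangular, the Ore expansion of $W_3$ for a triangular operator gives $b$ as a bilinear expression in the $u$-data and in $g$ and its derivatives, linear in $\lambda$. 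Then \eqref{II:eq:extension-local-bracket} with $q_i=\lambda g^2$, $d_{LM,z}=2a$ and $\nabla^T_z=\partial_x+2u$ gives
\[
 c=6\bigl(2a(b'+2ub)-2a'b-4a^2\lambda g^2\bigr)(dx)^7 .
\]
The claim is that after clearing with $y^2=f$ this equals $\lambda\kappa_0\,r(x,y)/y^6\cdot(dx)^7$ for a constant $\kappa_0$ depending on $c$. The normalization by $\omega^6$ is where the factor $y^{-6}$ comes from. The $g$-terms involve $c=\sqrt{-7}$ through \eqref{II:eq:x023-g}, so the computation takes place in $\Q(\sqrt{-7})(x,y)$. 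I expect $\kappa_0=(-414)\cdot(-2)(9-8c)/7$.

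\textbf{Step 2: the pairing.} The class is represented on $U_0\cap U_1$ by $(\lambda/x)e_L^2/dx$. Pairing with $\zeta$ gives
\[
 \langle\xi_\lambda,\zeta\rangle=\res_{R_0}\,\frac{\lambda}{x}\bigl((3x-16)(y+S)-414\bigr)\frac{dx}{y^2},
\]
using $\omega^2=dx^2/y^2$. Since $x$ is a local coordinate at $R_0=(0,c)$, this equals $\lambda\bigl(-16(c-27)-414\bigr)/c^2$. With $c^2=-7$ this is $\lambda(16c-18)/7=-2\lambda(9-8c)/7$, which matches the stated value. The sign must follow the residue convention of Section~\ref{II:sec:extension-recovery}.

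\textbf{Main obstacle.} The hard part is Step 1: extracting $b$ correctly, including the non-horizontal correction terms, and simplifying the resulting rational function to a constant multiple of $r$. I would first reduce the bookkeeping by using the transitions \eqref{II:eq:extension-data-transitions}, where only the combination $\nabla^T(b/d_{LM})-\nu$ matters. I would then test the proportionality numerically by $q$-expansion or at several points before doing the exact symbolic simplification, as in the ancillary package.

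Holomorphicity of $e_L^2 r\omega^6$ then follows a posteriori, since $c_\xi$ is holomorphic and nonzero for $\lambda\ne0$. This can be cross-checked by pole counts at $P_\infty$ and $P_0$, using $\operatorname{div}(e_L^2\omega^6)=10P_\infty+4P_0$ and the orders of $r$ at infinity.
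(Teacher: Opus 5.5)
Your proposal is correct and follows the paper's proof essentially step for step. The paper also works in the rational divisor frame. It reads the upper cubic entry $\lambda B$ from the regularized entry $\lambda(w-x)g(x)g(w)$ via the Ore formulas (with $I_2$ scalar). It obtains your expression, which is $12\lambda\{a_x(B'+2uB-2a_xg^2)-a_x'B\}(dx)^7$ with $b=\lambda B$. It reduces this to $\frac{828\lambda(9-8c)}{7}e_L^2r\omega^6$ by the exact identity of Lemma~\ref{II:lem:x023-rational-certificate}, which separates the $1$ and $y$ components and checks two polynomial identities modulo $c^2+7$; this takes the place of your numerical tests. It then compares with the same residue at $R_0$.

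One point should be stated explicitly. The nonzero value $-2(9-8c)/7$ of that residue is what shows that $\xi_1\ne0$ spans $H^1(X,T)$, and your reduction to a single class depends on this.
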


\begin{lemma}[The rational recovery certificate]
\label{II:lem:x023-rational-certificate}
Use $f$ from~\eqref{II:eq:x023-model}, put
$S=x^3-4x^2-7x-27$, and take $g$ from~\eqref{II:eq:x023-g},
with $y^2=f$ and $c^2=-7$.
Put
\[
 u=\frac{S'-y'}{2y},\qquad
 v=\frac{3(f')^2}{8f^2}-\frac{f''}{4f}
          +\frac{S''}{2y}-\frac{S'f'}{2fy},\qquad
 a_x=-\frac{23(9x^2+12x+5)}{y^3},
\]
and $B=gg''-2(g')^2-2ugg'+(2u^2+4u'-3v)g^2$.
Then
\begin{equation}\label{II:eq:x023-rational-reduction}
 12\{a_x(B'+2uB-2a_xg^2)-a_x'B\}
   =\frac{828(9-8c)}7\,\frac{y'-S'}{y^4}.
\end{equation}
\end{lemma}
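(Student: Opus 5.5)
The statement is a closed identity in the function field $\C(c)(x,y)$, with $y^2=f$ and $c^2=-7$. I will prove it by reducing both sides to the normal form $P_0(x)+P_1(x)y$ with $P_0,P_1\in\Q(c)(x)$ and comparing the two rational functions.

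\textbf{Step 1: normal form for the building blocks.}
\begin{itemize}
\item Derivatives of $y$ are reduced with $y'=f'/(2y)$, $y''=(2ff''-(f')^2)/(4fy)$, and in general by replacing $y^{-1}$ with $y/f$.
\item Write $g=(y-S+c-27)/(2xc)$ as $g_0+g_1y$, with $g_1=1/(2xc)$ and $g_0=(c-27-S)/(2xc)$.
\item Compute $g'$ and $g''$ in the same form.
\item Expand $u$, $u'$ and $v$ likewise. For instance, $u=-f'/(4f)+S'y/(2f)$ after clearing $y$.
\end{itemize}

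\textbf{Step 2: form the intermediate quantities.} Assemble $B$, then $B'+2uB-2a_xg^2$ and $a_x'$. Each product of two normal forms is reduced again using $y^2=f$.

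Before the heavy expansion I will use a structural shortcut. The bracket $12\{a_x(\cdots)-a_x'B\}$ is the $x$-coordinate expression of $c_\xi/\lambda$ from \eqref{II:eq:extension-local-bracket}, up to the factor conventions. Here:
\begin{itemize}
\item $a_x$ is the coefficient of $d_{LM}/2$, by \eqref{II:eq:x023-cubic};
\item $B$ is, up to normalization, the off-diagonal cubic entry, built from $I_2,I_3$ of the matrix Taylor operator.
\end{itemize}
Consequently the left side is a section of $TK^7$ times explicit frame factors. By \eqref{II:eq:x023-target-direction} and $h^0(TK^7)$-type dimension counting, it must be a multiple of $r\,y^{-7}$-type expressions.

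Indeed, $(y'-S')/y^4=(f'/2-S'y)/y^5=(\tfrac12f'y-S'f)/y^6=r/y^6$. So the right side is exactly $\tfrac{828(9-8c)}{7}\,r/y^6$.

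This gives a sanity check on the expected shape, but the lemma is a bare identity, so the proof should still verify it directly.

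\textbf{Step 3: reduce to one-variable identities.} Clear denominators by multiplying through by $y^6$ times a suitable power of $x$ and $f$. Write the left side as $N_0(x)+N_1(x)y$ and the right side as $\tfrac{828(9-8c)}{7}\bigl(-S'f+\tfrac12f'y\bigr)$ after the same scaling.

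The identity then splits into two polynomial identities in $\Q(c)[x]$, which reduce to $\Q[x]$ after using $c^2=-7$ to write every coefficient as $\alpha+\beta c$. These are finite coefficient comparisons. They could equally be discharged by noting that both sides are polynomials of bounded degree and checking enough values of $x$.

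\textbf{Main obstacle.} The main obstacle is the size of the expansion: $B$ involves $g''$ and $v$, so degrees in $x$ become large. I would manage this by pulling out the common factor $1/(x^2c^2)$ coming from $g^2$ and from $gg''$, and by first checking that every term independent of $c$ cancels. That cancellation is expected, since the split extension gives zero bracket.

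In particular, the $c$-dependence enters only through $g$ via the constant $c-27$. So the result is automatically affine-linear in $c$ after reduction, which matches the factor $9-8c$. The residual verification is mechanical, and I would delegate it to the ancillary exact computation described in Appendix~\ref{app:computations}.
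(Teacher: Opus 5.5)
Your plan is essentially the paper's proof. The paper also separates the $1$- and $y$-components over $\Q(c)(x)$, first putting $B$ in the explicit normal form $(V_B+yU_B)/(14xf)$ before differentiating, which keeps the expansion small, and then reduces \eqref{II:eq:x023-rational-reduction} to the two polynomial identities $N_0=-138x^2(9-8c)S'f$ and $N_1=69x^2(9-8c)f'$ modulo $c^2+7$.

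Drop two side remarks, which are wrong though not load-bearing. First, no dimension count fixes the shape in advance: $h^0(TK^7)=13$, and the identification of the brackets with multiples of $r\omega^6$ in Theorem~\ref{II:thm:x023-map} itself uses this lemma. Second, the $c$-free terms do not cancel. After reducing to the form $\alpha+\beta c$, the $c$-free part of the left side equals that of the right side, $\frac{7452}{7}\,\frac{y'-S'}{y^4}\neq0$, and the split extension is $\lambda=0$, which has already been divided out.
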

\begin{proof}
\noindent\textit{1. Separate the quadratic-field components.} Separate the
rational part of $B$ from its coefficient of $y$ before
differentiating. The auxiliary polynomials below are taken in
$\C[x,c]/(c^2+7)$. Put
\begin{align*}
 \alpha&=-23(9x^2+12x+5),& d&=S+27-c,\\
 U_B&=3(27-c)(S+27)+4(21-11c)x-230,\\
 V_B&=46\{9(27x^2+34x+15)-c(9x^2-4x+5)\}-SU_B.
\end{align*}
Thus $a_x=\alpha/y^3$ and $g=(y-d)/(2cx)$.
Substitution in the expression for $B$, followed by $y^2=f$
and $c^2=-7$, gives
\[
 B=\frac{V_B+yU_B}{14xf}.
\]
After differentiating this expression, the left side of
\eqref{II:eq:x023-rational-reduction} becomes
$12(N_0+yN_1)/(14x^2f^3)$, where
\begin{align*}
 N_0={}&x\bigl\{f(\alpha U_B'-\alpha'U_B)
           +\tfrac12\alpha f'U_B+\alpha S'V_B\bigr\}
           -\alpha fU_B+\alpha^2(f+d^2),\\
 N_1={}&x(\alpha V_B'-\alpha'V_B+\alpha S'U_B)
           -\alpha V_B-2\alpha^2d.
\end{align*}
\noindent\textit{2. Verify two polynomial identities.} Only two polynomial identities remain:
\[
 N_0=-138x^2(9-8c)S'f,\qquad
 N_1=69x^2(9-8c)f'.
\]
They follow by inserting the displayed polynomials $f,S,U_B,V_B$
and replacing $c^2$ by $-7$. In particular, the factor $x^2$
cancels the apparent denominator introduced by $g$. \noindent\textit{3. Recombine the components.} Combining
the two terms leaves
\[
 \frac{12(N_0+yN_1)}{14x^2f^3}
 =\frac{828(9-8c)}7\,
       \frac{\tfrac12yf'-S'f}{f^3}
 =\frac{828(9-8c)}7\,\frac{y'-S'}{y^4},
\]
as required.

\end{proof}

\begin{proof}[Proof of Theorem~\ref{II:thm:x023-map}]
It is enough to compute the bracket for the cocycle $\lambda/x$
and compare it with the residue pairing in the same frames.
The nonzero pairing will show that this cocycle spans the
extension space; linearity will then give the map for every class.

\noindent\textit{1. The bracket in the divisor frame.} Use the factorial expansion \eqref{II:eq:hyperelliptic-factorial-expansion}.
Its second coefficient is
\[
 v=\frac{3(f')^2}{8f^2}-\frac{f''}{4f}
          +\frac{S''}{2y}-\frac{S'f'}{2fy}.
\]
For the matrix kernel, the upper entry of the regularized kernel
is $\lambda(w-x)g(x)g(w)$. Since
$a_j=\partial_w^jH|_{w=x}$, differentiation gives the upper
entries of $a_1,a_2,a_3$ as
$\lambda g^2,2\lambda gg',3\lambda gg''$, respectively.
The quadratic coefficient $I_2$ is scalar. Substitution in the
ordered Ore formulas therefore gives, with
$a_x=-23(9x^2+12x+5)/y^3$,
\begin{align}\label{II:eq:x023-bracket-calculation}
 \cub_E&=
 \begin{pmatrix}a_x&\lambda B\\0&-a_x\end{pmatrix}(dx)^3,
 \nonumber\\
 B&=gg''-2(g')^2-2ugg'+(2u^2+4u'-3v)g^2,\nonumber\\
 (\mathcal C_E)_{12}
 &=12\lambda\{a_x(B'+2uB-2a_xg^2)-a_x'B\}(dx)^7.
\end{align}
The connection terms $2uB-2a_xg^2$ come from $[\mathsf A,F]_{12}$.
\noindent\textit{2. The rational identity.} Keeping these terms and using
$y'=f'/(2y)$, $y^2=f$, and $c^2=-7$, the remaining rational
identity follows from
Lemma~\ref{II:lem:x023-rational-certificate}. It gives
\eqref{II:eq:x023-rational-reduction}, including the cancellation of
the denominator introduced by $g$.

\noindent\textit{3. The residue pairing.} The upper entry uses the frame ${e_L}^2/dx$ of $T$. Since
$r=f(y'-S')$ and $\omega=dx/y$, the intrinsic differential is
\[
 c_{\xi_\lambda}
   =\frac{828\lambda(9-8c)}7\,{e_L}^2r\omega^6.
\]
At $R_0$, the coefficient of $\zeta$ in the frame
$dx^2/{e_L}^2$ is $-2(9-8c)/7$. Thus
\[
 \langle\xi_\lambda,\zeta\rangle
 =\res_{R_0}\left(\frac{\lambda}{x}\frac{{e_L}^2}{dx}\,\zeta\right)
 =-\frac{2\lambda(9-8c)}7.
\]
It is nonzero for $\lambda\ne0$, so these classes span $H^1(T)$.
Comparison proves \eqref{II:eq:x023-map}, with the displayed sign.
The final inverse is meaningful because $\zeta$ is a basis of
the dual space.
\end{proof}

We can also follow the polar recovery of
Section~\ref{II:sec:extension-recovery} directly.
Put $d_{LM}=2\cub_L$ and let $Z$ be its zero divisor.
On $U_0$, the quotient of the upper cubic entry by $d_{LM}$ is
\begin{equation}\label{II:eq:x023-polar-section}
 t_0=\frac{\lambda B}{2a_x}\frac{{e_L}^2}{dx}\in T.
\end{equation}
Its polar parts along $Z$ are exactly the polar primitives of
$c_\xi/(6d_{LM}^2)$ by \eqref{II:eq:extension-basic-identity}.
At $R_0$, one has $a_x(R_0)\ne0$. The gluing law gives
$t_1=t_0+(\lambda/x){e_L}^2/dx$, with $t_1$ holomorphic there.
Thus $t_0$ has principal part $-(\lambda/x){e_L}^2/dx$ at $R_0$
and no poles outside $Z\cup\{R_0\}$. The residue theorem for the
global meromorphic one-form $t_0\zeta$ now gives
\begin{equation}\label{II:eq:x023-polar-pairing}
 \sum_{p\in Z}\res_p(t_0\zeta)
   =-\res_{R_0}(t_0\zeta)
   =-\frac{2\lambda(9-8c)}7
   =\langle\xi_\lambda,\zeta\rangle.
\end{equation}
The left side is the Serre pairing of the recovered principal
parts with $\zeta$. Hence the polar algorithm returns the same
coordinate as \eqref{II:eq:x023-map}, including its sign. It
recovers the marked extension class, not just its split or
nonsplit isomorphism type.

\subsection{Horizontal normalization and Fourier coefficients}
\label{II:subsec:x023-fourier}

The rational frame made the kernel calculation explicit. To
read its result as a modular form with a fixed multiplier, we
now use a horizontal frame. Normalize
$\langle\xi,\zeta\rangle=-1/414$, or equivalently
$\lambda=7/(828(9-8c))$, so that $c_\xi={e_L}^2r\omega^6$.
Put $t=\log q=2\pi i\tau$. Then
$D=d/dt$ and $\omega=f_2\,dt$.

In the rational frame $n={e_L}^2/\omega$ of $T$, the induced
connection is
\begin{equation}\label{II:eq:x023-line-connection}
 \nabla^T n=\frac{S'}{y}\,dx\otimes n.
\end{equation}
Indeed, its coefficient in ${e_L}^2/dx$ is $2u$ from
\eqref{II:eq:x023-connection}. The change $n=y{e_L}^2/dx$
adds $y'/y$, leaving $S'/y$.
The horizontal frame at ${P_\infty}$ is $n_h=\psi n$, where
\begin{equation}\label{II:eq:x023-horizontal-series}
 \psi=q^{-3}U(q),\qquad U(0)=1,\qquad
 D\log U=3-3\frac{f_1^2}{f_2}+8f_1+7f_2.
\end{equation}
The factor $q^{-3}$ cancels the zero of $n$ at ${P_\infty}$.
Thus $n_h$ is holomorphic and nonvanishing at the cusp.

The change of frame and parameter now gives one conversion:
\begin{equation}\label{II:eq:x023-frame-conversion}
 \begin{aligned}
 c_\xi&={e_L}^2r\omega^6=n\,r\omega^7\\
      &=n_h\,\frac{r f_2^7}{\psi}(dt)^7.
 \end{aligned}
\end{equation}
Thus $r f_2^7/\psi$ is the coefficient in the horizontal frame
and the parameter $t$. To relate it to the $\tau$ convention of
Section~\ref{II:sec:kernel}, use
$dt=\kappa^{-1}d\tau$, where $\kappa=(2\pi i)^{-1}$.
Writing $F_E,\Phi_E$ for the forms defined there, set
\begin{equation}\label{II:eq:x023-normalized-forms}
 \widehat F=\kappa^3F_E,\qquad
 \widehat\Phi=\kappa^6\Phi_E=6[\widehat F,D\widehat F].
\end{equation}
Thus the pulled-back tensors are
$\cub_E=\widehat F(dt)^3$ and
$\mathcal C_E=\widehat\Phi(dt)^7$.
The two powers of $\kappa$ in
\eqref{II:eq:x023-normalized-forms} record this change of parameter
and the derivative normalization in the bracket. In an adapted
horizontal frame in which the matrix unit
$E_{12}=\left(\begin{smallmatrix}0&1\\0&0\end{smallmatrix}\right)$
represents $n_h$,
\eqref{II:eq:x023-frame-conversion} gives
\begin{equation}\label{II:eq:x023-c14}
 \widehat\Phi=c_{14}E_{12},\qquad
 c_{14}=\frac{r(x,y)f_2^7}{\psi}.
\end{equation}
Expanding \eqref{II:eq:x023-horizontal-series} gives
$U=1+2q+q^2-12q^3-\frac{65}{2}q^4-\frac{151}{5}q^5+\cdots$,
and hence
\begin{align}\label{II:eq:x023-q14}
 c_{14}={}&-6q^9+64q^{10}-230q^{11}+120q^{12}
           +1401q^{13}\nonumber\\
          &-\frac{18736}{5}q^{14}+\frac{3869}{5}q^{15}
           +\frac{260588}{21}q^{16}+O(q^{17}).
\end{align}
Equations \eqref{II:eq:x023-basis}, \eqref{II:eq:x023-horizontal-series},
and \eqref{II:eq:x023-c14} determine every coefficient by rational
arithmetic.

The cubic itself is equally explicit. Its diagonal coefficient is
\begin{equation}\label{II:eq:x023-a-modular}
 a=-23(9f_1^2f_2+12f_1f_2^2+5f_2^3)
   =-207q^4+138q^5+1610q^6+O(q^7).
\end{equation}
A constant upper triangular change of horizontal frame makes
the constant term of $b/a$ zero. In that frame,
\begin{equation}\label{II:eq:x023-full-cubic}
 \widehat F=\begin{pmatrix}a&b\\0&-a\end{pmatrix},\qquad
 b=\frac{q^5}{414}-\frac{8q^6}{621}
       +\frac{q^7}{46}-\frac{q^8}{414}+O(q^9).
\end{equation}
The identity
\begin{equation}\label{II:eq:x023-cubic-primitive}
 D(b/a)=\frac{c_{14}}{12a^2}
\end{equation}
both computes $b$ and verifies
$6[\widehat F,D\widehat F]=c_{14}E_{12}$.
Its right side starts in degree one in $q$, so the prescribed
constant term fixes the primitive.

For an arbitrary $\xi$, keep the same horizontal normalization
of $T$. If $\widehat\Phi_\xi=(A_9q^9+O(q^{10}))E_{12}$, then
linearity and \eqref{II:eq:x023-map} give the explicit recovery rule
\begin{equation}\label{II:eq:x023-fourier-recovery}
 \langle\xi,\zeta\rangle=\frac{A_9}{2484},
 \qquad
 \xi=\xi_\lambda,\quad
 \lambda=-\frac{7A_9}{4968(9-8c)}.
\end{equation}
For the unscaled form $\Phi_E$, multiply its $q^9$ coefficient
by $\kappa^6$ before applying this formula.
In particular the extension splits if and only if $A_9=0$.

\subsection{The multiplier and the noncommutative pair}
\label{II:subsec:x023-multiplier}

We compute the multiplier in two steps. The action on the line
$T$ first determines the character of $c_{14}$; the primitive
$b/a$ then determines the adjoint action on the two matrix forms.
Finally, a scalar factor will specify the full monodromy on
$V_E$.

Let
\[
 A_i(\tau)=\int_{q=0}^{q} f_i(s)\,\frac{ds}{s}.
\]
These are primitives on $\mathbb H$, normalized to have zero
constant Fourier term at ${P_\infty}$. The identity
\begin{equation}\label{II:eq:x023-connection-identity}
 S'(x)f_2=-\frac3{11}D\log\mathfrak u
                    -\frac{58}{11}f_1-\frac{86}{11}f_2
\end{equation}
implies
\begin{equation}\label{II:eq:x023-psi}
 \psi=\mathfrak u^{3/11}
       \exp\left(\frac{58A_1+86A_2}{11}\right),
 \qquad \psi\sim q^{-3}.
\end{equation}
For example, \eqref{II:eq:x023-connection-identity} follows by
multiplying by $f_2$ and checking the resulting holomorphic
weight-four identity through $q^8$. Its bound is $4\cdot24/12=8$.
The exact series also checks this identity.

Since the derivative of $\log\psi$ is an invariant one-form,
$\psi(\gamma\tau)/\psi(\tau)$ is constant. Define
\begin{equation}\label{II:eq:x023-character}
 \chi_\gamma=\frac{\psi(\tau)}{\psi(\gamma\tau)}.
\end{equation}
Then
$c_{14}(\gamma\tau)=j(\gamma,\tau)^{14}\chi_\gamma c_{14}(\tau)$.
This gives the character of the invariant line $T$.
To specify the full adjoint multiplier, put $v=b/a$ using
\eqref{II:eq:x023-cubic-primitive}, and define
\begin{equation}\label{II:eq:x023-full-multiplier}
 d_\gamma=v(\gamma\tau)-\chi_\gamma v(\tau),\qquad
 R_\gamma=
 \begin{pmatrix}\chi_\gamma&-d_\gamma/2\\0&1\end{pmatrix}.
\end{equation}
The numbers $d_\gamma$ are constant. Indeed the derivative of
$v$ transforms with weight two and character $\chi$, by
\eqref{II:eq:x023-cubic-primitive}. At zeros of $a$, the primitive
is interpreted meromorphically: it is the already defined
quotient $b/a$, so it has no logarithmic ambiguity.
Composition gives
$d_{\gamma\delta}=d_\gamma+\chi_\gamma d_\delta$.
Thus $R_{\gamma\delta}=R_\gamma R_\delta$, and direct conjugation
gives both transformation laws
\begin{align}\label{II:eq:x023-adjoint-laws}
 \widehat F(\gamma\tau)
  &=j(\gamma,\tau)^6 R_\gamma\widehat F(\tau)R_\gamma^{-1},
  \nonumber\\
 \widehat\Phi(\gamma\tau)
  &=j(\gamma,\tau)^{14}R_\gamma\widehat\Phi(\tau)R_\gamma^{-1}.
\end{align}
Scalar multiples of $R_\gamma$ have the same adjoint action, so
the transformation laws just obtained do not yet fix the
monodromy on $V_E$. The endpoint line connections determine the
remaining scalar. For the choice $\vartheta=\OO(6{P_\infty}-5{P_0})$ above,
take its rational frame $d$ with
$d^2=\mathfrak u^{-1}\omega$. The frames ${e_L}/d$ of $V_L$ and
$(\omega/{e_L})/d$ of $V_M$ have determinant $\mathfrak u$ and ratio $n$.
Their connection forms therefore have sum $d\log\mathfrak u$ and
difference $-d\log\psi$. The horizontal scalar factors are
\[
 \psi_L=(\psi/\mathfrak u)^{1/2}\sim q^4,
 \qquad \psi_M=(\mathfrak u\psi)^{-1/2}\sim q^7,
\]
with these leading terms fixing the branches on $\mathbb H$.
It follows that the full rank-two multiplier is
\begin{equation}\label{II:eq:x023-rank-two-multiplier}
 \rho_M(\gamma)=\frac{\psi_M(\tau)}{\psi_M(\gamma\tau)},
 \qquad \rho_E(\gamma)=\rho_M(\gamma)R_\gamma.
\end{equation}
Its determinant is one, and its adjoint action is that of
$R_\gamma$. Thus both the coefficient connection and its
monodromy representation have been specified.

The character $\chi$ has infinite image.
The factor $\mathfrak u^{3/11}$ has finite monodromy, whereas
$\alpha=(58\omega_1+86\omega_2)/11$ is a nonzero holomorphic
one-form on $X$. If exponentials of all its periods had finite
image, some positive integer multiple of $\alpha$ would have
all periods in $2\pi i\mathbb Z$. Its real part would then
have zero periods and be the differential of a single-valued
harmonic function on the compact curve. That function is
constant. The real part of $\alpha$, and hence $\alpha$
itself, would vanish, a contradiction.
Thus $c_{14}$ is a cusp form with the specified infinite-image
character $\chi$.

Finally, the rational expression exhibits the cusp orders
without a numerical approximation. The function $r$ has poles
of orders eight at ${P_\infty}$ and four at ${P_0}$, whereas
${e_L}^2\omega^6$ has divisor $10{P_\infty}+4{P_0}$. Therefore $c_\xi$
vanishes to order two at ${P_\infty}$ and is nonzero at ${P_0}$.
The normalized modular orders are nine and seven.
The coefficients through $q^{21}$ detect every nonzero section
of $TK^7$. Indeed, if they all vanish, its modular coefficient is
$O(q^{22})$. Since $(dt)^7=(dq/q)^7$, the intrinsic section
vanishes to order at least $22-7=15$ at ${P_\infty}$, exceeding
$\deg(TK^7)=14$. For the present one-dimensional image,
\eqref{II:eq:x023-fourier-recovery} reduces the test to $q^9$.

In this example the bracket is square zero and has zero trace
and determinant. Nevertheless
\[
 \widehat F\,\widehat\Phi=a\widehat\Phi,\qquad
 \widehat\Phi\,\widehat F=-a\widehat\Phi .
\]
Where $ac_{14}\ne0$, the identity and these two matrices span
the upper triangular algebra. The modular forms thus retain
the extension coordinate in a matrix entry that every scalar
characteristic invariant discards.

\clearpage
\part{Siegel modular varieties and Picard--Fuchs systems}\label{part:III}
\markboth{Part III: Siegel and period equations}{}
The universal expressions have two geometric realizations here.
Sections~\ref{III:sec:siegel-ore}--\ref{III:sec:schwarzians-modular-constructions}
evaluate them in the full symmetric tensor algebra of a Siegel modular
variety. Sections~\ref{III:sec:filtered-connections}--\ref{III:sec:scalarizations}
start instead from a flat symplectic bundle on a curve and a Lagrangian
subbundle. Differentiation of the filtration gives a matrix equation;
its Schwarzian controls the cyclicity divisors in
Theorem~\ref{intro:cyclicity}. Sections~\ref{III:sec:explicit-gm}--\ref{III:sec:igusa-applications}
compute the resulting scalar equations and modular divisors for
genus-two families.

A reader interested in the period equations may begin with
Section~\ref{III:sec:filtered-connections}, using the one-variable
normalization of Part~\ref{part:I}. The acceleration comparison in
Theorem~\ref{III:thm:acceleration-comparison} gives the precise
relation with the ambient construction. No canonical kernel on an
individual fibre is required: the filtered connection varies along
the base, whereas the kernel of Part~\ref{part:II} is attached to a
specified bundle on a fixed curve.

\section{Siegel modular bundles and the Ore calculus}
\label{III:sec:siegel-ore}

Differentiation changes the automorphic type of a modular form.
We include all the resulting symmetric tensors in one algebra, so that
covariant differentiation becomes a derivation. The bundle conventions
follow van der Geer \cite{vdG}, and the symmetric calculus follows
Yang--Yin \cite{YY}; see also the earlier differential operators of
Maass and Resnikoff \cite{Maass,Resnikoff}.

\subsection{Automorphic bundles and tensor types}
\label{III:subsec:automorphic-bundles}

The automorphy representation specifies the bundle in which each
coefficient takes values. Fix $g\geq1$, put $V=\C^g$, and let
\[
 \Hg=\{Z\in\operatorname{Mat}_g(\C):Z^t=Z,\ \operatorname{Im}Z>0\}.
\]
For $\gamma=\left(\begin{smallmatrix}a&b\\c&d\end{smallmatrix}\right)
\in\Sp_{2g}(\mathbb R)$, write
\begin{equation}\label{III:eq:siegel-action}
 \gamma Z=(aZ+b)(cZ+d)^{-1},
 \qquad J_\gamma(Z)=cZ+d.
\end{equation}
The factor of automorphy satisfies
$J_{\gamma_1\gamma_2}(Z)=
J_{\gamma_1}(\gamma_2Z)J_{\gamma_2}(Z)$.
Let $\Gamma\subset\Sp_{2g}(\Z)$ be a torsion-free congruence subgroup and set
$X=\Gamma\backslash\Hg$. For a general congruence subgroup the same
constructions are interpreted equivariantly on the modular orbifold.

A finite-dimensional rational representation
$\rho:\GL(V)\to\GL(V_\rho)$ defines an automorphic bundle $\VV_\rho$
on $X$. Its meromorphic sections are represented upstairs by maps
$f:\Hg\to V_\rho$ satisfying
\begin{equation}\label{III:eq:automorphy-rho}
 f(\gamma Z)=\rho(J_\gamma(Z))f(Z).
\end{equation}
We denote their space by $M_\rho^{\mer}(\Gamma)$, writing
$M_k^{\mer}=M_{\det^k}^{\mer}$ for scalar weight $k$.
Equivalently, they
are the fixed points of the slash action
\[
 (f|_\rho\gamma)(Z)=\rho(J_\gamma(Z))^{-1}f(\gamma Z).
\]
Throughout this section meromorphic sections are taken on the open
quotient $X$. No extension across a compactification boundary is
imposed on this space of meromorphic sections. Holomorphicity and
cusp conditions are asserted separately for the particular classical
modular forms used below.

The standard representation gives the Hodge bundle $\EE$, and we put
$\lambda=\det\EE$. The representation
$\det^k\otimes\Sym^mV$ corresponds to
$\lambda^k\otimes\Sym^m\EE$.
The cotangent bundle has the fundamental identification
\begin{equation}\label{III:eq:cotangent-hodge}
 \Omega_X^1\simeq\Sym^2\EE.
\end{equation}
To see the coordinate convention, write $dZ=(dZ_{ij})$ and define
the symmetric matrix derivative by
\begin{equation*}
 \partial_Z=
 \left(\frac{1+\delta_{ij}}{2}
              \frac{\partial}{\partial Z_{ij}}\right)_{i,j},
 \qquad df=\tr((\partial_Zf)\dZ).
\end{equation*}
Here $Z_{ij}$ with $i\leq j$ are the independent coordinates.
Differentiating \eqref{III:eq:siegel-action} gives
\begin{equation}\label{III:eq:dZ-law}
 d(\gamma Z)=J_\gamma^{-t}\dZ J_\gamma^{-1}.
\end{equation}
Thus $\tr(Q\dZ)$ descends precisely when the symmetric coefficient
matrix satisfies $Q(\gamma Z)=J_\gamma Q(Z)J_\gamma^t$, which is the
$\Sym^2V$ transformation law. We use unnormalized derivatives in the
geometric formulas; the Fourier normalization is $(2\pi i)^{-1}\partial_Z$.

We shall also use the following elementary evaluation principle
\cite[Observation 3.1]{CvG}. If
\[
 P:V_{\rho_1}\oplus\cdots\oplus V_{\rho_s}\longrightarrow V_\sigma
\]
is a $\GL(V)$-equivariant polynomial map and
$f_i\in M_{\rho_i}^{\mer}(\Gamma)$, then
$P(f_1,\ldots,f_s)\in M_\sigma^{\mer}(\Gamma)$.
Indeed, this follows by applying $P$ to
\eqref{III:eq:automorphy-rho}. In particular, the output is a scalar
determinant-weight form when $\sigma=\det^\ell$.

\subsection{A modular connection}
\label{III:subsec:modular-connection}

Differentiating the automorphy factor introduces an inhomogeneous term.
For example, if $f$ has scalar weight $k$, then
\begin{equation}\label{III:eq:scalar-anomaly}
 (\partial_Zf)(\gamma Z)
 =\det(J)^k
   \bigl(J(\partial_Zf)(Z)J^t+kf(Z)Jc^t\bigr),
 \qquad J=J_\gamma(Z).
\end{equation}
We remove this term by fixing a connection. The following normalization
is that of Yang--Yin \cite{YY}.

\begin{definition}\label{III:def:modular-matrix}
A \emph{modular connection matrix} is a meromorphic symmetric matrix
$G:\Hg\to\Sym^2V$ satisfying
\begin{equation}\label{III:eq:G-law}
 G(\gamma Z)=JG(Z)J^t+2Jc^t
 \qquad(\gamma\in\Gamma).
\end{equation}
\end{definition}

\begin{proposition}\label{III:prop:hodge-connection}
Put $\Theta_G=\tfrac12G\dZ$. Then
\begin{equation*}
 \nabla^G=d-\Theta_G
\end{equation*}
defines a meromorphic connection on $\EE$. On an associated bundle
$\VV_\rho$ its expression is
$d-\rho_*(\Theta_G)$, where $\rho_*$ is the induced Lie-algebra
representation. In particular, on $\lambda^k$,
\begin{equation}\label{III:eq:det-connection}
 \nabla^Gf=df-\frac{k}{2}\tr(G\dZ)f
          =\tr\left(\left(\partial_Zf-\frac{k}{2}Gf\right)\dZ\right).
\end{equation}
\end{proposition}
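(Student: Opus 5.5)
The plan is to verify directly that the local expressions $d-\Theta_G$, with $\Theta_G=\tfrac12G\dZ$, transform under $\gamma\in\Gamma$ as a connection on the bundle defined by the standard automorphy factor. Since $\EE$ corresponds to $\rho=\mathrm{std}$, a local section is represented upstairs by $f:\Hg\to V$ with $f(\gamma Z)=J_\gamma(Z)f(Z)$. The claim is that $\nabla^Gf=df-\Theta_Gf$, a $V$-valued one-form, satisfies the same law with an extra factor for $\Omega^1_X$, i.e. $(\nabla^G f)(\gamma Z)$ pulled back by $\gamma$ equals $J\,(\nabla^G f)(Z)$. Equivalently, the connection matrix satisfies the gauge law
\[
 \gamma^*\Theta_G=J\Theta_GJ^{-1}+(dJ)J^{-1}.
\]
Here $\gamma^*$ pulls the form back along $Z\mapsto\gamma Z$, and $dJ=c\,\dZ$ because $J=cZ+d$.

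First I compute $\gamma^*\Theta_G$ using \eqref{III:eq:G-law} and \eqref{III:eq:dZ-law}:
\[
 \tfrac12G(\gamma Z)\,d(\gamma Z)=\tfrac12\bigl(JGJ^t+2Jc^t\bigr)J^{-t}\dZ J^{-1}
 =J\Theta_GJ^{-1}+Jc^tJ^{-t}\dZ J^{-1}.
\]
The remaining task is the symplectic identity $Jc^tJ^{-t}=cJ^{-1}$, which turns the last term into $c\,\dZ J^{-1}=(dJ)J^{-1}$. Rearranged, it says $c^tJ=J^tc$, that is, $c^t(cZ+d)=(Z c^t+d^t)c$ for symmetric $Z$. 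This follows from the relations $c^td=d^tc$ and the symmetry of $c^tc$, which hold for $\gamma\in\Sp_{2g}$. I expect this identity to be the only genuine point of the proof. Writing out $\gamma^tJ_0\gamma=J_0$ gives $a^tc=c^ta$ and $b^td=d^tb$, and $\gamma J_0\gamma^t=J_0$ gives $cd^t=dc^t$. I will need the version with $c^td$ symmetric, so I take care to derive it from the first form. Meromorphicity is clear from that of $G$.

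For an associated bundle $\VV_\rho$, I differentiate $\rho$ along the gauge law. Since $\rho$ is a rational homomorphism, $d(\rho(J))\rho(J)^{-1}=\rho_*((dJ)J^{-1})$ and $\rho(J)\rho_*(\Theta)\rho(J)^{-1}=\rho_*(J\Theta J^{-1})$. Hence $d-\rho_*(\Theta_G)$ satisfies the gauge law for the automorphy factor $\rho(J)$. This is the induced connection, because connections on tensor constructions are obtained from the Lie-algebra action.

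For $\rho=\det^k$ we have $\rho_*(X)=k\tr X$, so the connection term is $\tfrac k2\tr(G\dZ)f$. Then $df=\tr((\partial_Zf)\dZ)$ gives \eqref{III:eq:det-connection}. As a consistency check, this matches \eqref{III:eq:scalar-anomaly}, whose inhomogeneous term $kfJc^t$ cancels against the term $2Jc^t$ in the law for $G$.
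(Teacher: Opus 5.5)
You follow the paper's route: derive the gauge law $\gamma^*\Theta_G=J\Theta_GJ^{-1}+(dJ)J^{-1}$ from \eqref{III:eq:G-law} and \eqref{III:eq:dZ-law}, then pass to associated bundles via $\rho_*$ and use $\rho_*=k\tr$ on $\det^k$. However, the step you flag as the only genuine point is wrong as written.

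\begin{enumerate}
\item From your own display, the extra term is $Jc^tJ^{-t}\,dZ\,J^{-1}$. What you need is therefore $Jc^tJ^{-t}=c$, i.e.\ $Jc^t=cJ^t$. The identity you state, $Jc^tJ^{-t}=cJ^{-1}$, is false in general, and it would produce $cJ^{-1}dZ\,J^{-1}$ rather than $(dJ)J^{-1}$.
\item Your ``rearranged'' form $c^tJ=J^tc$ is also false for $g\ge2$. Expanding with $Z^t=Z$ gives $c^tJ-J^tc=[c^tc,Z]+c^td-d^tc$, and $c^tc$ need not commute with a symmetric $Z$. Take $\gamma=\left(\begin{smallmatrix}I&0\\S&I\end{smallmatrix}\right)$ with $S$ an integral multiple of $\operatorname{diag}(1,0)$, so that $\gamma\in\Gamma$. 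Then the difference is $[S^2,Z]$, which is nonzero whenever $Z_{12}\ne0$. The symmetry of $c^tc$ is irrelevant here.
\item The relation $c^td=d^tc$ is not a symplectic relation. The block conditions say that $a^tc$, $b^td$, $ab^t$ and $cd^t$ are symmetric. For example, $\gamma=\left(\begin{smallmatrix}I&0\\S&I\end{smallmatrix}\right)\left(\begin{smallmatrix}A&0\\0&A^{-t}\end{smallmatrix}\right)$ has $c^td=A^tSA^{-t}$, which is not symmetric in general. So the derivation ``from the first form'' that you promise cannot be carried out.
\end{enumerate}

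The fix is the relation you already wrote down and then set aside, namely $cd^t=dc^t$ from $\gamma J_0\gamma^t=J_0$. Indeed $Jc^t=cZc^t+dc^t$ and $cJ^t=cZc^t+cd^t$, so $Jc^t=cJ^t$. This is exactly the identity the paper invokes. It gives $Jc^tJ^{-t}\,dZ\,J^{-1}=c\,dZ\,J^{-1}=(dJ)J^{-1}$. With this replacement, the rest of your argument is correct and coincides with the paper's proof: the check that $d-\Theta_G$ intertwines $f\circ\gamma=Jf$, the $\rho_*$ computation for $\VV_\rho$, and the $\det^k$ specialization.
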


\begin{proof}
The symplectic identities give $Jc^t=cJ^t$, and $dJ=c\dZ$.
Equations \eqref{III:eq:dZ-law} and \eqref{III:eq:G-law} therefore imply
\[
 \gamma^*\Theta_G
 =J\Theta_GJ^{-1}+dJ\,J^{-1}.
\]
For a coefficient column transforming by $v(\gamma Z)=Jv(Z)$, this
is exactly the rule making $\nabla^Gv$ transform by $J$.
The associated representations give the remaining assertions;
on $\det^k$ the infinitesimal action is $k\tr$.
\end{proof}

This is a particular family of connections on $\EE$, sufficient for
the calculus below. It has an immediate meromorphic construction.
If $h$ is a nonzero scalar modular form of weight $w\ne0$, put
\begin{equation*}
 G_h=\frac{2}{w}\,h^{-1}\partial_Zh.
\end{equation*}
Equation \eqref{III:eq:scalar-anomaly} proves \eqref{III:eq:G-law}.
The same argument permits a locally constant character for $h$.
The connection is holomorphic wherever $h$ is holomorphic and
nonvanishing.

For a scalar form $f$ of weight $k$, set
$Q_f^G=\partial_Zf-\tfrac{k}{2}Gf$.
Its transformation law and determinant are
\begin{equation}\label{III:eq:det-derivative}
 Q_f^G(\gamma Z)=\det(J)^kJQ_f^G(Z)J^t,
 \qquad
 \det Q_f^G\in M_{gk+2}^{\mer}(\Gamma).
\end{equation}
This recovers the determinant construction of Yang--Yin and
illustrates the equivariant evaluation principle. In genus one,
choosing the elliptic discriminant $h=\Delta$ gives
$G_\Delta=(\pi i/3)E_2$, where $E_2$ is the normalized
weight-two Eisenstein series. With $D=(2\pi i)^{-1}d/d\tau$,
division of the coefficient in \eqref{III:eq:det-connection} by
$2\pi i$ gives the Serre derivative $Df-\tfrac{k}{12}E_2f$.

\subsection{The symmetric differential algebra}
\label{III:subsec:symmetric-algebra}

Successive derivatives add cotangent factors. Including their symmetric
powers allows the same differentiation rule to be iterated.
The connection on $\EE$ induces connections on $\lambda$, on all
tensor representations, and, through \eqref{III:eq:cotangent-hodge},
on $\Omega_X^1$. To include noncommutative coefficients, let $\AA$
be a locally free sheaf of finite-rank unital associative $\OO_X$-algebras,
equipped with a meromorphic \emph{algebra connection}:
\begin{equation}\label{III:eq:algebra-connection}
 \nabla^{\AA}(ab)=(\nabla^{\AA}a)b+a\nabla^{\AA}b.
\end{equation}
The basic example is $\AA=A\otimes_\C\OO_X$ for a
finite-dimensional associative algebra $A$, with entrywise
differentiation. A vector bundle $\mathcal W$ with connection
$d+B$ gives another example, $\AA=\End(\mathcal W)$, with
$\nabla^\AA a=da+[B,a]$.
We fix $G$ and $\nabla^\AA$ from now on.

Let $\MM_X$ be the sheaf of meromorphic functions and set
\begin{equation*}
 \RR=\bigoplus_{\substack{k\in\Z\\m,r\geq0}}\RR_{k,m,r},
 \qquad
 \RR_{k,m,r}
 =\MM_X\otimes_{\OO_X}
   \bigl(\AA\otimes\lambda^k\otimes\Sym^m\EE
                       \otimes\Sym^r\Omega_X^1\bigr).
\end{equation*}
Multiplication uses the given order in $\AA$ and the ordinary
commutative products in the symmetric and determinant factors.
In particular,
$\RR_{k,m,r}\RR_{\ell,n,s}\subset\RR_{k+\ell,m+n,r+s}$.
The symmetric variables commute with the coefficient algebra.

The tensor product connection induces
\begin{equation*}
 \delta=\operatorname{sym}\circ\nabla:
 \RR_{k,m,r}\longrightarrow\RR_{k,m,r+1},
\end{equation*}
where $\operatorname{sym}$ multiplies the new cotangent factor
into $\Sym^r\Omega_X^1$.
These are symmetric products, not exterior products.

\begin{proposition}\label{III:prop:graded-derivation}
The map $\delta$ is a $\C$-linear derivation of the unital
associative algebra $\RR$:
\begin{equation}\label{III:eq:delta-leibniz}
 \delta(ab)=\delta(a)b+a\delta(b).
\end{equation}
It is defined on $X$, hence commutes upstairs with the automorphy
actions. Its construction requires no flatness assumption.
\end{proposition}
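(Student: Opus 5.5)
The plan is to reduce everything to local statements about the tensor product connection and the symmetrization map. Work on a coordinate chart of $X$ where $\EE$, $\lambda$, $\AA$ and $\Omega_X^1$ are trivialized. The connection on $\RR_{k,m,r}$ is built from $\nabla^\AA$, the induced connection $\nabla^G$ on $\lambda^k\otimes\Sym^m\EE$, and the induced connection on $\Sym^r\Omega_X^1$ via \eqref{III:eq:cotangent-hodge}. So $\nabla$ is a connection
\[
 \nabla:\RR_{k,m,r}\to\RR_{k,m,r}\otimes\Omega_X^1 ,
\]
and $\delta$ is its composite with the multiplication $\operatorname{sym}:\Sym^r\Omega_X^1\otimes\Omega_X^1\to\Sym^{r+1}\Omega_X^1$. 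Both maps are defined intrinsically on $X$, on meromorphic sections. Hence $\delta$ is a well-defined $\C$-linear sheaf map, and upstairs it commutes with every slash action. This is the descent assertion; it rests only on Proposition~\ref{III:prop:hodge-connection}, and no curvature enters.

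For the Leibniz rule, I would use the Leibniz rule of a tensor product connection with respect to the multiplication map. Write the product $\RR_{k,m,r}\otimes\RR_{\ell,n,s}\to\RR_{k+\ell,m+n,r+s}$ as the tensor product of four maps:
\begin{itemize}
 \item the multiplication of $\AA$;
 \item $\lambda^k\otimes\lambda^\ell\to\lambda^{k+\ell}$;
 \item $\Sym^m\EE\otimes\Sym^n\EE\to\Sym^{m+n}\EE$;
 \item $\Sym^r\Omega^1\otimes\Sym^s\Omega^1\to\Sym^{r+s}\Omega^1$.
\end{itemize}
Each of these is horizontal, i.e.\ parallel for the induced connections. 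For $\AA$ this is exactly \eqref{III:eq:algebra-connection}. For the others, all connections are induced from a single connection on $\EE$ by functoriality, and the symmetric and determinant multiplication maps are $\GL(V)$-equivariant. Concretely, in a local frame the connection form acts by $\rho_*(\Theta)$, and equivariance of a bilinear map $\mu$ gives
\[
 \rho_*(\Theta)\mu(a,b)=\mu(\rho_*(\Theta)a,b)+\mu(a,\rho_*(\Theta)b).
\]
The $\Omega^1_X$ factor needs more care, because the connection on the cotangent bundle is taken to be the one transported from $\Sym^2\EE$. I would note that under \eqref{III:eq:cotangent-hodge} it is still an induced representation connection, so the same equivariance applies.

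Tensoring the four horizontal maps gives
\[
 \nabla(ab)=(\nabla a)b+a(\nabla b)
\]
in $\RR\otimes\Omega^1_X$. In doing this, the new one-form factor of $\nabla b$ must be moved past nothing noncommutative, since the symmetric variables are central with respect to $\AA$. Applying $\operatorname{sym}$ then gives \eqref{III:eq:delta-leibniz}, because $\operatorname{sym}$ is the commutative multiplication in the symmetric algebra and is associative with the existing products.

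The main obstacle I expect is bookkeeping rather than substance. One has to check that the ordering of $\AA$-factors is preserved while the cotangent factor is appended. One also has to confirm that the induced connection on $\Omega^1_X$ via \eqref{III:eq:cotangent-hodge} makes the symmetric multiplication parallel. I would verify both in a local frame, where $\delta=\operatorname{sym}\circ(d-\rho_*(\Theta_G)+\omega_\AA)$ with $\omega_\AA$ a derivation-valued form. There $d$ satisfies Leibniz, and the other two terms act as derivations of the product, which finishes the argument.
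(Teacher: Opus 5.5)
Your proposal is correct and follows essentially the same route as the paper: the induced connections are compatible with the tensor and symmetric multiplication maps, \eqref{III:eq:algebra-connection} keeps the $\AA$-factors in order, applying $\operatorname{sym}$ to the resulting Leibniz rule for $\nabla$ gives \eqref{III:eq:delta-leibniz}, and descent holds because every connection and multiplication map is defined on $X$. One small slip in your bookkeeping: in the term $(\nabla a)b$ it is the new cotangent factor of $\nabla a$ that must be moved past $b$, not that of $\nabla b$; this is harmless because the symmetric factors commute with $\AA$.
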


\begin{proof}
The induced connections commute with tensor multiplication,
and \eqref{III:eq:algebra-connection} preserves the order of the
coefficient factors. Applying symmetric multiplication to their
Leibniz rule gives \eqref{III:eq:delta-leibniz}.
All the connections and multiplication maps descend to $X$.
\end{proof}

On scalar functions $\delta f=df$. On the symmetric algebra of
coordinate differentials, the induced connection has the
particularly simple expression
\begin{equation}\label{III:eq:YY-symmetric-calculus}
 \delta(dZ_{rs})=-\sum_{i,j=1}^gG_{ij}\,dZ_{ri}\,dZ_{sj},
 \qquad\text{or}\qquad
 \delta(dZ)=-dZ\,G\,dZ.
\end{equation}
This is Yang--Yin's symmetric calculus
\cite[Definition 3 and Theorem 2]{YY}.
Together with the Leibniz rule it determines differentiation of
all symmetric cotangent tensors. The notation $\delta^r$
always means iteration in this graded algebra, with the
connection acting on every tensor factor.

Retaining the whole symmetric algebra is important. For $g=2$,
\begin{equation}\label{III:eq:genus-two-decomposition}
 \Sym^2(\Omega_X^1)
 \simeq\Sym^2(\Sym^2\EE)
 \simeq\Sym^4\EE\oplus\lambda^2.
\end{equation}
The determinant component is therefore present already in the
second symmetric derivative. Subsequent equivariant tensor maps
may select either component. With $\AA$-valued inputs we apply
such tensor maps while keeping the coefficient multiplication
ordered. When $\AA=\End(\mathcal W)$, taking the coefficient
trace removes $\AA$ but leaves the automorphic tensor type.

\subsection{The highest symmetric-power component}
\label{III:subsec:highest-component}

Selecting the highest symmetric power replaces a cotangent tensor by
a polynomial in one auxiliary vector. This selection commutes with
multiplication and covariant differentiation. Indeed, symmetric
multiplication gives natural maps
\[
 \pi_{m,r}:
 \Sym^m\EE\otimes\Sym^r(\Sym^2\EE)
       \longrightarrow\Sym^{m+2r}\EE.
\]
Together they define an algebra map
\begin{equation*}
 \pi:\RR\longrightarrow
 \SSS=\bigoplus_{\substack{k\in\Z\\m\geq0}}\SSS_{k,m},
 \qquad
 \SSS_{k,m}=\MM_X\otimes_{\OO_X}
           \bigl(\AA\otimes\lambda^k\otimes\Sym^m\EE\bigr),
\end{equation*}
where $\RR_{k,m,r}$ maps to $\SSS_{k,m+2r}$.
On $\SSS_{k,m}$ define
$\delta^{\mathrm{top}}=\pi_{m,1}\circ\nabla$,
using \eqref{III:eq:cotangent-hodge} for the new cotangent factor.
Compatibility of the induced connections with symmetric
multiplication makes this a derivation
$\SSS_{k,m}\to\SSS_{k,m+2}$ and gives
\[
 \pi\delta=\delta^{\mathrm{top}}\pi.
\]

For its coordinate expression, realize $\Sym^mV$ as homogeneous
polynomials $F[u]$ in a column $u=(u_1,\ldots,u_g)^t$, with
$\Sym^m(M)F[u]=F[M^tu]$. A form of determinant weight $k$
then satisfies
\[
 F(\gamma Z)[u]=\det(J)^kF(Z)[J^tu].
\]
For constant associative coefficients with entrywise connection,
the projected derivative is
\begin{equation}\label{III:eq:projected-derivative}
 \boxed{\quad
 \delta^{\mathrm{top}}F[u]
 =u^t(\partial_ZF[u])u-\frac{k+m}{2}\,G[u]F[u],
 \qquad G[u]=u^tG u.
 \quad}
\end{equation}
Indeed, the determinant factor contributes
$-\tfrac{k}{2}G[u]F[u]$. Evaluating the Hodge connection in the
rank-one direction $uu^t$ gives the infinitesimal action of
$Guu^t/2$. In the polynomial model its contribution is
$-\tfrac12G[u]\sum_i u_i\partial F/\partial u_i
=-\tfrac m2G[u]F[u]$.
This proves the coefficient $k+m$ without a separate automorphy
calculation. Iteration uses the new symmetric degree after
each application.

The map $\pi$ discards representation components. In genus two
it annihilates the $\lambda^2$ summand in
\eqref{III:eq:genus-two-decomposition}; in coordinates,
$dZ_{ij}\mapsto u_i u_j$ sends
$dZ_{11}dZ_{22}-dZ_{12}^2$ to zero.
We shall use \eqref{III:eq:projected-derivative} when this component
is intended, and retain $\RR$ for the full tensor calculus.

\subsection{Ore operators and their action}
\label{III:subsec:ore-operators}

A derivation of an associative algebra determines an Ore extension.
For the symmetric calculus this is
\begin{equation}\label{III:eq:ore-extension}
 \mathscr O_\nabla=\RR[\Dt;\delta],
 \qquad \Dt a=a\Dt+\delta(a)\quad(a\in\RR).
\end{equation}
The symbol $\Dt$ is formal; its action on $\RR$ is $\delta$.
The normal-ordering convention and identities of
Section~\ref{I:sec:local-ore} apply to this differential algebra.

The same action is available on a left $\RR$-module $\mathcal N$
equipped with an operator $\delta_{\mathcal N}$ satisfying
\[
 \delta_{\mathcal N}(as)
    =\delta(a)s+a\delta_{\mathcal N}(s):
 \qquad \Dt\cdot s=\delta_{\mathcal N}(s).
\]
Thus the construction accommodates both algebra-valued and
vector-valued unknowns.

\begin{definition}\label{III:def:homogeneous-operator}
A \emph{homogeneous monic operator of order $n$} is an expression
\begin{equation*}
 L=\sum_{j=0}^n\binom nj a_j\Dt^{\,n-j},
 \qquad a_0=1,\qquad a_j\in\RR_{0,0,j}.
\end{equation*}
This is the binomial convention of \eqref{I:eq:local-L}, with the indicated tensor degrees.
\end{definition}

Both multiplication by $a_j$ and iteration of $\delta$ have a
specified symmetric cotangent degree. Consequently,
\begin{equation*}
 L:\RR_{k,m,r}\longrightarrow\RR_{k,m,r+n}.
\end{equation*}
If the coefficients are global sections, this is an operator on
the corresponding meromorphic automorphic sections. Under $\pi$,
$a_j$ has type $\AA\otimes\Sym^{2j}\EE$, and the projected
operator raises the symmetric $\EE$-degree by $2n$.
All Ore-algebra identities apply to the fixed differential
algebra $(\RR,\delta)$; dependence on its chosen connections
is a separate question.

For $g>1$, equation $LF=0$ is a tensor-valued system of partial
differential equations: in a local symmetric basis, each
coefficient of the output tensor must vanish. The Ore symbol represents the iterates of $\delta$.
Away from coefficient poles, the holomorphic solution sheaf is the
kernel of the resulting map of sheaves of complex vector spaces.
For example, with the flat cotangent connection on a $d$-dimensional
affine chart, $\delta^2f=0$ says that every second partial derivative
vanishes. Its local solutions are the affine functions, a space of
dimension $d+1$. Thus the Ore order alone does not determine the
solution rank. Restriction to geodesics is described by
\eqref{III:eq:geodesic-jet-restriction}.

On a parameterized curve, an algebra connection gives the
ordinary one-variable construction directly. If $\mathcal B$
is an associative coefficient bundle with connection, its
meromorphic sections form an algebra $\mathcal K$, and
$\nabla_t^{\mathcal B}$ is a derivation. The relevant Ore algebra is
\[
 \mathcal K[\partial_t;\nabla_t^{\mathcal B}],
 \qquad
 \partial_t a=a\partial_t+\nabla_t^{\mathcal B}a.
\]
This applies, in particular, to connections pulled back to a
curve wherever the pullback is defined. The acceleration of the curve relates symmetric derivatives to
ordinary directional derivatives. If $\nabla^T$ is the dual tangent
connection, then for a scalar function $f$ and a tangent field $v$,
\[
 (\delta^2f)(v,v)=v(v(f))-df(\nabla^T_vv).
\]
A Picard--Fuchs system on a curve will be treated using its
Gauss--Manin connection in the one-variable construction.

\Needspace{30\baselineskip}

\subsection{Notation for ambient tensors and period equations}
\label{III:subsec:notation}
The ambient construction uses tensor-valued derivatives. Along a
parameter curve these become ordinary matrix covariants.
\begin{center}
\small\renewcommand{\arraystretch}{1.16}
\begin{tabularx}{\textwidth}{@{}lXl@{}}
\toprule
Symbol & Meaning & Definition \\
\midrule
$\AA,\RR,\SSS$ & Coefficient algebra bundle, symmetric tensor algebra, and highest-component algebra & \S\ref{III:subsec:symmetric-algebra}, \S\ref{III:subsec:highest-component} \\
$G,\Theta$ & Ambient modular connection matrix; $\Theta=G[u]/2$ & \eqref{III:eq:G-law}, Definition~\ref{III:def:projected-derivative} \\
$\mathfrak d,\mathcal D_{\mathrm{raw}},w$ & The same local raw derivative; effective weight $w=k+m$ & Definition~\ref{III:def:projected-derivative} \\
$\HH,\EE,K_U$ & Flat symplectic bundle, Lagrangian subbundle, and $K_U=\Omega_U^1$ & \S\ref{III:subsec:filtered-input} \\
$\mathsf K$ & Symmetric pairing matrix $(Q(\omega_i,\nD\omega_j))$ & \eqref{III:eq:K-three} \\
$\cov,I,S$ & $\partial_t+[a_1,\cdot]$, matrix Schwarzian, and its trace-free part & \eqref{III:eq:I-S-D-three} \\
$q,C,c$ & $\tr(S^2)$, $[I,\cov I]$, and $\tr(C^2)$ & \eqref{III:eq:q-c-three} \\
$\operatorname{Wr}_s,\mathcal W(v)$ & Cyclic determinant of a section; primitive cyclic determinant of a Hodge wedge & \eqref{III:eq:cyclic-W-four}, \S\ref{III:subsec:primitive-four} \\
$\mathfrak j$ & Joint determinant of three binary quadratics & \eqref{III:eq:joint-J-four} \\
$\mathscr B_m(t_0)$ & Algebra generated by the Schwarzian jets through order $m$ at $t_0$ & Definition~\ref{III:def:jet-algebra} \\
$\rho,\delta_*$ & Scalar coefficients of the auxiliary equation for the trace-free entries & \eqref{III:eq:rho-delta-six} \\
$\alpha_Q,\mathfrak q$ & Connection-change one-form $\tr(Q\,dZ)$; binary quartic & \S\ref{III:subsec:full-connection-change}, \S\ref{III:subsec:binary-covariants-two} \\
\bottomrule
\end{tabularx}
\end{center}
$W_m$ continues to denote a higher Schwarzian. The Euler derivative
$\theta=z\partial_z$ is used only in the Yang--Zudilin dictionary.
The symbols $h,H$ are local inputs or pairings, with definitions in
their displayed formulas; $H_\pm$ and $H_4$ are distinct named
polynomials. The coefficient algebra $\mathcal K$ is not the pairing
matrix $\mathsf K$ or the canonical line $K_U$.

\section{Higher Schwarzians and invariant-theoretic modular constructions}
\label{III:sec:schwarzians-modular-constructions}

The normalized coefficients of an Ore operator transform by
conjugation under a change of coefficient frame. Covariant
differentiation and equivariant tensor maps then produce further
covariants, brackets, and scalar modular forms.
Throughout this section the connections of Section~\ref{III:sec:siegel-ore}
define the differential algebra $(\RR,\delta)$. We write
$M_{k,m}^{\mer}=M_{\det^k\otimes\Sym^mV}^{\mer}$, with determinant
weight first. Coefficients in $\AA$ are understood when specified.

\subsection{Realization in the symmetric tensor algebra}
\label{III:subsec:tensor-realization}

Apply Proposition~\ref{I:prop:local-normalization} and
Theorem~\ref{I:thm:rp-higher} to the differential algebra
$(\RR,\delta)$ of Section~\ref{III:sec:siegel-ore}. For a homogeneous
monic operator as in Definition~\ref{III:def:homogeneous-operator}, write
\begin{equation}\label{III:eq:adapted-derivation}
 \mathsf T=\Dt+a_1,\qquad
 \Delta_L C=[\mathsf T,C]=\delta C+[a_1,C].
\end{equation}
The normalized coefficients $I_j$ and the higher Schwarzians $W_j$
are the evaluations of the universal polynomials of
Part~\ref{part:I} in this algebra. Since $\Delta_L$ raises symmetric
cotangent degree by one and the universal formulas are homogeneous,
\[
 I_j\in\RR_{0,0,j}\quad(2\le j\le N),\qquad
 W_j\in\RR_{0,0,j}\quad(3\le j\le N).
\]
Their automorphy follows from the equivariance of $\delta$.
The gauge laws of Proposition~\ref{I:prop:local-covariance} and
\eqref{I:eq:local-Delta-law} give simultaneous conjugation of all their
covariant jets. Universal freeness in
Theorem~\ref{I:thm:local-generation} is asserted before evaluation;
the specialized tensors may satisfy additional relations.

Ordinary parameter covariance is a separate application of
Theorem~\ref{I:thm:rp-higher}. For a Picard--Fuchs equation it concerns
the one-variable curve operator of Section~\ref{III:subsec:ore-operators},
with its own coefficient connection. The automorphic tensor construction
does not identify differentiation along an arbitrary curve with
unrestricted ambient differentiation.

For $\AA=\End(\mathcal W)$, with its induced algebra connection,
\[
 \delta\tr C=\tr(\Delta_L C).
\]
The coefficient connection preserves trace and the commutator has
trace zero. Trace therefore commutes with covariant differentiation,
but retains the symmetric cotangent factors; scalar automorphic forms
require the further contractions developed below.

\subsection{The Serre comparison and the first bracket}
\label{III:subsec:serre-first-bracket}

Pairing the derivatives of two forms gives the simplest cancellation
of the modular connection. For a determinant-weight input
$f\in M_{k,0}^{\mer}(\Gamma;\AA)$, $\delta f$ has type
$\AA\otimes\lambda^k\otimes\Omega_X^1$.
If the coefficient algebra is constant with entrywise differentiation,
its matrix expression is
$Q_f^G=\partial_Zf-\tfrac{k}{2}Gf$.
Thus the determinant in \eqref{III:eq:det-derivative} is a polynomial
operation applied after a linear covariant derivative.

Two modular connection matrices differ by a tensor. Write
$G'=G+2Q$; then $Q(\gamma Z)=JQ(Z)J^t$. Keeping the coefficient
connection fixed gives
\begin{equation*}
 \delta_{G'}f=\delta_Gf-k\,\tr(Q\dZ)f.
\end{equation*}
Consequently the ordered first bracket
\begin{equation}\label{III:eq:first-bracket-full}
 \mathcal B_1(f,h)=k f\,\delta h-\ell(\delta f)h,
 \qquad f\in M_{k,0}^{\mer},\quad h\in M_{\ell,0}^{\mer},
\end{equation}
is independent of $G$ and lies in
$\AA\otimes\lambda^{k+\ell}\otimes\Omega_X^1$.
The cancellation uses the centrality of $\tr(Q\dZ)$ and preserves
the order of $f$ and $h$. With entrywise coefficients its matrix is
$k f\partial_Zh-\ell(\partial_Zf)h$.

For $g=1$, suppressing the cotangent symbol and using
$D=(2\pi i)^{-1}d/d\tau$, the choice $G=G_\Delta$ gives
\begin{equation*}
 \mathcal S_k f=Df-\frac{k}{12}E_2f,
 \qquad
 [f,h]_1=k f\mathcal S_\ell h-\ell(\mathcal S_kf)h
        =k fDh-\ell(Df)h.
\end{equation*}
The correction is holomorphic in this genus-one realization.
For $g\geq2$, Hofmann--Kohnen's theorem excludes a globally
holomorphic matrix $G$ with \eqref{III:eq:G-law} on a congruence quotient
\cite{HK}. It does not exclude holomorphic expressions such as
\eqref{III:eq:first-bracket-full}, in which the connection cancels.

There is a useful geometric obstruction behind this distinction.
A holomorphic $G$ would induce a holomorphic connection on
$\lambda=\det\EE$. Its restriction to a smooth complete curve
would be flat, since exterior two-forms on a curve vanish, and hence
would have degree zero. Any complete curve of positive Hodge degree
therefore obstructs such a connection. This is the line-bundle
case of the connection obstruction studied by Atiyah \cite{Atiyah};
Hofmann--Kohnen give the stated global nonexistence theorem for the
modular transformation law.

For gauge covariants the adapted derivative also includes
$[a_1,\cdot]$, as in \eqref{III:eq:adapted-derivation}. Below, independence
from $G$ always means that the inputs and their coefficient connection
are held fixed.

\subsection{Higher brackets in the highest symmetric-power component}
\label{III:subsec:higher-brackets}

\begin{definition}[Projected derivative and effective weight]\label{III:def:projected-derivative}

The projection of \S\ref{III:subsec:highest-component} gives a precise
comparison in every order. In the polynomial model put
\begin{equation*}
 \Theta=\frac12G[u],\qquad
 \mathfrak d F=\delta^{\mathrm{top}}F+(k+m)\Theta F
       \quad(F\in\SSS_{k,m}).
\end{equation*}
Locally $\mathfrak d$ is independent of $G$ and is a derivation.
For constant coefficients,
$\mathcal D_{\mathrm{raw}}F=u^t(\partial_ZF)u$, with $u$ held fixed. In general it also includes the fixed
algebra connection. The relevant integer for this projected calculus
is $w=k+m$; after one differentiation it becomes $w+2$.
Thus $\mathfrak d=\mathcal D_{\mathrm{raw}}$ on these local polynomial
representatives, with the coefficient connection included in both.
\end{definition}

The second derivative requires a scalar correction. Its transformation
law will also cancel the connection terms in higher orders.

\begin{lemma}\label{III:lem:R-theta}
The expression
\begin{equation}\label{III:eq:R-theta}
 R_\Theta=\mathcal D_{\mathrm{raw}}\Theta-\Theta^2
\end{equation}
is a scalar-coefficient modular tensor of type $\Sym^4\EE$.
For another modular connection, write $\Theta'=\Theta+q$.
Then $q\in M_{0,2}^{\mer}$ and
\begin{equation*}
 R_{\Theta'}=R_\Theta+\delta^{\mathrm{top}}q-q^2.
\end{equation*}
\end{lemma}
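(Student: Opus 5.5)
We work in the projected calculus of Definition~\ref{III:def:projected-derivative} and prove both assertions by local transformation laws, the pattern of the genus-one identity \eqref{II:eq:change-eta-curvature}. The key fact is that $\Theta=\tfrac12G[u]$ plays the role of a connection form for $\delta^{\mathrm{top}}$. On each $\SSS_{k,m}$ we have $\delta^{\mathrm{top}}F=\mathfrak dF-w\Theta F$ with $w=k+m$. Here $\mathfrak d=\mathcal D_{\mathrm{raw}}$ is the raw derivative, so it is independent of $G$ and a derivation. Also, $\Theta$ is scalar and central.

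We begin with the connection change. If $G'=G+2Q$, then $Q$ transforms as a $\Sym^2V$ tensor, as noted in Section~\ref{III:subsec:serre-first-bracket}. Hence $q=\Theta'-\Theta=Q[u]=u^tQu$ lies in $M_{0,2}^{\mer}$; its effective weight is $w=2$. We then expand directly:
\[
 R_{\Theta'}=\mathcal D_{\mathrm{raw}}\Theta+\mathcal D_{\mathrm{raw}}q-\Theta^2-2\Theta q-q^2
 =R_\Theta+(\mathcal D_{\mathrm{raw}}q-2\Theta q)-q^2 .
\]
By \eqref{III:eq:projected-derivative} with $k=0$, $m=2$, the middle term is exactly $\delta^{\mathrm{top}}q$. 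This proves the displayed formula. It also shows that $R_\Theta$ changes by a modular tensor of type $\Sym^4\EE$, because $\delta^{\mathrm{top}}$ maps $\SSS_{0,2}$ to $\SSS_{0,4}$ and $q^2\in\SSS_{0,4}$. Since $\Theta$ is scalar-valued, $R_\Theta$ has scalar coefficients.

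It remains to show that $R_\Theta$ itself is modular; this is the main step. We would compute the inhomogeneous law of $\Theta$ from \eqref{III:eq:G-law}: $\Theta(\gamma Z)[u]$ equals $\tfrac12 G[J^tu]$ plus an affine term. To state it, write $\widetilde u=J^tu$. Then $\Theta(\gamma Z)[u]=\Theta(Z)[\widetilde u]+(u^tJc^tu)$. The next task is to differentiate this law along the direction $uu^t$, using the chain rule and $dJ=c\,dZ$. For the raw derivative of the pulled-back function in direction $uu^t$, we pass to the direction $\widetilde u\widetilde u^t$ via \eqref{III:eq:dZ-law}, which produces an extra term linear in $\Theta$. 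We then expect
\[
 (\mathcal D_{\mathrm{raw}}\Theta)(\gamma Z)[u]=(\mathcal D_{\mathrm{raw}}\Theta)(Z)[\widetilde u]+2\,\Theta(Z)[\widetilde u]\,(u^tJc^tu)+(u^tJc^tu)^2 .
\]
The last term comes from differentiating $u^tJc^tu$, using $Jc^t=cJ^t$ and the symplectic relations. Subtracting the square of the $\Theta$ law then cancels both inhomogeneous terms and gives $R_\Theta(\gamma Z)[u]=R_\Theta(Z)[J^tu]$. This is the $\Sym^4$ law with $k=0$.

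The obstacle is verifying the precise coefficients in the derivative of $u^tJc^tu$; they must equal $(u^tJc^tu)^2$ and the cross term. As a check and fallback, we would test with a local $G_h=\tfrac2w h^{-1}\partial_Zh$, for which $\Theta=\tfrac1w h^{-1}\mathcal D_{\mathrm{raw}}h$. Then $R_\Theta=\tfrac1w h^{-1}\mathcal D_{\mathrm{raw}}^2h-(1+\tfrac1w)\Theta^2$. This can be compared with the iterated projected derivative of $h$, which is modular. Modularity of $R_{\Theta_h}$ combined with the change formula would then give it for every $G$, avoiding the direct computation.
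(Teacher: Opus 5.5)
Your proof is correct. The connection-change half is exactly the paper's argument. Your direct differentiation of \eqref{III:eq:G-law} for the modularity of $R_\Theta$ is the check the paper mentions in its first sentence, and the coefficients you flagged as the obstacle do come out right.

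Along $Z(\epsilon)=Z+\epsilon\widetilde u\widetilde u^t$ one has $\gamma Z(\epsilon)=\gamma Z+\epsilon uu^t+O(\epsilon^2)$ and $\dot J=c\widetilde u\widetilde u^t$. Hence $\dot{\widetilde u}=\kappa\widetilde u$ with $\kappa=u^tJc^tu$, and this produces the cross term $2\kappa\Theta(Z)[\widetilde u]$. That term comes from the $Z$-dependence of $\widetilde u$ inside $\Theta(Z)[\widetilde u]$, not from the change of direction itself. Also $\frac{d}{d\epsilon}(u^tJc^tu)=(u^tc\widetilde u)(\widetilde u^tc^tu)=\kappa^2$, because the two factors are transposes of one scalar; no symplectic identity is needed.

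The paper's displayed argument packages the same computation differently. It restricts to the rank-one line $Z_0+tuu^t$ and notes that $\gamma$ maps it exactly onto a rank-one line, with the M\"obius reparametrization $s=t/(1+\kappa t)$. The claim then reduces to the one-variable law $\Theta_1(s)=(1+\kappa t)^2\Theta_0(t)+\kappa(1+\kappa t)$. For that law, homogeneity of $d\Theta/ds-\Theta^2$ is the classical fact behind \eqref{II:eq:change-eta-curvature}. Your version is a pointwise chain-rule check. The paper's explains the identity as a Riccati law along M\"obius-parametrized lines and avoids the matrix bookkeeping. Its $\kappa=u^tJ_0^{-1}cu$ equals your $\kappa$ once its $u$ is identified with your $\widetilde u$.

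You should discard the fallback. For $G_h$ one gets $R_{\Theta_h}=\tfrac1w h^{-1}\mathcal D_{\mathrm{raw}}^2h-(w+1)\Theta_h^2$, not a coefficient $1+\tfrac1w$. The proposed comparison with iterated projected derivatives of $h$ is also empty, because $\delta^{\mathrm{top}}h=\mathcal D_{\mathrm{raw}}h-w\Theta_hh=0$ for this connection. What does hold is $R_{\Theta_h}=\mathcal B_2(h,h)/(w^2(w+1)h^2)$. However, the paper derives the modularity of $\mathcal B_2$ from this very lemma, so that route is circular unless you import the Eholzer--Ibukiyama result.
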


\begin{proof}
One may check the first assertion by differentiating
\eqref{III:eq:G-law}. The underlying one-variable calculation is as
follows. Fix $Z_0,u$, put $J_0=cZ_0+d$, $v=J_0^{-t}u$, and
$\kappa=u^tJ_0^{-1}cu$. The symplectic action sends the rank-one line
$Z(t)=Z_0+tuu^t$ to
\[
 \gamma Z(t)=\gamma Z_0+svv^t,
 \qquad s=\frac{t}{1+\kappa t}.
\]
Writing $\Theta_0(t)=\tfrac12G(Z(t))[u]$ and
$\Theta_1(s)=\tfrac12G(\gamma Z(t))[v]$, the connection law gives
\[
 \Theta_1(s)=(1+\kappa t)^2\Theta_0(t)+\kappa(1+\kappa t).
\]
Hence $d\Theta_1/ds-\Theta_1^2
=(1+\kappa t)^4(d\Theta_0/dt-\Theta_0^2)$.
At $t=0$, varying $u$ proves the asserted tensor law.
Finally, expand \eqref{III:eq:R-theta} after replacing $\Theta$ by
$\Theta+q$, and use
$\delta^{\mathrm{top}}q=\mathcal D_{\mathrm{raw}}q-2\Theta q$.
\end{proof}

The tensor $R_\Theta$ belongs to the symmetric algebra; exterior
curvature uses alternating products. For
$F\in M_{k,m}^{\mer}(\Gamma;\AA)$, define corrected derivatives
$F_{[r]}$ by
\begin{equation}\label{III:eq:corrected-jet-recurrence}
 \begin{aligned}
 F_{[0]}&=F,\qquad F_{[1]}=\delta^{\mathrm{top}}F,\\
 F_{[r+1]}&=\delta^{\mathrm{top}}F_{[r]}
              +r(w+r-1)R_\Theta F_{[r-1]}\quad(r\geq1).
 \end{aligned}
\end{equation}
Then $F_{[r]}\in M_{k,m+2r}^{\mer}(\Gamma;\AA)$.
In particular, $F_{[2]}=(\delta^{\mathrm{top}})^2F+wR_\Theta F$.
These are corrected derivatives, not just iterates of
$\delta^{\mathrm{top}}$.

\begin{lemma}[Closed corrected derivatives]
\label{III:lem:corrected-jet-closed}
For the recurrence~\eqref{III:eq:corrected-jet-recurrence}, with
$w=k+m$, one has
\begin{equation}\label{III:eq:corrected-jet-closed}
 F_{[r]}=\sum_{j=0}^r(-1)^j\binom rj
           (w+r-j)_{j}\,\Theta^j\mathfrak d^{r-j}F,
 \qquad (x)_j=x(x+1)\cdots(x+j-1),\quad(x)_0=1.
\end{equation}
\end{lemma}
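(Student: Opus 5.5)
The plan is an induction on $r$, modelled on the proof of Lemma~\ref{II:lem:higher-derivative-closed}. The argument uses three facts, all from Definition~\ref{III:def:projected-derivative} and Lemma~\ref{III:lem:R-theta}. First, on an element of $\SSS$ of effective weight $w'$ one has $\delta^{\mathrm{top}}=\mathfrak d-w'\Theta$. Second, $\mathfrak d$ is a derivation. Third, $\Theta$ and $R_\Theta$ are scalar, so they commute with every coefficient in $\AA$, and
\[
 R_\Theta=\mathfrak d\Theta-\Theta^2 .
\]
The second expression for $R_\Theta$ holds because $\mathfrak d=\mathcal D_{\mathrm{raw}}$ on local representatives. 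The term $\Theta^j\mathfrak d^{r-j}F$ has effective weight $w+2(r-j)+2j=w+2r$, since $\Theta$ has type $\Sym^2$ in the polynomial model. Hence every summand of \eqref{III:eq:corrected-jet-closed} has the common effective weight $w+2r$, and applying $\delta^{\mathrm{top}}$ to the closed form amounts to applying $\mathfrak d-(w+2r)\Theta$.

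The cases $r=0,1$ are immediate, since $F_{[1]}=\mathfrak dF-w\Theta F$. For the induction step, assume \eqref{III:eq:corrected-jet-closed} for $r-1$ and $r$. Compute $F_{[r+1]}=(\mathfrak d-(w+2r)\Theta)F_{[r]}+r(w+r-1)(\mathfrak d\Theta-\Theta^2)F_{[r-1]}$. By the Leibniz rule, $\mathfrak d$ applied to $\Theta^j\mathfrak d^{r-j}F$ gives $j\Theta^{j-1}(\mathfrak d\Theta)\mathfrak d^{r-j}F+\Theta^j\mathfrak d^{r+1-j}F$. The plan is to organize the result into two groups of terms.

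The first group consists of the terms containing $\mathfrak d\Theta$. They come from the derivative of $\Theta^j$ in $F_{[r]}$ and from the $R_\Theta$ correction. These must cancel, and the cancellation reduces to the coefficient identity
\[
 j\binom rj(w+r-j)_j=r(w+r-1)\binom{r-1}{j-1}(w+r-j)_{j-1},
\]
which follows from $(w+r-j)_j=(w+r-j)_{j-1}(w+r-1)$ together with $j\binom rj=r\binom{r-1}{j-1}$. This is the analogue of the cancellation of $D\eta$ in the proof of Lemma~\ref{II:lem:higher-derivative-closed}. The sign of the correction in \eqref{III:eq:corrected-jet-recurrence} is opposite to that in \eqref{II:eq:higher-derivative-recurrence}, but $\Theta$ enters \eqref{III:eq:corrected-jet-closed} with the alternating sign $(-1)^j$, and I expect these two sign differences to compensate. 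Checking this compensation carefully is the main obstacle.

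The second group consists of the remaining terms, which are monomials $\Theta^j\mathfrak d^{r+1-j}F$. Each receives three contributions:
\begin{itemize}
 \item $\Theta^j\mathfrak d^{r+1-j}F$ from $\mathfrak d$ acting on the $\mathfrak d$-factor;
 \item $-(w+2r)\Theta\cdot\Theta^{j-1}\mathfrak d^{r+1-j}F$ from the scalar part of $\delta^{\mathrm{top}}$;
 \item $-r(w+r-1)\Theta^2\cdot\Theta^{j-2}\mathfrak d^{r+1-j}F$ from the $-\Theta^2$ part of $R_\Theta$.
\end{itemize}
The final step is to show that these sum to $(-1)^j\binom{r+1}{j}(w+r+1-j)_j$, which amounts to the binomial and Pochhammer identity
\[
 \binom{r+1}j(w+r+1-j)_j=\binom rj(w+r-j)_j+(w+2r)\binom r{j-1}(w+r+1-j)_{j-1}-r(w+r-1)\binom{r-1}{j-2}(w+r+1-j)_{j-2}.
\]
I would verify it by dividing through by $\binom rj(w+r+1-j)_{j-2}$ and comparing the resulting polynomials in $j$ and $w$. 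As a sanity check, the case $j=1$ reduces to $(r+1)(w+r)=r(w+r-1)+(w+2r)$.

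Because every step uses only the centrality of $\Theta$ and $R_\Theta$, no factors of $F$ are reordered, and the argument holds for $\AA$-valued $F$.
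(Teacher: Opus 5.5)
Your proposal is correct and is essentially the paper's induction. It uses the same rewriting $\delta^{\mathrm{top}}F_{[r]}=\mathfrak dF_{[r]}-(w+2r)\Theta F_{[r]}$ and the same cancellation identity for the $\mathfrak d\Theta$-terms.

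The sign you flag is automatic, and the reason is not the one you suggest. $R_\Theta=\mathfrak d\Theta-\Theta^2$ plays the role of $-\Omega_\eta$ in Lemma~\ref{II:lem:higher-derivative-closed}, so the two recurrences actually carry the same sign. The two competing coefficients have signs $(-1)^j$ and $(-1)^{j-1}$, so they cancel directly.

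The only difference from the paper is bookkeeping. The paper substitutes $\mathfrak d\Theta=R_\Theta+\Theta^2$, cancels the $R_\Theta$-terms, and is left with a two-term Pascal step. Eliminating $R_\Theta$ instead, as you do, gives your three-term identity, and that identity holds. For $j\ge2$, remove the common factor $(w+r+1-j)_{j-2}(w+r-1)$ rather than dividing by $\binom rj$, which vanishes at $j=r+1$. What remains reduces to Pascal's rule and $j\binom rj=r\binom{r-1}{j-1}$; the cases $j\le1$ are immediate.
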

\begin{proof}
The cases $r=0,1$ are the definitions. In the induction step use
$\delta^{\mathrm{top}}F_{[r]}=
\mathfrak dF_{[r]}-(w+2r)\Theta F_{[r]}$ and
$\mathfrak d\Theta=R_\Theta+\Theta^2$.
The terms containing $R_\Theta$ cancel because
\[
 j\binom rj(w+r-j)_j
 =r(w+r-1)\binom{r-1}{j-1}(w+r-j)_{j-1}
 \qquad(1\leq j\leq r).
\]
Pascal's identity combines the remaining powers of $\Theta$ into
the formula for $r+1$. These are polynomial identities in $w$,
so the induction includes zero and negative weights.
\end{proof}

\begin{proposition}\label{III:prop:projected-RC}
Let $F\in M_{k,m}^{\mer}$ and $H\in M_{\ell,p}^{\mer}$, and put
$w=k+m$, $v=\ell+p$. For $n\geq0$, the expression
\begin{equation}\label{III:eq:corrected-bracket}
 \mathcal B_n(F,H)
 =\sum_{i=0}^n(-1)^i
       \binom{w+n-1}{n-i}\binom{v+n-1}{i}
       F_{[i]}H_{[n-i]}
\end{equation}
belongs to $M_{k+\ell,m+p+2n}^{\mer}(\Gamma;\AA)$.
It is independent of $G$, with the inputs and their coefficient
connection fixed, and has the local formula
\begin{equation}\label{III:eq:raw-bracket}
 \mathcal B_n(F,H)
 =\sum_{i=0}^n(-1)^i
       \binom{w+n-1}{n-i}\binom{v+n-1}{i}
       (\mathfrak d^i F)(\mathfrak d^{n-i}H).
\end{equation}
Binomial coefficients have their polynomial meaning for arbitrary
integer weights. Products retain the indicated order.
\end{proposition}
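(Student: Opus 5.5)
The plan follows the one-variable argument of Proposition~\ref{II:prop:RC-properties}, with Lemma~\ref{III:lem:corrected-jet-closed} in the role of Lemma~\ref{II:lem:higher-derivative-closed} and $\Theta$ in the role of $\eta$. First, modularity and type follow termwise from \eqref{III:eq:corrected-bracket}. By the recurrence \eqref{III:eq:corrected-jet-recurrence} and Lemma~\ref{III:lem:R-theta}, each $F_{[i]}$ lies in $M_{k,m+2i}^{\mer}(\Gamma;\AA)$ and each $H_{[n-i]}$ lies in $M_{\ell,p+2n-2i}^{\mer}(\Gamma;\AA)$. The product in $\SSS$ multiplies determinant weights and symmetric degrees. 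Hence every summand lies in $M_{k+\ell,m+p+2n}^{\mer}$, and the coefficient factors keep their order. This uses only that $\pi$ is an algebra map and that $\delta^{\mathrm{top}}$ descends, which was proved in \S\ref{III:subsec:highest-component}.

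Next I would derive the local formula \eqref{III:eq:raw-bracket}. Substitute the closed form \eqref{III:eq:corrected-jet-closed} for $F_{[i]}$, with weight $w$, and for $H_{[n-i]}$, with weight $v$. Since $\Theta$ is a scalar, it is central in $\SSS$, so all powers of $\Theta$ can be moved to the front without reordering $\mathfrak d^aF$ and $\mathfrak d^bH$. For fixed $a,b$ with $s=n-a-b\ge0$, collect the coefficient of $\Theta^s(\mathfrak d^aF)(\mathfrak d^bH)$. Write $i=a+u$, where $u$ is the number of $\Theta$'s contributed by $F_{[i]}$, so that $H$ contributes $s-u$. The resulting coefficient should factor as
\[
 \frac{(-1)^a(w+a)_{n-a}(v+b)_{n-b}}{a!\,b!}\cdot(-1)^{s}\sum_{u=0}^{s}\frac{(-1)^u}{u!(s-u)!},
\]
up to an overall sign. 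This is the same shape as \eqref{II:eq:RC-cancellation}, with $(w+r-j)_j$ now playing the role of $(k+i)_{n-i}$. The key check is the factorial bookkeeping. In $\binom{w+n-1}{n-i}\binom ij(w+i-j)_j$ with $j=u$, the product $\binom{w+n-1}{n-i}(w+a)_u$ should simplify to $(w+a)_{n-a}/(n-i)!$. Combining this with $\binom iu=i!/(u!a!)$ and the matching factors on the $H$ side leaves $1/(u!(s-u)!)$ times quantities independent of $u$. The inner sum equals $(1-1)^s/s!$, which vanishes for $s>0$. For $s=0$ the surviving coefficient is exactly the one in \eqref{III:eq:raw-bracket}. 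I would carry out this binomial identity, valid polynomially in $w$ and $v$, as the main computational step. It is the only place where an error could occur, and the rising-factorial index shift $(w+r-j)_j$ versus $(k+i)_{n-i}$ is where I expect the obstacle to lie.

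Independence of $G$ then follows. By Definition~\ref{III:def:projected-derivative}, $\mathfrak d$ equals $\mathcal D_{\mathrm{raw}}$ on local representatives, and it involves only $\partial_Z$ together with the fixed coefficient connection, not $G$. The right side of \eqref{III:eq:raw-bracket} therefore does not involve $G$. Since the local formula holds for every choice of $G$ with the inputs and coefficient connection held fixed, the global modular section $\mathcal B_n(F,H)$ is the same for all modular connections.
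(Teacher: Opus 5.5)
Your proof is correct. The bookkeeping you flagged closes: $\binom{w+n-1}{n-i}(w+a)_u=(w+a)_{n-a}/(n-i)!$ on the $F$ side, and $\binom{v+n-1}{i}(v+b)_{s-u}=(v+b)_{n-b}/i!$ on the $H$ side. Pairing these with $\binom iu/i!$ and $\binom{n-i}{s-u}/(n-i)!$ gives $1/(u!\,a!\,(s-u)!\,b!)$. All of these are polynomial identities in $w,v$, so no division by a weight occurs.

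The paper handles the cancellation step differently. It does not extract the coefficient of every monomial $\Theta^s(\mathfrak d^aF)(\mathfrak d^bH)$. Instead it reads off from Lemma~\ref{III:lem:corrected-jet-closed} that $\partial F_{[r]}/\partial\Theta=-r(w+r-1)F_{[r-1]}$, with the $\mathfrak d$-jets held fixed. It then checks that the bracket coefficients $c_i$ satisfy the two-term relation $(i+1)(w+i)c_{i+1}+(n-i)(v+n-i-1)c_i=0$. Hence $\partial\mathcal B_n/\partial\Theta$ telescopes to zero, and setting $\Theta=0$ gives \eqref{III:eq:raw-bracket}.

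The paper's route is shorter. It needs only a ratio of adjacent coefficients, and it shows where the binomial coefficients come from. Your route transplants the proof of Proposition~\ref{II:prop:RC-properties} essentially verbatim, making the $\eta\leftrightarrow\Theta$ parallel explicit, and yields the coefficient of each $\Theta$-monomial directly.

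One small wording slip: in $\SSS$ the determinant weights and symmetric degrees add rather than multiply. Your conclusion $M_{k+\ell,m+p+2n}^{\mer}$ is nonetheless correct.
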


\begin{proof}
\noindent\textit{1. Tensor type.} The tensor type follows from the
recurrence and Lemma~\ref{III:lem:R-theta}.

\noindent\textit{2. Cancel the scalar connection.} By
Lemma~\ref{III:lem:corrected-jet-closed},

as a polynomial in $\Theta$ with the
$\mathfrak d$-jets fixed,
$\partial F_{[r]}/\partial\Theta=-r(w+r-1)F_{[r-1]}$.
Let $c_i$ be the coefficient of $F_{[i]}H_{[n-i]}$ in
\eqref{III:eq:corrected-bracket}. The identity
\[
 (i+1)(w+i)c_{i+1}+(n-i)(v+n-i-1)c_i=0
\]
shows that consecutive terms in
$\partial\mathcal B_n/\partial\Theta$ cancel.
\noindent\textit{3. Recover the raw formula.} Thus the bracket is independent of $\Theta$; setting $\Theta=0$
in \eqref{III:eq:corrected-jet-closed} proves \eqref{III:eq:raw-bracket}.
This is a polynomial identity, so the argument covers all weights
without division by a weight. It also applies to associative
coefficients, since only the central factors have been moved.
\end{proof}

For scalar inputs, \eqref{III:eq:raw-bracket} is the symmetric-power
Rankin--Cohen construction of Eholzer--Ibukiyama
\cite[\S6, Proposition 6.1]{EI}, with the displayed normalization.
The recurrence provides its expression in the present connection
calculus. For entrywise coefficients it is holomorphic on $\Hg$
when the inputs are holomorphic; the poles of $G$ have canceled.
It produces the specified highest component. Other components of
higher tensor derivatives require their own equivariant contractions.

The need for the correction is already visible in degree two.
Write $\partial=\delta^{\mathrm{top}}$, with the type updated at
each iteration. Then
\begin{equation*}
 \begin{aligned}
 \mathcal B_2(F,H)={}&
   \binom{w+1}{2}F\partial^2H
   -(w+1)(v+1)(\partial F)(\partial H)
   +\binom{v+1}{2}(\partial^2F)H\\
 &\quad+\frac{wv(w+v+2)}2 R_\Theta FH.
 \end{aligned}
\end{equation*}
For $g=1$ and Fourier-normalized derivatives,
$\Theta=E_2/12$ and
$R_\Theta=D(E_2/12)-(E_2/12)^2=-E_4/144$.
Thus the last term becomes
$-k\ell(k+\ell+2)E_4fh/288$ for scalar inputs.
This is the correction to the expression obtained from unmodified
iterates of the Serre derivative. More generally,
$(2\pi i)^{-n}\mathcal B_n(f,h)$ in the unnormalized convention
of \eqref{III:eq:raw-bracket} is the classical elliptic bracket. For the established elliptic theory
of covariant higher derivatives and modular differential operators,
see \cite{Zagier1994,NSZ}.

The same construction can be applied to gauge covariants of $L$:
replace the algebra connection by
$\nabla^{\AA}+[a_1,\cdot]$, so that the projected derivative is
$\Delta^{\mathrm{top}}=\delta^{\mathrm{top}}
+[\pi(a_1),\cdot]$.
The scalar tensor $R_\Theta$ is unchanged, and the recurrence and
its brackets preserve conjugation covariance. As before, connection independence of the operation is asserted with
its inputs fixed; an input $I_j(L)$ or $W_j(L)$ may itself depend on $G$.

\subsection{Connection changes in the full symmetric calculus}
\label{III:subsec:full-connection-change}

The determinant summand of \eqref{III:eq:genus-two-decomposition} requires
a different cancellation from the highest symmetric-power summand.
We give the complete order-two calculation for determinant-weight
inputs, allowing the fixed associative coefficient connection of
Section~\ref{III:sec:siegel-ore}. All products of differential symbols in
this subsection are symmetric.

Write $H=G/2$. For $G'=G+2Q$, put
\[
 {\alpha_Q}=\tr(Q\dZ),\qquad r_Q=\tr(Q\dZ Q\dZ).
\]
Here $Q$ has scalar entries and ${\alpha_Q}$ is a meromorphic one-form.
The difference of the two derivations is an $\OO_X$-linear derivation:
\begin{equation}\label{III:eq:full-derivation-change}
 \delta_{G'}=\delta_G-\mathcal A_Q.
\end{equation}
On a determinant-weight input $f$, and on the cotangent symbols,
\[
 \mathcal A_Q f=k{\alpha_Q}f,\qquad
 \mathcal A_Q(dZ)=2dZ\,Q\,dZ.
\]
On other associated bundles it is the symmetrized infinitesimal
action of $QdZ$. Thus this rule determines the difference on the
whole algebra $\RR$, with the coefficient connection fixed.
For a one-form $\alpha=\tr(PdZ)$, define
\[
 \mathcal C_Q(\alpha)=2\tr(PdZQdZ).
\]
The trace here contracts the $g\times g$ indices. If $P$ has entries
in $\AA$, it does not contract the coefficient algebra.

\begin{proposition}\label{III:prop:full-second-change}
For $f\in M_{k,0}^{\mer}(\Gamma;\AA)$ and $\alpha=\delta_Gf$,
\begin{align}\label{III:eq:full-second-change}
 \delta_{G'}^2f={}&\delta_G^2f-2k{\alpha_Q}\alpha-\mathcal C_Q(\alpha)
                   -k(\delta_G{\alpha_Q})f\notag\\
                 &\hspace{18mm}+(k^2{\alpha_Q}^2+2kr_Q)f.
\end{align}
More generally, changing the connection changes the first $r$
covariant jets by a triangular transformation with identity leading
term; its coefficients use derivatives of $Q$ through order $r-1$.
\end{proposition}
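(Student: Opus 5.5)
The argument applies \eqref{III:eq:full-derivation-change} twice and uses the Leibniz rule for the derivation $\mathcal A_Q$. Write $\delta=\delta_G$ and $\mathcal A=\mathcal A_Q$. Then
\[
 \delta_{G'}^2f=(\delta-\mathcal A)(\delta f-k\alpha_Qf)
 =\delta^2f-k\,\delta(\alpha_Qf)-\mathcal A(\alpha)+k\,\mathcal A(\alpha_Qf).
\]
The terms are evaluated one at a time, using only that $\delta$ and $\mathcal A$ are derivations of $\RR$ (Proposition~\ref{III:prop:graded-derivation}) and that $\alpha_Q$, $r_Q$ are central. Centrality holds because $Q$ has scalar entries and the cotangent symbols commute with $\AA$.

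First, $\delta(\alpha_Qf)=(\delta\alpha_Q)f+\alpha_Q\alpha$, which gives the terms $-k(\delta_G\alpha_Q)f-k\alpha_Q\alpha$.

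Second, $\alpha=\tr(P\,dZ)$ is an element of type $\AA\otimes\lambda^k\otimes\Omega^1_X$, with $P=Q_f^G$-type coefficients. So $\mathcal A$ acts on it through the determinant factor and through the cotangent symbol. The determinant factor gives $k\alpha_Q\alpha$. The symbol gives $\tr(P\,\mathcal A(dZ))=2\tr(P\,dZ\,Q\,dZ)=\mathcal C_Q(\alpha)$. Hence $-\mathcal A(\alpha)=-k\alpha_Q\alpha-\mathcal C_Q(\alpha)$.

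Third, $\mathcal A(\alpha_Qf)=\mathcal A(\alpha_Q)f+\alpha_Q\cdot k\alpha_Qf$. Here $\alpha_Q=\tr(Q\,dZ)$ is a scalar one-form, so $\mathcal A(\alpha_Q)=\mathcal C_Q(\alpha_Q)=2\tr(Q\,dZ\,Q\,dZ)=2r_Q$. This yields $k(2r_Q+k\alpha_Q^2)f$.

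Summing the three contributions gives $-2k\alpha_Q\alpha$ together with the remaining terms of \eqref{III:eq:full-second-change}.

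The one point needing care is the action of $\mathcal A$ on a one-form with coefficient $P$. One must check that $P$, which lives in $\lambda^k\otimes\AA$ with $\AA$ untouched, contributes exactly $k\alpha_Q$ and nothing from the coefficient algebra. This is what "coefficient connection held fixed" means, so the derivation-difference rule stated before the proposition fixes it. I would also confirm the factor $2$ in $\mathcal A(dZ)$ by comparing \eqref{III:eq:YY-symmetric-calculus} for $G$ and $G'$: $-dZ\,G'\,dZ=-dZ\,G\,dZ-2\,dZ\,Q\,dZ$.

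For the general triangularity statement, I would induct on $r$. Write $\delta_{G'}^r f=(\delta_G-\mathcal A)^r f$ and expand. The operator $\mathcal A$ is $\OO_X$-linear and raises cotangent degree by one through multiplication by $Q$-dependent symbols, so it does not differentiate $f$. Moving every $\delta_G$ past the $\mathcal A$'s by commutators then expresses $\delta_{G'}^rf$ as $\delta_G^rf$ plus linear combinations of $\delta_G^jf$ with $j<r$. The coefficients of these combinations are polynomial in $Q$ and its $\delta_G$-derivatives. Each commutator $[\delta_G,\mathcal A]$ costs one derivative of $Q$, and at most $r-1$ of them occur. This gives a unipotent triangular change of the first $r$ jets, with identity leading term.
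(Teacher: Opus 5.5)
Your proof is correct and is essentially the paper's argument: apply $\delta_{G'}=\delta_G-\mathcal A_Q$ twice, evaluate $\mathcal A_Q\alpha=k\alpha_Q\alpha+\mathcal C_Q(\alpha)$ and $\mathcal A_Q\alpha_Q=2r_Q$, collect terms by the Leibniz rule, and obtain the triangular assertion by the same iteration, counting the derivatives that fall on $Q$. One small imprecision: $\mathcal A_Q$ acts on higher jets by $\OO_X$-linear contractions with $Q$ (as in $\mathcal C_Q$), not only by multiplication, so your ``linear combinations of $\delta_G^jf$'' should be read as $\OO_X$-linear tensor maps applied to those jets.
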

\begin{proof}
Apply \eqref{III:eq:full-derivation-change} twice. Since
\[
 \mathcal A_Q\alpha=k{\alpha_Q}\alpha+\mathcal C_Q(\alpha),\qquad
 \mathcal A_Q{\alpha_Q}=2r_Q,
\]
the Leibniz rule gives \eqref{III:eq:full-second-change}.
Iteration proves the triangular assertion: the next derivative
raises the highest derivative of the connection difference by at
most one. None of these operations changes the leading ordinary jet.
\end{proof}

There is a full symmetric correction which removes $\delta_G{\alpha_Q}$
from this formula. Define
\begin{equation}\label{III:eq:full-Riccati}
 \mathcal K_G=\tr\bigl((dH-HdZ H)dZ\bigr).
\end{equation}
The entries of $dH$ are ordinary differentials; the final product is
symmetrized. This tensor extends $R_\Theta$:
$\pi\mathcal K_G=R_\Theta$.

\begin{lemma}\label{III:lem:full-Riccati}
The expression $\mathcal K_G$ is a meromorphic section of
$\Sym^2\Omega_X^1$. Under a change $G'=G+2Q$,
\begin{equation*}
 \mathcal K_{G'}=\mathcal K_G+\delta_G{\alpha_Q}-r_Q.
\end{equation*}
\end{lemma}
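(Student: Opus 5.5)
The two assertions are of different natures, so I will prove them separately and by direct computation.

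\emph{Tensoriality of $\mathcal K_G$.} I will follow the one-variable argument of Lemma~\ref{III:lem:R-theta}. I will not differentiate \eqref{III:eq:G-law} in full matrix form. Instead I will compute the transformation law of the ordinary differential $dH$ under $\gamma$, where $H=G/2$. Differentiating $H(\gamma Z)=JHJ^t+Jc^t$ with $dJ=c\,dZ$ gives
\[
 d(H\circ\gamma)=c\,dZ\,HJ^t+J\,dH\,J^t+JH\,dZ^tc^t+c\,dZ\,c^t .
\]
On the other side, $(H\circ\gamma)\,d(\gamma Z)\,(H\circ\gamma)$ is expanded using \eqref{III:eq:dZ-law}, i.e.\ $d(\gamma Z)=J^{-t}dZJ^{-1}$. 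The key step is to check that, after multiplying by $d(\gamma Z)$ and taking the trace, every term involving $c$ cancels in
\[
 \tr\bigl((dH-H\,dZ\,H)\,dZ\bigr).
\]
What should remain is $\tr\bigl(J(dH-HdZH)J^t\,J^{-t}dZJ^{-1}\bigr)$, which is the original expression. The identities needed are the symplectic relations $Jc^t=cJ^t$ (so $cJ^t$ is symmetric), the cyclicity of the trace, and the fact that all products of $dZ$-entries are symmetric. The cross terms $c\,dZ\,HJ^t$ and $JH\,dZ\,c^t$ from $dH$ should cancel against the cross terms $Jc^t\,J^{-t}dZJ^{-1}\,JHJ^t$ and its mirror. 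The term $c\,dZ\,c^t$ should cancel against $Jc^t J^{-t}dZJ^{-1}cJ^t$, which reduces to $c\,dZ\,c^t$ by the same relations. This bookkeeping is the main obstacle. I will organise it by writing everything in the form $\tr(X\,dZ\,Y\,dZ)$ and using symmetrisation to swap $X$ and $Y$.

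As a fallback, I would restrict to rank-one directions $dZ=uu^t$. There the expression becomes $\tfrac12\mathcal D_{\mathrm{raw}}\Theta\cdot$(scaling) minus $\Theta^2$. Lemma~\ref{III:lem:R-theta} then gives its law, and polarisation recovers the full quadratic form, since a symmetric quadratic form on $\Sym^2V$ is not determined by its values on rank-one elements alone. That last point is why I prefer the direct trace computation. It also yields $\pi\mathcal K_G=R_\Theta$ on evaluation at $dZ\mapsto uu^t$.

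\emph{Change of connection.} This part is pure algebra. Substituting $H'=H+Q$ into $\mathcal K_{G'}$ gives
\[
 \mathcal K_{G'}=\mathcal K_G+\tr(dQ\,dZ)-\tr(Q\,dZ\,H\,dZ)-\tr(H\,dZ\,Q\,dZ)-r_Q .
\]
By symmetrisation the two middle terms are equal, so together they contribute $-2\tr(HdZQdZ)$. Next I compute $\delta_G\alpha_Q$ with $\alpha_Q=\tr(Q\,dZ)$. Using the Leibniz rule and \eqref{III:eq:YY-symmetric-calculus} in the form $\delta(dZ)=-dZ\,G\,dZ=-2\,dZ\,H\,dZ$, I get
\[
 \delta_G\alpha_Q=\tr(dQ\,dZ)-2\tr(Q\,dZ\,H\,dZ).
\]
Here $Q$ is treated through its scalar entries as ordinary functions, since the connection acts only on the symbols $dZ$. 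Comparing the two expansions gives $\mathcal K_{G'}=\mathcal K_G+\delta_G\alpha_Q-r_Q$. Finally, meromorphy is clear from the meromorphy of $G$.
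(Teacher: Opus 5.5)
Your proposal is correct and follows the paper's proof. The paper also differentiates $H(\gamma Z)=JHJ^t+Jc^t$ and cancels the three $c$-terms of $d(H\circ\gamma)$ against the cross terms of $(JH+c)\,dZ\,(HJ^t+c^t)$, using $Jc^t=cJ^t$. This already gives $\gamma^*B_H=JB_HJ^t$ for $B_H=dH-H\,dZ\,H$ at the matrix level, so that step needs no trace or symmetrization. The connection-change formula is then obtained exactly as you do it, from $\delta(dZ)=-dZ\,G\,dZ$.

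You were right not to rely on your rank-one fallback, but the sentence claiming that polarisation recovers the full form is wrong. Restricting to $dZ=uu^t$ only sees $\pi\mathcal K_G=R_\Theta$ and loses the complementary component, which is the $\lambda^2$ summand in genus two.
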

\begin{proof}
Put $B_H=dH-HdZ H$, a symmetric-matrix-valued one-form.
The affine law for $H$ is $H(\gamma Z)=JHJ^t+Jc^t$.
Differentiating this identity, and using $dJ=c\,dZ$, gives
\[
 \gamma^*B_H=JB_HJ^t:
\]
the three extra terms in $d(JHJ^t+Jc^t)$ cancel the cross terms
in $(JHJ^t+Jc^t)d(\gamma Z)(JHJ^t+Jc^t)$.
Pairing with $d(\gamma Z)=J^{-t}dZJ^{-1}$ and symmetrizing proves
descent. Replacing $H$ by $H+Q$ gives
\[
 B_{H+Q}-B_H=dQ-HdZQ-QdZH-QdZQ.
\]
Its trace pairing with $dZ$ is exactly
$\delta_G{\alpha_Q}-r_Q$, by \eqref{III:eq:YY-symmetric-calculus}.
\end{proof}

In particular, $\mathcal K_G$ is not the exterior curvature
$(\nabla^G)^2=-d(HdZ)+(HdZ)\wedge(HdZ)$.
Their tensor symmetries and their restrictions to curves differ.
For a determinant-weight input define
\begin{equation*}
 \operatorname{Hess}_{k,G}(f)=\delta_G^2f+k\mathcal K_G f.
\end{equation*}
Proposition~\ref{III:prop:full-second-change} and
Lemma~\ref{III:lem:full-Riccati} imply
\begin{equation}\label{III:eq:corrected-Hessian-change}
 \operatorname{Hess}_{k,G'}(f)=\operatorname{Hess}_{k,G}(f)
 -2k{\alpha_Q}\alpha-\mathcal C_Q(\alpha)+(k^2{\alpha_Q}^2+kr_Q)f.
\end{equation}
This transformation contains $Q$ but no derivatives of $Q$.
The cancellation can also be read directly in coordinates. In an
entrywise coefficient frame put
$h=\tr(HdZ)$ and let $d_0$ differentiate the coefficient functions
while keeping the symmetric symbols $dZ_{ij}$ fixed. Direct expansion
gives
\begin{equation}\label{III:eq:corrected-Hessian-raw}
 \operatorname{Hess}_{k,G}(f)=d_0^2f-2\tr((\partial_Zf)dZHdZ)
 -2kh\,d_0f+\bigl(k^2h^2+k\tr(HdZHdZ)\bigr)f.
\end{equation}
This also makes the absence of derivatives of $H$ explicit.

\subsection{Both genus-two components of the second bracket}
\label{III:subsec:both-second-brackets}

In genus two the two projections have different scalar correction
factors. We compute them before forming the brackets. Assume $g=2$,
and let $\Pi_+$ and $\Pi_-$ be the equivariant maps from
$\Sym^2(\Sym^2\EE)$ to $\Sym^4\EE$ and $\lambda^2$, respectively.
The first is evaluation at $dZ=uu^t$. To fix the second normalization,
replace $dZ$ by $X=\left(\begin{smallmatrix}x&y\\y&z\end{smallmatrix}\right)$.
For a homogeneous quadratic polynomial $F(X)$, set
\begin{equation*}
 \Pi_-F=\frac12\left(\partial_x\partial_z
                         -\frac14\partial_y^2\right)F.
\end{equation*}
The value represents the coefficient of a section of $\lambda^2$.
Its embedding back into $\Sym^2\Omega^1$ is
$(4/3)(\Pi_-F)\det(dZ)$: indeed
$\Pi_-(xz-y^2)=\tfrac12(1+\tfrac12)=3/4$, which accounts for
the factor $4/3$. In particular,
\[
 \Pi_-\bigl(\tr(PdZ)\tr(QdZ)\bigr)
 =\langle P,Q\rangle
 :=\tfrac12(P_{11}Q_{22}+P_{22}Q_{11})-P_{12}Q_{12}.
\]
For associative entries this expression retains the displayed order.

The different corrections in the two components follow from the
two-dimensional matrix identity
\[
 \tr(PXQX)=\tr(PX)\tr(QX)-2\langle P,Q\rangle\det X,
\]
used below with $Q$ scalar. It gives
\begin{equation*}
 \begin{array}{c|cc}
 &\Pi_+&\Pi_-\\ \hline
 \mathcal C_Q(\alpha)&2\Pi_+({\alpha_Q}\alpha)&-\Pi_-({\alpha_Q}\alpha)\\
 r_Q&\Pi_+({\alpha_Q}^2)&-\tfrac12\Pi_-({\alpha_Q}^2).
 \end{array}
\end{equation*}
Put $\varepsilon_+=1$ and $\varepsilon_-=-1/2$.
Equation~\eqref{III:eq:corrected-Hessian-change} therefore reads
\begin{equation}\label{III:eq:component-Hessian-change}
 \Pi_\pm\operatorname{Hess}_{k,G'}(f)
 =\Pi_\pm\operatorname{Hess}_{k,G}(f)
 -2(k+\varepsilon_\pm)\Pi_\pm({\alpha_Q}\alpha)
 +k(k+\varepsilon_\pm)\Pi_\pm({\alpha_Q}^2)f.
\end{equation}

\begin{proposition}[The two second-order brackets]
\label{III:prop:full-second-brackets}
Let $f\in M_{k,0}^{\mer}(\Gamma;\AA)$ and
$h\in M_{\ell,0}^{\mer}(\Gamma;\AA)$, with their coefficient
connection fixed. For either sign, the expression
\begin{align}\label{III:eq:full-component-bracket}
 \mathcal B_{2,\pm}(f,h)=\Pi_\pm\bigl[&
 \tfrac12 k(k+\varepsilon_\pm)f\operatorname{Hess}_{\ell,G}(h)
 -(k+\varepsilon_\pm)(\ell+\varepsilon_\pm)
                         (\delta_Gf)(\delta_Gh)\notag\\
 &+\tfrac12\ell(\ell+\varepsilon_\pm)
                              \operatorname{Hess}_{k,G}(f)h\bigr]
\end{align}
is independent of $G$. The targets are, respectively,
$M_{k+\ell,4}^{\mer}(\Gamma;\AA)$ and
$M_{k+\ell+2,0}^{\mer}(\Gamma;\AA)$.
For entrywise coefficients its formula is obtained by replacing
$\operatorname{Hess}_{k,G}(f),\delta_Gf$ by $d_0^2f,d_0f$ in
\eqref{III:eq:full-component-bracket}.
\end{proposition}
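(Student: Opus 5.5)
The plan is to compare $\mathcal B_{2,\pm}$ computed with $G'=G+2Q$ against the same expression computed with $G$, using only the two change formulas already established: the first-order law $\delta_{G'}f=\delta_Gf-k{\alpha_Q}f$, together with its analogue for $h$, and the componentwise Hessian law \eqref{III:eq:component-Hessian-change}. Because $\Pi_\pm$ is linear and ${\alpha_Q}$ is a scalar one-form, it commutes past the coefficient factors. All order-sensitive products therefore keep $f$ on the left and $h$ on the right, as in the proof of Proposition~\ref{III:prop:projected-RC}. The tensor types of the outputs follow from the equivariant evaluation principle. $\Pi_+$ lands in $\Sym^4\EE$ with determinant weight $k+\ell$. $\Pi_-$ lands in $\lambda^2$, which raises the determinant weight to $k+\ell+2$.

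Next I expand the change term by term. Write $\kappa=k+\varepsilon_\pm$, $\mu=\ell+\varepsilon_\pm$, $\alpha=\delta_Gf$ and $\beta=\delta_Gh$. The first term changes by
\[
\tfrac12 k\kappa\,\Pi_\pm\bigl(-2\mu{\alpha_Q}f\beta+\ell\mu{\alpha_Q}^2fh\bigr).
\]
The last term changes by
\[
\tfrac12\ell\mu\,\Pi_\pm\bigl(-2\kappa{\alpha_Q}\alpha h+k\kappa{\alpha_Q}^2fh\bigr).
\]
The middle term changes by
\[
-\kappa\mu\,\Pi_\pm\bigl(-k{\alpha_Q}f\beta-\ell{\alpha_Q}\alpha h+k\ell{\alpha_Q}^2fh\bigr).
\]
I then collect coefficients. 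For ${\alpha_Q}f\beta$ they are $-k\kappa\mu+k\kappa\mu=0$. For ${\alpha_Q}\alpha h$ they are $-\ell\mu\kappa+\ell\kappa\mu=0$. For ${\alpha_Q}^2fh$ they are $\tfrac12k\ell\kappa\mu+\tfrac12k\ell\kappa\mu-k\ell\kappa\mu=0$. So the bracket does not depend on $G$.

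The step that needs real care is checking that \eqref{III:eq:component-Hessian-change} actually holds for each projection. That law comes from \eqref{III:eq:corrected-Hessian-change} by the two-dimensional trace identity, and this is exactly where the asymmetric constants $\varepsilon_+=1$ and $\varepsilon_-=-1/2$ enter. For $\Pi_+$, the term $-\mathcal C_Q(\alpha)$ adds $-2$ to $-2k$, and $kr_Q$ adds $k$ to $k^2$. For $\Pi_-$ the signs and the factor $\tfrac12$ flip. I would verify the table entries directly. For $\Pi_+$, set $X=uu^t$, so that $\det X=0$ and $\tr(PXQX)=\tr(PX)\tr(QX)$. For $\Pi_-$, apply it to the identity $\tr(PXQX)=\tr(PX)\tr(QX)-2\langle P,Q\rangle\det X$, which needs $\Pi_-(\det X)=3/4$. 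When $P$ has entries in $\AA$, I should confirm that $Q$ being scalar keeps the identity valid with the order preserved.

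Finally, for entrywise coefficients I take the special case $G=0$. This choice is legitimate locally, since independence of $G$ is an algebraic identity among local expressions and does not require $G$ to satisfy the automorphy law. With $G=0$, \eqref{III:eq:corrected-Hessian-raw} reduces to $\operatorname{Hess}_{k,0}(f)=d_0^2f$, and $\delta_0f=d_0f$, which gives the stated local formula.
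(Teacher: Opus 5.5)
Your proposal is correct and follows the paper's proof essentially step for step. You substitute the first-order law and the componentwise Hessian law \eqref{III:eq:component-Hessian-change}, observe that the coefficients of $\Pi_\pm({\alpha_Q}f\beta)$, $\Pi_\pm({\alpha_Q}\alpha h)$ and $\Pi_\pm({\alpha_Q}^2)fh$ cancel separately, and obtain the raw formula by specializing the local identity to $H=0$ (your $G=0$). Your extra check of the $\Pi_\pm$ table, via $X=uu^t$ and the $2\times 2$ trace identity, reproduces the computation the paper gives just before the proposition; that check also uses the formula $\Pi_-(\tr(PX)\tr(QX))=\langle P,Q\rangle$, not only $\Pi_-(\det X)=3/4$.
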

\begin{proof}
Automorphy follows from the tensor types. Write
$\alpha=\delta_Gf$, $\beta=\delta_Gh$, and
$\varepsilon=\varepsilon_\pm$. Under a connection change these
become $\alpha-k{\alpha_Q}f$ and $\beta-\ell {\alpha_Q}h$.
Substitute \eqref{III:eq:component-Hessian-change}. The coefficients
of $\Pi_\pm({\alpha_Q}f\beta)$ and $\Pi_\pm({\alpha_Q}\alpha h)$ cancel separately.
The remaining coefficient of $\Pi_\pm({\alpha_Q}^2)fh$ is
\[
 \tfrac12k\ell(k+\varepsilon)(\ell+\varepsilon)
 -k\ell(k+\varepsilon)(\ell+\varepsilon)
 +\tfrac12k\ell(k+\varepsilon)(\ell+\varepsilon)=0.
\]
Only central factors have been moved, so the proof applies to ordered
associative products. Setting $H=0$ in the local polynomial identity
\eqref{III:eq:corrected-Hessian-raw} gives the raw formula.
\end{proof}

For example, writing $Z=\left(\begin{smallmatrix}x&y\\y&z\end{smallmatrix}\right)$
and $\mathscr L=\partial_x\partial_z-\tfrac14\partial_y^2$, the
determinant component for entrywise coefficients is
\begin{equation*}
 \mathcal B_{2,-}(f,h)=
 \frac{k(k-\tfrac12)}2f\mathscr Lh
 -(k-\tfrac12)(\ell-\tfrac12)\langle\partial_Zf,\partial_Zh\rangle
 +\frac{\ell(\ell-\tfrac12)}2(\mathscr Lf)h.
\end{equation*}
The scalar cases are established Siegel Rankin--Cohen operators:
the plus component is \cite[Proposition 6.1]{EI}, and the minus
component is the genus-two, scalar weight-increment-two case of
\cite[\S5.1]{EI}. More precisely, it is $1/8$ of the displayed
$Q_{2,2}(\partial_{Z_1},\partial_{Z_2})(f(Z_1)h(Z_2))$ there,
restricted to the diagonal, with $d_1=2k$, $d_2=2\ell$.
For symmetric matrices $R,S$, this polynomial is
\begin{align*}
 Q_{2,2}(R,S)={}&d_2(d_2-1)\det R+d_1(d_1-1)\det S\\
 &-(d_1-1)(d_2-1)
       (R_{11}S_{22}+R_{22}S_{11}-2R_{12}S_{12}).
\end{align*}
Substitution of $d_1=2k,d_2=2\ell$ and the normalized off-diagonal
matrix derivatives makes the factor $1/8$ explicit.
The calculation here gives their simultaneous expression in the full
modular-connection calculus, including ordered coefficient products.
Within the three-term ansatz in \eqref{III:eq:full-component-bracket},
the two linear cancellation equations determine the coefficients
up to scale when $k\ell(k+\varepsilon)(\ell+\varepsilon)\ne0$.
For higher orders and other input representations, the corresponding
problem is to find corrected jets whose connection-change laws contain
no derivatives of $Q$. Such formulas would reduce the construction of
independent brackets to algebraic cancellation, as above.

\subsection{A quadratic Schwarzian tensor from modular geodesics}
\label{III:subsec:geodesic-Schwarzian}

The classical matrix Schwarzian of a curve involves the inverse of
its velocity matrix. On geodesics of a modular connection that inverse
cancels, giving a tensor in the full symmetric calculus. Fix $G$, put $H=G/2$, and in this subsection take
$\AA=\End(\EE^\vee)$ with the connection induced by $\nabla^G$.
The dual tangent connection is torsion-free: in symmetric matrix
coordinates its Christoffel term is
$\Gamma_G(U,V)=UHV+VHU$. Its affinely parameterized geodesics satisfy
\begin{equation}\label{III:eq:modular-geodesic}
 Z''+2Z'HZ'=0.
\end{equation}
All local statements take place away from the polar locus of $G$.
For every initial point and symmetric initial velocity there is a
unique local holomorphic geodesic.

\Needspace{15\baselineskip}
\begin{theorem}[Geodesic Schwarzian tensor]
\label{III:thm:geodesic-Schwarzian}
For a symmetric tangent matrix $V$, put
\begin{equation}\label{III:eq:geodesic-Schwarzian}
 \boxed{\quad
 \mathbb I_G(V,V)=-V\bigl(dH[V]-HVH\bigr).
 \quad}
\end{equation}
This quadratic polynomial in $V$ defines a meromorphic section of
$\End(\EE^\vee)\otimes\Sym^2\Omega_X^1$. On a geodesic with
$\det Z'\ne0$, its contraction is the matrix Schwarzian
\begin{equation}\label{III:eq:geodesic-graph-comparison}
 \mathbb I_G(Z',Z')
 =\frac12Z'''(Z')^{-1}
       -\frac34\bigl(Z''(Z')^{-1}\bigr)^2.
\end{equation}
Thus it extends that geodesic expression across degenerate initial
velocities. Its scalar trace is $-\mathcal K_G$.
If $\widehat G=G+2Q$, then
\begin{align}\label{III:eq:geodesic-connection-change}
 \mathbb I_{\widehat G}(V,V)-\mathbb I_G(V,V)
 &=-V\left(\nabla^{G,\Sym^2\EE}_VQ-QVQ\right),\\
 \nabla^{G,\Sym^2\EE}_VQ&=dQ[V]-HVQ-QVH.\notag
\end{align}
\end{theorem}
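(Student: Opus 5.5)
The proof has four parts: tensoriality of $\mathbb I_G$, the geodesic comparison, the trace, and the connection-change law. For tensoriality I would reuse the block $B_H=dH-H\,dZ\,H$ from the proof of Lemma~\ref{III:lem:full-Riccati}. The expression $dH[V]-HVH$ is exactly $B_H$ evaluated on the tangent vector $V$. That proof already gives $\gamma^*B_H=JB_HJ^t$, and by \eqref{III:eq:dZ-law} a tangent vector transforms as $V\mapsto J^{-t}VJ^{-1}$. Hence $V\,B_H[V]\mapsto J^{-t}V\,B_H[V]\,J^t$, which is precisely the conjugation law for $\End(\EE^\vee)$, since $\EE^\vee$ has automorphy factor $J^{-t}$. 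The expression is quadratic in $V$ and polynomial in $H$ and $dH$, so it is a meromorphic section of $\End(\EE^\vee)\otimes\Sym^2\Omega_X^1$ with poles only where $G$ has poles.

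For the geodesic comparison I would differentiate \eqref{III:eq:modular-geodesic} along the curve. Write $Z''=-2Z'HZ'$. Then
\[
 Z'''=-2Z''HZ'-2Z'(dH[Z'])Z'-2Z'HZ''.
\]
Substituting $Z''$ again gives $Z'''=4Z'HZ'HZ'-2Z'(dH[Z'])Z'+4Z'HZ'HZ'$. Now assume $\det Z'\ne0$. Then $Z''(Z')^{-1}=-2Z'H$, so $(Z''(Z')^{-1})^2=4Z'HZ'H$, and $Z'''(Z')^{-1}=8Z'HZ'H-2Z'\,dH[Z']$. The right side of \eqref{III:eq:geodesic-graph-comparison} is therefore $4Z'HZ'H-Z'\,dH[Z']-3Z'HZ'H=-Z'(dH[Z']-HZ'H)$, which is $\mathbb I_G(Z',Z')$.

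It remains to justify the phrase about extension across degenerate velocities. Geodesics exist for every initial point and every symmetric initial velocity, and the invertible velocities are Zariski dense among symmetric matrices. So the polynomial $\mathbb I_G(V,V)$ is the unique continuous extension of the geodesic expression.

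For the trace, $\tr(V\,dH[V]-VHVH)$ is the symmetric evaluation of $\tr((dH-H\,dZ\,H)\,dZ)$ at $dZ=V$. This is $\mathcal K_G(V,V)$ by \eqref{III:eq:full-Riccati}, so the trace of $\mathbb I_G$ is $-\mathcal K_G$.

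For the connection change, replace $H$ by $H+Q$. The same expansion as in Lemma~\ref{III:lem:full-Riccati} gives $B_{H+Q}-B_H=dQ-H\,dZ\,Q-Q\,dZ\,H-Q\,dZ\,Q$. Evaluating at $V$ and multiplying by $-V$ on the left yields \eqref{III:eq:geodesic-connection-change}. The three terms $dQ[V]-HVQ-QVH$ should be checked against the induced connection on $\Sym^2\EE$. Since $Q$ transforms as $JQJ^t$ and $\Theta_G=\tfrac12G\,dZ=H\,dZ$, the induced connection on $Q$ is $dQ-\Theta_GQ-Q\Theta_G^t$. Evaluated at $V$, and using symmetry of $H$ and $V$, this is $dQ[V]-HVQ-QVH$, as claimed.

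The only delicate step is keeping sign and transpose conventions consistent: $\EE$ versus $\EE^\vee$, whether $\mathbb I_G$ carries $V$ on the left, and $\Theta_G$ versus its transpose. I would fix these by checking the genus-one case, where everything is scalar and the identity reduces to the classical Schwarzian on a geodesic of $\eta$.
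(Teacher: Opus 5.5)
Your proposal is correct and follows essentially the same route as the paper. The paper's proof uses the same four steps: it differentiates \eqref{III:eq:modular-geodesic} to get $Z'''=8VHVHV-2V\,dH[V]V$ and cancel the inverse of $V$, it proves descent through the block $B_H$ of Lemma~\ref{III:lem:full-Riccati} with $V\mapsto J^{-t}VJ^{-1}$, it identifies the trace with $-\mathcal K_G$, and it expands $H+Q$. Your explicit check that $dQ[V]-HVQ-QVH$ is the induced connection on $\Sym^2\EE$ is a detail the paper leaves implicit.
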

\begin{proof}
Write $V=Z'$. Differentiating \eqref{III:eq:modular-geodesic} gives
\[
 V'=-2VHV,\qquad
 Z'''=8VHVHV-2VdH[V]V.
\]
Substitution in \eqref{III:eq:geodesic-graph-comparison} cancels the
inverse $V$ and gives \eqref{III:eq:geodesic-Schwarzian}. The right-hand side is quadratic in $V$ and therefore extends to every
velocity.

For descent use the tensor $B_H=dH-HdZH$ of
Lemma~\ref{III:lem:full-Riccati}. Under a modular transformation,
$V$ becomes $J^{-t}VJ^{-1}$ and $B_H(V)$ becomes $JB_H(V)J^t$.
Their product therefore transforms by
\[
 \mathbb I_G(V,V)\longmapsto
 J^{-t}\mathbb I_G(V,V)J^t.
\]
This is the endomorphism law for $\EE^\vee$. Polarizing in $V$
gives the asserted symmetric tensor. The trace is the negative of
\eqref{III:eq:full-Riccati}. Finally expand $H+Q$ in
\eqref{III:eq:geodesic-Schwarzian}; the linear and quadratic terms are
exactly \eqref{III:eq:geodesic-connection-change}.
\end{proof}

The transpose $\mathbb I_G^t$ has coefficient bundle $\End(\EE)$.
We retain the dual convention because period unknowns are sections
of $\EE^\vee$; see \eqref{III:eq:period-operator-bundle-three}.
Its dependence on $G$ is given by
\eqref{III:eq:geodesic-connection-change}. Here the induced coefficient
connection varies with $G$ as well; Proposition~\ref{III:prop:full-second-change}
held that connection fixed.

The relation with one-variable operators is exact on geodesics.
If $T$ is a coefficient-valued symmetric $p$-tensor and $\gamma$
is an affine geodesic, then
\begin{equation}\label{III:eq:geodesic-jet-restriction}
 (\delta_G^rT)(\dot\gamma^{\,p+r})
 =\bigl(\nabla_t^{\rm coef}\bigr)^r
                 \bigl(T(\dot\gamma^{\,p})\bigr).
\end{equation}
Indeed the derivative of a tensor contraction has, in addition to
the coefficient derivative, one term for each covariant derivative
of $\dot\gamma$; all those terms vanish. Induction proves the formula.
Consequently a homogeneous tensor equation $LF=0$ restricts to its
contracted Ore equation on every geodesic. Conversely, if these
equations hold along all geodesics through a point and for every
initial velocity, polarization makes every symmetric component of
$LF$ vanish there.

Put $\mathbb S_G=\mathbb I_G-g^{-1}\tr(\mathbb I_G)\Id$. Further
automorphic tensors are
\begin{equation*}
 \mathbb C_G=[\mathbb S_G,\delta_G\mathbb S_G],\qquad
 \mathbf{q}_G=\tr(\mathbb S_G^2),\qquad
 \mathbf{c}_G=\tr(\mathbb C_G^2).
\end{equation*}
Their symmetric cotangent degrees are $5,4,10$, respectively; the
last two have scalar coefficients. The comparison with the invariants of a period curve is made in
Section~\ref{III:subsec:ambient-acceleration}, after the curve operator
has been constructed. The degree-zero rational
expression $\mathbf{c}_G^2/\mathbf{q}_G^5$, where defined, is a
function on $\mathbb P(T_X)$; independence of the tangent direction
would be an additional condition for descent to $X$.

In genus two the coefficient representation is explicit:
\begin{equation*}
 \End_0(\EE^\vee)\otimes\Sym^2(\Sym^2\EE)
 \simeq(\Sym^6\EE\otimes\lambda^{-1})
       \oplus\Sym^4\EE
       \oplus(\Sym^2\EE\otimes\lambda)^{\oplus2}.
\end{equation*}
This follows from $\End_0(\EE^\vee)\simeq\Sym^2\EE\otimes
\lambda^{-1}$ and the Clebsch--Gordan formula. Its four components
have types $(-1,6),(0,4),(1,2),(1,2)$ in our weight convention.
Thus binary invariant theory applies separately to these four
components.

\subsection{A nonzero third Schwarzian on Siegel space}
\label{III:subsec:cubic-Siegel-example}

The quadratic tensor gives an order-three Ore operator. Its third
Schwarzian has a pole along the separating divisor for the connection
defined by $\chi_{10}$, and hence is nonzero. Set
\begin{equation*}
 \mathsf L_G=\Dt^3+3\mathbb I_G\Dt+\delta_G\mathbb I_G.
\end{equation*}
It is homogeneous, with $a_1=0$, $I_2=\mathbb I_G$ and
$I_3=\delta_G\mathbb I_G$. Its third Schwarzian is therefore
\begin{equation}\label{III:eq:cubic-Siegel-W3}
 W_3(\mathsf L_G)=-\tfrac12\delta_G\mathbb I_G
 \in H^0\bigl(X,\End(\EE^\vee)\otimes
                                      \Sym^3\Omega_X^1\bigr)^{\mer}.
\end{equation}

\begin{theorem}\label{III:thm:cubic-Siegel-nonzero}
Let $\chi_{10}$ be the genus-two cusp form of weight ten and choose
$G=G_{\chi_{10}}=\tfrac15\partial_Z\log\chi_{10}$.
At a generic separating point, write
$Z=\left(\begin{smallmatrix}\tau_1&z\\z&\tau_2\end{smallmatrix}\right)$
and $E=\left(\begin{smallmatrix}0&1\\1&0\end{smallmatrix}\right)$.
Then
\begin{equation}\label{III:eq:cubic-Siegel-pole}
 W_3(\mathsf L_G)(E,E,E)
       =\frac{33}{250z^3}\Id+O(z^{-2}).
\end{equation}
In particular, this higher Schwarzian is nonzero.
\end{theorem}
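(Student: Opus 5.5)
The plan is a leading-order Laurent computation along the separating divisor $z=0$, using the explicit formula \eqref{III:eq:geodesic-Schwarzian} for $\mathbb I_G$. By \eqref{III:eq:cubic-Siegel-W3} it suffices to show
\[
 (\delta_G\mathbb I_G)(E,E,E)=-\tfrac{33}{125z^3}\Id+O(z^{-2}).
\]

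\textbf{Step 1: the connection near the divisor.} I take as known the classical expansion
\[
 \chi_{10}(Z)=C\,z^2\Delta(\tau_1)\Delta(\tau_2)+O(z^4),\qquad C\ne0,
\]
which holds at a generic point of the divisor, where $\Delta(\tau_1)\Delta(\tau_2)\ne0$. The derivative $\partial_Z$ carries the factor $\tfrac12$ on the off-diagonal entry. Hence $G=\tfrac15\partial_Z\log\chi_{10}$ has $G_{12}=\tfrac1{5z}+O(z)$, while its diagonal entries are holomorphic. Consequently
\[
 H=G/2=H_0+O(1),\qquad H_0=\tfrac{1}{10z}E.
\]
A direction-$E$ derivative is simply $\partial_z$. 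The diagonal $O(1)$ part of $H$ enters every term below with at least one fewer power of $z^{-1}$. So to order $z^{-3}$ only $H_0$ matters. Checking this power counting carefully is the only delicate point; beyond it the argument is bookkeeping.

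\textbf{Step 2: the quadratic tensor.} With $E^2=\Id$ we have $dH_0[E]=-E/(10z^2)$ and $H_0EH_0=E/(100z^2)$. Therefore
\[
 \mathbb I_G(E,E)=-E\bigl(dH[E]-HEH\bigr)=\tfrac{11}{100z^2}\Id+O(z^{-1}).
\]
By polarization, $\mathbb I_G(U,E)$ is linear in $U$ with the same leading behaviour.

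\textbf{Step 3: one covariant derivative.} Write $(\delta_G T)(E,E,E)$ for $T=\mathbb I_G$ as a sum of three contributions.
\begin{enumerate}
\item The coefficient derivative $\partial_z\bigl(T(E,E)\bigr)$ gives $-\tfrac{22}{100z^3}\Id$.
\item The coefficient-connection term on $\End(\EE^\vee)$ acts by a commutator. It vanishes at leading order because the leading term of $T(E,E)$ is scalar.
\item The Christoffel term of the torsion-free dual tangent connection, with $\Gamma_G(E,E)=2EHE=E/(5z)+O(1)$, contributes
\[
 -2\,T\bigl(\Gamma_G(E,E),E\bigr)=-\tfrac{2}{5z}\cdot\tfrac{11}{100z^2}\Id=-\tfrac{22}{500z^3}\Id.
\]
\end{enumerate}
Summing gives $-\tfrac{132}{500z^3}\Id=-\tfrac{33}{125z^3}\Id$. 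Multiplying by $-\tfrac12$ yields \eqref{III:eq:cubic-Siegel-pole}, and nonvanishing follows at once.

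The main obstacle is the sign and normalization convention for $\delta$ on $\Sym^2\Omega^1$-valued tensors. Specifically, the factor $2$ in front of the Christoffel term must agree with \eqref{III:eq:YY-symmetric-calculus}, which encodes $\delta(dZ)=-dZ\,G\,dZ$. I would confirm it by differentiating $T=\tr(P\,dZ)\tr(Q\,dZ)$ directly with that rule.
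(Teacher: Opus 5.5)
Your computation is correct and follows the paper's proof: both take $H=\frac{1}{10z}E+O(1)$ and obtain $\mathbb I_G(E,E)=\frac{11}{100z^2}\Id+O(z^{-1})$. Both drop the coefficient commutator because the leading term is scalar, and both arrive at $(\delta_G\mathbb I_G)(E,E,E)=-\frac{33}{125z^3}\Id$. The only difference is cosmetic: the paper restricts to an affine geodesic with velocity $vE$, so the connection enters through $v'=-v^2/(5z)$ and \eqref{III:eq:geodesic-jet-restriction}, whereas you evaluate pointwise with the explicit Christoffel correction $-2\,\mathbb I_G(\Gamma_G(E,E),E)$, whose sign and factor $2$ are indeed consistent with \eqref{III:eq:YY-symmetric-calculus} and which produces the same two terms.
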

\begin{proof}
The separating expansion is
$\chi_{10}=c z^2\Delta(\tau_1)\Delta(\tau_2)+O(z^4)$, with $c\ne0$
\cite[\S9]{vdG}. The factor $1/2$ in the off-diagonal entry of
$\partial_Z$ therefore gives
\[
 H=\frac{\alpha}{z}E+O(1),\qquad \alpha=\frac1{10}.
\]
Theorem~\ref{III:thm:geodesic-Schwarzian} gives
$\mathbb I_G(E,E)=\alpha(1+\alpha)z^{-2}\Id+O(z^{-1})$.
For the leading term, a geodesic with velocity $vE$ satisfies
$z'=v$ and $v'=-2\alpha v^2/z$. Thus differentiating
$\alpha(1+\alpha)v^2/z^2$ along that geodesic gives
$-2\alpha(1+\alpha)(1+2\alpha)v^3/z^3$.
The leading coefficient commutator vanishes because this term is
scalar. All bounded terms of $H$ contribute lower pole orders.
Apply \eqref{III:eq:geodesic-jet-restriction} and
\eqref{III:eq:cubic-Siegel-W3}, and substitute $v=1$, $\alpha=1/10$.
\end{proof}

This example is an endomorphism-valued meromorphic tensor for the
chosen connection. The pole in \eqref{III:eq:cubic-Siegel-pole} also
specifies its failure to extend holomorphically across the separating
divisor.

\subsection{Equivariant evaluation of differential tensors}
\label{III:subsec:differential-invariant-evaluation}

To obtain a scalar form from a differential tensor, one must contract
its automorphic representation as well as its coefficient algebra.
For a form $f$ of type $\rho$, its $r$th covariant derivative has fiber
representation
\begin{equation}\label{III:eq:jet-representation}
 T_{\rho,r}=V_\rho\otimes\Sym^r(\Sym^2V).
\end{equation}
For a Schwarzian covariant, the corresponding coefficient type is
$\AA\otimes\Sym^{j+r}(\Sym^2\EE)$ for $\Delta^rW_j$;
the same statement holds for $\Delta^rI_j$.
One can decompose these representations before forming further
covariants. For example, in genus two a second derivative has both
the $\Sym^4\EE$ and $\lambda^2$ components of
\eqref{III:eq:genus-two-decomposition}.

\begin{proposition}\label{III:prop:differential-evaluation}
Let $C_1,\ldots,C_s$ be scalar-coefficient modular tensors obtained
from covariant derivatives or scalar contractions of Schwarzian
covariants, with fiber representations $T_1,\ldots,T_s$.
Every $\GL(V)$-equivariant polynomial map
\[
 \Phi:T_1\oplus\cdots\oplus T_s\longrightarrow V_\sigma
\]
gives, by evaluation, a meromorphic modular form of type $\sigma$.
For matrix-valued Schwarzian inputs, take instead a polynomial map
from $\bigoplus_i(\operatorname{Mat}_d\otimes T_i)$ to $V_\sigma$.
If it is invariant under simultaneous coefficient conjugation and
equivariant under $\GL(V)$, its value is a gauge-invariant modular
form of type $\sigma$.
If $\sigma=\det^a$, its value is a scalar form of weight $a$.
\end{proposition}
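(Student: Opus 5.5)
The argument is the evaluation principle of Section~\ref{III:subsec:automorphic-bundles} (\cite[Observation 3.1]{CvG}), applied after checking that each input is a meromorphic automorphic section of the stated type. I will take the steps in order: first the tensor types of the inputs, then the scalar-coefficient case, then the matrix case.

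\emph{Tensor types.} Each $C_i$ is built from $\Delta^rW_j$ or $\Delta^rI_j$ by covariant differentiation in $(\RR,\delta)$ and by scalar contractions such as the coefficient trace. By Section~\ref{III:subsec:tensor-realization}, $W_j,I_j\in\RR_{0,0,j}$. The map $\delta$ is a derivation defined on $X$ (Proposition~\ref{III:prop:graded-derivation}), so it commutes with the automorphy actions upstairs. Each jet is therefore a meromorphic section of $\AA\otimes\Sym^{j+r}(\Sym^2\EE)$, and through \eqref{III:eq:cotangent-hodge} this is a map $\Hg\to\AA\otimes T$ satisfying \eqref{III:eq:automorphy-rho} for the representation $T=T_{\rho,r}$ of \eqref{III:eq:jet-representation}. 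A trace is $\GL(V)$-equivariant, so it preserves this law. In the scalar-coefficient case, every $C_i$ is then a meromorphic form of type $T_i$.

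\emph{Scalar-coefficient case.} Substituting $f_i=C_i$ into the automorphy law and applying $\Phi$ gives
\[
 \Phi(C(\gamma Z))=\Phi\bigl(T(J)C(Z)\bigr)=\sigma(J)\Phi(C(Z)).
\]
This is exactly the cited observation. Meromorphicity holds because $\Phi$ is polynomial. If $\sigma=\det^a$, the output is a scalar form of weight $a$.

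\emph{Matrix-valued case.} The one extra point is that, in a local frame of $\AA=\End(\mathcal W)$, the upstairs law carries the multiplier as well. Concretely, $C_i(\gamma Z)=\rho_{\mathcal W}(\gamma,Z)\,(T_i(J)C_i(Z))\,\rho_{\mathcal W}(\gamma,Z)^{-1}$. Changing the operator frame by a gauge $g$ conjugates all covariant jets simultaneously, by Proposition~\ref{I:prop:local-covariance} and \eqref{I:eq:local-Delta-law}. The $\GL(V)$ action on $T_i$ commutes with conjugation on $\operatorname{Mat}_d$. Simultaneous-conjugation invariance of $\Phi$ then removes the $\mathcal W$ factor, and $\GL(V)$-equivariance produces $\sigma(J)$. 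The same invariance shows that the value does not depend on the choice of coefficient frame or gauge, and so it is gauge invariant.

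The only step needing care is the conversion between bundle sections and upstairs functions, namely that $\Sym^{j+r}(\Sym^2\EE)$ corresponds to $\Sym^{j+r}(\Sym^2V)$ with factor $J$. This follows from \eqref{III:eq:dZ-law} and the convention of Section~\ref{III:subsec:automorphic-bundles}. Beyond that conversion, I expect no real obstacle.
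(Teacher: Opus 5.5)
Your proposal is correct and follows the paper's proof. The inputs get their automorphy laws from the equivariance of covariant differentiation, and applying $\Phi$ gives the scalar case exactly as in \cite[Observation 3.1]{CvG}; in the matrix case, the simultaneous conjugation supplied by Proposition~\ref{I:prop:local-covariance} and \eqref{I:eq:local-Delta-law} is removed by the conjugation invariance of $\Phi$. You also point out that the multiplier of $\End(\mathcal W)$ enters as a simultaneous conjugation, removed by the same invariance, which the paper leaves implicit.
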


\begin{proof}
Each input has its stated automorphy law by the equivariance of
covariant differentiation. Applying $\Phi$ to these laws proves
the first assertion, exactly as in
\cite[Observation 3.1]{CvG}. For matrix-valued Schwarzian inputs,
\eqref{I:eq:local-Delta-law} and Proposition~\ref{I:prop:local-covariance} supply simultaneous conjugation; its
invariance under $\Phi$ proves the second assertion.
\end{proof}

For $\AA=\End(\mathcal W)$ of matrix size $d$, the two relevant
groups are $\GL(V)$ and $\GL_d$. A practical construction first
uses the coefficient contraction
\[
 A_1\otimes\cdots\otimes A_q\longmapsto
                      \tr(A_1\cdots A_q),
\]
leaving the automorphic tensor factors separate, and then applies
an equivariant map to those factors. Products of such trace
contractions provide the familiar polynomial invariants of
simultaneous matrix conjugation; see Procesi \cite{Procesi}.
For a general associative coefficient algebra, multilinear tensor
maps can instead be evaluated with a specified order of coefficient
multiplication. The output then remains algebra-valued until a
suitable scalar contraction is applied. In particular, an arbitrary
commutative polynomial formula cannot be evaluated on matrix inputs
without specifying its ordering or polarization.

The determinant operation fits exactly into this construction.
For symmetric $g\times g$ matrices $X_i$ with associative entries,
define
\begin{equation}\label{III:eq:ordered-det-two}
 \operatorname{ODet}(X_1,\ldots,X_g)
 =\frac1{g!}\sum_{\sigma,\tau\in S_g}
       \operatorname{sgn}(\sigma)\operatorname{sgn}(\tau)
       (X_1)_{\sigma(1)\tau(1)}\cdots
       (X_g)_{\sigma(g)\tau(g)}.
\end{equation}
The two alternating tensors show that
\[
 \operatorname{ODet}(MX_1M^t,\ldots,MX_gM^t)
 =\det(M)^2\operatorname{ODet}(X_1,\ldots,X_g).
\]
Hence, for $f_i\in M_{k_i,0}^{\mer}(\Gamma;\AA)$, applying this
map to the matrices representing $\delta f_i$ gives an
$\AA$-valued scalar-type form of weight
$k_1+\cdots+k_g+2$. For commuting entries and identical inputs it
is the ordinary determinant. It generally depends on $G$.
Applying the same map to connection-independent tensors such as
\eqref{III:eq:first-bracket-full} removes that dependence.

For higher derivatives, replace \eqref{III:eq:ordered-det-two} by
equivariant maps between the representations
\eqref{III:eq:jet-representation}. At fixed derivative orders,
polynomial degrees, and target type, this is a finite-dimensional
representation-theoretic problem. The central character gives a necessary weight condition. If the $i$th input has type
$\det^{k_i}V\otimes\Sym^{m_i}V\otimes\Sym^{r_i}(\Sym^2V)$,
and a scalar-valued covariant has degree $d_i$ in that input,
then its output weight $a$ necessarily satisfies
\begin{equation*}
 ga=\sum_i d_i(gk_i+m_i+2r_i).
\end{equation*}
This follows by evaluating the central scalar matrices in
$\GL(V)$. The existence of the equivariant map is a separate
condition.

Connection independence can be tested within the same finite
ansatz. Express the candidate in local jets, replace $G$ by
$G+2Q$, and treat the symmetric matrix $Q$ and its derivatives as
independent formal variables. Universal independence is equivalent
to the vanishing of every coefficient involving those variables.
For a linear combination of a fixed finite list of candidates,
these are linear equations in its coefficients. This provides
a concrete way to seek further anomaly cancellations. The
pluriharmonic-polynomial method for Siegel Rankin--Cohen operators
\cite{EI} is an essential comparison; holomorphic nonlinear
differential constructions also occur in \cite{GIMS}.

\subsection{Binary covariants in genus two}
\label{III:subsec:binary-covariants-two}

For $g=2$, a form in $M_{k,m}^{\mer}$ is a binary form of degree
$m$ with determinant twist $k$. Its determinant weight and binary degree determine the output type
of every polynomial covariant. If a binary multi-covariant has degree $d_i$ in inputs
$F_i\in M_{k_i,m_i}^{\mer}$ and degree $b$ in the binary variables,
its value has type
\begin{equation*}
 M_{a,b}^{\mer},\qquad
 a=\sum_i d_i\left(k_i+\frac{m_i}{2}\right)-\frac b2.
\end{equation*}
For characters $\chi_i$, the output character is
$\prod_i\chi_i^{d_i}$.
This is \cite[Propositions 4.1--4.2]{CvG}, expressed with our
determinant weight first. It follows either from the irreducible
$\GL_2$-representations or from their central characters once
the binary covariant has been fixed.

For example, the normalized $r$th transvectant is
\begin{equation}\label{III:eq:transvectant-two}
 (F,H)_r=\frac{(m-r)!(p-r)!}{m!p!}
       \sum_{i=0}^r(-1)^i\binom ri
       \frac{\partial^rF}{\partial u_1^{r-i}\partial u_2^i}
       \frac{\partial^rH}{\partial u_1^i\partial u_2^{r-i}},
 \qquad 0\leq r\leq\min(m,p).
\end{equation}
It maps $M_{k,m}^{\mer}\times M_{\ell,p}^{\mer}$ to
$M_{k+\ell+r,m+p-2r}^{\mer}$. Its derivatives are with respect
to the auxiliary binary variables, not the entries of $Z$.
With associative coefficients, the displayed order defines a
bilinear equivariant tensor map. If $q_X[u]=u^tXu$ and
$q_Y[u]=u^tYu$ are quadratics, then
\begin{equation*}
 (q_X,q_Y)_2=2\operatorname{ODet}(X,Y),
 \qquad (q_X,q_X)_2=2\det X
       \quad\text{for scalar entries}.
\end{equation*}
Thus the earlier determinant is the first member of a much larger
family of available contractions. In genus two the connection with
binary sextics and vector-valued Siegel forms was developed
systematically in \cite{CFG}; the present evaluation principle
uses that representation-theoretic framework.

To obtain higher differential constructions, one can use inputs
$(\delta^{\mathrm{top}})^rf_i$, the corrected derivatives
\eqref{III:eq:corrected-jet-recurrence}, or the brackets
$\mathcal B_n(f_i,f_j)$, and then apply
\eqref{III:eq:transvectant-two} and more general multi-covariants.
The last choice gives connection-independent operations on fixed
inputs. For instance, for scalar-coefficient forms
$f\in M_{k,0}^{\mer}$ and $h\in M_{\ell,0}^{\mer}$,
\[
 {\mathfrak q}=\mathcal B_2(f,h)\in M_{k+\ell,4}^{\mer}
\]
is a binary quartic made from derivatives of order at most two.

Here is an explicit contraction that also applies directly to
Schwarzian data. Write a scalar-coefficient quartic as
\[
 {\mathfrak q}[u]=c_0u_1^4+4c_1u_1^3u_2+6c_2u_1^2u_2^2
                     +4c_3u_1u_2^3+c_4u_2^4
       \in M_{k,4}^{\mer}.
\]
Its classical invariants are
\begin{equation}\label{III:eq:quartic-invariants-two}
 \begin{aligned}
 A({\mathfrak q})&=c_0c_4-4c_1c_3+3c_2^2,\\
 B({\mathfrak q})&=c_0c_2c_4+2c_1c_2c_3-c_0c_3^2-c_1^2c_4-c_2^3.
 \end{aligned}
\end{equation}
They have degrees two and three in the coefficients. The weight
rule gives
\begin{equation}\label{III:eq:quartic-types-two}
 A({\mathfrak q})\in M_{2k+4}^{\mer},\qquad
 B({\mathfrak q})\in M_{3k+6}^{\mer},\qquad
 \mathcal J_{\mathfrak q}=\frac{B({\mathfrak q})^2}{A({\mathfrak q})^3}\in M_0^{\mer}
        \quad\text{if }A({\mathfrak q})\not\equiv0.
\end{equation}
The relative discriminant is proportional to
$A({\mathfrak q})^3-27B({\mathfrak q})^2$. All these assertions concern scalar
coefficients; matrix inputs may first be contracted by trace as
in Proposition~\ref{III:prop:differential-evaluation}.

For a matrix-coefficient operator $L$, take
\begin{equation*}
 {\mathfrak q}_L=\tr\bigl(\pi I_2(L)\bigr)\in M_{0,4}^{\mer}.
\end{equation*}
Then $A({\mathfrak q}_L)$ and $B({\mathfrak q}_L)$ are gauge-invariant meromorphic scalar
forms of weights four and six, and \eqref{III:eq:quartic-types-two}
gives a modular function wherever its denominator is not
identically zero. Higher data are available from
$\tr(\pi W_3(L))\in M_{0,6}^{\mer}$: binary-sextic invariants of
coefficient degrees $2,4,6,10$ give scalar forms of weights
$6,12,18,30$. Ratios of equal weights give further modular
functions. Nonvanishing and nonconstancy must be checked for each chosen input;
Section~\ref{III:subsec:Igusa-logarithmic-example} gives such a calculation.

For comparison, applying \eqref{III:eq:quartic-invariants-two} to
$\mathcal B_2(f,h)$ gives connection-independent scalar forms of
weights $2(k+\ell)+4$ and $3(k+\ell)+6$. Thus higher brackets
provide modular tensors, and invariant theory supplies systematic
scalar contractions beyond determinants. For differential data
coming from $L$, the input operator and its coefficient connection
remain part of the construction.

\subsection{A nonconstant function from Igusa forms}
\label{III:subsec:Igusa-logarithmic-example}

Logarithmic derivatives give a connection-independent example whose
nonconstancy follows from its divisor. Let $f,h$ be nonzero scalar
forms of nonzero weights $k,\ell$, respectively, and set
\begin{equation*}
 Q_{f,h}=\frac1k\frac{\partial_Zf}{f}
                  -\frac1\ell\frac{\partial_Zh}{h}.
\end{equation*}
The two anomalies cancel, so
$Q_{f,h}(\gamma Z)=JQ_{f,h}(Z)J^t$ and
$\det Q_{f,h}\in M_2^{\mer}$.
This is the normalized first bracket, with the sign determined
by \eqref{III:eq:first-bracket-full}.

The full-group statements below use the equivariant convention of
Section~\ref{III:subsec:automorphic-bundles}; equivalently, their
tensor-valued forms are sections on the modular orbifold.

\begin{proposition}\label{III:prop:Igusa-nonconstant}
On the full genus-two modular quotient, the expression
\begin{equation*}
 \mathcal J_{4,6;10}
   =\frac{\det Q_{E_4,E_6}}{\det Q_{E_4,\chi_{10}}}
\end{equation*}
is a nonconstant meromorphic modular function. It has a zero of
order two upstairs along a generic separating point $z=0$.
\end{proposition}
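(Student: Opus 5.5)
Modularity follows from the preceding paragraph. $Q_{E_4,E_6}$ and $Q_{E_4,\chi_{10}}$ are both $\Sym^2$-type tensors, so their determinants lie in $M_2^{\mer}$ for the full group. Their ratio is therefore a weight-zero meromorphic modular function, provided the denominator is not identically zero. The whole content is thus the local computation at a separating point $Z=\left(\begin{smallmatrix}\tau_1&z\\z&\tau_2\end{smallmatrix}\right)$. There the order of $\det Q_{E_4,\chi_{10}}$ and of $\det Q_{E_4,E_6}$ in $z$ is to be read off. Nonconstancy then follows because a nonzero constant has no zeros.

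\emph{Step 1: the denominator.} Use $\chi_{10}=cz^2\Delta(\tau_1)\Delta(\tau_2)+O(z^4)$ with $c\ne0$, as in the proof of Theorem~\ref{III:thm:cubic-Siegel-nonzero}. With the factor $\tfrac12$ on the off-diagonal entry of $\partial_Z$, one gets
\[
 \frac1{10}\frac{\partial_Z\chi_{10}}{\chi_{10}}=\frac1{10z}E+O(1),
\]
with $E$ as in that theorem. Also $E_4$ is holomorphic and nonzero at a generic point of the diagonal locus, so $\partial_Z\log E_4/4$ is holomorphic there. Hence $Q_{E_4,\chi_{10}}=-\frac1{10z}E+O(1)$. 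Since $\det E=-1$, the determinant is $-\frac1{100z^2}+O(z^{-1})$. It has a pole of exact order two and in particular is not identically zero.

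\emph{Step 2: the numerator.} $Q_{E_4,E_6}$ is holomorphic near a generic separating point, so $\det Q_{E_4,E_6}$ has order $\ge0$ there. It remains to show that it is nonzero at a generic diagonal point. Restrict to $z=0$, where the Eisenstein series factor: $E_k(\operatorname{diag}(\tau_1,\tau_2))=E_k(\tau_1)E_k(\tau_2)$ for $k=4,6$. They are even in $z$ under $z\mapsto -z$ (conjugation by $\operatorname{diag}(1,-1)$). Hence $\partial_zE_k=0$ on $z=0$, and $Q_{E_4,E_6}$ is diagonal there. Its diagonal entries are
\[
 q(\tau_i)=\frac14\frac{E_4'(\tau_i)}{E_4(\tau_i)}-\frac16\frac{E_6'(\tau_i)}{E_6(\tau_i)}.
\]
The Ramanujan identities $DE_4=(E_2E_4-E_6)/3$ and $DE_6=(E_2E_6-E_4^2)/2$ then give
\[
 q\propto\frac{E_4^3-E_6^2}{E_4E_6}\propto\frac{\Delta}{E_4E_6}.
\]
The $E_2$ terms cancel, as they must by the anomaly cancellation. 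So $\det Q_{E_4,E_6}|_{z=0}\propto q(\tau_1)q(\tau_2)\ne0$ generically.

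\emph{Conclusion.} The ratio has order $0-(-2)=2$ in $z$, a zero of order two upstairs. A function with a zero along this divisor but not identically zero is nonconstant.

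The main obstacle I expect is Step 2: making sure the numerator does not vanish on the separating locus. This needs the factorization of $E_4,E_6$ on the diagonal and their evenness in $z$ to kill the off-diagonal entry. I would cite the standard Witt-type restriction to the diagonal for this factorization rather than reprove it. A secondary check is that the factor-$\tfrac12$ normalization does not change the orders but only the constants.
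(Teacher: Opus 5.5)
Your proposal is correct and follows the paper's proof essentially step for step. Both arguments restrict $E_4,E_6$ to $z=0$, use evenness in $z$ to kill the off-diagonal entry, obtain the leading term $a(\tau_1)a(\tau_2)$ from $e_4^3-e_6^2=1728\Delta$, and take the $-\frac1{100z^2}$ pole of $\det Q_{E_4,\chi_{10}}$ from the factor $\tfrac12$ in $\partial_Z$. Your error term $O(z^{-1})$ is weaker than the paper's $O(1)$, which uses that $\chi_{10}$ is also even in $z$, but it suffices for the order-two conclusion.
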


\begin{proof}
The weights of the two determinants agree. Write
$Z=\left(\begin{smallmatrix}\tau_1&z\\z&\tau_2\end{smallmatrix}\right)$.
Use normalized genus-two Eisenstein series and denote the elliptic
ones by $e_4,e_6$. Their restrictions are
$E_4|_{z=0}=e_4(\tau_1)e_4(\tau_2)$ and
$E_6|_{z=0}=e_6(\tau_1)e_6(\tau_2)$.
Also,
\[
 \chi_{10}(Z)=c\,z^2\Delta(\tau_1)\Delta(\tau_2)+O(z^4),
 \qquad c\ne0;
\]
see \cite[\S9]{vdG}. All three forms are even in $z$.
Put
\[
 a(\tau)=\frac14\frac{e_4'(\tau)}{e_4(\tau)}
                    -\frac16\frac{e_6'(\tau)}{e_6(\tau)}
     =(2\pi i)\frac{144\Delta(\tau)}{e_4(\tau)e_6(\tau)}.
\]
The last identity follows from the elliptic derivative identities
and $e_4^3-e_6^2=1728\Delta$.
At a generic separating point,
\[
 \det Q_{E_4,E_6}=a(\tau_1)a(\tau_2)+O(z^2).
\]
The off-diagonal entry of $\partial_Z\log\chi_{10}$ is
$1/z+O(z)$, because the off-diagonal matrix derivative includes
the factor $1/2$. Its diagonal entries are bounded. Hence
\[
 \det Q_{E_4,\chi_{10}}=-\frac1{100z^2}+O(1),
 \qquad
 \mathcal J_{4,6;10}
     =-100a(\tau_1)a(\tau_2)z^2+O(z^4).
\]
The leading coefficient is generically nonzero. This proves both
the stated order and nonconstancy.
\end{proof}

More explicitly, let $\mathbf B(f,h)=k f\partial_Zh
-\ell h\partial_Zf$ denote the scalar-coefficient matrix of
\eqref{III:eq:first-bracket-full}. Then
\begin{equation*}
 \mathcal J_{4,6;10}
   =\frac{25}{9}\,
      \frac{\chi_{10}^2\det\mathbf B(E_4,E_6)}
           {E_6^2\det\mathbf B(E_4,\chi_{10})}.
\end{equation*}
Both numerator and denominator are holomorphic scalar forms of
weight $42$. This also presents the function as a quotient of
classical modular forms. At full genus-two level such functions
belong to the field determined by Igusa's generators
\cite[Theorem 9.1]{vdG}. The differential expression specifies this member of the classical
modular function field. To obtain geometric information from
Schwarzian inputs, we next construct the operator from a filtered
Gauss--Manin connection.

\begin{remark}[Scope of the ambient construction]
\label{III:rem:ambient-scope}
Sections~\ref{III:sec:siegel-ore} and
\ref{III:sec:schwarzians-modular-constructions} evaluate the universal
polynomials in the symmetric tensor algebra of a Siegel modular
variety. Theorem~\ref{III:thm:cubic-Siegel-nonzero} gives a nonzero
cubic, and the projected brackets provide connection-independent
operations on fixed inputs. The period equations in the following
sections use the one-variable calculus along the base curve.
Theorem~\ref{III:thm:acceleration-comparison} compares the resulting
matrix Schwarzian with the ambient quadratic tensor, retaining the
acceleration term of the period map. These are the two interfaces
used here; no assertion of independence or completeness is made for
the higher ambient tensors on a Siegel variety.
\end{remark}

\section{Filtered symplectic connections and Schwarzian covariants}
\label{III:sec:filtered-connections}

A flat connection becomes trivial in a horizontal frame, but a varying
Lagrangian subbundle still defines a curve in the Lagrangian
Grassmannian. Its first derivative determines whether the filtration
gives a second-order matrix equation or a distinguished kernel line.
The constructions are local on a smooth curve. The specialization
to genus two is marked in Convention~\ref{III:conv:genus-two}.
A variation over a
higher-dimensional base enters through its restriction to a specified
curve.

\subsection{The filtered input and its transverse presentation}
\label{III:subsec:filtered-input}

Let $U$ be a smooth complex curve. Let $\HH$ be a holomorphic bundle
of rank $2g$, with a nonsingular flat connection $\nabla$ and a
horizontal nondegenerate alternating form $Q$. Fix a Lagrangian
subbundle $\EE\subset\HH$ of rank $g$. For an abelian scheme, these data are
\[
 (\HH,\nabla,\EE,Q)
 =(H^1_{\mathrm{dR}},\nabla^{\mathrm{GM}},F^1,Q).
\]
The second fundamental map of $\EE$ is
\begin{equation*}
 \beta:\EE\longrightarrow(\HH/\EE)\otimes K_U,
 \qquad s\longmapsto\nabla s\pmod{\EE},
 \qquad K_U=\Omega_U^1.
\end{equation*}
Choose a parameter $t$ and write $\nD=\nabla_{\partial_t}$.
The contraction $\beta_t$ is called \emph{transverse} when it is
invertible. This condition is independent of the nonvanishing
tangent field used to test it.

\begin{proposition}\label{III:prop:matrix-pf-three}
On the transverse locus, a column of local frame elements
$\omega=(\omega_1,\ldots,\omega_g)^t$ determines unique holomorphic
matrices $a_1,a_2$ with
\begin{equation}\label{III:eq:matrix-pf-three}
 \nD^2\omega+2a_1\nD\omega+a_2\omega=0.
\end{equation}
Their construction requires no choice of a complement to $\EE$.
For a horizontal basis of the dual local system, the period matrix
$Y$ satisfies $Y''+2a_1Y'+a_2Y=0$.
\end{proposition}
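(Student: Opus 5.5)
The argument rests on a single observation. On the transverse locus the $2g$ sections $\omega_1,\ldots,\omega_g,\nD\omega_1,\ldots,\nD\omega_g$ form a local frame of $\HH$. Write $\pi:\HH\to\HH/\EE$ for the quotient. The classes $\pi(\nD\omega_i)=\beta_t(\omega_i)$ form a frame of $\HH/\EE$ precisely because $\beta_t$ is invertible, while the $\omega_i$ frame $\EE$. The extension $0\to\EE\to\HH\to\HH/\EE\to0$ then shows that the combined column spans $\HH$, without choosing any complement. Expanding $\nD^2\omega_i$ in this frame with holomorphic coefficients gives $\nD^2\omega=-2a_1\nD\omega-a_2\omega$, and uniqueness of the coefficients in a frame gives uniqueness of $a_1,a_2$. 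The binomial factor $2$ causes no trouble over $\C$.

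Holomorphicity needs slightly more care. I would pick local holomorphic frames $e$ of $\EE$ and $f$ of a local complement. Invertibility of $\beta_t$ then says that the block matrix expressing $(\omega,\nD\omega)$ in the frame $(e,f)$ is holomorphic with nonvanishing determinant. Its inverse is holomorphic, so the coordinates of $\nD^2\omega$ in the new frame are holomorphic. The auxiliary complement enters only to confirm regularity, and $a_1,a_2$ are defined intrinsically by the relation. Independence of the test field $\partial_t$ was already noted, so only the parameter $t$ is fixed here.

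For the period statement, let $\gamma_1,\ldots,\gamma_{2g}$ be a horizontal local frame of the dual local system $\HH^\vee$. Put $Y_{ij}=\langle\gamma_j,\omega_i\rangle$. Horizontality of $\gamma_j$ gives $\partial_t\langle\gamma_j,s\rangle=\langle\gamma_j,\nD s\rangle$ for every section $s$. Applying $\gamma_j$ to \eqref{III:eq:matrix-pf-three} is $\C$-linear and passes through the left multiplication by the matrices $a_1,a_2$, since these act on the index $i$. This yields $Y''+2a_1Y'+a_2Y=0$ column by column.

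I expect no serious obstacle. The only point needing attention is that spanning is proved from the extension sequence rather than from a chosen splitting, so that the claim ``no choice of complement'' is justified. I would also note that the alternating form $Q$ is not used here; it enters only later, for the symmetry of $\mathsf K$.
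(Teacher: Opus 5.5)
Your proposal is correct and follows the paper's proof. Transversality makes the entries of $(\omega,\nD\omega)^t$ a frame of $\HH$; expanding $\nD^2\omega$ in this frame gives existence and uniqueness of $a_1,a_2$; and pairing with horizontal dual sections, which commutes with differentiation, yields $Y''+2a_1Y'+a_2Y=0$. Your additional remarks on holomorphicity and on using the extension sequence rather than a complement to $\EE$ spell out points the paper leaves implicit.
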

\begin{proof}
The images of $\nD\omega_1,\ldots,\nD\omega_g$ in $\HH/\EE$
are independent. Thus the entries of $(\omega,\nD\omega)^t$ form
a frame of $\HH$. Expanding $\nD^2\omega$ in this frame proves
existence and uniqueness. Pairing with horizontal dual sections
commutes with differentiation and proves the period equation.
\end{proof}

This second-order matrix equation has a $2g$-dimensional local
solution space: the periods of the $2g$ horizontal dual sections.
For explicit elimination, suppose a full frame $(\omega,\xi)^t$
has connection matrix
\[
 \nD\binom{\omega}{\xi}=
 \begin{pmatrix}B&C_0\\D&E_0\end{pmatrix}
 \binom{\omega}{\xi}.
\]
Then transversality means $\det C_0\ne0$. Put
\[
 T_0=C_0'C_0^{-1}+C_0E_0C_0^{-1}.
\]
Differentiating the first block and eliminating $\xi$ gives
\begin{equation}\label{III:eq:block-elimination-three}
 2a_1=-(B+T_0),\qquad
 a_2=-B'-C_0D+T_0B.
\end{equation}
The inverse of $C_0$ explains the possible additional poles of this
presentation at a failure of transversality. Where $\beta_t$ has
constant rank one, the kernel-line construction below replaces this
elimination.

\subsection{Normalization and the polarization}
\label{III:subsec:polarized-normalization}

Normalizing the first-derivative term gives the matrix Schwarzian.
The polarization will make it self-adjoint in a suitable frame.
Proposition~\ref{III:prop:normal-form-three} gives the normal form
used in the cyclicity and scalar-operator proofs.
For \eqref{III:eq:matrix-pf-three} define
\begin{equation}\label{III:eq:I-S-D-three}
 I=a_2-a_1'-a_1^2,\qquad
 \cov T=T'+[a_1,T],\qquad
 S=I-\frac1g\tr(I)\Id.
\end{equation}
Here $I=I_2$ of Section~\ref{III:subsec:tensor-realization}.
We use a different font for $\cov$ to distinguish this curve
derivation from the symmetric derivation on the Siegel variety.
If $\widetilde\omega=M\omega$, direct conjugation of
$(\partial_t+a_1)^2+I$ gives
\begin{equation}\label{III:eq:frame-laws-three}
 \widetilde a_1=Ma_1M^{-1}-M'M^{-1},\qquad
 \widetilde I=MIM^{-1},\qquad
 \widetilde{\cov}(MTM^{-1})=M(\cov T)M^{-1}.
\end{equation}
The conjugation direction agrees with Section~\ref{III:sec:schwarzians-modular-constructions} by taking $b=M^{-1}$
there.

The polarization yields more structure than an arbitrary matrix
second-order equation. Define
\begin{equation}\label{III:eq:K-three}
 {\mathsf K}_{ij}=Q(\omega_i,\nD\omega_j).
\end{equation}

\begin{proposition}[Polarized normal form]\label{III:prop:normal-form-three}
The matrix ${\mathsf K}$ is symmetric and invertible. Put
$v=\nD\omega+a_1\omega$. Then
\begin{equation}\label{III:eq:normal-first-order-three}
 \nD\omega=v-a_1\omega,\qquad
 \nD v=-I\omega-a_1v,
\end{equation}
and
\begin{equation}\label{III:eq:polarization-identities-three}
 Q(v,v)=0,\qquad
 {\mathsf K}'=-a_1{\mathsf K}-{\mathsf K}a_1^t,\qquad I{\mathsf K}={\mathsf K}I^t.
\end{equation}
Locally there is a holomorphic frame with $a_1=0$ and ${\mathsf K}$ constant.
In that frame $I$ is self-adjoint for the form ${\mathsf K}^{-1}$.
\end{proposition}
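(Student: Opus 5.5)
The approach is to use only the horizontality of $Q$, the Lagrangian property of $\EE$, and the transversality frame of Proposition~\ref{III:prop:matrix-pf-three}.

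\emph{Symmetry and invertibility of $\mathsf K$.} Since $Q(\omega_i,\omega_j)=0$ identically, differentiating with $\nD$ and using horizontality of $Q$ gives
\[
 Q(\nD\omega_i,\omega_j)+Q(\omega_i,\nD\omega_j)=0 .
\]
Alternation of $Q$ turns this into $\mathsf K_{ij}=\mathsf K_{ji}$. For invertibility, note that $\EE$ is Lagrangian, so $Q$ induces a perfect pairing $\EE\times\HH/\EE\to\OO$. Transversality says the images of the $\nD\omega_j$ form a frame of $\HH/\EE$. Hence $\mathsf K$ is the matrix of a perfect pairing and is invertible.

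\emph{First-order system.} The relations \eqref{III:eq:normal-first-order-three} follow directly from the definitions. The first is the definition of $v$. For the second, I compute $\nD v=\nD^2\omega+a_1'\omega+a_1\nD\omega$, substitute \eqref{III:eq:matrix-pf-three}, and rewrite $\nD\omega=v-a_1\omega$. The result is $-a_1v-(a_2-a_1'-a_1^2)\omega=-I\omega-a_1v$.

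\emph{The three polarization identities.}
\begin{itemize}
 \item I write $\mathsf K=Q(\omega,v^t)$. This holds because $Q(\omega_i,a_1\omega)=0$, so $\mathsf K_{ij}=Q(\omega_i,v_j)$.
 \item Differentiating and using \eqref{III:eq:normal-first-order-three} gives
 \[
  \mathsf K'=Q(v-a_1\omega,v^t)+Q(\omega,(-I\omega-a_1v)^t).
 \]
 The terms containing $Q(\omega,\omega^t)$ vanish, so $\mathsf K'=Q(v,v^t)-a_1\mathsf K-\mathsf K a_1^t$.
 \item Symmetry of $\mathsf K$ forces $Q(v,v^t)$ to be symmetric. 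Since $Q$ is alternating, $Q(v,v^t)$ is also antisymmetric. Hence $Q(v,v^t)=0$ and $\mathsf K'=-a_1\mathsf K-\mathsf K a_1^t$.
 \item For $I\mathsf K=\mathsf K I^t$, I differentiate $Q(v,v^t)=0$. This gives $Q(-I\omega-a_1v,v^t)+Q(v,(-I\omega-a_1v)^t)=0$. The $a_1$ terms vanish because $Q(v,v^t)=0$, leaving $-I\mathsf K+\mathsf K^tI^t=0$, that is, $I\mathsf K=\mathsf K I^t$.
\end{itemize}
The only delicate point is bookkeeping of transposes in matrix-valued pairings. I will fix the convention $Q(x,y^t)_{ij}=Q(x_i,y_j)$ and note the rules $Q(Ax,y^t)=AQ(x,y^t)$ and $Q(x,(By)^t)=Q(x,y^t)B^t$.

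\emph{The normal frame.} Following \eqref{III:eq:frame-laws-three}, I choose $M$ solving the linear ODE $M'=Ma_1$ with $M(t_0)=\Id$. This gives $\widetilde a_1=0$ locally, and $I$ is replaced by a conjugate. In the new frame, the transformation rule for $\mathsf K$ (obtained by recomputing with $\widetilde\omega=M\omega$, or directly from the derivative identity) gives $\widetilde{\mathsf K}'=0$, so $\widetilde{\mathsf K}$ is constant. The identity $I\mathsf K=\mathsf K I^t$ becomes $\mathsf K^{-1}I\mathsf K=I^t$, which says exactly that $I$ is self-adjoint for the bilinear form $\mathsf K^{-1}$.

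I expect the main obstacle to be the transpose conventions in the third step rather than anything conceptual.
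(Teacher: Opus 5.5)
Your proposal is correct and follows essentially the paper's proof: symmetry from differentiating $Q(\omega_i,\omega_j)=0$, invertibility from the Lagrangian identification $\HH/\EE\simeq\EE^\vee$, the symmetric-versus-alternating splitting that gives $Q(v,v)=0$ and the law for $\mathsf K'$, then differentiating $Q(v,v)=0$ to get $I\mathsf K=\mathsf K I^t$, and finally the gauge $M'=Ma_1$. The only difference is organizational: you write $\mathsf K=Q(\omega,v^t)$ and obtain $Q(v,v^t)=0$ and $\mathsf K'=-a_1\mathsf K-\mathsf K a_1^t$ in a single step, whereas the paper first splits $\mathsf K'=C_1-2\mathsf K a_1^t$ with $C_1=Q(\nD\omega,\nD\omega)$ and then expands $Q(v,v)$.
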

\begin{proof}
Differentiating $Q(\omega_i,\omega_j)=0$ proves symmetry.
The polarization identifies $\HH/\EE$ with $\EE^\vee$, so the
invertibility of $\beta_t$ implies that of ${\mathsf K}$.
Let $C_1=Q(\nD\omega,\nD\omega)$, with pairings taken entrywise.
Equation~\eqref{III:eq:matrix-pf-three} gives
${\mathsf K}'=C_1-2{\mathsf K}a_1^t$. Since ${\mathsf K}'$ is symmetric and $C_1$ alternating,
\[
 C_1={\mathsf K}a_1^t-a_1{\mathsf K},\qquad {\mathsf K}'=-a_1{\mathsf K}-{\mathsf K}a_1^t.
\]
Expansion of $Q(v,v)$ gives zero. Differentiating the definition
of $v$ proves \eqref{III:eq:normal-first-order-three}; differentiating
$Q(v,v)=0$ then gives $I{\mathsf K}={\mathsf K}I^t$.
Finally solve $M'=Ma_1$. Equation~\eqref{III:eq:frame-laws-three}
makes the transformed $a_1$ zero, and then ${\mathsf K}'=0$.
\end{proof}

The normal frame is a device for proofs. Even for an algebraic
Gauss--Manin connection it need not be algebraic; the invariants
can instead be computed directly in an algebraic frame by
\eqref{III:eq:I-S-D-three}.

\subsection{The classical matrix Schwarzian and the underlying bundle}
\label{III:subsec:classical-comparison-three}

The period chart identifies the normalized coefficient with the
classical matrix Schwarzian. Choose a horizontal symplectic
trivialization and a nonsingular period block. Normalizing that block
puts the Hodge period matrix in the form $[\Id\;Z(t)]$, with $Z^t=Z$.
The constant block forces $a_2=0$ in the period equation. On the
transverse locus $Z'$ is invertible, so the other block gives
\begin{equation}\label{III:eq:period-Schwarzian-three}
 a_2=0,\qquad a_1=-\tfrac12Z''(Z')^{-1},\qquad
 I=\tfrac12Z'''(Z')^{-1}
                  -\tfrac34\bigl(Z''(Z')^{-1}\bigr)^2.
\end{equation}
The factor order follows from the column convention for $\omega$.
For a polarized weight-one variation, one may choose a period chart
with $Z\in\mathfrak H_g$.

Formula~\eqref{III:eq:period-Schwarzian-three} is a convention for the
classical Lagrangian matrix Schwarzian. In the fanning-frame
convention of \cite{APD}, the transposed frame satisfies
$A''+A'P+AR=0$ with $P=2a_1^t$ and $R=a_2^t$; their Schwarzian
$2R-\tfrac12P^2-P'$ is therefore $2I^t$.
Normal frames and local equivalence belong to that established
theory; see also \cite{Ovsienko,MBS}.
For example, in a normal frame the system
$\omega'=v$, $v'=-I\omega$, together with its constant ${\mathsf K}$,
reconstructs the local polarized filtered connection up to a
constant symplectic identification.

The period equation acts on the dual Hodge bundle. For a horizontal
functional $\ell$ on $\HH$, the values $\ell(\omega_i)$ are the
coefficients of $\ell|_{\EE}\in\EE^\vee$. Thus the matrix operator
glues as
\begin{equation}\label{III:eq:period-operator-bundle-three}
 L:\EE^\vee\longrightarrow\EE^\vee\otimes K_U^2.
\end{equation}
If $\vartheta^2=K_U$ and $\mathcal V=\EE^\vee\otimes\vartheta$,
this is
\[
 L:\mathcal V\otimes\vartheta^{-1}
       \longrightarrow\mathcal V\otimes\vartheta^3,
\]
the order-two case of \eqref{II:eq:curve-operator-type}.
The adapted connection with local coefficient $a_1dt$ belongs to
$\mathcal V$, while
$\End(\mathcal V)=\End(\EE^\vee)$.
The half-form twist will account for the central term in the
parameter law below. No square root is needed for
\eqref{III:eq:period-operator-bundle-three} itself.

\subsection{Parameter laws and intrinsic covariants}
\label{III:subsec:parameter-laws-three}

A change of parameter adds a scalar Schwarzian term to $I$.
Taking the trace-free part removes it; taking a commutator will then
remove the anomaly in the next derivative. Let $t=t(s)$, put
$\mu=dt/ds\ne0$, and keep the same Hodge frame.
The chain rule gives
\begin{equation*}
 a_{1,s}=\mu a_{1,t}-\frac{\mu'}{2\mu}\Id,
 \qquad a_{2,s}=\mu^2a_{2,t},\qquad {\mathsf K}_s=\mu {\mathsf K}_t,
\end{equation*}
where primes on $\mu$ are $s$-derivatives and all $t$-coefficients
are evaluated at $t(s)$. Substitution gives
\begin{equation}\label{III:eq:parameter-I-three}
 I_s=\mu^2I_t+\tfrac12\{t,s\}\Id,\qquad
 S_s=\mu^2S_t,
 \qquad \{t,s\}=\frac{\mu''}{\mu}
                         -\frac32\left(\frac{\mu'}\mu\right)^2.
\end{equation}
This agrees with the Part~\ref{part:I} projective datum $P=I/3$.
On endomorphism coefficients $\cov_s=\mu\cov_t$; hence
\begin{equation}\label{III:eq:parameter-DS-three}
 (\cov S)_s=\mu^3(\cov S)_t+2\mu\mu'S_t.
\end{equation}
In particular, covariant differentiation of $S$ has a parameter
anomaly even after the Hodge gauge has been accounted for.

\begin{convention}[Genus-two specialization]\label{III:conv:genus-two}
From this point to the end of Part~\ref{part:III}, $g=2$ unless a
statement explicitly allows arbitrary rank or genus. The cyclic-section
lemma and the general endomorphism criterion retain their stated
arbitrary-rank hypotheses.
\end{convention}
Set
\begin{equation}\label{III:eq:q-c-three}
 T=\cov S,\qquad C=[S,T]=[I,\cov I],\qquad
 q=\tr(S^2),\qquad c=\tr(C^2).
\end{equation}
Taking a commutator removes the last term of
\eqref{III:eq:parameter-DS-three}:
\begin{equation}\label{III:eq:parameter-C-three}
 C_s=\mu^5C_t,\qquad q_s=\mu^4q_t,
 \qquad c_s=\mu^{10}c_t.
\end{equation}
Thus $Sdt^2$ and $Cdt^5$ are endomorphism-valued differentials,
and $qdt^4$, $cdt^{10}$ are scalar differentials. On $q\ne0$ put
\begin{equation}\label{III:eq:R-three}
 \mathcal R=\frac{c^2}{q^5},\qquad
 T^\perp=T-\frac{q'}{2q}S.
\end{equation}
Both the scalar $\mathcal R$ and the tensor $T^\perp dt^3$ are
independent of the parameter and the Hodge frame, with conjugation
understood for the latter.
Indeed, $\tr(ST)=q'/2$, and
\eqref{III:eq:parameter-DS-three} shows
$(T^\perp)_s=\mu^3(T^\perp)_t$.
For trace-free $2\times2$ matrices, Cayley--Hamilton gives
\begin{equation}\label{III:eq:trace-relations-three}
 S^2=\tfrac q2\Id,\qquad
 ST+TS=\tr(ST)\Id,\qquad
 c=-2\bigl(q\tr(T^2)-\tr(ST)^2\bigr).
\end{equation}
Consequently $\tr(ST^\perp)=0$ and
$\tr((T^\perp)^2)=-c/(2q)$.

\begin{proposition}[Local filtered splitting]\label{III:prop:splitting-three}
On a sufficiently small open set where $q\ne0$, the identity
$C\equiv0$ is equivalent to an orthogonal decomposition of the
polarized filtered connection into two rank-two flat subbundles,
each meeting $\EE$ in a line.
\end{proposition}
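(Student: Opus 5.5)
The plan is to work in the polarized normal frame of Proposition~\ref{III:prop:normal-form-three}, where $a_1=0$, $\mathsf K$ is constant, $\cov=\partial_t$, and $I$ (hence $S$) is self-adjoint for $\mathsf K^{-1}$. The flat bundle is then spanned by $\omega_1,\omega_2,v_1,v_2$, with $\omega'=v$ and $v'=-I\omega$. Both sides of the equivalence are invariant under constant frame changes preserving $\mathsf K$, and the statement is local, so this normalization costs nothing.

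\emph{Splitting implies $C\equiv0$.} Suppose $\HH=\HH_1\oplus\HH_2$ is $Q$-orthogonal and $\nabla$-invariant, with each $\HH_i\cap\EE$ a line. I would choose a Hodge frame $\omega_i\in\HH_i\cap\EE$. Then $\nD\omega_i$ and $\nD^2\omega_i$ stay in $\HH_i$. Since $\HH_i$ has rank two and, by transversality, $\omega_i,\nD\omega_i$ span it, the matrices $a_1,a_2$ are diagonal in this frame, and hence so are $I$ and $\cov I=I'+[a_1,I]$. Diagonal matrices commute, so $C=[I,\cov I]=0$. By \eqref{III:eq:frame-laws-three}, this holds in every frame.

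\emph{$C\equiv0$ implies splitting.} This direction carries the real content. Since $q\ne0$, the trace-free matrix $S$ satisfies $S^2=\tfrac q2\Id$, so it has two distinct eigenvalues $\pm\sqrt{q/2}$ and is diagonalizable on a small disc with holomorphic eigenline projectors $\Pi_\pm=\tfrac12(\Id\pm S/\sqrt{q/2})$. The condition $[S,S']=0$ is the key input. It says $S'$ lies in the centralizer of $S$, which for a regular semisimple $2\times2$ matrix is $\operatorname{span}(\Id,S)$. Differentiating $\Pi_\pm$ then gives $\Pi_\pm'\in\operatorname{span}(\Id,S)$. But $\Pi_\pm'$ is trace-free and satisfies $\Pi\Pi'\Pi=0$, which forces $\Pi_\pm'=0$.

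With constant eigenprojectors, I would make a constant frame change, still keeping $a_1=0$, that diagonalizes $S$, and hence $I$, on the whole disc. The system $\omega'=v$, $v'=-I\omega$ then decouples into two rank-two subsystems: $\HH_i=\langle\omega_i,v_i\rangle$ is $\nabla$-invariant and meets $\EE$ in the line $\langle\omega_i\rangle$.

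It remains to show orthogonality. Self-adjointness $I\mathsf K=\mathsf K I^t$ with $I$ diagonal having distinct eigenvalues forces $\mathsf K$ to be diagonal. Then $Q(\omega_1,v_2)=\mathsf K_{12}=0$ and $Q(\omega_1,\omega_2)=0$. Also $Q(v_1,v_2)=0$, because $Q(v,v)=0$ entrywise with $v=\nD\omega$ is exactly the alternating block computed in the proof of Proposition~\ref{III:prop:normal-form-three}, and it vanishes when $a_1=0$. So $\HH_1\perp\HH_2$, and nondegeneracy then makes each $\HH_i$ symplectic.

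\emph{Main obstacle.} The delicate step is the centralizer argument turning $C\equiv0$ into constancy of the eigenprojectors. This is where both $q\ne0$ and the smallness of the open set (to choose $\sqrt q$) are used. I would check it with care, including that the diagonalizing constant frame preserves the normal-frame conditions.
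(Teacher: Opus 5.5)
Your proposal is correct and follows essentially the same route as the paper. You pass to a normal frame and use $[S,S']=0$ with $q\ne0$ to make the spectral projectors of $S$ constant; your centralizer-plus-trace argument is equivalent to the paper's observation that $P_\pm'$ is off-diagonal while $S'$ must be diagonal in the eigenbasis. You then split in the constant eigenbasis, get orthogonality from $I\mathsf K=\mathsf K I^t$, and obtain the converse from diagonal $a_1,a_2$ in an adapted frame.

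One small wording point: reaching the normal frame uses the nonconstant gauge $M'=Ma_1$, not a constant frame change. This is harmless, since $C$ transforms by conjugation and the splitting is intrinsic.
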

\begin{proof}
Use a normal frame. Since $q\ne0$, $S$ has distinct eigenvalues.
Its spectral projectors $P_+,P_-$ are holomorphic after shrinking.
Differentiating $P_\pm^2=P_\pm$ shows that $P_\pm'$ is off-diagonal
for the eigenspace decomposition. The identity $[S,S']=0$ forces
these off-diagonal terms to vanish. Thus the projectors are constant,
and the second-order equation splits in a constant eigenbasis.
Self-adjointness makes the summands orthogonal. Conversely, an
adapted frame for such a filtered splitting makes $a_1,I,\cov I$
diagonal, so $C=0$.
\end{proof}

The condition is $C\equiv0$ on the neighborhood; an isolated zero
is insufficient. This local complex splitting need not preserve an
integral lattice.

\subsection{Comparison with the ambient connection and acceleration}
\label{III:subsec:ambient-acceleration}

Suppose the period curve lies in a Siegel chart where $G$ is regular.
Theorem~\ref{III:thm:geodesic-Schwarzian} supplies an ambient tensor, but an
arbitrary period curve need not be its geodesic. The discrepancy is
explicit. Put $H=G/2$, $V=Z'$, and, on $\det V\ne0$, define
\[
 A=\nabla^T_tV=Z''+2VHV,\qquad \Upsilon=AV^{-1},\qquad B_0=VH.
\]
Here $\Upsilon$ is an endomorphism of the pulled-back $\EE^\vee$,
and $d+B_0dt$ is its underlying dual Hodge connection.

\begin{theorem}\label{III:thm:acceleration-comparison}
For the graph matrix Schwarzian in \eqref{III:eq:period-Schwarzian-three},
\begin{equation}\label{III:eq:acceleration-comparison}
 \boxed{\quad
 I=\mathbb I_G(V,V)
       +\frac12\bigl(\Upsilon'+[B_0,\Upsilon]\bigr)
       -\frac14\Upsilon^2.
 \quad}
\end{equation}
In particular the ambient and curve Schwarzians agree along affine
$G$-geodesics. In the general case the displayed sum is independent
of the choice of ambient modular connection.
\end{theorem}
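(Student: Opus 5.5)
Starting from the graph formula \eqref{III:eq:period-Schwarzian-three}, I would substitute $Z''=A-2VHV$, so that $Z''V^{-1}=\Upsilon-2VH=\Upsilon-2B_0$. This gives
\[
 I=\tfrac12Z'''V^{-1}-\tfrac34(\Upsilon-2B_0)^2 .
\]
Next, differentiating $Z''=A-2VHV$ gives
\[
 Z'''=A'-2V'HV-2V\,dH[V]\,V-2VHV'.
\]
Since $V'=Z''=A-2VHV$, the derivatives of $V$ become expressions in $A$, $V$ and $H$. For the term $A'V^{-1}$ I would use $\Upsilon'=A'V^{-1}-AV^{-1}V'V^{-1}=A'V^{-1}-\Upsilon(\Upsilon-2B_0)$. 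This writes $\tfrac12Z'''V^{-1}$ entirely through $\Upsilon,\Upsilon',B_0$ and the two terms $V\,dH[V]$ and $VHVH$.

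Collecting terms should then produce three pieces. The terms $-V\,dH[V]$ and $+VHVH$, with the coefficients fixed by the geodesic computation in Theorem~\ref{III:thm:geodesic-Schwarzian}, assemble into $\mathbb I_G(V,V)$. The terms linear in $\Upsilon$ should combine to $\tfrac12(\Upsilon'+B_0\Upsilon-\Upsilon B_0)$. The quadratic term in $\Upsilon$ should come out as $-\tfrac14\Upsilon^2$. As a check, at $A=0$ this reduces exactly to the identity \eqref{III:eq:geodesic-graph-comparison} already proved. Hence only the terms in $\Upsilon$ need care, and one can organize the computation as the geodesic case plus a correction linear and quadratic in $A$.

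The main obstacle is the noncommutative bookkeeping. Cross terms such as $B_0\Upsilon$, $\Upsilon B_0$ and $B_0^2$ arise both from $-\tfrac34(\Upsilon-2B_0)^2$ and from the $V'$ terms, and they must cancel except for the commutator. I would track these by expanding in ordered monomials in $\Upsilon$ and $B_0$, keeping $VH=B_0$ always on the correct side. The claim that the $B_0$ terms form $[B_0,\Upsilon]$ follows from matching left and right multiplications.

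Independence of $G$ then follows formally, since the left side $I$ is defined from the curve alone. Alternatively, one can check it directly. Under $H\mapsto H+Q$ we have $A\mapsto A+2VQV$ and $\Upsilon\mapsto\Upsilon+2VQ$, and the change of $\mathbb I_G$ given in \eqref{III:eq:geodesic-connection-change} should cancel the change of the remaining terms. Agreement along geodesics is the case $A=0$.
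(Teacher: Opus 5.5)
Your proposal is correct and is essentially the paper's direct computation. The paper organizes it through $a_1=B_0-\tfrac12\Upsilon$ in $I=-a_1'-a_1^2$, together with the identity $-B_0'-B_0^2=\mathbb I_G(V,V)-\Upsilon B_0$ (from $V'=\Upsilon V-2VHV$), which avoids expanding $Z'''$; your expansion of $\tfrac12Z'''V^{-1}-\tfrac34(\Upsilon-2B_0)^2$ also carries through, yielding exactly $B_0^2-V\,dH[V]=\mathbb I_G(V,V)$, $\tfrac12(\Upsilon'+B_0\Upsilon-\Upsilon B_0)$ and $-\tfrac14\Upsilon^2$, and the geodesic case ($\Upsilon\equiv0$) and $G$-independence are handled as in the paper.
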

\begin{proof}
The graph frame has $a_2=0$ and
$a_1=-Z''V^{-1}/2=B_0-\Upsilon/2$. Since
$V'=\Upsilon V-2VHV$, differentiation gives
\[
 -B_0'-B_0^2=\mathbb I_G(V,V)-\Upsilon B_0.
\]
Substitute $a_1=B_0-\Upsilon/2$ into $I=-a_1'-a_1^2$ and retain
factor order. The mixed terms are
$[B_0,\Upsilon]/2$, proving the formula. Its left side does not
use $G$, which proves the final assertion.
\end{proof}

Along an affine $G$-geodesic the graph frame has $a_1=VH$.
The coefficient connection is therefore the pulled-back dual Hodge
connection. Formula~\eqref{III:eq:geodesic-jet-restriction} identifies
the contractions of $\mathbb S_G,\mathbb C_G,\mathbf q_G,\mathbf c_G$
with $S,C,q,c$. For an arbitrary period curve, the acceleration
terms in \eqref{III:eq:acceleration-comparison} give the difference.

\subsection{Dependence on period-curve jets}
\label{III:subsec:jet-descent-three}

Frame and parameter invariance leave open whether a quantity depends
on the curve or only on its point in moduli.
Formula~\eqref{III:eq:period-Schwarzian-three} uses the third period jet;
$C$ uses the fourth. The transformations already proved show that
$\mathcal R$ is a rational invariant of transverse fourth jets,
under Hodge-frame changes, changes of symplectic marking, and
reparametrization. Its denominator requires the indicated open
conditions. This is a statement on jets of period curves, not
descent to the moduli space of individual abelian varieties.

\begin{proposition}\label{III:prop:jet-obstruction-three}
Fix $Z(0)$ and the two-jet of a curve with $\det Z'(0)\ne0$.
Varying the symmetric matrix $Z'''(0)$ makes $S(0)$ range over
all trace-free matrices satisfying $S {\mathsf K}={\mathsf K}S^t$, with ${\mathsf K}=Z'(0)$
in the standard symplectic graph chart.
In particular, a nonconstant function of this freely varying $S$
is not determined by the base point alone.
\end{proposition}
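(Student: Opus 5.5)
We will compute $S(0)$ directly from the graph formula \eqref{III:eq:period-Schwarzian-three}, viewing it as a function of the third jet with the lower jets fixed. Put $V=Z'(0)$, $A=Z''(0)$ and $W=Z'''(0)$. At $t=0$,
\[
 I(0)=\tfrac12WV^{-1}-\tfrac34(AV^{-1})^2 .
\]
With $V$ and $A$ held fixed, the map $W\mapsto I(0)$ is affine. Its linear part is $W\mapsto\tfrac12WV^{-1}$, which is injective on symmetric matrices because $V$ is invertible. Taking trace-free parts, $S(0)=I(0)-\tfrac12\tr I(0)\Id$ is then affine in $W$, with linear part $W\mapsto\tfrac12\bigl(WV^{-1}\bigr)_0$, where the subscript denotes the trace-free part.

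Next we identify the target space. In the graph chart the Hodge frame gives ${\mathsf K}=Z'$ up to the sign and transpose conventions of \eqref{III:eq:K-three}; we take ${\mathsf K}=V$ as in the statement. The condition $X{\mathsf K}={\mathsf K}X^t$ says that $XV$ is symmetric. So the matrices $X$ satisfying it are exactly those of the form $X=YV^{-1}$ with $Y$ symmetric. This space has dimension $3$ when $g=2$.

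The trace-free ones form a subspace $\mathcal S$ of dimension $2$. This holds because $X\mapsto\tr X=\tr(YV^{-1})$ is a nonzero functional on this space, being the pairing of $Y$ with $V^{-1}\neq0$. The space is also closed under $X\mapsto X-\tfrac12\tr(X)\Id$, since $\Id$ satisfies the condition. Hence $W\mapsto\tfrac12(WV^{-1})_0$ maps the $3$-dimensional space of symmetric $W$ onto $\mathcal S$. Moreover, the constant term $-\tfrac34\bigl((AV^{-1})^2\bigr)_0$ also lies in $\mathcal S$. This uses \eqref{III:eq:polarization-identities-three}, which guarantees $S{\mathsf K}={\mathsf K}S^t$ for every admissible curve. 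Alternatively, one can check directly that $(AV^{-1})^2V=AV^{-1}A$ is symmetric. The image of the affine map is therefore all of $\mathcal S$.

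The step needing the most care is matching the claimed normalization ${\mathsf K}=Z'(0)$ with the frame conventions. If the convention instead gives ${\mathsf K}=\pm V$ or $\pm V^t$, the same argument applies unchanged, because the condition is homogeneous and $V$ is symmetric.

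For the final assertion, observe that the base point $Z(0)$, and even the whole $2$-jet, is fixed while $S(0)$ sweeps out $\mathcal S$. Hence any nonconstant function of $S$ takes different values on curves through the same point of moduli. This is a matter of realizability: every prescribed third jet $W$ occurs for some curve, for instance the polynomial $Z(t)=Z(0)+tV+\tfrac{t^2}{2}A+\tfrac{t^3}{6}W$, which lies in $\mathfrak H_g$ for small $t$.
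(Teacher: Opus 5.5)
Your proposal is correct and follows the paper's argument. The common steps are the affine dependence of $I(0)$ on $Z'''(0)$ with linear part $\tfrac12 B\,Z'(0)^{-1}$, the identification of the target with symmetric matrices by right multiplication by $Z'(0)$, surjectivity of trace removal, and realization by polynomial germs that stay in $\mathfrak H_2$ after shrinking. Your explicit check that the $W$-independent term $-\tfrac34\bigl((AV^{-1})^2\bigr)_0$ already lies in the target, via symmetry of $AV^{-1}A$, fills in a step the paper leaves implicit.
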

\begin{proof}
Changing $Z'''(0)$ by a symmetric matrix $B$ changes $I(0)$ by
$\tfrac12B(Z'(0))^{-1}$. Multiplication on the right by $Z'(0)$
identifies this space with the symmetric matrices; trace removal
is surjective onto its trace-free part. All these jets are realized
by polynomial curve germs. If $Z(0)\in\mathfrak H_2$, positivity
of its imaginary part persists after sufficiently shrinking the germ.
\end{proof}

For $\mathcal R$ the obstruction can be seen while fixing even the
three-jet: variation of $Z''''(0)$ varies $T(0)$ arbitrarily in the
trace-free self-adjoint space. If $q(0)\ne0$, one can choose $T$
commuting with $S$ or not commuting with $S$, giving different
values of $\mathcal R$ at the same point.
Thus $\mathcal R$ is an invariant of a period curve, while
$\mathbb I_G$ is an ambient tensor obtained by choosing a modular
connection. Their comparison is \eqref{III:eq:acceleration-comparison}.
For the general Gauss--Manin interpretation of differential equations
satisfied by Siegel modular functions, see \cite{BertrandZudilin}.

\subsection{Rank-one directions and the kernel line}
\label{III:subsec:rank-one-three}

When the Kodaira--Spencer map has rank one, its kernel is a line
whose next derivative measures the remaining variation.
Suppose that $\HH$ has rank four and $\beta_t$ has constant rank one.
Put $N=\ker\beta$. Differentiating the polarization
shows that
\begin{equation*}
 N\subset\EE\subset N^\perp=\EE+\nD\EE\subset\HH,
 \qquad\text{with ranks }1,2,3,4.
\end{equation*}
Indeed, $\nD N\subset\EE$ and
$Q(n,\nD e)=-Q(\nD n,e)=0$ for $n\in N$, $e\in\EE$;
rank counting proves the equality. There is a further intrinsic map
\begin{equation}\label{III:eq:nu-three}
 \nu:N\longrightarrow(\EE/N)\otimes K_U,
 \qquad s\longmapsto\nabla s\pmod N.
\end{equation}
Its nonvanishing is the appropriate cyclicity condition in this regime.

\begin{proposition}\label{III:prop:rank-one-cyclicity-three}
If $\nu(t_0)\ne0$, every nonvanishing local generator $s$ of $N$
is cyclic at $t_0$: the four sections
$s,\nD s,\nD^2s,\nD^3s$ are independent.
If $U$ is connected and $\nu\not\equiv0$, every flat complex
endomorphism of $\HH$ preserving $\EE$ is a scalar constant.
\end{proposition}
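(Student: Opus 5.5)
We will prove cyclicity first and then the endomorphism statement, working throughout with the flag $N\subset\EE\subset N^\perp\subset\HH$ of ranks $1,2,3,4$ established just before the statement.

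\emph{Step 1: the first three sections.} Let $s$ generate $N$ near $t_0$. Since $\nD N\subset\EE$ and $\nu(t_0)\ne0$, the section $\nD s$ is a nonzero element of $\EE/N$ at $t_0$. Hence $s,\nD s$ span $\EE$ near $t_0$.

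\emph{Step 2: the second derivative.} The map $\beta$ kills $N$, so $\beta$ factors through $\EE/N$. Because $\beta$ has rank one, the induced map $\EE/N\to(\HH/\EE)\otimes K_U$ is injective. We have just seen that $\nD s$ generates $\EE/N$ at $t_0$. Therefore $\nD^2s=\nD(\nD s)$ is nonzero modulo $\EE$ at $t_0$. As $\nD\EE\subset N^\perp$, the three sections $s,\nD s,\nD^2 s$ span $N^\perp$.

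\emph{Step 3: the third derivative.} It remains to show $\nD^3s\notin N^\perp$ at $t_0$, i.e.\ that $Q(s,\nD^3 s)\ne0$. Differentiating the identities $Q(s,s)=Q(s,\nD s)=Q(s,\nD^2 s)=0$ gives
\[
Q(s,\nD^3s)=-Q(\nD s,\nD^2s).
\]
The pairing $Q$ induces a perfect pairing between $\EE/N$ and $\HH/N^\perp$, both of which are lines. Under this pairing, $Q(\nD s,\nD^2 s)$ pairs a generator of $\EE/N$ with the class of $\nD^2 s$ in $N^\perp/\EE$.

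The main obstacle is to show this pairing is nonzero. The reason it should be is as follows. Since $\EE$ is Lagrangian and $\EE+\nD\EE=N^\perp$, the form $Q$ restricted to $N^\perp$ has radical exactly $N$. Hence $Q$ descends to a nondegenerate form on the two-dimensional space $N^\perp/N$. In that quotient, $\EE/N$ is an isotropic line, and $N^\perp/\EE$ is its complement, spanned by the image of $\nD^2 s$. Nondegeneracy then forces $Q(\nD s,\nD^2 s)\ne0$.

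I would check the radical claim explicitly. Suppose $x=e+\nD e'\in N^\perp$ is $Q$-orthogonal to $N^\perp$. Pairing $x$ with $\EE$ gives $\beta(e')$ paired with $\EE$, which vanishes only if $e'\in N$ modulo $\EE$, and then $x\in\EE$. Pairing $x\in\EE$ with $\nD\EE$ says $\beta(\EE)$ is orthogonal to $x$; since $\beta$ has rank one, this forces $x\in N$. With the radical equal to $N$, the four sections are independent, which proves cyclicity.

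\emph{Endomorphisms.} Let $\phi$ be a flat endomorphism of $\HH$ preserving $\EE$. Because $\phi$ commutes with $\nabla$, it commutes with $\beta$, so $\phi(N)\subset N$ and $s$ is an eigenvector: $\phi s=fs$. Flatness gives
\[
\phi(\nD s)=\nD(fs)=f'\,s+f\,\nD s.
\]
Since $\phi(\nD s)$ must lie in $\phi(\EE)\subset\EE$, this is consistent, but we need $f'=0$ to conclude.

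I would argue as follows. Near a point where $\nu\ne0$, the four sections $s,\nD s,\nD^2 s,\nD^3 s$ form a frame (by the cyclicity just proved). In this frame $\phi$ is triangular with constant diagonal. To see $f$ is locally constant, note that $\phi$ is flat, so its eigenvalues are constant; $f$ is one of them. Write $\phi=f\,\Id+\psi$, where $\psi$ is flat and $\psi s=0$. Differentiating repeatedly gives $\psi(\nD^k s)=0$ for all $k$. Hence $\psi=0$ on the open set where $\nu\ne0$, and by flatness and connectedness of $U$, $\psi=0$ everywhere.
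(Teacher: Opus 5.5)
Your proof is correct and follows essentially the paper's route. For cyclicity, $s,\nD s$ span $\EE$, and nondegeneracy of $Q$ on $N^\perp/N$ (the counterpart of the paper's remark that the Kodaira--Spencer pairing has radical $N$) gives $Q(s,\nD^3 s)=-Q(\nD s,\nD^2 s)\neq0$. For the endomorphism statement, you show the flat eigenvalue $f$ is constant and then kill $\phi-f\Id$ on the cyclic frame, exactly as the paper does.

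One slip to fix: $Q$ does not induce a pairing on $\EE/N\times\HH/N^\perp$, since it is not well defined there ($Q(N,\HH)\neq0$). The pairing you actually use is $\EE/N\times N^\perp/\EE$. Also, the radical of $Q|_{N^\perp}$ is $N^\perp\cap(N^\perp)^\perp=N$ directly by nondegeneracy, so your explicit verification of the radical can be dropped.
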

\begin{proof}
Where $\nu\ne0$, $s,s'$ span $\EE$. The Kodaira--Spencer pairing
has radical $N$, so $Q(s',s'')\ne0$.
The first three derivative sections span $N^\perp$, while
$Q(s,s''')=-Q(s',s'')\ne0$ puts the fourth outside it.
For the endomorphism assertion, a flat endomorphism $A$ preserving
$\EE$ also preserves $N$. Write $As=f s$. Its characteristic
polynomial in a horizontal frame is constant, so the holomorphic
eigenvalue $f$ is constant. Then $As^{(j)}=f s^{(j)}$ for all $j$.
Cyclicity gives $A=f\Id$ on a nonempty open set, and flatness
extends the equality over connected $U$.
\end{proof}

The scalar differential module attached to $N$ is intrinsic; a
displayed monic fourth-order operator still requires a generator
and a parameter. Its ramification and its relation to Yang--Zudilin
will be treated in Section~\ref{III:sec:scalarizations}.
This is the rank-one geometry of \cite{Zelenko,ZelenkoLi}, expressed
in filtered-connection terms. The constant-rank assumption makes
$N$ and $\EE/N$ bundles; points where the rank changes are excluded.
If $\beta$ vanishes identically, $\EE$ is horizontal.

\section{Canonical scalarizations and cyclicity divisors}
\label{III:sec:scalarizations}

A line in a flat bundle gives a scalar differential equation wherever
its successive derivatives span the bundle. The zeros of their
determinant are apparent singularities of that presentation. Under Convention~\ref{III:conv:genus-two}, we
compute this determinant for the Hodge determinant line, the two
null lines on the transverse locus, and the kernel line on the
rank-one locus.

\subsection{Cyclic sections and apparent singularities}
\label{III:subsec:cyclic-sections-four}

Let $\mathcal F$ be a rank-$r$ flat bundle on $U$, nonsingular at
the points under consideration, and let $\mathcal L\subset\mathcal F$
be a line subbundle. A local section $s$ is \emph{cyclic} at a point
if $s,\nD s,\ldots,\nD^{r-1}s$ form a basis there. For a
nonvanishing local generator of $\mathcal L$, define
\begin{equation}\label{III:eq:cyclic-W-four}
 \operatorname{Wr}_s=\det(s,\nD s,\ldots,\nD^{r-1}s)
\end{equation}
in a local nonzero flat determinant frame. Under $s\mapsto fs$ and
$t=t(u)$, triangularity of the derivative columns gives the factors
$f^r$ and $(dt/du)^{r(r-1)/2}$, respectively. Equivalently, the
cyclic determinant is a bundle morphism
\[
 \mathcal L^{\otimes r}\longrightarrow
                  \det\mathcal F\otimes K_U^{r(r-1)/2}.
\]
If it is not identically zero, its zero divisor is intrinsic.
On its complement, expansion of $\nD^r s$ in the first $r$
derivatives gives the unique monic scalar operator for periods of
$s$. Its coefficients continue meromorphically across the zeros.

\begin{lemma}\label{III:lem:apparent-exponents-four}
Suppose $s$ is generically cyclic and let
$n_0<\cdots<n_{r-1}$ be the vanishing orders of an adapted basis
of its scalar periods at a regular point. They are exactly the indices
$j$ at which
$\operatorname{span}\{s(t_0),\nabla_t s(t_0),\ldots,
\nabla_t^j s(t_0)\}$ increases in dimension. Moreover,
\begin{equation}\label{III:eq:Wronskian-orders-four}
 \operatorname{ord}\operatorname{Wr}_s=\sum_{i=0}^{r-1}n_i-\frac{r(r-1)}2.
\end{equation}
The scalar equation is regular singular with these exponents and
trivial local monodromy. If $\operatorname{Wr}_s$ is a unit it is ordinary; if
$\operatorname{Wr}_s$ has a simple zero, the exponents are
$0,1,\ldots,r-2,r$.
\end{lemma}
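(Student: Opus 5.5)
The plan is to work in a horizontal frame of $\mathcal F$ near $t_0$, where $\nabla_t$ becomes ordinary differentiation of coefficient columns. The flat bundle is nonsingular at $t_0$, so there is a horizontal dual basis $\ell_1,\ldots,\ell_r$ on a disc. The scalar periods of $s$ are then $y_i=\ell_i(s)$, holomorphic functions of $t$. The space $\mathcal P=\operatorname{span}\{y_i\}$ has dimension $r$: a horizontal functional killing $s$ on an open set kills every $\nabla_t^j s$ there, which contradicts generic cyclicity. We have $\ell(\nabla_t^js)=\ell(s)^{(j)}$ for horizontal $\ell$, and $\operatorname{Wr}_s$, computed in a flat determinant frame, equals up to a nonzero constant the ordinary Wronskian $W(y_1,\ldots,y_r)=\det(y_i^{(j)})$.

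For the first assertion, consider the linear map $\mathcal P^\ast\supset\{\text{horizontal }\ell\}\to$ jets, $\ell\mapsto(\ell(s)^{(j)}(t_0))_j=(\ell(\nabla^j s(t_0)))_j$. Put $V_j=\operatorname{span}\{s(t_0),\ldots,\nabla^js(t_0)\}$. The functionals whose period vanishes to order $>j$ are exactly the annihilator of $V_j$, of codimension $\dim V_j$. Choose an adapted basis, meaning a basis of $\mathcal P$ with distinct vanishing orders $n_0<\cdots<n_{r-1}$; it exists by standard Gaussian elimination on leading terms. The number of orders $\le j$ is then the codimension of the subspace of periods vanishing to order $>j$, which is $\dim V_j$. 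So $\dim V_j$ jumps exactly at the $n_i$.

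For \eqref{III:eq:Wronskian-orders-four}, write $y_i=c_i t^{n_i}(1+O(t))$ with $c_i\ne0$. The leading term of $W$ is $\prod c_i$ times the Wronskian of the monomials $t^{n_i}$. That Wronskian equals $\prod_{i<k}(n_k-n_i)\,t^{\sum n_i-r(r-1)/2}$, a Vandermonde determinant in the falling factorials, and its coefficient is nonzero because the $n_i$ are distinct. The higher-order corrections only raise the exponent, which gives the order formula.

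The scalar operator is $W(y,y_1,\ldots,y_r)/W(y_1,\ldots,y_r)$ normalized to be monic. Its solution space is $\mathcal P$, which consists of holomorphic functions with single-valued continuation, so the local monodromy is trivial. Its coefficients are ratios of holomorphic functions by $W$, so it is Fuchsian at $t_0$; this is the standard fact that an equation with a basis of moderate-growth solutions is regular singular. The exponents are the $n_i$, read off from the adapted basis.

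If $\operatorname{Wr}_s$ is a unit, the order formula forces $\sum n_i=r(r-1)/2$, and distinct nonnegative integers with that sum must be $0,\ldots,r-1$. The coefficients are then holomorphic and the point is ordinary. If $\operatorname{Wr}_s$ has a simple zero, the $n_i$ are distinct with sum $r(r-1)/2+1$. The only such set is $0,\ldots,r-2,r$: raising any entry other than the last by one would create a repetition.

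The main obstacle is the careful justification that the Fuchsian exponents coincide with the vanishing orders, rather than merely having trivial monodromy. I would handle it through the indicial equation of the Wronskian operator: its exponents are the leading exponents of a basis of solutions, and those are exactly the $n_i$ of the adapted basis.
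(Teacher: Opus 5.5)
Your proposal is correct and follows the paper's route. The steps match: a horizontal frame, the rank jumps of the derivative columns (your annihilator count is the paper's Taylor-coefficient rank argument written out), an adapted basis with distinct leading powers, the Vandermonde leading term of the Wronskian, exponents read from the leading powers, and the combinatorics of distinct nonnegative orders.

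The one divergence is how regular singularity is obtained. The paper derives the Fuchs bounds directly from Cramer's rule: the numerator determinant, with derivative order $j$ replaced by $r$, has order at least $\sum_i n_i - r(r-1)/2 - (r-j)$, so the coefficient of $\partial_t^j$ has a pole of order at most $r-j$. You instead invoke Fuchs' theorem for a basis of holomorphic, hence moderate-growth, solutions. That is valid, but your preceding clause, that division by $W$ alone makes the equation Fuchsian, is not by itself a justification.
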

\begin{proof}
In a horizontal frame the components of $s$ are holomorphic.
The rank of the derivative columns through order $j$ is the rank of
the Taylor coefficient map through degree $j$ on their scalar periods.
Its successive rank jumps give the stated vanishing sequence.
Successive constant row operations give a basis with distinct
leading powers $t^{n_i}$. The leading coefficient of the Wronskian
is a nonzero multiple of $\prod_{i<j}(n_j-n_i)$, proving
\eqref{III:eq:Wronskian-orders-four}. Cramer's rule for the coefficient
of $\partial_t^j$ replaces derivative order $j$ by $r$ in this
determinant. Its numerator has order at least
$\sum_i n_i-r(r-1)/2-(r-j)$, so the coefficient has a pole of
order at most $r-j$. These are the regular-singular bounds.
The leading powers of the solution basis give the indicial roots;
all the solutions are single-valued. For a simple zero, distinct
nonnegative $n_i$ with the sum in
\eqref{III:eq:Wronskian-orders-four} can only have the stated values.
\end{proof}

Multiplying a generator by a function with a zero adds $r$ times
that zero to the displayed Wronskian divisor. Such a generator
does not compute the intrinsic divisor of the line subbundle there.

\subsection{The primitive Hodge determinant}
\label{III:subsec:primitive-four}

The determinant of the Hodge bundle lies in a smaller flat bundle
than the full exterior square. Its cyclic determinant can therefore
be compared directly with the matrix Schwarzian. Assume $g=2$ and
$\beta_t$ invertible, and let
\[
 \mathcal P=\Lambda^2_0\HH
       =\ker\bigl(\Lambda^2\HH\xrightarrow{Q}\OO_U\bigr).
\]
This is a rank-five flat bundle, and
$v=\omega_1\wedge\omega_2$ is a nonvanishing local generator of
$\det\EE\subset\mathcal P$. Fix its invariant volume by assigning
volume one to
\begin{equation}\label{III:eq:primitive-basis-four}
 (e_1\wedge e_2,\ e_1\wedge f_1-e_2\wedge f_2,
       \ e_1\wedge f_2,\ e_2\wedge f_1,\ f_1\wedge f_2)
\end{equation}
for any symplectic frame with $Q(e_i,f_j)=\delta_{ij}$.
The symplectic group preserves this volume. Write
$\mathcal W(v)=\det_{\mathcal P}(v,v',v'',v''',v^{(4)})$.

\begin{theorem}[Commutator and primitive cyclicity]
\label{III:thm:primitive-commutator-four}
On the transverse locus,
\begin{equation*}
 \boxed{\quad
 \mathcal W(\omega_1\wedge\omega_2)^2
       =-32(\det {\mathsf K})^5\tr\bigl([I,\cov I]^2\bigr).
 \quad}
\end{equation*}
The section $\omega_1\wedge\omega_2$ is cyclic in $\mathcal P$
at a point if and only if $C=[I,\cov I]$ is nonzero there.
\end{theorem}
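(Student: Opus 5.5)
We reduce to the polarized normal form of Proposition~\ref{III:prop:normal-form-three}. Both sides are frame-covariant with matching weights: under $\omega\mapsto M\omega$ the generator $v$ scales by $\det M$, so $\mathcal W(v)^2$ gains $(\det M)^{10}$. Since $\mathsf K\mapsto M\mathsf K M^t$ modulo derivative terms, $(\det\mathsf K)^5$ gains the same factor. The trace term $\tr(C^2)$ is conjugation-invariant by \eqref{III:eq:frame-laws-three}. Under reparametrization the determinant gains $\mu^{10}$, and so does $\mathcal W^2$, while $\mathsf K^5$ contributes $\mu^{10}$ and $c$ contributes $\mu^{10}$. To keep this honest, we will check that the exponent count for $\mathcal W$ under $t=t(s)$ is $\mu^{5\cdot4/2}=\mu^{10}$, so $\mathcal W^2$ gains $\mu^{20}=\mu^{10}\cdot\mu^{10}$. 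Consistency of these covariances lets us compute in a frame with $a_1=0$ and $\mathsf K$ constant. There $\cov=\partial_t$, the system reads $\omega'=v$, $v'=-I\omega$, and $I$ is $\mathsf K^{-1}$-self-adjoint. Because both sides are polynomial in the jets of $I$ at a point, it suffices to verify an algebraic identity at that point.

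We next compute $v=\omega_1\wedge\omega_2$ and its derivatives up to order four, expressed in the basis of $\Lambda^2$ formed by wedges of $\omega_i,\nu_j$ (with $\nu=\nD\omega$). Write $\Omega=\omega_1\wedge\omega_2$, $N=\nu_1\wedge\nu_2$, and mixed wedges $\omega_i\wedge\nu_j$. Then $v'$ is the sum of the mixed terms $\nu_1\wedge\omega_2+\omega_1\wedge\nu_2$. Each further derivative uses $\nu'=-I\omega$, producing entries that are polynomials in $I,I',I''$ times these basis wedges. Collecting terms, $v''$ involves $2N$ plus terms linear in $I$ against $\Omega$. Then $v'''$ introduces $I$ acting on mixed wedges and $I'\Omega$, and $v^{(4)}$ introduces $I''$ and $I^2$ terms.

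The Lagrangian condition $Q(\nu,\nu)=0$ from \eqref{III:eq:polarization-identities-three} shows $N$ is primitive, consistent with $\mathcal P$. We then choose a symplectic frame adapted to $\mathsf K$: after a constant change diagonalize $\mathsf K$, set $e_i=\omega_i$, and let $f_i$ be the suitably rescaled $\nu_i$. In this frame the five primitive basis vectors of \eqref{III:eq:primitive-basis-four} are explicit. The $5\times5$ determinant then has a triangular-looking structure. The columns $v,v',v''$ fill the directions $e_1\wedge e_2$, the mixed trace direction, and $f_1\wedge f_2$, with pivots built from $\det\mathsf K$. The remaining $2\times2$ minor in the two trace-free mixed directions is formed by the mixed components of $v'''$ and $v^{(4)}$. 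Those components are the trace-free parts of $I$ and of $I'+(\text{stuff})I$, up to lower-column corrections that cancel by column operations. The minor should therefore reduce to a bilinear antisymmetric expression in $S$ and $T=S'$, namely the $2\times2$ determinant of $(S,T)$ in the self-adjoint trace-free space. By \eqref{III:eq:trace-relations-three} the square of that determinant equals $-2(q\tr T^2-\tr(ST)^2)=c$, up to a constant and a power of $\det\mathsf K$.

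The main obstacle is the bookkeeping in the previous step. We must show that the mixed components of $v'''$ and $v^{(4)}$, after column reduction, involve only $S$ and $T$ rather than $\tr I$. The trace parts should be absorbed by multiples of $v'$, since $\tr(I)$ acts as a scalar on the mixed trace direction. We must also pin down the constant $-32$ and the exponent $5$. To settle the constant, we will first carry out the case $\mathsf K=\Id$ with $I$ diagonal plus a small off-diagonal perturbation, and then restore $\det\mathsf K$ by the scaling covariance. The cyclicity equivalence follows immediately: cyclicity means $\mathcal W\ne0$, and $\det\mathsf K\ne0$ on the transverse locus, so this holds exactly when $c\ne0$. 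For trace-free self-adjoint $2\times2$ matrices, $C=[S,T]$ is itself trace-free with $C^2=\tfrac c2\Id$ and $\mathsf K^{-1}$-antiselfadjoint. It is nonzero exactly when $\tr C^2\ne0$, because such a nonzero $C$ is anti-self-adjoint for a nondegenerate symmetric form and hence cannot be nilpotent. This last fact should be checked directly from $\tr(C^2)=-2(q\tr T^2-\tr(ST)^2)$ together with the self-adjointness constraint.
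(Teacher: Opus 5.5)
Your plan is correct and matches the paper's proof. Pass to a normal frame with $a_1=0$ and $\mathsf K=\Id$, differentiate $e_1\wedge e_2$ in the primitive basis, reduce the determinant to the minor in $S$ and $S'$, and transfer to an arbitrary frame via $\mathcal W\mapsto(\det M)^5\mathcal W$, $\mathsf K\mapsto M\mathsf KM^t$ and $C\mapsto MCM^{-1}$; the paper finds $\mathcal W=-8\kappa$ with $C=\kappa\left(\begin{smallmatrix}0&1\\-1&0\end{smallmatrix}\right)$, hence $\mathcal W^2=64\kappa^2=-32\tr C^2$.

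Two small corrections to the bookkeeping. The direction filled by $v'$ is the antisymmetric mixed direction $e_1\wedge f_2-e_2\wedge f_1$, not a ``trace'' direction: every primitive mixed direction is trace-free, and this antisymmetric one is where the $\tr I$ terms of $v'''$ and $v^{(4)}$ land. Also, the reparametrization check is unnecessary, since the identity is stated and proved in a single fixed parameter.
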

\begin{proof}
\noindent\textit{1. Normalize the frame.} Use Proposition~\ref{III:prop:normal-form-three} and a constant
congruence to arrange $a_1=0$, ${\mathsf K}=\Id$. Put
\[
 I=\begin{pmatrix}p&u\\u&r\end{pmatrix},\qquad
 e=\omega,\quad f=\omega',\quad \tau=p+r.
\]
\noindent\textit{2. Differentiate in the primitive exterior square.} Then $e'=f$, $f'=-Ie$, and direct exterior differentiation gives
\begin{align*}
 v&=e_1\wedge e_2,\\
 v'&=e_1\wedge f_2-e_2\wedge f_1,\\
 v''&=2f_1\wedge f_2-\tau e_1\wedge e_2,\\
 v'''&=-\tau'e_1\wedge e_2
       +2u(e_1\wedge f_1-e_2\wedge f_2)\\
     &\hspace{12mm}-(3p+r)e_1\wedge f_2+(p+3r)e_2\wedge f_1.
\end{align*}
\noindent\textit{3. Take the determinant.} Differentiate the last row and take the determinant in
\eqref{III:eq:primitive-basis-four}. It is $-8\kappa$, where
\[
 \kappa=(p-r)u'-u(p'-r'),\qquad
 C=[I,I']=\begin{pmatrix}0&\kappa\\-\kappa&0\end{pmatrix}.
\]
Thus $\mathcal W^2=64\kappa^2=-32\tr(C^2)$, and cyclicity
is equivalent to $C\ne0$ in this frame.
\noindent\textit{4. Transfer the identity to every frame.} Under $\omega\mapsto M\omega$, the generator $v$ acquires
$\det M$, so $\mathcal W$ acquires $(\det M)^5$.
Meanwhile ${\mathsf K}\mapsto M{\mathsf K}M^t$ and $C\mapsto MCM^{-1}$.
These transformation laws prove the identity and the equivalence
in every frame.
\end{proof}

Under reparametrization, both $\mathcal W$ and the coefficient
of $C$ have the weights prescribed by
\eqref{III:eq:cyclic-W-four} and \eqref{III:eq:parameter-C-three}:
$\mathcal W$ has weight ten, and $(\det {\mathsf K})^5c$ has weight twenty.
Thus the theorem is compatible with all changes used to define
the intrinsic cyclicity divisor.

\begin{corollary}\label{III:cor:primitive-apparent-four}
Suppose $C\not\equiv0$. The primitive Hodge determinant defines
a fifth-order scalar presentation on a nonempty open set. At a
simple zero of its cyclic determinant, the equation has an
apparent singularity with exponents $0,1,2,3,5$.
Locally, if $C$ vanishes to order $m$ in a normal frame, then
$\mathcal W$ vanishes to order $m$ and $c$ to order $2m$.
\end{corollary}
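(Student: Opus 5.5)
The plan is to read everything off Theorem~\ref{III:thm:primitive-commutator-four} and Lemma~\ref{III:lem:apparent-exponents-four}, applied with $\mathcal F=\mathcal P$ of rank $r=5$ and $\mathcal L=\det\EE$ generated by $v=\omega_1\wedge\omega_2$. First, since $C\not\equiv0$ and $C$ is holomorphic on the transverse locus, it is nonzero on a dense open subset. The theorem says $v$ is cyclic in $\mathcal P$ exactly where $C\ne0$. There $v,v',\ldots,v^{(4)}$ form a frame of $\mathcal P$. Expanding $v^{(5)}$ in this frame gives the monic fifth-order scalar operator annihilating the periods of $v$, as described after \eqref{III:eq:cyclic-W-four}. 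This gives the first assertion. The operator is intrinsic up to the allowed changes of generator and parameter, because $\operatorname{Wr}_v=\mathcal W(v)$ is a bundle morphism.

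For the apparent singularity, take a simple zero of $\mathcal W$. Lemma~\ref{III:lem:apparent-exponents-four} with $r=5$ gives a regular singular point with trivial local monodromy, hence an apparent singularity. The same lemma gives the exponents $0,1,2,3,5$ at a simple zero. The only thing to check is that $\mathcal P$ is nonsingular there. It is, because $\nabla$ on $\HH$ is nonsingular and $\mathcal P$ is a flat subbundle of $\Lambda^2\HH$. The generator $v$ is nonvanishing, so the Wronskian divisor is the intrinsic one and not inflated by a zero of the generator.

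For the orders, I work in the normal frame of Proposition~\ref{III:prop:normal-form-three} with $a_1=0$ and $\mathsf K=\Id$, as in step~1 of the proof of the theorem. There $C=\left(\begin{smallmatrix}0&\kappa\\-\kappa&0\end{smallmatrix}\right)$ and $\mathcal W=-8\kappa$. So $\operatorname{ord}\mathcal W=\operatorname{ord}\kappa=\operatorname{ord}C=m$, and $c=\tr(C^2)=-2\kappa^2$ has order $2m$. This agrees with the identity $\mathcal W^2=-32(\det\mathsf K)^5c$, since $\det\mathsf K$ is a unit on the transverse locus.

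Nothing here is a real obstacle. The one point to keep straight is that ``order of $C$'' must be computed in a normal (or any holomorphic) frame. Holomorphic conjugation preserves vanishing order, so the statement is frame-independent.
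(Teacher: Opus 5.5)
Your proposal is correct and follows the paper's proof: both read off $\mathcal W=-8\kappa$ and $c=-2\kappa^2$ from the normal-frame computation in Theorem~\ref{III:thm:primitive-commutator-four}, then apply Lemma~\ref{III:lem:apparent-exponents-four} with $r=5$. Your further points make explicit what the paper's two-line proof leaves implicit: that $\mathcal P$ is nonsingular, that the generator $\omega_1\wedge\omega_2$ does not vanish, and that the vanishing orders are independent of the holomorphic frame.
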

\begin{proof}
In the preceding normal form $C$ has the single coefficient
$\kappa$, $\mathcal W=-8\kappa$, and $c=-2\kappa^2$.
Apply Lemma~\ref{III:lem:apparent-exponents-four}.
\end{proof}

\subsection{A binary joint invariant for the same divisor}
\label{III:subsec:binary-dictionary-four}

Three binary quadratics give an unsquared version of the preceding
identity. Their coefficient determinant changes by the same relative
weight as the primitive cyclic determinant after one factor of
$\det {\mathsf K}$. With $T=\cov S$, set
\[
 \Phi_0={\mathsf K},\qquad\Phi_1=S{\mathsf K},\qquad\Phi_2=T{\mathsf K}.
\]
These three matrices are symmetric: for $\Phi_2$ differentiate
$S{\mathsf K}={\mathsf K}S^t$ and use \eqref{III:eq:polarization-identities-three}.
Define their joint coefficient determinant by
\begin{equation}\label{III:eq:joint-J-four}
 \mathfrak j=\det\begin{pmatrix}
 (\Phi_0)_{11}&2(\Phi_0)_{12}&(\Phi_0)_{22}\\
 (\Phi_1)_{11}&2(\Phi_1)_{12}&(\Phi_1)_{22}\\
 (\Phi_2)_{11}&2(\Phi_2)_{12}&(\Phi_2)_{22}
 \end{pmatrix}.
\end{equation}
Use the polarized determinant pairing $\langle\ ,\ \rangle$ of
Section~\ref{III:subsec:both-second-brackets}. For its Gram matrix write
$m=\tr(ST)$, $p=\tr(T^2)$, and $d_{\mathsf K}=\det {\mathsf K}$.

\begin{theorem}[Binary-quadratic comparison]
\label{III:thm:binary-comparison-four}
The Gram matrix and the primitive cyclic determinant satisfy
\begin{align}
 \bigl(\langle\Phi_i,\Phi_j\rangle\bigr)_{0\leq i,j\leq2}
   &=d_{\mathsf K}\begin{pmatrix}1&0&0\\0&-q/2&-m/2\\0&-m/2&-p/2\end{pmatrix},
                         \label{III:eq:Gram-four}\\
 \mathfrak j^2&=16\det\bigl(\langle\Phi_i,\Phi_j\rangle\bigr),\qquad
 c=-\frac{\mathfrak j^2}{2d_{\mathsf K}^3},                  \notag\\
 \boxed{\quad\mathcal W(\omega_1\wedge\omega_2)=-4d_{\mathsf K}\mathfrak j.\quad}
                                             \label{III:eq:W-J-four}
\end{align}
\end{theorem}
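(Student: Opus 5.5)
The plan is to reduce every identity to the normal frame of Proposition~\ref{III:prop:normal-form-three}, as in the proof of Theorem~\ref{III:thm:primitive-commutator-four}, and then transport to arbitrary frames by the transformation laws. Under $\omega\mapsto M\omega$ one has ${\mathsf K}\mapsto M{\mathsf K}M^t$ and $S,T\mapsto MSM^{-1},MTM^{-1}$ by \eqref{III:eq:frame-laws-three}. Hence each $\Phi_i$ transforms by the congruence $\Phi\mapsto M\Phi M^t$.

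The first step is to record how the relevant quantities behave under this congruence. The pairing $\langle P,Q\rangle$ is the polarization of $\det$, so it scales by $(\det M)^2$, as the ordered determinant does in \eqref{III:eq:ordered-det-two}. The coefficient vectors $((\Phi)_{11},2(\Phi)_{12},(\Phi)_{22})$ transform by $\Sym^2$ of $M$, whose determinant is $(\det M)^3$. Thus $\mathfrak j$ scales by $(\det M)^3$, $d_{\mathsf K}$ by $(\det M)^2$, and $\mathcal W$ by $(\det M)^5$. The quantities $q$, $m$, $p$, $c$ are conjugation invariant. All four displayed identities are therefore homogeneous of matching weights, so it suffices to verify them after a constant congruence making $a_1=0$ and ${\mathsf K}=\Id$.

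In the normal frame I would write $I=\begin{pmatrix}p_0&u\\u&r\end{pmatrix}$. Then $S=I-\tfrac12\tau\Id$ and $T=S'$, since $\cov$ is ordinary differentiation there. We have $\Phi_0=\Id$, $\Phi_1=S$ and $\Phi_2=S'$, all symmetric. The Gram entries follow from the identity $\langle P,Q\rangle=\tfrac12(\tr P\tr Q-\tr(PQ))$ for symmetric $2\times2$ matrices. Since $S$ and $T$ are trace-free, this gives $\langle\Id,\Id\rangle=1$, $\langle\Id,S\rangle=\langle\Id,T\rangle=0$, $\langle S,S\rangle=-q/2$, $\langle S,T\rangle=-m/2$ and $\langle T,T\rangle=-p/2$. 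This proves \eqref{III:eq:Gram-four} with $d_{\mathsf K}=1$.

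Next, write $\sigma=(p_0-r)/2$, so that $S=\begin{pmatrix}\sigma&u\\u&-\sigma\end{pmatrix}$ and $S'=\begin{pmatrix}\sigma'&u'\\u'&-\sigma'\end{pmatrix}$. The determinant in \eqref{III:eq:joint-J-four} has rows $(1,0,1)$, $(\sigma,2u,-\sigma)$ and $(\sigma',2u',-\sigma')$. It equals $4(\sigma u'-u\sigma')=2\kappa$, where $\kappa=(p_0-r)u'-u(p_0'-r')$ is the quantity from the proof of Theorem~\ref{III:thm:primitive-commutator-four}. That proof gives $\mathcal W=-8\kappa$, which yields $\mathcal W=-4\mathfrak j$. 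It also gives $c=\tr(C^2)=-2\kappa^2=-\mathfrak j^2/2$.

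Finally, \eqref{III:eq:trace-relations-three} gives $\det\langle\Phi_i,\Phi_j\rangle=\tfrac14(qp-m^2)=-c/8$. Therefore $16\det\langle\Phi_i,\Phi_j\rangle=-2c=\mathfrak j^2$. Restoring the weights gives the stated powers of $d_{\mathsf K}$: $\mathfrak j^2$ has weight $6$ and the Gram determinant has weight $6$, $c\,d_{\mathsf K}^3$ has weight $6$, and $\mathcal W$ and $d_{\mathsf K}\mathfrak j$ both have weight $5$.

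The main obstacle is bookkeeping of signs and normalizations. This includes the factor $2$ on the middle column of $\mathfrak j$, the sign and volume convention in $\mathcal W$ inherited from \eqref{III:eq:primitive-basis-four}, and the sign of $\kappa$. I would recheck these by substituting the explicit $v'''$ from the earlier proof rather than trusting the sign of $\kappa$ alone.
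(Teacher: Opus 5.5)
Your proof is correct and follows essentially the same route as the paper: reduce by congruence to $\mathsf K=\Id$, compute $\mathfrak j=2\kappa$, use $\mathcal W=-8\kappa$ from the proof of Theorem~\ref{III:thm:primitive-commutator-four}, and transport to an arbitrary frame using the weights $(\det M)^2,(\det M)^3,(\det M)^5$ of $d_{\mathsf K},\mathfrak j,\mathcal W$. The only difference is minor: the paper obtains the Gram matrix directly in an arbitrary frame from $\langle\mathsf K,A\mathsf K\rangle=\tfrac12\tr(A)\det\mathsf K$ and Cayley--Hamilton, while you compute it at $\mathsf K=\Id$ and transport it, which works equally well.
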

\begin{proof}
\noindent\textit{1. The Gram matrix.} The identity
$\langle {\mathsf K},A{\mathsf K}\rangle=\tfrac12\tr(A)\det {\mathsf K}$ and
Cayley--Hamilton give \eqref{III:eq:Gram-four}. \noindent\textit{2. The normalized joint determinant.} At a point reduce by
congruence to ${\mathsf K}=\Id$ and write
\[
 S=\begin{pmatrix}a&b\\b&-a\end{pmatrix},\qquad
 T=\begin{pmatrix}d&e\\e&-d\end{pmatrix}.
\]
Then $C=\left(\begin{smallmatrix}0&\kappa\\-\kappa&0\end{smallmatrix}\right)$,
$\kappa=2(ae-bd)$, and $\mathfrak j=4(ae-bd)=2\kappa$.
\noindent\textit{3. Frame weights and the unsquared identity.} Under a frame change $M$, $d_{\mathsf K}$ acquires $(\det M)^2$ and
$\mathfrak j$ acquires $(\det M)^3$. This proves the formula for $c$.
The Gram identity follows either from \eqref{III:eq:Gram-four} and
\eqref{III:eq:trace-relations-three}, or from the determinant $1/16$
of the pairing on coefficient vectors $(A_{11},2A_{12},A_{22})$.
The normalized computation $\mathcal W=-8\kappa=-4\mathfrak j$ and the
common transformation factor $(\det M)^5$ prove
\eqref{III:eq:W-J-four}, including its sign.

\end{proof}

\begin{corollary}[The full matrix algebra]
\label{III:cor:binary-matrix-basis}
On the transverse genus-two locus, $C{\mathsf K}$ is alternating, so a nonzero $C$ is invertible and
cannot be nilpotent. In particular, $C\ne0$, $c\ne0$, $\mathfrak j\ne0$, and
$\mathcal W\ne0$ are equivalent. When they hold,
$\Id,S,T,C$ form a basis of $\operatorname{Mat}_2(\C)$.
\end{corollary}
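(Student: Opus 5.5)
The plan has three parts: show that $C\mathsf K$ is alternating, deduce the chain of equivalences from the identities already proved, and then prove the basis statement in the normalized frame of Proposition~\ref{III:prop:normal-form-three}.

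\emph{Alternation of $C\mathsf K$.} By \eqref{III:eq:polarization-identities-three} and the argument preceding \eqref{III:eq:joint-J-four}, $S\mathsf K$ and $T\mathsf K$ are symmetric, so $S\mathsf K=\mathsf K S^t$ and $T\mathsf K=\mathsf K T^t$. Then
\[
 C\mathsf K=ST\mathsf K-TS\mathsf K=\mathsf K(S^tT^t-T^tS^t)=-\mathsf K C^t=-(C\mathsf K)^t,
\]
so $C\mathsf K$ is alternating. A nonzero alternating $2\times2$ matrix has the form $\kappa\left(\begin{smallmatrix}0&1\\-1&0\end{smallmatrix}\right)$ with $\kappa\ne0$, hence is invertible. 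Since $\mathsf K$ is invertible, a nonzero $C$ is invertible and in particular not nilpotent.

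\emph{The equivalences.} Because $C\mathsf K$ has the above form, $C^2=-\kappa^2\,\mathsf K^{-1}\left(\begin{smallmatrix}0&1\\-1&0\end{smallmatrix}\right)\mathsf K^{-1}\left(\begin{smallmatrix}0&1\\-1&0\end{smallmatrix}\right)$ is a scalar multiple of $\Id$. Concretely, $C^2=-\det C\,\Id$ by Cayley--Hamilton, since $\tr C=0$. So $c=\tr(C^2)=-2\det C$, which is nonzero exactly when $C$ is invertible, i.e.\ when $C\ne0$. The identity $c=-\mathfrak j^2/(2d_{\mathsf K}^3)$ of Theorem~\ref{III:thm:binary-comparison-four} gives $c\ne0\iff\mathfrak j\ne0$. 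Then $\mathcal W=-4d_{\mathsf K}\mathfrak j$ gives $\mathfrak j\ne0\iff\mathcal W\ne0$.

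\emph{The basis.} Work at a point in a normalized frame, reduced by congruence to $\mathsf K=\Id$. Both the conjugations and the linear span transform compatibly under frame changes, so this loses nothing. Here $S=\left(\begin{smallmatrix}a&b\\b&-a\end{smallmatrix}\right)$ and $T=\left(\begin{smallmatrix}d&e\\e&-d\end{smallmatrix}\right)$ are symmetric and trace-free, and $C=\kappa\left(\begin{smallmatrix}0&1\\-1&0\end{smallmatrix}\right)$ is antisymmetric with $\kappa=2(ae-bd)\ne0$. So $\Id$, $S$ and $T$ span the symmetric matrices while $C$ spans the antisymmetric ones. The proof therefore reduces to showing that $\Id,S,T$ are independent in the three-dimensional symmetric space. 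The trace separates $\Id$ from the trace-free pair $S,T$. The pair $S,T$ is independent exactly when $ae-bd\ne0$, which holds because $\kappa\ne0$. Hence $\Id,S,T,C$ are four independent elements of the four-dimensional space $\operatorname{Mat}_2(\C)$, and so form a basis.

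The only mildly delicate step is justifying the symmetry of $T\mathsf K$ in a general frame. This comes from differentiating $S\mathsf K=\mathsf K S^t$ and using the law $\mathsf K'=-a_1\mathsf K-\mathsf K a_1^t$ from \eqref{III:eq:polarization-identities-three}, which the paper has already done just before \eqref{III:eq:joint-J-four}; everything else is linear algebra.
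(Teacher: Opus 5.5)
Your proof is correct. The alternation of $C\mathsf K$ and the chain of equivalences follow the paper. Your Cayley--Hamilton step, $C^2=-\det C\,\Id$ and hence $c=-2\det C$, makes explicit the link $C\ne0\iff c\ne0$ that the paper leaves implicit.

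Your display just before that step has a stray sign. With $J=\left(\begin{smallmatrix}0&1\\-1&0\end{smallmatrix}\right)$ one has $C=\kappa J\mathsf K^{-1}$, so $C^2=\kappa^2J\mathsf K^{-1}J\mathsf K^{-1}=-\kappa^2(\det\mathsf K)^{-1}\Id$. This is harmless, since you only use the Cayley--Hamilton form.

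The basis step is where you take a different route. The paper stays frame-free:
\begin{itemize}
\item $\tr(SC)=\tr(TC)=0$;
\item by \eqref{III:eq:trace-relations-three}, the trace Gram determinant of $S,T$ is $qp-m^2=-c/2$;
\item together with $\tr(C^2)=c$ and $\tr(\Id^2)=2$, the trace Gram matrix of $\Id,S,T,C$ is block diagonal and nondegenerate exactly when $c\ne0$.
\end{itemize}

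You instead reduce at a point to $\mathsf K=\Id$ and split $\operatorname{Mat}_2(\C)$ into symmetric and antisymmetric parts. This reuses the coordinates $(a,b),(d,e)$ and $\kappa=2(ae-bd)$ from the proof of Theorem~\ref{III:thm:binary-comparison-four}.

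Each version has an advantage. Yours shows the structure more transparently: $\Id,S,T$ span the $\mathsf K$-self-adjoint matrices and $C$ spans the $\mathsf K$-anti-self-adjoint line. The paper's needs no normalization and reads nondegeneracy directly off the invariants $q,m,p,c$. The two are really the same computation, since in your frame $qp-m^2=4(ae-bd)^2=-c/2$.
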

\begin{proof}
Since $S{\mathsf K}={\mathsf K}S^t$ and $T{\mathsf K}={\mathsf K}T^t$, one has
$C{\mathsf K}=-{\mathsf K}C^t$. A nonzero alternating matrix of size two is invertible;
so is $C$ because ${\mathsf K}$ is invertible.
Finally, $\tr(SC)=\tr(TC)=0$, while the trace Gram determinant
of $S,T$ is $qp-m^2=-c/2$. Thus $S,T,C$ are independent when
$c\ne0$, and adjoining $\Id$ gives the stated basis.
\end{proof}

The Gram identity is the classical syzygy of three binary quadratics.
Equation~\eqref{III:eq:W-J-four} identifies its joint determinant with the
cyclic determinant of the specified Hodge section.

The parameter weights also agree. Equation~\eqref{III:eq:parameter-DS-three}
changes $\Phi_2$ by a multiple of $\Phi_1$ in addition to its
homogeneous factor. This shear disappears from the determinant:
\[
 \mathfrak j_s=\mu^8\mathfrak j_t,\qquad (d_{\mathsf K})_s=\mu^2(d_{\mathsf K})_t,
 \qquad
 \mathcal R=\frac{\mathfrak j^4}{4d_{\mathsf K}^6q^5}.
\]
Thus $\mathfrak j$ is a relative invariant of Hodge weight three and
parameter weight eight.

\begin{example}[Different cyclicity divisors in a local model]
\label{III:ex:two-scalarizations}
Take the normal-frame system $e'=f$, $f'=-I(t)e$, with
\[
 \mathsf K=\Id,\qquad
 I(t)=\begin{pmatrix}t&1\\1&-t\end{pmatrix}.
\]
Because $I$ is symmetric, this is a local flat symplectic system
with Lagrangian subbundle spanned by $e_1,e_2$ and invertible
Kodaira--Spencer form. It is a local model; no algebraic family of
curves is asserted. Here $S=I$, $\cov=\partial_t$, and
\[
 q=2(t^2+1),\qquad
 C=[I,I']=\begin{pmatrix}0&-2\\2&0\end{pmatrix},\qquad c=-8.
\]
The primitive determinant is $\mathcal W(e_1\wedge e_2)=16$
in the volume~\eqref{III:eq:primitive-basis-four}, so this section
is cyclic at every parameter.

To see the null lines, make the constant change
\[
 M=\frac1{\sqrt2}\begin{pmatrix}1&i\\1&-i\end{pmatrix},\qquad
 M\mathsf K M^t=\begin{pmatrix}0&1\\1&0\end{pmatrix},\qquad
 MIM^{-1}=\begin{pmatrix}0&t+i\\t-i&0\end{pmatrix}.
\]
In this frame $b=t+i$ and $c_0=t-i$. The factorization in
Proposition~\ref{III:prop:null-factorization} becomes
\[
 L_b=(t+i)\partial_t^2(t+i)^{-1}\partial_t^2-(t^2+1),
\]
and the other operator interchanges $t+i$ and $t-i$.
Their cyclic determinants are $-(t+i)^2$ and $-(t-i)^2$.
The first equation has apparent exponents $0,1,3,4$ at $t=-i$;
the second has them at $t=i$. The primitive equation is ordinary
at both points. At these points $S$ is nonzero and nilpotent,
whereas the commutator $C$ remains invertible. Thus the two
scalarizations measure different losses of cyclicity in the same
regular filtered connection.
\end{example}

\subsection{The two null-line scalarizations}
\label{III:subsec:null-lines-four}

The polarization makes a second choice of scalar sections: the null
lines of the symmetric Kodaira--Spencer form ${\mathsf K}dt$ on $\EE$.
There are two in each fiber. They form an \'etale double cover
$\pi:\widetilde U\to U$, possibly disconnected. Its tautological
line is intrinsic and does not depend on the parameter.

\begin{theorem}[Null-line cyclicity divisor]
\label{III:thm:null-lines-four}
On the transverse rank-four locus, if $q\not\equiv0$, both null
lines are generically cyclic and their intrinsic cyclicity divisor
$Z_N$ on $\widetilde U$ satisfies
\begin{equation}\label{III:eq:null-divisor-four}
 \boxed{\quad\pi_*Z_N=2\operatorname{div}_0(qdt^4).\quad}
\end{equation}
At a simple zero of $q$, one equation has apparent exponents
$0,1,3,4$ and the other is ordinary.

\end{theorem}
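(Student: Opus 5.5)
The plan is to reduce everything to a local computation in a normal frame adapted to the two null lines, where each null line has an explicit nonvanishing generator whose cyclic determinant is a perfect square of one off-diagonal entry of $S$. Work locally on $U$ away from nothing in particular; the statement is local, and the cover $\pi:\widetilde U\to U$ is trivial over a small disc. By Proposition~\ref{III:prop:normal-form-three} there is a holomorphic frame with $a_1=0$ and ${\mathsf K}$ constant. Following a further constant congruence, as in Example~\ref{III:ex:two-scalarizations}, I arrange ${\mathsf K}=\left(\begin{smallmatrix}0&1\\1&0\end{smallmatrix}\right)$. The condition $I{\mathsf K}={\mathsf K}I^t$ then forces
\[
 I=\tfrac12\tau\,\Id+S,\qquad S=\begin{pmatrix}0&b\\c_0&0\end{pmatrix},\qquad q=\tr(S^2)=2bc_0,
\]
with $\tau=\tr I$. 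The two null lines of ${\mathsf K}\,dt$ on $\EE$ are spanned by $e_1=\omega_1$ and $e_2=\omega_2$. These generators are holomorphic and nowhere vanishing, so by the remark after Lemma~\ref{III:lem:apparent-exponents-four} they compute the intrinsic cyclicity divisor of each sheet. Any other normal frame preserving the null decomposition differs by a constant diagonal matrix, so nothing depends on choices beyond the labelling of the two sheets.

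Next I compute $\operatorname{Wr}_{e_1}$ in the flat frame $(e_1,e_2,f_1,f_2)$ with $f=\omega'$, using $e'=f$ and $f'=-Ie$. The first few derivatives are
\[
 e_1,\qquad f_1,\qquad f_1'=-\tfrac12\tau e_1-b\,e_2,\qquad
 f_1''=-\tfrac12\tau' e_1-\tfrac12\tau f_1-b'e_2-b f_2.
\]
Modulo the span of $e_1,f_1$, the last two columns form the triangular block $\left(\begin{smallmatrix}-b&-b'\\0&-b\end{smallmatrix}\right)$ in the $(e_2,f_2)$ coordinates. Hence $\operatorname{Wr}_{e_1}=\pm b^2$ in a constant volume, and symmetrically $\operatorname{Wr}_{e_2}=\pm c_0^2$. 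Changing to the volume used elsewhere only introduces a nonzero constant, such as a power of $\det{\mathsf K}$.

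Consequently, $q\not\equiv0$ gives $b\not\equiv0$ and $c_0\not\equiv0$, so both null lines are generically cyclic. Over a point $t_0$, the two sheets contribute $2\operatorname{ord}_{t_0}b+2\operatorname{ord}_{t_0}c_0=2\operatorname{ord}_{t_0}q$. Since $q\,dt^4$ has intrinsic zero divisor by \eqref{III:eq:parameter-C-three}, this is \eqref{III:eq:null-divisor-four}.

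At a simple zero of $q$, exactly one of $b,c_0$ vanishes, and simply; say $b(t_0)=0$ and $b'(t_0)\ne0$. The other null line then has a unit Wronskian, so its equation is ordinary. For $e_1$, I read off the rank jumps required by Lemma~\ref{III:lem:apparent-exponents-four}. The span grows at $j=0$ and $j=1$. At $j=2$ it does not grow, since $f_1'(t_0)\in\langle e_1\rangle$. At $j=3$ it grows through the term $-b'e_2$. At $j=4$ it grows in the $f_2$ direction, because the coefficient of $f_2$ in $\nabla_t^4e_1$ is $-2b'\ne0$ at $t_0$. This gives exponents $0,1,3,4$, consistent with $\operatorname{ord}\operatorname{Wr}=2=8-6$.

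The main point needing care is the last step. Lemma~\ref{III:lem:apparent-exponents-four} only lists exponents explicitly for a simple zero, so the double-zero case has to be handled through its rank-jump description, and I must check the $f_2$ coefficient of the fourth derivative. The reduction to a frame that is both normal and null-adapted, and compatible with the double cover, is the other point to state carefully, but it follows from constant congruences.
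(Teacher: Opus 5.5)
Your proposal is correct and follows the paper's proof step for step. It uses the same normal frame with ${\mathsf K}$ off-diagonal, the same cyclic determinant $-b^2$ (resp.\ $-c_0^2$) in the frame $(\omega_1,\omega_2,\omega_1',\omega_2')$, and the same rank-jump reading of the exponents, except that you get the fourth exponent directly from the $f_2$-coefficient $-2b'$ of $\nabla_t^4e_1$, where the paper deduces it from the Wronskian order formula. One small correction: that frame is not flat, but its volume is horizontal because the connection matrix $\left(\begin{smallmatrix}0&\Id\\-I&0\end{smallmatrix}\right)$ is trace-free, and a horizontal volume is all the cyclic determinant requires.
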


The local expression uses the normal frame of
Proposition~\ref{III:prop:normal-form-three}. A constant congruence
makes its polarization matrix off-diagonal.

\begin{proposition}[Null-line factorization and multiplicities]
\label{III:prop:null-factorization}
On the transverse genus-two locus, the null-line equations have the
following local form. For the local formulas, choose a normal frame with
\begin{equation*}
 {\mathsf K}=\begin{pmatrix}0&1\\1&0\end{pmatrix},\qquad
 I=\begin{pmatrix}a&b\\c_0&a\end{pmatrix}.
\end{equation*}
The null lines are generated by $\omega_1,\omega_2$. On $b\ne0$
the first has scalar operator
\begin{equation}\label{III:eq:null-operator-four}
 L_b=b(\partial_t^2+a)b^{-1}(\partial_t^2+a)-bc_0.
\end{equation}
The other operator is obtained by interchanging $b,c_0$.
Their cyclic determinants in the frame
$(\omega_1,\omega_2,\omega_1',\omega_2')$ are $-b^2,-c_0^2$.
At a zero of $b$ of order $m\geq1$ where $c_0$ is a unit, the
first scalar equation has apparent exponents $0,1,m+2,m+3$.
\end{proposition}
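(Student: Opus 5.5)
We work throughout in the normal frame supplied by Proposition~\ref{III:prop:normal-form-three}, so that $a_1=0$ and $\mathsf K$ is constant. A constant congruence brings $\mathsf K$ to the off-diagonal form, and self-adjointness $I\mathsf K=\mathsf K I^t$ then forces $I$ to have equal diagonal entries, as displayed. The system reads $\omega''=-I\omega$, that is,
\[
 \omega_1''=-a\omega_1-b\omega_2,\qquad
 \omega_2''=-c_0\omega_1-a\omega_2 .
\]
Since $\mathsf K_{11}=Q(\omega_1,\omega_1')=0$ and $\mathsf K_{22}=0$, the null lines of the Kodaira--Spencer form are spanned by $\omega_1$ and $\omega_2$. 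By the interchange symmetry it suffices to treat $\omega_1$.

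First, on $b\ne0$ we solve the first equation for $\omega_2$:
\[
 \omega_2=-b^{-1}(\partial_t^2+a)\omega_1 .
\]
Substituting this into $(\partial_t^2+a)\omega_2+c_0\omega_1=0$ and multiplying by $-b$ gives
\[
 b(\partial_t^2+a)b^{-1}(\partial_t^2+a)\omega_1-bc_0\,\omega_1=0,
\]
which is \eqref{III:eq:null-operator-four}. This operator is monic of order four and kills $\omega_1$. Its periods are exactly the periods of $\omega_1$, so it is the scalar presentation of Section~\ref{III:subsec:cyclic-sections-four}, by the uniqueness of the monic operator.

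Next we compute the cyclic determinant. The derivatives of $\omega_1$ expand in the frame $(\omega_1,\omega_2,\omega_1',\omega_2')$ as
\[
 \omega_1=(1,0,0,0),\qquad \omega_1'=(0,0,1,0),
\]
\[
 \omega_1''=(-a,-b,0,0),\qquad
 \omega_1'''=(-a',-b',-a,-b).
\]
The determinant is triangular after reordering and equals $-b^2$. Interchanging the roles of $b$ and $c_0$ gives $-c_0^2$ for $\omega_2$. The frame is Darboux up to a constant, so these values are the intrinsic Wronskians up to a nonzero constant.

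For the multiplicities, suppose $b$ vanishes to order $m$ at a point where $c_0$ is a unit. Then $\operatorname{Wr}$ vanishes to order $2m$. We apply Lemma~\ref{III:lem:apparent-exponents-four}: the rank-jump indices are the orders $n_i$ at which the span of $\omega_1,\omega_1',\ldots,\omega_1^{(j)}$ grows. The indices $0$ and $1$ clearly jump. Modulo $\operatorname{span}(\omega_1,\omega_1')$, the $j$-th derivative is congruent to a combination whose $\omega_2$ and $\omega_2'$ components are governed by $b$ and its derivatives. A short induction then shows that the first derivative with a nonzero $\omega_2$-component at the point has order $m+2$, with value $-b^{(m)}\omega_2$ up to a nonzero factor. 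The next jump, which requires $\omega_2'$, occurs at order $m+3$. The sum formula \eqref{III:eq:Wronskian-orders-four} gives
\[
 (0+1+(m+2)+(m+3))-6=2m,
\]
which matches $\operatorname{ord}\operatorname{Wr}=2m$. So the exponents are $0,1,m+2,m+3$.

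The main obstacle is the jet bookkeeping in this induction. One must confirm that no lower derivative acquires an $\omega_2$ or $\omega_2'$ component beyond what $b$ forces; the $c_0$ terms only enter after $\omega_2$ has appeared. The consistency check against $\operatorname{ord}\operatorname{Wr}=2m$ gives the cleanest control: once we know that the jump indices begin $0,1$, that at most two jumps lie below $m+2$, and that the indices sum to $2m+6$, the exponents are forced.
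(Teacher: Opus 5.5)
Your proof is correct and follows the paper's argument: eliminate $\omega_2$ to obtain $L_b$, compute the derivative-row determinant $-b^2$ in the frame $(\omega_1,\omega_2,\omega_1',\omega_2')$, then use the rank-jump description in Lemma~\ref{III:lem:apparent-exponents-four} to get $0,1,m+2$, and the Wronskian-order formula to force the last exponent $m+3$. You also make explicit a point the paper leaves implicit: this frame has a constant symplectic Gram matrix, so its determinant is a flat volume up to a constant. Your observation that $n_2\ge m+2$ together with $\sum n_i=2m+6$ already forces both remaining exponents is a harmless streamlining.
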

\begin{proof}[Proof of the theorem and proposition]
\noindent\textit{1. Eliminate the other period.} A constant congruence after normalization gives the displayed ${\mathsf K}$.
The condition $I{\mathsf K}={\mathsf K}I^t$ forces the diagonal entries of $I$ to agree.
For periods $z,y$ of the two frame elements,
\[
 z''+az+by=0,\qquad y''+c_0z+ay=0.
\]
Eliminating $y$ proves \eqref{III:eq:null-operator-four}.
\noindent\textit{2. Compute the intrinsic divisor.} The derivative rows for the first line are
\[
 \begin{pmatrix}
 1&0&0&0\\0&0&1&0\\-a&-b&0&0\\-a'&-b'&-a&-b
 \end{pmatrix},
\]
with determinant $-b^2$; interchange the indices for the other.
Since $q=2bc_0$, the product of the two determinants is $q^2/4$.
Changes of nonvanishing generators multiply by units, proving
\eqref{III:eq:null-divisor-four}.

\noindent\textit{3. Read the vanishing orders.} For the exponents, take $t_0=0$. Since $b$ vanishes to order $m$,
the first component and its first derivative give the first two
independent Taylor coefficients of $\omega_1$. Its equation
$\omega_1''+a\omega_1=-b\omega_2$ first introduces the independent
vector $\omega_2(0)$ at derivative order $m+2$.
The rank-jump assertion of Lemma~\ref{III:lem:apparent-exponents-four}
therefore gives the first three vanishing orders
$0,1,m+2$. The cyclic determinant has order $2m$;
\eqref{III:eq:Wronskian-orders-four} forces the fourth order to be
$m+3$. The scalar equation is apparent by
Lemma~\ref{III:lem:apparent-exponents-four}.
\end{proof}

For a fixed parameter, the metric connection determined by $a_1$
preserves these null lines. A parallel generator on either sheet
is unique up to a nonzero constant, so its monic scalar operator
is unambiguous locally. The normal frame used to display
\eqref{III:eq:null-operator-four} need not be algebraic.
The null-line cover is determined by ${\mathsf K}$, whereas the spectral
cover of $S$ is locally $\mu^2=q/2$; they are different covers.

The operator also exhibits the usual scalar self-duality:
\[
 b^{-1}L_b=(\partial_t^2+a)b^{-1}(\partial_t^2+a)-c_0
\]
is formally self-adjoint. More invariantly, a null-line generator
$s$ satisfies $Q(s,s')=0$, so $s\wedge s'$ belongs to
$\Lambda^2_0\HH$. An arbitrary cyclic section need not have this
property. Its scalar exterior-square equation can have order six
even when the flat bundle has symplectic monodromy, as emphasized
in \cite[Remark 4]{YZ}. The primitive rank-five bundle exists in
either case; what changes is the particular section being used.

\subsection{Ramification of the rank-one kernel}
\label{III:subsec:rank-one-ramification-four}

In rank one, a zero of the next fundamental map delays the appearance
of independent derivative directions. The polarization determines
this delay exactly. On the constant-rank-one locus of
Section~\ref{III:subsec:rank-one-three}, let $s$ be a nonvanishing generator
of $N$, extend it to a frame $(s,e)$ of $\EE$, and write
\[
 s'=\alpha s+\nu_t e,\qquad \kappa=Q(e,e').
\]
The coefficient $\nu_t$ represents \eqref{III:eq:nu-three}; constant
rank one makes $\kappa$ a unit. Since $Q(s,s')=Q(s,s'')=0$,
\begin{equation*}
 h:=Q(s,s''')=-Q(s',s'')=-\nu_t^2\kappa.
\end{equation*}
The Gram matrix of $s,s',s'',s'''$ has determinant $h^4$.
Its Pfaffian shows that $\operatorname{Wr}_s$ is a nonzero constant times $h^2$
in a flat determinant frame. Thus
\begin{equation}\label{III:eq:rank-one-W-order-four}
 \operatorname{ord}_{t_0}\operatorname{Wr}_s=4\operatorname{ord}_{t_0}\nu_t.
\end{equation}

\begin{theorem}[Kernel-line apparent exponents]
\label{III:thm:rank-one-exponents-four}
Suppose $\nu\not\equiv0$ and its zero order at a regular
constant-rank-one point is $m\geq0$. The kernel-line scalar
equation has solution vanishing orders
\begin{equation*}
 0,\qquad m+1,\qquad m+2,\qquad 2m+3.
\end{equation*}
For $m>0$ the point is apparent, with these exponents and trivial
local monodromy; for $m=0$ the equation is ordinary.
\end{theorem}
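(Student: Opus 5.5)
The plan is to read off the rank jumps of the derivative flag of $s$ at $t_0$ and apply Lemma~\ref{III:lem:apparent-exponents-four}. That lemma identifies the vanishing orders $n_0<n_1<n_2<n_3$ with the indices $j$ at which $\operatorname{span}\{s(t_0),\ldots,\nD^js(t_0)\}$ grows. It also ties the total to the Wronskian order: by \eqref{III:eq:rank-one-W-order-four}, $\sum n_i-6=4m$. So it suffices to find the first three jumps; the fourth is then forced to be $4m+6-(0+(m+1)+(m+2))=2m+3$.

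First, $n_0=0$, since $s$ is nonvanishing. Next, work in the frame $(s,e)$ of $\EE$ with $s'=\alpha s+\nu_te$ and $\nu_t=t^m u$, where $u$ is a unit and $t_0=0$. Every derivative $s^{(j)}$ lies in $\EE+\nD\EE=N^\perp$ modulo lower terms, and we track it modulo the line of $s$.

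Differentiating $j$ times gives $s^{(j)}\equiv (\nu_t)^{(j-1)}e+\ldots$ modulo $\C s$ and terms carrying higher derivatives of $e$ with coefficients divisible by lower-order jets of $\nu_t$. More precisely, by induction, $s^{(j)}=\beta_js+\sum_{k\ge0}\gamma_{j,k}e^{(k)}$, where $\gamma_{j,k}$ is a combination of derivatives of $\nu_t$ of order at most $j-1-k$. At $t=0$ the coefficient $\gamma_{j,k}(0)$ vanishes whenever $j-1-k<m$.

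Hence $s^{(j)}(0)\in\C s(0)$ for $j\le m$. At $j=m+1$ the $e(0)$-component is $m!\,u(0)\ne0$, which gives $n_1=m+1$.

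At $j=m+2$, the $e'(0)$-coefficient is a nonzero multiple of $u(0)$, since $\gamma_{m+2,1}(0)=(m+1)\nu_t^{(m)}(0)$. Now $e'\notin\EE$ modulo $\EE$ because $\kappa=Q(e,e')$ is a unit, while $Q(s,e')=-Q(s',e)=0$ on $N$. So $e'(0)$ lies in $N^\perp$ but outside $\EE$, and the span reaches dimension $3$, giving $n_2=m+2$.

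The step I expect to be the main obstacle is this bookkeeping of the coefficient of $e'(0)$, in particular checking that no term $e'$ enters earlier through $\alpha$ or through derivatives of $e$. I would handle it by the induction above, noting that $\nD e\in N^\perp$ has an $\EE$-component plus a $\nD\EE$-component. A cleaner alternative is to use $Q$-pairings: $Q(s,s^{(j)})$ and $Q(s',s^{(j)})$ detect leaving $\EE$, and $Q(s',s'')=\nu_t^2\kappa$ gives the orders directly. I would try this pairing route first, since it avoids expanding derivatives of $e$.

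The fourth order $2m+3$ then follows from the Wronskian sum, as noted at the start. For $m>0$ the exponents differ from $0,1,2,3$, so $\operatorname{Wr}_s$ vanishes. Lemma~\ref{III:lem:apparent-exponents-four} then gives a regular singular point with trivial local monodromy, so the point is apparent. For $m=0$, $\operatorname{Wr}_s$ is a unit by \eqref{III:eq:rank-one-W-order-four} and the equation is ordinary.
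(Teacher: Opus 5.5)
Your proposal is correct and follows the paper's proof. You locate the rank jumps of the derivative flag at $0$, $m+1$ and $m+2$, with the last coming from $e'(t_0)\notin\EE(t_0)$ because $Q(e,e')$ is a unit; then $\operatorname{ord}\operatorname{Wr}_s=4m$ together with \eqref{III:eq:Wronskian-orders-four} forces $n_3=2m+3$, and Lemma~\ref{III:lem:apparent-exponents-four} finishes. The only difference is bookkeeping: the paper first rescales $s$ by a holomorphic unit to remove $\alpha$ and integrates $s'=\nu_te$ in a horizontal frame, whereas your coefficient induction keeps $\alpha$ (your aside that every $s^{(j)}$ lies in $N^\perp$ is false for $j\ge3$, but the argument never uses it).
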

\begin{proof}
Rescale $s$ by a holomorphic unit to remove $\alpha$.
Then $s'=\nu_t e$. In a horizontal frame, integration of this
identity shows the first two independent Taylor directions at
orders $0,m+1$. Since $Q(e,e')$ is a unit, $e'(0)$ lies outside
$\EE(0)$ and supplies a third direction at order $m+2$.
By the same rank-jump assertion, the first three vanishing orders are $0,m+1,m+2$.
Equations~\eqref{III:eq:rank-one-W-order-four} and
\eqref{III:eq:Wronskian-orders-four} force the last to be $2m+3$.
Apply Lemma~\ref{III:lem:apparent-exponents-four}.
\end{proof}

These exponents occur at regular points of the flat connection on
the constant-rank-one locus.

\subsection{An explicit dictionary with Yang--Zudilin}
\label{III:subsec:YZ-dictionary-four}

The kernel line identifies the rank-one input in the holomorphic
period construction of Yang--Zudilin \cite[\S4]{YZ}. Write $u(\tau)$
for their auxiliary function called $t$. Their symmetric matrix is
\begin{equation*}
 T(\tau)=\begin{pmatrix}\tau_1&\tau_2\\\tau_2&\tau_3\end{pmatrix},
 \qquad\tau_1=\tau,\quad \tau_2'=-u,\quad\tau_3'=u^2.
\end{equation*}
Hence $T'=\left(\begin{smallmatrix}1&-u\\-u&u^2\end{smallmatrix}\right)$
has rank one. In a standard horizontal symplectic trivialization,
take the graph frame
\[
 \omega_1=(1,0,\tau_1,\tau_2),\qquad
 \omega_2=(0,1,\tau_2,\tau_3).
\]
The radical of its Kodaira--Spencer form is generated by
$s=u\omega_1+\omega_2$, and differentiation gives
\begin{equation*}
 s'=u'\omega_1,\qquad
 Q(s,s''')=-(u')^2,\qquad
 \det(s,s',s'',s''')=(u')^4.
\end{equation*}
For the determinant use the displayed order of horizontal coordinates.
For example, $\omega_1'=(0,0,1,-u)$ and
$\omega_1''=(0,0,0,-u')$; substituting into the four derivative
rows proves the last two equalities. Thus the function
$v=u'$ of \cite[Theorem 4]{YZ} is the coefficient of the kernel
map $\nu$ in this chart. Their $G_3=v'/v$ is its logarithmic
derivative. The zeros of this coefficient have the geometric
interpretation in Theorem~\ref{III:thm:rank-one-exponents-four}.

There is also a direct dictionary for their fifth-order invariants.
Let $z$ be their invariant parameter and $w$ their weight-one
factor, on an open set where the logarithmic derivatives are
defined and $dz/d\tau\ne0$. Put
\[
 G_1=z'/z,\qquad G_2=w'/w,\qquad G_3=v'/v,
 \qquad \theta=z\partial_z.
\]
Primes here denote $\tau$-derivatives. Yang--Zudilin define
\begin{align*}
 p_1&=\frac{2G_1'-G_1(G_2+G_3)}{2G_1^2},\\
 p_2&=\frac{24G_3'-20(G_2+G_3)'+5(G_2+G_3)^2-8G_3^2}
                 {20G_1^2},\\
 p_3&=\frac{-10G_3'''+40G_3G_3''+21(G_3')^2
                         -54G_3^2G_3'+9G_3^4}{50G_1^4}.
\end{align*}
Their monic fifth-order operator has solutions
$w,\tau_1w,\tau_2w,\tau_3w,(\tau_1\tau_3-\tau_2^2)w$.
These are a coordinate realization of a lift of the primitive
Hodge determinant line.

\begin{theorem}[The normalized coefficient dictionary]
\label{III:thm:YZ-invariants-four}
For the fifth-order operator of \cite[Theorem 4]{YZ}, with
derivation $\theta$ and the binomial normalization of Section~\ref{III:sec:schwarzians-modular-constructions},
\begin{equation*}
 \boxed{\quad
 a_1=2p_1,\qquad
 I_2=\tfrac12(p_2-2\theta p_1-p_1^2),\qquad
 W_3=0,\qquad W_4=\tfrac25p_3.
 \quad}
\end{equation*}
Thus $p_3$ is, up to the displayed constant, the fourth Schwarzian
of that scalar fifth-order equation. The parameter for this
normalization is locally $\log z$, since $\theta=\partial_{\log z}$.
\end{theorem}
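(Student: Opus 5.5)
Since the Yang--Zudilin operator has an explicit solution basis, I would work from the solutions rather than from their displayed coefficients. The plan is to express everything in the parameter $\tau$, where the solutions are $w\cdot(1,\tau_1,\tau_2,\tau_3,\tau_1\tau_3-\tau_2^2)$. I would then pass to $\log z$ by the parameter laws of Section~\ref{I:sec:reparam}.

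\textbf{Step 1: compute the $\tau$-invariants from the solution basis.} Write the monic operator in $\partial_\tau$ with binomial coefficients. A scalar gauge by $w$ and by $v^{\alpha}$ does not change $I_k$ or $W_m$ (Proposition~\ref{I:prop:local-covariance} with scalar $g$). So I may compute the invariants of the gauge-normalized operator whose solutions are $1,\tau_1,\tau_2,\tau_3,\tau_1\tau_3-\tau_2^2$, with $\tau_2'=-u$, $\tau_3'=u^2$ and $v=u'$. The derivatives of these solutions reduce to rational expressions in $v$ and its derivatives, so $a_1^{(\tau)}$ can be read off the Wronskian. The ratio of consecutive Wronskian minors gives $I_2^{(\tau)},I_3^{(\tau)},I_4^{(\tau)}$ as differential polynomials in $G_3=v'/v$.

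There is a structural shortcut. This span is the symmetric-square type image of a rank-four system. I would check whether $I_2^{(\tau)}$ is a multiple of $G_3'-\tfrac12G_3^2$ or similar, and whether $W_3^{(\tau)}=0$ follows from the formal self-adjointness of the primitive exterior-square equation: the operator on $\Lambda^2_0$ preserves a symmetric bilinear form, and that kills odd-weight primary invariants. $W_4^{(\tau)}$ is then an expression in $G_3$ of weight four.

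\textbf{Step 2: transfer to the parameter $\log z$.} Use \eqref{I:eq:rp-low-laws} with $\lambda_1=d\tau/d\log z=1/G_1$; the weight-one factor $w$ is just a gauge. The first line of \eqref{I:eq:rp-low-laws} gives $a_1$ in terms of $G_1,G_2,G_3$. Comparing with $p_1$ should yield $a_1=2p_1$; here the coefficient $-(N-1)/2=-2$ produces the $G_1'/G_1^2$ term and the gauge $w$ contributes $G_2$. For $I_2$, the law $I_2^\lambda=\lambda_1^2\bar I_2+(N+1)S_\lambda$ with $N=5$ adds a Schwarzian of $\tau\mapsto\log z$. I would verify the displayed combination $\tfrac12(p_2-2\theta p_1-p_1^2)$ by expanding $p_2$. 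The conversion $\theta p_1=p_1'/G_1$ is routine.

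The higher Schwarzians are weight covariants by Theorem~\ref{I:thm:rp-higher}. Hence $W_3=\lambda_1^3\bar W_3=0$ and $W_4=G_1^{-4}W_4^{(\tau)}$. The claim then reduces to $W_4^{(\tau)}=\tfrac25(-10G_3'''+40G_3G_3''+21G_3'^2-54G_3^2G_3'+9G_3^4)/50$, using the explicit $W_4=E_4-\frac{3(5N+7)}{5(N+1)}I_2^2$ from \eqref{I:eq:quartic-W4} with $N=5$.

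\textbf{Main obstacle.} The hard part is the exact fourth-order computation in Step 1. This means obtaining $I_3^{(\tau)},I_4^{(\tau)}$, and hence $E_4=I_4-2\Delta I_3+\frac65\Delta^2I_2$, as polynomials in $G_3$. I would first make $\tau_1=\tau$ the parameter and divide out $v$ to reduce to a gauge where $a_1=0$. From there I would either expand the Wronskian ratios symbolically or, more safely, check the identity on polynomial ansätze for $u$ of degree up to five; both sides are differential polynomials of bounded order, so finitely many jet coefficients suffice.
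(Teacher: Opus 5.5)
Your argument is correct, but it takes a genuinely different route from the paper's. The paper never uses the solution basis. It takes Yang--Zudilin's explicit coefficients of $\theta^4,\theta^3,\theta^2,\theta$, written in terms of $p_1,p_2,p_3$, and divides them by $5,10,10,5$. The normalization recurrence then gives $I_2=A/2$, $I_3=\tfrac34\theta A$ and $I_4=\tfrac9{10}\theta^2A+\tfrac45A^2+\tfrac25p_3$, where $A=p_2-2\theta p_1-p_1^2$. Every $A$-term cancels in $W_3$ and in $W_4=I_4-2\theta I_3+\tfrac65\theta^2I_2-\tfrac{16}5I_2^2$. That is a purely formal check from the displayed operator.

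Your route needs the solutions and the $G$-formulas for $p_1,p_2,p_3$ instead, but it explains the result. Scalar gauge invariance removes $w$, and hence $G_2$. Weight covariance removes $z$, so $G_1$ enters only through $G_1^{-4}$ and the Schwarzian anomaly in $I_2$. What remains is an invariant of the $\tau$-operator that depends on $G_3=v'/v$ alone. Along the way you also test the parameter laws of Part~I.

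Step~1 is much shorter than you fear. You need neither the self-adjointness heuristic, which would require a lemma the paper does not prove, nor polynomial ans\"atze. Take $w=1$ and put $Z=\tau u+\tau_2$.
\begin{itemize}
\item The solutions $1,\tau$ make two columns of the Wronskian matrix trivial, so the operator has no terms of order $0$ or $1$.
\item Add $2u$ times the $\tau_2$-column to the $\tau_3$-column, and $2Z$ times the $\tau_2$-column to the last column. Then subtract $\tau$ times the new $\tau_3$-column from the last column.
\item The derivative rows of orders $2,\dots,5$ become $(-v,0,0)$, $(-v',2v^2,0)$, $(-v'',6vv',2v^2)$ and $(-v''',6v'^2+8vv'',10vv')$.
\end{itemize}
With $G=G_3$ this gives
\[
 L_\tau=\partial_\tau^5-5G\partial_\tau^4+(8G^2-4G')\partial_\tau^3-(4G^3-6GG'+G'')\partial_\tau^2 .
\]
Hence $I_2^{(\tau)}=(3G'-G^2)/5$ and $I_3^{(\tau)}=\tfrac32\bigl(I_2^{(\tau)}\bigr)'$, so $W_3^{(\tau)}=0$. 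Also $W_4^{(\tau)}=(9G^4-54G^2G'+40GG''+21G'^2-10G''')/125$, which is exactly your target.

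Restoring $w$ gives $a_1^{(\tau)}=-(G_2+G_3)$. Your transfer then uses $\lambda_1=1/G_1$, $\rho=-G_1'/G_1^2$ and $6S_\lambda=\{\tau,\log z\}$. It reproduces the displayed $a_1$ and $I_2$.
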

\begin{proof}
The coefficients of $\theta^4,\theta^3,\theta^2$ in that operator
are
\[
 10p_1,\quad 10\theta p_1+35p_1^2+5p_2,\quad
 5\theta^2p_1+\tfrac{15}{2}\theta p_2
       +45p_1\theta p_1+50p_1^3+30p_1p_2.
\]
The coefficient of $\theta$ is
\begin{align*}
 &46p_1^2\theta p_1+14p_2\theta p_1+24p_1^4+2p_3+4p_2^2
       +11p_1\theta^2p_1+\tfrac92\theta^2p_2+\theta^3p_1\\
 &\hspace{16mm}+7(\theta p_1)^2+52p_1^2p_2+30p_1\theta p_2.
\end{align*}
Dividing these four coefficients by $5,10,10,5$, respectively,
gives $a_1,a_2,a_3,a_4$.
Set $A=p_2-2\theta p_1-p_1^2$. The normalization recurrence in
Proposition~\ref{I:prop:local-normalization} now gives
\[
 I_2=A/2,\qquad I_3=\tfrac34\theta A,\qquad
 I_4=\tfrac9{10}\theta^2A+\tfrac45 A^2+\tfrac25p_3.
\]
For $N=5$, formula~\eqref{I:eq:quartic-W4} is
$W_4=I_4-2\theta I_3+\tfrac65\theta^2I_2-\tfrac{16}{5}I_2^2$.
Substitution cancels the $A$ terms and proves the assertion;
the formula for $W_3$ is immediate.
\end{proof}

The proposition identifies Yang--Zudilin's $p_3$ with the normalized
fourth Schwarzian of their scalar equation. The geometric input is
the kernel line: $T'$ has rank one, so the transverse matrix formula
\eqref{III:eq:period-Schwarzian-three} does not apply. Their holomorphic
$T$ also need not lie in $\mathfrak H_2$; it is distinct from the
nonholomorphic positive period matrix constructed in \cite{YZ}.

\Needspace{16\baselineskip}
\subsection{A finite differential test for generic endomorphisms}
\label{III:subsec:endomorphisms-four}

An endomorphism of an abelian scheme acts horizontally on de Rham
cohomology and preserves the Hodge bundle. In a normal frame it must
therefore commute with the matrix Schwarzian and all its derivatives.
This gives a finite-dimensional linear-algebra test.
\begin{definition}[Schwarzian jet algebra]\label{III:def:jet-algebra}

For a family of matrices, $\operatorname{Cent}$ denotes its common
centralizer. Write $\mathscr B_m(t_0)$ for the unital complex algebra
generated by $\cov^jI(t_0)$, $0\leq j\leq m$.
\end{definition}

\begin{theorem}[Differential and semisimple centralizer criteria]
\label{III:thm:finite-jet-endomorphisms-four}
Let $\mathcal A\to U$ be an abelian scheme of relative dimension
$g$ over a smooth complex algebraic curve. Suppose $\beta_t$ is
invertible near a point $t_0$. If, for some finite $m$, the only
unital semisimple
complex subalgebra of $\operatorname{Cent}(\mathscr B_m(t_0))$ is
$\C\Id$, then $\End(\mathcal A_{\overline{\C(U)}})=\Z$.
In particular, the simpler condition
\begin{equation}\label{III:eq:centralizer-four}
 \{T\in\operatorname{Mat}_g(\C):
        [T,\cov^jI(t_0)]=0\text{ for }0\leq j\leq m\}=\C\Id
\end{equation}
suffices. In genus two either of the following is sufficient:
$C(t_0)\ne0$; or $\mathscr B_m(t_0)$ contains a nonzero nilpotent
matrix. Thus $q(t_0)=0$, $S(t_0)\ne0$ is also a certificate.
The same endomorphism conclusion holds in genus two on a nonempty
constant-rank-one locus if $\nu\not\equiv0$ there.
\end{theorem}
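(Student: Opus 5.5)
The plan is to realize every geometric endomorphism, after a finite base change, as a constant matrix in a normal frame that commutes with the Schwarzian jets. We then use semisimplicity of $\End^0$ and faithfulness of its analytic representation to force $\End\otimes\Q=\Q$.

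\emph{Step 1: reduction to a finite cover.} Every endomorphism of $\mathcal A_{\overline{\C(U)}}$ is defined over a finite extension, so it extends to an endomorphism $\varphi$ of the pulled-back abelian scheme over a finite étale cover $U'\to U$ restricted to a neighbourhood of a preimage of $t_0$ (after shrinking $U$). The local data $(\HH,\nabla,\EE,Q)$ near that preimage coincide with those near $t_0$, since the map is a local biholomorphism. Hence $I$, $\cov^jI(t_0)$, $C$, $\nu$ and $\mathscr B_m(t_0)$ are unchanged up to simultaneous conjugation. We may therefore assume $\varphi$ is defined over $U$.

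\emph{Step 2: commutation.} The map $\varphi^*$ acts on $H^1_{\mathrm{dR}}$ horizontally and preserves $F^1=\EE$. Write $\varphi^*\omega=T\omega$ in a normal frame from Proposition~\ref{III:prop:normal-form-three}, where $a_1=0$ and $\omega''=-I\omega$. Apply $\varphi^*$ to this equation and use horizontality. This gives $T''\omega+2T'\omega'+[I,T]\omega=0$. Since $(\omega,\omega')$ is a frame on the transverse locus, $T'=0$ and $[T,I]=0$. Differentiating repeatedly gives $[T,\cov^jI(t_0)]=0$ for all $j$, because $\cov=\partial_t$ in this frame. By \eqref{III:eq:frame-laws-three}, the statement in an arbitrary frame is simultaneous conjugation. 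Thus $\rho_a\colon\End(\mathcal A)\otimes\C\to\End(\EE_{t_0})$ lands in $\operatorname{Cent}(\mathscr B_m(t_0))$.

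\emph{Step 3: semisimplicity and faithfulness.} Because of the Rosati involution, $\End^0=\End\otimes\Q$ is a semisimple $\Q$-algebra. Hence $\End^0\otimes\C$ is semisimple, and its image under $\rho_a$ is a unital semisimple subalgebra of the centralizer. The rational representation on $H^1$ is faithful and satisfies $\rho_r\otimes\C\simeq\rho_a\oplus\overline{\rho_a}$. A rational element killed by $\rho_a$ is also killed by $\overline{\rho_a}$, hence it is zero. So $\End^0$ embeds in that image. By hypothesis the image is $\C\Id$, so $\End^0=\Q$ and $\End=\Z$. Condition~\eqref{III:eq:centralizer-four} is a special case, since there the whole centralizer is $\C\Id$.

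\emph{Step 4: the genus-two certificates.}
\begin{enumerate}
\item If $C(t_0)\ne0$, Corollary~\ref{III:cor:binary-matrix-basis} gives $\mathscr B_1(t_0)=\operatorname{Mat}_2(\C)$. Its centralizer is $\C\Id$.
\item If $N\in\mathscr B_m(t_0)$ is nonzero nilpotent, its centralizer is $\C\Id+\C N$. The only unital semisimple subalgebra of this algebra is $\C\Id$, since $N$ spans its radical.
\item If $q(t_0)=0$ and $S(t_0)\ne0$, then $S^2=\tfrac q2\Id=0$ by \eqref{III:eq:trace-relations-three}. So $S=I-\tfrac12\tr(I)\Id\in\mathscr B_0(t_0)$ is a nonzero nilpotent, and the previous case applies.
\item On the rank-one locus, Step 1 reduces to $\varphi$ over a cover on which $\nu\not\equiv0$ persists. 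Proposition~\ref{III:prop:rank-one-cyclicity-three} makes every flat endomorphism preserving $\EE$ a scalar, so $\rho_r(\End^0)$ consists of scalars. Faithfulness then gives $\End=\Z$.
\end{enumerate}

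The main obstacle is the bookkeeping in Steps 1 and 3. One must check that base change preserves the local invariants, which holds because the cover is étale near $t_0$. One must also justify faithfulness of $\rho_a$ on $\End^0$ rather than on $\End^0\otimes\C$, which is where the conjugate-representation argument is needed.
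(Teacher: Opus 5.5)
The proposal follows the same route as the paper (finite cover, then $T'=0$ and $[T,\cov^jI]=0$ in a normal frame, then a semisimple image in the centralizer, then the genus-two and rank-one checks). However, the last inference of Step 3 fails as stated.

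Injectivity of $\rho_a$ on $\End^0$, together with $\rho_a(\End^0\otimes\C)=\C\Id$, only shows that $\End^0$ is a $\Q$-subalgebra of $\C$, i.e.\ a number field. It does not show $\End^0=\Q$. Your decomposition $\rho_r\otimes\C\simeq\rho_a\oplus\overline{\rho_a}$ can at best narrow this to $\Q$ or to an imaginary quadratic field acting on $\EE$ through a single embedding, that is, with signature $(g,0)$.

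The inference alone is already invalid for $g=1$. Take a single CM elliptic curve: the centralizer in $\operatorname{Mat}_1(\C)$ is $\C$, and $\rho_a$ is injective on $\End^0$. So every fact you use in Step 3 holds, yet $\End^0$ is imaginary quadratic.

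What excludes this case is transversality, which Step 3 never uses. The paper argues as follows. If $\varphi$ acts on $\EE$ by a scalar $\lambda$, then $T=\lambda\Id$ is constant in the normal frame. Since $\varphi^*$ commutes with $\nD$, it also acts by $\lambda$ on $\nD\EE$, hence on $\HH=\EE+\nD\EE$. A scalar preserving the Betti lattice is an integer, and faithfulness on $H^1$ gives $\varphi=\lambda\in\Z$. Equivalently, $\varphi$ acts on $\HH/\EE\cong\overline{\EE}$ by $\bar\lambda$, and $\varphi$-equivariance of the nonzero map $\beta$ forces $\lambda=\bar\lambda$. With this step, your conjugate-representation faithfulness argument becomes unnecessary.

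Your rank-one case is fine as written. There Proposition~\ref{III:prop:rank-one-cyclicity-three} already gives a scalar on all of $\HH$, and such a scalar must be rational.

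There is also a smaller issue in Step 1. As written, ``after shrinking $U$'' could remove $t_0$, and your later claim that the cover is \'etale near $t_0$ is not justified. The claim is true: the Galois action on geometric endomorphisms factors through the monodromy of the family, which is unramified over $U$. Hence the field of definition of all endomorphisms corresponds to a finite \'etale cover of all of $U$. Alternatively, you can avoid \'etaleness as the paper does. On a disc about the regular point $t_0$, the horizontal action obtained over a sector is a constant matrix in a flat frame, so it extends over the whole disc, and preservation of $\EE$ extends by continuity.
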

\begin{proof}
\noindent\textit{1. Extend the endomorphism action.} The rational geometric generic endomorphism algebra is finite
 dimensional and semisimple by Poincar\'e reducibility. Its elements
can be defined simultaneously after a finite extension of $\C(U)$
and spread over an open part of the resulting curve. Their de Rham
actions are horizontal and preserve $\EE$. On a disk about the
regular point $t_0$, a horizontal action is constant in a flat frame
and extends across the omitted points, after choosing a branch of
the finite cover. Preservation of $\EE$ extends by continuity.

\noindent\textit{2. The normal-frame equation.} Use a local normal frame. If an endomorphism acts on the column of
Hodge frame elements by $\phi(\omega)=T\omega$, apply it to
$\omega''=-I\omega$ and differentiate $T\omega$ twice. Independence
of $\omega,\omega'$ gives $T'=0$ and $[T,I]=0$, hence
$[T,I^{(j)}]=0$ for every $j$. These statements are $\cov T=0$
and $[T,\cov^jI]=0$ in any frame. \noindent\textit{3. The semisimple image.} The complexified image of the
endomorphism algebra on $\EE(t_0)$ is therefore a semisimple
subalgebra of the displayed centralizer. Indeed, this image is a quotient of the complexified semisimple
algebra (or its opposite in the column-frame convention), and is
therefore semisimple.
\noindent\textit{4. Integrality.} The hypothesis makes this image scalar. Since $T$ is constant in
the normal frame and $\EE+\nD\EE=\HH$, the full action on $\HH$
is the same constant scalar. For an integral endomorphism that
scalar preserves the Betti lattice, hence is an integer.
Faithfulness on $H^1$ finishes the argument.

\noindent\textit{5. The genus-two criteria.} For $2\times2$ matrices, the centralizer of a nonscalar matrix is
the commutative algebra spanned by it and $\Id$. Two noncommuting
matrices therefore have scalar common centralizer, proving the
$C\ne0$ case. If a nonzero nilpotent $N$ belongs to
$\mathscr B_m$, its centralizer is $\C\Id+\C N$, whose only
unital semisimple subalgebra is $\C\Id$. Finally
$S^2=(q/2)\Id$, and $S\in\mathscr B_0$, which proves the stated
nilpotent-Schwarzian criterion. In the rank-one case apply
Proposition~\ref{III:prop:rank-one-cyclicity-three} and the same lattice
argument.
\end{proof}

For fixed $m$, \eqref{III:eq:centralizer-four} is the rank-$g^2-1$
test for the finite linear map
\[
 T\longmapsto([T,I],[T,\cov I],\ldots,[T,\cov^mI]).
\]
One may equivalently state the semisimple criterion using the algebra
generated by all jets: its increasing chain of finite-jet
subalgebras stabilizes because $\operatorname{Mat}_g(\C)$ is
finite dimensional. The argument gives no uniform bound on that jet order. The conclusion
implies absolute simplicity of the geometric generic fiber. Special
fibers may have additional endomorphisms, as the example in
Section~\ref{III:sec:explicit-gm} will illustrate.

The preceding results distinguish the geometric loci needed in
explicit computations:
\begin{center}
\small
\begin{tabular}{@{}p{.34\textwidth}p{.31\textwidth}p{.26\textwidth}@{}}
\toprule
Presentation & Cyclicity condition & Apparent exponents\\
\midrule
Primitive Hodge determinant, transverse case
 & $C\ne0$ & $0,1,2,3,5$ at a simple cyclic zero\\[3pt]
Null line, transverse case
 & $b\ne0$ on one sheet, $c_0\ne0$ on the other
 & $0,1,3,4$ over a simple zero of $q$\\[3pt]
Kernel line, rank-one case
 & $\nu\ne0$ & $0,m+1,m+2,2m+3$ when $\operatorname{ord}\nu=m$\\
\bottomrule
\end{tabular}
\end{center}
All entries concern regular points of the flat connection on
the stated constant-rank locus; the null-line exponents refer to the
failing sheet. They identify failures of the
chosen scalar presentation separately from singular fibers.

\section{Explicit genus-two Gauss--Manin systems}\label{III:sec:explicit-gm}

Throughout this section, use the genus-two convention~\ref{III:conv:genus-two}.

For an algebraic family of genus-two curves, polynomial reduction
computes the Gauss--Manin connection in a Hodge-compatible frame.
We determine its transverse locus and matrix Schwarzian, then identify
the two polynomials governing the scalarizations of
Section~\ref{III:sec:explicit-scalarizations}. The reduction is classical
\cite{KatzOda}; compare the universal genus-two calculation in
\cite[\S2.2]{CMY}.

\subsection{An algebraic frame and exact reduction}\label{III:subsec:exact-reduction-five}
A basis consisting of holomorphic differentials and differentials of
the second kind makes both the filtration and the connection explicit.
Fix constants $a,b\in\C$ and let $\pi:\mathcal C\to U_{a,b}$ be the
smooth projective family obtained from
\begin{equation}\label{III:eq:pencil-five}
 y^2=f_t(x)=x^5+ax^3+bx+t,\qquad
 U_{a,b}=\{t\in\mathbb A^1_\C:d_{a,b}(t)\ne0\},
\end{equation}
where
\begin{equation*}
\begin{split}
 d_{a,b}(t)={}&3125t^4+(108a^5-900a^3b+2000ab^2)t^2\\
              &+16b^3(a^2-4b)^2.
\end{split}
\end{equation*}
This polynomial is $\operatorname{disc}_x(f_t)$. The sections
\[
 \eta_j=\left[x^j\frac{dx}{y}\right]\quad(0\le j\le3),\qquad
 \eta=(\eta_0,\eta_1,\eta_2,\eta_3)^t
\]
form an algebraic frame of $\HH=R^1\pi_*\Omega^\bullet_{\mathcal C/U_{a,b}}$.
The Hodge frame is $\omega=(\eta_0,\eta_1)^t$, and $\eta_2,\eta_3$ are
differentials of the second kind at infinity. A row $r$ represents the
section $r\eta$; thus, if $\nD\eta=A\eta$, its derivative is represented by
\begin{equation}\label{III:eq:row-recurrence-five}
 \nD(r\eta)=(r'+rA)\eta.
\end{equation}
All matrices below follow this row-coefficient and column-frame convention.

For each $0\le j\le3$, there are unique polynomials $U_j,V_j$ satisfying
\begin{equation}\label{III:eq:bezout-five}
 x^j=U_jf_t+V_j(f_t)_x,\qquad \deg_xV_j<5.
\end{equation}
Indeed, $(f_t)_x$ is invertible modulo $f_t$ on $U_{a,b}$; take
$V_j\equiv x^j(f_t)_x^{-1}\pmod {f_t}$ and then divide by $f_t$ to obtain
$U_j$. Since differentiation at fixed $x$ gives
$\nD\eta_j=-\tfrac12[x^jdx/y^3]$, the identity
\[
 d_x(V_j/y)=\left((\partial_xV_j)/y-V_j(f_t)_x/(2y^3)\right)dx
\]
implies
\begin{equation}\label{III:eq:reduction-five}
 \nD\eta_j=-\frac12\left[(U_j+2\partial_xV_j)\frac{dx}{y}\right].
\end{equation}
Here $\deg_xU_j\le3$ and $\deg_x\partial_xV_j\le3$, so no further
reduction at infinity is needed. The coefficient of $x^k$ on the right
is $A_{jk}$. Equations~\eqref{III:eq:bezout-five}--\eqref{III:eq:reduction-five}
therefore certify every row of the connection by an exact differential.
They also show that its only possible finite poles lie on $d_{a,b}=0$.

We fix the following normalization of the polarization throughout the
examples:
\begin{equation}\label{III:eq:polarization-five}
 Q_a=\bigl(Q(\eta_i,\eta_j)\bigr)_{0\le i,j\le3}
 =\frac43
 \begin{pmatrix}
 0&0&0&1\\0&0&3&0\\0&-3&0&a\\-1&0&-a&0
 \end{pmatrix}.
\end{equation}
To compute this pairing, put $x=u^{-2}$ near infinity. Then
\[
 \eta_j=-2u^{2-2j}\bigl(1-\tfrac12au^4+O(u^8)\bigr)du.
\]
The residue pairing of a local primitive of $\eta_i$ with $\eta_j$ gives
$Q(\eta_0,\eta_3)=4/3$, $Q(\eta_1,\eta_2)=4$, and
$Q(\eta_2,\eta_3)=4a/3$; the other upper-triangular entries vanish.
This yields~\eqref{III:eq:polarization-five}, including its sign convention.
Flatness takes the form
\begin{equation*}
 AQ_a+Q_aA^t=0.
\end{equation*}
In particular $\tr A=0$, and the fixed de Rham frame has constant
symplectic volume.

\subsection{The exact transverse locus}
The matrix presentation exists exactly when the derivatives of the
two Hodge sections are independent modulo the Hodge bundle. Write
the connection in the block notation of \eqref{III:eq:block-elimination-three}:
\begin{equation}\label{III:eq:blocks-five}
 A=\begin{pmatrix}B&C_0\\D&E_0\end{pmatrix}.
\end{equation}
Thus $C_0$ represents the directional Kodaira--Spencer map in the chosen
Hodge and quotient frames. The letter $C$ without a subscript continues to
mean the Schwarzian commutator $[S,\cov S]$ from~\eqref{III:eq:q-c-three}.

\begin{proposition}\label{III:prop:transversality-five}
For~\eqref{III:eq:pencil-five},
\begin{equation}\label{III:eq:det-KS-five}
 \det C_0=\frac{3a(3a^2-10b)}{d_{a,b}(t)}.
\end{equation}
Consequently the Hodge frame is transverse at every smooth finite
parameter if and only if $a(3a^2-10b)\ne0$.
\end{proposition}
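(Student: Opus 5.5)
The plan is to compute the block $C_0$ of the Gauss--Manin matrix directly from the exact reduction \eqref{III:eq:bezout-five}--\eqref{III:eq:reduction-five}, using only the rows $j=0,1$. The entries $A_{j2},A_{j3}$ for $j=0,1$ are $-\tfrac12$ times the coefficients of $x^2,x^3$ in $U_j+2\partial_xV_j$. So the first task is to write down $U_0,V_0,U_1,V_1$. Rather than inverting $(f_t)_x$ modulo $f_t$ symbolically in full, I will use a resultant-style construction. The Sylvester matrix of $f_t$ and $f_t'$ has determinant proportional to $d_{a,b}(t)$ (for a monic quintic, $\operatorname{Res}(f,f')=\operatorname{disc}(f)$ up to sign). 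Cramer's rule then shows that $U_j,V_j$ have coefficients in $\C[a,b,t]$ divided by $d_{a,b}$, and it produces them explicitly. The degree bounds $\deg U_j\le3$ and $\deg V_j\le4$ come from the Sylvester system of size $9$.

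Next I extract the $2\times2$ matrix $C_0$, whose entries lie in $d_{a,b}^{-1}\C[a,b,t]$, and take its determinant. A priori this has denominator $d_{a,b}^2$, so the content of \eqref{III:eq:det-KS-five} is that the numerator equals $3a(3a^2-10b)\,d_{a,b}$. To make this cancellation structural rather than a raw polynomial identity, I will use the polarization. By Proposition~\ref{III:prop:normal-form-three}, ${\mathsf K}_{ij}=Q(\omega_i,\nD\omega_j)$, and $Q$ restricted to the Hodge frame against $(\eta_2,\eta_3)$ is the constant matrix read off from \eqref{III:eq:polarization-five}. Hence $\det{\mathsf K}=\det C_0\cdot\det\bigl(Q(\omega_i,\eta_{k})\bigr)_{i,k}$ with a nonzero constant factor $\det\left(\begin{smallmatrix}0&4/3\\4&0\end{smallmatrix}\right)=-16/3$. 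This reduces the problem to computing $\det C_0$ in any convenient way.

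A further simplification comes from weighting. Give $x$ weight $2$, $a$ weight $4$, $b$ weight $8$, and $t$ weight $10$. Then $d_{a,b}$ is homogeneous of weight $40$, and each entry of $A$ is homogeneous. Consequently the numerator of $\det C_0$ is homogeneous in $(a,b,t)$ of a determined weight. Comparing weights with $3a(3a^2-10b)$ (weight $12$) times a multiple of $d_{a,b}$ restricts the possible monomials. The verification can then be done by checking finitely many coefficients, for example by specializing $b=0$ and $a=0$ separately and at one generic point.

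For the consequence, on $U_{a,b}$ we have $d_{a,b}(t)\ne0$ and finite. So $\det C_0$ vanishes somewhere, indeed everywhere, exactly when $a(3a^2-10b)=0$, and otherwise it never vanishes. This gives transversality at every smooth finite parameter. The main obstacle I expect is the middle step, establishing that the numerator is divisible by $d_{a,b}$. I would try first to prove this by showing that $\det C_0$ has at most a simple pole along each root of $d_{a,b}$, using the local Picard--Lefschetz or regular-singular behaviour of periods of holomorphic forms at a node. If that proves too delicate, I would fall back on direct exact expansion of the Cramer formulas.
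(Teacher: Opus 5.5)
Your fallback, direct exact expansion, is exactly the paper's proof. The first two reductions give $C_0=d_{a,b}^{-1}\left(\begin{smallmatrix}2t\mathfrak u&-3\mathfrak v\\ \mathfrak w&6t\mathfrak u\end{smallmatrix}\right)$, with $\mathfrak u=(a^2-5b)(9a^2-20b)$ and explicit $\mathfrak v,\mathfrak w$ of weights $24$ and $28$. The single identity $4t^2\mathfrak u^2+\mathfrak v\mathfrak w=a(3a^2-10b)\,d_{a,b}$ then finishes. As written, however, your proposal never establishes an identity of this kind, and the devices you offer in its place do not do what you claim:
\begin{enumerate}
\item The polarization step is circular. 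The relation $\det{\mathsf K}=-\tfrac{16}{3}\det C_0$ is correct (it matches $112/d$ and $-21/d$ on the main pencil), but by itself it only renames the unknown.
\item The weight check, used before divisibility, is too weak. $d_{a,b}^2\det C_0$ has weight $52$, and the space of weight-$52$ monomials $a^ib^jt^k$ is $14$-dimensional. The slices $b=0$ and $a=0$ see only four of these monomials, so those two specializations plus one numerical point cannot certify the equality.
\item ``Regular-singular behaviour'' is not enough for divisibility. It only gives simple poles of $A$, hence at most double poles of $\det C_0$. It does not yield divisibility by $d_{a,b}$, which you correctly identify as the crux.
\end{enumerate}

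The missing idea is that the residue at a node has rank one. For generic $(a,b)$ the four roots of $d_{a,b}$ are simple, and each is an ordinary node. The periods of $\omega_j$ then have the form $\exp\bigl(\tfrac{\log s}{2\pi i}N\bigr)h_j$, with $h_j$ holomorphic and $N$ the logarithm of the Picard--Lefschetz transvection, $Nv=\pm Q(v,\delta)\delta$. Since $Q$ is monodromy invariant, ${\mathsf K}_{ij}=Q(\omega_i,\nabla_t\omega_j)=\tfrac{1}{2\pi i s}Q(h_i,Nh_j)+(\text{holomorphic})$. So $\operatorname{Res}{\mathsf K}$ is a multiple of $pp^t$, where $p_i$ is the vanishing-cycle period of $\omega_i$. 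Hence $\det{\mathsf K}$, and therefore $\det C_0$, has at most a simple pole; this is where your passage to ${\mathsf K}$ actually earns its keep.

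Because $d_{a,b}$ has constant leading coefficient $3125$ in $t$, this generic statement gives $d_{a,b}\mid d_{a,b}^2\det C_0$ in $\mathbb Q[a,b][t]$. Weight $12$ then leaves only $\alpha a^3+\beta ab$. You still need two genuine evaluations; note that $a=0$ kills both monomials and gives nothing. For instance, the main pencil gives $\alpha+\beta=-21$. On the branch $3a^2=10b$, the exact identity $\nabla_t s_a=\tfrac{225}{2}\eta_1$ makes $C_0$ singular, giving $10\alpha+3\beta=0$. Hence $\alpha=9$ and $\beta=-30$.

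With these additions, your route is a genuinely more conceptual alternative: it replaces the weight-$52$ polynomial identity by a monodromy argument plus two small computations. Without them, the proposal presupposes the constant $3a(3a^2-10b)$ rather than proving it.
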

\begin{proof}
The first two reductions give
\[
 C_0=\frac1{d_{a,b}}
 \begin{pmatrix}2t\mathfrak u&-3\mathfrak v\\
                 \mathfrak w&6t\mathfrak u\end{pmatrix},
\]
where
\begin{align*}
 \mathfrak u&=(a^2-5b)(9a^2-20b),\\
 \mathfrak v&=12a^4b-88a^2b^2+125at^2+160b^3,\\
 \mathfrak w&=4a^3b^2+75a^2t^2-16ab^3-250bt^2.
\end{align*}
The polynomial identity
\[
 4t^2\mathfrak u^2+\mathfrak v\mathfrak w
     =a(3a^2-10b)d_{a,b}
\]
proves~\eqref{III:eq:det-KS-five}. All these identities hold over $\mathbb Q(a,b,t)$.
\end{proof}

The determinant separates two distinct constructions. On its nonzero
locus the full Hodge frame gives the matrix equation of
Proposition~\ref{III:prop:matrix-pf-three}. On the two factors
$a=0$ and $3a^2=10b$, the kernel-line construction of
Section~\ref{III:subsec:rank-one-three} applies instead; these pencils are
worked out in Section~\ref{III:sec:rank-one-applications}. Their intersection
$a=b=0$ is isotrivial after a finite base change and will be excluded.

\subsection{A transverse pencil and its complete connection}
The choice $a=b=1$ gives a transverse family on its whole smooth
affine base. For the remainder of this section and
Section~\ref{III:sec:explicit-scalarizations}, fix
\begin{equation}\label{III:eq:main-family-five}
 \mathcal C_t:\ y^2=x^5+x^3+x+t,\qquad
 d(t)=3125t^4+1208t^2+144,\qquad U=\{d\ne0\}.
\end{equation}
The four roots of $d$ are simple. In the frame $\eta$, the blocks
in~\eqref{III:eq:blocks-five} are
\begin{align}
 B&=\frac1{2d}\begin{pmatrix}
 -5t(375t^2+76)&4(25t^2-36)\\
 -10(43t^2+12)&-t(625t^2-108)
 \end{pmatrix},\label{III:eq:main-B-five}\\
 C_0&=\frac1d\begin{pmatrix}
 88t&-3(125t^2+84)\\-(175t^2+12)&264t
 \end{pmatrix},\qquad \det C_0=-\frac{21}{d},\notag\\
 D&=\frac1d\begin{pmatrix}
 10t(25t^2+8)&-(305t^2+36)\\
 -(t^2-12)&4t(125t^2+18)
 \end{pmatrix},\notag\\
 E_0&=\frac1{2d}\begin{pmatrix}
 t(625t^2+68)&-6(175t^2+12)\\
 4(65t^2+12)&3t(625t^2+68)
 \end{pmatrix}.\label{III:eq:main-E-five}
\end{align}
The blocks specify the connection over $\mathbb Q[t,d^{-1}]$.
Together with the row recurrence \eqref{III:eq:row-recurrence-five},
they determine all the scalar equations below by rational operations
and differentiation.

\begin{proposition}\label{III:prop:matrix-main-five}
The periods of $\omega=(\eta_0,\eta_1)^t$ satisfy
\begin{equation*}
 Y''+2a_1Y'+a_2Y=0,
\end{equation*}
where
\begin{align}
 a_1&=\frac1{7d}\begin{pmatrix}
 t(21875t^2+9244)&-2(3875t^2-1092)\\
 -2(2275t^2-372)&t(21875t^2-788)
 \end{pmatrix},\label{III:eq:a1-five}\\
 a_2&=\frac1{28d}\begin{pmatrix}
 147(125t^2+84)&-3960t\\-12936t&45(175t^2+12)
 \end{pmatrix}.\label{III:eq:a2-five}
\end{align}
The resulting first-order system in $(Y,Y')$ is equivalent to the full
Gauss--Manin system on $U$. In particular this matrix presentation has no
additional finite poles on the smooth base.
\end{proposition}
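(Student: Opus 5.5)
The plan is to obtain $a_1,a_2$ from the block elimination formula \eqref{III:eq:block-elimination-three}, applied to the explicit blocks \eqref{III:eq:main-B-five}--\eqref{III:eq:main-E-five}. Proposition~\ref{III:prop:transversality-five} already gives $\det C_0=-21/d$. This is nonzero on $U$, so $C_0^{-1}$ exists over $\mathbb Q[t,d^{-1}]$, and its adjugate formula gives
\[
 C_0^{-1}=-\tfrac{d}{21}\,\mathrm{adj}(dC_0)/d .
\]
Since $dC_0$ is polynomial, $C_0^{-1}=-\tfrac1{21}\,\mathrm{adj}(dC_0)$ is itself a polynomial matrix. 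With this in hand, I will form
\[
 T_0=C_0'C_0^{-1}+C_0E_0C_0^{-1},\qquad
 a_1=-\tfrac12(B+T_0),\qquad a_2=-B'-C_0D+T_0B .
\]
Then I will reduce each entry to a single fraction over $d$ (respectively $7d$ and $28d$) and compare with \eqref{III:eq:a1-five} and \eqref{III:eq:a2-five}. This comparison is a finite identity in $\mathbb Q(t)$. It can be certified by clearing denominators and checking polynomial coefficients, or through the ancillary computation.

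For the equivalence with the full Gauss--Manin system, I will use the proof of Proposition~\ref{III:prop:matrix-pf-three}. On $U$ the entries of $(\omega,\nD\omega)^t$ form a frame of $\HH$, because
\[
 \nD\omega=B\omega+C_0\xi ,
\]
where $\xi=(\eta_2,\eta_3)^t$ and $C_0$ is invertible. The change of frame $(\omega,\xi)\mapsto(\omega,\nD\omega)$ is therefore the block matrix
\[
 \begin{pmatrix}\Id&0\\ B&C_0\end{pmatrix}.
\]
Its entries lie in $\mathbb Q[t,d^{-1}]$, and its determinant is $\det C_0=-21/d$, a unit on $U$. So the change is an algebraic gauge transformation, invertible over the ring of regular functions on $U$. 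The first-order system for $(Y,Y')$ is the Gauss--Manin connection written in this new frame. It is therefore gauge equivalent to the original system over $U$, and it has the same horizontal sections, namely the periods paired with horizontal dual sections.

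The absence of additional finite poles then follows in two ways. The expressions $a_1,a_2$ displayed above have denominators dividing $28d$, and $d$ has no zeros on $U$. More conceptually, the only potential source of apparent poles in \eqref{III:eq:block-elimination-three} is $C_0^{-1}$, and it has none on $U$ because $\det C_0$ is a unit there.

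The main obstacle is purely computational: carrying out the rational simplification of $T_0$ and $a_2$ and verifying that all numerator factors collapse to the stated low-degree polynomials. I would first check that $T_0$ has denominator $d$ with the predicted entries. As an independent consistency check, I would then verify the trace relation coming from $\tr A=0$ and the symmetry constraints of Proposition~\ref{III:prop:normal-form-three}, namely $I\mathsf K=\mathsf K I^t$ with $\mathsf K$ computed from $Q_1$ in \eqref{III:eq:polarization-five}. Only after those checks pass would I do the full entrywise comparison.
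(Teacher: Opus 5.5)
Your proposal is correct and takes the same approach as the paper. The paper certifies the displayed $a_1,a_2$ by checking the row identity $O_2+2a_1O_1+a_2O=0$, with $O=(\Id_2\ 0)$, $O_1=OA$, $O_2=O_1'+O_1A$, and notes that this is equivalent to your route of substituting the blocks into \eqref{III:eq:block-elimination-three}; it then uses the same frame-change matrix $\left(\begin{smallmatrix}\Id_2&0\\B&C_0\end{smallmatrix}\right)$, with determinant $-21/d$, for the equivalence, and the denominators $7d,28d$ for the pole statement. Your extra checks are sound: $C_0^{-1}=-\tfrac1{21}\operatorname{adj}(dC_0)$ is indeed polynomial, and the trace relation $\tr a_1=d'/(2d)$ does hold for the displayed $a_1$.
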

\begin{proof}
Let $O=(\Id_2\ 0)$, $O_1=OA$, and $O_2=O_1'+O_1A$. Substitution gives
the identity
\begin{equation*}
 O_2+2a_1O_1+a_2O=0
\end{equation*}
in all four de Rham coordinates. Equivalently, substitute
\eqref{III:eq:main-B-five}--\eqref{III:eq:main-E-five} into
\eqref{III:eq:block-elimination-three}. The change of frame from $\eta$ to
$(\omega,\nD\omega)^t$ has matrix
$\left(\begin{smallmatrix}\Id_2&0\\B&C_0\end{smallmatrix}\right)$,
whose determinant is $-21/d$. It is invertible everywhere on $U$.
Thus the relation has exactly the four-dimensional solution space of
the original connection. The denominators in~\eqref{III:eq:a1-five} and
\eqref{III:eq:a2-five} prove the assertion about finite poles.
\end{proof}

For the polarization~\eqref{III:eq:polarization-five} with $a=1$, the
directional form from~\eqref{III:eq:K-three} is
\begin{equation}\label{III:eq:K-five}
 {\mathsf K}=-\frac4d\begin{pmatrix}
 125t^2+84&-88t\\-88t&175t^2+12
 \end{pmatrix},\qquad \det {\mathsf K}=\frac{112}{d}.
\end{equation}
This is the matrix ${\mathsf K}$ of
Proposition~\ref{III:prop:normal-form-three}; hence it satisfies
${\mathsf K}'=-a_1{\mathsf K}-{\mathsf K}a_1^t$ and $I{\mathsf K}={\mathsf K}I^t$. The displayed algebraic frame
suffices for the calculations.

\subsection{Schwarzian covariants and a finite-jet certificate}
The endomorphism criterion requires only finitely many derivatives
at one regular point. Compute $I,S,\cov I$, and $c$ from
\eqref{III:eq:I-S-D-three} and \eqref{III:eq:q-c-three}. At $t=0$ they are
\begin{align*}
 I(0)&=\frac1{1008}\begin{pmatrix}-7769&0\\0&-689\end{pmatrix},
 &S(0)&=\begin{pmatrix}-295/84&0\\0&295/84\end{pmatrix},
 \\
 \cov I(0)&=\begin{pmatrix}0&72257/1512\\72257/10584&0\end{pmatrix},
 &c(0)&=-\frac{454363969114225}{14114653056}.
\end{align*}
These values follow by differentiating the rational matrices
\eqref{III:eq:a1-five}--\eqref{III:eq:a2-five}.

\begin{corollary}\label{III:cor:generic-end-five}
The family~\eqref{III:eq:main-family-five} admits no local polarized
Hodge-compatible splitting near $t=0$. Its geometric generic Jacobian
has endomorphism ring $\Z$ and is absolutely simple.
\end{corollary}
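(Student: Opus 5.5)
The plan is to read both assertions off the single numerical value $c(0)\ne0$, combining Proposition~\ref{III:prop:splitting-three} and Theorem~\ref{III:thm:finite-jet-endomorphisms-four}.

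\emph{Hypotheses at $t=0$.} Proposition~\ref{III:prop:transversality-five} gives $\det C_0=-21/d$, and $d(0)=144\ne0$, so $\beta_t$ is invertible on a neighbourhood of $t=0$. The displayed $S(0)$ is diagonal with distinct entries $\mp295/84$. Hence
\[
 q(0)=\tr(S(0)^2)=2(295/84)^2\ne0 .
\]

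\emph{Computing $C(0)$.} I would write $C(0)=[I(0),\cov I(0)]$ directly from the displayed matrices. Since $I(0)$ is diagonal with distinct entries and $\cov I(0)$ is purely off-diagonal with nonzero entries, the commutator is off-diagonal. Its entries are $(\lambda_1-\lambda_2)$ times the off-diagonal entries of $\cov I(0)$, so they are nonzero. Its square is a negative multiple of $\Id$, which matches the stated negative value of $c(0)$. The only real work is confirming the quoted values of $I(0)$ and $\cov I(0)$. For this I would expand $a_1,a_2$ from \eqref{III:eq:a1-five}--\eqref{III:eq:a2-five} to first order in $t$, using that $a_1(0)$ is off-diagonal and $a_2(0)$ diagonal. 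Then I would form $I=a_2-a_1'-a_1^2$ and $\cov I=I'+[a_1,I]$ at $t=0$. I expect this exact rational arithmetic to be the main obstacle, though it is purely mechanical. Only the nonvanishing of $C(0)$ is needed, so a parity observation should suffice: $a_1$ is odd on the diagonal and even off it, and $a_2$ the reverse. This forces $I(0)$ to be diagonal and $\cov I(0)$ off-diagonal, so it remains only to check that the diagonal entries of $I(0)$ differ and that the off-diagonal entries of $\cov I(0)$ are nonzero.

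\emph{No splitting.} A local polarized Hodge-compatible splitting near $t=0$ would, by the converse direction of Proposition~\ref{III:prop:splitting-three}, force $C\equiv0$ on a neighbourhood; here $q\ne0$ near $0$. This contradicts $C(0)\ne0$.

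\emph{Endomorphisms.} The Jacobian family over $U$ is an abelian scheme over a smooth curve, and transversality holds at $t_0=0$. Since $C(0)\ne0$, the genus-two criterion of Theorem~\ref{III:thm:finite-jet-endomorphisms-four} with $m=1$ applies: $I(0)$ and $\cov I(0)$ do not commute, so their common centralizer is $\C\Id$. That theorem then gives $\End=\Z$ for the geometric generic fibre. Absolute simplicity follows as noted after the theorem, since $\End\otimes\Q=\Q$ excludes any isogeny decomposition.
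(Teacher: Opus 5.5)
Your proposal is correct and follows the paper's own proof. Both arguments read $q(0)\ne0$ and $C(0)\ne0$ off the displayed values at $t=0$, then apply Proposition~\ref{III:prop:splitting-three} to exclude a splitting and the genus-two commutator criterion of Theorem~\ref{III:thm:finite-jet-endomorphisms-four} to get $\End=\Z$, with absolute simplicity following because there are no nontrivial idempotents. Your parity argument, showing that $I(0)$ is diagonal and $\cov I(0)$ is off-diagonal, is a sound extra check of $C(0)\ne0$ and agrees with the displayed negative value of $c(0)$.
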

\begin{proof}
Since $q(0)\ne0$ and $C(0)\ne0$, the splitting criterion of
Proposition~\ref{III:prop:splitting-three} excludes such a splitting.
Theorem~\ref{III:thm:finite-jet-endomorphisms-four} gives the endomorphism
assertion. A nonsimple abelian variety has a nontrivial idempotent in
its rational endomorphism algebra, so it cannot have endomorphism ring $\Z$.
\end{proof}

The generic endomorphism conclusion for this pencil is also accessible
from the established theory of hyperelliptic Jacobians; see~\cite{Zarhin}.
The additional information here is an exact certificate in the filtered
Gauss--Manin connection. It concerns the geometric generic fiber. Indeed,
at the certificate point $t=0$ the curve has the extra automorphism
$(x,y)\mapsto(-x,iy)$, which does not extend as an endomorphism over an
open neighborhood of the parameter curve. This distinguishes an
endomorphism of one fiber from a horizontal endomorphism preserving
the Hodge filtration along the family.

\subsection{Two polynomials controlling the scalarizations}
The zeros of $q$ and $c$ will give the two distinguished apparent
loci. Both functions are even in $t$, so their numerators can be
written as polynomials in $u=t^2$. Put
\begin{align}
 D_2(u)&=3125u^2+1208u+144,\label{III:eq:polynomials-five}\\
 H_4(u)&=22968750000u^4-214863009375u^3-205941087232u^2
 \notag\\&\hspace{16mm}-56411909744u-4911163776,\notag\\
 N_6(u)&=67291259765625u^6+842772656250000u^5
 \notag\\&\hspace{16mm}+1019287360500000u^4+660155943176960u^3
 \notag\\&\hspace{16mm}+196495468427520u^2+24928225026048u
                  +1039421030400.\notag
\end{align}
The subscripts denote polynomial degrees, not modular weights. Set
$P_8(t)=H_4(t^2)$ and $N_{12}(t)=N_6(t^2)$, so $d=D_2(t^2)$.

\begin{proposition}\label{III:prop:qc-five}
For the main pencil,
\begin{equation}\label{III:eq:qc-five}
 q=\frac{N_{12}}{98d^4},\qquad
 c=-\frac{28800P_8^2}{16807d^7},\qquad
 \mathcal R=\frac{c^2}{q^5}
       =\frac{26542080000d^6P_8^4}{N_{12}^5}.
\end{equation}
The polynomials $P_8$ and $N_{12}$ are squarefree and mutually coprime;
both are coprime to $d$.
\end{proposition}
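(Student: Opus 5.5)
The statement is an exact computation, so the plan is to carry it out symbolically over $\mathbb Q(t)$ and to use structural identities to keep the expressions small.

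\textbf{Computing $q$.} Start from the rational matrices $a_1,a_2$ of Proposition~\ref{III:prop:matrix-main-five}. Form $I=a_2-a_1'-a_1^2$ and its trace-free part $S$. Then
\[
 q=\tr(S^2)=\tr(I^2)-\tfrac12(\tr I)^2 .
\]
Because $I\mathsf K=\mathsf K I^t$, one can instead work with the symmetric matrix $I\mathsf K$ and the explicit $\mathsf K$ of \eqref{III:eq:K-five}. This gives a cross-check. Clearing denominators should produce $N_{12}/(98d^4)$. The denominator bound $d^4$ comes from $a_1,a_2$ having simple poles along $d$: the term $a_1'$ and the products then give at most $d^2$ in $I$, hence $d^4$ in $q$.

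\textbf{Computing $c$.} Rather than expanding $\tr(C^2)$ directly, use Theorem~\ref{III:thm:binary-comparison-four}, which gives $c=-\mathfrak j^2/(2d_{\mathsf K}^3)$ with $d_{\mathsf K}=112/d$. It then suffices to compute the joint determinant $\mathfrak j$ of
\[
 \mathsf K,\qquad S\mathsf K,\qquad T\mathsf K,\qquad T=\cov S=S'+[a_1,S].
\]
This is a single $3\times3$ determinant of rational functions, and it should come out as a rational multiple of $P_8/d^{5}$.

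\textbf{Consistency checks.} The exponent of $d$ is consistent with the stated $c$. We have $\mathfrak j^2/d_{\mathsf K}^3\sim d^{-10}\cdot d^{3}=d^{-7}$. With $\mathfrak j=\lambda P_8/d^5$, the relation $c=-\lambda^2 d^3P_8^2/(2\cdot112^3d^{10})$ fixes $\lambda^2$ from the constant $28800/16807$. As a second check, evaluate at $t=0$. The displayed $c(0)$ must equal $-28800\,H_4(0)^2/(16807\cdot144^7)$, and this fixes the normalization independently. The parity in $t$ follows from the symmetry $t\mapsto -t$, $x\mapsto -x$, $y\mapsto iy$ of the pencil, which conjugates the frame by a diagonal sign matrix. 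This justifies writing everything in $u=t^2$.

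\textbf{The ratio $\mathcal R$.} The formula for $\mathcal R$ is then pure algebra:
\[
 \frac{c^2}{q^5}=\frac{28800^2\,98^5}{16807^2}\cdot\frac{P_8^4\,d^{20}}{d^{14}\,N_{12}^5},
\]
and the constant simplifies to $26542080000$.

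\textbf{Coprimality and squarefreeness.} Compute the discriminants $\operatorname{disc}(H_4)$ and $\operatorname{disc}(N_6)$ as nonzero integers. Compute the resultants $\operatorname{Res}(H_4,N_6)$, $\operatorname{Res}(H_4,D_2)$ and $\operatorname{Res}(N_6,D_2)$ and check that they are nonzero. Passing from $u$ to $t=\sqrt u$ also requires $H_4(0)\ne0$ and $N_6(0)\ne0$, which is visible from the constant terms. Then $P_8(t)=H_4(t^2)$ is squarefree, since a repeated root would need either a repeated root of $H_4$ or a root at $u=0$. The same argument applies to $N_{12}$.

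The main obstacle is the size of the polynomial arithmetic in $\mathfrak j$ and in the discriminant and resultant certificates. Routing $c$ through the Gram/joint-determinant identity, rather than expanding $\tr([S,T]^2)$, keeps the degrees manageable, and the $t=0$ values serve as independent checks. The resultant and discriminant computations are exact integer evaluations and would be supplied in the ancillary package.
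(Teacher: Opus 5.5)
Your proposal is correct. For $q$ and $\mathcal R$ it matches the paper's argument: substitute \eqref{III:eq:a1-five}--\eqref{III:eq:a2-five}, take the trace, and simplify the constant. That constant is $32\cdot28800^2=26542080000$, because $98^5=2^5\cdot7^{10}$ and $16807^2=7^{10}$.

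You differ from the paper in how you obtain $c$ and in the final certificates.

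\emph{The cubic invariant $c$.} The paper expands $c=\tr(C^2)$ directly and computes the joint determinant only afterwards, in \eqref{III:eq:J-explicit-five}. You compute $\mathfrak j$ first and then apply the identity $c=-\mathfrak j^2/(2d_{\mathsf K}^3)$ of Theorem~\ref{III:thm:binary-comparison-four}. That identity is frame-covariant, so it applies in the algebraic frame $\eta$. Your route explains why $c\,d^7$ is a constant times a perfect square, and it combines two computations that the paper does separately. With the paper's value $\lambda=-15360/7$ and $d_{\mathsf K}=112/d$, one gets $\lambda^2/(2\cdot112^3)=2^7\cdot225/7^5=28800/16807$. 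Your $t=0$ test also reproduces the displayed value of $c(0)$. The paper's direct trace expansion does not use the binary-comparison theorem, so together with \eqref{III:eq:J-explicit-five} it gives an independent check of that theorem on this pencil.

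\emph{Squarefreeness and coprimality.} You pass to $u=t^2$ and use discriminants and resultants of $H_4$, $N_6$ and $D_2$. This requires the extra fact $H_4(0)N_6(0)\ne0$, which you correctly supply. The paper instead runs the Euclidean algorithm on $P_8$, $N_{12}$ and $d$ directly in $t$. Both are finite exact certificates of the same statement.
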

\begin{proof}
Substitute~\eqref{III:eq:a1-five}--\eqref{III:eq:a2-five} into the definitions
of $I,S,\cov S$. Taking the two traces gives the first two identities
after cancellation. Their quotient gives the third, since
$28800^2\cdot98^5/16807^2=26542080000$.
For squarefreeness and coprimality, the Euclidean algorithm over
$\mathbb Q$ gives
\[
 \gcd(P_8,P_8')=\gcd(N_{12},N_{12}')
 =\gcd(P_8,N_{12})=\gcd(P_8,d)=\gcd(N_{12},d)=1.
\]
The exact reductions are reproduced by the accompanying verification
scripts.
\end{proof}

The function $\mathcal R$ is invariant under the frame and parameter
changes of Section~\ref{III:sec:filtered-connections}; it is nonconstant
already by~\eqref{III:eq:qc-five}, since it has poles at the roots of
$N_{12}$ and zeros at the roots of $P_8$. Section~\ref{III:sec:explicit-scalarizations}
identifies these two loci as failures of different geometrically selected
cyclic frames. Section~\ref{III:sec:igusa-applications} then expresses them
by scalar Siegel modular forms on the period image.

\subsection{Binary covariants and normalized Schwarzian variation}
The binary comparison of Theorem~\ref{III:thm:binary-comparison-four}
can be made explicit on this pencil. Set $T=\cov S$ and form the three
symmetric matrices ${\mathsf K},S{\mathsf K},T{\mathsf K}$. The determinant $\mathfrak j$ of their coefficient
rows $(\Phi_{11},2\Phi_{12},\Phi_{22})$ satisfies
\begin{equation}\label{III:eq:J-explicit-five}
 \mathfrak j=-\frac{15360P_8}{7d^5}.
\end{equation}
Substitute ${\mathsf K},S{\mathsf K},(\cov S){\mathsf K}$ into the coefficient determinant
\eqref{III:eq:joint-J-four}; cancellation gives
\eqref{III:eq:J-explicit-five}. The unsquared identity
\eqref{III:eq:W-J-four} will then give the primitive cyclic determinant,
including its sign.

Recall from \eqref{III:eq:R-three} that $T^\perp$ is the component of
$T$ orthogonal to $S$ for the trace pairing. On $q\ne0$,
\eqref{III:eq:trace-relations-three} gives
$\tr((T^\perp)^2)=-c/(2q)$ and $[S,T^\perp]=C$.
Replacing $T{\mathsf K}$ by $T^\perp {\mathsf K}$ does not change $\mathfrak j$. In particular
the invariant can be expressed entirely in binary quadratic data:
\begin{equation*}
 \mathcal R=\frac{\mathfrak j^4}{4(\det {\mathsf K})^6q^5}
          =-\frac{\mathfrak j^4}{128\det {\mathsf K}\,\det(S{\mathsf K})^5}.
\end{equation*}
The second identity uses $\det S=-q/2$; the first uses
$c=-\mathfrak j^2/(2(\det {\mathsf K})^3)$ from Section~\ref{III:sec:scalarizations}.

There is a local geometric interpretation of this quotient. Choose
a fourth root of $q$ on a simply connected open set where $q\ne0$
and define a parameter $\xi$ by $d\xi=q^{1/4}dt$. The parameter law
\eqref{III:eq:parameter-I-three} gives
\begin{equation*}
 \overline S=q^{-1/2}S,\qquad
 \tr(\overline S^2)=1,\qquad
 \cov_\xi\overline S=q^{-3/4}T^\perp,
\end{equation*}
and hence
\begin{equation*}
 \mathcal R=4\left[\tr\bigl((\cov_\xi\overline S)^2\bigr)\right]^2.
\end{equation*}
Thus $\mathcal R$ is four times the square of the trace norm of
$\cov_\xi\overline S$. The fourth-root choice changes the local
normalization, but not this final square. At a root of $P_8$, $q$ is a unit and $T^\perp=0$:
in a frame with ${\mathsf K}=\Id$ the trace-free self-adjoint matrices form a
two-dimensional quadratic space, and the complement of a nonisotropic
$S$ is a nondegenerate line. Thus $\tr((T^\perp)^2)=0$ forces
$T^\perp=0$ there. The primitive apparent divisor consequently detects
stationarity of the normalized trace-free Schwarzian.

\subsection{Nilpotent Schwarzian points}
The semisimple refinement in
Theorem~\ref{III:thm:finite-jet-endomorphisms-four} also applies at
zeros of $q$ where $S$ is nonzero. This is useful because scalar
ratios involving $q$ cease to be regular there, whereas the matrix
criterion remains defined.

In the main pencil, the twelve zeros of $q$ satisfy $C\ne0$ by
\eqref{III:eq:qc-five}. At each, $S^2=0$ and $S\ne0$. Both the commutator criterion and the
nilpotent criterion therefore give the generic endomorphism conclusion.
The matrix equation is regular at these points, even though the scalar
ratio $\mathcal R$ has poles.

\section{Distinguished scalar equations and their apparent divisors}
\label{III:sec:explicit-scalarizations}

Throughout this section, use the genus-two convention~\ref{III:conv:genus-two}.

We continue with the transverse pencil~\eqref{III:eq:main-family-five}.
The matrix equation is regular throughout $U$, but its scalar
presentations need not be. We shall distinguish three choices: the
individual differential $\eta_0$, the Hodge determinant
$v=\eta_0\wedge\eta_1$, and the tautological null line of the directional
polarization. The last two are selected by the filtered symplectic
connection. Their apparent divisors are controlled, respectively, by
$c=\tr(C^2)$ and $q=\tr(S^2)$.

\subsection{A scalar equation from one holomorphic differential}
Cramer's rule turns a cyclic section into a monic scalar equation.
For $s=r_0\eta$, define $r_{j+1}=r_j'+r_jA$ and
\[
 R_s=\begin{pmatrix}r_0\\r_1\\r_2\\r_3\end{pmatrix}.
\]
On $\det R_s\ne0$ there is a unique row
$(\ell_0,\ell_1,\ell_2,\ell_3)=-r_4R_s^{-1}$, and the corresponding
periods satisfy
\begin{equation}\label{III:eq:scalar-operator-six}
 L_s z=0,\qquad
 L_s=\partial_t^4+\ell_3\partial_t^3+
          \ell_2\partial_t^2+\ell_1\partial_t+\ell_0.
\end{equation}
Here $\ell_j$ is the coefficient of $\partial_t^j$. In the binomial
convention of Part~\ref{part:I} it equals $\binom4j a_{4-j}$.
The scalar operator depends on the generator: replacing $s$ by
$\varphi s$ gives $L_{\varphi s}=\varphi L_s\varphi^{-1}$.

For $s=\eta_0$, the calculation is small enough to display in full. Put
$p(t)=96t^2+91$. Then
\begin{equation}\label{III:eq:eta0-W-six}
 \det R_{\eta_0}=\frac{32400p}{d^3},
\end{equation}
and
\begin{align}
 \ell_3&=\frac{12t(250000t^4+323031t^2+52660)}{pd},
 \label{III:eq:eta0-coefficients-six}\\
 \ell_2&=\frac{3(4860000t^4+7151921t^2+491372)}{2pd},\notag\\
 \ell_1&=\frac{42t(93000t^2+192659)}{pd},\notag\\
 \ell_0&=\frac{65835(32t^2+133)}{16pd}.\notag
\end{align}
Substitution verifies
$r_4+\ell_3r_3+\ell_2r_2+\ell_1r_1+\ell_0r_0=0$ in the four-dimensional
de Rham module. This supplies an exact equation for all periods of
$\eta_0$, without choosing any individual solution.

\begin{proposition}\label{III:prop:eta0-apparent-six}
The equation~\eqref{III:eq:scalar-operator-six} with
coefficients~\eqref{III:eq:eta0-coefficients-six} has exactly two additional
finite apparent singularities on $U$, at the roots of $p$. Their local
exponents are $0,1,2,4$, and their local monodromy is trivial.
\end{proposition}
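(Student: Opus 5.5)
The plan is to apply Lemma~\ref{III:lem:apparent-exponents-four} to the section $s=\eta_0$ of the rank-four flat bundle $\HH$ on $U$. First, the only possible finite singularities of~\eqref{III:eq:scalar-operator-six} on $U$ must be zeros of $p$. By Cramer's rule each $\ell_j$ is a quotient with denominator $\det R_{\eta_0}$, and the numerators are polynomial in the entries of $A$, which lie in $\mathbb Q[t,d^{-1}]$. Hence on $U$ poles can occur only where $\det R_{\eta_0}=32400p/d^3$ vanishes, i.e.\ at the roots of $p$. The explicit formulas~\eqref{III:eq:eta0-coefficients-six} show that $\ell_3$ has a genuine pole there: its numerator at $t^2=-91/96$ is nonzero. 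Checking this numerator is a one-line rational evaluation. Since $d$ is coprime to $p$ (the two roots $t=\pm i\sqrt{91/96}$ are not roots of $d$), these two points lie in $U$ and are regular points of the Gauss--Manin connection. Thus there are exactly two additional finite singular points.

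Second, I would apply Lemma~\ref{III:lem:apparent-exponents-four} in the fixed algebraic frame $\eta$. This frame has constant symplectic volume, since $\tr A=0$, so $\det R_{\eta_0}$ computes $\operatorname{Wr}_{\eta_0}$ up to a nonzero constant. The generator $\eta_0$ is nonvanishing, because $\eta$ is a frame, and it is generically cyclic, since $\det R_{\eta_0}\not\equiv0$. So the lemma applies directly. It says the scalar equation is regular singular at each zero of $\operatorname{Wr}_{\eta_0}$ with trivial local monodromy. The exponents are the vanishing orders $n_0<\dots<n_3$, with $\sum n_i-6=\operatorname{ord}\operatorname{Wr}$.

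Third, I would determine the order of vanishing. The polynomial $p=96t^2+91$ has two distinct simple roots, and $d$ is a unit there, so $\operatorname{Wr}_{\eta_0}$ has a simple zero at each. The last clause of Lemma~\ref{III:lem:apparent-exponents-four} then gives exponents $0,1,2,4$, that is $0,1,\dots,r-2,r$ with $r=4$. Trivial monodromy makes these points apparent.

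The main obstacle is certifying exactness of~\eqref{III:eq:eta0-W-six}, rather than merely its root locus, since the simple-zero conclusion depends on the exact factor $p^1$. I would verify it by computing $r_1,r_2,r_3$ from the row recurrence~\eqref{III:eq:row-recurrence-five} with the blocks~\eqref{III:eq:main-B-five}--\eqref{III:eq:main-E-five}, then reducing the $4\times4$ determinant over $\mathbb Q[t,d^{-1}]$. If the full determinant is cumbersome, it suffices to show $\det R_{\eta_0}\cdot d^3$ is a polynomial of degree two agreeing with $32400p$; one can pin it down from three evaluations together with the degree bound coming from the pole orders of the $r_j$.
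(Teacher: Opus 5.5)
Your proposal is correct and follows the paper's proof. Both arguments check that the roots of $p$ lie in $U$ (the paper records the remainder $16652141/9216$ of $d$ modulo $p$), use the simple zeros of \eqref{III:eq:eta0-W-six} with Lemma~\ref{III:lem:apparent-exponents-four} to get exponents $0,1,2,4$ and trivial monodromy, and confirm a genuine pole of $\ell_3$, which the paper does via Abel's identity $\ell_3=3d'/d-p'/p$ (residue $-1$) where you evaluate the numerator directly.

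The only soft spot is your optional shortcut for certifying \eqref{III:eq:eta0-W-six}. Pole counting through the row recurrence shows only that $d^6\det R_{\eta_0}$ is a polynomial of degree at most $18$, not that $d^3\det R_{\eta_0}$ has degree two, so three evaluations do not suffice. Either carry out the exact determinant or cite the displayed formula, as the paper does.
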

\begin{proof}
The remainder of $d$ modulo $p$ is $16652141/9216$, so both roots
of $p$ belong to $U$. At each, \eqref{III:eq:eta0-W-six} has a simple
zero and the connection is regular. Lemma~\ref{III:lem:apparent-exponents-four}
gives the exponents and trivial monodromy. The displayed coefficients
have no other poles on $U$. Abel's identity also gives
$\ell_3=3d'/d-p'/p$, whose residue is $-1$ at either apparent point.
\end{proof}

Thus the two apparent points disappear when the full Hodge frame is
retained.

\subsection{The primitive fifth-order equation}
Let $\mathcal P=\Lambda^2_0\HH$ and $v=\eta_0\wedge\eta_1$ as in
Section~\ref{III:subsec:primitive-four}. Theorem~\ref{III:thm:primitive-commutator-four}
and~\eqref{III:eq:qc-five} give the following exact comparison.

\begin{theorem}\label{III:thm:primitive-example-six}
With the primitive volume fixed in~\eqref{III:eq:primitive-basis-four},
\begin{equation*}
 \mathcal W(v)=\frac{983040P_8(t)}{d(t)^6}.
\end{equation*}
The Hodge determinant is cyclic in $\mathcal P$ precisely on
$U\setminus\{P_8=0\}$. Its fifth-order scalar equation has eight
additional finite apparent singularities, with local exponents
$0,1,2,3,5$ and trivial local monodromy.
\end{theorem}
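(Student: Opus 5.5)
The plan is to derive the formula for $\mathcal W(v)$ from the binary comparison rather than by building the $5\times5$ derivative determinant directly. Theorem~\ref{III:thm:binary-comparison-four} gives the unsquared identity $\mathcal W(\omega_1\wedge\omega_2)=-4\det{\mathsf K}\,\mathfrak j$ in any Hodge frame. Here the Hodge frame is $\omega=(\eta_0,\eta_1)^t$, so $v=\eta_0\wedge\eta_1$. By \eqref{III:eq:K-five}, $\det{\mathsf K}=112/d$, and by \eqref{III:eq:J-explicit-five}, $\mathfrak j=-15360P_8/(7d^5)$. Multiplying,
\[
 \mathcal W(v)=-4\cdot\frac{112}{d}\cdot\Bigl(-\frac{15360P_8}{7d^5}\Bigr)=\frac{983040\,P_8}{d^6},
\]
because $4\cdot16\cdot15360=983040$. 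As a sign-free cross-check, square the result and compare it with Theorem~\ref{III:thm:primitive-commutator-four} together with $c=-28800P_8^2/(16807d^7)$ from \eqref{III:eq:qc-five}:
\[
 -32(112/d)^5c=32\cdot 112^5\cdot 28800\,P_8^2/(16807\,d^{12}),
\]
which should equal $983040^2P_8^2/d^{12}$. The sign itself comes from the unsquared identity \eqref{III:eq:W-J-four}.

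Next I would treat cyclicity. The frame $\eta$ of $\HH$ is algebraic with constant symplectic volume, so the induced frame of $\mathcal P$ has constant primitive volume. The determinant $\mathcal W(v)$ computed above is therefore the cyclic determinant in a flat determinant frame. The generator $v$ is nonvanishing on $U$, because $\eta_0,\eta_1$ span $\EE$ everywhere and the Hodge frame is transverse on all of $U$ by Proposition~\ref{III:prop:transversality-five}. The intrinsic cyclicity divisor is thus the zero divisor of $P_8/d^6$ restricted to $U$. Since $\gcd(P_8,d)=1$ by Proposition~\ref{III:prop:qc-five}, this is exactly $\{P_8=0\}$. The alternative route is Corollary~\ref{III:cor:binary-matrix-basis}: $\mathcal W\ne0\iff C\ne0$.

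Finally, $P_8$ has degree eight and is squarefree, again by Proposition~\ref{III:prop:qc-five}, so it has eight distinct simple roots, all lying in $U$. At each root the cyclic determinant has a simple zero and the flat connection is regular. Lemma~\ref{III:lem:apparent-exponents-four}, or equivalently Corollary~\ref{III:cor:primitive-apparent-four} with $r=5$, then gives apparent singularities with exponents $0,1,2,3,5$ and trivial local monodromy. The scalar coefficients are obtained by Cramer's rule with denominator $\mathcal W$. Apart from the roots of $P_8$, their only possible poles come from the poles of the connection at $d=0$, which lie outside $U$. Hence there are no further finite singularities on $U$.

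The main obstacle is the exact input \eqref{III:eq:J-explicit-five}, which requires symbolic differentiation of $S{\mathsf K}$ and a cancellation down to $P_8/d^5$. I take it as given, and I would confirm its sign by checking the value at $t=0$ using $S(0)$, $\cov I(0)$ and ${\mathsf K}(0)$ from the displayed data.
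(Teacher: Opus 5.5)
Your proposal is correct and follows essentially the paper's own proof. The unsquared identity \eqref{III:eq:W-J-four}, with $\det{\mathsf K}=112/d$ and $\mathfrak j=-15360P_8/(7d^5)$, gives the formula and its sign; squarefreeness of $P_8$, $\gcd(P_8,d)=1$ (Proposition~\ref{III:prop:qc-five}) and Corollary~\ref{III:cor:primitive-apparent-four} then give the eight simple cyclicity zeros with exponents $0,1,2,3,5$ and trivial monodromy, while your squared cross-check against Theorem~\ref{III:thm:primitive-commutator-four} (both sides equal $2^{32}\cdot225\,P_8^2/d^{12}$) is a consistent but unnecessary extra.
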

\begin{proof}
Use the unsquared identity \eqref{III:eq:W-J-four}. Equations
\eqref{III:eq:K-five} and \eqref{III:eq:J-explicit-five} give
\[
 \mathcal W(v)=-4\det({\mathsf K})\mathfrak j
 =-4\frac{112}{d}\left(-\frac{15360P_8}{7d^5}\right)
 =\frac{983040P_8}{d^6}.
\]
This fixes the sign in the prescribed primitive volume.
By Proposition~\ref{III:prop:qc-five}, $P_8$ is squarefree and coprime
to $d$. Its eight roots are therefore regular parameters with simple
cyclicity zeros. Corollary~\ref{III:cor:primitive-apparent-four} gives
the asserted exponents and monodromy.
\end{proof}

There is also a compact specification of the entire scalar operator.
Use the exterior basis
\[
 (01,02,03,12,13,23)
 =(\eta_0\wedge\eta_1,\eta_0\wedge\eta_2,
   \eta_0\wedge\eta_3,\eta_1\wedge\eta_2,
   \eta_1\wedge\eta_3,\eta_2\wedge\eta_3).
\]
Let $A^{[2]}$ be defined by
$\nD(\eta_i\wedge\eta_j)
 =\sum_k A_{ik}\eta_k\wedge\eta_j+
             \sum_k A_{jk}\eta_i\wedge\eta_k$.
Starting with $v_0=(1,0,0,0,0,0)$, set
\begin{equation}\label{III:eq:primitive-recursion-six}
 v_{j+1}=v_j'+v_jA^{[2]}\qquad(0\le j\le4).
\end{equation}
Primitivity is the single constant relation
$(v_j)_{03}+3(v_j)_{12}+(v_j)_{23}=0$.
Consequently deletion of coordinate $03$ identifies $\mathcal P$ with
a fixed five-dimensional coefficient space. Write $\bar v_j$ for
these five coordinates and let $V$ have rows $\bar v_0,\ldots,\bar v_4$.
If $V[j\leftarrow\bar v_5]$ denotes replacement of its $j$th row,
the operator is
\begin{equation}\label{III:eq:primitive-operator-six}
 L_v=\partial_t^5+\sum_{j=0}^4 b_j\partial_t^j,
 \qquad
 b_j=-\frac{\det V[j\leftarrow\bar v_5]}{\det V}.
\end{equation}
Equations~\eqref{III:eq:main-B-five}--\eqref{III:eq:main-E-five} and
\eqref{III:eq:primitive-recursion-six}--\eqref{III:eq:primitive-operator-six}
are a complete rational definition of $L_v$. The deleted coordinate
introduces no additional locus: its projection is an isomorphism on
the entire constant primitive subspace. In particular
\begin{equation}\label{III:eq:primitive-Abel-six}
 b_4=6d'/d-P_8'/P_8.
\end{equation}
The coefficients $b_j$ have at most simple poles at the eight apparent
points, by Cramer's rule in a regular primitive frame.

Thus the zeros of $C$ are precisely the failures of cyclicity of the
primitive Hodge determinant.

\subsection{Writing the primitive operator in scalar covariants}
\label{III:subsec:primitive-coefficients-six}
For computations one can go further than Cramer's rule. The entire
primitive fifth-order equation has a compact formula in scalar traces
of the matrix Schwarzian and its covariant derivatives. Its denominators involve $c$, so it is suited to studying the
primitive equation near the zeros of $q$.

On $c\ne0$ define, in the fixed parameter $t$,
\begin{equation}\label{III:eq:rho-delta-six}
 \tau=\tr I,\qquad
 \rho=\frac{c'}{2c},\qquad
 \delta_*=\frac{\tr\bigl([\cov S,\cov^2 S]C\bigr)}{c}.
\end{equation}
All three functions are invariant under Hodge-frame changes.

\begin{proposition}\label{III:prop:primitive-coefficients-six}
In a normal Hodge frame, with $a_1=0$ and ${\mathsf K}=\Id$, the operator of
$v=\omega_1\wedge\omega_2$ is
\begin{equation}\label{III:eq:primitive-trace-operator-six}
 L_v=\partial_t^5+B_4\partial_t^4+B_3\partial_t^3
                       +B_2\partial_t^2+B_1\partial_t+B_0,
\end{equation}
where
\begin{align}
 B_4&=-\rho,& B_3&=2\tau+\delta_*,
 \label{III:eq:primitive-trace-coefficients-six}\\
 B_2&=5\tau'-2\rho\tau,\notag\\
 B_1&=4\tau''-3\rho\tau'+2\delta_*\tau+2q,\notag\\
 B_0&=\tau'''-\rho\tau''+\delta_*\tau'+3q'-2\rho q.\notag
\end{align}
In an arbitrary Hodge frame, the operator of its determinant section
is obtained by replacing every $\partial_t$ in
\eqref{III:eq:primitive-trace-operator-six} by $\partial_t+\gamma$,
where $\gamma=\tr a_1$, with all $B_j$ on the left.
\end{proposition}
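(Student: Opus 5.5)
In the normal Hodge frame of Proposition~\ref{III:prop:normal-form-three}, with $a_1=0$ and ${\mathsf K}=\Id$, we have $e=\omega$, $f=\omega'$, $e'=f$ and $f'=-Ie$. The proof of Theorem~\ref{III:thm:primitive-commutator-four} already gives $v,v',v'',v'''$ in the primitive basis \eqref{III:eq:primitive-basis-four}. The plan is to continue that exterior differentiation to obtain $v^{(4)}$ and $v^{(5)}$, keeping $I=\left(\begin{smallmatrix}p&u\\u&r\end{smallmatrix}\right)$ symbolic. The operator is then read off by Cramer's rule, exactly as in \eqref{III:eq:primitive-operator-six}: $B_j=-\det V[j\leftarrow v^{(5)}]/\det V$, with $\det V=\mathcal W(v)=-8\kappa$ and $\kappa=(p-r)u'-u(p'-r')$. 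The main task is to rewrite the resulting rational expressions in $p,u,r$ and their derivatives in terms of $\tau,q,\rho,\delta_*$.

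I would bypass full Cramer determinants via a decomposition argument. First write each $v^{(j)}$ in the adapted basis
\[
 v,\qquad w_1=e_1\wedge f_2-e_2\wedge f_1,\qquad f_1\wedge f_2,\qquad \text{the two-dimensional trace-free part } X_S(\cdot)
\]
where $X_S(M)=\sum M_{ij}\,e_i\wedge f_j$ with $M$ symmetric trace-free. In this basis $v'=w_1$, while $v''$ and $v'''$ pick up $\tau$-terms together with the component $X_S(S)$. This splits $v^{(j)}$ into a scalar part governed by $\tau$ and a "$\Sym^2_0$ part" governed by $S$, $S'$, $S''$, \ldots. The scalar part should account for the $\tau$-terms in \eqref{III:eq:primitive-trace-coefficients-six}; these are the symmetric-square-type terms $2\tau$, $5\tau'$, $4\tau''$, $\tau'''$ that arise when an $\mathfrak{sl}_2$-type operator $\partial^2+\tau/2$ is lifted to an exterior square. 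The trace-free part lies in the plane spanned by $S$ and $S'$ whenever $C\ne0$. Expanding $S''$ in the basis $\Id,S,S',C$ of Corollary~\ref{III:cor:binary-matrix-basis}, and using the self-adjoint symmetric condition, shows that $S''$ has no $\Id$ or $C$ component along the relevant directions. Hence $S''=\alpha S+\beta S'$. Taking traces against $S$ and $S'$ determines $\alpha$ and $\beta$ through $q$, $q'$ and $\tr(S'^2)$. Equivalently, computing $[S',S'']$ against $C=[S,S']$ gives $\delta_*$ and the relation $c'/2c=\rho$. This auxiliary second-order equation for $S$ is what produces $\rho$ and $\delta_*$, in keeping with the notation table's description of $\rho,\delta_*$ as coefficients of the auxiliary equation.

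Next, substitute this auxiliary relation into $v^{(4)}$ and $v^{(5)}$ and match coefficients of $v^{(0)},\dots,v^{(4)}$. The identities $S^2=\tfrac q2\Id$ and $ST+TS=\tfrac{q'}{2}\Id$ from \eqref{III:eq:trace-relations-three} convert the scalar contributions of the trace-free part into the $2q$ and $3q'-2\rho q$ terms. For the general frame, Abel's identity gives $B_4=-\rho$ as a check. For an arbitrary Hodge frame $\omega\mapsto M\omega$ with $M'=Ma_1$, we have $v\mapsto(\det M)v$ and $(\det M)'/\det M=\tr a_1=\gamma$; conjugating by this scalar gives $\partial_t\mapsto\partial_t+\gamma$. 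Frame invariance of $\tau,\rho,\delta_*,q$ (under \eqref{III:eq:frame-laws-three}) completes the argument.

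The main obstacle is the bookkeeping in the fifth derivative: verifying that all remaining terms collapse precisely to the listed trace expressions, particularly the $\delta_*\tau'$ and $-2\rho q$ terms in $B_0$. I would first check these at a point where $I$ is diagonal with $u=0$, reducing to a few scalar polynomial identities, and then verify the general case by rational simplification.
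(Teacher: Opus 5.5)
Your plan is the paper's proof: work in a normal frame, use $C\ne0$ to force the auxiliary relation $S''=\rho S'-\delta_*S$ (with $\rho=c'/2c$ and $\delta_*=\tr([S',S'']C)/c$), substitute it into $v^{(5)}$ and match coefficients in the primitive basis, then pass to an arbitrary Hodge frame by conjugating with $\lambda=\det M$, where $\lambda'/\lambda=\gamma$. One small correction: the Abel check $B_4=-\rho$ holds in the normal frame, where $\mathcal W(v)=-8\kappa$ and $c=-2\kappa^2$, whereas after the conjugation the $\partial_t^4$ coefficient is $5\gamma-\rho$.
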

\begin{proof}
\noindent\textit{1. A scalar equation for the trace-free entries.} In a normal frame the matrices $S,S'$ are trace-free symmetric
matrices. Since $C=[S,S']\ne0$, they are independent and span that
two-dimensional space. Write $S''=\rho_0S'-\delta_0S$. Then
$C'=[S,S'']=\rho_0C$ and $[S',S'']=\delta_0C$. Taking the relevant
traces gives $\rho_0=c'/(2c)$ and
$\delta_0=\tr([S',S'']C)/c$. Thus
\begin{equation}\label{III:eq:traceless-ODE-six}
 S''=\rho S'-\delta_*S.
\end{equation}
\noindent\textit{2. The primitive exterior equation.} Write
\[
 I=\frac\tau2\Id+
       \begin{pmatrix}x&y\\y&-x\end{pmatrix},\qquad
 q=2(x^2+y^2).
\]
Equation~\eqref{III:eq:traceless-ODE-six} says that both $x$ and $y$
satisfy $u''=\rho u'-\delta_*u$. Differentiate
$e'=f$, $f'=-Ie$ in the primitive basis
\eqref{III:eq:primitive-basis-four}. Substitution of these two relations
in the fifth derivative of $e_1\wedge e_2$ gives
\[
 v^{(5)}-\rho v^{(4)}+(2\tau+\delta_*)v'''
        +(5\tau'-2\rho\tau)v''+B_1v'+B_0v=0.
\]
\noindent\textit{3. The scalar coefficients.} The coefficient in each of the five primitive basis directions vanishes;
using $q=2(x^2+y^2)$ and $q'=4(xx'+yy')$ gives exactly
\eqref{III:eq:primitive-trace-coefficients-six}. This direct exterior
calculation is also verified as a polynomial identity in the jets of
the three entries of $I$ in the accompanying code. Cyclicity on
$c\ne0$ gives uniqueness of the resulting operator.

\noindent\textit{4. Return to the original Hodge frame.} For the frame assertion, choose $M$ with $M'=Ma_1$ so that $M\omega$
is normal. If $\lambda=\det M$, then $\lambda'/\lambda=\gamma$
and the determinant section changes by $v\mapsto\lambda v$.
The operator for the original section is therefore
$\lambda^{-1}L_{\lambda v}\lambda$, which replaces
$\partial_t$ by $\partial_t+\gamma$. The scalar functions in
\eqref{III:eq:rho-delta-six}, together with $q$, are unchanged by this
frame transformation, completing the proof.
\end{proof}

For the main pencil, the two logarithmic terms are especially simple:
\begin{equation*}
 \gamma=\frac{d'}{2d},\qquad
 \rho=\frac{P_8'}{P_8}-\frac72\frac{d'}d.
\end{equation*}
The coefficient of $\partial_t^4$ after the scalar conjugation is
$5\gamma-\rho=6d'/d-P_8'/P_8$, recovering
\eqref{III:eq:primitive-Abel-six}. The remaining coefficients are given
by~\eqref{III:eq:primitive-trace-coefficients-six} and the rational matrices
\eqref{III:eq:a1-five}--\eqref{III:eq:a2-five}; no integration of the normalizing
frame is necessary. Their displayed invariant denominators involve
$c$, not $q$, so the formula is valid across the twelve zeros of $q$
in this example. The formula therefore reconstructs the primitive scalar operator from
these specified covariants of the matrix equation.

\subsection{An explicit distinction between exterior orders five and six}
The primitive module is intrinsic, but the exterior-square scalar
equation of an arbitrary fourth-order presentation need not have order
five. For $s=\eta_0$,
\begin{equation*}
 Q(s,\nD s)=-\frac{4(125t^2+84)}d\not\equiv0.
\end{equation*}
Thus $s\wedge\nD s$ has a nonzero projection to the trivial quotient
of $\Lambda^2\HH$. In the exterior basis above, its first six derivative
rows have determinant, at $t=1$,
\begin{equation}\label{III:eq:order-six-certificate}
 \frac{20912756039932870656000000}
 {36600668831158425482549281168458881161}\ne0.
\end{equation}
Its scalar exterior-square equation therefore has order six.

In contrast, $v=\eta_0\wedge\eta_1$ belongs to $\mathcal P$ and has
order five by Theorem~\ref{III:thm:primitive-example-six}. This concretely
illustrates the distinction in the Yang--Zudilin comparison of
Section~\ref{III:subsec:YZ-dictionary-four}: a symplectic rank-four connection,
a primitive rank-five connection, and a scalar exterior-square operator
are related objects, but the choice of cyclic section matters.

\subsection{The algebraic null-line cover}\label{III:subsec:null-cover-six}
Theorem~\ref{III:thm:null-lines-four} selects an unordered pair of scalar
presentations from ${\mathsf K}$. In the algebraic frame, a line generated by
$\eta_0+r\eta_1$ is null exactly when
\begin{equation*}
 (175t^2+12)r^2-176tr+(125t^2+84)=0.
\end{equation*}
Put $\alpha=175t^2+12$, $\beta=125t^2+84$. The two roots are
\begin{equation}\label{III:eq:null-cover-six}
 r_\pm=\frac{88t\pm w}{\alpha},\qquad
 \widetilde U:\ w^2=-7d(t).
\end{equation}
This double cover is connected and \'etale over $U$; its smooth
compactification has genus one because $d$ is a squarefree quartic.
Its points parametrize the null lines of ${\mathsf K}$.
The coordinate $t$ is a valid local parameter everywhere on
$\widetilde U$, and differentiation in its function field is determined by
\begin{equation}\label{III:eq:quadratic-differentiation-six}
 w'=\frac{d'}{2d}w.
\end{equation}
The deck involution $\sigma(w)=-w$ exchanges the two scalarizations.

For computations it is convenient to clear the denominator in
\eqref{III:eq:null-cover-six} and use
\begin{equation}\label{III:eq:null-generators-six}
 s_+=\alpha\eta_0+(88t+w)\eta_1,\qquad s_-=\sigma(s_+).
\end{equation}
These polynomial sections generate the null lines on $\alpha\ne0$.
At a zero of $\alpha$ one of them vanishes; the resulting displayed
Wronskian includes a generator zero. The second chart below removes
this extra factor before the intrinsic divisor is stated.

To compute the cyclic determinant, first compute the pairing of a
generator with its third derivative. In the basis $1,w$ of the
quadratic field, the required numerator uses the polynomials
\begin{align*}
 F_8(t)&=1435546875t^8+5975287500t^6+2313346928t^4
 \\&\hspace{18mm}+175760448t^2-12234240,\\
 G_4(t)&=459375t^4-283624t^2-79248,\\
 H_\pm(t,w)&=F_8(t)\mp44tG_4(t)w.
\end{align*}
The subscripts on $F_8,G_4$ denote degrees. The symbols $H_\pm$ refer
to functions on the double cover and are distinct from $H_4(u)$ in
\eqref{III:eq:polynomials-five}.

\begin{proposition}\label{III:prop:null-determinants-six}
In the fixed de Rham basis and polarization,
\begin{align*}
 h_\pm:=Q(s_\pm,\nD^3s_\pm)&=\frac{4H_\pm}{d^2},
 \\
 \operatorname{Wr}_{s_\pm}:=\det(s_\pm,\nD s_\pm,\nD^2s_\pm,\nD^3s_\pm)
   &=-\frac{3H_\pm^2}{d^4}.
\end{align*}
Their norm identities are
\begin{equation}\label{III:eq:null-norm-six}
 H_+H_-=\alpha^2N_{12},\qquad
 \operatorname{Wr}_{s_+}\operatorname{Wr}_{s_-}=\frac{9\alpha^4N_{12}^2}{d^8}.
\end{equation}
\end{proposition}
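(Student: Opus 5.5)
The plan is to compute both quantities directly from the row recurrence \eqref{III:eq:row-recurrence-five}. The only new ingredient is that all arithmetic now happens in the quadratic function field $\C(t)[w]/(w^2+7d)$, with derivation \eqref{III:eq:quadratic-differentiation-six}.

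\textbf{Step 1: derivative rows.} Write $s_+=r_0\eta$ with $r_0=(\alpha,88t+w,0,0)$. Form $r_{j+1}=r_j'+r_jA$ for $j\le2$, using the blocks \eqref{III:eq:main-B-five}--\eqref{III:eq:main-E-five} and $w'=d'w/(2d)$. Each $r_j$ then has the form $r_j=u_j+v_jw$ with $u_j,v_j$ rows over $\mathbb Q[t,d^{-1}]$, and every $w^2$ is reduced to $-7d$.

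\textbf{Step 2: the pairing.} Compute $h_+=r_0Q_1r_3^t$ with $Q_1$ from \eqref{III:eq:polarization-five} at $a=1$. Before the long calculation there are two consistency checks. Nullity of $s_+$ for ${\mathsf K}$ means exactly $Q(s_+,\nD s_+)=0$. Differentiating this gives $Q(s_+,\nD^2s_+)=0$ and $Q(s_+,\nD^3 s_+)=-Q(\nD s_+,\nD^2 s_+)$. I would compute $h_+$ both ways as a check. After clearing the denominator $d^2$ and reducing $w^2$, the identity $h_+=4H_+/d^2$ becomes two polynomial identities, one in the rational part and one in the $w$-coefficient. Applying $\sigma$ then gives the formula for $h_-$.

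\textbf{Step 3: the Wronskian.} I would not expand the $4\times4$ determinant. The Gram-matrix argument of Section~\ref{III:subsec:rank-one-ramification-four} already handles this case. The derivative Gram matrix $(Q(\nD^is,\nD^js))$ vanishes for $i+j\le2$, so it is antitriangular with antidiagonal entries $\pm h$. Its Pfaffian is therefore $\pm h^2$. Hence $\operatorname{Wr}_{s}\cdot\operatorname{Pf}(Q_1)=\pm h^2$, which gives $\operatorname{Wr}_s=\kappa_0h^2$ for a universal constant $\kappa_0$, since $\det\eta$ has constant symplectic volume by $\tr A=0$.

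I would fix $\kappa_0$ and its sign either by evaluating $\operatorname{Pf}(Q_1)$ in the frame $\eta$ or by checking one numerical point. The formulas $-3H^2/d^4$ and $4H/d^2$ give $\kappa_0=-3/16$, which should equal the reciprocal of the Pfaffian, computed with the sign convention used for the Pfaffian of the derivative rows.

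\textbf{Step 4: the norm identities.} Multiply out $H_+H_-=F_8^2-1936t^2G_4^2w^2=F_8^2+13552\,t^2dG_4^2$. Checking that this equals $\alpha^2N_6(t^2)$ is a one-variable identity in $u=t^2$ of degree $8$. The second identity in \eqref{III:eq:null-norm-six} then follows by squaring the Wronskian formula.

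I expect Step 2 to be the main obstacle. It is purely computational, but the bookkeeping in the quadratic field is heavy, so I would delegate it to the verification scripts. The conceptual content is concentrated in the Pfaffian reduction of Step 3.
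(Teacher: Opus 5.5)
Your proposal is correct and follows essentially the same route as the paper. The paper likewise obtains $h_\pm$ from the row recurrence over $\mathbb Q(t)[w]/(w^2+7d)$. It then uses the Gram Pfaffian $Q(s,s''')\,Q(s',s'')=-h^2$ together with $\operatorname{Pf}(Q_1)=16/3$ to get $\operatorname{Wr}_s=-3h^2/16$, and it checks the polynomial identity $H_+H_-=F_8^2+13552\,t^2G_4^2d=\alpha^2N_{12}$. The sign you hedge on is already fixed by these two Pfaffians, so no numerical spot check is needed.
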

\begin{proof}
Apply the row recurrence~\eqref{III:eq:row-recurrence-five} to
$(\alpha,88t\pm w,0,0)$, using~\eqref{III:eq:quadratic-differentiation-six}.
Reduction modulo $w^2+7d$ gives the formula for $h_\pm$.
Nullity gives $Q(s,s')=Q(s,s'')=0$ and $Q(s',s'')=-h$.
The Pfaffian of the Gram matrix of $(s,s',s'',s''')$ is therefore
$-h^2$. Since $\operatorname{Pf}(Q_1)=16/3$, the change-of-basis
identity for Pfaffians gives $\operatorname{Wr}_s=-3h^2/16$. This proves the
second formula. Finally
\[
 H_+H_-=F_8^2+13552t^2G_4^2d=\alpha^2N_{12}
\]
is a polynomial identity; squaring it gives the Wronskian norm.
\end{proof}

\begin{lemma}[Changing the null-line generator]
\label{III:lem:null-chart}
At a root of $\alpha$ on the sheet $w=-88t$, the polynomial
generator $s_+$ in~\eqref{III:eq:null-generators-six} has a simple
zero. The same null line has the nonvanishing generator
$\widetilde s_+=(88t-w)\eta_0+\beta\eta_1$, and
$s_+=\alpha(88t-w)^{-1}\widetilde s_+$.
Consequently this zero adds four to the order of the cyclic
determinant of $s_+$ relative to that of $\widetilde s_+$.
The conjugate statement holds for $s_-$.
\end{lemma}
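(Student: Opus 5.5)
We verify the lemma directly from the defining quadratic of the null cover together with the scaling rule for cyclic determinants in \eqref{III:eq:cyclic-W-four}.

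\emph{Step 1: the sheet and the simple zero.} At a root $t_0$ of $\alpha=175t^2+12$ we have $t_0\ne0$. From $w^2=-7d$ and the identity
\[
 (88t)^2+7d=\alpha\beta,
\]
which is the discriminant relation of the null quadratic $\alpha r^2-176tr+\beta=0$, we get $w^2=(88t)^2-\alpha\beta$. Hence $w^2=(88t_0)^2$ at $t_0$. On the sheet $w=-88t$ both coefficients of $s_+=\alpha\eta_0+(88t+w)\eta_1$ vanish.

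To see that the zero is simple, we use $t$ as the local parameter on $\widetilde U$, which is allowed since $d(t_0)\ne0$. The coefficient $\alpha$ has a simple zero, because $\alpha'=350t_0\ne0$. For the other coefficient, $(88t+w)(88t-w)=\alpha\beta$. At $t_0$ we have $88t-w=176t_0\ne0$, so $\operatorname{ord}(88t+w)=\operatorname{ord}(\alpha\beta)$. This order is one provided $\beta(t_0)\ne0$, and that follows because $\alpha$ and $\beta$ have no common root: $\operatorname{Res}(175t^2+12,\,125t^2+84)\ne0$ by a direct check. Both coefficients of $s_+$ therefore vanish exactly to first order, so $s_+$ has a simple zero as a section of $\HH$.

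\emph{Step 2: the alternative generator.} Multiplying $s_+$ by $88t-w$ and using $(88t+w)(88t-w)=\alpha\beta$ gives
\[
 (88t-w)s_+=\alpha\bigl((88t-w)\eta_0+\beta\eta_1\bigr)=\alpha\widetilde s_+.
\]
Thus $\widetilde s_+$ generates the same line. It is nonvanishing at $t_0$, since its $\eta_0$-coefficient there is $176t_0\ne0$.

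\emph{Step 3: the cyclic determinant.} Write $s_+=\varphi\widetilde s_+$ with $\varphi=\alpha(88t-w)^{-1}$, a function with a simple zero at the point. By the scaling rule, $\operatorname{Wr}_{\varphi s}=\varphi^4\operatorname{Wr}_s$, because the derivative columns are triangular in $\varphi$ with diagonal entries $\varphi$. So the order of $\operatorname{Wr}_{s_+}$ exceeds that of $\operatorname{Wr}_{\widetilde s_+}$ by exactly $4$.

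\emph{Step 4: the conjugate sheet.} Applying the deck involution $\sigma$ gives the same statement for $s_-$ on the sheet $w=88t$.

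The only step that needs real checking is the simplicity of the zero. It comes down to the discriminant identity and the nonvanishing of the resultant of $\alpha$ and $\beta$; both are finite polynomial checks over $\mathbb Q$.
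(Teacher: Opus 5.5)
Your proposal is correct and follows the paper's proof: the identity $\alpha\beta=(88t+w)(88t-w)$ produces $\widetilde s_+$ with $(88t-w)s_+=\alpha\widetilde s_+$, the scaling law $\operatorname{Wr}_{\varphi s}=\varphi^4\operatorname{Wr}_s$ gives the order four, and the deck involution handles $s_-$. Your explicit verification of simplicity, via $(88t)^2+7d=\alpha\beta$, $\alpha'(t_0)\ne0$ and $\beta(t_0)\ne0$, fills in a step the paper only asserts.
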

\begin{proof}
The identity
\begin{equation*}
 \alpha\beta=(88t+w)(88t-w)
\end{equation*}
gives the complementary generator
\[
 \widetilde s_+=(88t-w)\eta_0+\beta\eta_1.
\]
At a root of $\alpha$, the polynomial section $s_+$ vanishes simply
on the sheet $w=-88t$, whereas $\widetilde s_+$ is nonzero.
Locally $s_+=\alpha(88t-w)^{-1}\widetilde s_+$, so this generator
zero contributes order four to its cyclic determinant. 
The scaling law~\eqref{III:eq:cyclic-W-four} gives the order four;
applying the deck involution gives the other chart.
\end{proof}

\begin{theorem}\label{III:thm:null-apparents-six}
The intrinsic null-line scalarization on $\widetilde U$ has twelve
finite apparent points, one over each root of $N_{12}(t)$. At each,
the cyclic determinant has order two, the exponents are $0,1,3,4$,
and local monodromy is trivial. The conjugate scalarization and the
primitive fifth-order equation are ordinary at that point.
\end{theorem}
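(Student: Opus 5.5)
The plan is to combine the explicit determinant formulas of Proposition~\ref{III:prop:null-determinants-six} with the generator change of Lemma~\ref{III:lem:null-chart}, and then apply Lemma~\ref{III:lem:apparent-exponents-four} pointwise on $\widetilde U$. The steps are as follows.

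First, fix a point of $\widetilde U$ over a root $t_0$ of $N_{12}$. By Proposition~\ref{III:prop:qc-five}, $N_{12}$ is coprime to $d$, so $t_0\in U$ and the connection is regular there. Next I need $\alpha(t_0)\ne0$. This should follow from a resultant or remainder computation showing $\gcd(\alpha,N_{12})=1$: compute $N_6$ at $u=-12/175$ and check that it is nonzero. Once this holds, $s_\pm$ are nonvanishing generators near $t_0$, so their Wronskians compute the intrinsic cyclicity divisor.

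The norm identity \eqref{III:eq:null-norm-six} gives $H_+H_-=\alpha^2N_{12}$, and $N_{12}$ is squarefree. Since $H_\pm$ are regular on $\widetilde U$ and exchanged by $\sigma$, exactly one of $H_+(t_0,w_0)$, $H_-(t_0,w_0)$ vanishes at the chosen sheet point, and it vanishes simply. To see simplicity, note that $t$ is a local parameter on $\widetilde U$ because $d(t_0)\ne0$. The function $H_+H_-$ then has a simple zero in $t$ at each of the two points over $t_0$, and the norm of $H_+$ has total order $1$ at $t_0$. So the order of vanishing of $H_+$ at the two preimages sums to one, and $H_+$ vanishes simply at exactly one of them.

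It follows that $\operatorname{Wr}=-3H^2/d^4$ has order two on the failing sheet and order zero on the conjugate sheet. This gives one apparent point over each of the twelve roots. Lemma~\ref{III:lem:apparent-exponents-four} now applies: the determinant order is $2$, the first two vanishing orders are $0,1$, and the sum formula forces the remaining orders to be $3,4$. I expect to get these directly from Proposition~\ref{III:prop:null-factorization} with $m=1$, noting that $q=2bc_0$ and that $b$ vanishes simply where $q$ does. Trivial local monodromy also comes from Lemma~\ref{III:lem:apparent-exponents-four}.

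For the primitive equation, Theorem~\ref{III:thm:primitive-example-six} says its apparent points are exactly the roots of $P_8$. Proposition~\ref{III:prop:qc-five} gives $\gcd(P_8,N_{12})=1$, so the primitive equation is ordinary at $t_0$.

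It remains to exclude any other finite apparent points. Away from roots of $\alpha$, the zeros of $\operatorname{Wr}_{s_\pm}$ are zeros of $H_\pm$, which divide $\alpha^2N_{12}$; so they lie over roots of $N_{12}$ or of $\alpha$. At a root of $\alpha$, I use Lemma~\ref{III:lem:null-chart}. The factor $\alpha^2$ in $H_+H_-$ contributes order four to one Wronskian norm through the generator zero, and this is exactly the contribution the lemma removes. So $\widetilde s_\pm$ has a unit Wronskian there, which I will confirm by matching orders: $\operatorname{ord}\operatorname{Wr}_{s_+}=4$ at that point, entirely from the generator. I expect this bookkeeping over the roots of $\alpha$ to be the main obstacle. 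If the order count is ambiguous, the fallback is to compute $h$ directly for $\widetilde s_+$ via the row recurrence.
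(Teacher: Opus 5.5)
Your proposal is correct and follows the paper's proof essentially step for step. The shared steps are: coprimality of $N_{12}$ with $d$, $\alpha$ and $P_8$; the norm identity $H_+H_-=\alpha^2N_{12}$ forcing exactly one simple zero of $H_\pm$ over each root; Lemma~\ref{III:lem:null-chart} with order matching at the roots of $\alpha$; and Lemma~\ref{III:lem:apparent-exponents-four} for trivial monodromy. Your simplicity argument already rules out simultaneous vanishing, so you do not need the paper's additional check via $H_++H_-=2F_8$.

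One small correction concerns the exponents. With $n_0=0$, $n_1=1$ and determinant order two, the sum formula only gives $n_2+n_3=7$, and $(2,5)$ satisfies that as well as $(3,4)$. What pins the exponents to $0,1,3,4$ is Proposition~\ref{III:prop:null-factorization} with $m=1$, equivalently Theorem~\ref{III:thm:null-lines-four} at a simple zero of $q$. There $b(t_0)=0$ keeps $\nabla_t^2 s(t_0)$ in the span of $s(t_0)$ and $\nabla_t s(t_0)$, which forces $n_2\ge3$.
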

\begin{proof}
\noindent\textit{1. Locate the intrinsic zeros.} Exact Euclidean division gives
\[
 \gcd(N_{12},N_{12}')=1,\qquad
 \gcd(N_{12},d\alpha F_8tG_4P_8)=1.
\]
At a root of $N_{12}$, the section~\eqref{III:eq:null-generators-six} is
nonzero because $\alpha\ne0$. If $H_+$ and $H_-$ vanished at the same
point, their sum would force $F_8=0$, contrary to the displayed
coprimality. Their product has a simple zero by~\eqref{III:eq:null-norm-six},
so precisely one factor vanishes, and it vanishes simply.
Theorem~\ref{III:thm:null-lines-four} now gives exponents $0,1,3,4$ on
the failing line. The other line is cyclic, and $P_8\ne0$ makes
the primitive Hodge determinant cyclic as well.

\noindent\textit{2. Remove zeros of the chosen generator.}
Lemma~\ref{III:lem:null-chart} removes the factor $\alpha$ in
\eqref{III:eq:null-norm-six}. The null line
itself has no cyclicity zero there, since $N_{12}$ is a unit.
Away from these chart changes, \eqref{III:eq:null-norm-six} excludes
any further zeros. Regularity of the pulled-back connection proves
trivial local monodromy at all twelve intrinsic apparent points.
\end{proof}

Writing $Z_N$ for the intrinsic cyclicity divisor of the tautological
line on $\widetilde U$, we have in this example
\begin{equation}\label{III:eq:null-pushforward-six}
 \pi_*Z_N=2\operatorname{div}_0N_{12}
        =2\operatorname{div}_0(q\,dt^4).
\end{equation}
This is the divisor norm of Theorem~\ref{III:thm:null-lines-four}, including
multiplicity. Its support consists of twelve points on the base;
one of the two points above each lies in the tautological cyclicity
divisor. The conjugate operator describes the same construction after the
deck involution.

\subsection{Complete null-line operators over the quadratic field}
The analytic normal-frame factorization~\eqref{III:eq:null-operator-four}
explains the geometry, but the operators can also be specified entirely
over $\mathbb Q(t,w)$. Let $v_j$ now denote the row of $\nD^j s_+$,
and set
\[
 h=Q(v_0\eta,v_3\eta),\qquad
 k=Q(v_2\eta,v_3\eta),\qquad
 j=Q(v_4\eta,v_3\eta).
\]
These three pairings are rational functions in the quadratic field;
in particular $h=4H_+/d^2$. On $h\ne0$, put
\begin{align}
 \ell_3^+&=-2h'/h,&
 \ell_2^+&=-(k+h''+\ell_3^+h')/h,\label{III:eq:null-operator-six}\\
 \ell_1^+&=-(k'+\ell_3^+k)/h,&
 \ell_0^+&=(\ell_1^+h'-\ell_2^+k-j)/h.\notag
\end{align}
Then $L_{s_+}=\partial_t^4+\sum_{j=0}^3\ell_j^+\partial_t^j$,
and $L_{s_-}=\sigma(L_{s_+})$. Together with the displayed matrix $A$
and the generator~\eqref{III:eq:null-generators-six}, these equations
determine every coefficient by rational operations and differentiation.

To verify them, pair
$v_4+\ell_3^+v_3+\ell_2^+v_2+\ell_1^+v_1+\ell_0^+v_0$
successively with $v_0,v_1,v_2,v_3$ using $Q_1$.
The first pairing gives $2h'+\ell_3^+h=0$; the next two give
$k=-h''-\ell_2^+h-\ell_3^+h'$ and
$k'+\ell_3^+k+\ell_1^+h=0$. The last gives the formula for
$\ell_0^+$. Nondegeneracy of the cyclic Gram matrix forces the whole
row residual to vanish. Thus the relation holds in the differential module and annihilates
all periods of the chosen generator.

Eliminating $h$ and $k$ also gives the familiar fourth-order
self-duality relation
\begin{equation}\label{III:eq:selfduality-six}
 \ell_1=\tfrac12\ell_2\ell_3-\tfrac18\ell_3^3+\ell_2'
             -\tfrac34\ell_3\ell_3'-\tfrac12\ell_3''.
\end{equation}
This is the standard fourth-order self-duality identity
\cite[\S2.4]{vanStraten}. Here it follows from the polarization and
the nullity of the selected line.

For this example the scalar exterior square of either null-line
equation has order exactly five. One direct check uses the nonzero
generator $s=\eta_0+r_+\eta_1$ at $t=0$, with $w(0)=12\zeta$ and
$\zeta^2=-7$. For $u=s\wedge\nD s$, the determinant of its first five
derivative rows in columns $(01,02,03,12,13)$ is
$-2234138434375/53747712$. The same value holds on the other sheet.
Nullity puts these rows in the primitive module, and this nonzero
minor proves the claimed order. Together with \eqref{III:eq:order-six-certificate}, this gives scalar
exterior-square orders five and six for different sections of the
same connection.

\subsection{Geometric comparison of the three apparent loci}
The finite apparent loci of the main pencil can now be recorded without
reference to expanded higher-order coefficients:
\begin{center}
\small
\begin{tabular}{@{}p{.34\textwidth}p{.32\textwidth}p{.24\textwidth}@{}}
\toprule
Presentation & Apparent locus on the smooth base & Exponents\\
\midrule
Second-order Hodge matrix & none & ordinary\\
Fourth order from $\eta_0$ & $p(t)=0$ (two points) & $0,1,2,4$\\
Primitive Hodge determinant & $P_8(t)=0$ (eight points) & $0,1,2,3,5$\\
Tautological null line & one point over each $N_{12}(t)=0$ & $0,1,3,4$\\
\bottomrule
\end{tabular}
\end{center}
The last row counts twelve points on the null-line cover. The primitive
row has differential rank five; the other rows present the rank-four
Gauss--Manin module, after the indicated base change when necessary.

At a root of $N_{12}$, Cayley--Hamilton gives $S^2=(q/2)\Id=0$.
Moreover $P_8\ne0$, hence $C\ne0$ and $S\ne0$. Thus $S$ is a nonzero
nilpotent there; its induced dual action on the Hodge bundle singles
out the failing null line, as in Theorem~\ref{III:thm:null-lines-four}.
At a root of $P_8$, by contrast, $q\ne0$ and the primitive cyclic
frame fails although both null-line presentations are ordinary.
The spectral cover of $S$ has local equation $\mu^2=q/2$; it should
not be confused with the polarization cover~\eqref{III:eq:null-cover-six},
which is \'etale at these points.

Finally, on the smooth affine base the rational invariant of
Section~\ref{III:sec:filtered-connections} has divisor
\begin{equation}\label{III:eq:R-divisor-six}
 \operatorname{div}_U(\mathcal R)
 =4\operatorname{div}_0(P_8)-5\operatorname{div}_0(N_{12}).
\end{equation}
Its zeros and poles thus compare the two selected cyclicity failures.

\section{Rank-one pencils and canonical Picard--Fuchs equations}
\label{III:sec:rank-one-applications}

Throughout this section, use the genus-two convention~\ref{III:conv:genus-two}.

On the two rank-one branches of \eqref{III:eq:det-KS-five}, the
Kodaira--Spencer kernel gives a distinguished fourth-order equation.
An exact differential identity shows that its next fundamental map
never vanishes. The resulting scalar equations therefore have no
finite apparent singularities on the smooth base.

\subsection{The pencil \texorpdfstring{$x^5+bx+t$}{x5+bx+t}}
For the first branch, the weighted Euler identity gives both the
kernel section and its first derivative. Fix $b\in\C^\times$ and put
\begin{equation}\label{III:eq:rankone-family-seven}
 \mathcal C_{b,t}:y^2=x^5+bx+t,\qquad
 d_b=3125t^4+256b^5,\qquad U_b=\{d_b\ne0\}.
\end{equation}
Use the de Rham frame $\eta$ and the polarization
\eqref{III:eq:polarization-five} with $a=0$. Reduction as in
\eqref{III:eq:reduction-five} gives
\begin{equation*}
 {\mathsf K}=-\frac{40b}{d_b}
 \begin{pmatrix}4b\\-5t\end{pmatrix}
 \begin{pmatrix}4b&-5t\end{pmatrix}.
\end{equation*}
This matrix has rank one at every point of $U_b$. Its kernel line
$N\subset\EE$ is generated by
\begin{equation*}
 s_b=5t\eta_0+4b\eta_1.
\end{equation*}
The simple shape of this generator has an explanation independent of
matrix elimination. Since $5t+4bx=5f_t-x(f_t)_x$,
\begin{equation}\label{III:eq:Euler-seven}
 \partial_t\!\left((5t+4bx)\frac{dx}{y}\right)
       =\frac72\frac{dx}{y}-d_x(x/y),\qquad
 \nD s_b=\frac72\eta_0.
\end{equation}
The pair $(s_b,\eta_0)$ is a Hodge frame everywhere on $U_b$ because
$b\ne0$. Thus the second fundamental map
$\nu:N\to(\EE/N)\otimes K_{U_b}$ never vanishes.

\begin{proposition}\label{III:prop:rankone-cyclic-seven}
The section $s_b$ is cyclic at every point of $U_b$. In the fixed
de Rham frame and the polarization~\eqref{III:eq:polarization-five},
\begin{equation}\label{III:eq:rankone-W-seven}
 Q(s_b,\nD^3s_b)=\frac{7840b^3}{d_b},\qquad
 \operatorname{Wr}_{s_b}=-\frac{11524800b^6}{d_b^2}.
\end{equation}
Its periods satisfy the fourth-order equation
\begin{equation}\label{III:eq:rankone-PF-seven}
 z^{(4)}+\frac{25000t^3}{d_b}z'''
 +\frac{80625t^2}{2d_b}z''
 +\frac{5625t}{d_b}z'
 -\frac{5355}{16d_b}z=0.
\end{equation}
There are no finite apparent singularities on the smooth base.
\end{proposition}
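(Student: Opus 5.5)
The proof follows the rank-one template of Section~\ref{III:subsec:rank-one-ramification-four}. The Euler identity \eqref{III:eq:Euler-seven} already gives $\nD s_b=\tfrac72\eta_0$. In the notation there we take $\alpha=0$, $e=\eta_0$, and $\nu_t=7/2$ as a nowhere vanishing constant. Then $h=Q(s_b,\nD^3s_b)=-\nu_t^2\kappa$ with $\kappa=Q(\eta_0,\nD\eta_0)$. Since $\nD\eta_0=\tfrac27\nD^2s_b$, computing $\kappa$ only needs the row $\nD\eta_0$ from the reduction \eqref{III:eq:reduction-five} at $a=0$, paired with $Q_0$ from \eqref{III:eq:polarization-five}. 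In fact $\kappa$ is just the $(0,0)$ entry of ${\mathsf K}$. The displayed rank-one formula gives ${\mathsf K}_{00}=-640b^3/d_b$. Hence $h=-\tfrac{49}{4}\cdot(-640b^3/d_b)=7840b^3/d_b$, which is the first formula in \eqref{III:eq:rankone-W-seven}.

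For the Wronskian we reuse the Pfaffian argument of Proposition~\ref{III:prop:null-determinants-six}. The kernel line satisfies $Q(s,s')=Q(s,s'')=0$ (Section~\ref{III:subsec:rank-one-ramification-four}), and $Q(s',s'')=-h$. So the Gram matrix of $(s,s',s'',s''')$ has Pfaffian $-h^2$. With $\operatorname{Pf}(Q_0)=16/3$ (only the $(0,3)$ and $(1,2)$ entries matter, so this is independent of $a$), we get $\operatorname{Wr}_{s_b}=-\tfrac{3}{16}h^2=-11524800b^6/d_b^2$. This is a unit on $U_b$ because $b\neq0$. Cyclicity everywhere then follows from Proposition~\ref{III:prop:rank-one-cyclicity-three} (or directly from the nonzero Wronskian). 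Absence of apparent points is Theorem~\ref{III:thm:rank-one-exponents-four} with $m=0$, or Lemma~\ref{III:lem:apparent-exponents-four} with a unit Wronskian.

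For the equation \eqref{III:eq:rankone-PF-seven} we use the pairing recipe \eqref{III:eq:null-operator-six}, which applies verbatim to any section with $Q(s,s')=Q(s,s'')=0$. It gives $\ell_3=-2h'/h$, and then $\ell_2,\ell_1,\ell_0$ from $k=Q(v_2,v_3)$ and $j=Q(v_4,v_3)$. As a check, $-2h'/h=2d_b'/d_b=25000t^3/d_b$, which matches the leading coefficient. The remaining coefficients come from the row recurrence \eqref{III:eq:row-recurrence-five}. Starting from $v_1=(7/2,0,0,0)$, this requires only the first row of $A$ and its iterates, so the reduction is short.

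The main obstacle is this last computation: reducing $\nD^2\eta_0$ and $\nD^3\eta_0$ via the Bezout identities \eqref{III:eq:bezout-five} for $f=x^5+bx+t$, and simplifying $k$ and $j$. As a consistency check we will verify the self-duality relation \eqref{III:eq:selfduality-six} and confirm that every coefficient has only $d_b$ in its denominator. This is expected, since $h$ is $d_b^{-1}$ times a constant.
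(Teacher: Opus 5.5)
Your proposal is correct and follows the paper's proof. Cyclicity comes from the Euler identity together with Proposition~\ref{III:prop:rank-one-cyclicity-three}, the Wronskian from the Gram--Pfaffian identity $\operatorname{Wr}_{s_b}=-3h^2/16$ with $\operatorname{Pf}(Q_0)=16/3$, and the equation from an exact check of the fourth-order row relation. The only differences are minor. You get $h$ as $-\nu_t^2\kappa$ with $\kappa=Q(\eta_0,\nD\eta_0)=-640b^3/d_b$, the top-left entry of $\mathsf K$, rather than by pairing the zeroth and third rows directly. You produce the coefficients with the pairing formulas~\eqref{III:eq:null-operator-six}, which the paper itself uses for the second rank-one pencil. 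Note that forming $v_3,v_4$ from $v_2=\tfrac72 A_{0\cdot}$ requires the full connection matrix, not only its first row.
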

\begin{proof}
Cyclicity follows from~\eqref{III:eq:Euler-seven} and
Proposition~\ref{III:prop:rank-one-cyclicity-three}. For an exact coefficient
check, differentiate the row $(5t,4b,0,0)$ four times using
\eqref{III:eq:row-recurrence-five} and the reduction for~\eqref{III:eq:rankone-family-seven}.
The displayed coefficients annihilate the resulting row in all four
de Rham coordinates. Pairing the zeroth and third rows gives the first
identity in~\eqref{III:eq:rankone-W-seven}. Put $h=Q(s_b,\nD^3s_b)$. The Gram-Pfaffian argument of
Proposition~\ref{III:prop:null-determinants-six}, with
$\operatorname{Pf}(Q_0)=16/3$, gives $\operatorname{Wr}_{s_b}=-3h^2/16$ and hence
the second identity. Its numerator is a nonzero constant. Therefore
the cyclic frame is invertible at every smooth finite parameter.
Abel's identity is visible directly:
\[
 \frac{25000t^3}{d_b}=2d_b'/d_b=-\operatorname{Wr}_{s_b}'/\operatorname{Wr}_{s_b}.
\]
\end{proof}

This is a distinguished scalar equation because its line is intrinsic
to the filtration. Its particular coefficients still use the specified
generator $s_b$ and parameter $t$. The self-duality relation
\eqref{III:eq:selfduality-six} holds, either by nullity or by substitution.
The absence of apparent points is stronger than generic cyclicity:
it follows from the fact that $\nu$ is a unit everywhere on $U_b$.
For a general constant-rank-one family, a zero of order $m$ of $\nu$
instead gives exponents $0,m+1,m+2,2m+3$, as proved in
Theorem~\ref{III:thm:rank-one-exponents-four}.

\subsection{Hypergeometric comparison and local exponents}
Equation~\eqref{III:eq:rankone-PF-seven} has an exact classical
hypergeometric form. Set
\begin{equation}\label{III:eq:hypergeometric-coordinate-seven}
 v=-\frac{3125t^4}{256b^5},\qquad \vartheta=v\partial_v.
\end{equation}
It is the pullback of
\begin{equation*}
\begin{split}
 \bigl[&\vartheta(\vartheta-\tfrac14)(\vartheta-\tfrac12)
                       (\vartheta-\tfrac34)\\
       &-v(\vartheta-\tfrac7{40})(\vartheta+\tfrac1{40})
                  (\vartheta+\tfrac9{40})(\vartheta+\tfrac{17}{40})\bigr]z=0.
\end{split}
\end{equation*}
To check the identity in the Ore algebra, put $\theta=t\partial_t$.
Multiplication of~\eqref{III:eq:rankone-PF-seven} by $t^4d_b$ gives
\begin{equation}\label{III:eq:Euler-polynomial-seven}
\begin{split}
 256b^5\theta(\theta-1)(\theta-2)(\theta-3)
 +3125t^4(\theta-\tfrac7{10})(\theta+\tfrac1{10})
              (\theta+\tfrac9{10})(\theta+\tfrac{17}{10}).
\end{split}
\end{equation}
Replace $\theta$ by $4\vartheta$ and use~\eqref{III:eq:hypergeometric-coordinate-seven}.
The factor $t^4$ is on the left, in the Ore convention.

In the standard generalized hypergeometric notation~\cite[16.8.3]{DLMF},
the numerator parameters are $(-7,1,9,17)/40$ and the denominator
parameters are $(1/4,1/2,3/4)$. This classical form determines the local exponents and explains the
ramification of the invariant parameter.

The four finite roots of $d_b$ are simple. At each, the residue of the
coefficient of $z'''$ is $2$, and all lower coefficients have at most
simple poles. Hence the indicial polynomial is
$\lambda(\lambda-1)^2(\lambda-2)$ and the exponents are $0,1,1,2$.
These are singular fibers, unlike the apparent points of
Section~\ref{III:sec:explicit-scalarizations}. At infinity the leading
polynomial in~\eqref{III:eq:Euler-polynomial-seven}, expressed in the local
coordinate $u=1/t$, gives exponents
\[
 -\tfrac7{10},\qquad\tfrac1{10},\qquad\tfrac9{10},\qquad\tfrac{17}{10}.
\]
At $v=0$ the hypergeometric exponents are $0,1/4,1/2,3/4$; the
ramified pullback makes them $0,1,2,3$ at $t=0$, where the original
equation is ordinary. Thus the ramification in the hypergeometric
coordinate introduces no apparent singularity at the origin.

\begin{corollary}\label{III:cor:rankone-end-seven}
For every fixed $b\ne0$, the geometric generic Jacobian of
\eqref{III:eq:rankone-family-seven} has endomorphism ring $\Z$.
\end{corollary}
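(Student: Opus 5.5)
We will apply the rank-one clause of Theorem~\ref{III:thm:finite-jet-endomorphisms-four}. That clause asserts the endomorphism conclusion on a nonempty constant-rank-one locus, provided the next fundamental map $\nu$ is not identically zero there. So the proof reduces to checking these hypotheses for the pencil \eqref{III:eq:rankone-family-seven} with $b\ne0$ fixed.

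\textbf{Step 1: constant rank one.} The displayed formula ${\mathsf K}=-\tfrac{40b}{d_b}\,w w^t$, with $w=(4b,-5t)^t$, shows that the Kodaira--Spencer form has rank exactly one at every point of $U_b$. Indeed, $w\ne0$ because $b\ne0$, and the scalar prefactor is a unit on $U_b$. The kernel line $N$ is therefore a subbundle over all of $U_b$, generated by $s_b$. This base is a nonempty connected smooth algebraic curve, so the setting of Section~\ref{III:subsec:rank-one-three} applies.

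\textbf{Step 2: $\nu$ is a unit.} By the Euler identity \eqref{III:eq:Euler-seven}, $\nD s_b=\tfrac72\eta_0$. The pair $(s_b,\eta_0)$ is a Hodge frame wherever $4b\ne0$, so $\nD s_b$ is nonzero modulo $N$ everywhere. Hence $\nu$ vanishes nowhere, and in particular $\nu\not\equiv0$. Equivalently, Proposition~\ref{III:prop:rankone-cyclic-seven} gives $\operatorname{Wr}_{s_b}$ a nonvanishing numerator, hence cyclicity at every point.

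\textbf{Step 3: conclude.} Proposition~\ref{III:prop:rank-one-cyclicity-three} now says that any flat endomorphism of $\HH$ preserving $\EE$ is a scalar constant. The endomorphism argument of Theorem~\ref{III:thm:finite-jet-endomorphisms-four} then proceeds in its stated order:
\begin{enumerate}
\item Spread the endomorphisms of the geometric generic fibre over a finite cover.
\item Their de Rham actions are horizontal and preserve $\EE$.
\item Hence each acts by a scalar.
\item That scalar preserves the Betti lattice, so it is an integer.
\end{enumerate}
This gives $\End(\mathcal A_{\overline{\C(U_b)}})=\Z$ for the Jacobian family.

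The one point that needs care is the passage to a finite base change. The pulled-back kernel line must remain cyclic on the cover, which is immediate because cyclicity is local and the cover is étale near a regular point. Beyond that, the statement is a direct specialization of the earlier theorem, and no step is expected to be a real obstacle.
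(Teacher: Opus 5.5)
Your proposal is correct and follows the paper's proof: the paper likewise notes that the Euler identity \eqref{III:eq:Euler-seven} makes $\nu$ nowhere vanishing, and then invokes the rank-one clause of Theorem~\ref{III:thm:finite-jet-endomorphisms-four}. Your additional checks (constant rank one from the displayed ${\mathsf K}$, and applying Proposition~\ref{III:prop:rank-one-cyclicity-three} locally on a branch of the finite cover) spell out hypotheses that the paper's one-line proof leaves implicit.
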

\begin{proof}
The map $\nu$ never vanishes by \eqref{III:eq:Euler-seven}, so the
rank-one endomorphism criterion in
Theorem~\ref{III:thm:finite-jet-endomorphisms-four} applies.
\end{proof}
After scaling over $\C$, these are the familiar pencils
$y^2=x^5-x+t$. Their generic endomorphism conclusion is already covered
by known results; see the discussion in~\cite{Zarhin}.
Here the kernel-line calculation proves the conclusion directly from
the filtered connection.

\subsection{The second rank-one factor}
The second rank-one branch has the same kernel-line mechanism, with
the two Hodge differentials interchanged. Fix $a\ne0$ and put
\begin{equation*}
 f_t=x^5+ax^3+\frac{3a^2}{10}x+t,\qquad
 D_a=54a^{10}+56250a^5t^2+9765625t^4.
\end{equation*}
Here $d_{a,3a^2/10}=D_a/3125$. With the same normalization
\eqref{III:eq:polarization-five},
\begin{equation*}
 {\mathsf K}=-\frac{100a}{D_a}
       \begin{pmatrix}125t\\3a^3\end{pmatrix}
       \begin{pmatrix}125t&3a^3\end{pmatrix},\qquad
 s_a=-3a^3\eta_0+125t\eta_1.
\end{equation*}
There is again an exact differential identity:
\begin{equation*}
 \partial_t\!\left((125tx-3a^3)\frac{dx}{y}\right)
   =\frac{225}{2}\frac{x\,dx}{y}
          -d_x\!\left(\frac{25x^2+10a}{y}\right).
\end{equation*}
Thus $\nD s_a=(225/2)\eta_1$, and $(s_a,\eta_1)$ is a Hodge
frame everywhere on $D_a\ne0$. The kernel is nonparallel and its
canonical scalarization has no finite apparent points.

The periods of $s_a$ satisfy
\begin{equation*}
\begin{split}
 z^{(4)}&+\frac{25000t(9a^5+3125t^2)}{D_a}z'''
 +\frac{625(357a^5+390625t^2)}{2D_a}z''\\
 &+\frac{9765625t}{D_a}z'
 -\frac{56109375}{16D_a}z=0.
\end{split}
\end{equation*}
It is obtained either by the row recurrence or by the pairing formulas
\eqref{III:eq:null-operator-six}. Those formulas apply here because
$Q(s_a,s_a')=Q(s_a,s_a'')=0$, even though ${\mathsf K}$ is degenerate.
For the polarization \eqref{III:eq:polarization-five},
\begin{equation*}
 Q(s_a,\nD^3s_a)=\frac{11390625a^7}{D_a},\qquad
 \operatorname{Wr}_{s_a}=-\frac{389239013671875a^{14}}{16D_a^2}.
\end{equation*}
The fourth-order row residual vanishes exactly, and
$\operatorname{Wr}_{s_a}=-3Q(s_a,\nD^3s_a)^2/16$ independently checks its determinant.
The generic endomorphism criterion applies just as for the first pencil.
These constructions give the scalar equations on both nonisotrivial
rank-one branches of \eqref{III:eq:det-KS-five}.

\subsection{Ranks and derivatives of periods}
\label{III:subsec:andre-correction-seven}
The first pencil separates differential generation from first-order
transversality. We compute its ranks and period derivatives, then
compare them with the statements of Andr\'e \cite{Andre}.

In the notation of~\cite[p.~1397]{Andre}, let
\[
 r=\operatorname{rank}(\mathscr D_{U_-}\EE/\EE),\quad
 r'=\operatorname{rank}\bigl(T_{U_-}\otimes\EE\longrightarrow\HH/\EE\bigr),
 \quad r''=\max_\partial\operatorname{rank}\beta_\partial.
\]
The symbol $\mathscr D_{U_-}$ here denotes the sheaf of differential
operators on the base, not the covariant derivative $\cov$.
\begin{proposition}[A rank-one pencil with full differential generation]
\label{III:prop:rank-one-full-generation}
For the genus-two family $\mathcal C_t:y^2=x^5-x+t$ over
$U_-=\operatorname{Spec}\C[t,(3125t^4-256)^{-1}]$, the ranks above are
\begin{equation}\label{III:eq:andre-ranks-seven}
 \boxed{\quad r'=r''=1,\qquad r=2.\quad}
\end{equation}
Its complex geometric monodromy is Zariski dense in $\Sp_4$.
There is a local horizontal integral cycle for which both
$P(t)=\int_{\gamma_t}(5t-4x)dx/y$ and $P'(t)$ are nonzero
Abelian integrals of the first kind.
\end{proposition}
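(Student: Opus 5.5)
The family is the case $b=-1$ of \eqref{III:eq:rankone-family-seven}, with $d_{-1}=3125t^4-256$ and kernel generator $s_{-1}=5t\eta_0-4\eta_1$, whose period is exactly $P(t)$. I will establish the three ranks first, then Zariski density, then the period statement.

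\textbf{The ranks $r'$ and $r''$.} On a curve there is a single tangent direction, so $r'=r''=\operatorname{rank}\beta_t$. The computation preceding Proposition~\ref{III:prop:rankone-cyclic-seven} gives ${\mathsf K}$ as a nonzero rank-one matrix at every point of $U_-$, since $4b=-4\ne0$. The polarization identifies $\HH/\EE$ with $\EE^\vee$, and under this identification $\beta_t$ is represented by ${\mathsf K}$. Hence $r'=r''=1$.

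\textbf{The rank $r$.} Here $\mathscr D_{U_-}\EE$ is the span of all iterated derivatives of sections of $\EE$. By Proposition~\ref{III:prop:rankone-cyclic-seven}, $s_{-1}$ is cyclic at every point, so its derivatives already span $\HH$. Therefore $\mathscr D_{U_-}\EE=\HH$ and $r=4-2=2$. Equivalently, the flag $N\subset\EE\subset N^\perp\subset\HH$ of Section~\ref{III:subsec:rank-one-three} shows that second derivatives reach all of $\HH$.

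\textbf{Zariski density.} Let $G$ be the identity component of the Zariski closure of monodromy. By Deligne semisimplicity it is semisimple, and it lies in $\Sp_4$.
\begin{itemize}
\item \emph{$G$ is irreducible.} Corollary~\ref{III:cor:rankone-end-seven} gives geometric generic endomorphism ring $\Z$, and Proposition~\ref{III:prop:rank-one-cyclicity-three} gives that flat endomorphisms are scalar. Together these rule out reducibility of $G$ even after a finite base change. I would check carefully that flat endomorphisms not preserving $\EE$ are also excluded. The route is the theorem of the fixed part: the commutant of monodromy is a Hodge substructure, so its elements are endomorphisms of the generic fiber.
\item \emph{$G$ is not too small.} The irreducible semisimple connected subgroups of $\Sp_4$ are $\Sp_4$, $\mathrm{SL}_2$ acting on $\Sym^3$, and $\mathrm{SL}_2\times\mathrm{SL}_2$ acting on the tensor product. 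The last one is not in $\Sp_4$ irreducibly, since it preserves a symmetric form, so it is excluded. The principal $\mathrm{SL}_2$ is excluded by local monodromy. By the hypergeometric comparison, the local exponents at the roots of $d_b$ are $0,1,1,2$, so the local monodromy is a symplectic transvection: a unipotent element $u$ with $u-1$ of rank one. The image of a nontrivial unipotent of $\mathrm{SL}_2$ in $\Sym^3$ has a single Jordan block of size four. A rank-one unipotent has Zariski closure inside $G$, so $G$ cannot be the principal $\mathrm{SL}_2$.
\end{itemize}
Hence $G=\Sp_4$. Alternatively, I could cite the Beukers--Heckman criterion for the hypergeometric group with parameters $(-7,1,9,17)/40$ and $(0,1/4,1/2,3/4)$. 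The pullback only passes to a finite-index subgroup, which does not change the identity component.

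\textbf{The period statement.} By \eqref{III:eq:Euler-seven}, $\nD s_{-1}=\tfrac72\eta_0$. Hence
\[
P'(t)=\tfrac72\int_{\gamma_t}\frac{dx}{y}
\]
for any horizontal cycle $\gamma_t$, because the exact term integrates to zero. Both $P$ and $P'$ are periods of holomorphic differentials. It remains to choose one integral cycle on which both are nonzero. For fixed $t_0$, the integral cycles on which the period of $\eta_0(t_0)$ vanishes form a proper sublattice of $H_1$, since that period functional is nonzero. The same holds for $s_{-1}(t_0)$, which is nonzero because $s_{-1}$ is a nonvanishing section of $\EE$. A lattice is not the union of two proper sublattices, so some cycle avoids both. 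Parallel transport then gives the required local horizontal cycle, and the analytic functions $P$ and $P'$ are not identically zero.

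The main obstacle I expect is the density step, specifically showing that the local monodromy really is a rank-one transvection. The exponents $0,1,1,2$ force unipotence, but the exact Jordan type needs justification. I would settle it with the standard Picard--Lefschetz fact that a simple root of the discriminant corresponds to a nodal fiber, whose vanishing cycle gives exactly a symplectic transvection.
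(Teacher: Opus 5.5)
Your treatment of the ranks and of the periods agrees with the paper's in substance. The paper reads $r'=r''=1$ off the Kodaira--Spencer block $C_0$ rather than off ${\mathsf K}$, which is equivalent under $\HH/\EE\simeq\EE^\vee$, and gets $r=2$ from the nonvanishing cyclic determinant. For the periods it takes $\gamma_t$ with $P'\not\equiv0$, and this already forces $P\not\equiv0$; your two-sublattice argument is correct but more than is needed.

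The real difference is Zariski density. The paper never argues over $\C$. It notes that $x^5-x$ is a Morse polynomial with distinct critical values, hence weakly supermorse in characteristic $7$, and quotes Katz's theorem that $y^2=u-h(x)$ then has geometric monodromy $\Sp_4$ in that characteristic. It carries this back to the complex family by Pink's specialization theorem, with the polarization giving the reverse inclusion. Your route stays in characteristic zero and reuses the paper's kernel-line results. It combines Deligne semisimplicity, irreducibility from the filtered connection, the short list of irreducible connected semisimple subgroups of $\Sp_4$, and a Picard--Lefschetz transvection at a simple root of $3125t^4-256$ to exclude the principal $\mathrm{SL}_2$. It costs a classification and a local analysis; the paper's argument is shorter but rests on two substantial external theorems. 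Your classification is fine, and so is your use of the transvection, since a unipotent element lies in the identity component.

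The step that does not work as written is irreducibility. After finite base change, the theorem of the fixed part makes the algebra $B$ of flat endomorphisms a weight-zero Hodge structure. But only its rational $(0,0)$ classes are endomorphisms of the generic fiber. A priori $B\otimes\C$ can have components of types $(-1,1)$ and $(1,-1)$. Neither $\End=\Z$ nor Proposition~\ref{III:prop:rank-one-cyclicity-three}, which concerns endomorphisms preserving $\EE$, addresses them.

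The repair uses the same tools:
\begin{itemize}
\item An element of $B^{0,0}$ preserves $\EE$ at every point, so it is scalar by Proposition~\ref{III:prop:rank-one-cyclicity-three}.
\item A nonzero $\phi\in B^{-1,1}$ has flat image inside $\overline{\EE}$. Its conjugate for the flat real structure is a nonzero flat subbundle of $\EE$. Such a subbundle lies in $N=\ker\beta$, so it equals $N$. That forces $\nu=0$, contrary to \eqref{III:eq:Euler-seven}.
\item Since $B^{1,-1}=\overline{B^{-1,1}}$, this part vanishes as well.
\end{itemize}
Hence $B\otimes\C=\C$ and $G$ is irreducible. Alternatively, $\End=\Z$ for an abelian surface gives Mumford--Tate group $\mathrm{GSp}_4$. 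Normality of the connected monodromy group in the derived Mumford--Tate group then yields $G=\Sp_4$ directly for this non-isotrivial family.
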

\begin{proof}
\noindent\textit{1. The first-order and differential ranks.}
Specialize the first pencil to $b=-1$, and let
$\mathcal J=\operatorname{Jac}(\mathcal C/U_-)$ over
$U_-=\operatorname{Spec}\C[t,(3125t^4-256)^{-1}]$. Then
\begin{equation*}
 \partial_t\!\left((5t-4x)\frac{dx}{y}\right)
   =\frac72\frac{dx}{y}-d_x\!\left(\frac{x}{y}\right),
 \qquad s=5t\eta_0-4\eta_1.
\end{equation*}
Consequently $\nD s=\tfrac72\eta_0$. In the row convention of \eqref{III:eq:row-recurrence-five}, the
Kodaira--Spencer block is
\begin{equation}\label{III:eq:andre-KS-seven}
 C_0=\frac1{3125t^4-256}
 \begin{pmatrix}200t&480\\250t^2&600t\end{pmatrix}.
\end{equation}
It has rank one everywhere on $U_-$. Its left kernel is generated by
$(5t,-4)$, so $\ker\beta_t=\OO_{U_-}s$. The two sections $s$ and
$\nD s$ are independent holomorphic differentials.

The calculation yields~\eqref{III:eq:andre-ranks-seven}.
Indeed every tangent field on the curve is a local multiple of
$\partial_t$, proving the first equality from~\eqref{III:eq:andre-KS-seven}.
The nonzero cyclic determinant~\eqref{III:eq:rankone-W-seven} proves
$\mathscr D_{U_-}\EE=\HH$ and hence the value of $r$.

\noindent\textit{2. Monodromy.} The relevant monodromy hypothesis is also available. The polynomial
$h(x)=x^5-x$ has critical points $r_0$ satisfying $r_0^4=1/5$;
they are simple and their critical values $h(r_0)=-4r_0/5$ are distinct.
It is therefore a Morse polynomial, and its reduction in characteristic
$7$ is weakly supermorse in the sense of \cite[\S5.5.2]{KatzACT}:
the same critical-point calculation applies, since $2$ and $5$ are
invertible. For a quadratic character, \cite[Theorem 5.13(1)]{KatzACT}
gives geometric monodromy $\Sp_4$ for the family $y^2=u-h(x)$
over the complement of the critical values in that characteristic.
The smooth proper hyperelliptic family spreads over the corresponding
open parameter scheme in mixed characteristic. Pink's specialization
theorem \cite[Theorem 8.18.2]{KatzESDE} puts this group inside the
characteristic-zero geometric monodromy group. The polarization gives
the reverse inclusion in $\Sp_4$. Comparison with Betti cohomology
therefore gives Zariski-dense complex monodromy $\Sp_4$ as well.
The changes $u=-t$ and $y\mapsto iy$ identify the displayed sign
convention with our pencil. Passing to a finite \'etale cover to add level structure
preserves both the connected monodromy group and the computed ranks.

\noindent\textit{3. Differentiate an Abelian integral.}
On a simply connected analytic open set take a
horizontal integral cycle $\gamma_t$ and set
\begin{equation*}
 P(t)=\int_{\gamma_t}(5t-4x)\frac{dx}{y},\qquad
 P'(t)=\frac72\int_{\gamma_t}\frac{dx}{y}.
\end{equation*}
Choose $\gamma_t$ so that the right-hand side is not identically zero;
such a cycle exists because $\eta_0$ is a nonzero cohomology class.
Then both $P$ and $P'$ are nonzero Abelian integrals of the first kind.
\end{proof}

\begin{remark}[Comparison with the published rank statements]
\label{III:rem:andre-comparison}
There is already a comparison to make with
\cite[Theorem 3.1]{Andre}. With sufficiently fine level, the smallest
weakly special subvariety containing this full-monodromy period curve
is the full genus-two Siegel variety. Its universal family has
$r''=2$, because its tangent representation consists of all symmetric
$2\times2$ matrices and includes invertible matrices. The asserted
invariance of $r''$ under passage to that weakly special subvariety
would therefore disagree with \eqref{III:eq:andre-ranks-seven}.
The published proof of Theorem~3.1 invokes the tangent-density
assertion of Theorem~2.2. 

The full symplectic case is explicitly included in Andr\'e's restricted
PEM setting~\cite[pp.~1398, 1413]{Andre}. The equality asserted in his
published Theorem~4.4 would then give $r''=r'=r=2$, in conflict with
\eqref{III:eq:andre-ranks-seven}. There is also a direct comparison with
his Theorem~4.6. The coefficient $5t$ is algebraic on $U_-$ and is permitted by the
$\OO(U_-)$-linear interpretation recorded in footnote~10 of that paper.
Under this interpretation of the hypotheses, the conclusions of
Theorems~3.1, 4.4, and 4.6 would therefore conflict with
Proposition~\ref{III:prop:rank-one-full-generation}. The applicability
of the tangent-density assertion to this period curve remains
unresolved, so this comparison is conditional.
\end{remark}

\begin{remark}[First-order transversality]\label{III:rem:first-order-transversality}

Irreducibility of the connection forces $\mathscr D_U\EE=\HH$, but the first-order map
$\beta_t$ can still have a kernel. The elementary statement about
differentials themselves is precisely
\[
 \ker\beta_t=0
 \quad\Longleftrightarrow\quad
 \nD s\notin\EE\ \text{for every nonzero local meromorphic section }s\in\EE,
\]
interpreted over the function field. Accordingly, a transversality
hypothesis must be supplied before making this first-order conclusion;
full symplectic monodromy alone does not supply it in the example.
The kernel line and its nonzero next fundamental map $\nu$ show
how differential generation continues after the first derivative
remains in the Hodge bundle.
\end{remark}

\section{Igusa equations for period curves and apparent divisors}
\label{III:sec:igusa-applications}

Throughout this section, use the genus-two convention~\ref{III:conv:genus-two}.

The apparent loci are defined by polynomials in the parameter of a
period curve. Expressing that parameter in Igusa modular coordinates
turns the polynomials into scalar Siegel modular forms with the same
pullback zero divisors. We use the classical invariant-theoretic
and modular dictionaries of \cite{CvG,MS}.

\subsection{Binary-sextic conventions and Hodge weights}
The normalization of the sextic invariants fixes the constants in
all subsequent modular equations. Write the binary sextic of the
main pencil as
\begin{equation}\label{III:eq:sextic-eight}
 F_t(u_1,u_2)=u_1^5u_2+u_1^3u_2^3+u_1u_2^5+tu_2^6.
\end{equation}
We denote its Igusa--Clebsch invariants by
$\mathsf J_2,\mathsf J_4,\mathsf J_6,\mathsf J_{10}$, reserving
$I_j$ for the higher Schwarzians of Section~\ref{III:sec:schwarzians-modular-constructions}. The normalization of
the transvectant is \eqref{III:eq:transvectant-two}, with binary variables
$(u_1,u_2)$.

For a binary sextic $F$, put
\[
 i=(F,F)_4,\quad h=(i,i)_2,\quad
 y_1=(F,i)_4,\quad y_2=(i,y_1)_2,\quad y_3=(i,y_2)_2
\]
and introduce the Clebsch invariants
\[
 A_{\mathrm{Cl}}=(F,F)_6,\quad B_{\mathrm{Cl}}=(i,i)_4,\quad
 C_{\mathrm{Cl}}=(i,h)_4,\quad D_{\mathrm{Cl}}=(y_3,y_1)_2.
\]
The conversion used here is
\begin{align}
 \mathsf J_2&=-120A_{\mathrm{Cl}},\label{III:eq:clebsch-conversion-eight}\\
 \mathsf J_4&=-720A_{\mathrm{Cl}}^2+6750B_{\mathrm{Cl}},\notag\\
 \mathsf J_6&=8640A_{\mathrm{Cl}}^3-108000A_{\mathrm{Cl}}B_{\mathrm{Cl}}
                                    +202500C_{\mathrm{Cl}},\notag\\
 \mathsf J_{10}&=-62208A_{\mathrm{Cl}}^5+972000A_{\mathrm{Cl}}^3B_{\mathrm{Cl}}
                    +1620000A_{\mathrm{Cl}}^2C_{\mathrm{Cl}}\notag\\
 &\hspace{10mm}-3037500A_{\mathrm{Cl}}B_{\mathrm{Cl}}^2
                    -6075000B_{\mathrm{Cl}}C_{\mathrm{Cl}}-4556250D_{\mathrm{Cl}}.\notag
\end{align}
This fixes the Mestre/Sage convention~\cite{Sage}; in the dictionary
of~\cite[\S2.3]{MS} these are the invariants denoted $J_k$.
The explicit conversion matters: different normalizations alter the
integer coefficients of every modular relation below.

For~\eqref{III:eq:sextic-eight}, successive transvectants give
\begin{align*}
 A_{\mathrm{Cl}}&=-23/60,& B_{\mathrm{Cl}}&=103/11250,\\
 C_{\mathrm{Cl}}&=(6250t^2+99)/562500,&
 D_{\mathrm{Cl}}&=-\frac{39062500t^4-10237500t^2+8281}{56953125000}.
\end{align*}
Substitution into~\eqref{III:eq:clebsch-conversion-eight} yields
\begin{equation}\label{III:eq:Igusa-values-eight}
 \mathsf J_2=46,\qquad\mathsf J_4=-44,\qquad
 \mathsf J_6=2250t^2-72,\qquad\mathsf J_{10}=d(t).
\end{equation}
The last invariant is the polynomial discriminant $d(t)$.

Let $E_4,E_6$ be the normalized genus-two Eisenstein series of Section~\ref{III:sec:schwarzians-modular-constructions},
and choose the scalings of $\chi_{10},\chi_{12}$ as in~\cite{MS}.
The weighted identities are
\begin{align}
 \mathsf J_2&=-24\frac{\chi_{12}}{\chi_{10}},&
 \mathsf J_4&=4E_4,\label{III:eq:Igusa-modular-eight}\\
 \mathsf J_6&=-\frac83E_6-32E_4\frac{\chi_{12}}{\chi_{10}},&
 \mathsf J_{10}&=-2^{14}\chi_{10}.\notag
\end{align}
Each side has Hodge weight given by its subscript. These equations are
identities of weighted coordinates, or of sections after a compatible
trivialization of the Hodge line $\lambda=\det\EE$ from Section~\ref{III:sec:siegel-ore}.
The algebraic frame and a normalized analytic period frame give
different trivializations. If their generators of the Hodge line
satisfy $\sigma_{\rm alg}=\kappa(t)\sigma_{\rm an}$, then the
coefficients of a weight-$k$ section satisfy
\[
 f_{\rm an}(t)=\kappa(t)^k f_{\rm alg}(t).
\]
For example, \eqref{III:eq:Igusa-values-eight} gives
$E_4(Z(t))=-11\kappa(t)^4$. The factor cancels from every weight-zero
ratio.

\subsection{Recovering the parameter of the period image}
Weight-zero ratios remove the choice of Hodge trivialization. They
will recover a coordinate on the coarse period image. On the open
set where the denominators are nonzero, define
\begin{equation}\label{III:eq:modular-ratios-eight}
 R_4=\frac{E_4\chi_{10}^2}{\chi_{12}^2},\qquad
 R_6=\frac{E_6\chi_{10}^3}{\chi_{12}^3},\qquad
 R_{10}=\frac{\chi_{10}^6}{\chi_{12}^5}.
\end{equation}
Their subscripts name the invariant used in their numerator; each
ratio has weight zero.

\begin{proposition}\label{III:prop:Igusa-ratios-eight}
The restrictions of~\eqref{III:eq:modular-ratios-eight} to the period image
of~\eqref{III:eq:main-family-five} are
\begin{equation*}
 R_4=-\frac{1584}{529},\qquad
 R_6=\frac{432(3375t^2+904)}{12167},\qquad
 R_{10}=\frac{243d(t)}{102981488}.
\end{equation*}
Consequently the modular function
\begin{equation}\label{III:eq:Umod-eight}
 U_{\mathrm{mod}}(Z)=\frac{12167R_6(Z)-390528}{1458000}
\end{equation}
restricts to $t^2$ and is a rational coordinate on the coarse period
image.
\end{proposition}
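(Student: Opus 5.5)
The plan is to invert the dictionary \eqref{III:eq:Igusa-modular-eight} on the period image and substitute the explicit values \eqref{III:eq:Igusa-values-eight}.

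First, solve \eqref{III:eq:Igusa-modular-eight} for the modular forms in terms of $\mathsf J_2,\mathsf J_4,\mathsf J_6,\mathsf J_{10}$:
\[
 \frac{\chi_{12}}{\chi_{10}}=-\frac{\mathsf J_2}{24},\qquad
 E_4=\frac{\mathsf J_4}{4},\qquad
 \chi_{10}=-2^{-14}\mathsf J_{10},\qquad
 E_6=-\tfrac38\bigl(\mathsf J_6+32E_4\chi_{12}/\chi_{10}\bigr)
    =-\tfrac38\mathsf J_6+\tfrac18\mathsf J_2\mathsf J_4 .
\]
These hold as identities of weighted coordinates. By the Hodge-trivialization remark, the algebraic coefficients differ from the analytic ones by the factor $\kappa(t)^k$ in weight $k$. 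Each $R_j$ in \eqref{III:eq:modular-ratios-eight} is a quotient of total weight zero, so every power of $\kappa$ cancels. The ratios can therefore be evaluated directly from the algebraic invariants, provided the denominators $\chi_{12}$ and $\chi_{10}$ do not vanish. Here $\chi_{10}\propto d\ne0$ on $U$, and $\chi_{12}/\chi_{10}=-46/24\ne0$.

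Next, substitute. This gives
\[
 R_4=\frac{E_4}{(\chi_{12}/\chi_{10})^2}=\frac{-11}{(23/12)^2}=-\frac{1584}{529}.
\]
For $R_6$, compute $E_6=-\tfrac38(2250t^2-72)+\tfrac18(46)(-44)$, which should give
\[
 E_6=-\frac{3375t^2+904}{4}.
\]
Dividing by $(\chi_{12}/\chi_{10})^3=-(23/12)^3$ then yields $432(3375t^2+904)/12167$.

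For $R_{10}=\chi_{10}/(\chi_{12}/\chi_{10})^5$, insert $\chi_{10}=-2^{-14}d$ and $(\chi_{12}/\chi_{10})^5=-(23/12)^5$. This gives $12^5d/(2^{14}\cdot 23^5)$, and one checks that it reduces to $243d/102981488$. The main risk in the whole argument is arithmetic and sign bookkeeping, especially in $E_6$. I would recheck that step against the Clebsch values and \eqref{III:eq:clebsch-conversion-eight}.

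Finally, \eqref{III:eq:Umod-eight} becomes
\[
 \frac{432\cdot3375\,t^2+432\cdot904-390528}{1458000}=t^2,
\]
since $432\cdot904=390528$ and $432\cdot3375=1458000$. So $U_{\mathrm{mod}}$ restricts to $t^2$.

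For the coordinate claim, the pencil \eqref{III:eq:main-family-five} is invariant under $t\mapsto -t$ up to isomorphism, via $(x,y)\mapsto(-x,iy)$. The period map therefore factors through $u=t^2$. The restriction of $U_{\mathrm{mod}}$ is the coordinate $u$ itself, so the induced map from the $u$-line to the coarse image is birational onto its image, and $U_{\mathrm{mod}}$ is a rational coordinate there.
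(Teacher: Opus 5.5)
Your proposal is correct and follows essentially the paper's route. The paper writes the weight-zero ratios directly as $R_4=144\mathsf J_4/\mathsf J_2^2$, $R_6=1728(3\mathsf J_6-\mathsf J_2\mathsf J_4)/\mathsf J_2^3$ and $R_{10}=486\mathsf J_{10}/\mathsf J_2^5$, which is exactly what your inverted dictionary produces; it then substitutes the values and uses $\mathcal C_t\simeq\mathcal C_{-t}$ to identify the function field of the image with $\C(t^2)$. Your arithmetic checks out, including $E_6=-(3375t^2+904)/4$ in the algebraic trivialization and the cancellation of the Hodge factor in the weight-zero quotients.
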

\begin{proof}
The weight-zero versions of~\eqref{III:eq:Igusa-modular-eight} are
\[
 R_4=\frac{144\mathsf J_4}{\mathsf J_2^2},\qquad
 R_6=\frac{1728(3\mathsf J_6-\mathsf J_2\mathsf J_4)}{\mathsf J_2^3},
 \qquad R_{10}=\frac{486\mathsf J_{10}}{\mathsf J_2^5}.
\]
Substitute~\eqref{III:eq:Igusa-values-eight} to obtain the restrictions.
The coefficient of $t^2$ in $R_6$ is nonzero, proving recovery by
\eqref{III:eq:Umod-eight}. Conversely all four weighted invariants of the
family depend on $t^2$, and the isomorphism
$(x,y)\mapsto(-x,iy)$ identifies $\mathcal C_t$ and $\mathcal C_{-t}$.
Thus the invariant function field of the image is exactly $\C(t^2)$.
\end{proof}

Elimination gives two explicit holomorphic scalar forms vanishing
on the period image:
\begin{align*}
 F_{24}&=529E_4\chi_{10}^2+1584\chi_{12}^2,
 \\
 F_{22}&=48323E_4^3\chi_{10}-37752E_4E_6\chi_{12}
                     -6325E_6^2\chi_{10}\\
       &\hspace{18mm}+12317184000\chi_{10}\chi_{12}.
\end{align*}
All monomials in each line have the indicated weight. Substitution
of~\eqref{III:eq:Igusa-modular-eight} and~\eqref{III:eq:Igusa-values-eight}
annihilates both expressions identically. These are vanishing relations in the classical Igusa ring. Their
common zero scheme may have additional components; only their
restriction to the period image is used here.

At $t=0$, every displayed coarse invariant has zero first derivative,
but $\det C_0=-21/d$ is nonzero. The stabilizer of $\mathcal C_0$ acts on the parameter by
$t\mapsto-t$, so the coarse coordinate $t^2$ has zero differential
there. The Kodaira--Spencer map is computed before taking this
coarse quotient and still has rank two.

\subsection{Modular equations for the three apparent loci}
To turn a polynomial in $t^2$ into a modular form, substitute
$U_{\mathrm{mod}}$ and clear its denominator with a matching Hodge
weight. Set
\begin{equation*}
 A_{36}=12167E_6\chi_{10}^3-390528\chi_{12}^3,
 \qquad B_{36}=1458000\chi_{12}^3.
\end{equation*}
Both have weight $36$, and $U_{\mathrm{mod}}=A_{36}/B_{36}$.
For $P(u)$ of degree $r$, let
\begin{equation}\label{III:eq:homogenization-eight}
 P^{\rm h}(A,B)=B^rP(A/B).
\end{equation}
This is a homogeneous polynomial even where $B=0$.
Using~\eqref{III:eq:polynomials-five}, put
\begin{align*}
 \mathscr P_{36}&=96A_{36}+91B_{36},\\
 \mathscr D_{72}&=D_2^{\rm h}(A_{36},B_{36}),\\
 \mathscr H_{144}&=H_4^{\rm h}(A_{36},B_{36}),\\
 \mathscr N_{216}&=N_6^{\rm h}(A_{36},B_{36}).
\end{align*}
They are holomorphic scalar Siegel modular forms for the full genus-two
group, with the weights indicated by their subscripts. Homogenization
keeps their definitions explicit without a large monomial expansion.

\begin{theorem}\label{III:thm:modular-apparents-eight}
Let $\varphi:U\to\mathcal A_2$ be the period map of the main pencil,
understood on the moduli stack when recording line bundles. Its
pullback zero divisors satisfy
\begin{align}
 \operatorname{div}_0\varphi^*\mathscr P_{36}
      &=\operatorname{div}_0p,\label{III:eq:modular-divisor-pullback-eight}\\
 \operatorname{div}_0\varphi^*\mathscr H_{144}
      &=\operatorname{div}_0P_8,\notag\\
 \operatorname{div}_0\varphi^*\mathscr N_{216}
      &=\operatorname{div}_0N_{12}.\notag
\end{align}
The first two are the cyclicity divisors of the fourth-order equation
from $\eta_0$ and the primitive fifth-order equation, respectively.
For the tautological null-line cyclicity divisor $Z_N$,
\begin{equation*}
 \pi_*Z_N=2\operatorname{div}_0\varphi^*\mathscr N_{216}.
\end{equation*}
Thus the last pullback has the support of the null-line apparent
locus; the factor two is its cyclic determinant multiplicity.
\end{theorem}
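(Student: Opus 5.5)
The plan is to compute each pullback directly from the weighted-coordinate dictionary and then control the extra factors coming from the Hodge trivialization and from the denominator $B_{36}$. Along the period image, \eqref{III:eq:Igusa-modular-eight} together with \eqref{III:eq:Igusa-values-eight} expresses every weighted modular quantity in terms of $t$ and the trivialization factor $\kappa(t)$. Since $\chi_{10}=-\mathsf J_{10}/2^{14}=-d/2^{14}$ and $\chi_{12}=-\mathsf J_2\chi_{10}/24=-46\chi_{10}/24$, the algebraic coefficients satisfy $\chi_{12}=(23/12)\,d/2^{14}$ up to sign. This is a nonzero multiple of $d$, so $\chi_{12}$ has no zero on $U$. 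Consequently $\varphi^*B_{36}$ equals $\kappa^{36}$ times a nonzero constant times $d^3$, and by Proposition~\ref{III:prop:Igusa-ratios-eight} we have $\varphi^*A_{36}=\varphi^*(U_{\mathrm{mod}}B_{36})=t^2\,\varphi^*B_{36}$.

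Substituting into \eqref{III:eq:homogenization-eight}, each form pulls back as $\varphi^*P^{\rm h}(A_{36},B_{36})=(\varphi^*B_{36})^rP(t^2)$, where $r$ is the degree of $P$ in $u$. This gives, up to nonzero constants and powers of $\kappa$, the pullbacks $d^3p(t)$ for $\mathscr P_{36}$ (with $p=96t^2+91$), $d^{12}H_4(t^2)=d^{12}P_8$ for $\mathscr H_{144}$, and $d^{18}N_{12}$ for $\mathscr N_{216}$. The factor $\kappa(t)$ is a nowhere-vanishing holomorphic function on $U$, because both the algebraic frame and the analytic frame generate the Hodge line there, so it contributes nothing to zero divisors. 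On $U$ the polynomial $d$ is a unit, so the zero divisors are exactly $\operatorname{div}_0p$, $\operatorname{div}_0P_8$ and $\operatorname{div}_0N_{12}$, which is \eqref{III:eq:modular-divisor-pullback-eight}. One point requires care: these are pullbacks of sections of $\lambda^k$ on the stack. I will therefore phrase the identity as an equality of sections of $\varphi^*\lambda^k$, trivialized by $\sigma_{\rm alg}$, so that the coarse ramification at $t=0$ is harmless; the divisor is read on $U$ itself.

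The identification with cyclicity divisors then uses results already proved. Proposition~\ref{III:prop:eta0-apparent-six} shows that $\det R_{\eta_0}=32400p/d^3$ has zero divisor $\operatorname{div}_0p$. Theorem~\ref{III:thm:primitive-example-six} shows that $\mathcal W(v)$ is a unit times $P_8$. Finally, \eqref{III:eq:null-pushforward-six} gives $\pi_*Z_N=2\operatorname{div}_0N_{12}$, and combining it with the third pullback identity yields the last assertion.

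The main obstacle is justifying that $\kappa$ introduces no spurious zeros or poles and that no normalization sign breaks the weighted identities. I expect to handle this by the descent argument for $\sigma_{\rm alg}$ on $U$, checking that the conventions of \eqref{III:eq:Igusa-modular-eight} are consistent with \eqref{III:eq:Igusa-values-eight}, which also serves as a consistency check on the $R_j$.
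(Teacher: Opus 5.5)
Your proposal is correct and is essentially the paper's proof. The paper likewise uses $\mathsf J_{10}=d$ and $\mathsf J_2=46$ to show that $\varphi^*\chi_{10}$ and $\varphi^*\chi_{12}$ do not vanish on $U$, so $\varphi^*B_{36}$ is a nowhere-vanishing section of $\varphi^*\lambda^{36}$. It then writes $\varphi^*\mathscr P_{36}$, $\varphi^*\mathscr H_{144}$ and $\varphi^*\mathscr N_{216}$ as $(\varphi^*B_{36})^r$ times $p$, $P_8$ and $N_{12}$, and quotes the earlier cyclicity results and \eqref{III:eq:null-pushforward-six}. Your explicit tracking of the trivialization factor $\kappa(t)$ and of the powers of $d$ only spells out what the paper absorbs into the phrase ``nowhere-vanishing section of $\varphi^*\lambda^{36}$''.
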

\begin{proof}
The pullback of $\chi_{10}$ has no zero on $U$, as also follows from
$\mathsf J_{10}=d$ in the algebraic Hodge trivialization. Since
$\mathsf J_2=46$, the pullback of $\chi_{12}$ is nonzero there as well.
Thus $\varphi^*B_{36}$ is a nowhere-vanishing section of
$\varphi^*\lambda^{36}$. The identity $U_{\mathrm{mod}}\circ\varphi=t^2$
and~\eqref{III:eq:homogenization-eight} give
\begin{align*}
 \varphi^*\mathscr P_{36}&=(\varphi^*B_{36})p(t),\\
 \varphi^*\mathscr H_{144}&=(\varphi^*B_{36})^4P_8(t),\\
 \varphi^*\mathscr N_{216}&=(\varphi^*B_{36})^6N_{12}(t).
\end{align*}
Taking zero divisors proves~\eqref{III:eq:modular-divisor-pullback-eight}.
The scalar interpretations follow from
Proposition~\ref{III:prop:eta0-apparent-six} and
Theorems~\ref{III:thm:primitive-example-six}, \ref{III:thm:null-apparents-six};
the last multiplicity is~\eqref{III:eq:null-pushforward-six}.
\end{proof}

The nonzero restrictions prove that these modular forms are nonzero.
The theorem identifies their pullback divisors on the specified family;
their ambient zero divisors may contain other components.

Homogenization is an instance of the equivariant-evaluation method
of \cite{CvG}. Theorem~\ref{III:thm:modular-apparents-eight} gives the
differential interpretation of these particular restrictions as
cyclicity divisors.

\subsection{A modular extension of the differential invariant}
The same substitution extends the rational curve invariant to a
modular function. Using \eqref{III:eq:Umod-eight} in \eqref{III:eq:qc-five},
set
\begin{equation}\label{III:eq:Rhat-eight}
 \widehat{\mathcal R}(Z)=
 \frac{26542080000D_2(U_{\mathrm{mod}}(Z))^6H_4(U_{\mathrm{mod}}(Z))^4}
      {N_6(U_{\mathrm{mod}}(Z))^5}.
\end{equation}
Equivalently, it is the weight-balanced quotient
\begin{equation*}
 \widehat{\mathcal R}
 =\frac{26542080000 B_{36}^{2}\mathscr D_{72}^{6}
                                      \mathscr H_{144}^{4}}
        {\mathscr N_{216}^{5}}.
\end{equation*}
Both numerator and denominator have weight $1080$.
Therefore $\widehat{\mathcal R}$ is a meromorphic Siegel modular
function, and
\[
 \varphi^*\widehat{\mathcal R}=\mathcal R=c^2/q^5.
\]
It is nonconstant because~\eqref{III:eq:R-divisor-six} gives nontrivial
zeros and poles on the family.

The restriction equality is precise, but the extension is not
canonical. For example
\[
 F_{24}/\chi_{12}^2=529R_4+1584
\]
is a meromorphic modular function vanishing on this period image.
Adding a multiple of it to~\eqref{III:eq:Rhat-eight} gives another
extension wherever the expressions are defined. This ambiguity is
different from the frame and parameter invariance of $\mathcal R$:
the latter is intrinsic to the period curve, while an extension off
that curve requires additional data.

Here extension is possible because $U_{\mathrm{mod}}$ restricts to a
coordinate on the coarse period image. The geodesic construction of
Theorem~\ref{III:thm:geodesic-Schwarzian} instead gives tensors on the
tangent bundle from a chosen connection. Their relation to the actual
pencil includes the acceleration terms of
\eqref{III:eq:acceleration-comparison}.

\subsection{The coarse moduli image of the rank-one pencil}
The rank-one pencil gives a coarse coordinate with a different
ramification order.
For $y^2=x^5-x+t$, with $d_-=3125t^4-256$, one obtains
\begin{equation*}
 \mathsf J_2=-40,\qquad\mathsf J_4=-80,\qquad
 \mathsf J_6=320,\qquad\mathsf J_{10}=d_-.
\end{equation*}
Thus the modular functions~\eqref{III:eq:modular-ratios-eight} restrict to
\[
 R_4=-\frac{36}{5},\qquad R_6=\frac{1512}{25},\qquad
 R_{10}=-\frac{243d_-}{51200000}.
\]
Two equations for the image are
\begin{equation*}
 5E_4\chi_{10}^2+36\chi_{12}^2=0,\qquad
 25E_6\chi_{10}^3-1512\chi_{12}^3=0.
\end{equation*}
The coarse parameter is recovered by
\begin{equation*}
 t^4=\frac{256-(51200000/243)R_{10}}{3125}.
\end{equation*}
The isomorphism $(x,y)\mapsto(ix,\sqrt{i}\,y)$ takes $\mathcal C_t$
to $\mathcal C_{it}$, explaining the occurrence of $t^4$. At the
origin the displayed coarse modular coordinates have vanishing
derivatives through order three, even though the directional
Kodaira--Spencer map has rank one. This gives a second explicit
distinction between derivatives of coarse invariant coordinates and
the filtered connection.

Two questions arise from these comparisons. Which equivariant
contractions of the ambient Schwarzian and its derivatives are
independent both of the modular connection and of the tangent
direction? Such contractions would give functions on the modular
variety directly. Conversely, to what extent do the paired null-line
operators, together with their polarization data, determine the
original filtered connection? The formulas above give the forward
construction and its apparent divisors.

\Needspace{8\baselineskip}
\section*{Acknowledgments}
\addcontentsline{toc}{section}{Acknowledgments}
I thank Mehrzad Ajoodanian for sharing his ideas and his unpublished
preprints on non-abelian Schwarzians and theta--Hessian metrics on modular
curves, and for insightful conversations. The non-abelian perspective
developed in \cite{Ajoodanian2026} helped initiate this project.
I also thank Amin Najafi-Amin for his Mathematica computations of
low-degree scalar invariants, which helped reveal patterns in the
general formulas.

\clearpage
\phantomsection

\clearpage
\appendix
\markboth{Higher Schwarzians: Appendices}{}
\section{Explicit formulas in low weights}\label{I:app:formulas}

These formulas permit direct evaluation through weight six and show
the ordering prescribed by the construction. We use the binomial convention
$L=\sum_{k=0}^N\binom Nk a_kD^{N-k}$, with $a_0=1$.
Each formula applies when $m\le N$. Write $a=a_1$ and
$a^{(r)}=\delta^r(a)$; primes also denote $\delta$.
Products are taken in the displayed order.

\subsection{The normalized coefficients}
The coefficients $I_m$ are independent of $N$ once $N\ge m$.
Recall that
\begin{equation}\label{I:eq:app-I2}
 I_2=a_2-a'-a^2.
\end{equation}
The next four are
\begin{equation}\label{I:eq:app-I3}
\begin{aligned}
I_3&=a_3-3a_2a\\
 &\quad+2a^{3}-2a a'+2a' a-a''.
\end{aligned}
\end{equation}

\begin{equation}\label{I:eq:app-I4}
\begin{aligned}
I_4&=a_4-4a_3a+6a_2(a^2-a')\\
 &\quad-3a^{4}+3a^{2} a'+6a a' a-3a a''-3a' a^{2}\\
 &\quad+3(a')^{2}+3a'' a-a'''.
\end{aligned}
\end{equation}

\begin{equation}\label{I:eq:app-I5}
\begin{aligned}
I_5&=a_5-5a_4a+10a_3(a^2-a')\\
 &\quad-10a_2(a^3-aa'-2a' a+a'')\\
 &\quad+4a^{5}-4a^{3} a'-8a^{2} a' a+4a^{2} a''-12a a' a^{2}\\
 &\quad+12a (a')^{2}+12a a'' a-4a a'''+4a' a^{3}-4a' a a'\\
 &\quad-8(a')^{2} a+4a' a''-6a'' a^{2}+6a'' a'+4a''' a\\
 &\quad-a^{(4)}.
\end{aligned}
\end{equation}

\begin{equation}\label{I:eq:app-I6}
\begin{aligned}
I_6&=a_6-6a_5a+15a_4(a^2-a')\\
 &\quad-20a_3(a^3-aa'-2a' a+a'')\\
 &\quad+15a_2\bigl(a^4-a^2a'-2aa' a+aa''
                 -3a' a^2\\
 &\hspace{5em}+3(a')^2+3a'' a-a'''\bigr)\\
 &\quad-5a^{6}+5a^{4} a'+10a^{3} a' a-5a^{3} a''+15a^{2} a' a^{2}\\
 &\quad-15a^{2} (a')^{2}-15a^{2} a'' a+5a^{2} a'''+20a a' a^{3}-20a a' a a'\\
 &\quad-40a (a')^{2} a+20a a' a''-30a a'' a^{2}+30a a'' a'+20a a''' a\\
 &\quad-5a a^{(4)}-5a' a^{4}+5a' a^{2} a'+10a' a a' a-5a' a a''\\
 &\quad+15(a')^{2} a^{2}-15(a')^{3}-15a' a'' a+5a' a'''+10a'' a^{3}\\
 &\quad-10a'' a a'-20a'' a' a+10(a'')^{2}-10a''' a^{2}+10a''' a'\\
 &\quad+5a^{(4)} a-a^{(5)}.
\end{aligned}
\end{equation}

\subsection{The projective expressions}
Put $\Delta C=\delta(C)+[a,C]$. Formula \eqref{I:eq:rp-E} gives
\begin{equation}\label{I:eq:app-E}
 \begin{aligned}
 E_3&=I_3-\frac32\Delta I_2,\\
 E_4&=I_4-2\Delta I_3+\frac65\Delta^2 I_2,\\
 E_5&=I_5-\frac52\Delta I_4+\frac{15}{7}\Delta^2 I_3
                                      -\frac57\Delta^3 I_2,\\
 E_6&=I_6-3\Delta I_5+\frac{10}{3}\Delta^2 I_4
                         -\frac53\Delta^3 I_3+\frac5{14}\Delta^4 I_2.
 \end{aligned}
\end{equation}
These expressions have weighted covariance under M\"obius changes
of parameter.

\subsection{The higher Schwarzians}
Recall $A\odot B=(AB+BA)/2$ from \eqref{I:eq:rp-Sym}. The normalized
representatives of Theorem~\ref{I:thm:rp-higher} are
\begin{equation}\label{I:eq:app-W}
 \begin{aligned}
 W_3&=E_3,\\[2pt]
 W_4&=E_4-\frac{3(5N+7)}{5(N+1)}I_2^2,\\[2pt]
 W_5&=E_5-\frac{10(7N+13)}{7(N+1)}\,I_2\odot E_3,\\[2pt]
 W_6&=E_6-\frac{5(3N+7)}{N+1}\,I_2\odot E_4\\
    &\quad-\frac{7N+8}{14(N+1)}
           \bigl(4I_2\odot\Delta^2 I_2-5(\Delta I_2)^2\bigr)\\
    &\quad+\frac{30(7N^2+28N+25)}{7(N+1)^2}I_2^3.
 \end{aligned}
\end{equation}
Since $E_3$ and $E_4$ are linear in the normalized jets, the binary
products in \eqref{I:eq:app-W} give exactly the full symmetrization
prescribed in \eqref{I:eq:rp-Sym}. For commuting coefficients, replace
$\odot$ by ordinary multiplication and $\Delta$ by $\delta$.

The formulas for $I_m$ follow from the Bell expansion
\eqref{I:eq:local-I-formulas}. Projective normalization
\eqref{I:eq:rp-normalizer}--\eqref{I:eq:rp-q-definition}, followed by
symmetrization, gives \eqref{I:eq:app-W}. Their scalar specializations
agree with $E_m$ when $I_2$ and all its derivatives vanish; this is
the normalization used throughout the paper.

\clearpage
\section{Computational supplement}\label{app:computations}

The accompanying directory \texttt{anc/} contains the exact calculation
programs and their data in one package. It is independent of the
compilation of this article. The universal formulas through weight six
are given in Appendix~\ref{I:app:formulas}; the supplied programs concern
the curve and Siegel applications. They check finite identities and
explicit examples, and do not replace the proofs of the general
theorems.

\subsection{Curve and modular calculations}

The directory \texttt{anc/curves/} contains checks of the coordinate
and Ore conventions, the kernel cubic, ordered brackets, extension
recovery, and the chosen higher hierarchy. The subdirectory
\texttt{x023\_calculations/} contains the rational and Fourier
calculations for Section~\ref{II:sec:x023}. In that subdirectory the
series program is run before the programs that read its coefficients.
The rational certificate checks the displayed polynomial identities
for the extension map. All products use coefficients on the left;
intrinsic derivatives are ordinary, while modular derivatives use
$D=(2\pi i)^{-1}\partial_\tau$.

\subsection{Siegel tensors and scalarizations}

The directory \texttt{anc/siegel/} contains checks of the projected
calculus, full tensor identities, modular geodesics, and scalarizations.
Its \texttt{applications/} subdirectory contains the exact
Gauss--Manin reductions, primitive and null-line operators, rank-one
pencils, and Igusa substitutions of
Sections~\ref{III:sec:explicit-gm}--\ref{III:sec:igusa-applications}.
The application runner orders the calculations so that each generated
data file is available when needed.

The subdirectory \texttt{independent/} contains additional derivations
supplied with the source material and their attribution. Its
\texttt{gm.py} implements a separate B\'ezout reduction. These
calculations use exact rational or quadratic-field arithmetic.
In code, the variable \texttt{J} denotes the binary joint invariant
written $\mathfrak j$ in the article; the keys \texttt{J2},
\texttt{J4}, \texttt{J6}, and \texttt{J10} denote the binary-sextic
invariants. The residue polarization and left-coefficient Ore
conventions agree with the displayed formulas.

\subsection{Small worked examples and notation in code}
The program \texttt{anc/exposition\_checks.py} checks the local
matrix brackets of Examples~\ref{II:ex:ordered-self-brackets}
and~\ref{II:ex:third-self-bracket}, the
kernel expansion of Example~\ref{II:ex:small-kernel}, and the two
scalarizations of Example~\ref{III:ex:two-scalarizations} by exact
arithmetic. It is included in the combined runner.

The original calculation programs retain their established variable
names. In the curve programs, \texttt{A} is the kernel connection
matrix $\mathsf A$, and a cubic difference called \texttt{delta}
is $d_{LM}$ here. In the period programs, \texttt{K} is the pairing
matrix $\mathsf K$; a cyclic determinant written \texttt{W} in code
is $\operatorname{Wr}_s$ or $\mathcal W(v)$, according to its rank.
The projected scalar connection called \texttt{theta} is $\Theta$.
These translations change no normalization or computational input.

\subsection{Reproduction}

With Python~3 and SymPy installed, run the following commands from the
directory containing this article:
\begin{verbatim}
python -m pip install -r anc/requirements.txt
python anc/run_all.py
\end{verbatim}
The runner executes the suites in dependency order and records each
exit status and output in \texttt{anc/results/}. It also detects
explicit false comparisons printed by the independent checks.
The accompanying \texttt{README.md} lists the individual entry points.

\end{document}